	\appto\endthenomenclature{\end{multicols}}
\newcommand{\nomentry}[2]{
  \nomenclature{#1}{#2}%
}
\newtheorem{thm}{Theorem}[section]
\newtheorem*{rem*}{Remark}
\newtheorem*{defi*}{Definition}
\newtheorem{defi}[thm]{Definition}
\newtheorem{prop}[thm]{Proposition}
\newtheorem{cor}[thm]{Corollary}
\newtheorem{exa}[thm]{Example}
\newtheorem{lem}[thm]{Lemma}
\newtheorem{rem}[thm]{Remark}
\numberwithin{equation}{section}
\renewcommand{\bf}[1]{\mathbf{#1}}
\renewcommand{\rm}[1]{\mathrm{#1}}
\renewcommand{\cal}[1]{\mathcal{#1}}
\renewcommand{\frak}[1]{\mathfrak{#1}}
\newcommand{\bb}[1]{\mathbb{#1}}
\newcommand{\scr}[1]{\mathscr{#1}}
\newcommand{\bm}[1]{\mathbbm{#1}}
\def\PP{\scr P}
\def\B{\mathbbm 1}
\def\R{\bb R}
\def\Z{\bb Z}
\def\P{\scr P}
\def\bp{\bb P}
\newcommand{\dd}{\mathrm{d}}
\newcommand{\supp}{\rm{supp}}
\newcommand{\mucon}[1]{\mu^{*#1}}
\newcommand{\bfh}[1]{\bf{h}_{#1}}
\def\slr{G}
\def\slrn{\rm{SL}_{\rank+1}(\R)}
\def\sltwo{\rm{SL}_2(\R)}
\def\gltwo{\rm{GL}_2(\R)}
\def\sigmah {\sigmahh^n}
\def\sigmahh {Y}
\def\tisig{\tilde{\sigmahh}^n}
\def\fren {\tau}
\def\l{\langle }
\def\r{\rangle }
\def\sg{\rm{m}}
\def\sign{\rm{sg}}
\def\diag{\rm{diag}}
\def\fgeven{f_{\bf g}}
\def\bfgh{\bf g\!\leftrightarrow\!\bf h}
\def\tbfgh{T\bfgh}
\def\expec{\beta}
\def\epss{\epsilon}
\def\rank{m}
\def\mollifier{\tau}
\def\fini{\Gamma}
\def\ad{\rm{Ad}}
\def\g{g}
\def\gap{\gamma}
\def\nupione{{l_1}}
\def\nupitwo{{l_2}}
\def\sp{\rm{Span}}
\def\ceta{\bb C_\eta}
\def\constant{C_A}
\def\realpart{\varpi}
\def\open{J}
\def\k{z}
\def\up{\sharp}
\def\epsilonone{\epsilon_1}
\def\epsilonzer{\epsilon_0}
\def\epsilontwo{\epsilon_2}
\def\Cone{C_2}
\def\epsilonthr{\epsilon_3}
\def\epsilonfour{\epsilon_4}
\begin{document}
\bibliographystyle{alpha}
\title{\textbf{Fourier decay, Renewal theorem and Spectral gaps for random walks on split semisimple Lie groups }}
\author{Jialun LI}
\date{}
\maketitle


\begin{abstract}
	We establish an exponential error term for the renewal theorem in the context of products of random matrices, which is surprising compared with classical abelian cases. A key tool is the Fourier decay of the Furstenberg measures on the projective spaces, which is a higher dimensional generalization of a recent work of Bourgain-Dyatlov.
\\

	\textbf{Résumé}: On établit un terme d'erreur exponentiel dans le théorème de renouvellement dans le cadre de produits de matrices aléatoires, qui est inattendu par rapport au cas classique abélien. L'outil clef est le décroissance de Fourier de mesures de Furstenberg sur les espaces projectifs, qui est une généralisation en dimension supérieure d'un travail récent de Bourgain-Dyatlov.
\end{abstract}

\section{Introduction}

Let $V$ be a nontrivial finite-dimensional irreducible real algebraic representation of a $\R$-split semisimple algebraic real Lie group $G$ (for example $G=\slrn$ and $V=\R^{\rank+1}$ with $\rank\geq 1$). 
Let $\mu$ be a Borel probability measure on $G$ and let $\Gamma_\mu$ be the subgroup generated by the support of $\mu$. We call $\mu$ Zariski dense if $\Gamma_\mu$ is a Zariski dense subgroup of $G$. This means that the measure $\mu$ does not concentrate on any proper algebraic subgroup of $G$. 
We also need the hypothesis of finite exponential moment. If $V$ is a faithful representation of $G$, the definition of exponential moment is that there exists $\epsilon$ positive such that 
\[\int_G\|g\|^\epsilon\dd\mu(g)<\infty, \]
where $\|g\|$ is the operator norm of $g$ acting on $V$.
For the general case, please see Definition \ref{defi:exponential moment}. From now on, we always suppose that the measure $\mu$ is Zariski dense with a finite exponential moment. 

For any natural number $n$, let $\mucon{n}$ be the $n$-times convolution of the measure $\mu$. Let $X_1,\cdots,X_n$ be i.i.d. random variables in $G$ with the same distribution $\mu$, then $\mucon{n}$ is the distribution of the product $X_1X_2\cdots X_n$. People are interested in generalising results about sum of random variables, such as the law of large numbers or the central limit theorem, to the norms or coefficients of products of random matrices.
The pioneers of the study of products of random matrices are Furstenberg, Kesten, Guivarc'h,.... 
For example, Furstenberg proved the law of large numbers, which states that there exists a Lyapunov constant $\sigma_{V,\mu}>0$ such that almost surely
$$\lim_{n\rightarrow\infty } \frac{1}{n}\log\|X_1\cdots X_n\|=\sigma_{V,\mu}.$$

\subsection*{Renewal theorem}
Our first result is about exponential error term in the renewal theorem.
The renewal theorem was first introduced in sum of random variables by Blackwell and in products of random matrices by Kesten \cite{kesten1973random} \cite{kesten1974renewal}, where he applied the renewal theorem to study the solution of random differential equations. The renewal theorem has also applications outside probability theory, for example its application to the decay of correlation by Sarig \cite{sarig} and its application to the asymptotic analysis of certain counting functions arising in the geometry of discrete groups by Lalley \cite{lalley1989renewal}.

Let $\|\cdot \|$ be a good norm\footnote{When $G=\slrn$ and $V=\R^{\rank+1}$, any euclidean norm on $\R^{\rank+1}$ is a good norm. For the definition, please see Definition \ref{defi:good norm}.} on $V$. Recall that for $g$ in $G$, we define $\|g\|$ to be its operator norm on $V$. For a compactly supported continuous function $f$ on $\bb R$ and a real number $t$, we define the renewal sum for norms by
\begin{align*}
R_Pf(t):=\sum_{n=0}^{+\infty}\int_Gf(\log\|g\|-t)\dd\mu^{*n}(g).
\end{align*}
If $f$ is positive, this sum is obviously well defined. In fact, using the positiveness of the Lyapunov constant and the Large deviation principle, we can show that this sum is always well defined.

Let $X=\bp V$ be the real projective space of $V$, which is the set of lines of $V$. Then we have a group action of $G$ on $X$. 
We define the cocycle function $\sigma:G\times X\rightarrow \R$ by, for $x=\R v$ in $X$ and $g$ in $G$, 
\begin{equation}\label{coc}
\sigma(g,x)=\log\frac{\|gv\|}{\|v\|}.
\end{equation}
For a compactly supported continuous $f$ on $\bb R$, the renewal sum for cocycles is defined by
\[Rf(x,t):=\sum_{n=0}^{+\infty}\int_G f(\sigma(g,x)-t)\dd\mucon{n}(g), \text{ for }x\in X \text{ and }t\in\R. \]
The limit law for norms and the limit law for cocycles are closely related.

The renewal theorem gives us a phenomenon of equidistribution when the time $t$ is large enough. The main result (due to Guivarc'h and Le Page \cite{guivarchlepage2015}) is that for a compactly supported continuous function $f$ when the time $t$ tends to infinite, the renewal sum $Rf(x,t)$ tends to $\frac{1}{\sigma_{V,\mu}}\int f$, where $\sigma_{V,\mu}$ is the Lyapunov constant.

Here is one of our main results.
\begin{thm}\label{thm:renewal}	Let $\bf G$ be a connected semisimple algebraic group defined and split over $\R$ and let $G=\bf G(\R)$ be its group of real points.
	Let $\mu$ be a Zariski dense Borel probability measure on $\slr$ with a finite exponential moment. Let $V$ be a nontrivial finite-dimensional irreducible real algebraic representation of $\slr$ with a good norm.
	There exists $\epsilon>0$ such that for $f\in C_c^{2}(\bb R)$ and $t\in\R$, we have
	\begin{equation*}
	Rf(x,t)=\frac{1}{\sigma_{V,\mu}}\int_{-t}^{\infty}f(u)\dd Leb(u)+O_f( e^{-\epsilon|t|}),
	\end{equation*}
	and
	\begin{equation*}
	R_Pf(t)=\frac{1}{\sigma_{V,\mu}}\int_{-t}^{\infty}f(u)\dd Leb(u)+O_f( e^{-\epsilon|t|}),
	\end{equation*}
	where $O_f$ depends on the support and some Sobolev norm of $f$.
\end{thm}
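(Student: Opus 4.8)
The plan is to run the Fourier-analytic proof of the renewal theorem, with the scalar renewal kernel replaced by a holomorphic family of transfer operators on $X=\bp V$ and with the exponential gain supplied by the Fourier decay of the Furstenberg measure. For $z\in\bb C$ set
\[(P_z\varphi)(x)=\int_{\slr}e^{z\sigma(g,x)}\varphi(gx)\,\dd\mu(g),\qquad x\in X,\]
acting on the space of $\gamma$-Hölder functions on $X$; the cocycle relation $\sigma(g_1g_2,x)=\sigma(g_1,g_2x)+\sigma(g_2,x)$ gives $\int_{\slr}e^{z\sigma(g,x)}\varphi(gx)\,\dd\mucon n(g)=(P_z^n\varphi)(x)$. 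Expanding $f(u)=\frac1{2\pi}\int_\R\hat f(\xi)e^{i\xi u}\,\dd\xi$ and using the large-deviation bound that makes the renewal series summable, one obtains
\[Rf(x,t)=\frac1{2\pi}\int_\R\hat f(\xi)\,e^{-i\xi t}\,\bigl((I-P_{i\xi})^{-1}\BB\bigr)(x)\,\dd\xi,\]
so the whole problem reduces to controlling the resolvent $(I-P_z)^{-1}$ on the line $i\R$ and in a thin strip around it.

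Since $P_0$ is the Markov operator, which by Le Page's theorem has a spectral gap on $\gamma$-Hölder functions with a simple leading eigenvalue $1$ (eigenfunction $\BB$, left eigenvector the stationary measure $\nu$), analytic perturbation theory provides, for $z$ near $0$, a simple eigenvalue $\lambda(z)$ with $\lambda(0)=1$, $\lambda'(0)=\sigma_{V,\mu}$, and a splitting $(I-P_z)^{-1}=\frac{\Pi_z}{1-\lambda(z)}+H(z)$ with $\Pi_z$ the rank-one spectral projection ($\Pi_z\BB\to\BB$) and $H$ holomorphic near $0$. Hence $(I-P_{i\xi})^{-1}\BB=\frac{1}{-i\sigma_{V,\mu}\,\xi}\BB+(\text{holomorphic near }0)$. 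Because $(-i\xi)^{-1}$ is, up to a constant, the Fourier transform of the Heaviside function, substituting this into the integral and isolating the contribution of the simple pole at $\xi=0$ produces exactly the main term $\frac1{\sigma_{V,\mu}}\int_{-t}^\infty f(u)\,\dd Leb(u)$; what remains is $\frac1{2\pi}\int_\R\hat f(\xi)e^{-i\xi t}\,\tilde h(i\xi,x)\,\dd\xi$, where $\tilde h(\cdot,x)$ is holomorphic in a strip $\{|\mathrm{Re}\,z|<\epsilonzer\}$.

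For the error term I would shift the contour in this remaining integral from $i\R$ to $\{\mathrm{Re}\,z=\epsilon\,\sign(t)\}$, gaining $|e^{-zt}|=e^{-\epsilon|t|}$. This needs two inputs. (i) Holomorphy of $z\mapsto(I-P_z)^{-1}$ in $\{|\mathrm{Re}\,z|<\epsilonzer\}$: for real $z$ this is the large-deviation estimate ($P_z$ has spectral radius $<1$), and for $z=i\eta$ with $\eta\neq 0$ it is the absence of eigenvalues of modulus one, which follows from Zariski density; by continuity of the spectrum this is uniform on compact sets of $\eta$ and, with (ii), persists in a strip. (ii) A polynomial resolvent bound $\|(I-P_z)^{-1}\|_\gamma=O(|\mathrm{Im}\,z|^{1-\delta})$ for some $\delta>0$, uniformly for $\mathrm{Re}\,z$ in the strip and $|\mathrm{Im}\,z|$ large; this follows from a Dolgopyat-type contraction $\|P_z^{\,n}\|_\gamma\lesssim|\mathrm{Im}\,z|^{C}\rho^{\,n}$ with $\rho<1$, obtained by converting the Fourier decay of the Furstenberg measure (the higher-dimensional Bourgain–Dyatlov estimate) into cancellation in the oscillatory integrals defining $P_z^n$. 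Since $f\in C_c^2$, Paley–Wiener makes $\hat f$ entire with $|\hat f(\eta-i\sigma)|\lesssim_f e^{R|\sigma|}(1+|\eta|)^{-2}$, $R=\sup|\supp f|$; combined with (i)–(ii) the contour shift is justified and the shifted integral is $\lesssim e^{-\epsilon|t|}\int_\R(1+|\eta|)^{1-\delta}(1+|\eta|)^{-2}\,\dd\eta<\infty$, yielding $O_f(e^{-\epsilon|t|})$ with $O_f$ depending on $\supp f$ and a Sobolev norm of $f$.

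Finally, $R_Pf$ follows from $Rf$ by a standard comparison: $\log\|g\|$ equals $\sigma(g,\cdot)$ at the attracting direction of $g$, and for $\mucon n$-typical $g$ one has $\sigma(g,x)=\log\|g\|+O(\rho_0^{\,n})$ for $x$ outside a neighborhood of the repelling hyperplane of $\nu$-measure $O(\rho_0^{\,\alpha n})$; since $\sigma(g,x)-t\in\supp f$ forces $\log\|g\|\approx t$ and hence, by the large deviation principle, $n$ within a window of $t/\sigma_{V,\mu}$, these errors sum to $O_f(e^{-\epsilon|t|})$, while $\int_XRf(x,t)\,\dd\check\nu(x)$ carries the same main term. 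The main obstacle is input (ii): it is precisely there that the Fourier decay of the Furstenberg measure must be turned into genuine norm contraction of $P_z^{\,n}$ at high frequency, and this Dolgopyat-type argument is the technical heart of the proof — the perturbative analysis at $z=0$, the contour shift, and the norm-versus-cocycle comparison being routine once it is in hand.
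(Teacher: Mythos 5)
Your proposal follows essentially the same route as the paper: Fourier inversion of the renewal sum, the resolvent decomposition $(I-P_z)^{-1}=\frac{1}{\sigma_{V,\mu}z}N_0+U(z)$ on a strip with the polynomial bound $\|U(z)\|\ll(1+|\Im z|)^{2\gamma}$ coming from the Dolgopyat-type spectral gap (itself derived from the Fourier decay of the stationary measure), and then exponential decay in $t$ via analytic continuation into the strip — the paper phrases your contour shift through the Reed–Simon Paley–Wiener lemma, which is equivalent — followed by the same large-deviation comparison of $\log\|g\|$ with the cocycle to pass from $Rf$ to $R_Pf$. The step you flag as the technical heart (turning Fourier decay into the high-frequency contraction $\|P_{\varpi+i\vartheta}^n\|_\gamma\ll|\vartheta|^{2\gamma}\rho^n$) is exactly the paper's Theorem \ref{prop:spegap}, proved separately before the renewal theorem, so your reliance on it matches the paper's own logical structure.
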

\begin{rem}
We should compare this result with the renewal theorem on $\R$ (the classical abelian case). If $\lambda$ is a measure on $\R$ whose support is finite, then the error term in the renewal theorem for the real random walk induced by $\lambda$ is never exponential. So Theorem \ref{thm:renewal} says that the behaviour of products of random matrices is more regular.

One application of Theorem \ref{thm:renewal} is the Fourier decay of self-affine measures in \cite{li2019affine}, where the exponential error term is crucially used to obtain a polynomial rate in the decay of Fourier transform. 

Our result is an improvement of a result of Boyer \cite{boyer2016renewalrd}, where the error term is polynomial on $t$.
\end{rem}

 We hope this type of result and its idea of proof will be helpful to obtain some exponential error terms in the orbital counting problems of higher rank. For instance in \cite{lalley1989renewal}, \cite{quint2005groupes} and \cite{sambarino2015hyperconvex}, they are interested in the asymptotic growth of $\#\{\gamma\in\Gamma|\ d(\gamma o,o)\leq R \}$, where $o$ is the base point in $\slrn/\rm{SO}(\rank+1)$ and $\Gamma$ is a discrete subgroup of $\slrn$. In \cite{GMR19}, they relate the number of integer solutions of Markoff-Hurwitz equations to a counting problem of discrete subsemigroup of $\slrn$, whose limit set is known as the Rauzy gasket.

This type of error term is usually proved using some spectral gap property. 

\subsection*{Spectral gap}
Equip $\bp V$ with a Riemannian distance. For $\gamma$ positive, let $C^\gamma(\bp V)$ be the space of $\gamma$-H\"older functions on $\bp V$. We introduce the transfer operator, which is an analogue of the characteristic function for real random variables.
\begin{defi*}
	For $z$ in $\bb C$ with the real part $|\Re z|$ small enough, let $P_{z}$ be the operator on the space of continuous functions on $\bp V$, which is given by
	\[P_{z}f(x)=\int_G e^{z\sigma(g,x)}f(gx)\dd\mu(g), \text{ for }x\in\bp V, \]
	where the cocycle $\sigma(g,x)$ is defined in \eqref{coc}.
\end{defi*}
We keep the assumption that $\mu$ is a Zariski dense Borel probability measure on $\slr$ with a finite exponential moment. The use of this transfer operator on the products of random matrices has been introduced by Guivarc'h and Le Page. Due to the property of exponential moment, when $|\Re z|$ is small enough, the operator $P_z$ preserves the Banach space $C^\gamma(\bp V)$ for $\gamma>0$ small enough. Due to the contracting action of $G$ on $X$, for $z$ in a small ball centred at $0$, the spectral radius of $P_z$ on $C^\gamma(\bp V)$ is less than $1$ except at $0$. Due to the non-arithmeticity of $\Gamma_\mu$, on the imaginary line, the operator $P_z$ also has spectral radius less than $1$ except at $0$. These were used to give limit theorems for products of random matrices by Guivarc'h and Le Page (Please see \cite{lepage_limite_1982} and \cite{benoistquint}). We will prove a uniform spectral gap of the transfer operator with parameter $z$ near the imaginary line.
\begin{thm}\label{thm:spegaprep}
Under the same assumptions as in Theorem \ref{thm:renewal},
	for every $\gamma>0$ small enough, there exists $\delta>0$ such that for all $|b|>1$ and $|a|$ small enough the spectral radius of $P_{a+ib}$ acting on $C^{\gamma}(\bb PV)$ satisfies
	\[\rho(P_{a+ib})<1-\delta.\]
\end{thm}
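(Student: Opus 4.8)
The plan is to establish the uniform spectral gap $\rho(P_{a+ib})<1-\delta$ via a careful study of the $L^2$-norm of iterates $P_{a+ib}^n$ acting on suitable test functions, combined with the Fourier decay estimate for the Furstenberg measure mentioned in the abstract. First I would reduce the problem to a norm estimate: since $P_{a+ib}$ already preserves $C^\gamma(\bb PV)$ and has spectral radius at most $1$ (with the relevant perturbation theory controlling the $a$-dependence near $a=0$ uniformly), it suffices to show that for some fixed integer $n_0$ and some $\theta<1$ one has $\|P_{a+ib}^{n_0}\|_{C^\gamma\to C^\gamma}\le\theta$ for all $|b|>1$ and $|a|$ small; the spectral radius bound then follows by submultiplicativity and the spectral radius formula. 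To get such a contraction one cannot work directly in $C^\gamma$; instead I would interpolate, bounding the $C^\gamma$-norm of $P_{a+ib}^n f$ by a geometric combination of an $L^2(\nu)$-type norm (where $\nu$ is the stationary measure, or an auxiliary measure adapted to the $z$-twist) and the Lipschitz constant, exploiting the contraction of the $G$-action on $\bb PV$ — this is the standard Doeblin–Fortet / Lasota–Yorke philosophy.

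The core of the argument is then an oscillatory-integral bound: $\|P_{a+ib}^n f\|_{L^2}$ must be shown to decay because of cancellation in $e^{(a+ib)\sigma(g,x)}$ as $b$ grows. Here I would expand $P_{a+ib}^n f(x)=\int e^{(a+ib)\sigma(g,x)} f(gx)\,\dd\mu^{*n}(g)$, take the square, and reduce to estimating, after integrating in $x$, a bilinear sum over pairs of words $g,g'$ of the phase $e^{ib(\sigma(g,x)-\sigma(g',x))}$. The key geometric input is that $\sigma(g,x)-\sigma(g',x)$, as a function of $x$, has controlled non-degeneracy (non-stationary phase) unless $g$ and $g'$ are ``close'' in a suitable sense — this is where the non-arithmeticity and Zariski density of $\Gamma_\mu$, together with the regularity of the Furstenberg measure (its Fourier decay, as in Bourgain–Dyatlov in the $\rm{SL}_2$ case and its higher-rank generalization promised in the paper), enter decisively. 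One would partition the words according to a scale depending on $b$, handle the diagonal-ish part by a crude bound that is small because the stationary measure gives small mass to small balls (dimension/regularity of $\nu$), and handle the off-diagonal part by stationary phase or by directly invoking the Fourier-decay theorem applied to pushforwards of $\nu$.

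The main obstacle I anticipate is exactly this oscillatory estimate and, within it, the higher-rank combinatorics of the Cartan/Iwasawa decomposition that control $\sigma(g,x)$: in rank one the cocycle is essentially governed by a single contraction coefficient and one derivative of the phase in $x$ suffices, but for a general split semisimple $G$ and an arbitrary irreducible $V$ one must keep track of the action of $G$ on $\bb PV$ through the highest-weight structure, ensure the phase $x\mapsto\sigma(g,x)-\sigma(g',x)$ genuinely oscillates in enough directions, and make all constants uniform in $b$. A secondary difficulty is uniformity in the real part $a$: one needs the large-deviation / spectral-radius-$<1$ statements near $b=0$ to hold uniformly for $|a|$ in a fixed small interval, which I would get from the analyticity of $z\mapsto P_z$ on $C^\gamma$ and a compactness argument on a fixed segment $\{a+ib:|b|\le 1\}$ of the imaginary axis, patched to the large-$b$ regime handled above. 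Assembling the two regimes $|b|\le 1$ and $|b|>1$ then yields the uniform $\delta$.
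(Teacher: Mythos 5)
Your proposal follows essentially the same route as the paper: a Dolgopyat-type reduction (Lasota--Yorke estimates plus an $L^2(\nu)$ bound), expansion of $\int|P^n_{a+ib}f|^2\,\dd\nu$ into a bilinear integral over pairs $(g,h)$ of words, removal of an exponentially small bad set of pairs by the large deviation principles, and an application of the Fourier decay theorem for the stationary measure to the phase $\varphi=|b|^{-1}b\,(\sigma(g,\cdot)-\sigma(h,\cdot))$ with a cut-off amplitude, the required non-degeneracy of this phase being exactly the paper's derivative-of-cocycle (non local integrability) lemma. The only caveat is your opening reduction: a uniform bound $\|P_{a+ib}^{n_0}\|_{C^\gamma\to C^\gamma}\le\theta<1$ at a fixed iterate $n_0$ is not attainable, since the twist inflates the H\"older constant by a factor $|b|^\gamma$; as in the paper one must take $n\asymp\log|b|$ and work with the modified norm $|f|_\infty+c_\gamma(f)/|b|^\gamma$, after which the spectral radius bound follows exactly as you indicate.
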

Even in the case $\rm{SL}_2(\R)$, the result is new and only known in some special cases; when $\mu$ is supported on a finite number of elements of $\rm{SL}_2(\R)$ and these elements generate a Schottky semigroup, this result is due to Naud \cite{naud2005expanding}. When $\mu$ is absolutely continuous with respect to the Haar measure on $\sltwo$, this result can be obtained directly using high oscillations. 

This result should be compared with similar results for random walks on $\bb R$. Let $\lambda$ be a Borel probability measure on $\bb R$ with finite support. Then
\[\liminf_{|b|\rightarrow \infty}|1-\hat \lambda(ib)|=0, \]
which is totally different from our case and where $\hat{\lambda}(z)$ is the Laplace transform of the measure $\lambda$, given by 
\[\hat{\lambda}(z)=\int_{\R} e^{zx}\dd\lambda(x). \]
The proof is direct. Let $\{x_1,\dots,x_l\}$ be the support of $\lambda$. Then $\hat{\lambda}(ib)=\sum_{1\leq j\leq l}\lambda(x_j)e^{ibx_j}$, and we only need to find $b$ such that all the terms are uniformly near $1$. Using the fact that $\liminf_{b\rightarrow \infty}d_{\bb R^l}(b(x_1,\dots, x_l),2\pi\bb Z^l)=0$, we have the claim. 

An analogous result is valid if we replace the projective space $\bp V$ by the flag variety $\P$. Let $\PP$ be the full flag variety of $\slr$ and let $\frak{a}$ be a Cartan subspace of the Lie algebra $\frak g$ of $G$. For $g\in G$ and $\eta\in\P $, let $\sigma(g,\eta)$ be the Iwasawa cocycle, which takes values in $\frak a$. We fix a Riemannian distance on $\P$. We can similarly define the space of $\gamma$-H\"older functions $C^\gamma(\P)$. Let $\realpart, \vartheta$ be in $\frak a^*$. For a continuous function $f$ on $\P$ and $|\realpart|$ small enough, the transfer operator $P_{\realpart+i\vartheta}$ on the flag variety is defined by
\[P_{\realpart+i\vartheta}f(\eta)=\int_G e^{(\realpart+i\vartheta)\sigma(g,\eta)}f(g\eta)\dd\mu(g). \]
We will prove that
\begin{thm}[Spectral gap]\label{thm:spegap}
	Let $\bf G$ be a connected semisimple algebraic group defined and split over $\R$ and let $G=\bf G(\R)$ be its group of real points.
	Let $\mu$ be a Zariski dense Borel probability measure on $\slr$ with finite exponential moment. 
	For every $\gamma>0$ small enough, there exists $\delta>0$ such that for all $\vartheta$, $\realpart$ in $\frak a^*$ with $|\vartheta|>1$ and $|\realpart|$ small enough the spectral radius of $P_{\realpart+i\vartheta}$ acting on $C^{\gamma}(\PP)$ satisfies
	\[\rho(P_{\realpart+i\vartheta})<1-\delta.\]
\end{thm}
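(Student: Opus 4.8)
The plan is to reduce the flag-variety statement to the projective-space statement (Theorem \ref{thm:spegaprep}) applied to a well-chosen family of irreducible representations, namely the fundamental representations $V_\alpha$ of $\bf G$. The Iwasawa cocycle $\sigma(g,\eta)\in\frak a$ is built out of the scalar cocycles attached to the highest-weight lines in the $V_\alpha$: writing $\eta=(x_\alpha)_\alpha$ with $x_\alpha\in\bp V_\alpha$ the image of the flag $\eta$ under the natural $G$-equivariant map $\P\to\prod_\alpha \bp V_\alpha$, one has, for each fundamental weight $\omega_\alpha$, the identity $\omega_\alpha(\sigma(g,\eta))=\sigma_{V_\alpha}(g,x_\alpha)$, where the right-hand side is the cocycle \eqref{coc} for the representation $V_\alpha$ and its good norm. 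Since the fundamental weights form a basis of $\frak a^*$, for a given $\vartheta\in\frak a^*$ we can expand $i\vartheta=\sum_\alpha i b_\alpha\,\omega_\alpha$, and likewise $\realpart=\sum_\alpha a_\alpha\omega_\alpha$, so that
\[
e^{(\realpart+i\vartheta)\sigma(g,\eta)}=\prod_\alpha e^{(a_\alpha+ib_\alpha)\sigma_{V_\alpha}(g,x_\alpha)}.
\]
Because $|\vartheta|>1$, at least one coordinate $b_\alpha$ is bounded below by a constant $c>0$ depending only on the basis; fix such an $\alpha_0$.

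The first step is to set up a comparison of norms on functions: the embedding $\P\hookrightarrow \prod_\alpha\bp V_\alpha$ is a bi-Lipschitz embedding onto its image for the chosen Riemannian metrics, so pulling back a $\gamma$-Hölder function on $\bp V_{\alpha_0}$ gives a $\gamma'$-Hölder function on $\P$ with controlled norm, and conversely the projection $\P\to\bp V_{\alpha_0}$ is Lipschitz. The second step is to observe that the operator $P_{\realpart+i\vartheta}$ on $\P$ does not literally factor through $\bp V_{\alpha_0}$ — the factors $e^{(a_\alpha+ib_\alpha)\sigma_{V_\alpha}}$ for $\alpha\ne\alpha_0$ and the action on the other coordinates genuinely depend on the full flag. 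The right move is therefore a twisting/conjugation argument: decompose $P_{\realpart+i\vartheta}$ as the $\bp V_{\alpha_0}$-transfer operator "with extra weight", i.e. treat $\prod_{\alpha\ne\alpha_0}e^{(a_\alpha+ib_\alpha)\sigma_{V_\alpha}(g,x_\alpha)}f(g\eta)$ as the function on which the one-variable operator $P^{(\alpha_0)}_{a_{\alpha_0}+ib_{\alpha_0}}$ acts, and track how Hölder norms of this function grow under iteration. Iterating $n$ times, $P_{\realpart+i\vartheta}^n f$ is a sum (integral over $g\in\Gamma_\mu^n$) of terms $e^{(a_{\alpha_0}+ib_{\alpha_0})\sigma_{V_{\alpha_0}}(g,x_{\alpha_0})}\times(\text{bounded Hölder factor})$, and the contraction comes entirely from the $\alpha_0$-factor via Theorem \ref{thm:spegaprep}, provided $b_{\alpha_0}$ plays the role of "$b$" there — which it does once $|b_{\alpha_0}|>c$, shrinking $c$ if necessary so that the hypothesis $|b|>1$ of Theorem \ref{thm:spegaprep} is met after rescaling, or more cleanly by first noting Theorem \ref{thm:spegaprep} holds with $|b|>c$ for any fixed $c>0$ (only the constants $\delta,|a|$ degrade). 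The contributions of the other fundamental representations are controlled because $|a_\alpha|$ is small (as $|\realpart|$ is small) and $\sigma_{V_\alpha}(g,\cdot)$ has subexponential growth by the finite-exponential-moment and large-deviation estimates, so they contribute a factor $e^{o(n)}$ and do not destroy the gap $1-\delta$.

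The main obstacle is making the twisting argument rigorous at the level of the H\"older-norm bookkeeping: one must show that the "bounded Hölder factor" $\prod_{\alpha\ne\alpha_0}e^{(a_\alpha+ib_\alpha)\sigma_{V_\alpha}(g,x_\alpha)}f(g\eta)$, viewed as a function on $\bp V_{\alpha_0}$ through the section of $\P\to\bp V_{\alpha_0}$ over the relevant limit set, genuinely lies in $C^{\gamma'}(\bp V_{\alpha_0})$ with norm growing at most subexponentially in $n$ — the subtlety being that a point of $\bp V_{\alpha_0}$ does not determine the flag, so one works on the Furstenberg boundary and uses the Hölder regularity of the stable-holonomy/limit map, together with the contraction of the $G$-action to show the ambiguity is exponentially suppressed. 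Once this is in place, the oscillatory cancellation is imported wholesale from Theorem \ref{thm:spegaprep} and the proof concludes by choosing $\delta$ and the bound on $|\realpart|$ uniformly over $\vartheta$ with $|\vartheta|>1$.
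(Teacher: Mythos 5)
There is a genuine gap, and it sits exactly where you located the ``main obstacle''. Your plan extracts all the cancellation from the single coordinate $\alpha_0$ where $|b_{\alpha_0}|$ is largest and treats $\prod_{\alpha\ne\alpha_0}e^{(a_\alpha+ib_\alpha)\sigma_{V_\alpha}(g,x_\alpha)}\,f(g\eta)$ as a ``bounded H\"older factor''. But the only thing you control about those factors is their modulus (via $|a_\alpha|$ small and moment bounds); their \emph{imaginary} frequencies $b_\alpha$ can be comparable to $|\vartheta|$ itself, so the H\"older/Lipschitz constant of this factor is of the same order as the frequency $|b_{\alpha_0}|$ you are trying to beat, not $e^{o(n)}$. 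Worse, the factor depends on the whole word $g$ and on the full flag $\eta$, so $P^n_{\realpart+i\vartheta}$ is not $(P^{(\alpha_0)}_{a_{\alpha_0}+ib_{\alpha_0}})^n$ applied to any fixed function on $\bp V_{\alpha_0}$; to make that literal you would need a twisted (fibered) transfer operator over $\bp V_{\alpha_0}$, whose spectral gap is not what Theorem \ref{thm:spegaprep} gives. The proposed fix via a ``section of $\P\to\bp V_{\alpha_0}$ over the limit set'' also fails: on the support of $\nu$ the flag is in general \emph{not} determined by its image in $\bp V_{\alpha_0}$ (the conditional measures on the fibers are not Dirac), so there is no H\"older, equivariant section making the bookkeeping possible. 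At bottom, a uniform gap in $\vartheta\in\frak a^*$ must exclude resonances between the different components $\omega_\alpha\sigma(g,\cdot)$ of the Iwasawa cocycle (e.g.\ $\vartheta$ pointing in a direction where these components are nearly arithmetically dependent); this is a genuinely $\rank$-dimensional phenomenon, encoded in the non-concentration of $(e^{-\alpha\sigma})_{\alpha\in\Pi}$ away from \emph{all} affine hyperplanes of $\R^\rank$ (Section \ref{sec:noncon}), and it cannot be seen through any single fundamental representation.

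For comparison, the paper's route is the reverse of yours: both Theorem \ref{thm:spegaprep} and Theorem \ref{thm:spegap} are deduced from the uniform statement Theorem \ref{prop:spegap}, proved directly on the flag variety by a Dolgopyat-type reduction (Propositions \ref{prop:spectral real} and \ref{prop:L1pbf}) to an $L^2(\nu)$ decay for $P^{[\Cone\ln|\vartheta|]}_{\realpart+i\vartheta}$, which is then obtained from the Fourier decay Theorem \ref{thm:foudecsemi} applied to $\varphi=|\vartheta|^{-1}\vartheta(\sigma(g,\cdot)-\sigma(h,\cdot))$ with a cutoff $r$ built from Lemma \ref{lem:-derivative}; the projective-space case is the one obtained as a corollary, by lifting $f$ to $\tilde f(\eta)=f(V_{\chi,\eta})$ and using $\sigma_V(g,V_{\chi,\eta})=\chi\sigma(g,\eta)$ together with Lemma \ref{lem:profla}. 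If you want to salvage your strategy you would have to prove a fibered/twisted spectral gap over $\bp V_{\alpha_0}$ with bounds uniform in the transverse frequencies, and at that point you are re-proving the multidimensional Fourier decay rather than quoting Theorem \ref{thm:spegaprep}.
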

This is a higher dimensional generalisation of Theorem \ref{thm:spegaprep}, because the parameter $\vartheta$ is in $\frak a^*$, whose dimension is equal to the real rank of $G$. It seems possible that this result implies a version of local limit theorem with exponentially shrinking targets in the Cartan subspace $\frak a$, similar to the work of Petkov and Stoyanov \cite{PS12}, where they used the spectral gap of transfer operator to obtain an asymptotic counting of lengths of closed geodesics lying in an exponentially shrinking targets.
This will be done in a joint work in progress with Cagri Sert.

\subsection*{Fourier decay}

The key ingredient in the proof of Theorem \ref{thm:spegaprep} and \ref{thm:spegap} is the following Fourier decay property of the $\mu$-stationary measure on the flag variety $\P$. 

On the space $X=\P$, a Borel probability measure $\nu$ is $\mu$-stationary if we have $$\nu=\mu*\nu:=\int_G g_*\nu\dd\mu(g),$$ where $g_*\nu$ is the pushforward of $\nu$ by the action of $g$ on $X$. By a theorem of Furstenberg, under Zariski dense condition there is a unique $\mu$-stationary probability measure $\nu$ on $X$ and it is also called the Furstenberg measure. This measure was introduced by Furstenberg when he established the law of large numbers for products of random matrices. The properties of the $\mu$-stationary measure are also important in other limit theorems for products of random matrices.

For a $\gamma$-H\"older function $f$ on $X$, we define $c_\gamma(f)=\sup_{x\neq x'}\frac{|f(x)-f(x')|}{d(x,x')^\gamma}$. We start with the case $G=\sltwo$. 
\begin{thm}\label{thm:foutwo}
	Let $\mu$ be a Zariski dense Borel probability measure on $\sltwo$ with a finite exponential moment. Let $X=\bb P(\bb R^2)$ and let $\nu$ be the $\mu$-stationary measure on $X$.
	
	For every $\gamma>0$, there exist $\epsilonzer > 0, \epsilonone > 0$ depending on $\mu$ such that the following holds. For $\xi>0$ large enough and any pair of real functions $\varphi\in C^2(X)$, $r\in C^\gamma(X)$ such that $|\varphi'|\geq \xi^{-\epsilonzer }$ on the support of $r$, $\|r\|_\infty\leq 1$ and 
	\begin{equation*}
	\|\varphi\|_{C^2}+c_\gamma(r) \leq \xi^{\epsilonzer },
	\end{equation*}
	then 
	\begin{equation*}
	\left|\int e^{i\xi \varphi(x)}r(x)\dd\nu(x)\right|\leq \xi^{-\epsilonone }.
	\end{equation*}
\end{thm}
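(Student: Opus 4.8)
The plan is to follow the strategy of Bourgain and Dyatlov for the Fourier decay of Gibbs measures, transported to the random walk setting, with the stationarity relation $\nu=\mu*\nu$ playing the role of the transfer operator / symbolic coding.

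\emph{Step 1: linearisation via iterated stationarity.} Fix $n=\lfloor c_0\log\xi\rfloor$ with $c_0>0$ small, to be pinned down later. Iterating $\nu=\mu*\nu$ gives $\nu=\int_G g_*\nu\,\dd\mucon{n}(g)$, hence
\[
\int e^{i\xi\varphi(x)}r(x)\,\dd\nu(x)=\int_G\int_X e^{i\xi\varphi(gy)}r(gy)\,\dd\nu(y)\,\dd\mucon{n}(g).
\]
By positivity of the Lyapunov exponent and the large deviation principle, for all $g$ outside a set of $\mucon{n}$-measure $O(e^{-c n})$, the map $g$ contracts $X$ minus an $e^{-cn}$-neighbourhood $N_g$ of its repelling point $x_g^-$ into the ball $B(x_g^+,e^{-cn})$ around its attracting point $x_g^+$; and $\nu(N_g)=O(e^{-cn})$ by Guivarc'h's regularity of $\nu$. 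On that ball, the $C^2$-control (with $\|\varphi\|_{C^2}\le\xi^{\epsilonzer}$ and $e^{-cn}$ chosen $\ll\xi^{-(1+\epsilonzer)/2}$) lets us replace $\varphi(gy)$ by its first-order Taylor polynomial at $x_g^+$ with error $\ll\xi^{-1}$, and $r(gy)$ by $\tilde r_g(y)$, still $\gamma$-Hölder with controlled constant since $g$ is a contraction on the ball. So, up to acceptable losses, the inner integral becomes $e^{i\xi\varphi(x_g^+)}\int_X e^{i\eta_g h_g(y)}\tilde r_g(y)\,\dd\nu(y)$, where $\eta_g=\xi\varphi'(x_g^+)$ and $h_g$ is the explicit Möbius linearising coordinate of $g$ near $x_g^+$, of size $e^{-cn}$ and depending on the pair $(x_g^+,x_g^-)$.

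\emph{Step 2: reduction to a non-degenerate bilinear oscillatory integral.} The pushforward of $\mucon{n}$ by $g\mapsto x_g^+$ is, up to the exponentially small error above, the measure $\nu$, and $(x_g^+,x_g^-)$ is distributed according to the $\mu$-stationary measure on $X\times X\setminus\Delta$, which by Zariski density is non-degenerate and inherits a Frostman exponent in each variable. The quantity to bound thus takes the form
\[
\left|\int_X\int_X e^{i\Phi(x,y)}R(x,y)\,\dd\nu(x)\,\dd\nu(y)\right|,\qquad \Phi(x,y)=\xi\varphi(x)+\xi\varphi'(x)\,k(x,y),
\]
where $k$ is smooth with $\partial_x\partial_y k$ bounded below — this is the genuine nonlinearity of the Möbius action — so that $\partial_x\partial_y\Phi$ is of size $\gtrsim\xi^{1-O(\epsilonzer)}e^{-cn}$, still a positive power of $\xi$ by the choice of $c_0$. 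Decay of such a bilinear form against a measure with a Frostman exponent $\alpha>0$ is exactly the Fourier-dimension mechanism: discretise at scale $\xi^{-1}$, apply Cauchy–Schwarz in $y$, and reduce to counting near-resonances $|\partial_y\Phi(x,y)-\partial_y\Phi(x',y)|\lesssim\xi^{1/2}$, which is controlled by a discretised sum–product / projection theorem of Bourgain once the non-concentration of $\nu$ at all scales between $\xi^{-1}$ and $1$ is available.

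\emph{Main obstacle.} The crux is the quantitative non-concentration feeding the sum–product step: one needs an effective Frostman bound $\nu(B(x,\rho))=O(\rho^\alpha)$, and, more delicately, that the joint law of $(x_g^+,x_g^-)$ together with the derivative cocycle $\log\partial_g$ does not concentrate near lower-dimensional or arithmetic sets at the relevant scales. This is where Zariski density and non-arithmeticity of $\Gamma_\mu$ must be made effective — presumably via a ping-pong construction of enough "independent" loxodromic elements in the semigroup generated by $\supp\mu$, whose fixed points and multipliers are in general position. A second, more bookkeeping but genuinely constraining point is to choose $c_0$, the contraction scale $e^{-cn}$, and the discretisation scale so that the quadratic Taylor error stays $\ll\xi^{-1}$ while the surviving frequency in $\Phi$ remains a positive power of $\xi$; tracking these inequalities is what fixes the admissible $\epsilonzer$ and the final gain $\epsilonone$.
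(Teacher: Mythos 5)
Your Step 2 is where the argument breaks. After your single iteration and Taylor expansion you arrive at a \emph{bilinear} oscillatory integral $\int\!\!\int e^{i\Phi(x,y)}R(x,y)\,\dd\nu(x)\dd\nu(y)$ and assert that decay follows from a Frostman exponent plus one Cauchy--Schwarz and a near-resonance count fed into Bourgain's discretized sum--product theorem. This does not close: Cauchy--Schwarz in $y$ produces $\int\!\!\int |\widehat{\nu}\,(\,\xi\,\partial\text{-type frequency}\cdot(x-x')\,)|\,\dd\nu(x)\dd\nu(x')$ (or, in your variables, an integral of $e^{i(\partial_y\Phi(x,y)-\partial_y\Phi(x',y))}$ against $\dd\nu(y)$), i.e.\ exactly the Fourier transform of $\nu$ at $\nu$-distributed frequencies --- the quantity being proven. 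A positive Frostman exponent alone does not yield any power decay of such a bilinear form (Cantor-type measures have positive dimension and no Fourier decay), and this is precisely why Bourgain--Dyatlov, and this paper, cannot stop at a bilinear reduction: the sum--product input is inherently $k$-linear, a decay estimate for $\int e^{i\langle\varsigma,x_1\cdots x_k\rangle}\dd\lambda_1\cdots\dd\lambda_k$ with $k$ large depending on the non-concentration exponent (Proposition \ref{prop:sum-product}). To produce that structure one must iterate the stationarity $\sim(2k+1)n$ times and decompose $\mucon{(2k+1)n}$ into alternating blocks $g_0h_1g_1\cdots h_kg_k$, so that after Cauchy--Schwarz the phase difference $\varphi(\bf g\!\leftrightarrow\!\bf h\,\eta)-\varphi(\bf g\!\leftrightarrow\!\bf h\,\eta')$ becomes, up to admissible error, $\langle\varsigma, x_1\cdots x_k\rangle$ with the $x_j$ independent and governed by the cocycle measures $\lambda_j$ (this is the content of Lemma \ref{lem:mainapprox} and Proposition \ref{prop:mainapprox}); your one-step linearisation with $n\asymp\log\xi$ cannot reach this.

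The second gap you acknowledge yourself: the non-concentration at \emph{all} scales of the relevant data (in rank one, of the norm cocycle $\sigma(h,\eta)-n\sigma_\mu$, not merely a Frostman bound for $\nu$) is the actual main input to the sum--product step, and "presumably via a ping-pong construction" does not supply it. In the paper this is done without ping-pong: one-scale non-concentration comes from large deviations plus Guivarc'h regularity (Section \ref{sec:noncon}), and the all-scale statement needed by Proposition \ref{prop:sum-product} is obtained by splitting $h=h_1h_2$ and using the cocycle relation to change scales (Proposition \ref{prop:appmea}). Note also that in the paper Theorem \ref{thm:foutwo} is not proved directly: it is the special case $G=\sltwo$ of Theorem \ref{thm:foudec}, the $(C,r)$-goodness reducing to your hypothesis on $|\varphi'|$; a direct $\rm{SL}_2$ proof along your lines would still have to reproduce the multilinear block decomposition and the all-scale cocycle non-concentration.
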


\begin{rem}
	As a corollary of Theorem \ref{thm:foutwo}, we obtain a polynomial decay of the Fourier coefficients of $\nu$, that is
	\[ |\hat{ \nu}(k)|=O(|k|^{-\epsilonone}).\]  
	This generalizes the recent work of Bourgain-Dyatlov \cite{bourgain2017fourier} on Patterson-Sullivan measures and a work of myself in \cite{li2017fourier}, where a qualitative version is proved. For more background on the decay of Fourier coefficients see \cite{li2017fourier}.
\end{rem}

Theorem \ref{thm:foutwo} is a particular case of a more general result: Theorem \ref{thm:foudec} below.
In order to state the Fourier decay on the flag variety, we need to introduce a special condition. Let $r$ be a continuous function on $\P$ and let $C>1$. For a $C^2$ function $\varphi$ on $\P$, we say that $\varphi$ is $(C,r)$ good if it satisfies some assumptions on the Lipschitz norm and derivatives, which will be defined later (Definition \ref{defi:C r good}). 
Due to some technical problem, we will only prove a simply connected case in Section \ref{sec:proof}. (For example the group $\bf{SL}_{\rank+1}$ is simply connected but $\bf{PGL}_{\rank+1}$ is not.) The general case will be proved in Appendix \ref{sec:semisimple} by a covering argument.
\begin{thm}[Fourier decay]\label{thm:foudec}
		Let $\bf G$ be a connected $\R$-split reductive $\R$-group whose semisimple part is simply connected and let $G=\bf G(\R)$ be its group of real points.
		Let $\mu$ be Zariski dense Borel probability measure on $\slr$ with finite exponential moment. Let $\nu$ be the $\mu$-stationary measure on the flag variety $\P$.
	
	For every $\gamma>0$, there exist $\epsilonzer > 0,\epsilonone >0$ depending on $\mu$ such that the following holds. For  $\xi>0$ large enough and any pair of real functions $\varphi\in C^2(\PP)$, $r\in C^\gamma(\PP)$ such that $\varphi$ is $(\xi^{\epsilonzer },r)$ good, $\|r\|_\infty\leq 1$ and $c_\gamma(r) \leq \xi^{\epsilonzer }$,
	then 
	\begin{equation}\label{equ:fourier decay}
	\left|\int_\P e^{i\xi \varphi(\eta)}r(\eta)\dd\nu(\eta)\right|\leq \xi^{-\epsilonone }.
	\end{equation}
\end{thm}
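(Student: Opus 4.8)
The plan is to follow the strategy pioneered by Bourgain--Dyatlov for Patterson--Sullivan measures and extended to stationary measures, adapting it to the higher-rank flag variety. The heart of the matter is a \emph{sum-product phenomenon}: the oscillatory integral $\int_\P e^{i\xi\varphi(\eta)}r(\eta)\,\dd\nu(\eta)$ can only fail to decay if the measure $\nu$, at scale $\xi^{-1}$, looks approximately like an additive subgroup (an approximate arithmetic progression) in suitable local coordinates on $\P$. The Zariski density of $\mu$, together with the regularity (dimension/Frostman-type lower bound) of $\nu$ coming from Guivarc'h's work and the exponential moment, rules this out. So the proof decomposes into: (1) a discretized sum-product / non-concentration estimate on $\R$ (or on $\frak a$-coordinates), of the Bourgain--Glibichuk--Konyagin flavor; (2) a multiscale decomposition of $\nu$ using the stationarity $\nu=\mu*\nu$ and the contraction properties of the $G$-action on $\P$, to express $\nu$ as an average of measures that are roughly self-similar across scales; and (3) a large-deviation / renewal input controlling the distribution of the Cartan projection $\sigma(g,\eta)$ at time $n$, so that iterating $\mu^{*n}$ moves us down a definite number of scales per step.

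Concretely, I would first set up the ``stopping time'' decomposition: fix a small scale $\xi^{-1}$ and, for a typical word $g=X_1\cdots X_n$, truncate $n$ at the first time the contraction $e^{-\sigma_1(g,\eta)+\sigma_2(g,\eta)}$ (gap between top two exponents along the relevant root direction) reaches some intermediate scale $\rho \gg \xi^{-1}$. Using $\nu=\mu^{*n}*\nu$ and pushing forward, one writes $\nu$ as $\int \bigl(g_{\omega}\bigr)_*\nu\,\dd\mathbb P(\omega)$ where each $g_\omega$ contracts a neighborhood on $\P$ onto a ball of radius $\approx\rho$ around an attracting flag $\eta_\omega^+$, and the pushed measure is, in linearized coordinates, a rescaled copy of $\nu$ (up to a controlled $C^{1+\gamma}$ distortion — this is exactly where the $(C,r)$-good hypothesis on $\varphi$ and the $C^\gamma$ control on $r$ are needed, since the distortion of $\varphi$ under $g_\omega$ must stay within the allowed class). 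Then I would iterate this at a geometric sequence of scales $\rho_0=\xi^{-\epsilon_0} > \rho_1 > \cdots$, and at each stage invoke the regularity of $\nu$ to guarantee the relevant coordinate projection of the conditional measure is not concentrated on a small additive-approximate-group, which feeds the sum-product estimate to gain a power $\xi^{-\epsilon_1}$.

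For step (1), the clean statement needed is a discretized $L^2$ sum-product / decay estimate for oscillatory integrals against measures satisfying a two-sided Frostman condition at scale $\delta$ with a non-concentration hypothesis modulo approximate subgroups; this is essentially in Bourgain's work and its quantitative refinements, but in higher rank one must run it coordinatewise in $\frak a^*$ and combine over the (finitely many) simple roots, since $\vartheta\in\frak a^*$ may be large in several root directions at once. The transition from $\bp V$ to the full flag variety $\P$ is handled by choosing, for a given large $\vartheta$, a simple root $\alpha$ in which $\langle\vartheta,\cdot\rangle$ is large (comparable to $|\vartheta|$), and working in the partial flag variety / Schubert cell associated to $\alpha$, where the Iwasawa cocycle in that coordinate behaves like the $\rm{SL}_2$ cocycle; the simply-connected hypothesis ensures the relevant fundamental representations exist with the right highest weights, so the needed linearizations are algebraic.

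The main obstacle, and where I expect the real work to be, is step (2): controlling the nonlinear distortion of $\varphi$ and $r$ under the random contractions $g_\omega$ over \emph{many} scales, uniformly, while keeping $\varphi$ in the $(C,r)$-good class — in rank one (Theorem~\ref{thm:foutwo}) this is the requirement $|\varphi'|\geq\xi^{-\epsilon_0}$ surviving pullback, and in higher rank it is the more delicate condition in Definition~\ref{defi:C r good} that must be preserved under the group action. This requires sharp $C^{1+\gamma}$ estimates on the derivative cocycle of the $G$-action on $\P$ (a Hölder continuity of the linear part, proven via the exponential moment and a martingale/Furstenberg-type argument), plus a careful bookkeeping of how the lower bound on the phase derivative degrades: one must show it degrades only polynomially in $\xi$, so that after $O(\log\xi)$ scales there is still enough oscillation to exploit. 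Coupling this geometric control with the probabilistic large-deviation bound on $\sigma(g,\eta)$ (so that the number of scales descended per convolution step is tightly concentrated) is the technical core; everything else is then an assembly of the sum-product input over the resulting multiscale tree, summing the geometric gains to $\xi^{-\epsilon_1}$.
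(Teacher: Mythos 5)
Your overall plan (Bourgain--Dyatlov via sum-product) is the right family of ideas, but as written it has two genuine gaps, and the second is fatal to the scheme. First, the reduction of the higher-rank case to a single simple root --- ``choosing a simple root $\alpha$ in which the frequency is large and working in the associated partial flag variety, where the cocycle behaves like the $\sltwo$ cocycle'', or running the sum-product ``coordinatewise and combining over the simple roots'' --- does not work, and in fact conflates this theorem with its spectral-gap corollary: here there is no $\vartheta\in\frak a^*$, the phase is a general $(\xi^{\epsilonzer},r)$-good function and by G4 its derivative bounds $v_\alpha$ may be comparable in several root directions simultaneously. After applying a typical word, the contributions of the different $\alpha$-directions to $\varphi(g\eta)-\varphi(g\eta')$ are all of comparable size $\xi v_\alpha e^{-(2k+1)\alpha\kappa}$ and are coupled by the geometry of $\P$; the paper has to walk from $\eta$ to $\eta'$ through $\alpha$-circles (Lemma \ref{lem:chapoi}, together with the sign group $M$ on $\P_0$) and assemble a frequency \emph{vector} $\varsigma\in\R^{\rank}$ pointing in an arbitrary direction. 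Consequently the sum-product input must be the $\R^{\rank}$-valued estimate (Proposition \ref{prop:sum-product}), whose hypothesis is non-concentration of the law of the cocycle vector $\big(e^{-\alpha(\sigma(gh,\eta)-\kappa(g)-n\sigma_\mu)}\big)_{\alpha\in\Pi}$ near \emph{every} affine hyperplane of $\R^{\rank}$, at all scales in a range. Coordinatewise non-concentration does not give this (the law could concentrate near $\{x_1=x_2\}$), and establishing it is the content of the whole of Section \ref{sec:noncon} (the PNC/WNC/SNC induction via linearization in representations and Guivarc'h regularity), for which your outline offers no substitute.

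Second, your multiscale ``stopping-time'' decomposition of $\nu$ into approximately self-similar pieces never produces the object to which the sum-product theorem applies. The decay mechanism is a \emph{multiplicative convolution}: one writes $\nu=\mucon{(2k+1)n}*\nu$ with a single $n\asymp\log\xi$, splits the word into alternating blocks $g_0h_1g_1\cdots h_kg_k$, fixes the $g_j$ on a large-deviation event, and shows via Newton--Leibniz along $\alpha$-circles that $\xi(\varphi(\bfgh\eta)-\varphi(\bfgh\eta'))=\l\varsigma,x_1\cdots x_k\r+O(\text{small})$, where the $x_l$ are \emph{independent} cocycle vectors with laws $\lambda_l$ and $\|\varsigma\|\in[\fren^{3/4},\fren^{5/4}]$; Proposition \ref{prop:sum-product} is then applied to $\lambda_1,\dots,\lambda_k$, whose non-concentration at all required scales is obtained from the one-scale PNC by the cocycle property (Proposition \ref{prop:appmea}). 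In your scheme the sum-product input is supposed to come from ``regularity of $\nu$ plus Zariski density ruling out additive approximate-group structure of $\nu$ at each scale'', but Frostman-type regularity of $\nu$ is not the hypothesis the discretized sum-product estimate needs, nothing in the iteration over $O(\log\xi)$ scales creates the $k$-fold product structure $x_1\cdots x_k$, and the genuinely hard verification (non-arithmeticity of the cocycle distribution quantified as hyperplane non-concentration) is left unaddressed. Relatedly, the distortion bookkeeping you single out as the technical core is in fact handled rather directly by large deviations and a ping-pong iteration (the analogue of Lemma \ref{lem:mainapprox}); the real higher-rank work lies in the $\R^{\rank}$ non-concentration and the changing-flags/sign analysis that your proposal bypasses.
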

\begin{rem}
	The decay rate only depends on the constants in the large deviation principles and the regularity of stationary measures. This should be compared with \cite{bourgain2017fourier}, where the spectral gap and the decay rate only depend on the dimension of the Patterson-Sullivan measure.
	
	A similar Fourier decay for the Lie group $\rm{SL}_2(\bb C)$ is established in \cite{lnpsl2c} for Patterson-Sullivan measures, which cannot be treated by our method due to the non splitness of $\rm{SL}_2(\bb C)$. It would also be interesting to establish a similar Fourier decay for the group $\rm{SL}_2(\bb Q_p)$ and the stationary measure on $\bp^1_{\bb Q_p}$.
\end{rem}
When $G=\rm{SL}_2(\R)$, the $(C,r)$ goodness is exactly the assumption of $\varphi$ in Theorem \ref{thm:foutwo}, which is natural for having a Fourier decay. Theorem \ref{thm:foudec} clearly implies Theorem \ref{thm:foutwo}.
%


It is interesting that the three objects, the polynomial rate in Fourier decay, the exponential error term in the renewal theorem and the spectral gap are roughly equivalent. In \cite{li2017fourier}, we use the renewal theorem to prove the Fourier decay. But in this manuscript, we use the polynomial rate in Fourier decay to prove the spectral gap, and then use the spectral gap to prove the exponential error term in the renewal theorem. In a highly related setting, convex cocompact surfaces, we can compare our three objects with more geometric objects. The Fourier decay was recently studied by Bourgain-Dyatlov; the spectral gap can be interpreted as the zero free region of the Selberg zeta function or the gap of the eigenvalues of the Laplace operator on the surface; the renewal theorem is replaced by the counting problem of the lattice points or the primitive closed geodesics. For the relation between these three objects of convex cocompact surfaces, see Borthwick \cite{borthwick2007spectral} and the references there.

\subsection*{Paper organization}

In Section \ref{sec:lie groups}, we will study the action of the group on the tangent bundle of the flag variety. In our higher rank case, the action is not conformal and we need to understand the contraction or the dilatation rate in different directions, which is a difficulty compared with rank one case. We will also recall basic properties of random walks on Lie groups.

Section \ref{sec:noncon} is devoted to the study of non-concentration condition, which is the main input for the use of discretized sum-product estimates (Proposition \ref{prop:sum-product}, a generalized version of a result of Bourgain \cite{bourgain2010discretized}). We need to verify certain measures on $\R^\rank$ are not concentrated on any affine subspaces of $\R^\rank$. The key idea is to use linear algebra to transfer the problem to an estimate of volume, which gives non-concentration for all affine subspaces simultaneously (Corollary \ref{cor:3.6}). Then we apply representation theory and the Guivarc'h regularity to establish the non-concentration condition.

In Section \ref{sec:proof}, our main results are proved.
The proof of Theorem \ref{thm:foudec} follows the similar strategy as in \cite{bourgain2017fourier}, but we are in higher rank case and we need to use almost all the preparations in previous sections. Then we use some idea of Dolgopyat to derive Theorem \ref{thm:spegaprep} and \ref{thm:spegap} from Theorem \ref{thm:foudec}. Finally, Theorem \ref{thm:renewal} will be obtained by using Theorem \ref{thm:spegaprep} rather straight forward.

In Appendix, we explain how to obtain the Fourier decay for semisimple groups from Theorem \ref{thm:foudec}, which only holds for groups with simply connected semisimple part.


\subsection*{Notation}
We will make use of some classical notation: for two real functions $f$ and $g$, we write $f=O(g), f\ll g$ or $g\gg f$ if there exists a constant $C>0$ such that $|f|\leq Cg$, where $C$ only depends on the ambient group $G$ and the measure $\mu$. We write $f\asymp g$ if $f\ll g$ and $g\ll f$.  We write $f=O_\epsilon(g), f\ll_\epsilon g$ or $g\gg_\epsilon f$ if the constant $C$ depends on an extra parameter $\epsilon>0$.

We always use $0<\delta<1$ to denote an error term and $0<\beta<1$ to denote the magnitude. We will use $\delta^{C_0}$ with $C_0$ only depends the group $G$ and measure $\mu$, this constant $C_0$ may change from line to line, while still being denoted the same.
If $\delta^{C_0}f\leq g\leq \delta^{-C_0}f$, then we say that $f$ and $g$ are of the same size. 

\subsection*{Acknowledgement}
This is part of the author’s Ph.D. thesis, written under the supervision
of Jean-François Quint at the University of Bordeaux. The author gratefully acknowledges the many helpful suggestions and stimulating conversations of Jean-François Quint during the preparation of the article.

The author would also like to thank the referee for many helpful comments and corrections which greatly improve the readability of this article.

\section{Random walks on Reductive groups}
\label{sec:lie groups}
The representation theory of algebraic groups is more clear than the representation theory of Lie groups. We will use the vocabulary of algebraic groups. In this manuscript, we always assume that $\bf G$ is a connected $\R$-split reductive $\R$-group. We are interested in two particular cases:
\begin{itemize}
	\item the semisimple part is simply connected in algebraic group sense,
	\item the group $\bf G$ is actually semisimple.
\end{itemize}
If we need these further assumptions, we will say it at the beginning of the section or in the statement.
Please see \cite{helgason1979differential}, \cite{borel1990linear} and \cite{benoistquint} for more details. 

We write $\bf G$ for an algebraic group, and $G=\bf G(\R)$ for its group of real points, equipped with the Lie group topology (analytical topology). All the representations are nontrivial finite-dimensional real algebraic and with a good norm.
\subsection{Reductive groups and representations}
\subsubsection*{Reductive groups}
Let $\bf G$ be a connected $\R$-split reductive $\R$-group. Let $\bf A$ be a maximal $\R$-split torus in $\bf G$. Because $\bf G$ is $\R$-split, the group $\bf A$ is also the maximal torus of $\bf G$ and the centralizer of $\bf A$ in $\bf G$ is $\bf A$. Let $\bf C$ be the connected component of the centre of $\bf G$, which is contained in the maximal torus $\bf A$. The semisimple part of $\bf G$ is the derived group $\scr D\bf G=[\bf G,\bf G]$. \nomentry{$\scr D\bf G$}{}
Let $\bf B$ be the subtorus of $\bf A$ given by $\bf A\cap \scr D\bf G$. The dimension of $\bf A$ and $\bf B$ are called the reductive rank and the semisimple rank of $\bf G$, respectively. We write $r$ and $\rank$ for the reductive rank and the semisimple rank.

Because we are dealing with real groups, we will use transcendental methods to describe the structure of $\bf G$. Let $G,A,B$ and $C$ be the group of real points of $\bf G,\bf A,\bf B$ and $\bf C$. Let $\theta$ be a Cartan involution of $G$ which satisfies $\theta(A)=A$ and such that the set of fixed points $K=\{g\in G |\ \theta(g)=g \}$ is a maximal compact subgroup of $G$. \nomentry{$\theta$}{}
 Let $\frak g, \frak k$, $\frak a,\frak b$ and $\frak c$ be the Lie algebra of $G, K, A, B$ and $C$, respectively. Then $\frak a=\frak b\oplus\frak c$ due to $\frak g=\scr D\frak g\oplus\frak c$. We write $\exp$ for the exponential map from $\frak a$ to $A$. 
We also write $\theta$ for the differential of the Cartan involution, whose set of fixed points is $\frak k$ and which equals $-id$ on $\frak a$.

For $X,Y$ in $\frak g$, the Killing form is defined as
\[K(X,Y)=tr(\rm{ad}X \rm{ad}Y).  \] 
The Killing form is positive definite on $\frak b$ and negative definite on $\frak k$. Endowed with the Killing form, the Lie algebra $\frak b$ and its dual $\frak b^*$ become Euclidean spaces.

\subsubsection*{Root systems and the Weyl group}
The spaces $\frak b^*$ and $\frak c^*$ are seen as subspaces of $\frak a^*$, which takes value zero on $\frak c$ and $\frak b$, respectively.
Let $R$ be the root system of $\frak g$ with respect to $\frak a$, that is the set of nontrivial weights of the adjoint action of $\frak a$ on $\frak g$. It is actually a subset of $\frak b^*$. Because $\frak c$ is in the centre of $\frak g$, its adjoint action on $\frak g$ is trivial. Fix a choice of positive roots $R^+$. Let $\Pi$ be the collection of primitive simple roots of $R^+$. Let $\frak a^+$ be the Weyl chamber defined by 
$\{X\in\frak a|\alpha(X)\geq 0,\ \forall \alpha\in\Pi \}$.
Let $\frak a^{++}$ be the interior of the Weyl chamber defined by
 $\{X\in\frak a|\alpha(X)> 0,\ \forall \alpha\in\Pi \}$.
Using the root system, we have a decomposition of $\frak g$ into eigenspaces of $\frak a$,
\[\frak g=\frak z\oplus\bigoplus_{\alpha\in R}\frak g^{\alpha}, \]
where $\frak z$ is the centralizer of $\frak a$ and $\frak g^{\alpha}$ is the eigenspace given by 
\[ \frak g^{\alpha}=\{X\in \frak g|\ [Y,X]=\alpha(Y)X \text{ for all }Y\in \frak a  \}. \]
Since the group $\bf G$ is split, we know that $\frak a=\frak z$ and that $\frak g^\alpha$ are of dimension 1.

Recall that for every root $\alpha$ in $R$, there is an orthogonal symmetry $s_\alpha$ which preserves $R$ and $s_\alpha(\alpha)=-\alpha$. For $\alpha\in R$, let $H_\alpha$ be the unique element in $\frak b$ such that $s_\alpha(\alpha')=\alpha'-\alpha'(H_\alpha)\alpha$ for $\alpha'\in\frak b^*$. The set $\{H_\alpha|\ \alpha\in R \}$ is called the set of dual roots in $\frak b$. Since the Cartan involution $\theta$ equals $-id$ on $\frak a$, this implies $\theta\frak g^\alpha=\frak g^{-\alpha}$ for $\alpha\in R$. Using the Killing form, we can prove that $[\frak g^\alpha,\frak g^{-\alpha}]= \R H_\alpha$ (See \cite[Cha. 4, Theorem 2]{serre2012complex} for more details). Hence, there is a unique choice (up to sign) $X_\alpha\in \frak g^\alpha,\ Y_\alpha\in \frak g^{-\alpha}$ for $\alpha$ positive such that 
\[[X_\alpha,Y_\alpha]=H_\alpha\text{ and }\theta(X_\alpha)=-Y_\alpha. \]
Let $K_\alpha=X_\alpha-Y_\alpha$. Due to $\theta K_\alpha=K_\alpha$, the element $K_\alpha$ is in $\frak k$.

Let $W$ be the Weyl group of $R$. Then the group $W$ acts simply transitively on the set of Weyl chambers. Let $w_0$ be the unique element in $W$ which sends the Weyl chamber $\frak a^+$ to the Weyl chamber $-\frak a^+$. Let $\iota=-w_0$ be the opposition involution. The Weyl group also acts on $\frak a^*$ by the dual action. Let $N_G(A)$ be the normalizer of $A$ in $G$. An element in $N_G(A)/A$ induces an automorphism on the tangent space $\frak a$. This gives an isomorphism from $N_G(A)/A$ to the Weyl group $W$. Hence $w_0$ can be realized as an element in $G/A$ and its action on $\frak a$ is given by conjugation.

\subsubsection*{The Iwasawa cocycle}
Let $\frak n=\oplus_{\alpha\in R^+ }\frak g^\alpha$ and $\frak n^-=\oplus_{\alpha\in R^+}\frak g^{-\alpha} $. They are nilpotent Lie algebras. Let $\bf N$ be the connected algebraic subgroup of $\bf G$ with Lie algebra $\frak n$. The group $\bf N$ is normalized by $\bf A$. Let $\bf P=\bf A\ltimes\bf N$ be a minimal parabolic subgroup. The flag variety $\P$ is defined to be the set of conjugacy classes of $P$ under the action of $G$. Since the normalizer of $P$ in $G$ is itself, we obtain an isomorphism
\[ G/P\rightarrow \P. \]
We write $\eta_o$ for the subgroup $P$ seen as a point in $\P$. Let $M$ be the subgroup of $A$, whose elements have order at most two. Since $A$ is isomorphic to $(\R^*)^r$, we know that $M\simeq (\Z/2\Z)^r$ and $A=M\times A_e$, where $A_e=\exp(\frak a)$ is the analytical connected component of $A$ and $A_e\simeq(\R_{>0})^r$. 

For a real Lie group $L$, let $L^o$ be the analytical connected component of the identity element in $L$.
\begin{lem}\label{lem:km} We have $K=K^o M$.
\end{lem}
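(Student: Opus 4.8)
We need to prove $K = K^o M$, where $K$ is the maximal compact subgroup fixed by the Cartan involution $\theta$, $K^o$ is its identity component, and $M \subset A$ consists of the order $\leq 2$ elements of the maximal torus $A$. Since $K^o M \subseteq K$ is clear ($M \subset A \subset$ ? — actually we must be careful: $M$ need not lie in $K$ a priori, but every element of $M$ has order dividing $2$, and on the torus $A = M \times A_e$ the Cartan involution acts as inversion, so $\theta$ fixes each element of $M$; hence $M \subseteq K$). The content is the reverse inclusion $K \subseteq K^o M$, i.e. that $K/K^o$ is represented by elements of $M$.

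**Proposed approach.**

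The plan is to identify the component group $K/K^o$ via the Weyl group and the structure of $A$. First I would recall that $G = \bf G(\R)$ for a connected $\R$-split reductive group, so by a theorem of Borel–Tits (or by Matsumoto's results on real points of reductive groups), $G$ is generated by $K^o$, $A$, and the root groups; more useful here is the Cartan/Iwasawa decomposition $G = KAK$ together with $G = K A_e N$ (the genuine Iwasawa decomposition, with $A_e$ connected). From $G = K A_e N$ one gets that $G^o \supseteq K^o A_e N$ and in fact $G/G^o \cong K/K^o$ since $A_e$ and $N$ are connected. So it suffices to understand $G/G^o$ and show it is hit by $M$.

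The key step is to show that the finite abelian group $A/A_e \cong M$ surjects onto $K/K^o$ — equivalently, that $G = G^o M$. For this I would argue: $G^o$ is a normal subgroup, and $G/G^o$ is generated by the images of a set of generators of $G$. Using that $\bf G$ is $\R$-split, $G$ is generated by $A = \bf A(\R)$ together with the one-parameter root subgroups $\exp(\R X_\alpha)$, $\alpha \in R$ (this is where $\R$-split and the explicit root vectors $X_\alpha$ from the excerpt enter). Each root subgroup $\exp(\R X_\alpha)$ is connected, hence lies in $G^o$. Therefore $G = G^o A$, and since $A = M \times A_e$ with $A_e \subseteq G^o$, we conclude $G = G^o M$. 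Intersecting the Iwasawa decomposition appropriately (writing $k = g^o m$ with $g^o \in G^o$, $m \in M$, then using $K \cap G^o = K^o$, which holds because $K^o$ is a maximal compact of $G^o$ and $K$ meets $G^o$ in a compact subgroup containing $K^o$) gives $K \subseteq K^o M$.

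**Main obstacle.**

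The delicate point is the claim $K \cap G^o = K^o$: one must verify that $K \cap G^o$ is connected. This follows from the fact that $G^o$ is itself a connected reductive Lie group with Cartan involution $\theta|_{G^o}$ and maximal compact $K \cap G^o$; since a connected Lie group has connected maximal compact subgroups (Cartan's theorem), $K \cap G^o$ is connected, and as it contains $K^o$ and is contained in $K$, it equals $K^o$. Equivalently, one can observe $K/K^o \hookrightarrow G/G^o$ is injective, and combined with the surjectivity of $M \to G/G^o$ and $M \subseteq K$, the surjection $M \to K/K^o$ follows by a diagram chase. I expect the verification that $G = G^o A$ — i.e. that the root subgroups generate $G$ modulo $A$ — to be the step requiring the most care, as it rests on the structure theory of $\R$-split reductive groups (Bruhat decomposition over $\R$, valid because $\bf G$ is $\R$-split so all relevant parabolic and root data are defined over $\R$); but the author has set up exactly the tools ($R$, $\frak{a}$, the split condition $\frak{a} = \frak{z}$, one-dimensional $\frak{g}^\alpha$) needed for this.
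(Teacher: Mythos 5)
Your proposal is correct and follows essentially the same route as the paper: both arguments reduce the lemma to the decomposition $G=G^{o}A=G^{o}M$, the inclusion $M\subset K$, and the identification $K\cap G^{o}=K^{o}$ (connectedness of the maximal compact of the connected group). The only difference is that the paper obtains $G=G^{o}A$ by citing Matsumoto's theorem and $M=A\cap K$ from Benoist, whereas you reprove the former directly from the Bruhat decomposition and the connectedness of the root subgroups, which is a legitimate, slightly more self-contained substitute for the same step.
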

\begin{proof}
	On the one hand, by Matsumoto's theorem \cite{matsumoto64gpreel} (\cite[Théorème 14.4]{boreltits65reductif}), we have $G=G^o A=G^o M$. Hence the group $M$ intersects each connected component of $G$. 
	The set $K^o M$ intersects each connected component of $G$ and its intersection with $G^o$ contains $K^o$. By definition of $K$, we know that $K\subset K^o M$.
	
	On the other hand, we know that $K\supset K^o M$, because the group $M$ equals to $A\cap K$ due to \cite[Lemme 4.2]{benoist2005convexes}. This lemma can also be proved directly by considering the action of the Cartan involution on $M$. The proof is complete.
\end{proof}
We have an Iwasawa decomposition of $G$ given by 
\[G=KAN. \]
The action of $K$ on $\P$ is transitive. Hence $\P$ is a compact manifold. 
By Lemma \ref{lem:km}, we have $G=KA_eMN=KA_eN$.
This is a bijection between $G$ and $K\times A_e\times N$. Then we can define the Iwasawa cocycle $\sigma$ from $G\times \P$ to $\frak a$. Let $\eta$ be in $\P$ and $g$ be in $G$. By the transitivity of $K$, there exists $k\in K$ such that $\eta=k\eta_o$. By the Iwasawa decomposition, there exists a unique element $\sigma(g,\eta)$ in $\frak a$ such that
\[gk\in K\exp(\sigma(g,\eta))N. \]
We can verify that this is well defined and $\sigma$ is an additive cocycle, that is for $g,h$ in $G$ and $\eta$ in $\P$
\[\sigma(gh,\eta)=\sigma(g,h\eta)+\sigma(h,\eta). \]

Due to the direct sum $\frak a=\frak b\oplus\frak c$, we can decompose the Iwasawa cocycle into the semisimple part and the central part of the cocycle, that is
\begin{equation*}
	\sigma(g,\eta)=\sigma_{ss}(g,\eta)+c(g),
\end{equation*}
where $\sigma_{ss}$ is in $\frak b$ and $c(g)$ in $\frak c$. The central part $c(g)$ does not depend on $\eta$, because the map
\[G\rightarrow G/\scr D G \]
kills the semisimple part and the restriction of this map to $C_e=\exp(\frak c)$ is injective. Moreover, since the Iwasawa cocycle is additive, the central part is also additive. That is for $g,h$ in $G$
\[c(gh)=c(g)+c(h). \]
\subsubsection*{The Cartan decomposition}
The Cartan decomposition says that $G=KA^+K$, where $A^+$ is the image of the Weyl chamber $\frak a^+$ under the exponential map. For $g$ in $G$, by Cartan decomposition, we can write $g=k_ga_g\ell_g$ with $k_g,\ell_g$ in $K$ and $a_g$ in $A^+$. The element $a_g$ is unique and there is a unique element $\kappa(g)$ in $\frak a^+$ such that $a_g=\exp(\kappa(g))$. We call $\kappa(g)$ the Cartan projection of $g$. Then $\kappa(g^{-1})=\iota\kappa(g)$, where $\iota$ is the opposition involution. Since $A$ is contained in $P$, we can define $\zeta_o=w_0\eta_o$, where the element $w_0$ in the Weyl group is seen as an element in $G/A$ (As an element in $\P$, $\zeta_o$ is the opposite parabolic group with respect to $P$ and $A$). Let $\eta^M_g=k_g\eta_o$
\nomentry{$\eta^M_g$}{}
 and $\zeta^m_g=\ell_g^{-1}\zeta_o$.
 \nomentry{$\zeta^m_g$}{}
When $\kappa(g)$ is in $\frak a^{++}$, it is uniquely defined, independently of the choice of $k_g$ and $\ell_g$. 

We can also define a unique decomposition of $\kappa(g)$ into semisimple part and central part.
Due to $\kappa(g)=\sigma(g,\ell^{-1}_g\eta_o)=\sigma_{ss}(g,\ell^{-1}_g\eta_o)+c(g)$, we have 
\[\kappa(g)=\kappa_{ss}(g)+c(g). \]

\subsubsection*{Dominant weights}

Let $\rm X(\bf A)$ and $\rm X(\bf B)$ be the character groups of $\bf A$ and $\bf B$, respectively. We will identify $\rm X(\bf A)$ and $\rm X(\bf B)$ as discrete subgroups of $\frak a^*$ and $\frak b^*$ by taking differential. The elements of $\frak a^*$ in $\rm X(\bf A)$ are called weights. All the roots are weights, because they come from adjoint action of $A$ on $\frak g^\alpha$.

Since $\{H_\alpha\}_{\alpha\in\Pi}$ is a basis of $\frak b$, let $\{\tilde\omega_\alpha\}_{\alpha\in\Pi}$ be the dual basis, they are called the fundamental weights. 

\textbf{For the general case}: we only know that $X(\bf B)$ is a finite index subgroup of $\oplus_{\alpha\in\Pi}\Z\tilde\omega_\alpha$. Let 
\begin{equation*}
\tilde\chi_{\alpha}=n_\alpha\tilde\omega_\alpha, 
\end{equation*}
where $n_\alpha$ is the smallest positive natural number such that $n_\alpha\tilde\omega_\alpha$ is in $X(\bf B)$. Since $\bf B$ is a closed subgroup of a split torus $\bf A$, every character on $\bf B$ extends to a character on $\bf A$. Let $\chi_\alpha$ be an extension of $\tilde\chi_{\alpha}$ to $\bf A$. We fix this choice. For semisimple case, we know that $\chi_{\alpha}=\tilde{\chi}_\alpha$.

\textbf{If the derived group $\scr D\bf G$ is simply connected}, we have
\begin{equation*}
	\rm X(\bf B)=\oplus_{\alpha\in\Pi}\Z\tilde\omega_\alpha.
\end{equation*}
Hence for $\alpha\in\Pi$, we have
\begin{equation*}
\chi_\alpha\in\rm X(\bf A) \text{ and }\chi_\alpha|_{\frak b}=\tilde\omega_\alpha.
\end{equation*}
We write $\omega_\alpha$ for the element in $\frak a^*$ which is another extension of $\tilde\omega_\alpha$ and vanishes on $\frak c$, that is
\begin{equation*}
	\omega_\alpha|_{\frak b}=\tilde\omega_\alpha\text{ and }\omega_\alpha|_{\frak c}=0.
\end{equation*}

Recall that a weight is a dominant weight, if for every $w$ in the Weyl group $W$, the difference $\chi-w(\chi)$ is a sum of positive roots. 
\begin{lem}\label{lem:dominant}
 For every $\alpha\in\Pi$, the weight $\chi_{\alpha}$ is a dominant weight.
\end{lem}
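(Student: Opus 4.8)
The plan is to verify the definition directly: a weight $\chi$ is dominant if $\chi - w\chi$ is a nonnegative integer combination of positive roots for every $w \in W$. Since $W$ is generated by the simple reflections $s_\beta$ with $\beta \in \Pi$, it suffices by an induction on the word length of $w$ to handle the generators, i.e. to show $\chi_\alpha - s_\beta \chi_\alpha \in \Z_{\geq 0} R^+$ for all $\alpha, \beta \in \Pi$. Here I would exploit that $\chi_\alpha = n_\alpha \tilde\omega_\alpha$ restricted to $\frak b$ is a positive multiple of the fundamental weight $\tilde\omega_\alpha$, and that $s_\beta$ acts on $\frak b^*$ fixing $\frak c^*$ pointwise (the Weyl group acts trivially on the central part), so the central component of $\chi_\alpha$ plays no role in the difference $\chi_\alpha - s_\beta\chi_\alpha$ — only the $\frak b^*$-part matters.

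The key computation is then the classical one: by definition of $H_\beta$ in the excerpt, $s_\beta(\alpha') = \alpha' - \alpha'(H_\beta)\beta$ for $\alpha' \in \frak b^*$, so
\[
\chi_\alpha - s_\beta\chi_\alpha = \tilde\chi_\alpha(H_\beta)\,\beta = n_\alpha\,\tilde\omega_\alpha(H_\beta)\,\beta.
\]
Because $\{H_\gamma\}_{\gamma\in\Pi}$ is the basis of $\frak b$ dual to $\{\tilde\omega_\gamma\}_{\gamma\in\Pi}$, we get $\tilde\omega_\alpha(H_\beta) = \delta_{\alpha\beta}$, hence $\chi_\alpha - s_\beta\chi_\alpha$ equals $n_\alpha\beta$ if $\beta = \alpha$ and $0$ otherwise; in both cases it lies in $\Z_{\geq 0}R^+$. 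For a general $w = s_{\beta_1}\cdots s_{\beta_k}$, I would write
\[
\chi_\alpha - w\chi_\alpha = \sum_{j=1}^{k}\bigl(s_{\beta_1}\cdots s_{\beta_{j-1}}\chi_\alpha - s_{\beta_1}\cdots s_{\beta_j}\chi_\alpha\bigr)
= \sum_{j=1}^{k} s_{\beta_1}\cdots s_{\beta_{j-1}}\bigl(\chi_\alpha - s_{\beta_j}\chi_\alpha\bigr),
\]
where each inner term is a nonnegative multiple of a simple root by the generator case. To conclude I need that each such $W$-translate of a simple root, when it contributes, is still a nonnegative combination of positive roots — this is where one uses the standard fact that $\chi_\alpha$ being dominant is equivalent to $\langle \chi_\alpha, \beta^\vee\rangle \geq 0$ for all $\beta \in \Pi$, which the computation above establishes, together with the standard lemma that a $\frak b^*$-weight with nonnegative pairings against all simple coroots is dominant in the "$\chi - w\chi \in \Z_{\geq 0}R^+$" sense.

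The main obstacle is essentially bookkeeping rather than substance: making sure the reduction to $\frak b^*$ (discarding $\frak c^*$) is clean, and invoking the correct version of the equivalence between the two standard characterizations of dominance (nonnegative coroot pairings versus $\chi - w\chi$ a sum of positive roots) so that the one-line computation $\tilde\omega_\alpha(H_\beta) = \delta_{\alpha\beta} \geq 0$ actually finishes the proof. The cleanest writeup is: note $s_\beta\chi_\alpha = \chi_\alpha - \langle\chi_\alpha,H_\beta\rangle\beta$ with $\langle\chi_\alpha,H_\beta\rangle = n_\alpha\delta_{\alpha\beta}\geq 0$, so $\chi_\alpha$ has nonnegative pairing with every simple coroot, hence is dominant by the standard criterion for root systems (e.g. Bourbaki), which is exactly the required property.
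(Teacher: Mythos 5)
Your proposal is correct and follows essentially the same route as the paper: you reduce to the semisimple part using that $W$ acts trivially on $\frak c^*$, so only $\chi_\alpha|_{\frak b}=n_\alpha\tilde\omega_\alpha$ matters, and then use dominance of the fundamental weight $\tilde\omega_\alpha$ (the paper cites this directly, while you re-derive it via the pairing $\tilde\omega_\alpha(H_\beta)=\delta_{\alpha\beta}\geq 0$ and the standard criterion, which is the same standard fact).
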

\begin{proof}
The action of the Weyl group on $\frak c^*$, the space of linear functionals which vanish on $\frak b$, is trivial. We know that
\[\chi_\alpha-w(\chi_\alpha)\in\frak b^*. \]
Because $\tilde\omega_\alpha$ is a fundamental weight, we have $\chi_\alpha-w(\chi_\alpha)|_{\frak b}=n_\alpha(\tilde{\omega}_\alpha-w(\tilde\omega_\alpha))$
equals a sum of positive roots. 
\end{proof}

\subsubsection*{Representations and highest weights}
Let $(\rho,V)$ be a representation of $G$. The set of restricted weights $\Sigma(\rho)$ of the representation is the set of elements $\omega$ in $\frak a^*$ such that the eigenspace
\[V^\omega=\{v\in V|\forall X\in\frak a,\ \dd\rho(X)v=\omega(X)v \} \]
is nonzero, where $\dd \rho$ is the tangent map of $\rho$ from $\frak g$ to $End(V)$. By definition, we see that $\omega$ is the differential of a character on $\bf A$, which is a weight. We define a partial order on the restricted weights: For $\omega_1,\omega_2$ in $\Sigma(\rho)$,
\[\omega_1\geq\omega_2 \Leftrightarrow \omega_1-\omega_2 \text{ is a sum of positive roots.} \]
If $\omega$ is in $\Sigma(\rho)$, then we say that $\omega$ is a weight of $V$ and a vector $v$ in $V^\omega$ is said to have weight $\omega$. We call $\rho$ proximal if there exists $\chi$ in $\Sigma(\rho)$ which is greater than the other weights and such that $V^\chi$ is of dimension 1. We should pay attention that a proximal representation is not supposed to be irreducible. An advantage of the splitness of $G$ is that all the irreducible representations are proximal, which will be extensively used later on.

Suppose that $(\rho,V)$ is an irreducible representation. Let $\chi\in\frak a^*$ be the highest weight of $(\rho,V)$. We write $V_{\chi,\eta}=\rho(g)V^{\chi}$
\nomentry{$V_{\chi,\eta}$}{}
for $\eta=g\eta_o$, which is well defined because the parabolic subgroup $P$ fixes the subspace $V^\chi$. This gives a map from $\P$ to $\bp V$ by
\begin{equation}\label{equ:ppV}
\P\rightarrow \bp V,\ \eta\mapsto V_{\chi,\eta}.
\end{equation}

In the case of split reductive groups, for a character $\chi$ on $\bf A$, there exists an irreducible algebraic representation with highest weight $\chi$ if and only if $\chi$ is a dominant weight \cite{tits_rep_1971}. Let 
$$\Theta_\rho=\{\alpha\in\Pi:\ \chi-\alpha\text{ is a weight of }\rho \}.$$
\nomentry{$\Theta_\rho$}{}By Lemma \ref{lem:dominant}, we have  
  \begin{lem}\label{lem:tits}
  Let $(\rho_\alpha,V_\alpha) _{\alpha\in\Pi}$ be a family of representations such that the highest weight of $\rho_\alpha$ is $\chi_\alpha$. Then we have $\Theta_{\rho_\alpha}=\{\alpha \}$ and
  	the product of the maps given by \eqref{equ:ppV}
  	\begin{align*}
  	\P\longrightarrow &\prod_{\alpha\in\Pi}\bb PV_\alpha,\ \ \eta\mapsto (V_{\chi_\alpha,\eta})_{\alpha\in\Pi},
  	\end{align*}
  	is an embedding of $\P$ into the product of projective spaces.
  \end{lem}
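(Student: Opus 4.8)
My plan is to first establish the statement $\Theta_{\rho_\alpha} = \{\alpha\}$ for each simple root $\alpha \in \Pi$, and then deduce the embedding claim from it together with a standard separation argument. For the first part, I would work in the semisimple setting (restricting $\chi_\alpha$ to $\frak{b}$ gives $n_\alpha \tilde\omega_\alpha$, and the relevant weights lie in $\frak b^*$, so the central part plays no role). The weights of the irreducible representation $(\rho_\alpha, V_\alpha)$ with highest weight $\chi_\alpha$ are all of the form $\chi_\alpha - \sum_{\beta \in \Pi} c_\beta \beta$ with $c_\beta \in \Z_{\geq 0}$. The key structural input is that $\chi_\alpha|_{\frak b} = n_\alpha \tilde\omega_\alpha$ pairs (via the dual-root pairing $\langle \cdot, H_\beta\rangle$) as $\langle \chi_\alpha, H_\beta \rangle = n_\alpha \delta_{\alpha\beta}$; in particular it is orthogonal to all simple roots $\beta \neq \alpha$. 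By the standard theory of highest weight modules, $\chi_\alpha - \beta$ is a weight of $\rho_\alpha$ if and only if $\langle \chi_\alpha, H_\beta\rangle \neq 0$, i.e.\ $\beta = \alpha$ (using $n_\alpha \geq 1$). This gives $\Theta_{\rho_\alpha} = \{\alpha\}$ directly. I would cite the basic representation theory (e.g.\ the same reference \cite{serre2012complex} or \cite{tits_rep_1971}) for the ``weight below the highest weight'' criterion rather than reproving it.

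For the embedding claim, the map $\Phi \colon \P \to \prod_{\alpha \in \Pi} \bp V_\alpha$, $\eta \mapsto (V_{\chi_\alpha,\eta})_{\alpha\in\Pi}$, is $G$-equivariant, where $G$ acts on each $\bp V_\alpha$ through $\rho_\alpha$. Since $G$ acts transitively on $\P = G/P$ and the stabilizer in $G$ of the point $(V^{\chi_\alpha})_{\alpha\in\Pi}$ contains $P$, the map factors through $G/P$; it is injective as a map of sets precisely when the common stabilizer $\bigcap_\alpha \mathrm{Stab}_G(V^{\chi_\alpha})$ equals $P$. Each $\mathrm{Stab}_G(V^{\chi_\alpha})$ is a parabolic subgroup $P_{\Theta_{\rho_\alpha}^c}$ containing $P$, associated to the set of simple roots whose root subgroups fix the line $V^{\chi_\alpha}$; by the computation $\Theta_{\rho_\alpha} = \{\alpha\}$, this parabolic ``omits'' exactly $\alpha$ (its Levi involves all simple roots except $\alpha$ — more precisely, $\mathrm{Stab}_G(V^{\chi_\alpha})$ is the maximal parabolic $P_{\{\alpha\}}$ in the notation where the subscript records which simple root's negative root group is \emph{not} in the parabolic). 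Intersecting over all $\alpha \in \Pi$ leaves only $P$ itself. Hence $\Phi$ is an injective $G$-equivariant map from the compact manifold $\P$.

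Finally, to upgrade set-theoretic injectivity to ``embedding'' I would note that $\P$ is compact, $\prod_\alpha \bp V_\alpha$ is Hausdorff, and $\Phi$ is continuous (indeed a morphism of algebraic varieties), so $\Phi$ is a homeomorphism onto its image; equivariance plus the identification of stabilizers makes it an isomorphism of $G$-varieties onto a closed orbit. The main obstacle, and the place where the hypothesis that $\scr D\bf G$ is simply connected (or at least the explicit choice of $\chi_\alpha = n_\alpha\tilde\omega_\alpha$) enters, is making sure the line $V^{\chi_\alpha}$ is genuinely $P$-stable and that its full stabilizer is no larger than the expected maximal parabolic — this is exactly where one needs $\chi_\alpha$ to be a well-defined character of $\bf A$ (Lemma \ref{lem:dominant}) so that an irreducible representation with that highest weight exists (\cite{tits_rep_1971}), and where one uses that $\langle \chi_\alpha, H_\beta\rangle = 0$ for $\beta \neq \alpha$ forces the root groups $U_{\pm\beta}$ to act trivially on $V^{\chi_\alpha}$ while $U_{-\alpha}$ does not. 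Everything else is bookkeeping with parabolic subgroups and the Bruhat/parabolic correspondence.
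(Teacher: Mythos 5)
Your argument is correct and is essentially the standard route the paper itself relies on (it states the lemma as a consequence of Lemma \ref{lem:dominant} and the Tits construction without spelling out details): the pairing $\chi_\alpha(H_\beta)=n_\alpha\delta_{\alpha\beta}$ plus the $\frak{sl}_2$ criterion gives $\Theta_{\rho_\alpha}=\{\alpha\}$ exactly as in the paper's Lemma \ref{lem:weight structure}, and the identification of the stabilizer of each highest-weight line with the maximal parabolic omitting $\alpha$, whose common intersection is $P$, together with compactness of $\P$, yields the embedding. No gaps worth flagging; your hedge about simple connectedness is also consistent with the paper, since what is really needed is only that $\chi_\alpha$ is a dominant character of $\bf A$ (Lemma \ref{lem:dominant}).
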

  \begin{lem}\label{lem:weight structure}
  	Let $(\rho,V)$ be an irreducible representation of $G$ with highest weight $\chi$. Then $\Theta_\rho=\{\alpha \}$ is equivalent to say that $\chi(H_\alpha)> 0$ for only one simple root $\alpha$.
  \end{lem}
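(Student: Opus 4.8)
The plan is to identify $\Theta_\rho$ explicitly as $\{\alpha\in\Pi : \chi(H_\alpha)>0\}$; granting this, the lemma is immediate, since a set of this form is a singleton $\{\alpha\}$ exactly when $\chi(H_\alpha)>0$ holds for precisely one simple root. So it suffices to show, for each fixed $\alpha\in\Pi$, that $\chi-\alpha$ is a weight of $\rho$ if and only if $\chi(H_\alpha)>0$.

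First I would fix a nonzero highest weight vector $v_\chi\in V^\chi$ (the line $V^\chi$ is one-dimensional because $\rho$ is proximal, $G$ being split). Maximality of $\chi$ for the partial order on $\Sigma(\rho)$ forces $V^{\chi+\beta}=0$, hence $\frak g^\beta v_\chi=0$, for every $\beta\in R^+$; in particular $X_\alpha v_\chi=0$. Recalling that the triple $(X_\alpha,H_\alpha,Y_\alpha)$ constructed above satisfies $[X_\alpha,Y_\alpha]=H_\alpha$ and $\alpha(H_\alpha)=2$, it spans a copy of $\frak{sl}_2$ for which $v_\chi$ is a highest weight vector. Finite-dimensional $\frak{sl}_2$-theory then gives that $m:=\chi(H_\alpha)$ is a nonnegative integer and that $Y_\alpha v_\chi\neq 0$ precisely when $m\geq 1$. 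Since $Y_\alpha v_\chi\in V^{\chi-\alpha}$, this already yields the implication $\chi(H_\alpha)>0\ \Rightarrow\ \chi-\alpha\in\Sigma(\rho)$.

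For the reverse implication I would use that $V=U(\frak n^-)v_\chi$: indeed $V=U(\frak g)v_\chi$ by irreducibility, $\frak a$ preserves $\R v_\chi$, and $\frak n v_\chi=0$ as above. By the Poincar\'e--Birkhoff--Witt theorem, the weight space $V^{\chi-\alpha}$ is then spanned by vectors $Y_{\beta_1}\cdots Y_{\beta_k}v_\chi$ with $\beta_i\in R^+$ and $\beta_1+\cdots+\beta_k=\alpha$; since $\alpha$ is a \emph{simple} root it admits no such decomposition other than $k=1$, $\beta_1=\alpha$, so $V^{\chi-\alpha}\subseteq\R\, Y_\alpha v_\chi$. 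Hence $\chi-\alpha\in\Sigma(\rho)$ forces $Y_\alpha v_\chi\neq 0$, i.e. $\chi(H_\alpha)\geq 1$. Combining the two directions gives $\Theta_\rho=\{\alpha\in\Pi:\chi(H_\alpha)>0\}$, which is what we wanted.

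I do not expect a genuine obstacle here: this is classical highest weight theory. The two points needing a little care are that $\chi(H_\alpha)\in\Z_{\geq 0}$ — which is automatic once we know $X_\alpha v_\chi=0$ and that the $\frak{sl}_2$-module generated by $v_\chi$ is finite-dimensional — and the PBW reduction $V^{\chi-\alpha}=\R\, Y_\alpha v_\chi$, where the simplicity of $\alpha$ is essential (a non-simple positive root could be reached along several lowering paths). The reductive rather than semisimple setting is harmless, since $X_\alpha,H_\alpha,Y_\alpha$ all lie in $\scr D\frak g$ and the centre $\frak c$ acts on $V$ by scalars, so it never enters the computation.
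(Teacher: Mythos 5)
Your proof is correct and follows essentially the same route as the paper: the $\frak{sl}_2$-triple $(X_\alpha,H_\alpha,Y_\alpha)$ applied to the highest weight vector gives $Y_\alpha v_\chi\neq 0\iff\chi(H_\alpha)>0$, and the fact that $Y_\alpha v_\chi$ is the only way to reach weight $\chi-\alpha$ (which the paper cites from Serre and you rederive via $V=U(\frak n^-)v_\chi$ and PBW, using simplicity of $\alpha$) completes the equivalence.
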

  \begin{proof}
  	 Consider the representation of the Lie algebra $\frak s_\alpha=<H_\alpha,X_\alpha,Y_\alpha>$ on $v$ of highest weight. By the classification of the representations of $\frak {sl}_2$, we know that $Y_\alpha v\neq 0$ if and only if $\chi(H_\alpha)>0$. The vector $Y_\alpha v$ is the only way to obtain a vector of weight $\chi-\alpha$ by \cite[Chapter 7, Proposition 2]{serre2012complex}. The proof is complete.
  \end{proof}
  %

  \begin{defi}[Super proximal representation]\label{defi:super proximal}
  	Let $(\rho,V)$ be an irreducible representation of $G$ with highest weight $\chi$. We call $V$ super proximal if the exterior square $\wedge^2V$ is also proximal. This is equivalent to $\Theta_\rho=\{\alpha \}$, and $V^{\chi-\alpha}$ is of dimension 1 for some simple root $\alpha$. 
  \end{defi}
  \begin{lem}\label{lem:super proximal}
  	If the highest weight $\chi$ of an irreducible representation satisfies $\chi(H_\alpha)> 0$ for only one simple root $\alpha$, then this representation is super proximal.
  \end{lem}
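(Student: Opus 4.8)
The plan is to read the statement off from Lemma~\ref{lem:weight structure} together with one extra step of $\frak{sl}_2$-bookkeeping. By Definition~\ref{defi:super proximal}, proving that $(\rho,V)$ is super proximal amounts to exhibiting a simple root $\alpha$ with $\Theta_\rho=\{\alpha\}$ and with $\dim V^{\chi-\alpha}=1$. Under the hypothesis that $\chi(H_\alpha)>0$ for exactly one simple root $\alpha$, Lemma~\ref{lem:weight structure} gives $\Theta_\rho=\{\alpha\}$ for that same $\alpha$ directly. So the whole proof reduces to checking that the weight space $V^{\chi-\alpha}$ is one-dimensional.

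For this I would argue exactly as in the proof of Lemma~\ref{lem:weight structure}, pushed one step further. Let $v\neq 0$ be a highest weight vector and consider the subalgebra $\frak s_\alpha=\langle H_\alpha,X_\alpha,Y_\alpha\rangle\cong\frak{sl}_2$ acting on $V$. Since $v$ is annihilated by $X_\alpha$ and $H_\alpha v=\chi(H_\alpha)v$ with $\chi(H_\alpha)>0$, the representation theory of $\frak{sl}_2$ forces $Y_\alpha v\neq 0$, and $Y_\alpha v$ has weight $\chi-\alpha$. Conversely, since $V$ is irreducible we have $V=U(\frak n^-)v$, and $\frak n^-$ is generated as a Lie algebra by the simple negative root vectors $\{Y_\beta\}_{\beta\in\Pi}$; hence $V$ is spanned by the vectors $Y_{\beta_1}\cdots Y_{\beta_k}v$, which have weight $\chi-(\beta_1+\cdots+\beta_k)$. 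By linear independence of the simple roots, the only such vector of weight $\chi-\alpha$ is $Y_\alpha v$ (this is the statement of \cite[Chapter 7, Proposition 2]{serre2012complex} quoted above). Therefore $V^{\chi-\alpha}=\R Y_\alpha v$ is one-dimensional, and combined with $\Theta_\rho=\{\alpha\}$ this shows that $V$ is super proximal.

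There is no serious obstacle here: the lemma is a bookkeeping corollary of Lemma~\ref{lem:weight structure} and the $\frak{sl}_2$-theory. The only point that deserves a line of care is the claim that a vector of weight $\chi-\alpha$ must be proportional to $Y_\alpha v$; this uses both irreducibility (to write $V=U(\frak n^-)v$) and the fact that $\frak n^-$ is generated by the \emph{simple} negative root vectors, so that lowering the highest weight by a single simple root can only be achieved by the corresponding operator $Y_\alpha$.
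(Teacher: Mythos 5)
Your proposal is correct and follows essentially the same route as the paper: use Lemma \ref{lem:weight structure} to get $\Theta_\rho=\{\alpha\}$ and that $\chi-\alpha$ is a weight, then span $V$ by vectors $Y_{\beta_1}\cdots Y_{\beta_k}v$ to see that $V^{\chi-\alpha}=\R Y_\alpha v$ is one-dimensional. The only cosmetic differences are that the paper first reduces the reductive case to the semisimple one (your argument via $V=U(\frak n^-)v$ handles this implicitly since $\frak a\supset\frak c$) and quotes the spanning statement with positive rather than simple roots, which leads to the same conclusion because a simple root cannot be a nontrivial sum of positive roots.
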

  \begin{proof}
 Because the central part of $G$ preserves eigenspaces of $A$. It is also an irreducible representation of the semisimple part. It will be thus sufficient to prove the semisimple case.
  	
  	 Let $\alpha$ be the simple root. Let $v$ be a nonzero vector with highest weight $\chi$. By \cite[Chapter 7, Proposition 2]{serre2012complex}, the representation $V$ is generated by vectors $Y_{\beta_1}\cdots Y_{\beta_k}v$, where $\beta_1,\dots,\beta_k$ are positive roots. Hence a vector of weight $\chi-\alpha$ can only be obtained by $Y_\alpha v$. The dimension of $V^{\chi-\alpha}$ is no greater than 1. Since $\chi-\alpha$ is a weight due to Lemma \ref{lem:weight structure}, the proof is complete.
  \end{proof}

For $\chi\in\frak a^*$, if it is a weight, we will use $\chi^\up$ to denote its corresponding algebraic character in $\rm X(\bf A)$.
By the definition of eigenspace $V^\chi$, we have
\begin{lem}\label{lem:sign m}
	Let $(\rho,V)$ be an irreducible representation of $G$. Let $\chi^\up$ be an algebraic character of $A$. For $a$ in $A$ and $v\in V^{\chi}$, we have
	\[\rho(a)v=\chi^\up(a)v. \]
\end{lem}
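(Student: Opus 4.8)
The plan is to deduce the identity from the algebraic weight-space decomposition of $V$ under the split torus $\bf A$; the only genuine point is that $A$ is disconnected.

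First I would record that $\rho(a)$ preserves $V^\chi$ for every $a\in A$: since $\bf A$ is commutative, its adjoint action on $\frak a=\mathrm{Lie}(A)$ is trivial, so $\rho(a)$ commutes with $\dd\rho(X)$ for all $X\in\frak a$ and hence stabilises every common eigenspace. Next, because $\rho$ is an algebraic representation of $\bf G$ and $\bf A$ is $\R$-split, $V$ is the direct sum of its algebraic weight spaces, $V=\bigoplus_{\psi\in\rm X(\bf A)}V_\psi$, where $\rho(a)$ acts on $V_\psi$ by the scalar $\psi(a)$. Each operator $\dd\rho(X)$, $X\in\frak a$, acts diagonally with respect to this decomposition, with eigenvalue $\dd\psi(X)$ on $V_\psi$, so $V^\chi$ is exactly the sum of those $V_\psi$ for which the differential $\dd\psi\in\frak a^*$ equals $\chi$.

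Finally I would invoke the rigidity of characters of a split torus: the differential map $\rm X(\bf A)\to\frak a^*$ is injective, which is precisely the identification of $\rm X(\bf A)$ with a discrete subgroup of $\frak a^*$ recalled above. Thus at most one $\psi$ satisfies $\dd\psi=\chi$, and whenever $V^\chi\neq 0$ this $\psi$ is the algebraic character $\chi^\up$ corresponding to the weight $\chi$. Hence $V^\chi=V_{\chi^\up}$, and $\rho(a)v=\chi^\up(a)v$ for all $a\in A$ and $v\in V^\chi$ (the case $v=0$ being trivial).

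The only step needing care is the disconnectedness $A=M\times A_e$: on the identity component $A_e=\exp(\frak a)$ the claim follows at once from $\rho(\exp X)v=e^{\chi(X)}v=\chi^\up(\exp X)v$, but pinning down the action of the finite subgroup $M$ (whose elements have order at most two) is what forces the argument through the algebraic weight-space decomposition, equivalently through the rigidity of split-torus characters, rather than through the Lie algebra alone. Irreducibility of $\rho$ plays no role here.
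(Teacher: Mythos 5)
Your proof is correct and is essentially the argument the paper intends: the paper presents the lemma as immediate from the definition of $V^\chi$ together with the identification of weights with algebraic characters of $\bf A$ via the (injective) differential $\rm X(\bf A)\hookrightarrow\frak a^*$, and your weight-space decomposition $V=\bigoplus_\psi V_\psi$ with $V^\chi=V_{\chi^\up}$ is exactly the standard justification of that identification, covering the finite part $M$ of $A$ that the Lie algebra alone does not see. Your remark that irreducibility of $\rho$ is not actually used is also accurate.
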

This lemma will be used to determine the sign in Section \ref{sec:sign group}. 

\subsubsection*{Representations and good norms}
\begin{defi}\label{defi:good norm}
Let $\|\cdot \|$ be an euclidean norm on a representation $(\rho, V)$ of $G$. We call $\|\cdot\|$ a good norm if $\rho(A)$ is symmetric and $\rho(K)$ preserves the norm. 
\end{defi}
By \cite{helgason1979differential}, \cite[Lemma 6.33]{benoistquint}, good norms exist on every representation of $G$. One advantage of good norm is that for $v,u$ in $V$ and $g$ in $G$
\[\l\rho(g)v,u \r=\l v,\rho(\theta(g^{-1})u)\r, \]
where $\theta$ is the Cartan involution. The above equation is true because it is true for $g$ in $A$ and $K$. This means that for good norm we have 
\begin{equation}\label{equ:good norm}
^t\rho(g)=\rho(\theta(g^{-1})).
\end{equation}
The application \eqref{equ:ppV} enables us to get information on $\P$ from the representations. For an element $g$ in $GL(V)$, let $\|g\|$ be its operator norm. 
\begin{lem}\label{lem:flapro}
	Let $G$ be a connected reductive $\R$-group. Let $(\rho,V)$ be an irreducible linear representation of $G$ with good norm. Let $\chi$ be the highest weight of $V$. For $\eta$ in $\P$ and a non-zero vector $v\in V_{\chi,\eta} $, we have
	\begin{align}\label{equ:representation cocycle}
	\frac{\|\rho(g)v\|}{\|v\|}=\exp(\chi\sigma(g,\eta)),\\
	\label{equ:representation cartan}
	\|\rho (g)\|=\exp(\chi\kappa(g)).
	\end{align}
\end{lem}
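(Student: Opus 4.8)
The plan is to read off both identities from the Iwasawa and Cartan decompositions, respectively, using only two inputs: the defining properties of a good norm (namely $\rho(K)$ is orthogonal and $\rho(A)$ is self-adjoint) and the standard fact that the highest weight line $V^\chi$ is fixed by $N$ while every weight of $V$ lies below $\chi$ in the root order. For \eqref{equ:representation cocycle}, I would first use the transitivity of the $K$-action on $\P$ to write $\eta=k\eta_o$ with $k\in K$, so that $V_{\chi,\eta}=\rho(k)V^\chi$ and hence $v=\rho(k)v_0$ for some $v_0\in V^\chi$; since both sides of the asserted identity are homogeneous of degree $0$ in $v$, this costs nothing. Next, by the Iwasawa decomposition write $gk=k'\exp(\sigma(g,\eta))n$ with $k'\in K$ and $n\in N$. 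Since $\bf N$ is connected unipotent we have $N=\exp\frak n$ with $\frak n=\bigoplus_{\alpha\in R^+}\frak g^\alpha$, and for $X\in\frak g^\alpha$ the vector $\dd\rho(X)v_0$ lies in $V^{\chi+\alpha}=0$ because $\chi+\alpha>\chi$ is not a weight; hence $\dd\rho(\frak n)v_0=0$ and $\rho(n)v_0=v_0$. As $v_0\in V^\chi$ and $\exp(\sigma(g,\eta))\in A_e$, we get $\rho(\exp(\sigma(g,\eta)))v_0=e^{\chi\sigma(g,\eta)}v_0$ (this is Lemma \ref{lem:sign m} applied to $\exp(\sigma(g,\eta))$). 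Therefore $\rho(g)v=\rho(gk)v_0=e^{\chi\sigma(g,\eta)}\rho(k')v_0$, and since a good norm is preserved by $\rho(K)$ we obtain $\|\rho(g)v\|=e^{\chi\sigma(g,\eta)}\|v_0\|=e^{\chi\sigma(g,\eta)}\|v\|$.

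For \eqref{equ:representation cartan}, I would invoke the Cartan decomposition $g=k_g\exp(\kappa(g))\ell_g$ with $\kappa(g)\in\frak a^+$. Because $\rho(K)$ preserves the norm, $\rho(k_g)$ and $\rho(\ell_g)$ are orthogonal, so $\|\rho(g)\|=\|\rho(\exp\kappa(g))\|$. The norm being good, $\rho(A)$ is symmetric, hence $\rho(\exp\kappa(g))$ is a symmetric positive-definite operator whose eigenvalues are exactly the numbers $e^{\omega(\kappa(g))}$ for $\omega\in\Sigma(\rho)$, so its operator norm equals $\max_{\omega\in\Sigma(\rho)}e^{\omega(\kappa(g))}$. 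Finally, since $V$ is irreducible it is proximal with highest weight $\chi$, so every $\omega\in\Sigma(\rho)$ has $\chi-\omega$ a nonnegative combination of simple roots; as $\kappa(g)\in\frak a^+$ makes every simple root nonnegative on $\kappa(g)$, we get $\omega(\kappa(g))\le\chi(\kappa(g))$ for all $\omega$, with equality at $\omega=\chi$. This yields $\|\rho(g)\|=e^{\chi\kappa(g)}$. (Alternatively the lower bound $\|\rho(g)\|\ge e^{\chi\kappa(g)}$ is immediate from \eqref{equ:representation cocycle} applied to $\eta=\ell_g^{-1}\eta_o$, using $\kappa(g)=\sigma(g,\ell_g^{-1}\eta_o)$, but the upper bound still needs the weight comparison above.)

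The computations are essentially routine, so I do not anticipate a genuine obstacle; the only points requiring care are the representation-theoretic facts being quoted — that $N=\exp\frak n$ acts trivially on the highest weight line, and that every weight is $\le\chi$ in the root order — both of which follow from the structure theory recalled above, together with the bookkeeping that a good norm makes $\rho(K)$ orthogonal and $\rho(A)$ self-adjoint \emph{simultaneously}, so that the same decomposition diagonalizes/neutralizes the relevant factors.
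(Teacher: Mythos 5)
Your proof is correct: for \eqref{equ:representation cocycle} the Iwasawa decomposition together with the fact that $\dd\rho(\frak n)$ kills the highest-weight line (so $N$ fixes $v_0$) and $K$-invariance of the good norm gives the identity, and for \eqref{equ:representation cartan} the Cartan decomposition, the symmetry of $\rho(A)$, and the comparison $\omega(\kappa(g))\leq\chi(\kappa(g))$ for all weights $\omega$ (valid since $\kappa(g)\in\frak a^+$ and $\chi$ is highest) give the operator norm. The paper does not prove this lemma itself but delegates it to \cite[Lemma 8.17]{benoistquint}, and your argument is essentially the standard proof found there, so there is nothing to add.
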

Please see \cite[Lemma 8.17]{benoistquint} for the proof.

\subsubsection*{Examples}
	\label{exa:slr}
	For the group $\bf{GL}_{\rank+1}$, the maximal torus $A$ can be taken as the diagonal subgroup and the Lie algebra $\frak a$ is the set of diagonal matrices. The Lie algebra $\frak b$ is the subset of $\frak a$ with trace zero. For $X$ in $\frak a$, we write $X=\diag(x_1,\dots,x_{\rank+1})$ with $x_i\in \R$. The Lie algebra $\frak c=\{X\in\frak a|\ x_1=x_2=\cdots=x_{\rank+1}  \}$. 
	Let $\lambda_i$ in $\frak a^*$ be the linear map given by $\lambda_i(X)=x_i$. The root system $R$ is given by 
	\[R=\{\lambda_i-\lambda_j| i\neq j,\text{ and }i,j\in\{1,\dots,\rank+1\}  \}. \]
	A choice of positive roots is $\lambda_i-\lambda_j$ with $i<j$. The set of simple roots is $\Pi=\{\lambda_i-\lambda_{i+1}| i=1,\dots, \rank \}$. Let $\alpha_i=\lambda_i-\lambda_{i+1}$. The Weyl chamber is
	\[\frak a^+=\{X\in \frak a|x_1\geq x_2\geq \cdots\geq x_{\rank+1}  \}. \]
	The fundamental weights are $\tilde\omega_{\alpha_i}=\lambda_1+\cdots+\lambda_i$ for $i=1,\dots,\rank$ on $\frak b$. The weight $\chi_{\alpha_i}$ has the same form as $\tilde\omega_{\alpha_i}$ in $\frak a$. The weight $\omega_{\alpha_i}$ is equal to $\chi_{\alpha_i}-\frac{i}{\rank+1}(\lambda_1+\cdots+\lambda_{\rank+1})$. The representations $V_{\alpha_i}$ are given by $V_{\alpha_i}=\wedge^i\R^{\rank+1}$ for $i=1,\dots,\rank$. The maximal compact subgroup $K$ is $\rm{O}(\rank+1)$ and the parabolic group $P$ is the upper triangular subgroup and $N$ is the subgroup of $P$ with all the diagonal entries equal to $1$. The flag variety $\P$ is the set of all flags
	\[W_1\subset W_2\subset \cdots \subset W_{\rank}, \]
	where $W_i$ is a subspace of $\R^{\rank+1}$ of dimension $i$. 
	
	Let $\epsilon_{i,j}$ be the square matrix of dimension $\rank +1$ with the only nonzero entry at the $i$-th row and $j$-th column, which equals 1. The element $H_{\alpha_i}$ is $\epsilon_{i,i}-\epsilon_{i+1,i+1}$. The element $X_{\alpha_i},Y_{\alpha_i}$ are given by $\epsilon_{i,i+1}, \epsilon_{i+1,i}$. The Cartan involution $\theta$ is the additive inverse of the transpose, that is $\theta(X)=-^tX$ for $X$ in $\frak a$.
	
	The Weyl group $W$ is isomorphic to the symmetric group $\scr S_{\rank+1}$. The action on $\frak a$ is simply given by the permutation of coordinates and the element $w_0$ sends $X=\diag(x_1,\dots,x_{\rank+1})$ to $w_0X=\diag(x_{\rank+1},\dots,x_1)$.

\subsection{Linear actions on vector spaces}
\label{sec:linear action}
Let $V$ be a vector space with an euclidean norm. Then we have an induced norm on its dual space $V^*$, tensor products $\otimes^jV$ and exterior powers $\wedge^jV$. 

For $x=\R v,x'=\R v'$ in $\bp V$, we define the distance between $x,x'$ by 
\begin{equation}\label{equ:distance x x'}
d(x,x')=\frac{\|v\wedge v'\|}{\|v\|\|v'\|}.
\nomentry{$d(x,x')$}{}
\end{equation}
This distance has the advantage that it behaves well under the action of $GL(V)$. See for example Lemma \ref{lem:gBmg}.
For $y=\R f$ in $\bp V^*$, let $y^\perp=\bp(\ker f)\subset \bp V$ be a hyperplane in $\bp V$. For $x=\R v$ in $\bp V$, we define the distance of $x$ to $y^\perp$ by
\begin{equation*}
	\delta(x,y)=\frac{|f(v)|}{\|f\|\|v\|},
	\nomentry{$\delta(x,y$)}{}
\end{equation*} 
which is explained by $\delta(x,y)=d(x,y^\perp)=\min_{x'\in y^\perp}d(x,x')$. Let $K_V$ be the compact group of $GL(V)$ which preserves the norm. Let $A_V^+$ be the set of diagonal elements such that $\{a=\diag(a_1,\cdots, a_d)| a_1\geq a_2\geq\cdots\geq a_d \}$, under the basis $\{e_1,\cdots, e_d \}$. Let $A_V^{++}$ be the interior of $A_V^+$. For $g$ in $GL(V)$, by the Cartan decomposition we can choose 
\begin{equation}\label{equ:kgaglg}
g=k_ga_g\ell_g\text{, where }a_g\in A_V^+\text{ and }k_g,\ell_g\in K_V.
\end{equation}
Let $e_i^*$ be the dual basis.
Let $x^M_g=\R k_ge_1$
\nomentry{$x^M_g$}{}
 and $y^m_g=\R\,^t\ell_g e_1^*$
 \nomentry{$y^m_g$}{}
  be the density points of $g$ on $\bp V$ and $\,^t g$ on $\bp V^*$, which is unique and independent of the choice of basis when $a_g$ is in $A_V^{++}$.
For $r>0$ and $g$ in $GL(V)$, let
\begin{align*}
b^M_{V,g}(r)&=\{x\in\bb PV|d(x,x^M_g)\leq r \},\\ \nomentry{$b^M_{V,g}(r)$}{}
B^m_{V,g}(r)&=\{x\in\bb PV|\delta(x,y^m_g)\geq r \}.\nomentry{$B^m_{V,g}(r)$}{}
\end{align*}
These two sets play an important role when we want to get some ping-pong property. The elements in the set $B^m_{V,g}(r)$ have distance at least $r$ to the hyperplane determined by $y^m_g$. 

\subsubsection*{Distance and norm}\label{sec:distance}
We start with a general $g$ in $GL(V)$, where $V$ is a finite-dimensional vector space with euclidean norm. We need some technical distance control. These are quantitative versions of the same controls in \cite[Lemma 2.5, 4.3, 6.5]{quint2002mesures}.

For $g$ in $GL(V)$ and $x=\R v\in\bp V$, we define an additive cocycle $\sigma_V:GL(V)\times \bp V\rightarrow \R$ by
\begin{equation}\label{equ:sigma V}
\sigma_V(g,x)=\log\frac{\|gv\|}{\|v\|}.
\end{equation}
This is called a cocycle, because for $g,h$ in $G$ we have
\[\sigma_V(gh,x)=\sigma_V(g,hx)+\sigma_V(h,x). \]
\begin{lem}[Lemma 14.2 in \cite{benoistquint}]\label{lem:cocycle} For any $g$ in $GL(V)$ and $x$ in $\bb PV$, we have
	\begin{align}\label{equ:coccar}
	\delta(x,y^m_g)&\leq \frac{\|gv\|}{\|g\|\|v\|}\leq 1.
	\end{align}
\end{lem}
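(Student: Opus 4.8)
The upper bound $\|gv\|/(\|g\|\|v\|)\le 1$ is immediate from the definition of the operator norm, so the content is entirely in the lower bound. The plan is to unwind everything through the Cartan decomposition $g=k_ga_g\ell_g$ of \eqref{equ:kgaglg}, exploiting that $k_g,\ell_g\in K_V$ preserve the Euclidean norm on $V$ (hence the dual norm on $V^*$ as well).

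First I would record that $\|g\|=\|a_g\|=a_1$, the largest diagonal entry of $a_g=\diag(a_1,\dots,a_d)$, since pre- and post-composition by the isometries $k_g,\ell_g$ leaves the operator norm unchanged. Next, writing $y^m_g=\R f$ with $f=\,^t\ell_g e_1^*$, note that $\|f\|=\|e_1^*\|=1$ because $\,^t\ell_g$ is again an isometry of $V^*$, and that, under the identification of $V$ with $V^*$ via the inner product, $f(v)=e_1^*(\ell_g v)=\l \ell_g v,e_1\r$. Hence
$$\delta(x,y^m_g)=\frac{|f(v)|}{\|f\|\,\|v\|}=\frac{|\l \ell_g v,e_1\r|}{\|v\|}.$$

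It then remains to compare this quantity with $\|gv\|$. Expanding $\ell_g v=\sum_{i=1}^d c_i e_i$ with $c_i=\l \ell_g v,e_i\r$, we have $gv=k_ga_g\ell_g v$, so $\|gv\|=\|a_g\ell_g v\|=(\sum_i a_i^2 c_i^2)^{1/2}\ge a_1|c_1|=\|g\|\cdot|\l \ell_g v,e_1\r|$, where in the inequality we simply dropped all but the first term of the (nonnegative) sum and used $a_1\ge a_i$. Dividing through by $\|g\|\,\|v\|$ yields $\|gv\|/(\|g\|\|v\|)\ge |\l \ell_g v,e_1\r|/\|v\|=\delta(x,y^m_g)$, which is the claimed lower bound.

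There is no genuine obstacle; the only steps requiring a little care are the identification $V\cong V^*$ together with the verification that $\,^t\ell_g$ preserves the dual norm, so that $\|f\|=1$ and the displayed formula for $\delta(x,y^m_g)$ is clean, and keeping track of which of $k_g,\ell_g$ acts where. (One could alternatively just invoke \cite[Lemma 14.2]{benoistquint}, but the argument above is short and self-contained.)
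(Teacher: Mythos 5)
Your argument is correct; note that the paper itself gives no proof of this lemma but simply cites \cite[Lemma 14.2]{benoistquint}, and your computation via the Cartan (singular value) decomposition, using that $k_g,\ell_g$ are isometries, $\|g\|=a_1$, and $\delta(x,y^m_g)=|\langle \ell_g v,e_1\rangle|/\|v\|$, is exactly the standard argument behind that reference. Nothing is missing.
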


For $g$ in $GL(V)$, let $\gamma_{1,2}(g):=\frac{\|\wedge^2g\|}{\|g\|^2}$. \nomentry{$\gamma_{1,2}(g)$}{}

\begin{lem}\label{lem:gBmg}
	Let $\delta>0$. For $g$ in $GL(V)$, if $\expec=\gamma_{1,2}(g)\leq\delta^2$, then 
	\begin{itemize}
		\item the action of $g$ on $B^m_{V,g}(\delta)$ is $\expec\delta^{-2}$-Lipschitz and
		\[gB^m_{V,g}(\delta)\subset b^M_{V,g}(\expec\delta^{-1})\subset b^M_{V,g}(\delta), \]
		\item
		the restriction of the real valued function $\sigma_V(g,\cdot)$ on $B^m_{V,g}(\delta)$ is $2\delta^{-1}$-Lipschitz.
	\end{itemize}
\end{lem}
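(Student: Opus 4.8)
\emph{Proof plan.} The plan is to reduce to the case where $g$ is diagonal and then run three short coordinate computations. Writing the Cartan decomposition $g=k_ga_g\ell_g$ from \eqref{equ:kgaglg}, I would first record that every element of $K_V$ acts on $\bp V$ as an isometry for $d$ (since $\wedge^2$ of an orthogonal map is orthogonal) and preserves the function $\delta(\cdot,\cdot)$ (since the transpose–inverse of an orthogonal map is orthogonal on $V^*$), while $\sigma_V(k,\cdot)\equiv 0$ for $k\in K_V$. Hence $\ell_g$ carries $B^m_{V,g}(\delta)$ isometrically onto $B^m_{V,a_g}(\delta)$, $k_g$ carries $b^M_{V,a_g}(r)$ onto $b^M_{V,g}(r)$, one has $\sigma_V(g,\cdot)=\sigma_V(a_g,\ell_g\,\cdot)$, and $\gamma_{1,2}(g)=\gamma_{1,2}(a_g)$ (as $\|{\cdot}\|$ and $\|\wedge^2{\cdot}\|$ are $K_V$-invariant); so it suffices to treat $g=a=\diag(a_1,\dots,a_d)\in A_V^+$. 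In that model $\expec=a_2/a_1$, $x^M_a=\R e_1$, $y^m_a=\R e_1^*$, and for a unit vector $v=\sum_i v_ie_i$ the condition $\R v\in B^m_{V,a}(\delta)$ just means $|v_1|\geq\delta$.

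For the Lipschitz bound on the action I would use the general inequality $d(gx,gx')\leq\|\wedge^2g\|\,d(x,x')/(\|gv\|\,\|gv'\|)$ for unit representatives $v,v'$, valid for any $g\in GL(V)$; on $B^m_{V,g}(\delta)$ Lemma \ref{lem:cocycle} gives $\|gv\|,\|gv'\|\geq\delta\|g\|$, and the claimed constant $\expec\delta^{-2}$ falls out (no reduction needed here). For $gB^m_{V,g}(\delta)\subset b^M_{V,g}(\expec\delta^{-1})$, in the diagonal model I would compute $\|av\wedge e_1\|^2=\sum_{i\geq 2}a_i^2v_i^2\leq a_2^2$ and $\|av\|\geq a_1|v_1|\geq a_1\delta$, giving $d(ax,x^M_a)\leq a_2/(a_1\delta)=\expec\delta^{-1}$; the hypothesis $\expec\leq\delta^2$ then forces $\expec\delta^{-1}\leq\delta$, hence the nested inclusion. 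For the Lipschitz bound on $\sigma_V(a,\cdot)=\log\|a\,(\cdot)\|$, I would choose unit representatives $v,v'$ with $\langle v,v'\rangle\geq 0$, so that $\|v-v'\|^2=2-2\langle v,v'\rangle\leq 2(1-\langle v,v'\rangle^2)=2\,d(x,x')^2$, and then combine $|\log\|av\|-\log\|av'\||\leq|\,\|av\|-\|av'\|\,|/\min(\|av\|,\|av'\|)$ with $|\,\|av\|-\|av'\|\,|\leq\|a(v-v')\|\leq a_1\|v-v'\|$ and $\min(\|av\|,\|av'\|)\geq a_1\delta$ to get the constant $\sqrt 2\,\delta^{-1}\leq 2\delta^{-1}$.

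I expect the only genuinely fiddly point to be the reduction bookkeeping — verifying that $K_V$ and its induced actions on $\wedge^2V$ and $V^*$ intertwine all of $d$, $\delta(\cdot,\cdot)$, $b^M_{V,\bullet}$, $B^m_{V,\bullet}$ and $\sigma_V$ correctly — together with the mild care needed (aligned representatives and the elementary inequality above) to land the final constant at exactly $2\delta^{-1}$ rather than something worse. Once $g$ is diagonal, each of the three bounds is a one-line computation.
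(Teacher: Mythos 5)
Your proof is correct, and all three constants land where they should (the $\sqrt 2\,\delta^{-1}\leq 2\delta^{-1}$ bookkeeping for the cocycle, and $\expec\delta^{-1}\leq\delta$ for the nested inclusion, both check out). The route differs from the paper's mainly in being self-contained: the paper proves the middle bullet (the $\expec\delta^{-2}$-Lipschitz bound on the action) by exactly the same three-factor estimate you use, but it obtains the inclusion by quoting the inequality $d(gx,x^M_g)\,\delta(x,y^m_g)\leq\gamma_{1,2}(g)$ from \cite[Lemma 14.2]{benoistquint} directly for general $g$ (no diagonal reduction needed), and it simply refers to \cite[Lemma 17.11]{benoistquint} for the $2\delta^{-1}$-Lipschitz property of $\sigma_V(g,\cdot)$. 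Your Cartan reduction $g=k_ga_g\ell_g$, with the checks that $K_V$ intertwines $d$, $\delta(\cdot,\cdot)$, the sets $b^M_{V,\bullet}$, $B^m_{V,\bullet}$, and kills $\sigma_V$, effectively reproves those two cited facts by hand: the diagonal computation $\|av\wedge e_1\|\leq a_2$, $\|av\|\geq a_1\delta$ is precisely the content of the quoted inequality, and your aligned-representative estimate gives the cocycle Lipschitz bound with an explicit (even slightly better) constant. What your approach buys is independence from the Benoist--Quint references and transparent constants; what the paper's buys is brevity and the observation that the key inclusion inequality holds for arbitrary $g$ without any reduction. Just make sure the reduction is phrased with respect to the same fixed choice of $k_g,\ell_g$ used to define $x^M_g$, $y^m_g$ in \eqref{equ:kgaglg} (as you implicitly do), since these density points depend on that choice when $a_g\notin A_V^{++}$.
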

\begin{proof}
	Due to \cite[Lem 14.2]{benoistquint},
	\[d(gx,x^M_g)\delta(x,y^m_g)\leq\gamma_{1,2}(g)=\beta. \]
	Hence
	\[d(gx,x^M_g)\leq\beta\delta(x,y^m_g)^{-1}\leq \beta\delta^{-1}, \]
	which implies the inclusion.
	
	For $x=\R v$ and $x'=\R v'$ in $B^m_{V,g}(\delta)$, by \eqref{equ:coccar}, we have
	\[d(gx,gx')=\frac{\|gv\wedge gv'\|}{\|v\wedge v'\|}\frac{\|v\wedge v'\|}{\|v\|\|v'\|}\frac{\|v\|\|v'\|}{\|gv\|\|gv'\|}\leq\gamma_{1,2}(g)d(x,x')\delta^{-2} , \]
	which implies the Lipschitz property of $g$.
	
	For the Lipschitz property of $\sigma_V(g,\cdot)$, please see \cite[Lemma 17.11]{benoistquint}.
\end{proof}
For two different points $x=\R v$ and $x'=\R v'$ in $\bp V$, we write $x\wedge x'=\R (v\wedge v')\in \bp (\wedge^2 V)$. 
\begin{lem}\label{lem:distance} For any $g$ in $GL(V)$ and two different points $x=\bb Rv, x'=\bb R v'$ in $\bb PV$, we have
	\begin{align}\label{equ:gxgx'l}
	\gamma_{1,2}(g)\delta(x\wedge x',y^m_{\wedge^2g})&\leq \frac{d(gx,gx')}{d(x,x')}.
	\end{align}
\end{lem}
\begin{proof} By definition and \eqref{equ:coccar}, we have
	\begin{align*}
	d(gx,gx')&=\frac{\|gv\wedge gv'\|}{\|v\wedge v'\|}\frac{\|v\wedge v'\|}{\|v\|\|v'\|}\frac{\|v\|\|v'\|}{\|gv\|\|gv'\|}\geq\gamma_{1,2}(g)\delta(x\wedge x',y_{\wedge^2g}^m)d(x,x').
	\end{align*}
	The proof is complete.
\end{proof}

\subsection{Actions on Flag varieties}
\label{sec:actionflag}
\subsubsection*{Representations and Density points}
Now, suppose that $V$ is a representation of $G$ with a good norm. Recall that $V^\chi$ is the eigenspace of the highest weight $\chi$. Let $V^*$ be the dual space of $V$. The representation of $G$ on $V^*$ is the dual representation given by: for $g\in G$ and $f\in V^*$, let $\rho^*(g)f=\,^t\rho(g^{-1})f$. This definition gives
\begin{equation}\label{equ:dual}
\rho^*(g)f(\rho(g)v)= \,^t\rho(g^{-1})f(\rho(g)v)=f(v), 
\end{equation}
for $f$ in $V^*$ and $v$ in $V$. Then the highest weight of $V^*$ is $\iota\chi$. The following results explain the relation between different definitions by using combinatorial information on root systems and representations.
\begin{lem}
	For an irreducible representation $V$ of $G$ with highest weight $\chi$, 
	\begin{equation}\label{equ:woweight}
	V_{\chi,\zeta_o}=V^{w_0\chi}.
	\end{equation}
\end{lem}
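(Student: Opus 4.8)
The plan is to unwind the two definitions involved: $\zeta_o$ is the image of $\eta_o$ under a representative of $w_0$, and a representative of $w_0$ carries the weight space $V^\chi$ onto its Weyl translate $V^{w_0\chi}$. Concretely, I would fix a representative $\dot w_0\in N_G(A)\subset G$ of $w_0\in W$. Since $A\subset P$, the coset $\dot w_0P\in G/P=\P$ does not depend on this choice, so $\zeta_o=w_0\eta_o=\dot w_0\eta_o$ in $\P$. By the definition $V_{\chi,\eta}=\rho(g)V^\chi$ for $\eta=g\eta_o$ — which, as already observed in the excerpt, is well posed because $P$ stabilises the line $V^\chi$ — this gives $V_{\chi,\zeta_o}=\rho(\dot w_0)V^\chi$, and the task reduces to showing $\rho(\dot w_0)V^\chi=V^{w_0\chi}$.

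Next I would establish the inclusion $\rho(\dot w_0)V^\chi\subseteq V^{w_0\chi}$ by an infinitesimal computation. Recall that $\ad(\dot w_0)$ acts on $\frak a$ as $w_0\in W$. For $X\in\frak a$ and $v\in V^\chi$, applying $\rho(\dot w_0)\dd\rho(Y)\rho(\dot w_0)^{-1}=\dd\rho(\ad(\dot w_0)Y)$ with $Y=\ad(\dot w_0)^{-1}X$ yields
\[
\dd\rho(X)\,\rho(\dot w_0)v=\rho(\dot w_0)\,\dd\rho\big(\ad(\dot w_0)^{-1}X\big)v=\chi\big(\ad(\dot w_0)^{-1}X\big)\,\rho(\dot w_0)v=(w_0\chi)(X)\,\rho(\dot w_0)v,
\]
the last equality being the definition of the dual action of $W$ on $\frak a^*$. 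Hence $\rho(\dot w_0)v\in V^{w_0\chi}$.

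Finally I would upgrade this to an equality by a dimension count. Since $\rho(\dot w_0)$ is a linear automorphism of $V$, it maps $V^\chi$ isomorphically onto its image inside $V^{w_0\chi}$, so $\dim V^\chi\le\dim V^{w_0\chi}$; the same argument applied to $\dot w_0^{-1}$, a representative of $w_0^{-1}$, gives $\rho(\dot w_0^{-1})V^{w_0\chi}\subseteq V^{w_0^{-1}w_0\chi}=V^\chi$ and hence $\dim V^{w_0\chi}\le\dim V^\chi$. Therefore the two finite-dimensional weight spaces have equal dimension, the inclusion of the previous paragraph is an equality, and $V_{\chi,\zeta_o}=\rho(\dot w_0)V^\chi=V^{w_0\chi}$, as claimed. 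I do not anticipate a genuine obstacle here: the computation is elementary, and the only points needing a little care are the independence of $\dot w_0P$ from the chosen lift (so that $V_{\chi,\zeta_o}$ really is the object claimed) and keeping the inverse/duality conventions for the $W$-action on $\frak a$ versus $\frak a^*$ straight.
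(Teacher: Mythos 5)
Your proposal is correct and follows essentially the same route as the paper: the core step is the same infinitesimal computation $\dd\rho(X)\rho(\dot w_0)v=\rho(\dot w_0)\dd\rho(\ad(\dot w_0)^{-1}X)v=(w_0\chi)(X)\rho(\dot w_0)v$, which the paper states (writing $w_0X$ for $w_0^{-1}X$, harmless since $w_0$ is an involution) and then stops. Your added points — independence of the lift $\dot w_0$ because $A\subset P$, and the dimension count via $\dot w_0^{-1}$ upgrading the inclusion $\rho(\dot w_0)V^\chi\subseteq V^{w_0\chi}$ to equality — are exactly the details the paper leaves implicit, and they are handled correctly.
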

\begin{proof}
	This can be verified as follows: For $X$ in $\frak a$ and $v$ in $V^\chi$,
	\[\dd\rho(X)\rho(w_0)v=\rho(w_0)\dd\rho(w_0X)v=\chi(w_0X)\rho(w_0)v=(w_0\chi)(X)\rho(w_0)v.  \]
	The proof is complete.
\end{proof} 
\begin{lem}\label{lem:ym rhog}
	Let $V$ be a proximal representation of $G$ with highest weight $\chi$. Then we have
	\begin{equation}\label{equ:ym rhog}
	x^M_{\rho(g)}=\rho(k_g)V^\chi\text{ and } y^m_{\rho(g)}=\,^t\rho(\ell_g)(V^*)^{-\chi}. 
	\end{equation}
	If $V$ is irreducible, then we have 
	$$x^M_{\rho(g)}=V_{\chi,\eta^M_g}\text{ and }y^m_{\rho(g)}=V^*_{\iota\chi, \zeta^m_g}.$$
\end{lem}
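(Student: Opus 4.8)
The plan is to transport the Cartan decomposition of $g$ through $\rho$ and read off the two density points from the weight decomposition of $V$.

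First I would write the Cartan decomposition $g=k_ga_g\ell_g$ in $G$, with $k_g,\ell_g\in K$ and $a_g=\exp\kappa(g)$, $\kappa(g)\in\frak a^+$, and apply $\rho$. Since the norm on $V$ is good, $\rho(k_g),\rho(\ell_g)$ preserve it and so lie in $K_V$, while $\rho(a_g)$ is symmetric and positive definite; moreover $\rho(a_g)$ acts on each weight space $V^\omega$ by the positive scalar $e^{\omega(\kappa(g))}$ (because $a_g=\exp\kappa(g)$ and $\dd\rho(X)$ acts by $\omega(X)$ on $V^\omega$, cf.\ Lemma \ref{lem:sign m}). Fixing a weight basis $e_1,\dots,e_d$ of $V$ ordered so that $\omega\mapsto\omega(\kappa(g))$ is nonincreasing, the factorization $\rho(g)=\rho(k_g)\rho(a_g)\rho(\ell_g)$ is then exactly a Cartan decomposition of $\rho(g)$ in $GL(V)$ as in \eqref{equ:kgaglg}. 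As $\chi$ is the highest weight and $\kappa(g)\in\frak a^+$, one has $\chi(\kappa(g))\ge\omega(\kappa(g))$ for every weight $\omega$ of $V$, so $e^{\chi(\kappa(g))}$ is a largest singular value of $\rho(g)$; by proximality $\dim V^\chi=1$, so I take $e_1$ to span $V^\chi$.

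With that choice, the definitions give $x^M_{\rho(g)}=\R\,\rho(k_g)e_1=\rho(k_g)V^\chi$, which is the first formula. For the dual point, $y^m_{\rho(g)}=\R\,^t\rho(\ell_g)e_1^*$; a one-line computation from $\dd\rho^*(X)=-\,^t\dd\rho(X)$ shows that the dual basis vector of a weight vector of weight $\omega$ has weight $-\omega$ in $V^*$, so $e_1^*\in(V^*)^{-\chi}$, and since $\dim(V^*)^{-\chi}=\dim V^\chi=1$ this gives $\R e_1^*=(V^*)^{-\chi}$ and hence $y^m_{\rho(g)}=\,^t\rho(\ell_g)(V^*)^{-\chi}$. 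In the irreducible case I would rewrite these using $V_{\chi,\eta}=\rho(h)V^\chi$ for $\eta=h\eta_o$: with $\eta^M_g=k_g\eta_o$ the first formula reads $x^M_{\rho(g)}=V_{\chi,\eta^M_g}$, while with $\zeta^m_g=\ell_g^{-1}\zeta_o=\ell_g^{-1}w_0\eta_o$ and \eqref{equ:woweight} applied to the irreducible representation $V^*$ (highest weight $\iota\chi$) one gets $V^*_{\iota\chi,\zeta^m_g}=\rho^*(\ell_g^{-1})\rho^*(w_0)(V^*)^{\iota\chi}=\rho^*(\ell_g^{-1})(V^*)^{w_0\iota\chi}$; since $\iota=-w_0$ and $w_0^2=1$ we have $w_0\iota\chi=-\chi$, and $\rho^*(\ell_g^{-1})=\,^t\rho(\ell_g)$ by definition of the dual representation, so $V^*_{\iota\chi,\zeta^m_g}=\,^t\rho(\ell_g)(V^*)^{-\chi}=y^m_{\rho(g)}$.

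The delicate point I expect is simply checking that $\rho(g)=\rho(k_g)\rho(a_g)\rho(\ell_g)$ genuinely is a $GL(V)$ Cartan decomposition with the top singular value attained along the line $V^\chi$: this is exactly where goodness of the norm, the highest-weight property of $\chi$ (which forces $\chi(\kappa(g))$ to be maximal over weights, because $\kappa(g)$ is dominant), and proximality ($\dim V^\chi=1$) are all used together. A minor subtlety is that when $\kappa(g)$ is singular the density points are only determined up to the choice of $k_g,\ell_g$; the asserted identities then hold for every such choice, so nothing is lost.
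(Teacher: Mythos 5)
Your proposal is correct and follows essentially the same route as the paper: transport the Cartan decomposition $g=k_ga_g\ell_g$ through $\rho$, use the good norm, dominance of $\kappa(g)$ and proximality to see that $\rho(k_g)\rho(a_g)\rho(\ell_g)$ is a $GL(V)$ Cartan decomposition with top singular direction $V^\chi$, note that $e_1^*$ has weight $-\chi$ in $V^*$, and identify the irreducible case via \eqref{equ:woweight} applied to $V^*$ with highest weight $\iota\chi$. Your ordering of the weight basis by $\omega(\kappa(g))$ plays the same role as the paper's remark that permuting $\{e_2,\dots,e_d\}$ does not affect the density points, so there is no gap.
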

\begin{proof}
Let $\{  e_1,\dots,e_d \}$ be an orthonormal basis of $V$ composed of eigenvectors of $\rho(A)$ such that $e_1\in V^\chi$. Then $\rho(A)$ is diagonal. For $\exp(X)\in A^+$, since $\chi$ is the highest weight, we have
\[a_1=\exp(\chi(X))\geq a_2,\dots, a_d. \]
By the definition of a good norm, $\rho(K)$ preserves the norm. Hence for $g$ in $G$, the formula $\rho(g)=\rho(k_g)\rho(a_g)\rho(\ell_g)$ is a decomposition which satisfies \eqref{equ:kgaglg} in the previous paragraph with some permutation of $\{e_2,\dots, e_d \}$. But these permutations do not change the density points. Hence we have $x^M_{\rho(g)}=\R\rho(k_g)e_1=\rho(k_g)V^\chi$. If $V$ is irreducible we have $x^M_{\rho(g)}=V_{\chi,\eta^M_g}$. 

 In the dual space, we can verify that $e_1^*$ has weight $-\chi$, which is the lowest weight in weights of $V^*$. By the same argument as in $\bp V$, we have 
\begin{equation*}
y^m_{\rho(g)}=\R\,^t\rho(\ell_g)e_1^*=\,^t\rho(\ell_g)(V^*)^{-\chi}. 
\end{equation*}
We also have a map from $\P$ to $\bp V^*$. Hence by \eqref{equ:woweight} with representation $V^*$ and weight $\iota\chi$, we know
$V^*_{\iota\chi,\zeta_o}=(V^*)^{w_0\iota\chi}=(V^*)^{-\chi}$.
For $\zeta=g\zeta_o$ in $\P$, by definition, 
\begin{equation}\label{equ:iotachi}
V^*_{\iota\chi,\zeta}=gV^*_{\iota\chi,\zeta_o}=g(V^*)^{-\chi}. 
\end{equation}
Since $V$ is irreducible, by \eqref{equ:iotachi} we have $y^m_{\rho(g)}=\,^t\rho(\ell_g)(V^*)^{-\chi}=\rho^*(\ell_g^{-1})(V^*)^{-\chi}=V^*_{\iota\chi, \zeta^m_g}$.
\end{proof}

\subsubsection*{Distance on Flag varieties}
For $\alpha$ in $\Pi$, we abbreviate $V_{\chi_\alpha,\eta},V^*_{\iota\chi_\alpha,\zeta}$ to $V_{\alpha,\eta}, V^*_{\alpha,\zeta}$. \nomentry{$V_{\alpha,\eta}$}{}For $g$ in $G$, by Lemma \ref{lem:ym rhog}, we find $x^M_{\rho_\alpha(g)}=V_{\alpha,\eta^M_g}$ and $y^m_{\rho_\alpha(g)}=V^*_{\alpha,\zeta^m_g}$. 
For $\eta,\eta'$ in $\P$, let 
\[d_\alpha(\eta,\eta')=d(V_{\alpha,\eta},V_{\alpha,\eta'}) \]
\nomentry{$d_\alpha(\eta,\eta')$}{}%
be its distance between their images in $\bp V_\alpha$.
We define a distance on the flag variety. It is the maximal distance induced by projections,
\begin{equation}\label{equ:distance eta eta'}
d(\eta,\eta')=\max_{\alpha\in \Pi}d(V_{\alpha,\eta},V_{\alpha,\eta'}).
\nomentry{$d(\eta,\eta')$}{}
\end{equation}
We have another embedding of the flag variety
\[\P\rightarrow \prod_{\alpha\in\Pi}\bb P(V^*_\alpha). \]
For $\zeta=k\zeta_o\in\P$, by definition, we have $V^*_{\alpha,\zeta}=kV^*_{\alpha,\zeta_o}$.
For $\eta\in\P$ and $\zeta\in \P$, we set
\[\delta(\eta,\zeta)=\min_{\alpha\in\Pi}\delta(V_{\alpha,\eta},V^*_{\alpha,\zeta}). \]
\nomentry{$\delta(\eta,\zeta)$}{}%
In particular, because the images of $\eta_o,\zeta_o$ in $\bp V_\alpha,\bp V^*_\alpha$ are $V^{\chi_\alpha}, (V^*)^{-\chi_\alpha}$, we know $\delta(V_{\alpha,\eta_o},V^*_{\alpha,\zeta_o})=\delta(V^{\chi_\alpha},(V^*)^{-\chi_\alpha})=1$, and then
\begin{equation}\label{equ:delta etao}
	\delta(\eta_o,\zeta_o)=1.
\end{equation}
 We write 
 $$b^M_{V_\alpha,g}(r)=\{x\in\bb PV_\alpha|d(x,x^M_{\rho_\alpha(g)})\leq r \},$$  $$B^m_{V_\alpha,g}(r)=\{x\in\bb PV_\alpha|\delta(x,y^m_{\rho_\alpha(g)})\geq r \}.$$ 
 They are subsets of $\bp V_\alpha$.
 Write 
 $$b^M_g(r)=\{\eta\in\PP|\forall\alpha\in\Pi,\  V_{\alpha,\eta}\in b^M_{V_\alpha,g}(r) \}=\{\eta\in\PP|d(\eta,\eta^M_g)\leq r \},\nomentry{$b^M_g(r)$}{}$$
  $$B^m_g(r)=\{\eta\in\PP|\forall\alpha\in\Pi,\  V_{\alpha,\eta}\in B^m_{V_\alpha,g}(r) \}=\{\eta\in\P|\delta(\eta,\zeta^m_g)\geq r \}.  \nomentry{$B^m_g(r)$}{}$$ 
 They are subsets of $\PP$.
\subsubsection*{Distance and norm}
We need a multidimensional version of the lemmas in Section \ref{sec:linear action}. They are about the similar quantities on flag varieties. The idea is to use all the representations $\rho_\alpha$. There exists $C_1>0$ such that for any element $X$ in $\frak b$, we have
\begin{equation}\label{equ:norrep}
\frac{1}{C_1}\sup_{\alpha\in \Pi}|\chi_\alpha(X)|\leq \|X\|\leq C_1 \sup_{\alpha\in \Pi}|\chi_\alpha(X)|.
\end{equation}
Using Lemma \ref{lem:flapro}, \eqref{equ:norrep} and $\sigma(g,\eta)-\kappa(g)\in\frak b$, we deduce the following two lemmas from Lemma \ref{lem:cocycle} and Lemma \ref{lem:gBmg}
\begin{lem}\label{lem:iwacar}
	For $g$ in $G$ and $\eta$ in $\P$,
	\begin{equation*}
	\|\sigma(g,\eta)-\kappa(g)\|\leq C_1 |\log \delta(\eta,\zeta^m_g)|.
	\end{equation*}
\end{lem}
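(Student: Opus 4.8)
The difference $\sigma(g,\eta)-\kappa(g)$ lies in $\frak b$, so by the norm comparison \eqref{equ:norrep} it suffices to bound $|\chi_\alpha(\sigma(g,\eta)-\kappa(g))|$ by $|\log\delta(\eta,\zeta^m_g)|$ for every $\alpha\in\Pi$; then multiplying by $C_1$ gives the claim. The idea is to read off each coordinate $\chi_\alpha$ of the difference through the proximal representation $(\rho_\alpha,V_\alpha)$ with highest weight $\chi_\alpha$ and apply the linear-algebra estimate of Lemma \ref{lem:cocycle} in $\bp V_\alpha$.

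First I would fix $\alpha\in\Pi$ and a nonzero vector $v\in V_{\alpha,\eta}$. By Lemma \ref{lem:flapro},
\[
\exp\big(\chi_\alpha(\sigma(g,\eta))\big)=\frac{\|\rho_\alpha(g)v\|}{\|v\|},\qquad
\exp\big(\chi_\alpha(\kappa(g))\big)=\|\rho_\alpha(g)\|,
\]
so that
\[
\exp\big(\chi_\alpha(\sigma(g,\eta)-\kappa(g))\big)=\frac{\|\rho_\alpha(g)v\|}{\|\rho_\alpha(g)\|\,\|v\|}.
\]
Now apply Lemma \ref{lem:cocycle} to the element $\rho_\alpha(g)\in GL(V_\alpha)$ and the point $x=V_{\alpha,\eta}\in\bp V_\alpha$: this sandwiches the right-hand side between $\delta(V_{\alpha,\eta},y^m_{\rho_\alpha(g)})$ and $1$. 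By Lemma \ref{lem:ym rhog} we have $y^m_{\rho_\alpha(g)}=V^*_{\alpha,\zeta^m_g}$, hence
\[
\delta\big(V_{\alpha,\eta},V^*_{\alpha,\zeta^m_g}\big)\ \leq\ \exp\big(\chi_\alpha(\sigma(g,\eta)-\kappa(g))\big)\ \leq\ 1 .
\]

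To finish, recall $\delta(\eta,\zeta^m_g)=\min_{\beta\in\Pi}\delta(V_{\beta,\eta},V^*_{\beta,\zeta^m_g})\le \delta(V_{\alpha,\eta},V^*_{\alpha,\zeta^m_g})$, so the displayed inequality gives $\delta(\eta,\zeta^m_g)\le \exp(\chi_\alpha(\sigma(g,\eta)-\kappa(g)))\le 1$. Taking logarithms yields $\log\delta(\eta,\zeta^m_g)\le \chi_\alpha(\sigma(g,\eta)-\kappa(g))\le 0$, i.e. $|\chi_\alpha(\sigma(g,\eta)-\kappa(g))|\le |\log\delta(\eta,\zeta^m_g)|$; since $\alpha$ was arbitrary, \eqref{equ:norrep} concludes the proof. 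There is no real analytic difficulty here; the only thing requiring care is the bookkeeping — correctly matching the point $V_{\alpha,\eta}$ and the hyperplane $y^m_{\rho_\alpha(g)}=V^*_{\alpha,\zeta^m_g}$ across the embeddings of $\P$ into the $\bp V_\alpha$, and keeping track of the sign of the logarithm since $\delta\le 1$.
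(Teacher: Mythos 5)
Your proof is correct and is exactly the deduction the paper sketches: read off each $\chi_\alpha$-coordinate of $\sigma(g,\eta)-\kappa(g)\in\frak b$ via Lemma \ref{lem:flapro}, bound it using Lemma \ref{lem:cocycle} together with the identification $y^m_{\rho_\alpha(g)}=V^*_{\alpha,\zeta^m_g}$ from Lemma \ref{lem:ym rhog}, and conclude with the definition of $\delta(\eta,\zeta^m_g)$ as a minimum and the norm comparison \eqref{equ:norrep}. The bookkeeping, including the sign of the logarithm, is handled correctly.
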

For $g$ in $G$ and $\alpha\in\Pi$, by Lemma \ref{lem:flapro}, \[\gamma_{1,2}(\rho_\alpha(g))=\frac{\|\wedge^2\rho_\alpha (g)\|}{\|\rho_\alpha(g)\|^2}=e^{(2\chi_\alpha-\alpha-2\chi_\alpha)\kappa(g)}=e^{-\alpha\kappa(g)}.\]
Let 
\begin{equation}\label{equ:gap of g}
\gap(g)=\sup_{\alpha\in\Pi}e^{-\alpha\kappa(g)}. \nomentry{$\gap(g)$}{}
\end{equation}	
We call it the gap of $g$.
\begin{lem}\label{lem:gBmgmul}
	Let $\delta>0$. For $g$ in $G$, if $\expec=\gap(g)\leq\delta^2$, then 
	\begin{itemize}
		\item the action of $g$ on $B^m_{g}(\delta)$ is $\expec\delta^{-2}$-Lipschitz and
		\[gB^m_{g}(\delta)\subset b^M_{g}(\expec\delta^{-1})\subset b^M_{g}(\delta), \]
		\item
		the restriction of the $\frak a$-valued function $\sigma(g,\cdot)$ on $B^m_{g}(\delta)$ is $O(\delta^{-1})$-Lipschitz.
	\end{itemize}
\end{lem}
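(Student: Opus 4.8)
The plan is to transport the one-step estimates of Lemma~\ref{lem:gBmg} from each projective space $\bp V_\alpha$ to the flag variety by means of the $G$-equivariant embedding $\eta\mapsto(V_{\alpha,\eta})_{\alpha\in\Pi}$ of Lemma~\ref{lem:tits}, using that by \eqref{equ:distance eta eta'} the distance on $\P$ is $d(\eta,\eta')=\max_{\alpha\in\Pi}d(V_{\alpha,\eta},V_{\alpha,\eta'})$ and that, by definition, $b^M_g(r)$ and $B^m_g(r)$ are the preimages of the sets $b^M_{V_\alpha,g}(r)$ and $B^m_{V_\alpha,g}(r)$ (the identifications $x^M_{\rho_\alpha(g)}=V_{\alpha,\eta^M_g}$, $y^m_{\rho_\alpha(g)}=V^*_{\alpha,\zeta^m_g}$ come from Lemma~\ref{lem:ym rhog}). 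The only numerical input is the identity recorded just above \eqref{equ:gap of g}: for every $\alpha\in\Pi$,
\[\gamma_{1,2}(\rho_\alpha(g))=e^{-\alpha\kappa(g)}\le\gap(g)=\beta\le\delta^2,\]
so Lemma~\ref{lem:gBmg} applies to each $\rho_\alpha(g)$ at scale $\delta$.

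For the inclusions, fix $\eta\in B^m_g(\delta)$, so that $V_{\alpha,\eta}\in B^m_{V_\alpha,g}(\delta)$ for all $\alpha$. Lemma~\ref{lem:gBmg} gives $\rho_\alpha(g)V_{\alpha,\eta}\in b^M_{V_\alpha,g}(\gamma_{1,2}(\rho_\alpha(g))\delta^{-1})\subset b^M_{V_\alpha,g}(\beta\delta^{-1})$, and equivariance of the map \eqref{equ:ppV} gives $\rho_\alpha(g)V_{\alpha,\eta}=V_{\alpha,g\eta}$; hence $g\eta\in b^M_g(\beta\delta^{-1})$. Since $\beta\le\delta^2$ forces $\beta\delta^{-1}\le\delta$, we also get $b^M_g(\beta\delta^{-1})\subset b^M_g(\delta)$, which is the claimed chain. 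For the Lipschitz bound on the action, take $\eta,\eta'\in B^m_g(\delta)$ and apply the Lipschitz part of Lemma~\ref{lem:gBmg} in each coordinate: $d(V_{\alpha,g\eta},V_{\alpha,g\eta'})\le\gamma_{1,2}(\rho_\alpha(g))\delta^{-2}\,d(V_{\alpha,\eta},V_{\alpha,\eta'})\le\beta\delta^{-2}\,d(\eta,\eta')$; taking the maximum over $\alpha\in\Pi$ and using \eqref{equ:distance eta eta'} yields $d(g\eta,g\eta')\le\beta\delta^{-2}d(\eta,\eta')$.

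For the cocycle, I would use that $c(g)$ does not depend on $\eta$, so that for $\eta,\eta'\in B^m_g(\delta)$ the difference $\sigma(g,\eta)-\sigma(g,\eta')=\sigma_{ss}(g,\eta)-\sigma_{ss}(g,\eta')$ lies in $\frak b$, whence by \eqref{equ:norrep} $\|\sigma(g,\eta)-\sigma(g,\eta')\|\le C_1\sup_{\alpha\in\Pi}|\chi_\alpha(\sigma(g,\eta)-\sigma(g,\eta'))|$. By Lemma~\ref{lem:flapro}, $\chi_\alpha\sigma(g,\eta)=\sigma_{V_\alpha}(\rho_\alpha(g),V_{\alpha,\eta})$, and the second bullet of Lemma~\ref{lem:gBmg} says this function is $2\delta^{-1}$-Lipschitz on $B^m_{V_\alpha,g}(\delta)$; therefore each term is $\le 2\delta^{-1}d(V_{\alpha,\eta},V_{\alpha,\eta'})\le 2\delta^{-1}d(\eta,\eta')$, giving $\|\sigma(g,\eta)-\sigma(g,\eta')\|\le 2C_1\delta^{-1}d(\eta,\eta')=O(\delta^{-1})d(\eta,\eta')$. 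The argument is pure bookkeeping; the only point that needs a little care is to align the density-point identifications of Lemma~\ref{lem:ym rhog} with the definitions of $b^M_g,B^m_g$ and to note $\beta\delta^{-1}\le\delta$, so I do not expect a genuine obstacle — all the real content sits in Lemma~\ref{lem:gBmg} and the norm comparison \eqref{equ:norrep}.
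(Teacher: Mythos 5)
Your proof is correct and follows essentially the route the paper indicates: the paper deduces this lemma from Lemma~\ref{lem:gBmg} via the representations $\rho_\alpha$, the identity $\gamma_{1,2}(\rho_\alpha(g))=e^{-\alpha\kappa(g)}\le\gap(g)$, Lemma~\ref{lem:flapro} and the norm comparison \eqref{equ:norrep}, exactly as you do. Your only (harmless) variation is to put the cocycle difference in $\frak b$ via the $\eta$-independence of the central part $c(g)$, where the paper invokes $\sigma(g,\eta)-\kappa(g)\in\frak b$; the two observations are interchangeable.
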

These properties tell us that the action of an element $g$ on a large set of the flag variety $\P$ behaves like uniformly contracting map.

We also need to compare the distance on the projective space and the flag variety. Recall that the map from $\P$ to $\bp V$ is defined in \eqref{equ:ppV}.
\begin{lem}\label{lem:profla}
	Let $(\rho,V)$ be an irreducible representation of $G$ with highest weight $\chi$. There exists a constant $C>0$ depending on the chosen norm such that for $\eta,\eta'$ in $\P$,
	\begin{equation}\label{equ:profla}
	d(V_{\chi,\eta},V_{\chi,\eta'})\leq C d(\eta,\eta').
	\end{equation}
\end{lem}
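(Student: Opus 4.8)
The plan is to reduce the inequality, by $K$-equivariance, to a local statement at the base point $\eta_o$, and then get a uniform Lipschitz constant from the compactness of $\P$ together with the fact that $\eta\mapsto V_{\chi,\eta}$ is a smooth map. First I would observe that $K$ acts by isometries on $(\bp V,d)$ and on $(\PP,d)$: since $\|\cdot\|$ is a good norm, $\rho(K)$ preserves it, hence $\wedge^2\rho(K)$ preserves the induced norm on $\wedge^2V$, so by \eqref{equ:distance x x'} we have $d(\rho(k)x,\rho(k)x')=d(x,x')$ for $k\in K$; the same holds on each $\bp V_\alpha$ (the $V_\alpha$ also carry good norms), and together with $V_{\alpha,k\eta}=\rho_\alpha(k)V_{\alpha,\eta}$ and \eqref{equ:distance eta eta'} this gives $d(k\eta,k\eta')=d(\eta,\eta')$ on $\PP$. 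As $K$ is transitive on $\PP$, given $\eta,\eta'$ choose $k\in K$ with $k\eta=\eta_o$; then $d(\eta,\eta')=d(\eta_o,k\eta')$ and $d(V_{\chi,\eta},V_{\chi,\eta'})=d(V^\chi,V_{\chi,k\eta'})$, using $V_{\chi,k\eta}=\rho(k)V_{\chi,\eta}$ and $V_{\chi,\eta_o}=V^\chi$. So it suffices to prove that $d(V^\chi,V_{\chi,\zeta})\le C\,d(\eta_o,\zeta)$ for every $\zeta\in\PP$.

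For this I would argue in two regimes. The map $\phi\colon\PP\to\bp V$, $\zeta\mapsto V_{\chi,\zeta}$, is smooth: it is obtained from the algebraic map $g\mapsto\rho(g)V^\chi$ (well defined since irreducibility forces $P$ to stabilise the line $V^\chi$) by descending along the submersion $G\to G/P=\PP$. Fix a chart $\zeta=\exp(Y)\eta_o$ with $Y$ in a small ball of a complement of $\mathrm{Lie}(K\cap P)$ in $\frak k$; in these coordinates $\phi$ is Lipschitz and $\phi(\eta_o)=V^\chi$, so $d(V^\chi,V_{\chi,\zeta})=O(\|Y\|)$. On the other hand the embedding of Lemma \ref{lem:tits} is in particular an immersion at $\eta_o$, so $d(\eta_o,\zeta)=\max_\alpha d(V^{\chi_\alpha},V_{\alpha,\zeta})\gtrsim\|Y\|$ for $\|Y\|$ small; combining the two estimates yields the bound with a uniform constant on a fixed neighbourhood $U$ of $\eta_o$. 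For $\zeta\notin U$ the number $\epsilon_0:=\inf_{\zeta\in\PP\setminus U}d(\eta_o,\zeta)$ is positive (continuity of the metric $d$ on the compact set $\PP\setminus U$, using Lemma \ref{lem:tits}), while $d(V^\chi,V_{\chi,\zeta})\le1$ by \eqref{equ:distance x x'}; so the inequality holds there with constant $\epsilon_0^{-1}$. Taking $C$ to be the maximum of the two constants finishes the argument, and $C$ visibly depends only on $G$, on $\rho$ and on the chosen norms.

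I do not expect a serious obstacle: morally this is just the statement that a smooth map between compact manifolds is Lipschitz, the only input beyond smoothness being that the distance $d$ on $\PP$ (the maximum of the $d_\alpha$) is comparable to a Riemannian distance, which is precisely the immersion part of Lemma \ref{lem:tits}. The mildly delicate point is therefore the local lower bound $d(\eta_o,\zeta)\gtrsim\|Y\|$ in the chart. If one wished to avoid invoking that property directly, an alternative route is to factor $\phi$ through a Cartan product: after replacing $V$ by a suitable symmetric power and twisting by a character, the map $\zeta\mapsto V_{\chi,\zeta}$ becomes the restriction to $\PP\hookrightarrow\prod_\alpha\bp V_\alpha$ of a Segre-type map into $\bp\!\left(\bigotimes_\alpha V_\alpha^{\otimes k_\alpha}\right)$ followed by a linear projection onto the Cartan summand; this map is smooth on the compact space $\prod_\alpha\bp V_\alpha$, hence Lipschitz for the $\max$-metric there, which gives the claim after undoing the power and the twist.
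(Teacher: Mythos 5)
Your argument is correct and is essentially the paper's proof: the map $\eta\mapsto V_{\chi,\eta}$ is smooth between compact manifolds, and the only real content is that the distance $d$ on $\P$ from \eqref{equ:distance eta eta'} is comparable to a Riemannian distance, which is exactly what the paper establishes in Appendix \ref{sec:equi distance} (Lemmas \ref{lem:equivalent distance} and \ref{lem:equivalence distance P}, via the explicit highest-weight computation of Lemma \ref{lem:kvv}). The one point to adjust is your appeal to ``the immersion part of Lemma \ref{lem:tits}'': that lemma as stated only asserts an embedding, so for the local lower bound $d(\eta_o,\zeta)\gg\|Y\|$ you should either invoke the appendix lemmas just cited or justify the injectivity of the differential directly (which is what Lemma \ref{lem:kvv} does).
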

The intuition is that a differentiable map between two compact Riemannian manifolds is Lipschitz. For more details, please see Lemma \ref{lem:equivalence distance P} in Appendix \ref{sec:equi distance}.
\subsection{Actions on the tangent bundle of the Flag variety}
\label{sec:tangent}
In this section, we will study the action of $G$ on the tangent bundle of $\P$. Recall that $\P\simeq G/P$ is the flag variety and $P=AN$ is a parabolic subgroup.

We first study the tangent bundle of the homogeneous space
$$\P_0=G/A_eN.\nomentry{$\P_0$}{}$$ 
Recall that $A_e$ is the analytical connected component of $A$, given by $\exp(\frak a)$. Note that the left action of $K$ on $\P_0$ is simply transitive (due to the Iwasawa decomposition in split case). 
Let $\k_o$ be the base point $A_eN$ in $\P_0$. We can identify the left $K$-invariant vector fields as 
$$T_{\k_o}\P_0=T_{\k_o}(G/A_eN)\simeq \frak g/\frak p.$$
Hence the tangent bundle of $\P_0$ has an isomorphism
\[T\P_0\simeq \P_0\times \frak g/\frak p, \]
that is because we can identify the tangent space at $\k_o$ and $\k=k\k_o$ by the left action of $k$. We denote by $(\k,Y)$ a point of $T\P_0$ where $\k$ is in $\P_0$ and $Y$ is in $\frak g/\frak p$. We use elements in $\frak n^-=\oplus_{\alpha\in R^+}\frak g^{-\alpha}$ as representative elements in $\frak g/\frak p$. 

Then we describe the left action of $G$ on $T\P_0$. Take $Y$ in $\frak g^{-\alpha}$ and $\k=k\k_o$ in $\P_0$. For $g$ in $G$, by the Iwasawa decomposition we have a unique $k'$ in $K$ and a unique $\sigma(g,k)$ in $\frak a$ such that $gk=k'p\in k'\exp(\sigma(g,k))N$, where $p\in A_eN$. Here $\sigma(g,k)$ is understood as $\sigma(g,k\eta_o)$. Due to 
$$gk\exp(tY)\k_o=k'p\exp(tY)\k_o=k'\exp(t\ad_pY)\k_o,$$ 
by taking derivative at $t=0$, the left action of $g$ on the tangent vector $(\k,Y)$ satisfies
\[L_g(\k,Y)=(\k',\ad_pY), \]
where $\k'=k'\eta_o$ and $\ad$ is the adjoint action of $P$ on $\frak g/\frak p$. 

Now we restrict our attention to simple roots. Let $\alpha$ be a \textbf{simple root}. Due to $Y\in \frak g^{-\alpha}$, we have $\ad_NY\subset Y+\frak a+\frak n$, which implies that the unipotent part $N$ acts trivially on $(\frak g^{-\alpha}+\frak p)/\frak p$. Due to $p\in \exp(\sigma(g,k))N$, we have
\begin{equation}\label{equ:adjoint P}
	\ad_pY=\exp(-\alpha\sigma(g,k))Y \text{ on }(\frak g^{-\alpha}+\frak p)/\frak p.
\end{equation} 
This means that the line bundle $\P_0\times \frak g^{-\alpha}$ is stable under the left action of $G$, and we call it the $\alpha$-bundle.

The flag variety $\P$ is a quotient of $\P_0$ by the right action of group $M$, due to $A=MA_e$. We use $\pi$ to denote the quotient map. The right action of $M$ also induces an action on the tangent bundle. For $(\k,Y)$ in $T\P_0$ and $m$ in $M$, by $k\exp(tY)m\k_o=km\exp(t\ad_{m^{-1}}Y)\k_o$, we have
\begin{equation}\label{equ:right M}
	R_m(k\k_o,Y)=(km\k_o,\ad_{m^{-1}}Y).
\end{equation}
Descending to the quotient implies that the tangent bundle of $\P$ satisfies
\[T\P\simeq \P_0\times_M\frak g/\frak p, \]
which is the quotient space of $\P_0\times \frak g/\frak p$ by the equivalence relation generated by the action of $M$, \eqref{equ:right M}. As $M$ is a subgroup of $A$, its adjoint action preserves the line $\frak g^{-\alpha}$ in $\frak g/\frak p $. Hence the $\alpha$-bundle on $\P_0$ descends to a line bundle on $\P$. The integral curves of the $\alpha$-bundle on $\P_0$ are closed, and we call them $\alpha$-circles on $\P_0$. At a point $\k=k\k_o$ in $\P_0$, it is given by 
\begin{equation}\label{equ:alpha circle}
\gamma_\alpha:\R \rightarrow \P_0,\ t\mapsto k\exp(tK_\alpha)\k_o.
\end{equation}
This can be verified directly, because the tangent vector of the curve at time $t$ is $(\gamma_\alpha(t),K_\alpha)=(\gamma_\alpha(t),Y_\alpha)$, due to the definition of $\frak g/\frak p$,  which belongs to the $\alpha$-bundle. The one parameter subgroup $\{\exp(tK_\alpha):t\in\R\}$ is a compact subgroup of $G$, which is isomorphic to $SO(2)$. We call it $O_\alpha$.

Under the right action of $M$, the $\alpha$-circles on $\P_0$ descend to the $\alpha$-circles on $\P$.
\begin{lem}\label{lem:alpha circle}
	Let $\chi$ be a dominant weight such that $\chi(H_\alpha)=0$ and let $(\rho,V)$ be an irreducible representation with highest weight $\chi$. Then the image of an $\alpha$-circle in $\bp V$ is a point.
	
	Suppose in addition that the semisimple part of $\bf G$ is simply connected. The image of the $\alpha$-circle containing $\eta=k\eta_o$ in $\bp V_{\alpha}$ is the projective line generated by $\rho_\alpha(k)V^{\chi_{\alpha}}$ and $\rho_\alpha(k)V^{\chi_\alpha-\alpha}$.
\end{lem}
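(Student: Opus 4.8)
The plan is to reduce everything to the representation theory of the $\frak{sl}_2$-triple $\frak s_\alpha=\l H_\alpha,X_\alpha,Y_\alpha\r$ acting on the highest weight line $V^\chi$. By \eqref{equ:alpha circle}, the $\alpha$-circle through $\eta=k\eta_o$ is the image in $\P$ of the curve $t\mapsto k\exp(tK_\alpha)\k_o$ on $\P_0$, where $K_\alpha=X_\alpha-Y_\alpha\in\frak k$ and $O_\alpha=\{\exp(tK_\alpha):t\in\R\}\cong SO(2)$. Composing with the $G$-equivariant map \eqref{equ:ppV} (and using $\rho(\exp(tK_\alpha))=\exp(t\,\dd\rho(K_\alpha))$), the image of the $\alpha$-circle in $\bp V$ is $\{\rho(k)\rho(\exp(tK_\alpha))V^\chi:t\in\R\}$, so it is enough to describe the $O_\alpha$-orbit of $V^\chi$ in $\bp V$ and then apply $\rho(k)$. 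Fix a nonzero $v\in V^\chi$. Since $\chi$ is the highest weight, $X_\alpha v=0$, while $H_\alpha v=\chi(H_\alpha)v$; hence $v$ is a highest weight vector for $\frak s_\alpha$ of $H_\alpha$-weight $\chi(H_\alpha)$, and it generates the irreducible $\frak s_\alpha$-submodule of dimension $\chi(H_\alpha)+1$.

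For the first assertion, $\chi(H_\alpha)=0$, so this submodule is trivial; in particular $Y_\alpha v=0$, hence $K_\alpha v=0$ and $\rho(\exp(tK_\alpha))$ fixes the line $V^\chi$ for all $t$. Therefore the image of the $\alpha$-circle through $\eta$ is the single point $\rho(k)V^\chi=V_{\chi,\eta}$.

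For the second assertion take $\chi=\chi_\alpha$. Since the semisimple part of $\bf G$ is simply connected, $\chi_\alpha|_{\frak b}=\tilde\omega_\alpha$, so $\chi_\alpha(H_\beta)=\delta_{\alpha\beta}$ for $\beta\in\Pi$, and in particular $\chi_\alpha(H_\alpha)=1$. Thus $v$ generates the $2$-dimensional irreducible $\frak s_\alpha$-submodule $\sp\{v,Y_\alpha v\}$, with $Y_\alpha^2 v=0$ and $X_\alpha Y_\alpha v=[X_\alpha,Y_\alpha]v=H_\alpha v=v$. Consequently $K_\alpha v=-Y_\alpha v$ and $K_\alpha(Y_\alpha v)=v$, so in the basis $(v,Y_\alpha v)$ the operator $\dd\rho_\alpha(K_\alpha)$ is the standard infinitesimal rotation and $\rho_\alpha(O_\alpha)$ acts on $\sp\{v,Y_\alpha v\}$ as the full rotation group. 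Hence the $O_\alpha$-orbit of $V^{\chi_\alpha}$ is the entire projective line $\bp(\sp\{v,Y_\alpha v\})$. Since $Y_\alpha v$ has weight $\chi_\alpha-\alpha$ and, by super proximality (Lemma \ref{lem:super proximal}, applicable because $\chi_\alpha(H_\beta)>0$ only for $\beta=\alpha$), the eigenspace $V^{\chi_\alpha-\alpha}$ is one-dimensional, we get $\R Y_\alpha v=V^{\chi_\alpha-\alpha}$. Applying $\rho_\alpha(k)$ shows that the image of the $\alpha$-circle through $\eta=k\eta_o$ is the projective line generated by $\rho_\alpha(k)V^{\chi_\alpha}$ and $\rho_\alpha(k)V^{\chi_\alpha-\alpha}$.

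The argument is essentially $\frak{sl}_2$-bookkeeping and I do not expect a serious obstacle. The two points that deserve care are: first, that $V^{\chi_\alpha-\alpha}$ is genuinely one-dimensional, which is precisely super proximality, obtained from Lemma \ref{lem:weight structure} and Lemma \ref{lem:super proximal}; and second, that $\rho_\alpha(O_\alpha)$ sweeps out the whole projective line $\bp(\sp\{v,Y_\alpha v\})$ rather than a proper sub-arc, which holds because $SO(2)$ acts transitively on the set of lines of a Euclidean plane.
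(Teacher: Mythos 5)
Your proof is correct and follows essentially the same route as the paper: reduce to the $O_\alpha$-orbit of the highest weight line and apply the classification of irreducible representations of the $\frak{sl}_2$-triple $\frak s_\alpha=\l H_\alpha,X_\alpha,Y_\alpha\r$, using $\chi(H_\alpha)=0$ for triviality and $\chi_\alpha(H_\alpha)=1$ for the two-dimensional module on which $O_\alpha$ acts by rotations. Your explicit computation of $\dd\rho_\alpha(K_\alpha)$ in the basis $(v,Y_\alpha v)$ and the appeal to one-dimensionality of $V^{\chi_\alpha-\alpha}$ via Lemma \ref{lem:super proximal} only make explicit what the paper's proof leaves implicit.
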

\begin{proof}
	Since the $\alpha$-bundle is left $K$-invariant, the set of $\alpha$-circles are also left $K$-invariant. It is sufficient to consider the $\alpha$-circle containing $\eta_o$. By \eqref{equ:alpha circle} and \eqref{equ:ppV}, the image of $\alpha$-circle is given by $\rho(O_\alpha)V^\chi$.
	
	Consider the Lie algebra $\frak s_\alpha$ generated by $H_\alpha,X_\alpha,Y_\alpha$, which is isomorphic to $\frak{sl}_2$. For $v$ in $V^\chi$, we have $\dd\rho(H_\alpha)v=\chi(H_\alpha)v$. Due to the classification of the irreducible representation of $\frak{sl}_2$, the irreducible representation $V_1$ of $\frak{s}_\alpha$ generated by $V^\chi$ is of dimension $\chi(H_\alpha)+1$.
	
	When $\chi$ satisfies $\chi(H_\alpha)=0$, the above argument implies that $V_1$ is a trivial representation and $\rho(O_\alpha)$ acts trivially on $V_1$.
	Hence the image of the $\alpha$-circle is a point.
	
	For simply connected case, by $\chi_\alpha(H_\alpha)=\tilde\omega_\alpha(H_\alpha)=1$, the same argument implies that $V_1$ is of dimension 2. Another eigenspace of $V_1$ is $V^{\chi_{\alpha}-\alpha}$. The group $\rho(O_\alpha)$ acts as $SO(2)$ on $V_1$, which implies the result.
\end{proof}
\begin{rem}
	If the reader knows the partial flag variety $\P_{\Pi-\{\alpha \}}$, then the $\alpha$-circle is simply the fibre of the quotient map $\P\rightarrow \P_{\Pi-\{\alpha \}}$. This point of view also implies Lemma \ref{lem:alpha circle}.
\end{rem}
Generally, the $\alpha$-bundle on $\P$ is non trivial in the sense of line bundle.
\begin{exa}
	Let $G$ be $\rm{SL}_3(\bb R)$. Recall that
	\[\frak a=\{X=\diag(x_1,x_2,x_3)|\ x_1+x_2+x_3=0,\ x_1,x_2,x_2\in\R \}, \] 
	and $\alpha_1,\alpha_2$ are two simple roots given by $\alpha_1=\lambda_1-\lambda_2$ and $\alpha_2=\lambda_2-\lambda_3$. The group $M$ is $\{e,\diag(1,-1,-1),\diag(-1,1,-1),\diag(-1,-1,1) \}\simeq (\bb Z/2\bb Z)^2$. We have
	\[\ad_{\diag(1,-1,-1)}Y_{\alpha_1}=\alpha_1^\up(\diag(1,-1,-1))Y_{\alpha_1}=-Y_{\alpha_1}. \]
	In this case the action of $M$ is nontrivial and it is not a normal subgroup of $K=SO(3)$.	The $\alpha$-bundle on $\P$ restricted to an $\alpha$-circle is roughly a M\"obius band.
	
	In this case, $\alpha_1$-circles are given by $\{W_1\subset W_2 \}$, where $W_2$ is a fixed two dimensional subspace of $\R^3$ and $W_1$ varies in one-dimensional subspaces of $W_2$. On the contrary, $\alpha_2$-circles are given by $\{W_1\subset W_2 \}$ with $W_1$ fixed and $W_2$ varying in two planes which contain $W_1$. From this description, we can easily see the $G$-invariance of the set of $\alpha$-circles.
\end{exa} 
It is better to work on $\P_0$, where the $\alpha$-bundle is trivial. One difficulty is that in the covering space $\P_0$, we need to capture the missing information of the group $M$. More precisely, for $h$ in $G$ and $\k, \k'$ in $\P_0$ if $h\pi(\k),h\pi(\k')$ are close, we do not know whether $h\k,h\k'$ are close or not. This will be answered at the end of Section \ref{sec:sign group}.

\begin{rem}
	In an abstract language as in \cite[Lemma 4.8]{benoistquint14random}, we have a principal bundle $M\rightarrow \P_0\rightarrow\PP$, where the action of $M$ on $\P_0$ is a right action. We also have a left action of a semigroup $\Gamma$ in $G$ on $\P_0$ and $\PP$ ($\Gamma$ will be taken as $\Gamma_\mu$ in our case). Suppose that we have a $\Gamma$-minimal set $\Lambda_\Gamma$ in $\PP$. The lifting of $\Lambda_\Gamma$ to $\P_0$ has different possibilities. Let $\eta$ be a point in $\Lambda_\Gamma$ and $\k=k\k_o$ be a lifting of $\eta$ in $\P_0$. Let $M_{\k}=\{m\in M| \overline{\Gamma k}m=\overline{\Gamma k} \}$, where the closure is taken in $\P_0$. Then we have a bijection between two sets
	\[\{\Gamma-\text{minimal orbit in }\P_0 \}\longleftrightarrow M_{\k}\backslash M. \]
	In particular, if $\Gamma$ is a semigroup of matrices of positive entries in $\slrn$, then $M_{\k}=\{ e\}$ and $\Gamma$ has the maximal number of minimal orbits in $\P_0$. 
\end{rem}
\subsection{The sign group}
\label{sec:sign group}
Suppose the semisimple part of $\bf G$ is simply connected. Recall the notation for reductive groups and Lie algebras. Let $N^-$ be the subgroup with Lie algebra $\frak n^-$.
We have a Bruhat decomposition of the reductive group $G$(\cite[21.15]{borel1990linear}), where the main part is given by 
\[N^-\times M\times  A_e\times N\rightarrow G. \]
The image $U$ is a Zariski open subset of $G$ and the map is injective. For elements in $U$, we can define a map $\sg$ to the group $M$, mapping an element $g$ to the part of $M$ in the Bruhat decomposition.

A part of $M$ is given by the different analytical connected components of $G$. Let 
$$M_0=M\cap G^o\ and\  M_1=M/M_0$$
be the quotient group. Let $\pi_0(X)$ be the set of connected components of a topological space $X$ and let $\#\pi_0(X)$ be its number of elements.
\begin{lem}\label{lem:mbm}
	We have $M\cap B=M_0$. 
\end{lem}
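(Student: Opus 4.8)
The plan is to prove the two inclusions $M\cap B\subseteq M_0$ and $M_0\subseteq M\cap B$ separately, using the quotient torus $\bf{T}:=\bf{G}/\scr D\bf{G}$ as the main device.

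For $M\cap B\subseteq M_0$: by definition $B=\bf{B}(\R)=\bf{A}(\R)\cap\scr D\bf{G}(\R)$, so any $m\in M\cap B$ lies in $\scr D\bf{G}(\R)$. Since the semisimple part $\scr D\bf{G}$ is simply connected by hypothesis, the group $\scr D\bf{G}(\R)$ is connected; this is a standard fact for simply connected semisimple real groups, and it can also be deduced, as in the proof of Lemma \ref{lem:km}, from the Bruhat decomposition together with the observation that each coroot image $\alpha^\vee(-1)$ and each lift $w_\alpha$ of a simple reflection lies in the identity component. Hence $m\in\scr D\bf{G}(\R)=\scr D\bf{G}(\R)^o\subseteq G^o$, so $m\in M\cap G^o=M_0$.

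For $M_0\subseteq M\cap B$: the quotient $\bf{T}=\bf{G}/\scr D\bf{G}$ is abelian and connected, and since $\bf{G}$ is $\R$-split its connected centre $\bf{C}\subseteq\bf{A}$ is a split torus which maps onto $\bf{T}$ with finite kernel, so $\bf{T}$ is itself a split torus. Thus $\bf{T}(\R)\simeq(\R^*)^{r-m}$ and $\bf{T}(\R)^o\simeq(\R_{>0})^{r-m}$ is torsion free, while the kernel of the homomorphism $q\colon G\to\bf{T}(\R)$ induced by the quotient map is exactly $\bf{G}(\R)\cap\scr D\bf{G}=\scr D\bf{G}(\R)$. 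Now take $m\in M_0=M\cap G^o$. Then $q(G^o)$ is connected and contains $e$, hence $q(G^o)\subseteq\bf{T}(\R)^o$ and in particular $q(m)\in\bf{T}(\R)^o$; but $m^2=e$ forces $q(m)^2=e$, and torsion freeness of $\bf{T}(\R)^o$ gives $q(m)=e$, i.e. $m\in\scr D\bf{G}(\R)$. Since also $m\in A=\bf{A}(\R)$, we conclude $m\in\bf{A}(\R)\cap\scr D\bf{G}(\R)=\bf{B}(\R)=B$, so $m\in M\cap B$.

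I expect the only non-routine point to be the connectedness of $\scr D\bf{G}(\R)$, which is precisely where simple connectedness of the semisimple part is genuinely used; once that is granted (or cited), both inclusions are short manipulations with split tori. Everything else — that $\bf{T}$ is a split torus, that $\bf{T}(\R)^o$ is torsion free, and the identity $\bf{A}(\R)\cap\scr D\bf{G}(\R)=\bf{B}(\R)$ — is elementary structure theory of split reductive groups.
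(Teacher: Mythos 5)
Your proof is correct, and it splits into one half that coincides with the paper's and one half that is genuinely different. The inclusion $M\cap B\subseteq M_0$ is exactly the paper's argument: simple connectedness of $\scr D\bf G$ gives connectedness of $\scr D G$ in the analytic topology (the Steinberg fact the paper invokes), hence $M\cap B= M\cap\scr D G\cap A\subseteq M\cap G^o=M_0$. For the reverse inclusion the paper argues via the maximal compact subgroup: it uses $K^o\subseteq\scr D G$ (which holds because $\theta=-\mathrm{id}$ on $\frak c$ forces $\frak k\subseteq \scr D\frak g$) together with $M\subseteq K$ and $M\cap K^o=M\cap G^o$, giving $M_0=M\cap K^o\subseteq M\cap\scr D G=M\cap B$. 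You instead pass to the abelianization $\bf T=\bf G/\scr D\bf G$, note that it is a split torus (being a quotient of the split connected centre $\bf C$, its character lattice embeds Galois-equivariantly into that of $\bf C$), that $\ker\bigl(G\to \bf T(\R)\bigr)=\scr D\bf G(\R)=\scr D G$, and that $\bf T(\R)^o\simeq(\R_{>0})^{r-\rank}$ is torsion free, so an involution lying in $G^o$ must map to the identity and therefore lies in $\scr D G\cap A=B$. Both routes are sound; the paper's is shorter because $K$ and the Cartan involution are already set up, while yours avoids the maximal compact entirely and makes transparent that only the inclusion $M\cap B\subseteq M_0$ uses simple connectedness — the torsion-freeness argument is valid for any connected $\R$-split reductive group. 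The points you leave implicit (quotients of split tori are split, and the kernel of the map on real points is $\scr D\bf G(\R)$, which is what the paper denotes $\scr D G$) are standard and correctly used.
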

\begin{proof}
	Recall that $B=A\cap \scr D G$. Due to $\scr D G\supset K^o$,
	\[M\cap B=M\cap (A\cap \scr D G)=M\cap \scr D G\supset M\cap K^o. \]
	Since $M$ is a subset of $K$, we see that $M_0=M\cap G^o=M\cap K\cap G^o=M\cap K^o$. At the same time, since $\scr D \bf G$ is simply connected, the group of real points $\scr DG$ is connected in the Lie group topology. Therefore
	\[M\cap K^o=M_0=M\cap G^o\supset M\cap\scr D G=M\cap B. \]
	The proof is complete.
\end{proof}
Let $\P_1=G/A_eBN$.
\begin{lem}
	The homogeneous space $\P_1$ has the same number of connected components as $\P_0$, that is $\#\pi_0(\P_0)=\#\pi_0(\P_1)=\#\pi_0(M_1)$, and each connected component of $\P_1$ is isomorphic to $\P$ as topological spaces.
\end{lem}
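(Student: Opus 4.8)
The plan is to realise all three spaces as quotients of the maximal compact subgroup $K$ and then count connected components. Since the left action of $K$ on $\P_0$ is simply transitive, the orbit map $k\mapsto k\k_o$ is a continuous bijection from the compact group $K$ onto the Hausdorff space $\P_0$, hence a homeomorphism; in particular $\#\pi_0(\P_0)=\#\pi_0(K)$. By Lemma \ref{lem:km} we have $K=K^oM$, so the composite $M\hookrightarrow K\to K/K^o$ is surjective with kernel $M\cap K^o$, and $M\cap K^o=M_0$ by the computation carried out in the proof of Lemma \ref{lem:mbm}. Therefore $\pi_0(K)\cong M/M_0=M_1$, and already $\#\pi_0(\P_0)=\#\pi_0(M_1)$.

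Next I would exhibit $\P_1$ as a free quotient $\P_0/M_0$. Because $\bf B=\bf A\cap\scr D\bf G$ is an $\R$-split subtorus of $\bf A$, its group of real points satisfies $B\cong(\R^\times)^{\dim\bf B}$; hence $B=M_0\,B_e$, where $B_e=B^o=\exp(\frak b)\subseteq A_e$ and $M_0=M\cap B$ is the $2$-torsion subgroup of $B$ (this last identification is Lemma \ref{lem:mbm}). Consequently $A_eBN=M_0A_eN=A_eNM_0$, which is a subgroup of $G$ with $A_eN$ normal in it, since $M_0\subseteq A$ centralises $A_e$ and normalises $N$. Thus $\P_1=G/A_eBN=(G/A_eN)/M_0=\P_0/M_0$ for the right action of $M_0$, and this action is free because $A_eN\cap M\subseteq A_e\cap M=\{e\}$. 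The crucial point is that this right $M_0$-action preserves every connected component of $\P_0$: as $A_eN$ is connected, $\pi_0(\P_0)=\pi_0(G)=G/G^o$, and right translation by $m\in M_0\subseteq G^o$ is trivial on $G/G^o$ because $G^o$ is normal in $G$. Since the quotient map $\P_0\to\P_1$ is open and every $M_0$-orbit lies in a single component of $\P_0$, it carries the components of $\P_0$ bijectively onto those of $\P_1$; hence $\#\pi_0(\P_1)=\#\pi_0(\P_0)=\#\pi_0(M_1)$.

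For the topological type of a component of $\P_1$, fix a component $C$ of $\P_0$; transported to $K$ by the homeomorphism above it is a coset $kK^o$, and left translation by $k^{-1}$ conjugates the right $M_0$-action on $kK^o$ to the right action of $M_0\subseteq K^o$ on $K^o$. Hence the image of $C$ in $\P_1$ is homeomorphic to $K^o/M_0$. On the other hand, the Iwasawa decomposition $G=KP$ gives $\P=G/P=K/(K\cap P)$, and $K\cap P=K\cap MA_eN=M$: writing an element of the intersection as $man$ with $m\in M$, $a\in A_e$, $n\in N$ and applying the Cartan involution forces $a=n=e$ via $A_eN\cap N^-=\{e\}$. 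Thus $\P=K/M=K^oM/M=K^o/(K^o\cap M)=K^o/M_0$, so every connected component of $\P_1$ is homeomorphic to $\P$.

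The content here is a handful of small group-theoretic identities — $M\cap K^o=M_0$, $B=M_0B_e$, $A_eN\cap M=\{e\}$, $K\cap P=M$ — together with the freeness of the right $M_0$-action and the fact that it stabilises each component of $\P_0$; granting these, the component count and the homeomorphisms $C/M_0\cong K^o/M_0\cong\P$ follow formally. I expect the only step needing genuine care to be the verification that $\P_0\to\P_0/M_0$ induces a bijection on $\pi_0$ (no component collapsed, none split), which is precisely why the claim that $M_0$ preserves each component is isolated as a separate point above.
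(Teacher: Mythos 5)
Your argument is correct, and it reaches the conclusion by a more explicit route than the paper. The paper's own proof stays with the covering $\P_1\rightarrow\P$: it first notes $A_eBN\subset A_e\scr D G\subset G^o$, so that $\#\pi_0(\P_1)=\#\pi_0(G)=\#\pi_0(\P_0)$, then computes the covering degree $\#(A/A_eB)=\#(M/M\cap B)=\#(M/M_0)=\#\pi_0(M_1)$ using Lemma \ref{lem:mbm}, and finally invokes the connectedness of $\P$: since the total degree equals the number of components of $\P_1$, each component covers $\P$ with degree one and is therefore homeomorphic to $\P$. You instead transport everything into the maximal compact subgroup: $\P_0\cong K$, so $\pi_0(\P_0)\cong M/M_0=M_1$ by Lemma \ref{lem:km} together with $M\cap K^o=M_0$ from the proof of Lemma \ref{lem:mbm}; you identify $A_eBN=M_0A_eN$, exhibiting $\P_1=\P_0/M_0$ as a free quotient whose components biject with those of $\P_0$ because $M_0\subset G^o$; and you obtain the homeomorphism of each component with $\P$ directly, via $K\cap P=M$ (your Cartan-involution argument, reducing to $A_eN\cap N^-=\{e\}$, is fine) and $\P\cong K/M\cong K^o/M_0$. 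The structural inputs are the same in both proofs (Matsumoto's theorem through $K=K^oM$, respectively $G=G^oM$; Lemma \ref{lem:mbm}; and $B=M_0B_e$, which is exactly what makes $\#(A/A_eB)=\#(M/M_0)$ in the paper), but your version buys an explicit identification of each component as $K^o/M_0\cong\P$, at the cost of checking $K\cap P=M$ and the freeness and component-preservation of the right $M_0$-action — facts the paper's degree count over the connected base $\P$ gets for free. Both the small identities you leave as one-liners ($A_eN\cap M=\{e\}$ via $A\cap N=\{e\}$, and the bijection on $\pi_0$ induced by the open quotient map) do hold, so there is no gap.
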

\begin{proof}
	Since $\scr D G$ is connected, we know $A_eBN\subset A_e\scr DG\subset G^o$. The number of connected components of $\P_1$ equals to $\# \pi_0(G)=\#\pi_0(\P_0)$.
	
	The degree of the covering $\P_1\rightarrow \P$ equals to 
	\[\#(A/A_eB)=\#(M/M\cap B) .\]
	By Lemma \ref{lem:mbm}, we have $M\cap B=M_0$. Hence
	\[\#(A/A_eB)=\#(M/M_0)=\#\pi_0(M_1)=\#\pi_0(G)=\#\pi_0(\P_1). \]
	Since $\P$ is connected, the result follows.
\end{proof}
Hence, the $M_1$ part of $m(g)$ can be determined by seeing in which connected component of $G$ the element $g$ is. We want to know for two near elements $g,g'$ in $G$, whether we have $\sg(g)=\sg(g')$ or not. 

In order to study the $M_0$ part, we will use representations defined in Lemma \ref{lem:tits} to give another description of the sign group. This is in the same spirit as the treatment of the sign group $M$ in \cite{benoist2005convexes}.
Let $v_\alpha$ be a non-zero eigenvector with highest weight $\chi_\alpha$ in $V_\alpha$. Let $\sign$ be the sign function on $\R$.
\begin{lem}\label{lem:isomorphism M Z}
	For $g$ in $U$, we have
	\[\sign\l v_\alpha,\rho_\alpha(g)v_\alpha\r=\chi_\alpha^\up(\sg(g)), \]
	where $\chi_\alpha^\up$ is the corresponding algebraic character on $A$ of the weight $\chi_\alpha$.
\end{lem}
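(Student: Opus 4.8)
The plan is to expand $g$ through its Bruhat decomposition and track the action of each factor on the highest weight vector $v_\alpha$. Write $g=n^- m a n$ with $n^-\in N^-$, $m\in M$, $a\in A_e$, $n\in N$; by definition of the map $\sg$ this is exactly saying $\sg(g)=m$. Now I would use three elementary facts about the irreducible representation $(\rho_\alpha,V_\alpha)$ with highest weight $\chi_\alpha$: first, the unipotent group $N$ fixes the highest weight vector, $\rho_\alpha(n)v_\alpha=v_\alpha$, since $\dd\rho_\alpha(X_\beta)v_\alpha\in V^{\chi_\alpha+\beta}=0$ for every positive root $\beta$; second, by Lemma \ref{lem:sign m} the torus $A$ acts on $v_\alpha$ by the character, so $\rho_\alpha(a)v_\alpha=\chi_\alpha^\up(a)v_\alpha$ and $\rho_\alpha(m)v_\alpha=\chi_\alpha^\up(m)v_\alpha$ (both $a$ and $m$ lie in $A$); third, writing $n^-=\exp Y$ with $Y\in\frak n^-=\oplus_{\beta\in R^+}\frak g^{-\beta}$, the vector $\rho_\alpha(n^-)v_\alpha=\sum_{k\geq 0}\frac1{k!}\dd\rho_\alpha(Y)^k v_\alpha$ lies in $v_\alpha+\bigoplus_{\omega<\chi_\alpha}V^\omega$, because each application of $\dd\rho_\alpha(Y)$ subtracts a nonzero sum of positive roots from the weight. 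Combining these, $\rho_\alpha(g)v_\alpha=\chi_\alpha^\up(a)\,\chi_\alpha^\up(m)\,\rho_\alpha(n^-)v_\alpha$, which therefore lies in $\chi_\alpha^\up(a)\chi_\alpha^\up(m)\bigl(v_\alpha+\bigoplus_{\omega<\chi_\alpha}V^\omega\bigr)$.

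Next I would pair with $v_\alpha$. Since the norm on $V_\alpha$ is good (Definition \ref{defi:good norm}), each $\rho_\alpha(a)$ for $a\in A$ is self-adjoint, so its eigenspaces --- the weight spaces $V^\omega$ --- are mutually orthogonal; in particular $v_\alpha\in V^{\chi_\alpha}$ is orthogonal to every $V^\omega$ with $\omega\neq\chi_\alpha$. Hence $\langle v_\alpha,\rho_\alpha(n^-)v_\alpha\rangle=\langle v_\alpha,v_\alpha\rangle=\|v_\alpha\|^2>0$, and so
\[\langle v_\alpha,\rho_\alpha(g)v_\alpha\rangle=\chi_\alpha^\up(a)\,\chi_\alpha^\up(m)\,\|v_\alpha\|^2.\]
Taking signs: on $A_e=\exp(\frak a)$ the character is positive, $\chi_\alpha^\up(\exp X)=e^{\chi_\alpha(X)}>0$, so $\sign\chi_\alpha^\up(a)=1$; and since $m^2=e$ and $\chi_\alpha^\up$ is a homomorphism into $\R^\times$, $\chi_\alpha^\up(m)\in\{\pm1\}$. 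Therefore $\sign\langle v_\alpha,\rho_\alpha(g)v_\alpha\rangle=\chi_\alpha^\up(m)=\chi_\alpha^\up(\sg(g))$, which is the claim.

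There is essentially no hard step here: the content is the bookkeeping that the $N^-$-factor perturbs $v_\alpha$ only by vectors of strictly lower weight, together with the orthogonality of weight spaces coming from the good norm. The one point to state carefully is that the partial order on weights is compatible with subtracting positive roots, so that no term produced by $\rho_\alpha(n^-)$ can contribute to the $V^{\chi_\alpha}$-component; this is precisely why the non-torus factor sits on the left ($n^-\in N^-$) and the right factor lies in $N$, which fixes $v_\alpha$ outright.
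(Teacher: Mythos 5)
Your proposal is correct. It differs from the paper's proof only in how the factor $n^-\in N^-$ of the Bruhat decomposition is disposed of: the paper uses the good-norm identity $^t\rho_\alpha(g)=\rho_\alpha(\theta(g^{-1}))$ to move $\rho_\alpha(n^-)$ to the other side of the inner product, where $\theta(N^-)=N$ fixes $v_\alpha$, so that $\langle v_\alpha,\rho_\alpha(g)v_\alpha\rangle=\langle v_\alpha,\rho_\alpha(\sg(g)a)v_\alpha\rangle$ in one line; you instead keep $n^-$ in place, expand $\rho_\alpha(n^-)v_\alpha$ as $v_\alpha$ plus strictly lower-weight terms, and discard those terms by orthogonality of weight spaces (self-adjointness of $\rho_\alpha(A)$, again a consequence of the good norm). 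Both routes rest on the same structural facts ($N$-invariance of $v_\alpha$, Lemma \ref{lem:sign m} for the $MA_e$-part, positivity of $\chi_\alpha^\up$ on $A_e$); the paper's version is slightly slicker since it never needs the weight decomposition of $\rho_\alpha(n^-)v_\alpha$, while yours is a bit more self-contained, uses only the orthogonality of weight spaces rather than the Cartan-involution transpose identity, and has the small bonus of exhibiting the pairing explicitly as $\chi_\alpha^\up(a)\chi_\alpha^\up(\sg(g))\|v_\alpha\|^2$. All the individual steps you invoke (surjectivity of $\exp$ onto $N$ and $N^-$, vanishing of $V^{\chi_\alpha+\beta}$ for $\beta\in R^+$, $\chi_\alpha^\up(m)\in\{\pm1\}$ since $m^2=e$) are sound, so there is no gap.
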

\begin{proof}
	Since $v_\alpha$ is $N$-invariant and the Cartan involution $\theta$ maps $N^-$ to $N$, by \eqref{equ:good norm} 
	\begin{align*}
	\l v_\alpha,\rho_\alpha(N^-MA_eN)v_\alpha)&=\l^t\rho_{\alpha}(N^-)v_\alpha,\rho_\alpha(MA_eN)v_\alpha\r=\l\rho_{\alpha}(\theta(N^-))v_\alpha,\rho_\alpha(MA_eN)v_\alpha\r\\
	&=\l\rho_{\alpha}(N)v_\alpha,\rho_\alpha(MA_eN)v_\alpha\r=\l v_\alpha,\rho_\alpha(MA_e)v_\alpha\r.
	\end{align*}
	The action of $A_e$ does not change the sign, hence by Lemma \ref{lem:sign m} we have 
	\[ \sign\l v_\alpha,\rho_\alpha(g)v_\alpha\r=\sign\l v_\alpha,\rho_\alpha(\sg(g))v_\alpha\r=\chi_\alpha^\up(\sg(g)).\]
	The proof is complete.
\end{proof}

We are in the simply connected case and we have $\rm X(\bf B)=\oplus_{\alpha\in\Pi}\Z \tilde{\omega}_\alpha$. Due to $M_0=M\cap B$, we know that $\chi_{\alpha}^\up(m)=\tilde{\omega}_\alpha^\up(m)$ for $m$ in $M_0$. By $B\simeq (\R^*)^{\#\Pi}$, the common kernel of all the characters is the neutral element.
Therefore
\begin{lem}\label{lem:isomorphism M Z1}
	The function $\Pi_{\alpha\in\Pi}\,\chi_\alpha^\up:M_0\rightarrow\R^{\#\Pi}$ given by $$\Pi_{\alpha\in\Pi}\,\chi_\alpha^\up(m)=(\chi_{\alpha}^\up(m))_{\alpha\in\Pi}\ \ \text{ for }m\in M_0,$$ is injective.
\end{lem}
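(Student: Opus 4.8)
The plan is to reduce the injectivity on $M_0$ to the elementary fact that the family of fundamental weights separates points of the split torus $B=\bf B(\R)$, which is exactly what the equality $\rm X(\bf B)=\oplus_{\alpha\in\Pi}\Z\tilde\omega_\alpha$ (valid in the simply connected case) provides.

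First I would pin down the characters involved. In the simply connected case we have $\chi_\alpha^\up\in\rm X(\bf A)$ and $\chi_\alpha|_{\frak b}=\tilde\omega_\alpha$, so the restriction of the algebraic character $\chi_\alpha^\up$ to the subtorus $\bf B$ is a character of $\bf B$ whose differential on $\frak b$ is $\tilde\omega_\alpha$. Since $\bf B$ is a torus over a field of characteristic zero, the differential map $\rm X(\bf B)\to\frak b^*$ is injective, hence $\chi_\alpha^\up|_{\bf B}=\tilde\omega_\alpha^\up$. By Lemma \ref{lem:mbm} we have $M_0=M\cap B\subset\bf B(\R)$, and therefore $\chi_\alpha^\up(m)=\tilde\omega_\alpha^\up(m)$ for every $m\in M_0$; thus, on $M_0$, the map $\prod_{\alpha\in\Pi}\chi_\alpha^\up$ of the statement coincides with $m\mapsto(\tilde\omega_\alpha^\up(m))_{\alpha\in\Pi}$.

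Then I would conclude. Since $(\tilde\omega_\alpha)_{\alpha\in\Pi}$ is a $\Z$-basis of $\rm X(\bf B)$ and $\bf B$ is split, the morphism $b\mapsto(\tilde\omega_\alpha^\up(b))_{\alpha\in\Pi}$ is an isomorphism of $\bf B$ onto $\bf{GL}_1^{\#\Pi}$; passing to real points it yields an isomorphism $B\xrightarrow{\sim}(\R^*)^{\#\Pi}$, in particular an injective group homomorphism. Its restriction to the subgroup $M_0$ remains injective, and by the previous paragraph this restriction is precisely $\prod_{\alpha\in\Pi}\chi_\alpha^\up|_{M_0}$. Equivalently, each $m\in M_0$ has order at most two, so the map lands in $\{\pm1\}^{\#\Pi}$ and its kernel is contained in the common kernel on $B$ of all the $\tilde\omega_\alpha^\up$, which is $\{e\}$. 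This proves the lemma.

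I expect the only point requiring a little care to be the identification $\chi_\alpha^\up|_{\bf B}=\tilde\omega_\alpha^\up$, which rests on the fact that an algebraic character of a torus in characteristic zero is determined by its differential, together with the simply connected hypothesis, which is what makes $(\tilde\omega_\alpha)_{\alpha\in\Pi}$ an honest $\Z$-basis of $\rm X(\bf B)$ rather than merely a basis of an overgroup. Everything else is formal.
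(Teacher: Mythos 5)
Your proposal is correct and follows essentially the same route as the paper: identify $\chi_\alpha^\up|_{M_0}$ with $\tilde\omega_\alpha^\up|_{M_0}$ using $M_0=M\cap B$ (Lemma \ref{lem:mbm}) and the simply connected hypothesis $\rm X(\bf B)=\oplus_{\alpha\in\Pi}\Z\tilde\omega_\alpha$, then use that $B\simeq(\R^*)^{\#\Pi}$ via the fundamental weights so that the common kernel of these characters on $B$ is trivial. The only difference is cosmetic: you justify $\chi_\alpha^\up|_{\bf B}=\tilde\omega_\alpha^\up$ via injectivity of the differential on characters of a torus, whereas in the paper this is immediate from the very construction of $\chi_\alpha$ as an extension of $\tilde\chi_\alpha=\tilde\omega_\alpha$ from $\bf B$ to $\bf A$.
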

\begin{defi}
	We define the sign function from $G\times G$ to $M\cup \{0 \}$ by
	\begin{equation*}
	\sg(g,g')=\begin{cases}
	\sg(\theta(g^{-1})g') &\text{ if }\,\theta(g^{-1})g'\in U,\\
	0 &\text{ if not,}
	\end{cases}
	\end{equation*}
	where $g,g'$ are in $G$ and $\theta$ is the Cartan involution.\nomentry{$\sg(g,g')$}{}
\end{defi}	
	This definition exploits the relation between $g$ and $g'$. More precisely, for $u,v$ in $V_\alpha$ we have $\l v, \,\rho_\alpha(\theta(g^{-1})g')u\r=\l \rho_\alpha gv, \rho_\alpha g'u\r$, which explains the definition. Due to $\theta(N)=N^-$, the sign function $\sg$ factors through $G/A_eN \times G/A_eN=\P_0\times \P_0$. 
	
	We now explain the sign function for the case $\gltwo$. We only need to consider the representation of $\gltwo$ on $\bb R^2$. Let $v_0=\begin{pmatrix}
	1\\ 0
	\end{pmatrix}$ be a vector with highest weight in $\R^2$. Then 
	\[\l v_0,\theta(g^{-1})g'v_0 \r=\l gv_0,g'v_0 \r,  \]
	which is the inner product of the first column of $g$ and $g'$. The sign function is used to determine whether these two vectors $gv_0,g'v_0$ have an acute angle and whether $g$ and $g'$ are in the same connected component.
	
	By the Bruhat decomposition, we have the following lemma.
\begin{lem}\label{lem:gg'm}
	For $g,g'$ in $G$ and $m$ in $M$, we have
	\begin{equation*}
	\sg(g,g'm)=\sg(gm,g')=\sg(g,g')m.
	\end{equation*}
\end{lem}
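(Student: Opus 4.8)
The plan is to unwind the definition of $\sg(g,g')$ through the Bruhat decomposition and to use three elementary facts about $M$: first, $M\subseteq K$ (indeed $M=A\cap K$, as recalled in the proof of Lemma \ref{lem:km}), so $\theta$ fixes $M$ pointwise; second, $M$ is abelian and every element has order at most two, so $m^{-1}=m$ for $m\in M$; third, being a subgroup of the torus $A$, the group $M$ commutes with $A_e$ and normalizes both $N$ and $N^-$.

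First I would check that the big cell $U=N^-MA_eN$ is stable under left and right multiplication by any $m\in M$. For right multiplication, $Nm=m(m^{-1}Nm)=mN$, so $N^-MA_eNm=N^-MA_emN=N^-MmA_eN=N^-MA_eN=U$, using that $A$ is abelian and $Mm=M$; left multiplication is symmetric, via $mN^-=N^-m$. Consequently $\theta(g^{-1})g'\in U$ if and only if $\theta((gm)^{-1})g'=\theta(m)\theta(g^{-1})g'=m\,\theta(g^{-1})g'\in U$ if and only if $\theta(g^{-1})(g'm)=(\theta(g^{-1})g')m\in U$. Hence the three quantities in the statement are simultaneously $0$ in the degenerate case, and the identity holds trivially there (with the convention $0\cdot m=m\cdot 0=0$).

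Now assume $\theta(g^{-1})g'\in U$ and write its Bruhat decomposition $\theta(g^{-1})g'=n^-m_0an$ with $n^-\in N^-$, $m_0=\sg(g,g')\in M$, $a\in A_e$, $n\in N$. For the first equality, $\theta(g^{-1})(g'm)=n^-m_0anm=n^-m_0am(m^{-1}nm)=n^-(m_0m)a(m^{-1}nm)$, where I used $am=ma$, $m_0m\in M$, and $m^{-1}nm\in N$; by uniqueness of the Bruhat decomposition this gives $\sg(g,g'm)=m_0m$. For the second equality, since $\theta(m^{-1})=m$ (as $m\in K$ and $m^{-1}=m$), we get $\theta((gm)^{-1})g'=m\,\theta(g^{-1})g'=mn^-m_0an=(mn^-m^{-1})(mm_0)an=(mn^-m^{-1})(m_0m)an$, using that $M$ is abelian and $mn^-m^{-1}\in N^-$; again by uniqueness, $\sg(gm,g')=m_0m$. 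Since $M$ is abelian, $m_0m=\sg(g,g')m$, which completes the proof.

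I do not foresee a genuine obstacle: the statement is a bookkeeping exercise with the Bruhat decomposition. The one point requiring care is the interaction of $\theta$ with $M$ — one needs $\theta$ to fix $M$ pointwise, which follows from $M\subseteq K$ — together with the commutation relations among the $N^-$-, $M$-, $A_e$-, $N$-factors, all of which are the standard structural facts already established in this section.
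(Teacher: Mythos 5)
Your proof is correct and follows exactly the route the paper intends: the paper's entire proof of this lemma is the remark ``By the Bruhat decomposition,'' and your argument just fills in the details, using that $M=A\cap K$ is fixed pointwise by $\theta$, is abelian of exponent two, commutes with $A_e$, normalizes $N$ and $N^-$, and that the Bruhat factorization on $U=N^-MA_eN$ is unique. The handling of the degenerate case (all three expressions simultaneously $0$ because $U$ is stable under left and right multiplication by $m$) is a sensible and correct reading of the convention implicit in the paper.
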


\begin{lem}\label{lem:dirgk}
	Take a Cartan decomposition of $g$, that is $g=k_ga_g\ell_g\in KA^+K$. 
	Then for $h$ in $G$,
	\begin{equation*}
		\sg(k_g,gh)=\sg(\ell_g^{-1},h).
	\end{equation*}
\end{lem}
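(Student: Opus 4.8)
The plan is to reduce the statement to the multiplicativity property of the sign function (Lemma \ref{lem:gg'm}) together with the definition of $\sg$ via the Cartan involution. First I would unwind the definition: $\sg(k_g, gh) = \sg(\theta(k_g^{-1}) gh)$ provided $\theta(k_g^{-1})gh \in U$, and similarly $\sg(\ell_g^{-1}, h) = \sg(\theta(\ell_g)h)$ provided $\theta(\ell_g)h \in U$. Since $k_g, \ell_g \in K$ and $\theta$ fixes $K$ pointwise, we have $\theta(k_g^{-1}) = k_g^{-1}$ and $\theta(\ell_g) = \ell_g^{-1}$, so the two quantities to compare are $\sg(k_g^{-1} g h)$ and $\sg(\ell_g^{-1} h)$. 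Now substitute the Cartan decomposition $g = k_g a_g \ell_g$: then $k_g^{-1} g h = a_g \ell_g h$. So the claim becomes $\sg(a_g \ell_g h) = \sg(\ell_g^{-1} h)$, i.e. I must understand how left multiplication by $a_g \in A^+ \subset A_e M \subset A$ and the conjugation-like change from $\ell_g h$ to $\ell_g^{-1} h$ interact with the Bruhat projection $\sg$.

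The key computation is the behaviour of $\sg$ under left multiplication by $A$. If $x \in U$ has Bruhat decomposition $x = n^- m a n$ with $n^- \in N^-$, $m \in M$, $a \in A_e$, $n \in N$, then for $a' \in A$ we get $a' x = (a' n^- a'^{-1})(a'm)(a)(n) $ — wait, more carefully $a' x = (a' n^- (a')^{-1}) \, (a' m) \, a \, n$ after absorbing; since $A$ normalizes $N^-$ and commutes with $M$ and $A_e$, we have $a' x = (a' n^- (a')^{-1}) (a' m) a n$, which lies in $U$ again (the $N^-$ part is still in $N^-$), and its $M$-component is $a' m$ if $a' \in M$, or just $m$ if $a' \in A_e$ (since $A_e$ gets absorbed into the $A_e$-slot). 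Writing $a_g = m_g (a_g)_e$ with $m_g \in M$ and $(a_g)_e \in A_e$, this shows $\sg(a_g \ell_g h) = m_g \cdot \sg(\ell_g h)$. So it remains to prove $\sg(\ell_g h) = m_g^{-1} \sg(\ell_g^{-1} h)$. Here I would use Lemma \ref{lem:gg'm}, or rather observe that $\ell_g h$ and $\ell_g^{-1} h$ differ by left multiplication by $\ell_g^{-2}$, which is generally not in $A$; so this direct route needs an extra idea, and I should instead go back to the two-variable function and use Lemma \ref{lem:gg'm} from the start: $\sg(k_g, gh)$ and $\sg(\ell_g^{-1}, h)$, and note $gh = k_g a_g \ell_g h$, so $\sg(k_g, gh) = \sg(k_g, k_g a_g \ell_g h)$. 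The natural statement I actually want is a left-invariance: $\sg(k x, k y) = \sg(x,y)$ for $k \in K$? That is false in general since $\sg(g,g') = \sg(\theta(g^{-1})g')$ and $\theta(g^{-1}k^{-1}) k g' = \theta(g^{-1})\theta(k^{-1}) k g' = \theta(g^{-1}) k^{-1} k g'$ — wait $\theta(k^{-1}) = k^{-1}$ so this is $\theta(g^{-1}) g'$ exactly. So indeed $\sg(kg, kg') = \sg(g,g')$ for all $k \in K$, immediately from $\theta|_K = \mathrm{id}$.

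Armed with this $K$-left-invariance, the proof is short: $\sg(k_g, gh) = \sg(k_g, k_g a_g \ell_g h) = \sg(e, a_g \ell_g h) = \sg(a_g \ell_g h)$ where $\sg(e, x) := \sg(\theta(e) x) = \sg(x)$, and similarly $\sg(\ell_g^{-1}, h) = \sg(\theta(\ell_g) h) = \sg(\ell_g^{-1} h)$. So I must show $\sg(a_g \ell_g h) = \sg(\ell_g^{-1}h)$. Using the $A$-left-multiplication computation above, $\sg(a_g \ell_g h) = m_g \, \sg(\ell_g h)$ where $m_g$ is the $M$-part of $a_g$; and I claim $m_g \sg(\ell_g h) = \sg(\ell_g^{-1} h)$. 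For this last identity I would again use $K$-left-invariance in the form $\sg(\ell_g h) = \sg(k' \ell_g h)$ is not directly helpful, so instead apply the definition symmetrically: $\sg(\ell_g h) = \sg(\theta((\ell_g^{-1})^{-1}) h) = \sg((\ell_g^{-1})^{-1}, h)\cdot(\text{??})$ — cleanest is to note $a_g \ell_g$ and $\ell_g^{-1}$ both represent the same coset computation because $g = k_g a_g \ell_g$ forces $\theta(g^{-1})g = \theta(\ell_g^{-1}) \theta(a_g^{-1}) \theta(k_g^{-1}) k_g a_g \ell_g = \ell_g^{-1} a_g^{-1} a_g \ell_g = e$, so in fact $\sg(g^{-1}\cdot,\ g\cdot)$-type relations pin down $a_g\ell_g h$ against $\ell_g^{-1} h$ up to the $M$-part $m_g$ of $a_g^2 = $ — since $a_g \in A^+$, $a_g^2 \in A_e$ so $m_g = e$ when... no, $a_g$ itself can have nontrivial $M$-part.

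\textbf{Main obstacle.} The genuine content, and the step I expect to be delicate, is tracking the $M$-part precisely: one must verify that the $M$-component contributed by $a_g$ on the left exactly cancels the discrepancy between $\sg(\ell_g h)$ and $\sg(\ell_g^{-1} h)$, i.e. that the two sides of the asserted equality carry the same element of $M$ and not merely the same $M_1$-component or the same coset. I would handle this by applying Lemma \ref{lem:isomorphism M Z} (and Lemma \ref{lem:isomorphism M Z1}) to reduce checking equality in $M$ to checking signs of the scalars $\langle v_\alpha, \rho_\alpha(\theta(k_g^{-1})gh) v_\alpha\rangle = \langle \rho_\alpha(k_g) v_\alpha, \rho_\alpha(g h) v_\alpha\rangle$ against $\langle v_\alpha, \rho_\alpha(\theta(\ell_g) h) v_\alpha \rangle = \langle \rho_\alpha(\ell_g^{-1}) v_\alpha, \rho_\alpha(h) v_\alpha \rangle$ for each $\alpha \in \Pi$, and using $\rho_\alpha(g) = \rho_\alpha(k_g)\rho_\alpha(a_g)\rho_\alpha(\ell_g)$ together with $^t\rho_\alpha(k_g) = \rho_\alpha(k_g^{-1})$ and $\rho_\alpha(a_g) v_\alpha = \chi_\alpha^\up(a_g) v_\alpha$ (Lemma \ref{lem:sign m}, since $v_\alpha$ has weight $\chi_\alpha$) to see that $\langle \rho_\alpha(k_g) v_\alpha, \rho_\alpha(gh)v_\alpha\rangle = \chi_\alpha^\up(a_g) \langle \rho_\alpha(\ell_g^{-1}) v_\alpha, \rho_\alpha(h) v_\alpha\rangle$, whose sign differs from the target by exactly $\chi_\alpha^\up(a_g) = \chi_\alpha^\up(m_g)$; this matches the $m_g$ produced by left multiplication by $a_g$, and simultaneity over all $\alpha$ together with Lemma \ref{lem:isomorphism M Z1} gives the equality in $M$ (the $M_1$-part is automatic since $\theta(k_g^{-1})gh$ and $\theta(\ell_g)h$ lie in the same connected component, both being translates of $h$ by $K\cdot A^+$). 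The case distinction for when the relevant elements fail to lie in $U$ (so $\sg = 0$) is compatible on both sides by the same sign computation.
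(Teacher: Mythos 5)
Your proposal stumbles on two concrete errors that derail the argument. First, as you yourself note, $\theta$ fixes $K$ pointwise, so $\theta(\ell_g)=\ell_g$ (not $\ell_g^{-1}$); hence $\sg(\ell_g^{-1},h)=\sg(\theta(\ell_g)h)=\sg(\ell_g h)$, and the lemma reduces to $\sg(a_g\ell_g h)=\sg(\ell_g h)$, not to $\sg(a_g\ell_g h)=\sg(\ell_g^{-1}h)$ as you claim. The "discrepancy" between $\sg(\ell_g h)$ and $\sg(\ell_g^{-1}h)$ that you then spend the rest of the argument trying to cancel does not exist, and the identity your route actually needs, $m_g\,\sg(\ell_g h)=\sg(\ell_g^{-1}h)$, is never proved (and is not available in general). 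Second, $A^+$ is by definition $\exp(\frak a^+)\subset A_e$, so $a_g$ has trivial $M$-part and $\chi_\alpha^\up(a_g)=e^{\chi_\alpha\kappa(g)}>0$; your assertion that "$a_g$ itself can have nontrivial $M$-part" is false, and your "main obstacle" of matching the factor $\chi_\alpha^\up(a_g)=\chi_\alpha^\up(m_g)$ against that nonexistent discrepancy rests on both mistakes at once. So, as written, the proof does not close.

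Once these two points are corrected the lemma is essentially immediate, in the spirit of the Bruhat-decomposition remark preceding Lemma \ref{lem:gg'm}: $\sg(k_g,gh)=\sg(\theta(k_g^{-1})gh)=\sg(k_g^{-1}gh)=\sg(a_g\ell_g h)$ and $\sg(\ell_g^{-1},h)=\sg(\ell_g h)$; since $A$ is abelian and normalizes $N^-$, left translation by $a_g\in A_e$ maps $U=N^-MA_eN$ onto itself without changing the $M$-component (writing $\ell_g h=n^-ma_en$ gives $a_g\ell_g h=(a_gn^-a_g^{-1})m(a_ga_e)n$), and it also preserves the complement of $U$, so $\sg(a_g\ell_g h)=\sg(\ell_g h)$ in all cases, including when both sides vanish. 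I note that your final displayed identity $\langle\rho_\alpha(k_g)v_\alpha,\rho_\alpha(gh)v_\alpha\rangle=\chi_\alpha^\up(a_g)\langle\rho_\alpha(\ell_g^{-1})v_\alpha,\rho_\alpha(h)v_\alpha\rangle$ is correct, and combined with $\chi_\alpha^\up(a_g)>0$, Lemmas \ref{lem:isomorphism M Z} and \ref{lem:isomorphism M Z1}, and the fact that $a_g\ell_g h$ and $\ell_g h$ lie in the same connected component of $G$, it would yield an alternative (heavier) proof; but the conclusion you actually draw from it is the confused cancellation described above rather than the simple observation that the factor is positive, so the signs on the two sides coincide.
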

The key observation here is that the sign function is locally constant. Recall that $\zeta_o$ is point in $\P$ and its image in $\bp V^*_\alpha$ is the linear functional on $V_\alpha$ which vanishes on the hyperplane perpendicular to $V^{\chi_{\alpha}}$. Recall that $\delta(\eta,\zeta)=\min_{\alpha\in\Pi}\delta(V_{\alpha,\eta},V^*_{\alpha,\zeta})$ and $d(\eta,\eta')=\max_{\alpha\in\Pi}d(V_{\alpha,\eta},V_{\alpha,\eta'})$.
\begin{lem}\label{lem:U}
	For $g\in G-U$, we have
	\[ \delta(g\eta_o,\zeta_o)=0 \text{ and }d(g\eta_o,\eta_o)=1. \]
\end{lem}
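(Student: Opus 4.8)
The statement concerns $g \in G - U$, i.e. $g$ lies outside the big Bruhat cell $U = N^- M A_e N$. The plan is to translate both claimed identities into statements about the representations $(\rho_\alpha, V_\alpha)$ and their highest-weight vectors $v_\alpha$, using Lemma \ref{lem:ym rhog} and the explicit descriptions of $V_{\alpha,\eta_o}, V^*_{\alpha,\zeta_o}$ recalled just before the statement. Recall $V_{\alpha,\eta_o} = \R v_\alpha$ (the highest-weight line $V^{\chi_\alpha}$), and the image of $\zeta_o$ in $\bp V^*_\alpha$ is the functional $f_\alpha$ on $V_\alpha$ that vanishes on the $\rho_\alpha(A)$-invariant complement of $V^{\chi_\alpha}$; concretely $f_\alpha(w) = \langle v_\alpha, w\rangle / \|v_\alpha\|$ up to scaling. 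Thus $\delta(g\eta_o, \zeta_o) = \min_\alpha \delta(V_{\alpha, g\eta_o}, V^*_{\alpha,\zeta_o})$, and $\delta(V_{\alpha, g\eta_o}, V^*_{\alpha,\zeta_o})$ is, by the definition of $\delta$ in \eqref{equ:distance x x'} and the line following it, proportional to $|\langle v_\alpha, \rho_\alpha(g) v_\alpha\rangle| / (\|v_\alpha\|\, \|\rho_\alpha(g) v_\alpha\|)$. So $\delta(g\eta_o,\zeta_o) = 0$ for some $\alpha$ is equivalent to $\langle v_\alpha, \rho_\alpha(g) v_\alpha\rangle = 0$, exactly the vanishing of the ``sign'' quantity from Lemma \ref{lem:isomorphism M Z}.

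The first step is therefore: show that $g \notin U \iff \langle v_\alpha, \rho_\alpha(g) v_\alpha \rangle = 0$ for some $\alpha \in \Pi$. The implication ``$g \in U \Rightarrow$ all inner products nonzero'' already follows from the computation in the proof of Lemma \ref{lem:isomorphism M Z}: for $g = n^- m a n \in U$ one gets $\langle v_\alpha, \rho_\alpha(g) v_\alpha\rangle = \chi_\alpha^\up(m a)\|v_\alpha\|^2 \neq 0$ since $a \in A_e$ acts by a positive scalar and $m \in M$ by $\pm 1$. For the converse, suppose $g \notin U$. Here I would invoke the standard fact (a consequence of the Bruhat decomposition $G = \bigsqcup_w N^- w M A_e N$, cf. \cite[21.15]{borel1990linear}) that the complement $G - U$ is precisely the union of the lower-dimensional cells, and on each such cell the highest-weight vector $v_\alpha$ gets moved — for at least one simple $\alpha$ — into the span of strictly lower weight spaces for the Weyl-translate, making $\langle v_\alpha, \rho_\alpha(g) v_\alpha\rangle = 0$. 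More cleanly: $\langle v_\alpha, \rho_\alpha(g) v_\alpha\rangle \neq 0$ for all $\alpha$ says that in each $V_\alpha$ the line $\rho_\alpha(g) V^{\chi_\alpha}$ is not contained in the hyperplane $f_\alpha^\perp$; by Lemma \ref{lem:tits} the flags are cut out by these lines, and non-transversality to $\zeta_o$ in every $V_\alpha$ forces $g\eta_o$ to be in general position with $\zeta_o$, which by the standard correspondence between the open Bruhat cell and flags transverse to $\zeta_o$ means $g \in N^- P$; combined with the $M$-refinement ($g \in U = N^- M A_e N$) one concludes. Then $\delta(g\eta_o,\zeta_o)=0$ is immediate from the formula above.

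The second identity $d(g\eta_o,\eta_o) = 1$ should be handled in parallel. By \eqref{equ:distance eta eta'}, $d(g\eta_o,\eta_o) = \max_\alpha d(V_{\alpha, g\eta_o}, V^{\chi_\alpha})$, and $d$ on $\bp V_\alpha$ takes values in $[0,1]$ with value $1$ exactly when the two lines are orthogonal. Since $V^{\chi_\alpha} = \R v_\alpha$, we have $d(V_{\alpha, g\eta_o}, V^{\chi_\alpha}) = 1$ iff $\rho_\alpha(g) v_\alpha \perp v_\alpha$, i.e. iff $\langle v_\alpha, \rho_\alpha(g) v_\alpha\rangle = 0$ — the same condition as before. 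So the ``$= 1$'' claim reduces to: $g \notin U$ forces $\langle v_\alpha, \rho_\alpha(g)v_\alpha\rangle = 0$ for some $\alpha$, which is exactly what was proved for the first identity. Hence both statements follow from the single equivalence $g \in U \iff \langle v_\alpha, \rho_\alpha(g)v_\alpha\rangle \neq 0 \ \forall \alpha \in \Pi$.

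The main obstacle I anticipate is the converse direction of that equivalence — showing $g \notin U$ genuinely forces a vanishing inner product for some simple root, rather than merely for some non-simple positive root. One has to use that the full flag variety $\P$ embeds into $\prod_{\alpha \in \Pi} \bp V_\alpha$ via the \emph{simple}-root representations (Lemma \ref{lem:tits}), so that transversality of $g\eta_o$ to $\zeta_o$ is detected already on the simple-root factors, and then feed in the $M_0$-refinement via Lemmas \ref{lem:isomorphism M Z}–\ref{lem:isomorphism M Z1} to upgrade ``$g \in N^- P$'' to ``$g \in N^- M A_e N = U$''. Everything else is bookkeeping with the definitions of $d$, $\delta$ and the good-norm identities \eqref{equ:good norm}.
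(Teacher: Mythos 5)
Your main route is essentially the paper's own proof: both identities reduce to the vanishing of $\langle v_\alpha,\rho_\alpha(g)v_\alpha\rangle$ for some simple root $\alpha$, and this vanishing is obtained from the Bruhat decomposition $g\in N^-MA_e wN$ with $w\neq e$, since $w$ acts non-trivially on $\frak b^*$, so $w\chi_\alpha\neq\chi_\alpha$ for some $\alpha\in\Pi$ and $\rho_\alpha(g)v_\alpha$ then lies in weight spaces strictly below $\chi_\alpha$, hence orthogonal to $v_\alpha$. Your ``more cleanly'' variant (detecting the open cell by transversality of $g\eta_o$ to $\zeta_o$ in each $\bp V_\alpha$) is unnecessary and would itself need the same weight computation on the lower Bruhat cells to be justified, so the direct cell argument you sketch first is the one to keep.
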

\begin{proof}
	By Bruhat's decomposition, we know that $g\in N^-A_ewN$ for some non-trivial element $w$ in the Weyl group. Then the action of $w$ on $\frak b^*$ is non-trivial, there exists a simple root $\alpha$ such that $w(\chi_{\alpha})\neq\chi_{\alpha}$.
	
	Then $\rho_{\alpha}(g)v_\alpha=\rho_{\alpha}(N^-MA_e(g))\rho_{\alpha}(w)v_\alpha$, where $N^-MA_e(g)$ is the corresponding part of $g$ in the Bruhat decomposition. The vector $\rho_{\alpha}(w)v_\alpha$ is of weight $w(\chi_{\alpha})<\chi_{\alpha}$ by the definition of highest weight and $\rho_{\alpha}(N^-MA_e(g))\rho_{\alpha}(w)v_\alpha$ is a linear combination of vectors of weight less than or equal to $w(\chi_{\alpha})$. Since vectors of different weights are orthogonal, we obtain
	\[\delta(V_{\alpha,g\eta_o},V^*_{\alpha,\zeta_o})=\langle v_\alpha,\rho_{\alpha}(g)v_\alpha\rangle=0 \]
	and
	\[d(V_{\alpha,g\eta_o},V_{\alpha,\eta_o})=\frac{\|\rho_{\alpha}(g)(v_\alpha)\wedge v_\alpha\|}{\|\rho_{\alpha}(g)v_\alpha\| \|v_\alpha\|}=1. \]
	The proof is complete.
\end{proof}
\begin{lem}\label{lem:locally constant}
	For $k_1,k_2,k_3$ in $K$, if $\delta(k_2\eta_o,k_1\zeta_o)> d( k_2\eta_o, k_3\eta_o)$, then
	\begin{equation*}
		\sg(k_1,k_2)=\sg(k_1,k_3)\sg(k_2,k_3).
	\end{equation*}
\end{lem}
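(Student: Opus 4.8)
The plan is to transfer the statement to the family of representations $\rho_\alpha$, $\alpha\in\Pi$, of Lemma~\ref{lem:tits}, where it reduces to an elementary fact about signs of inner products in a Euclidean space. First I would fix, once and for all, a unit highest weight vector $v_\alpha\in V^{\chi_\alpha}\subset V_\alpha$ for each $\alpha\in\Pi$, and for $k\in K$ set $u^\alpha_k=\rho_\alpha(k)v_\alpha$; this is again a unit vector because $\|\cdot\|$ is a good norm. Since $\rho_\alpha(K)$ is orthogonal, $\langle v_\alpha,\rho_\alpha(k_i^{-1}k_j)v_\alpha\rangle=\langle u^\alpha_{k_i},u^\alpha_{k_j}\rangle$ for $k_i,k_j\in K$. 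Moreover, under the good-norm identification $V_\alpha^*\simeq V_\alpha$ (sending $V^*_{\alpha,\zeta_o}=(V^*)^{-\chi_\alpha}$ to $\R v_\alpha$ and $\rho^*_\alpha(k)$ to $\rho_\alpha(k)$ for $k\in K$), the distances of Section~\ref{sec:actionflag} read, for $k_i,k_j\in K$,
\[
\delta(k_i\eta_o,k_j\zeta_o)=\min_{\alpha\in\Pi}|\langle u^\alpha_{k_i},u^\alpha_{k_j}\rangle|,\qquad d(k_i\eta_o,k_j\eta_o)=\max_{\alpha\in\Pi}\|u^\alpha_{k_i}\wedge u^\alpha_{k_j}\|,
\]
and likewise $\delta(k_i^{-1}k_j\,\eta_o,\zeta_o)=\min_{\alpha\in\Pi}|\langle u^\alpha_{k_i},u^\alpha_{k_j}\rangle|$.

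The elementary input I would isolate is: if $w_1,w_2,w_3$ are unit vectors in a Euclidean space with $|\langle w_1,w_2\rangle|>\|w_2\wedge w_3\|$, then $\langle w_1,w_3\rangle\neq0$, $\langle w_2,w_3\rangle\neq0$, and $\sign\langle w_1,w_2\rangle=\sign\langle w_1,w_3\rangle\,\sign\langle w_2,w_3\rangle$. One proves this by noting that changing the sign of $w_1$ or of $w_3$ affects neither the hypothesis nor the conclusion, so one may assume $\langle w_1,w_2\rangle\ge0$ and $\langle w_2,w_3\rangle\ge0$; writing $\theta_{ij}\in[0,\pi]$ for the angle between $w_i$ and $w_j$, one has $\theta_{12},\theta_{23}\in[0,\pi/2]$ and $\|w_2\wedge w_3\|=\sin\theta_{23}$, so the hypothesis $\cos\theta_{12}>\sin\theta_{23}=\cos(\pi/2-\theta_{23})$ forces $\theta_{12}+\theta_{23}<\pi/2$, whence $\theta_{13}\le\theta_{12}+\theta_{23}<\pi/2$ by the triangle inequality for the angular metric on the unit sphere, so all three inner products are strictly positive and the sign identity is trivial. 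Now, by the hypothesis of the lemma, for every $\alpha\in\Pi$ one has $|\langle u^\alpha_{k_1},u^\alpha_{k_2}\rangle|\ge\delta(k_2\eta_o,k_1\zeta_o)>d(k_2\eta_o,k_3\eta_o)\ge\|u^\alpha_{k_2}\wedge u^\alpha_{k_3}\|$, so the statement above applies to the triple $(u^\alpha_{k_1},u^\alpha_{k_2},u^\alpha_{k_3})$.

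I would then extract the consequences. For every $\alpha$ and every $(i,j)\in\{(1,2),(1,3),(2,3)\}$ we get $\langle v_\alpha,\rho_\alpha(k_i^{-1}k_j)v_\alpha\rangle=\langle u^\alpha_{k_i},u^\alpha_{k_j}\rangle\neq0$, i.e.\ $\delta(k_i^{-1}k_j\,\eta_o,\zeta_o)>0$, so $k_i^{-1}k_j\in U$ by Lemma~\ref{lem:U}; and since $\theta$ fixes $K$, $\sg(k_i,k_j)=\sg(k_i^{-1}k_j)$ is a genuine element of $M$ in each case. To prove $\sg(k_1^{-1}k_2)=\sg(k_1^{-1}k_3)\sg(k_2^{-1}k_3)$ in $M$, I would use that an element of $M$ is determined by its image in $M_1=M/M_0\simeq\pi_0(G)$ together with the tuple $(\chi^\up_\alpha(\cdot))_{\alpha\in\Pi}$, the latter because $(\chi^\up_\alpha)_{\alpha}$ is injective on $M_0$ by Lemma~\ref{lem:isomorphism M Z1}, hence on each $M_0$-coset. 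For the $M_1$-part: the image of $\sg(g)$ in $M_1$ is the class of $g$ in $\pi_0(G)$, which is elementary abelian of exponent $2$ (Section~\ref{sec:sign group}), so the $M_1$-parts of $\sg(k_1^{-1}k_2)$ and of $\sg(k_1^{-1}k_3)\sg(k_2^{-1}k_3)$ both equal the class of $k_1k_2$. For each character: Lemma~\ref{lem:isomorphism M Z} gives $\chi^\up_\alpha(\sg(k_i^{-1}k_j))=\sign\langle u^\alpha_{k_i},u^\alpha_{k_j}\rangle$, so, $\chi^\up_\alpha$ being a homomorphism, the elementary input yields
\[
\chi^\up_\alpha\!\big(\sg(k_1^{-1}k_3)\sg(k_2^{-1}k_3)\big)=\sign\langle u^\alpha_{k_1},u^\alpha_{k_3}\rangle\,\sign\langle u^\alpha_{k_2},u^\alpha_{k_3}\rangle=\sign\langle u^\alpha_{k_1},u^\alpha_{k_2}\rangle=\chi^\up_\alpha\!\big(\sg(k_1^{-1}k_2)\big)
\]
for every $\alpha$, which finishes the argument.

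The main obstacle here is not conceptual: the only substantive point is the one-line spherical-triangle-inequality statement about inner products. All the care goes into the bookkeeping with the sign group --- relating $\sg$ to the $\rho_\alpha$ through Lemma~\ref{lem:isomorphism M Z}, tracking the $M_0/M_1$ decomposition, and, above all, remembering that $\sg$ of a product is \emph{not} $\sg$ applied to the product of the arguments, so the desired identity has to be checked coordinate by coordinate (on $M_1$ and on each $\chi^\up_\alpha$) rather than by a naive cocycle manipulation.
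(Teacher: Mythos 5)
Your proof is correct and follows essentially the same route as the paper: both reduce the identity in $M$ to sign comparisons of $\langle v_\alpha,\rho_\alpha(\cdot)v_\alpha\rangle$ via Lemmas \ref{lem:U}, \ref{lem:isomorphism M Z} and \ref{lem:isomorphism M Z1}, and both rest on the same elementary angle estimate (your $\theta_{12}+\theta_{23}<\pi/2$ argument is the paper's $\sin\vartheta_1>\sin\vartheta_2$ picture). The only difference is bookkeeping: the paper first normalizes ($k_1=e$, $k_i\in K^o$, $\sg(k_2,k_3)=e$) using Lemma \ref{lem:gg'm} and the $K$-invariance of the data, whereas you verify the identity in $M$ componentwise, on $M_1\simeq\pi_0(G)$ and on each character $\chi^\up_\alpha$ directly.
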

\begin{figure}
	\begin{center}
		\begin{tikzpicture}[scale=2]
		\draw [domain=0:2*pi, samples=200] plot ({2*cos(\x r)},{2*sin(\x r)});
		\draw [dotted](0,0) -- (2,0);
		\draw (2,0) node[right]{$v_1$};
		\draw (0,-2) -- (0,2);
		\draw (0,2) node[above]{$v_1^\perp$};
		\draw (0,0) -- (1,{sqrt(3)});
		\draw (1,{sqrt(3)}) node[above]{$v_3$};
		\draw (0,0) -- ({sqrt(3)}, 1);
		\draw ({sqrt(3)},1) node[right]{$v_2$};
		\draw [domain=pi/6:pi/2, samples=200] plot ({0.5*cos(\x r)},{0.5*sin(\x r)});
		\draw (1/4,{sqrt(3)/4}) node[above]{$\vartheta_1$};
		\draw [domain=pi/6:pi/3, samples=200] plot ({cos(\x r)},{sin(\x r)});
		\draw ({sqrt(2)/2},{sqrt(2)/2}) node[above]{$\vartheta_2$};
		\end{tikzpicture}
	\end{center}
	\caption{Angle}\label{fig:angle}
\end{figure}
\begin{proof}
	By Lemma \ref{lem:gg'm}, it is sufficient to consider $k_i\in K^o$.
	By definition and \eqref{equ:dual}, we have $\delta(k_2\eta_o,k_1\zeta_o)=\delta(k_1^{-1}k_2\eta_o,\zeta_o)$ and $\sg(k_1,k_2)=\sg(id,\,k_1^{-1}k_2)$. Hence, we can suppose that $k_1=e$, the identity element in $K$. By $d(k_3^{-1}k_2\eta_o,\eta_o)=d(k_2\eta_o,k_3\eta_o)<1$ and Lemma \ref{lem:U}, we have $m(k_2,k_3)\in M$. Lemma \ref{lem:isomorphism M Z} and Lemma \ref{lem:isomorphism M Z1} imply that it is sufficient to prove that if $\delta(k_2\eta_o,\zeta_o)>d(k_2\eta_o,k_3\eta_o)$ and $\sg(k_2,k_3)=e$, then for every simple root $\alpha$ we have
	\[\sign\l v_\alpha,\rho_\alpha(k_2)v_\alpha\r=\sign\l v_\alpha,\rho_\alpha(k_3)v_\alpha\r. \]
	
	Fix a simple root $\alpha$ in $\Pi$. Abbreviate $v_\alpha,\rho_\alpha(k_2)v_\alpha, \rho_\alpha(k_3)v_\alpha$ to $v_1, v_2,v_3$. Let $\vartheta_1$ be the angle between the vector $v_2$ and the hyperplane $v_1^{\perp}$ and let $\vartheta_2$ be the angle between $v_2$ and $v_3$. Due to $\sg(k_2,k_3)=e$, by Lemma \ref{lem:isomorphism M Z} this implies
	$$0<\l v_1,\,k_2^{-1}k_3v_1 \r=\l k_2v_1,k_3v_1\r=\l v_2, v_3\r,$$
	hence the angle $\vartheta_2$ is acute. The image of $\zeta_0$ in $\bp V^*_\alpha$ is given by $\R \l v_1,\cdot\r$. The hypothesis $\delta(k_2\eta_o,\zeta_o)>d(k_2\eta_o,k_3\eta_o)$ implies that 
	$$\sin\vartheta_1= \l v_1,v_2\r > \| v_2\wedge v_3\|= \sin\vartheta_2. $$ 
	Hence $\vartheta_2<\vartheta_1$ and $v_2,v_3$ are in the same side of the hyperplane $v_1^\perp$, which implies $\sign \l v_1,v_2\r=\sign \l v_1,v_3\r$. Please see figure \ref{fig:angle}.
\end{proof}
We state a consequence of Lemma \ref{lem:locally constant} which will be used in Section \ref{sec:sumfou} to get independence of certain measures $\lambda_j$.
\begin{lem}\label{lem:ghkk'}
	Let $\delta<1/2$, let $g,h$ be in $G$ and $k,k'$ in $K$. If $h,k,k'$ satisfy 
	\[ d(k\eta_o,k'\eta_o)<\delta,\ k\eta_o,k'\eta_o\in B^m_h(\delta), \eta^M_h\in B^m_g(3\delta)\text{ and }\gap(h)< \delta^2, \]
	then
	\begin{equation*}
		\sg(k_g,ghk)=\sg(\ell_g^{-1},hk')\sg(k,k').
	\end{equation*} 
\end{lem}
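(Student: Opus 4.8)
I plan to strip off $g$ with Lemma~\ref{lem:dirgk}, localize at the scale of $g$ with Lemma~\ref{lem:locally constant}, and then establish the residual identity $\sg(hk,hk')=\sg(k,k')$ by a second round of Lemma~\ref{lem:dirgk} and Lemma~\ref{lem:locally constant} at the scale of $h$. I fix Cartan decompositions $g=k_ga_g\ell_g$ and $h=k_ha_h\ell_h$ as in the statement (so $\eta^M_h=k_h\eta_o$, $\zeta^m_g=\ell_g^{-1}\zeta_o$, $\zeta^m_h=\ell_h^{-1}\zeta_o$), and use freely that $d$ and $\delta$ are $K$-invariant, that $\delta(\cdot,\zeta)$ is $1$-Lipschitz for $d$ (being a distance to a subset), that $\delta(\eta_o,\zeta_o)=1$ by \eqref{equ:delta etao}, and that $\sg$ factors through $\P_0\times\P_0$ — this last point lets me replace $hk,hk'$ by $k_2,k_3\in K$ with $k_2\k_o=hk\k_o$, $k_3\k_o=hk'\k_o$ before invoking Lemma~\ref{lem:locally constant}. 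By Lemma~\ref{lem:dirgk}, $\sg(k_g,ghk)=\sg(\ell_g^{-1},hk)$, so it suffices to prove $\sg(\ell_g^{-1},hk)=\sg(\ell_g^{-1},hk')\sg(k,k')$. Since $\gap(h)<\delta^2$ and $k\eta_o,k'\eta_o\in B^m_h(\delta)$, Lemma~\ref{lem:gBmgmul} shows $h$ is $\gap(h)\delta^{-2}$-Lipschitz on $B^m_h(\delta)$ and sends it into $b^M_h(\gap(h)\delta^{-1})$, whence $d(hk\eta_o,hk'\eta_o)<\delta$ and $d(hk\eta_o,\eta^M_h)<\delta$; combining the latter with $\eta^M_h\in B^m_g(3\delta)$ and $1$-Lipschitzness gives $\delta(hk\eta_o,\zeta^m_g)>2\delta$, and combining it with $\delta(\eta_o,\zeta_o)=1$ gives $\delta(hk\eta_o,k_h\zeta_o)>1-\delta>\delta$ (here $\delta<1/2$ is used).

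For the localization at the $g$-scale, pick $k_2,k_3\in K$ as above, so that $k_2\eta_o=hk\eta_o$, $k_3\eta_o=hk'\eta_o$; then $\delta(k_2\eta_o,\ell_g^{-1}\zeta_o)=\delta(hk\eta_o,\zeta^m_g)>2\delta>\delta>d(hk\eta_o,hk'\eta_o)=d(k_2\eta_o,k_3\eta_o)$, so Lemma~\ref{lem:locally constant} applies with $(k_1,k_2,k_3)=(\ell_g^{-1},k_2,k_3)$ and, since $\sg$ factors through $\P_0\times\P_0$, this reads $\sg(\ell_g^{-1},hk)=\sg(\ell_g^{-1},hk')\sg(hk,hk')$. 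Thus the lemma reduces to the claim $\sg(hk,hk')=\sg(k,k')$.

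To prove the claim, apply Lemma~\ref{lem:dirgk} to $h$: $\sg(k_h,hk)=\sg(\ell_h^{-1},k)$ and $\sg(k_h,hk')=\sg(\ell_h^{-1},k')$. Using $\delta(k\eta_o,\zeta^m_h)\ge\delta>0$, $\delta(k'\eta_o,\zeta^m_h)\ge\delta>0$ and $\delta(k'\eta_o,k\zeta_o)\ge 1-d(k\eta_o,k'\eta_o)>0$, Lemma~\ref{lem:U} shows these sign values, as well as $\sg(k,k')$, all lie in $M$. Since $\delta(k\eta_o,\zeta^m_h)\ge\delta>d(k\eta_o,k'\eta_o)$, Lemma~\ref{lem:locally constant} with $(k_1,k_2,k_3)=(\ell_h^{-1},k,k')$ gives $\sg(\ell_h^{-1},k)=\sg(\ell_h^{-1},k')\sg(k,k')$, i.e.
\[\sg(k_h,hk)=\sg(k_h,hk')\,\sg(k,k').\]
On the other hand $\delta(k_2\eta_o,k_h\zeta_o)=\delta(hk\eta_o,k_h\zeta_o)>1-\delta>\delta>d(k_2\eta_o,k_3\eta_o)$, so Lemma~\ref{lem:locally constant} with $(k_1,k_2,k_3)=(k_h,k_2,k_3)$ gives, again after passing through $\P_0$,
\[\sg(k_h,hk)=\sg(k_h,hk')\,\sg(hk,hk').\]
Comparing the right-hand sides and cancelling $\sg(k_h,hk')\in M$ — note $\sg(hk,hk')\neq 0$, since otherwise the second identity would force $\sg(k_h,hk)=0$ — yields $\sg(hk,hk')=\sg(k,k')$, which together with the previous two paragraphs proves the lemma.

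The part I expect to be the real content is the idea that the $M$-valued discrepancy must be evaluated at two separate scales ($g$ and $h$), together with the claim $\sg(hk,hk')=\sg(k,k')$ it produces. The hypotheses $d(k\eta_o,k'\eta_o)<\delta$, $k\eta_o,k'\eta_o\in B^m_h(\delta)$, $\eta^M_h\in B^m_g(3\delta)$ and $\gap(h)<\delta^2$, with $\delta<1/2$, are exactly what force the relevant three-point configurations into the region governed by Lemma~\ref{lem:locally constant} on each of the three occasions it is used, and the bookkeeping of those inequalities — together with the checks, via Lemma~\ref{lem:U}, that no sign value degenerates to $0$ so that one may cancel inside the finite abelian group $M$ — is where care is required; the rest is formal.
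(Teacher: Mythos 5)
Your proof is correct, and its first half — stripping off $g$ with Lemma \ref{lem:dirgk} and then applying Lemma \ref{lem:locally constant} at the scale of $g$, after pushing $k\eta_o,k'\eta_o$ forward by $h$ via Lemma \ref{lem:gBmgmul} — is exactly the paper's argument. Where you genuinely diverge is in the residual claim $\sg(hk,hk')=\sg(k,k')$, which the paper isolates as Lemma \ref{lem:sg k k'} and proves metrically: it normalizes the Cartan decomposition of $h$ so that the relevant signs become trivial, then passes to the covering $\P_0$ and uses the triangle inequality through the lift of $\eta^M_h$ together with Lemma \ref{lem:p0 p} (the criterion that $d_0<1$ forces trivial sign, from Appendix \ref{sec:equi distance}). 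You instead stay entirely inside the sign calculus: Lemma \ref{lem:dirgk} applied to $h$, plus two further applications of Lemma \ref{lem:locally constant} — one based at $\ell_h^{-1}$ for the source points $k,k'$, one based at $k_h$ for the image points $hk,hk'$, the latter legitimized by $d(hk\eta_o,\eta^M_h)<\delta$, $\delta(k_h\eta_o,k_h\zeta_o)=1$ and $\delta<1/2$ — followed by cancellation of the common factor $\sg(k_h,hk')=\sg(\ell_h^{-1},k')\in M$, whose nonvanishing you correctly extract from $\delta(k'\eta_o,\zeta^m_h)\geq\delta>0$ and Lemma \ref{lem:U}. This route buys independence from the appendix distance $d_0$ and from Lemma \ref{lem:p0 p}, and it avoids the paper's normalization of $\ell_h$, at the price of one extra three-point configuration to verify and some explicit nonvanishing bookkeeping. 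One small remark: at the cancellation your parenthetical justifies $\sg(hk,hk')\neq 0$, which is not the factor you cancel; but since you had already shown $\sg(k_h,hk')\in M$, and multiplication by an element of $M$ is injective on $M\cup\{0\}$, the cancellation stands as written.
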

\begin{proof}
	By Lemma \ref{lem:gg'm}, it is sufficient to prove the case $\sg(k,k')=e$ and $k,k'$ in $K^o$. By Lemma \ref{lem:dirgk},
	\begin{align}\label{equ:kgghk}
	\sg(k_g,ghk)=\sg(\ell_g^{-1},hk).
	\end{align}	
	Denote $k\eta_o,k'\eta_o$ by $\eta,\eta'$. Then by Lemma \ref{lem:gBmgmul}, we have $h\eta,h\eta'\in b^M_h(\delta)\subset B^m_g(2\delta)$. Hence by $d(h\eta,h\eta')<2\delta\leq \delta(h\eta,\zeta^m_g)=\delta(h\eta,\ell_g^{-1}\zeta_o)$ and Lemma \ref{lem:locally constant}, we have
	\begin{equation}\label{equ:ghkghk'}
		\sg(\ell_g^{-1},hk)=\sg(\ell_g^{-1},hk')\sg(hk,hk'). 
	\end{equation}
	The main point here is to use the following lemma.
	\begin{lem}\label{lem:sg k k'}
		Under the same assumption as in Lemma \ref{lem:ghkk'}, we have
		\[ \sg(hk,hk')=\sg(k,k'). \]
	\end{lem}
	Combined with \eqref{equ:kgghk} and \eqref{equ:ghkghk'}, the proof is complete.
\end{proof}
\begin{proof}[Proof of Lemma \ref{lem:sg k k'}]
	Without loss of generality, suppose that $\sg(k,k')=e$. Due to $k\eta_o\in B^m_h(\delta)$, we can chose a $\ell_h$ in the Cartan decomposition $h=k_ha_h\ell_h$ such that $\sg(\ell_h^{-1},k)=e$. By Lemma \ref{lem:locally constant}, the hypothesis that $\delta(k\eta_o,\ell_h^{-1}\zeta_o)>\delta>d(k\eta_o,k'\eta_o)$ implies $\sg(\ell_h^{-1},k')=\sg(\ell_h^{-1},k)=e$. By Lemma \ref{lem:dirgk}, we conclude that
	$e=\sg(k_h,hk)=\sg(\ell_h^{-1},k)=\sg(\ell_h^{-1},k')=\sg(k_h,hk')$. Here we need a distance $d_0$ on $\P_0$, which is defined in Appendix \ref{sec:equi distance}. Let $\k=k\k_o$ and $\k'=k'\k_o$. By Lemma \ref{lem:p0 p} with $z_h=k_hz_o$
	\begin{equation}\label{equ:hkhk'}
	d_0(h\k,h\k')\leq d_0(h\k,\k_h)+d_0(\k_h,h\k')\leq d(hk\eta_o,\eta^M_h)+d(\eta^M_h,hk'\eta_o). 
	\end{equation}
	Hence by \eqref{equ:hkhk'}, we have $d_0(h\k,h\k')\leq 2\delta<1$, which implies $\sg(hk,hk')=e$ due to Lemma \ref{lem:p0 p}. 
\end{proof}
The proof of Lemma \ref{lem:sg k k'} also says that if $\k,\k'$ are close and away from the bad subvariety defined by $h$, and if the gap of $h$ is large, then $h\k,h\k'$ are also close.
\subsection{Derivative}
\label{sec:derfla}
Suppose in addition that the semisimple part of $\bf G$ is simply connected except Lemma \ref{lem:lift varphi}.
Let $\varphi$ be a $C^1$ function on $\P_0$. We will give some property of the directional derivative of $\varphi$. We write $\partial_\alpha\varphi$
\nomentry{$\partial_\alpha\varphi$}{}
 for the directional derivative $\partial_{Y_\alpha}\varphi$, where $\alpha$ is a simple root. 
\begin{defi}[Arc length] Let $\k_1,\k_2$ be two points in the same $\alpha$-circle in $\P_0$. If $\sg(\k_1,\k_2)=e$, we define the arc length distance between $\k_1,\k_2$ by
	\[d_A(\k_1,\k_2):=\arcsin d(\pi \k_1,\pi \k_2).\nomentry{$d_A(\k_1,\k_2)$}{}\]
\end{defi}
\begin{rem}
	This is a restriction of the left $K$-invariant distance, which can be induced by the $K$-invariant Riemann metric $d_1$ in the appendix.
\end{rem}
\begin{lem}[The Newton-Leibniz formula]
	Let $\k_1,\k_2$ be two points in the same $\alpha$-circle on $\P_0$ such that $\sg(\k_1,\k_2)=e$. Let $u=d_A(\k_1,\k_2)$ and let $\gamma:[0,u]\rightarrow \P_0$ be the curve in the $\alpha$-circle connecting $\k_1,\k_2$ with unit speed (in the sense of arc length). Then for $g$ in $G$
	\begin{equation}\label{equ:newlei}
		\varphi(g \k_1)-\varphi(g \k_2)=\pm\int_0^u\partial_\alpha\varphi_{g\gamma(s)}e^{-\alpha\sigma(g,\gamma(s))}\dd s,
	\end{equation}
	where the sign only depends on the direction of $\gamma$.
\end{lem}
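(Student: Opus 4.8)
The plan is to reduce the identity to the ordinary fundamental theorem of calculus applied to the scalar function $s\mapsto\varphi(g\gamma(s))$ on $[0,u]$, and then to evaluate its derivative using the description of the $G$-action on $T\P_0$ established around \eqref{equ:adjoint P}. Observe first that $\gamma$ is, up to the direction of traversal, the restriction to $[0,u]$ of an integral curve of the $\alpha$-bundle as in \eqref{equ:alpha circle}: if $\gamma(0)=k\k_o$ then $\gamma(s)=k\exp(\pm sK_\alpha)\k_o$ for a choice of sign. The hypothesis $\sg(\k_1,\k_2)=e$ is precisely what makes the short arc of the $\alpha$-circle from $\k_1$ to $\k_2$ well defined and of arc length exactly $u=d_A(\k_1,\k_2)$, so this parametrisation is consistent with the unit-speed normalisation; indeed, by Lemma \ref{lem:alpha circle} the $\alpha$-circle through $\gamma(s)$ maps into $\bp V_\alpha$ as a projective line on which $\rho_\alpha(O_\alpha)$ acts by rotations, and with the good norm the parameter of $\exp(tK_\alpha)$ coincides with the rotation angle, hence with $d_A$. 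In particular $\gamma$ is real-analytic, and since $L_g$ is a diffeomorphism of $\P_0$ and $\varphi\in C^1(\P_0)$, the map $s\mapsto\varphi(g\gamma(s))$ is $C^1$, so
\begin{equation*}
\varphi(g\gamma(u))-\varphi(g\gamma(0))=\int_0^u\frac{\dd}{\dd s}\varphi(g\gamma(s))\,\dd s .
\end{equation*}

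Next I would compute the integrand. In the trivialisation $T\P_0\simeq\P_0\times\frak g/\frak p$ the velocity of $\gamma$ is the vector $\dot\gamma(s)=\pm(\gamma(s),Y_\alpha)$ lying in the $\alpha$-bundle, with coefficient exactly $1$ by the unit-speed normalisation just discussed (recall that $K_\alpha\equiv Y_\alpha$ in $\frak g/\frak p$, as noted after \eqref{equ:alpha circle}). Applying the action map $L_g(\k,Y)=(\k',\ad_pY)$ together with formula \eqref{equ:adjoint P}, which states that on $(\frak g^{-\alpha}+\frak p)/\frak p$ the element $p\in A_eN$ with $gk=k'p$ acts by the scalar $e^{-\alpha\sigma(g,\k)}$, we get $L_g(\dot\gamma(s))=\pm e^{-\alpha\sigma(g,\gamma(s))}\,(g\gamma(s),Y_\alpha)$, hence
\begin{equation*}
\frac{\dd}{\dd s}\varphi(g\gamma(s))=\dd\varphi_{g\gamma(s)}\big(L_g(\dot\gamma(s))\big)=\pm e^{-\alpha\sigma(g,\gamma(s))}\,\partial_\alpha\varphi_{g\gamma(s)} ,
\end{equation*}
the last step being linearity of $\dd\varphi_{g\gamma(s)}$ and the definition $\partial_\alpha\varphi=\partial_{Y_\alpha}\varphi$; here $\sigma(g,\gamma(s))$ is the Iwasawa cocycle evaluated with the convention $\sigma(g,k\k_o)=\sigma(g,k\eta_o)$.

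Substituting into the first display and using $\{\gamma(0),\gamma(u)\}=\{\k_1,\k_2\}$ yields
\begin{equation*}
\varphi(g\k_1)-\varphi(g\k_2)=\pm\int_0^u\partial_\alpha\varphi_{g\gamma(s)}\,e^{-\alpha\sigma(g,\gamma(s))}\,\dd s ,
\end{equation*}
where the intermediate signs collapse into a single sign depending only on which endpoint of $[0,u]$ equals $\k_1$ and on the orientation of $\gamma$, i.e.\ only on the direction of $\gamma$, as claimed. The only genuinely delicate part of the argument is the sign bookkeeping: coordinating the chosen representative of the $\alpha$-direction in $\frak g/\frak p$, the orientation of $\gamma$, and the passage between the cocycle on $\P_0$ and on $\P$. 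Once \eqref{equ:adjoint P} and the trivialisation of $T\P_0$ are granted, no estimates intervene and the computation is purely formal.
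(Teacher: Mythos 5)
Your argument is correct and is essentially the paper's own proof: both parametrise the arc as $\gamma(s)=k_1\exp(sK_\alpha)\k_o$, apply the fundamental theorem of calculus to $s\mapsto\varphi(g\gamma(s))$, and evaluate the derivative via the trivialisation of $T\P_0$ and formula \eqref{equ:adjoint P}, with the sign absorbed into the orientation of $\gamma$. Your extra remarks on the unit-speed normalisation (the $\exp(tK_\alpha)$ parameter equalling the arc length, which uses $\sg(\k_1,\k_2)=e$ to stay below angle $\pi/2$) only make explicit what the paper leaves implicit.
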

\begin{rem}\label{rem:direction}
	The $\alpha$-circle already has an orientation given by $Y_\alpha$. The sign is negative if the curve $\gamma$ is negatively oriented.
\end{rem}
\begin{proof}
Without loss of generality, suppose that $\gamma$ is positively oriented. Recall that $K_\alpha=Y_\alpha-X_\alpha$ for $\alpha\in\Pi$. The images of $K_\alpha$ and $Y_\alpha$ coincide in $\frak g/\frak p$.
Then $k_2=k_1\exp(uK_\alpha)$ and $\gamma(s)=k_1\exp(sK_\alpha)\k_o$ for $s\in[0,u]$. By the Newton-Leibniz formula and \eqref{equ:adjoint P} we have
\begin{equation*}
\begin{split}
\varphi(g\k_2)-\varphi(g \k_1)&=\int_0^u\dd\varphi_{g\gamma(s)}\dd g_{\gamma(s)}K_\alpha\dd s=\int_0^u\dd\varphi_{g\gamma(s)}\dd g_{\gamma(s)}Y_\alpha\dd s\\
&=\int_0^u\dd\varphi_{g\gamma(s)}\exp(-\alpha\sigma(g,\gamma(s)))Y_\alpha\dd s=\int_0^u\partial_\alpha\varphi_{g\gamma(s)}e^{-\alpha\sigma(g,\gamma(s))}\dd s.
\end{split}
\end{equation*}
The proof is complete.
\end{proof}

	For $m$ in $M$ and $\alpha$ in $\Pi$, by Lemma \ref{lem:sign m} with the adjoint representation of $G$ on $\frak g$, due to $Y_\alpha\in \frak g^{-\alpha}$, we have $\ad_mY_\alpha=(-\alpha)^\up(m)Y_\alpha=\alpha^\up(m)^{-1}Y_\alpha=\alpha^\up(m)Y_\alpha$. The last equality is due to $\alpha^\up(m)\in\{\pm 1 \}$. Thanks to \eqref{equ:right M}, we have
\begin{lem}\label{lem:kmk}
	Let $m$ be in $M$ and let $\varphi$ be a $C^1$ function on $\P_0$ which is right $M$-invariant. We have for $\k=k\k_o$ in $\P_0$
	\begin{equation*}
		\partial_\alpha\varphi_{km\k_o}=\alpha^\up(m)\partial_\alpha\varphi_\k.
	\end{equation*}
\end{lem}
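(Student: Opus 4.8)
The plan is to compute the directional derivative $\partial_\alpha\varphi_{km\k_o}$ directly from the definition of the tangent-bundle action, using the description $T\P_0\simeq \P_0\times \frak g/\frak p$ with $\frak g^{-\alpha}$ as the distinguished line and the formula \eqref{equ:right M} for the right $M$-action. Write $\k=k\k_o$ and consider the curve $t\mapsto km\exp(tY_\alpha)\k_o$ passing through $km\k_o$ at $t=0$. By definition $\partial_\alpha\varphi_{km\k_o}=\frac{d}{dt}\big|_{t=0}\varphi\big(km\exp(tY_\alpha)\k_o\big)$.

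The key step is to slide the $m$ past the one-parameter subgroup: $km\exp(tY_\alpha)=k\exp(t\,\ad_m Y_\alpha)m$. Since $\varphi$ is right $M$-invariant, $\varphi(km\exp(tY_\alpha)\k_o)=\varphi(k\exp(t\,\ad_m Y_\alpha)\k_o)$. Now I invoke the computation recorded just before the statement: because $Y_\alpha\in\frak g^{-\alpha}$ and $\alpha^\up(m)\in\{\pm1\}$, one has $\ad_m Y_\alpha=(-\alpha)^\up(m)Y_\alpha=\alpha^\up(m)^{-1}Y_\alpha=\alpha^\up(m)Y_\alpha$ (using Lemma \ref{lem:sign m} applied to the adjoint representation, whose weight on $\frak g^{-\alpha}$ is $-\alpha$). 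Hence $\varphi(k\exp(t\,\ad_m Y_\alpha)\k_o)=\varphi(k\exp(t\,\alpha^\up(m)Y_\alpha)\k_o)$, and differentiating at $t=0$ by the chain rule pulls out the scalar $\alpha^\up(m)$, giving $\partial_\alpha\varphi_{km\k_o}=\alpha^\up(m)\,\partial_\alpha\varphi_{\k}$. (Strictly, the definition of $\partial_\alpha$ uses the class of $Y_\alpha$ in $\frak g/\frak p$, and I should note that $\ad_m$ preserves the line $\frak g^{-\alpha}$ and hence descends to $\frak g/\frak p$ compatibly, which is exactly the observation made in Section \ref{sec:tangent} that $M\subset A$ normalizes $\frak g^{-\alpha}$.)

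There is essentially no analytic obstacle here; the content is purely the bookkeeping that (i) the right $M$-action on the $\alpha$-bundle is by the scalar $\alpha^\up(m)$, which is \eqref{equ:right M} combined with Lemma \ref{lem:sign m}, and (ii) right $M$-invariance of $\varphi$ lets us convert a base-point translation by $m$ into a rescaling of the tangent direction. The one point that deserves care — and which I expect to be the only place a reader could stumble — is making sure the identification $T_{\k}\P_0\simeq\frak g/\frak p$ used to define $\partial_\alpha$ at the point $km\k_o$ is the left-$K$-translate one, so that the curve $km\exp(tY_\alpha)\k_o$ genuinely represents the tangent vector labelled $Y_\alpha$ at $km\k_o$; once that is fixed, the identity $\ad_m Y_\alpha=\alpha^\up(m)Y_\alpha$ does all the work and the conclusion is immediate.
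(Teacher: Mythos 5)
Your proof is correct and follows essentially the paper's own route: the paper's argument is precisely the computation $\ad_m Y_\alpha=(-\alpha)^\up(m)Y_\alpha=\alpha^\up(m)Y_\alpha$ (Lemma \ref{lem:sign m} applied to the adjoint representation, using $\alpha^\up(m)=\pm1$) combined with the right $M$-action formula \eqref{equ:right M}, which is exactly the sliding-$m$-past-$\exp(tY_\alpha)$ step you carry out. Your remark about the left-$K$-translate identification of $T_{km\k_o}\P_0$ with $\frak g/\frak p$ is the right point of care, and since every $m\in M$ has order at most two the distinction between $\ad_m$ and $\ad_{m^{-1}}$ is immaterial.
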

We say a function $\varphi$ on $\P_0$ is the lift of a function on $\bp V_\alpha$, if there exists a function $\varphi_1$ on $\bp V_\alpha$ such that for $\k=k\k_o\in\P_0$
\[\varphi(\k)=\varphi_1(V_{\alpha,k\eta_o}). \]
By Lemma \ref{lem:alpha circle}, we have
\begin{lem}\label{lem:lift varphi}
	Suppose $\bf G$ is a connected $\R$-split reductive $\R$-group. If $\varphi$ is a $C^1$ function on $\P_0$, which is the lift of a $C^1$ function on $\bp V_\alpha$, then 
	\[\partial_{\alpha'}\varphi=0 \text{ for }\alpha'\neq \alpha,\alpha'\in\Pi. \]
\end{lem}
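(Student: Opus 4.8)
The plan is to deduce the statement directly from the first part of Lemma \ref{lem:alpha circle}. Fix a point $\k = k\k_o$ in $\P_0$ and a simple root $\alpha' \in \Pi$ with $\alpha' \neq \alpha$. Under the identification $T\P_0 \simeq \P_0 \times \frak g/\frak p$, and because $K_{\alpha'}$ and $Y_{\alpha'}$ have the same image in $\frak g/\frak p$, the directional derivative $\partial_{\alpha'}\varphi$ at $\k$ is the derivative at $s=0$ of $s \mapsto \varphi\big(k\exp(sK_{\alpha'})\k_o\big)$, i.e.\ the derivative of $\varphi$ along the $\alpha'$-circle through $\k$ described in \eqref{equ:alpha circle} and the tangent-vector computation following it.

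Since $\varphi$ is the lift of a $C^1$ function $\varphi_1$ on $\bp V_\alpha$, this derivative equals the derivative at $s=0$ of $s \mapsto \varphi_1\big(V_{\alpha,\, k\exp(sK_{\alpha'})\eta_o}\big)$. Hence it suffices to prove that the $\alpha'$-circle through $\eta = k\eta_o$ has image a single point in $\bp V_\alpha$. I would apply the first part of Lemma \ref{lem:alpha circle} with the simple root taken to be $\alpha'$ and the representation taken to be $(\rho_\alpha, V_\alpha)$, whose highest weight is $\chi_\alpha$. The hypothesis required there is $\chi_\alpha(H_{\alpha'}) = 0$; this holds because on $\frak b$ the weight $\chi_\alpha$ equals $n_\alpha\tilde\omega_\alpha$, the fundamental weight $\tilde\omega_\alpha$ is dual to the basis $\{H_\beta\}_{\beta\in\Pi}$ of $\frak b$ so $\tilde\omega_\alpha(H_{\alpha'}) = 0$, and $H_{\alpha'}$ lies in $\frak b$.

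Lemma \ref{lem:alpha circle} then gives that the image in $\bp V_\alpha$ of the $\alpha'$-circle through $\eta_o$ is a point; by $G$-equivariance of $\eta \mapsto V_{\alpha,\eta}$, which sends the $\alpha'$-circle through $k\eta_o$ to $\rho_\alpha(k)$ applied to the image of the $\alpha'$-circle through $\eta_o$, the same holds at every $\k$. Therefore $\varphi$ is constant along every $\alpha'$-circle, and $\partial_{\alpha'}\varphi = 0$.

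There is essentially no obstacle here once the setup is fixed: the argument is a single application of Lemma \ref{lem:alpha circle}. The only point that deserves care is the first identification --- that $\partial_{Y_{\alpha'}}\varphi$ really is the derivative of $\varphi$ along the $\alpha'$-circle --- which follows from the description of $T\P_0$ and the formula for the tangent vector of an $\alpha'$-circle. Note also that, unlike most of this section, the argument uses neither the simply connected hypothesis nor the sign group, which is consistent with the lemma being stated for an arbitrary connected $\R$-split reductive $\bf G$.
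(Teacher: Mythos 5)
Your proof is correct and is essentially the paper's own argument: the paper proves this lemma precisely by invoking the first part of Lemma \ref{lem:alpha circle} applied to $(\rho_\alpha,V_\alpha)$ and the root $\alpha'$, using $\chi_\alpha(H_{\alpha'})=n_\alpha\tilde\omega_\alpha(H_{\alpha'})=0$ for $\alpha'\neq\alpha$, so that the lift is constant along $\alpha'$-circles. Your additional care about identifying $\partial_{Y_{\alpha'}}$ with the derivative along the $\alpha'$-circle (via the image of $K_{\alpha'}$ in $\frak g/\frak p$) and about not needing the simply connected hypothesis matches the paper's setup.
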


\subsection{Changing Flags}
\label{sec:chafla}

Suppose in addition that the semisimple part of $\bf G$ is simply connected.
This part is trivial for $\sltwo$, where the flag variety $\bp (\R^2)$ is a single $\alpha$-orbit. In this section, we suppose that the semisimple rank $\rank$ is no less than two.

On the flag variety, we have many directions in the tangent space. Roughly speaking, the action of $g$ is contracting and the contraction speed on $Y_\alpha$ is given by $e^{-\alpha\kappa(g)}$, $\alpha\in R^+$. Due to $\kappa(g)$ being in the Weyl chamber $\frak{a}^+$, the slowest directions are given by simple roots. Other directions are negligible. The main result Lemma \ref{lem:chapoi} is a quantitative version of this intuition.

We have already seen that if two points $\eta,\eta'$ are in the same $\alpha$-circle, then we have a nice formula for the difference of the value of a real function $\varphi$ at $g\eta$ and $g\eta'$, where $g\in G$. We want to compute this for $\eta,\eta'$ in general position. This is a new difficulty in higher rank. 

If we are on the euclidean space $\bf E^n$ and we are only allowed to move along the directions of coordinate vectors, for any two points $x,x'$, we can walk from $x$ to $x'$ with at most $n$ moves. But this is not true for the flag variety $\P$. Suppose that we are only allowed to move along $\alpha$ circles with $\alpha\in\Pi$. Then for two general points $\eta,\eta'$ in $\P$, it takes more than $\rank=\#\Pi$ moves to walk from one point to the other point. We try to move in each $\alpha$ circle at most one time and to make the resulting points as close as possible.

Recall that $V$ is a finite-dimensional vector space with euclidean norm. Let $l=\R(v_1\wedge v_2)$ be a point in $\bp (\wedge^2V)$, which is also a projective line in $\bp V$.
\begin{lem}\label{lem:volume area}
	Let $x=\R w_1$ be a point in $\bb PV$ and $l=\R(v_1\wedge v_2)$ be a projective line in $\bb PV$. Then we have
	\begin{equation*}
	d(l,x):=\min_{x'\in l}d(x',x)=\frac{\|v_1\wedge v_2\wedge w_1\| }{\|v_1\wedge v_2\|\|w_1\|}.
	\end{equation*}
\end{lem}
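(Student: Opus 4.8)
The plan is to reduce the statement about the distance from a point to a projective line in $\bp V$ to an elementary computation in the two‑dimensional subspace $W=\sp(v_1,v_2)$ together with the Gram–matrix identity relating $\|v_1\wedge v_2\wedge w_1\|$ to the volume of the parallelepiped spanned by $v_1,v_2,w_1$. First I would note that since both sides of the claimed identity are invariant under rescaling $v_1,v_2,w_1$, I may normalise $\|v_1\wedge v_2\|=1$ and $\|w_1\|=1$; moreover, replacing $(v_1,v_2)$ by an orthonormal basis of $W$ changes neither $l$ nor $\|v_1\wedge v_2\wedge w_1\|$, so I may assume $v_1,v_2$ are orthonormal. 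Then $\|v_1\wedge v_2 \wedge w_1\|$ equals the distance from $w_1$ to the plane $W$, i.e. $\|w_1 - \pi_W(w_1)\|$, where $\pi_W$ is the orthogonal projection onto $W$.

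Next I would compute $d(l,x)=\min_{x'\in l}d(x',x)$ directly from the definition \eqref{equ:distance x x'} of the distance on $\bp V$: writing $x'=\R u$ with $u\in W\setminus\{0\}$, we have $d(x',x)=\|u\wedge w_1\|/(\|u\|\|w_1\|)$. Decomposing $w_1 = w_1^\parallel + w_1^\perp$ with $w_1^\parallel=\pi_W(w_1)\in W$ and $w_1^\perp\perp W$, one gets $\|u\wedge w_1\|^2 = \|u\wedge w_1^\parallel\|^2 + \|u\|^2\|w_1^\perp\|^2$, because $u\wedge w_1^\perp$ is orthogonal to $u\wedge w_1^\parallel$ in $\wedge^2 V$ and has norm $\|u\|\|w_1^\perp\|$. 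Hence
\[
d(x',x)^2 = \frac{\|u\wedge w_1^\parallel\|^2}{\|u\|^2} + \|w_1^\perp\|^2 ,
\]
and the minimum over $\R u\subset W$ is attained by taking $u$ parallel to $w_1^\parallel$ (or any $u$, if $w_1^\parallel=0$), which kills the first term. Therefore $d(l,x)^2 = \|w_1^\perp\|^2 = \|v_1\wedge v_2\wedge w_1\|^2$ under our normalisation, which is exactly the claimed formula after reinstating the denominators $\|v_1\wedge v_2\|$ and $\|w_1\|$.

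The only genuinely computational point — and the mild obstacle worth spelling out — is the orthogonal‑decomposition identity $\|u\wedge w_1\|^2 = \|u\wedge w_1^\parallel\|^2 + \|u\|^2\|w_1^\perp\|^2$ for $u\in W$; this follows by expanding $u\wedge w_1 = u\wedge w_1^\parallel + u\wedge w_1^\perp$ and checking that the two summands are orthogonal in $\wedge^2 V$ (their inner product, via the induced inner product on $\wedge^2 V$, is a $2\times 2$ Gram determinant whose off‑diagonal entries vanish since $w_1^\perp\perp W\ni u, w_1^\parallel$) and that $\|u\wedge w_1^\perp\| = \|u\|\,\|w_1^\perp\|$ since $u\perp w_1^\perp$. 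Everything else is bookkeeping with scaling invariance, so the lemma follows.
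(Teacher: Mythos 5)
Your proof is correct and follows essentially the same route as the paper: both reduce the claim to identifying $\|v_1\wedge v_2\wedge w_1\|/(\|v_1\wedge v_2\|\|w_1\|)$ with the (normalized) distance from $w_1$ to the plane $\sp(v_1,v_2)$. The only difference is that where the paper invokes the geometric interpretation of $d(l,x)$ as the sine of the angle between $w_1$ and the plane, you verify the minimization over $x'\in l$ explicitly via the orthogonal decomposition $w_1=w_1^{\parallel}+w_1^{\perp}$ and the Pythagorean identity in $\wedge^2V$, which supplies in full the step the paper treats informally.
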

\begin{proof}
	The geometric meaning of $\|v_1\wedge v_2\wedge w_1\|$ is the volume of the parallelepiped generated by three vectors $v_1,v_2,w_1$. This volume can also be calculated as the product of the area of the parallelogram generated by $v_1$ and $v_2$, that is $\|v_1\wedge v_2\|$, and the distance of $w_1$ to the plane generated by $v_1$ and $v_2$, that is $d(w_1,\sp(v_1,v_2))$. Hence, we have the formula
	\begin{equation}\label{equ:volume area}
	\|v_1\wedge v_2\wedge w_1\|=\|v_1\wedge v_2\|d(w_1,\sp(v_1,v_2)). \end{equation}
	The distance $d(w_1,\sp(v_1,v_2))$ equals $\|w_1\|d(l,x)$, because the geometric sense of $d(l,x)$ is the sine of the angle between the vector $w_1$ and the plane $\sp(v_1,v_2)$.
	Together with \eqref{equ:volume area}, we have the result .
\end{proof}
\begin{lem}\label{lem:line point}
	Let $x$ be a point in $\bb PV$ and $l$ be a projective line in $\bb PV$. If $g\in GL(V)$ satisfies that $\delta(x, y^m_{g}),\delta(l,y^m_{\wedge^2g})>\delta$, then
	\begin{equation*}
	d(gl,gx)\leq \delta^{-2}\gamma_{1,3}(g)d(l,x),
	\end{equation*}
	where $\gamma_{1,3}(g)=\frac{\|\wedge^3g\|}{\|\wedge^2g\|\|g\|}$.
\end{lem}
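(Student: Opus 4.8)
The plan is to reduce the statement to the volume formula of Lemma~\ref{lem:volume area} and then apply the elementary distance control \eqref{equ:coccar} twice: once for $g$ acting on $\bp V$ and once for $\wedge^2 g$ acting on $\bp(\wedge^2 V)$.

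Write $l=\R(v_1\wedge v_2)$ and $x=\R w_1$; the case $d(l,x)=0$ (i.e.\ $w_1\in\sp(v_1,v_2)$) is trivial, so assume $v_1\wedge v_2\wedge w_1\neq 0$. Applying Lemma~\ref{lem:volume area} to the pairs $(l,x)$ and $(gl,gx)$ and taking the quotient gives
\[
\frac{d(gl,gx)}{d(l,x)}=\frac{\|gv_1\wedge gv_2\wedge gw_1\|}{\|v_1\wedge v_2\wedge w_1\|}\cdot\frac{\|v_1\wedge v_2\|}{\|gv_1\wedge gv_2\|}\cdot\frac{\|w_1\|}{\|gw_1\|}.
\]
The first factor is at most $\|\wedge^3 g\|$, because $gv_1\wedge gv_2\wedge gw_1=(\wedge^3 g)(v_1\wedge v_2\wedge w_1)$. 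For the third factor I would apply \eqref{equ:coccar} to $g$ and the point $x=\R w_1$: combined with the hypothesis $\delta(x,y^m_g)>\delta$ this yields $\|gw_1\|\geq\delta\|g\|\|w_1\|$, hence the third factor is at most $(\delta\|g\|)^{-1}$. For the second factor I would view $l$ as a point of $\bp(\wedge^2 V)$ and apply \eqref{equ:coccar} to the operator $\wedge^2 g\in GL(\wedge^2 V)$: with $\delta(l,y^m_{\wedge^2 g})>\delta$ this gives $\|(\wedge^2 g)(v_1\wedge v_2)\|\geq\delta\|\wedge^2 g\|\|v_1\wedge v_2\|$, so the second factor is at most $(\delta\|\wedge^2 g\|)^{-1}$. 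Multiplying the three bounds,
\[
\frac{d(gl,gx)}{d(l,x)}\leq\|\wedge^3 g\|\cdot\frac{1}{\delta\|\wedge^2 g\|}\cdot\frac{1}{\delta\|g\|}=\delta^{-2}\,\frac{\|\wedge^3 g\|}{\|\wedge^2 g\|\|g\|}=\delta^{-2}\gamma_{1,3}(g),
\]
which is the claim.

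There is essentially no obstacle here; the calculation is routine once the volume formula is in hand. The only point meriting a moment's care is bookkeeping: one must make sure that in the middle factor the quantities $\delta(\cdot,y^m_{\wedge^2 g})$ and $\|(\wedge^2 g)u\|/(\|\wedge^2 g\|\|u\|)$ are interpreted for $\wedge^2 g$ regarded as an element of $GL(\wedge^2 V)$, which is exactly the hypothesis under which Lemma~\ref{lem:cocycle} is stated, and that all denominators ($\|v_1\wedge v_2\|$, $\|gv_1\wedge gv_2\|$, $\|w_1\|$, $\|gw_1\|$) are nonzero, which holds since $g$ is invertible and $l$ is a genuine line.
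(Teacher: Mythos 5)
Your proposal is correct and follows essentially the same route as the paper: the volume formula of Lemma~\ref{lem:volume area}, the bound $\|\wedge^2 g(v_1\wedge v_2)\wedge gw_1\|=\|\wedge^3 g(v_1\wedge v_2\wedge w_1)\|\leq\|\wedge^3 g\|\|v_1\wedge v_2\wedge w_1\|$, and \eqref{equ:coccar} applied to both $g$ and $\wedge^2 g$ via the two hypotheses. The only cosmetic difference is that you bound the ratio $d(gl,gx)/d(l,x)$ as a product of three factors, while the paper bounds $d(gl,gx)$ directly and reinserts $d(l,x)$ at the end.
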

Compared with Lemma \ref{lem:distance}, we see that with more degree of freedom the contracting speed is significantly greater.
\begin{proof}
	By definition and the fact that $l=\R (v_1\wedge v_2), x=\R w_1$, we have
	\begin{align*}
	d(gl,gx)=\frac{\|\wedge^2g(v_1\wedge v_2)\wedge gw_1\|}{\|\wedge^2g(v_1\wedge v_2)\|\|gw_1\|}\leq \frac{\|\wedge^3g\|\|v_1\wedge v_2\wedge w_1\|}{\|\wedge^2g(v_1\wedge v_2)\|\|gw_1\|},
	\end{align*}
	Then by Lemma \ref{lem:cocycle}, we have
	\begin{equation*}
	d(gl,gx)\leq \frac{\|\wedge^3g\|\|v_1\wedge v_2\wedge w_1\|}{\delta^2\|\wedge^2g\|\|v_1\wedge v_2\|\|g\|\|w_1\|}= \frac{\|\wedge^3g\|}{\delta^2\|\wedge^2g\|\|g\|}d(l,x).
	\end{equation*}
	The proof is complete.
\end{proof}
  We can also prove Lemma \ref{lem:line point} by finding a point $x'=\R v'\in l$ such that $v'\wedge w_1$ is orthogonal to the vector of highest weight in $\wedge^2V$. Then the distance between $gx'$ and $gx$ will be roughly $\gamma_{1,3}(g)$.

We will start to change the flags. Recall that for $\alpha\in\Pi$ and $\eta,\eta'$ in $\P$, the function $d_\alpha(\eta,\eta')$ is the distance between the images of $\eta$ and $\eta'$ in $\bp V_{\alpha}$. If one wants to change a flag in the $\alpha$-circle in $\P$, there are some constraints from the structure of flags. We introduce the following definition which explains the constraint.

By Lemma \ref{lem:alpha circle}, we have
\begin{lem}\label{lem:line orbit}	
	The image of the $\alpha$-circle of $\eta$ in $\bp V_{\alpha}$ is a projective line and we call it $l_{\alpha,\eta}$. Seen as an element in $\bp (\wedge^2V_\alpha)$, the element $l_{\alpha,\eta}$ is actually in $\bp V_{2\chi_\alpha-\alpha}\subset\bp(\wedge^2V_\alpha)$.
\end{lem}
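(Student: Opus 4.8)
The plan is to combine the two parts of Lemma~\ref{lem:alpha circle} with the weight-theoretic structure of $\wedge^2 V_\alpha$. First I would recall that, by the simply connected hypothesis in force in this section and the second part of Lemma~\ref{lem:alpha circle}, the image of the $\alpha$-circle through $\eta = k\eta_o$ in $\bp V_\alpha$ is exactly the projective line spanned by the two lines $\rho_\alpha(k)V^{\chi_\alpha}$ and $\rho_\alpha(k)V^{\chi_\alpha-\alpha}$; both of these are genuine lines since $V_\alpha$ is super proximal by Lemma~\ref{lem:super proximal} (the highest weight $\chi_\alpha$ satisfies $\chi_\alpha(H_\alpha)>0$ for exactly the one simple root $\alpha$, by Lemma~\ref{lem:weight structure} together with $\Theta_{\rho_\alpha}=\{\alpha\}$ from Lemma~\ref{lem:tits}). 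So $l_{\alpha,\eta}$ is a well-defined projective line in $\bp V_\alpha$, which as a point of $\bp(\wedge^2 V_\alpha)$ is $\R\big(\rho_\alpha(k)(v^{\chi_\alpha}\wedge v^{\chi_\alpha-\alpha})\big)$ for nonzero vectors $v^{\chi_\alpha}\in V^{\chi_\alpha}$, $v^{\chi_\alpha-\alpha}\in V^{\chi_\alpha-\alpha}$.

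Next I would identify the weight of the vector $v^{\chi_\alpha}\wedge v^{\chi_\alpha-\alpha}$ in the $\frak a$-module $\wedge^2 V_\alpha$. Since $\dd(\wedge^2\rho_\alpha)(X)(v\wedge w) = (\dd\rho_\alpha(X)v)\wedge w + v\wedge(\dd\rho_\alpha(X)w)$ for $X\in\frak a$, a wedge of two weight vectors of weights $\omega_1,\omega_2$ is a weight vector of weight $\omega_1+\omega_2$; here this gives weight $\chi_\alpha + (\chi_\alpha-\alpha) = 2\chi_\alpha-\alpha$. Hence $v^{\chi_\alpha}\wedge v^{\chi_\alpha-\alpha}\in (\wedge^2 V_\alpha)^{2\chi_\alpha-\alpha}$, i.e. $\R(v^{\chi_\alpha}\wedge v^{\chi_\alpha-\alpha}) = V^{2\chi_\alpha-\alpha}\subset \bp(\wedge^2 V_\alpha)$ is the image of $\eta_o$'s $\alpha$-circle. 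For a general $\eta = k\eta_o$ with $k\in K$, the line $l_{\alpha,\eta}$ is just the $\rho_\alpha(k)$-translate; this is consistent with the general principle (as in Lemma~\ref{lem:ym rhog}, applied to the representation $\wedge^2 V_\alpha$ with highest weight vector in $V^{2\chi_\alpha-\alpha}$) that $V_{2\chi_\alpha-\alpha,\,k\eta_o} = \rho_\alpha(k)V^{2\chi_\alpha-\alpha}$, provided $2\chi_\alpha-\alpha$ really is the highest weight of the subrepresentation of $\wedge^2 V_\alpha$ generated by it — which one checks since $\chi_\alpha$ is the highest weight of $V_\alpha$ and $\chi_\alpha-\alpha$ is the second-highest weight, so no weight of $\wedge^2 V_\alpha$ exceeds $2\chi_\alpha-\alpha$.

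There is little to obstruct here; the one point that needs care, and which I expect to be the main (modest) obstacle, is checking that $2\chi_\alpha-\alpha$ is a \emph{dominant} weight and in fact the highest weight of an irreducible constituent of $\wedge^2 V_\alpha$, so that the notation $\bp V_{2\chi_\alpha-\alpha}\subset\bp(\wedge^2 V_\alpha)$ makes sense in the same way $V_{\chi,\eta}$ did — this is exactly the super proximality statement of Definition~\ref{defi:super proximal}, where $\wedge^2 V_\alpha$ is proximal with highest weight line $V^{2\chi_\alpha-\alpha}$ of dimension $1$, already granted by Lemma~\ref{lem:super proximal}. Given that, the identification $l_{\alpha,\eta} = V_{2\chi_\alpha-\alpha,\eta}$ follows by transporting the base-point computation by $\rho_\alpha(k)$ exactly as in the proof of Lemma~\ref{lem:ym rhog}, and the proof is complete.
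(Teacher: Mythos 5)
Your proof is correct and follows essentially the same route as the paper, which simply records this lemma as a consequence of Lemma \ref{lem:alpha circle}: you use its simply connected case to identify $l_{\alpha,\eta}$ as the line spanned by $\rho_\alpha(k)V^{\chi_\alpha}$ and $\rho_\alpha(k)V^{\chi_\alpha-\alpha}$, and then the additivity of weights under wedge together with super proximality (Definition \ref{defi:super proximal}, Lemma \ref{lem:super proximal}) to place $\R\bigl(\rho_\alpha(k)(v^{\chi_\alpha}\wedge v^{\chi_\alpha-\alpha})\bigr)$ inside the highest-weight constituent of $\wedge^2 V_\alpha$. The weight computation and the check that $2\chi_\alpha-\alpha$ is the top weight are exactly the details the paper leaves implicit, so nothing is missing.
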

\begin{exa}
	If $G=\slrn$. Let
	\[\eta=\{W_1\subset W_2\subset \cdots\subset W_{\rank+1}=\R^{\rank+1}  \} \]
	be a flag in $\PP$. Recall that $W_r$ is a $r$-dimensional subspace of $\R^{\rank+1}$.Take $W_0=\{0\}$. Let $i_r$ be the natural embedding of the Grassmannian to projective spaces, that is $\bb G_r(\R^{\rank+1})\rightarrow\bb P(\wedge^r\R^{\rank+1})$.
	In this case, we see that 
	$$l_{\alpha_r,\eta}=i_r(W_{r+1}\supset W_r'\supset W_{r-1}),
	$$ 
	as a projective line in $\bp(\wedge^r\R^{\rank+1})$, which is the image of all the $r$-dimensional subspace $W_r'$ of $\R^{\rank+1}$ such that $W_{r-1}\subset W'_r\subset W_{r+1}$. 
\end{exa}
\begin{defi}
	Let $(\eta_0,\eta_1,\dots,\eta_k)$ be a sequence of points in $\P$. We call it a chain if any consecutive elements $\eta_i,\eta_{i+1}$ are in the same $\alpha$-circle for some $\alpha\in \Pi$, and we write $\alpha(\eta_i,\eta_{i+1})$ for this simple root. Let $\Pi(\eta_0,\dots,\eta_k)$ be the set of simple roots appearing in the chain, that is
	\[\Pi(\eta_0,\dots,\eta_k)=\{\alpha\in\Pi|\ \exists i, \alpha(\eta_i,\eta_{i+1})=\alpha \}. \]
\end{defi}
\begin{lem}\label{lem:image Valpha}
	Let $(\eta_0,\dots,\eta_l)$ be a chain and let $\alpha$ be a simple root such that $\alpha\notin\Pi(\eta_0,\dots,\eta_l)$. Then the image of the chain in $\bp V_\alpha$ is a single point, that is 
	\[ V_{\alpha,\eta_j}=V_{\alpha,\eta_0},\ \forall j=1,\dots, l. \]
	If for any $\alpha'\in\Pi(\eta_0,\dots,\eta_l)$, we have that $\alpha+\alpha'$ is not a root, then
	\[ l_{\alpha,\eta_j}=l_{\alpha,\eta_0},\ \forall j=1,\dots, l. \]
\end{lem}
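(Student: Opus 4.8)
**The plan is to prove Lemma \ref{lem:image Valpha} via the representation-theoretic description of $\alpha$-circles given in Lemma \ref{lem:alpha circle} and Lemma \ref{lem:line orbit}.**

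For the first claim, recall that if $\alpha \notin \Pi(\eta_0,\dots,\eta_l)$, then every consecutive pair $\eta_i,\eta_{i+1}$ lies on a common $\alpha'$-circle for some simple root $\alpha' \neq \alpha$. By Lemma \ref{lem:alpha circle}, since $\chi_\alpha(H_{\alpha'}) = \tilde\omega_\alpha(H_{\alpha'}) = 0$ when $\alpha' \neq \alpha$ (as $\tilde\omega_\alpha$ is the fundamental weight dual to $H_\alpha$), the image of the $\alpha'$-circle in $\bb P V_\alpha$ is a single point. Therefore $V_{\alpha,\eta_i} = V_{\alpha,\eta_{i+1}}$ for each $i$, and by induction along the chain $V_{\alpha,\eta_j} = V_{\alpha,\eta_0}$ for all $j$.

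For the second claim, by Lemma \ref{lem:line orbit} the line $l_{\alpha,\eta}$, viewed in $\bb P(\wedge^2 V_\alpha)$, lies in the subspace $V_{2\chi_\alpha - \alpha}$, i.e. it corresponds to the image of $\eta$ under the map to the projective space of the irreducible subrepresentation of $\wedge^2 V_\alpha$ generated by the highest weight vector $v_\alpha \wedge Y_\alpha v_\alpha$. The highest weight of this subrepresentation is $2\chi_\alpha - \alpha$. Now for a simple root $\alpha' \in \Pi(\eta_0,\dots,\eta_l)$ with $\alpha' \neq \alpha$, I want to apply Lemma \ref{lem:alpha circle} again, this time to the representation with highest weight $2\chi_\alpha - \alpha$: the conclusion that the $\alpha'$-circle maps to a point will follow provided $(2\chi_\alpha - \alpha)(H_{\alpha'}) = 0$. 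We compute $(2\chi_\alpha - \alpha)(H_{\alpha'}) = 2\chi_\alpha(H_{\alpha'}) - \alpha(H_{\alpha'}) = 0 - \alpha(H_{\alpha'})$ (the first term vanishing as above), and $\alpha(H_{\alpha'}) = -1$ or $0$ in general — it equals $0$ precisely when $\alpha + \alpha'$ is not a root (for a simply-laced or general reduced root system, $\alpha(H_{\alpha'}) = \langle \alpha, \alpha'^\vee\rangle$ is zero iff $\alpha \perp \alpha'$, which for simple roots is equivalent to $\alpha + \alpha'$ not being a root). Thus under the stated hypothesis, $2\chi_\alpha - \alpha$ vanishes on $H_{\alpha'}$, so the $\alpha'$-circle maps to a point in $\bb P V_{2\chi_\alpha - \alpha}$, giving $l_{\alpha,\eta_i} = l_{\alpha,\eta_{i+1}}$; induction along the chain finishes the proof. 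One should also check that for $\alpha' = \alpha$ itself appearing in the chain, the hypothesis "$\alpha + \alpha'$ is not a root for all $\alpha' \in \Pi(\eta_0,\dots,\eta_l)$" forces $\alpha \notin \Pi(\eta_0,\dots,\eta_l)$ (since $2\alpha$ is never a root in a reduced system, the condition is vacuous for $\alpha' = \alpha$ — so one must read the hypothesis carefully; more likely the intended reading excludes $\alpha$ from $\Pi(\eta_0,\dots,\eta_l)$ already, or the chain only uses roots $\alpha' \neq \alpha$, which is automatic here since moving along an $\alpha$-circle would change $l_{\alpha,\eta}$).

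**The main obstacle** I anticipate is the bookkeeping around $\alpha(H_{\alpha'})$ and the precise meaning of "$\alpha + \alpha'$ is not a root": one needs the standard fact that for two distinct simple roots, $\alpha + \alpha'$ is a root if and only if $\langle \alpha, \alpha'^\vee \rangle \neq 0$, equivalently the Dynkin diagram has an edge between them. This is elementary root-system combinatorics but must be invoked carefully, especially to be sure that $\alpha(H_{\alpha'}) = 0$ is the correct condition rather than some variant involving the coroot pairing in the other order. Everything else is a direct application of Lemma \ref{lem:alpha circle} to suitably chosen irreducible representations together with an induction along the chain.
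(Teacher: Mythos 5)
Your proposal is correct and follows essentially the same route as the paper: the first claim via Lemma \ref{lem:alpha circle} together with $\chi_\alpha(H_{\alpha'})=\delta_{\alpha\alpha'}$, and the second claim by reducing through Lemma \ref{lem:line orbit} to the image of the $\alpha'$-circles in $\bp V_{2\chi_\alpha-\alpha}$ and computing $(2\chi_\alpha-\alpha)(H_{\alpha'})=-\alpha(H_{\alpha'})=0$ when $\alpha+\alpha'$ is not a root. Your side remark about $\alpha'=\alpha$ is resolved by noting that the standing hypothesis $\alpha\notin\Pi(\eta_0,\dots,\eta_l)$ applies to both claims, exactly as in the paper.
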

\begin{proof}
	The first equality is direct consequence of Lemma \ref{lem:alpha circle} and the relation $\chi_{\alpha}(H_{\alpha'})=\delta_{\alpha\alpha'}$.
	
	For the second equality, let $\alpha'$ be a simple root in $\Pi(\eta_0,\dots,\eta_l)$. The projective line $l_{\alpha,\eta}$ in $\bp V_\alpha$ is uniquely determined by the image of $\eta$ in $\bp V_{2\chi_\alpha-\alpha}$. Hence we only need to understand the image of $\alpha'$-circle in $\bp V_{2\chi_\alpha-\alpha}$. By definition,
	\[(2\chi_\alpha-\alpha)(H_{\alpha'})=2\delta_{\alpha\alpha'}-\alpha(H_{\alpha'}). \]
	Since $\alpha+\alpha'$ is not a root, we know $\alpha(H_{\alpha'})=0$ and $(2\chi_\alpha-\alpha)(H_{\alpha'})=0$. By Lemma \ref{lem:alpha circle}, the image of $\alpha'$-circle in $\bp V_{2\chi_\alpha-\alpha}$ is point. Hence $l_{\alpha,\eta_0}=l_{\alpha,\eta_1}=\cdots=l_{\alpha,\eta_l}$.
\end{proof}
The Coxeter diagram of an irreducible root system is a tree, modulo the multiplicities of edges. We can find a disjoint union $\Pi_1$ and $\Pi_2$ of vertices such that there is no edge whose two endpoints are in the same $\Pi_i$. 
In the Coxeter diagram, two simple roots $\alpha$, $\alpha'$ are connected by an edge if and only if $\alpha+\alpha'$ is a root. Hence, we have
\begin{lem}\label{lem:root partition}
	We can separate $\Pi$ into a disjoint union $\Pi_1$ and $\Pi_2$ such that for $\alpha,\alpha'$ in the same atom $\Pi_j$,
	\[\alpha+\alpha' \text{ is not a root.} \]
\end{lem}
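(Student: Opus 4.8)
The plan is to use the standard fact that the Coxeter diagram of an irreducible root system is a tree (in particular contains no cycles), hence is a bipartite graph. First I would recall the precise combinatorial input: in the Coxeter diagram of $R$, the vertices are the simple roots $\Pi$, and two distinct simple roots $\alpha,\alpha'$ are joined by an edge precisely when $\langle\alpha,\alpha'\rangle\neq 0$, which for a root system is equivalent to $\alpha+\alpha'$ being a root (since simple roots span obtuse or right angles, $\alpha+\alpha'$ is a root iff the angle is strictly obtuse iff there is an edge). So the statement to prove is exactly that the Coxeter graph is $2$-colourable.

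The key step is then the observation that a tree (more generally, any forest) is bipartite: choose a root vertex in each connected component and colour every vertex by the parity of its graph-distance to that root; since a tree has no cycles, every edge joins two vertices of opposite parity, so this is a proper $2$-colouring. I would carry this out as follows: (1) state that the Coxeter diagram is a tree, citing the classification of irreducible root systems (or the fact that the Dynkin diagram of an irreducible root system is connected and acyclic); (2) partition $\Pi=\Pi_1\sqcup\Pi_2$ according to the parity colouring; (3) conclude that if $\alpha,\alpha'$ lie in the same $\Pi_j$ then they are not adjacent in the diagram, hence $\alpha+\alpha'$ is not a root.

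If one wants to avoid invoking the classification, I would instead argue directly that the Coxeter graph is acyclic: a cycle $\alpha_{i_1},\dots,\alpha_{i_s},\alpha_{i_1}$ of distinct simple roots with consecutive ones non-orthogonal would give a vector $v=\sum_j \alpha_{i_j}$ with $\|v\|^2=\sum_j\|\alpha_{i_j}\|^2+2\sum_{j<k}\langle\alpha_{i_j},\alpha_{i_k}\rangle$; since distinct simple roots satisfy $\langle\alpha,\alpha'\rangle\le 0$, and along the cycle there are at least $s$ strictly negative cross terms while positivity of the form forces $\|v\|^2>0$, a short estimate (using $2|\langle\alpha,\alpha'\rangle|\le \|\alpha\|^2$ or $\le\|\alpha'\|^2$ for adjacent roots, the standard bound) yields a contradiction. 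Then bipartiteness of the acyclic graph finishes as above.

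The main obstacle — and it is a mild one — is simply making the dictionary "edge in Coxeter diagram $\iff$ $\alpha+\alpha'$ is a root" airtight: one must recall that for simple roots the angle is never acute, so $\langle\alpha,\alpha'\rangle\le 0$, and that $\alpha+\alpha'\in R$ iff $s_\alpha(\alpha')=\alpha'-\alpha'(H_\alpha)\alpha\neq\alpha'$, i.e. iff $\alpha'(H_\alpha)\neq 0$, i.e. iff $\langle\alpha,\alpha'\rangle\neq 0$. Everything else is the elementary graph theory of trees, so the proof is short.
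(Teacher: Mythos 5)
Your proposal is correct and follows essentially the same route as the paper: the paper also notes that the Coxeter diagram of an irreducible root system is a tree, takes the bipartition of its vertices, and uses the dictionary that two simple roots are joined by an edge exactly when their sum is a root. Your extra remarks (the forest case for reducible systems, the root-string justification of the dictionary, and the direct acyclicity argument) only flesh out steps the paper leaves implicit.
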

	Let $\nupione=\#\Pi_1$ and $\nupitwo=\#\Pi_2$.
Now, we state our main result of this part, which will be used in the main approximation (Proposition \ref{prop:mainapprox}). 
\begin{lem}
	\label{lem:chapoi}
	Let $\eta,\eta'$ be two points in $\PP$ and let $g$ be in $G$. If for $\alpha\in \Pi_1$,
	\begin{align*}
	\delta(V_{\alpha,\eta'},y^m_{\rho_\alpha(g)}), \delta(l_{\alpha,\eta},y^m_{\wedge^2\rho_\alpha(g)})>\delta,
	\end{align*}
	for $\alpha\in\Pi_2$,
	\begin{align*}
	\delta(V_{\alpha,\eta},y^m_{\rho_\alpha(g)}), \delta(l_{\alpha,\eta'},y^m_{\wedge^2\rho_\alpha(g)})>\delta,
	\end{align*}
	then we can find two chains $(\eta=\eta_0,\eta_1,\dots ,\eta_\nupione)$ and $(\eta'=\eta_0',\eta_1',\dots,\eta_\nupitwo')$ such that
	\begin{equation}\label{equ:djgj}
	d(g\eta_{j},g\eta_{j+1})=d_\alpha(g\eta_{j},g\eta_{j+1})=d_\alpha(g\eta,g\eta')+O(\delta^{-2}\beta e^{-\alpha\kappa(g)}),
	\end{equation} 
	where $\alpha=\alpha(\eta_j,\eta_{j+1})\in\Pi_1$ and different $j$ correspond to different roots; similarly for $\eta'$ with $\Pi_2$. 
	
	We also have that for all $\alpha\in\Pi$
	\begin{equation}\label{equ:geta1 geta2}
	d_\alpha(g\eta_\nupione,g\eta_\nupitwo')\leq \beta e^{-\alpha\kappa(g)}\delta^{-2},
	\end{equation}
	where $\beta$ is the gap of $g$, that is $\beta=\gap(g)=\max_{\alpha\in\Pi}\{e^{-\alpha\kappa(g)} \}$.
\end{lem}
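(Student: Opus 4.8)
The plan is to build the two chains greedily, one simple root at a time, within the two atoms $\Pi_1$ and $\Pi_2$ of the partition from Lemma~\ref{lem:root partition}. I focus on the chain starting at $\eta$; the chain starting at $\eta'$ is symmetric, with the roles of $\Pi_1$ and $\Pi_2$ swapped. Enumerate $\Pi_1=\{\alpha^{(1)},\dots,\alpha^{(\nupione)}\}$. At step $j$ I will choose $\eta_{j}$ inside the $\alpha^{(j)}$-circle through $\eta_{j-1}$ so that the image in $\bp V_{\alpha^{(j)}}$ moves ``as far as possible toward $g\eta'$'': concretely, using the description of the $\alpha$-circle in $\bp V_\alpha$ as the projective line $l_{\alpha,\eta}$ (Lemma~\ref{lem:line orbit}), I pick $\eta_{j}$ so that $V_{\alpha^{(j)},\eta_{j}}$ is the point of $l_{\alpha^{(j)},\eta_{j-1}}$ closest to $V_{\alpha^{(j)},\eta'}$ — equivalently the point realizing $d(l_{\alpha^{(j)},\eta_{j-1}},V_{\alpha^{(j)},\eta'})$ in the sense of Lemma~\ref{lem:volume area}. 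The key point making this work is Lemma~\ref{lem:image Valpha}: since $\Pi_1$ is an atom, moving along $\alpha^{(j)}$-circles does not disturb $V_{\beta,\cdot}$ for $\beta\in\Pi_1$, $\beta\neq\alpha^{(j)}$, nor the lines $l_{\beta,\cdot}$ for such $\beta$ (because $\alpha^{(j)}+\beta$ is not a root); so the choices at different steps are independent and at the end of the chain all the $\Pi_1$-coordinates have been matched simultaneously.

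The estimate \eqref{equ:djgj} is then obtained by transporting everything through $g$. For the step using root $\alpha=\alpha^{(j)}$: the two relevant objects are the line $l_{\alpha,\eta_{j-1}}=l_{\alpha,\eta}$ (unchanged by the previous steps, by Lemma~\ref{lem:image Valpha}) and the point $V_{\alpha,\eta'}$; by hypothesis $\delta(l_{\alpha,\eta},y^m_{\wedge^2\rho_\alpha(g)})>\delta$ and $\delta(V_{\alpha,\eta'},y^m_{\rho_\alpha(g)})>\delta$, so Lemma~\ref{lem:line point} applies to give $d(\rho_\alpha(g)l_{\alpha,\eta},\rho_\alpha(g)V_{\alpha,\eta'})\leq \delta^{-2}\gamma_{1,3}(\rho_\alpha(g))\, d(l_{\alpha,\eta},V_{\alpha,\eta'})$, and $\gamma_{1,3}(\rho_\alpha(g))=\|\wedge^3\rho_\alpha(g)\|/(\|\wedge^2\rho_\alpha(g)\|\|\rho_\alpha(g)\|)$ is at most $e^{-\alpha\kappa(g)}$ up to a bounded factor by Lemma~\ref{lem:flapro} and the weight combinatorics (the third exterior power involves weight $3\chi_\alpha-\alpha-\beta$ for some $\beta$, and $\chi_\alpha(H_{\alpha'})=\delta_{\alpha\alpha'}$ forces $\beta\in\{\alpha,\alpha'\}$ with $\alpha'$ adjacent). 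Since $\rho_\alpha(g)$ maps the whole line $l_{\alpha,\eta}$ — which is the image of the $\alpha$-circle through $\eta$, hence the image of the $\alpha$-circle through $\eta_j$ — into a neighbourhood of $\rho_\alpha(g)V_{\alpha,\eta'}$ of radius $O(\delta^{-2}\beta e^{-\alpha\kappa(g)})$, and since $g\eta_j,g\eta_{j+1}$ both lie on this image, we get $d(g\eta_j,g\eta_{j+1})=d_\alpha(g\eta,g\eta')+O(\delta^{-2}\beta e^{-\alpha\kappa(g)})$: the first term because our choice made $V_{\alpha,\eta_j}$ exactly the nearest point to $V_{\alpha,\eta'}$ on $l_{\alpha,\eta}$, so $d_\alpha(g\eta_j,g\eta')$ is the residual we just bounded, and then $d_\alpha(g\eta,g\eta_j)\geq d_\alpha(g\eta,g\eta')-d_\alpha(g\eta_j,g\eta')$ together with the reverse triangle inequality pins it to $d_\alpha(g\eta,g\eta')$ up to the error. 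That $d(g\eta_j,g\eta_{j+1})=d_\alpha(g\eta_j,g\eta_{j+1})$ (the max over $\Pi$ is attained at $\alpha$) is because for $\beta\neq\alpha$ in $\Pi$ the coordinate $V_{\beta,\cdot}$ is moved only by the $\beta$-factor when $\beta\in\Pi_1$ (a single earlier or later step, whose $g$-image is small once one checks the analogous non-concentration for that step) and is left fixed when $\beta\in\Pi_2$ (Lemma~\ref{lem:image Valpha}, since $\alpha+\beta$ is not a root inside an atom — here one uses that consecutive steps are arranged so that $\beta$-displacements are dominated).

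For \eqref{equ:geta1 geta2}, after completing both chains all $\Pi_1$-coordinates of $\eta_\nupione$ agree with those of $\eta'$ and all $\Pi_2$-coordinates of $\eta'_\nupitwo$ agree with those of $\eta$ — wait, more precisely the construction guarantees $V_{\alpha,\eta_\nupione}$ is the nearest point on $l_{\alpha,\eta}$ to $V_{\alpha,\eta'}$ for $\alpha\in\Pi_1$ and symmetrically for $\eta'_\nupitwo$ with $\Pi_2$; applying $g$ and invoking Lemma~\ref{lem:line point} once more for each root shows $d_\alpha(g\eta_\nupione,g\eta'_\nupitwo)\leq \beta e^{-\alpha\kappa(g)}\delta^{-2}$ for every $\alpha\in\Pi$, using $\gamma_{1,3}(\rho_\alpha(g))\ll e^{-\alpha\kappa(g)}\leq\beta$. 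The main obstacle I expect is bookkeeping the non-concentration hypotheses through the intermediate chain points: Lemma~\ref{lem:line point} needs $\delta(V_{\alpha,\eta_j},y^m_{\rho_\alpha(g)})>\delta$ and the line analogue at each step $j$, not just at $\eta,\eta'$, and one must check these survive the earlier moves — this is exactly where Lemma~\ref{lem:image Valpha} (atoms, $\alpha+\alpha'$ not a root) is essential, since it shows the relevant line $l_{\alpha,\eta_j}$ equals $l_{\alpha,\eta}$ and the relevant point $V_{\alpha,\eta'}$ is the one from the hypothesis, so no new non-concentration input is needed beyond the stated ones. The only genuinely delicate point is verifying $d(g\eta_j,g\eta_{j+1})=d_\alpha(g\eta_j,g\eta_{j+1})$ rather than merely $\leq$; this follows because the $\alpha$-step genuinely realizes a displacement of size $d_\alpha(g\eta,g\eta')$ in the $\alpha$-coordinate while all other coordinates are moved by at most $O(\delta^{-2}\beta e^{-\alpha'\kappa(g)})\leq O(\delta^{-2}\beta)$, which is negligible compared to the $\alpha$-displacement provided $d_\alpha(g\eta,g\eta')\gg\delta^{-2}\beta$; in the remaining regime $d_\alpha(g\eta,g\eta')=O(\delta^{-2}\beta e^{-\alpha\kappa(g)})$ the stated equality with error term is vacuously true since both sides are $O(\delta^{-2}\beta e^{-\alpha\kappa(g)})$.
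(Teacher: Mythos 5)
Your overall strategy is the paper's: build the two chains greedily inside the atoms $\Pi_1,\Pi_2$, use Lemma~\ref{lem:image Valpha} to keep $V_{\alpha,\eta_j}=V_{\alpha,\eta}$ and $l_{\alpha,\eta_j}=l_{\alpha,\eta}$ so that only the stated non-concentration hypotheses are needed, and use Lemma~\ref{lem:line point} for the one-step estimate. But there is a genuine error at the key step: your choice of the new flag. You take $V_{\alpha,\eta_j}$ to be the point of $l_{\alpha,\eta}$ \emph{closest to} $V_{\alpha,\eta'}$ \emph{before} applying $g$, and then assert that $\rho_\alpha(g)$ maps the whole line $l_{\alpha,\eta}$ into an $O(\delta^{-2}\beta e^{-\alpha\kappa(g)})$-neighbourhood of $\rho_\alpha(g)V_{\alpha,\eta'}$. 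The second claim is false: the image of a projective line under a projective map is again a projective line, of diameter comparable to $1$; Lemma~\ref{lem:line point} only bounds the \emph{minimum} $d(gl,gx)=\min_{x'\in l}d(gx',gx)$. And the minimizer after applying $g$ is in general not the pre-$g$ nearest point: for your choice the best one can say (splitting $w=w'+w''$ with $w'$ the projection onto the plane of $l$ and using \eqref{equ:coccar}) is $d(gx',gx)\leq \delta^{-2}\gamma_{1,2}(\rho_\alpha(g))\,d(l,x)=O(\delta^{-2}e^{-\alpha\kappa(g)})$, which is of the \emph{same} magnitude as the main term $d_\alpha(g\eta,g\eta')\asymp e^{-\alpha\kappa(g)}$ (Lemma~\ref{lem:gketagketa}). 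So with your construction, \eqref{equ:djgj} and \eqref{equ:geta1 geta2} lose exactly the gain of the factor $\beta=\gap(g)$ that is the whole point of the lemma and is what Proposition~\ref{prop:mainapprox} consumes. The fix is the paper's choice: define $\eta_{j+1}$ on the $\alpha$-circle of $\eta_j$ so that $V_{\alpha,\eta_{j+1}}$ is a point of $l_{\alpha,\eta}$ whose $\rho_\alpha(g)$-image realizes the minimum in Lemma~\ref{lem:line point}, giving $d_\alpha(g\eta_{j+1},g\eta')\leq\delta^{-2}\gamma_{1,3}(\rho_\alpha(g))\leq\delta^{-2}\beta e^{-\alpha\kappa(g)}$ (your weight computation for $\gamma_{1,3}$ is fine); then \eqref{equ:djgj} follows from $V_{\alpha,\eta_j}=V_{\alpha,\eta}$ and the triangle inequality, and \eqref{equ:geta1 geta2} follows because the later moves (other roots of $\Pi_1$, and all of $\Pi_2$ on the $\eta'$-side) do not change the $\bp V_\alpha$-coordinate, so $d_\alpha(g\eta_\nupione,g\eta'_\nupitwo)=d_\alpha(g\eta_{j+1},g\eta')$; no new application of Lemma~\ref{lem:line point} at the endpoints is needed or justified.

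A secondary confusion: the identity $d(g\eta_j,g\eta_{j+1})=d_\alpha(g\eta_j,g\eta_{j+1})$ requires no dominance argument and no case split. Since $\eta_j,\eta_{j+1}$ lie on a single $\alpha$-circle and $\chi_{\alpha'}(H_\alpha)=0$ for $\alpha'\neq\alpha$, Lemma~\ref{lem:alpha circle} gives $V_{\alpha',\eta_j}=V_{\alpha',\eta_{j+1}}$, so every coordinate other than $\alpha$ contributes distance exactly $0$ both before and after applying $g$. Your closing discussion (``the other coordinates are moved by at most $O(\delta^{-2}\beta)$'' and the vacuous regime $d_\alpha(g\eta,g\eta')=O(\delta^{-2}\beta e^{-\alpha\kappa(g)})$) is therefore beside the point, and in any case cannot repair the main gap above, since an error of size $e^{-\alpha\kappa(g)}$ instead of $\beta e^{-\alpha\kappa(g)}$ in \eqref{equ:djgj} is too weak for the intended application.
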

The point is that the contraction speed $\beta$ implies that the term $\delta^{-2}\beta e^{-\alpha\kappa(g)}$ is of smaller magnitude than $e^{-\alpha\kappa(g)}$. The objective is to walk from $g\eta$ to $g\eta'$ only through $\alpha$ circles and to preserve information of distance. Since we can neglect error term, it is simpler to walk from $g\eta$ to $g\eta_{l_1}$ through some $\alpha$ circles and to walk from $g\eta'$ to $g\eta_{l_2}'$ through the other $\alpha$ circles, which means the corresponding simple roots are different from the one for the first walk. After this operation, the distance between $g\eta_{l_1}$ and $g\eta_{l_2}'$ is negligible, due to \eqref{equ:geta1 geta2}. The distance of the move in the $\alpha$ circle is approximately the distance between the images of $g\eta$ and $g\eta'$ in $\bp V_\alpha$, due to \eqref{equ:djgj}.
\begin{figure}
	\begin{center}
		\begin{tikzpicture}[scale=1.5]
		\draw (-4,0) -- (4,0);
		\draw (-3,0) node[above]{$\alpha_1$-orbit};
		\fill  (-2,0) circle (1.5pt);
		\draw (-2,-0.25) node[below]{$g\eta_0$};
		\draw (3,-1) -- (-2,4);
		\fill (0,2) circle (1.5pt);
		\draw (-1,3.25) node[above]{$g\eta_0'$};
		\draw (3,2) node[above]{$g\eta_1'$};
		\draw (0,0) circle (1.5pt);
		\draw (0,-0.25) node[below]{$g\eta_1$};
		\draw [dotted] (-4,2) -- (4,2);
		\draw [shift={(0,2)}]  plot[domain=0:2.3,variable=\t]({1*1*cos(\t r)+0*2*sin(\t r)},{0*2*cos(\t r)+1*1*sin(\t r)});
		\draw (1,2.5) node[right]{$\alpha_2$-orbit};
		\draw [->] (1,2.00001) -- (1,2);
		\end{tikzpicture}
	\end{center}
	\caption{Changing Flag for $\rm{SL}_3(\R)$}\label{fig:change flag}
\end{figure}
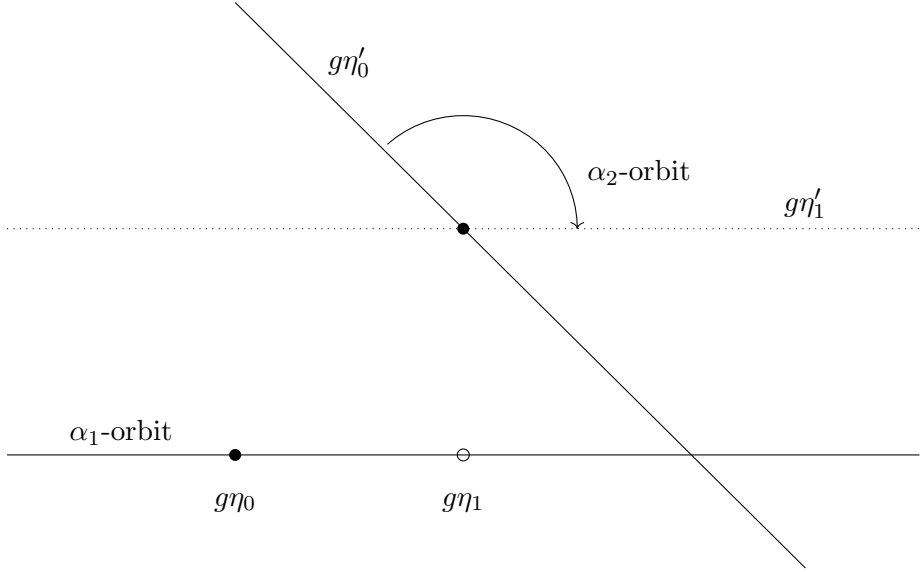
\begin{proof}[Proof of Lemma \ref{lem:chapoi}]
	If we have already found $(\eta_0,\dots,\eta_j)$ and $j<\nupione$, we want to find $\eta_{j+1}$. Let $\alpha\in\Pi_1$ be not in $\Pi(\eta_0,\dots,\eta_j)$. Hence by Lemma \ref{lem:image Valpha},
	\begin{equation}\label{equ:alpha new}
		V_{\alpha,\eta_j}=V_{\alpha,\eta_0}=V_{\alpha,\eta}.
	\end{equation}
	Due to Lemma \ref{lem:root partition} and Lemma \ref{lem:image Valpha},
	we have further
		\begin{equation}\label{equ:l new}
		l_{\alpha,\eta_j}=l_{\alpha,\eta_0}=l_{\alpha,\eta}.
		\end{equation}
	We are in the situation of Lemma \ref{lem:line point} with $V=V_{\alpha}$, $x=V_{\alpha,\eta'}$ and $l=l_{\alpha,\eta}$. Due to the hypothesis, Lemma \ref{lem:line point} and Lemma \ref{lem:line orbit}, we can find $\eta_{j+1}$ in the same $\alpha$-circle of $\eta_j$ such that 
	\begin{equation}\label{equ:eta r+1}
		d_{\alpha}(g\eta_{j+1},g\eta')=d(\rho_{\alpha}gV_{\alpha,\eta_{j+1}},\rho_{\alpha}gV_{\alpha,\eta'}) \leq \delta^{-2}\gamma_{1,3}(\rho_{\alpha}g)\leq \delta^{-2}\beta e^{-\alpha\kappa(g)}.
	\end{equation}
	Hence by \eqref{equ:alpha new} and \eqref{equ:eta r+1},
	\begin{equation*}
		d_{\alpha}(g\eta_{j+1},g\eta_j)=d_{\alpha}(g\eta_{j+1},g\eta)=d_{\alpha}(g\eta,g\eta')+O(\delta^{-2}\beta e^{-\alpha\kappa(g)}),
	\end{equation*}
	which is \eqref{equ:djgj}. Please see Figure \ref{fig:change flag}, where an element in the flag variety is represented by a line with a point.
	
	We need to verify the distance between $g\eta_{\nupione}$ and $g\eta'_{\nupitwo}$. Without loss of generality, suppose that $\alpha\in \Pi_1$. Then by Lemma \ref{lem:image Valpha}, the construction and \eqref{equ:eta r+1},
	\[d_\alpha(g\eta_{\nupione},g\eta'_{\nupitwo})=d_\alpha(g\eta_{\nupione},g\eta')=d_\alpha(g\eta_{j+1},g\eta')\leq \delta^{-2}\beta e^{-\alpha\kappa(g)}, \]
	where $j$ is the unique number such that $\alpha(\eta_j,\eta_{j+1})=\alpha$.
\end{proof}
\begin{rem}\label{rem:funpro}
	In the case $\rm{SL}_3(\R)$, we know that $\wedge^2 V_{\alpha_1}$ and $\wedge^2V_{\alpha_2}$ are isomorphic to $V_{\alpha_2}$ and $V_{\alpha_1}$, respectively. The condition in Lemma \ref{lem:chapoi} is equivalent to $\eta,\eta'$ in $B^m_g(\delta)$.
	
	In the case $\slrn$, the representations $V_r=\wedge^r\R^{\rank+1}$ are fundamental representations. Since $\slrn$ is split, $\wedge^2V_r$ is again proximal, but may not be irreducible. In Lemma \ref{lem:xx'w}, we will proceed to give a control on $y^m_{\wedge^2(\wedge^rg)}$. 
	
	The condition of Lemma \ref{lem:chapoi} is not really important, what we need is that the condition is true with a loss of exponentially small measure when we consider the random walks on $\slr$.
\end{rem}
\begin{lem}	\label{lem:getajgetal}
	Under the same assumptions and constructions as in Lemma \ref{lem:chapoi}, if we also have $\eta,\eta'\in B^m_g(\delta)$, then $g\eta_j,g\eta_l'$ are in $b^M_g(C\beta\delta^{-2})$ for $1\leq j\leq \nupione$ and $1\leq l\leq \nupitwo$, where $C>0$ is a constant only depends on the group $\bf G$.
\end{lem}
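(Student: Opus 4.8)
The plan is to bound, for every simple root $\alpha\in\Pi$, the one-factor distance $d_\alpha(g\eta_j,\eta^M_g)=d\bigl(\rho_\alpha(g)V_{\alpha,\eta_j},x^M_{\rho_\alpha(g)}\bigr)$ and then to conclude via $d(g\eta_j,\eta^M_g)=\max_{\alpha\in\Pi}d_\alpha(g\eta_j,\eta^M_g)$, so that $g\eta_j$ lands in $b^M_g(r)$ as soon as every $d_\alpha(g\eta_j,\eta^M_g)$ is at most $r$. Two facts already at our disposal do all the work. First, since $\eta,\eta'\in B^m_g(\delta)$ and $\beta=\gap(g)\le\delta^2$, Lemma~\ref{lem:gBmgmul} gives $g\eta,g\eta'\in b^M_g(\beta\delta^{-1})$, i.e. $d_\alpha(g\eta,\eta^M_g)\le\beta\delta^{-1}$ and $d_\alpha(g\eta',\eta^M_g)\le\beta\delta^{-1}$ for every $\alpha$. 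Second, the construction in the proof of Lemma~\ref{lem:chapoi} produces each vertex $\eta_i$ of the first chain with the property \eqref{equ:eta r+1}, that is $d_\alpha(g\eta_i,g\eta')\le\delta^{-2}\beta e^{-\alpha\kappa(g)}$ where $\alpha=\alpha(\eta_{i-1},\eta_i)\in\Pi_1$ is the label of the edge leading to $\eta_i$.

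Now fix $1\le j\le\nupione$ and $\alpha\in\Pi$, and distinguish two cases according to whether $\alpha$ is the label of one of the edges $\eta_{i-1}\eta_i$ for $1\le i\le j$; since the labels of a chain are pairwise distinct, this happens for at most one $i$, and never when $\alpha\notin\Pi_1$. If $\alpha$ is not such a label, then $\alpha\notin\Pi(\eta_0,\dots,\eta_j)$, so Lemma~\ref{lem:image Valpha} gives $V_{\alpha,\eta_j}=V_{\alpha,\eta_0}=V_{\alpha,\eta}$, hence $d_\alpha(g\eta_j,\eta^M_g)=d_\alpha(g\eta,\eta^M_g)\le\beta\delta^{-1}$. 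If instead $\alpha=\alpha(\eta_{i-1},\eta_i)$ for some $i\le j$, then $\alpha\notin\Pi(\eta_i,\dots,\eta_j)$, so Lemma~\ref{lem:image Valpha} applied to that sub-chain gives $V_{\alpha,\eta_j}=V_{\alpha,\eta_i}$, and combining the instance \eqref{equ:eta r+1} for $\eta_i$ with $d_\alpha(g\eta',\eta^M_g)\le\beta\delta^{-1}$ and the triangle inequality on $\bp V_\alpha$,
\begin{align*}
d_\alpha(g\eta_j,\eta^M_g)=d_\alpha(g\eta_i,\eta^M_g)&\le d_\alpha(g\eta_i,g\eta')+d_\alpha(g\eta',\eta^M_g)\\
&\le\delta^{-2}\beta e^{-\alpha\kappa(g)}+\beta\delta^{-1}.
\end{align*}
Since $e^{-\alpha\kappa(g)}\le1$ and $\delta<1$, both cases yield $d_\alpha(g\eta_j,\eta^M_g)\le 2\beta\delta^{-2}$; taking the maximum over $\alpha\in\Pi$ gives $g\eta_j\in b^M_g(2\beta\delta^{-2})$, for every $1\le j\le\nupione$. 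The points $g\eta'_l$, $1\le l\le\nupitwo$, are handled by the same argument after exchanging $\eta\leftrightarrow\eta'$ and $\Pi_1\leftrightarrow\Pi_2$ and using the matching instance of \eqref{equ:eta r+1} (here $d_\alpha(g\eta'_l,g\eta)\le\delta^{-2}\beta e^{-\alpha\kappa(g)}$ for $\alpha\in\Pi_2$, with $\eta\in B^m_g(\delta)$ playing the role $\eta'$ played before). Thus the lemma holds with, say, $C=2$.

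I do not expect a genuine difficulty; the statement is a bookkeeping consequence of how the chains were built in Lemma~\ref{lem:chapoi}. The two points needing a little care are: (i) that $d$ on each $\bp V_\alpha$ genuinely satisfies the triangle inequality used above --- it is the sine of the angle, which is subadditive on $[0,\pi/2]$, so this is fine; and (ii) keeping track of which simple roots have already been ``spent'' by the time the chain reaches $\eta_j$, so that the correct sub-chains $(\eta_0,\dots,\eta_j)$ and $(\eta_i,\dots,\eta_j)$ are fed into Lemma~\ref{lem:image Valpha}. A minor caveat is that invoking Lemma~\ref{lem:gBmgmul} (hence the bound $d_\alpha(g\eta,\eta^M_g)\le\beta\delta^{-1}$) uses $\beta\le\delta^2$, which is the regime in force throughout this subsection.
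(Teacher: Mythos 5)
Your proof is correct and rests on the same two ingredients as the paper's own argument: Lemma \ref{lem:gBmgmul} to place $g\eta,g\eta'$ in $b^M_g(\beta\delta^{-1})$, and the estimates produced by the chain construction of Lemma \ref{lem:chapoi}. The only (harmless) difference is organizational: the paper telescopes \eqref{equ:djgj} along the chain and absorbs the number of steps into $C$, whereas you track each $\bp V_\alpha$-coordinate separately via Lemma \ref{lem:image Valpha} and \eqref{equ:eta r+1}, which yields the uniform constant $C=2$.
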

\begin{proof}
	By hypothesis, Lemma \ref{lem:gBmgmul} implies that $g\eta,g\eta'\in b^M_g(\beta\delta^{-1})$. By \eqref{equ:djgj},
	\[d(g\eta_j,g\eta_{j+1})\leq 2\beta\delta^{-1}+O(\delta^{-2}\beta e^{-\alpha\kappa(g)})= O(\beta\delta^{-2}). \]
	Hence by induction, we have $g\eta_j\in b^M_g(C\beta\delta^{-2})$ for all $j$. Similarly the result holds for $g\eta_l'$. 
\end{proof}
\subsection{Random walks and Large deviation principles}
\label{sec:lardev}
The study of random walks on projective spaces and flag varieties are connected by representation theory. 

Let $X$ be $\P$ or $\bp V$, where $V$ is an irreducible representation of $G$. There is a natural group action of $G$ on $X$. Let $\mu$ be a Borel probability measure on $G$. Then a Borel probability measure $\nu$ on $X$ is called $\mu$-stationary if
\[\nu=\mu*\nu:=\int_G g_*\nu\dd\mu(g), \]
where $g_*\nu$ is the pushforward measure of $\nu$ under the action of $g$ on $X$.
\begin{lem}[Furstenberg]\label{lem:stauni}
	Let $\mu$ be a Zariski dense Borel probability measure on $G$. There exists a unique $\mu$-stationary probability measure $\nu$ on the flag variety and its images in the projective spaces $\bp V$ are the unique $\mu$-stationary probability measures when $V$ is an irreducible representation of $G$.
\end{lem}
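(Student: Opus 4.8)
This statement is classical (it goes back to Furstenberg; see \cite{benoistquint}), and the plan is to reprove it using the apparatus of this section. For existence, note that $\PP$ is a compact metric space, so the set $\mathcal M^1(\PP)$ of Borel probability measures on $\PP$ is convex and weak-$*$ compact and $\nu\mapsto\mu*\nu$ is affine and continuous; the Schauder--Tychonoff fixed point theorem (or a weak-$*$ cluster point of the Ces\`aro averages $\frac1N\sum_{n=0}^{N-1}\mu^{*n}*\delta_{\eta_o}$) produces a $\mu$-stationary measure $\nu$. It then remains to prove uniqueness on $\PP$ and to identify the images in the projective spaces. For the latter, recall from \eqref{equ:ppV} that for an irreducible $V$ with highest weight $\chi$ the map $\pi_V\colon\PP\to\bp V$, $\eta\mapsto V_{\chi,\eta}$, is $G$-equivariant, so $(\pi_V)_*$ carries a $\mu$-stationary measure on $\PP$ to one on $\bp V$; once uniqueness on $\bp V$ is known, this pins down the image of $\nu$. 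Since an irreducible $V$ is proximal, uniqueness on $\bp V$ is obtained by the very same argument as the one I now describe for $\PP$.

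Let $X_1,X_2,\dots$ be i.i.d. with law $\mu$ and write $S_n=X_1\cdots X_n$. For every simple root $\alpha$, the action of $\Gamma_\mu$ on $V_\alpha$ is strongly irreducible and proximal (a consequence of the Zariski density of $\Gamma_\mu$ together with $\Theta_{\rho_\alpha}=\{\alpha\}$ from Lemma \ref{lem:tits}), so by Furstenberg's law of large numbers the top Lyapunov exponent of $\rho_\alpha$ is simple; equivalently $\tfrac1n\log\gamma_{1,2}(\rho_\alpha(S_n))$ tends almost surely to a negative constant. Since $\gap(S_n)=\max_{\alpha\in\Pi}\gamma_{1,2}(\rho_\alpha(S_n))$ by the computation preceding \eqref{equ:gap of g}, we get $\gap(S_n)\to0$ almost surely, in fact exponentially. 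Setting $\delta_n=\gap(S_n)^{1/3}$ we have $\delta_n\to0$ and $\gap(S_n)\le\delta_n^2$ eventually, so Lemma \ref{lem:gBmgmul} applies and shows that the density points $\eta^M_{S_n}$ form a Cauchy sequence in $\PP$; call $Z(\omega)\in\PP$ its limit. In parallel, any $\mu$-stationary $\nu$ on $\PP$ has images $\nu_\alpha:=(\pi_{V_\alpha})_*\nu$ that are $\mu$-stationary on $\bp V_\alpha$ and that charge no projective hyperplane: otherwise $m:=\sup_{y}\nu_\alpha(y^\perp)>0$, a short maximality argument using stationarity shows that the set of hyperplanes realising $m$ is finite and nonempty, hence its union is a $\Gamma_\mu$-invariant finite union of hyperplanes, forcing $\Gamma_\mu$ into the proper algebraic subgroup that stabilises it and contradicting Zariski density. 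An upper-semicontinuity and compactness argument upgrades this to the uniform bound $\sup_{y}\nu_\alpha(\{x:\delta(x,y)\le\delta\})\to0$ as $\delta\to0$.

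To conclude uniqueness, fix a $\mu$-stationary $\nu$ on $\PP$. For each continuous $f$ on $\PP$ the sequence $n\mapsto\int f\,d(S_n)_*\nu$ is a bounded martingale for the filtration generated by $X_1,\dots,X_n$; running over a countable weak-$*$ dense family of test functions shows that $(S_n)_*\nu$ converges almost surely in the weak-$*$ topology to a random probability measure $\nu^\omega$, and passing to the limit in $\nu=\mathbb E[(S_n)_*\nu]$ gives $\int\nu^\omega\,d\mathbb P(\omega)=\nu$. Now $\PP\setminus B^m_{S_n}(\delta_n)\subset\bigcup_{\alpha\in\Pi}\{\eta:\delta(V_{\alpha,\eta},y^m_{\rho_\alpha(S_n)})<\delta_n\}$, so the uniform regularity above gives $\nu\big(\PP\setminus B^m_{S_n}(\delta_n)\big)\to0$, and then Lemma \ref{lem:gBmgmul} forces $(S_n)_*\nu$ to concentrate on the shrinking balls $b^M_{S_n}(\delta_n^2)$ centred at $\eta^M_{S_n}\to Z(\omega)$; hence $\nu^\omega=\delta_{Z(\omega)}$ almost surely. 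Therefore
\[
\nu=\int_{\Omega}\delta_{Z(\omega)}\,d\mathbb P(\omega),
\]
which depends only on $\mu$, proving uniqueness on $\PP$. The identical argument on each $\bp V_\alpha$, and more generally on $\bp V$ for any irreducible $V$, yields the unique $\mu$-stationary measure there; as $(\pi_V)_*\nu$ is $\mu$-stationary it must be that measure.

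The only genuinely non-formal ingredient is the regularity step --- that a $\mu$-stationary measure gives zero mass to proper subvarieties, together with the uniformisation in the hyperplane parameter $y$ that is needed because $y^m_{\rho_\alpha(S_n)}$ moves with $n$; all the rest is soft (compactness, martingale convergence) or already packaged in Section \ref{sec:lie groups}, notably Lemma \ref{lem:tits} for the embedding $\PP\hookrightarrow\prod_{\alpha}\bp V_\alpha$ and Lemma \ref{lem:gBmgmul} for the contraction of the random product. As noted, the whole statement may alternatively be quoted directly from \cite{benoistquint}.
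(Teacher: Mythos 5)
The paper does not prove Lemma \ref{lem:stauni} at all: it is quoted from \cite{furstenberg1973boundary} and \cite[Proposition 10.1]{benoistquint}, so your closing remark that the statement may simply be cited is exactly what the paper does, and your sketch is essentially the classical Furstenberg argument reproduced in those references (martingale convergence of $(S_n)_*\nu$, non-charging of hyperplanes via strong irreducibility, contraction of the random product, identification of the images under the equivariant maps of Lemma \ref{lem:tits}). Two steps of the sketch, however, do not hold together as written.

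First, the contraction input. You derive $\gap(S_n)\to 0$ from simplicity of the top Lyapunov exponent of each $\rho_\alpha$; besides the misattribution (simplicity is due to Guivarc'h--Raugi and Goldsheid--Margulis, cf.\ Lemma \ref{lem:poslya}, not to Furstenberg's law of large numbers), any Lyapunov-exponent argument needs a moment hypothesis on $\mu$, whereas Lemma \ref{lem:stauni} is stated for an arbitrary Zariski dense probability measure. The classical proof obtains the contraction without moments, by extracting subsequential limits $\pi_b$ of $S_n/\|\rho_\alpha(S_n)\|$, using $\nu_\alpha(\text{hyperplane})=0$ to write the limit measure as $(\pi_b)_*\nu_\alpha$, and using the existence of proximal elements in $\Gamma_\mu$ to force $\pi_b$ to have rank one. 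Under the paper's standing exponential-moment assumption your route is admissible, but it then proves a weaker statement than the one displayed. Second, the assertion that Lemma \ref{lem:gBmgmul} ``shows that the density points $\eta^M_{S_n}$ form a Cauchy sequence'' is not justified: that lemma only controls the action of a single element on $B^m_g(\delta)$, and comparing $\eta^M_{S_{n+1}}$ with $\eta^M_{S_n}$ requires knowing that the incoming part of the product is quantitatively transverse to the repelling hyperplane $\zeta^m_{S_n}$ (a no-cancellation estimate of the type $\|\rho_\alpha(S_nX_{n+1})\|\gg\|\rho_\alpha(S_n)\|\,\|\rho_\alpha(X_{n+1})\|$), which needs an extra probabilistic argument, e.g.\ a Borel--Cantelli argument built on estimates like Proposition \ref{prop:large deviation projective}. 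This gap is easily repaired by reordering your own argument: once $(S_n)_*\nu$ converges weak-$*$ and $(S_n)_*\nu\bigl(b^M_{S_n}(\delta_n^2)\bigr)\to1$, uniqueness of the weak-$*$ limit already forces $\eta^M_{S_n}$ to converge to some $Z(\omega)$ (independent of $\nu$, since $\eta^M_{S_n}$ is), and the conclusion $\nu=\int\delta_{Z(\omega)}\,d\bb P(\omega)$ follows; no separate Cauchy argument is needed. With these repairs, or simply with the citation, the statement stands.
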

See \cite{furstenberg1973boundary}, \cite[Proposition 10.1]{benoistquint} for more details. In order to distinguish stationary measures on different spaces, we use $\nu_V$ to denote a $\mu$-stationary measure on $\bp V$.
\begin{defi}\label{defi:exponential moment}
	Let $\mu$ be a Zariski dense Borel probability measure on $G$. The measure $\mu$ has a finite exponential moment if there exists $t_0>0$ such that
	\[\int_G e^{t_0\|\kappa(g)\|}\dd\mu(g)<\infty. \]
\end{defi}
\begin{rem*}
	This definition coincides with the definition given in the introduction for matrix groups, because in that case $\log\|g\|=\chi\kappa(g)$ where $\chi$ is the highest weight of a faithful representation. This weight $\chi$ is in the dual cone of $\frak a^+$ and $\chi(X)\gg \|X\|$ for $X$ in $\frak a^+$. %
\end{rem*}
\begin{defi}
	Let $\mu$ be a Zariski dense Borel probability measure with finite exponential moment on $G$. The Lyapunov vector $\sigma_\mu$ is defined as the average of the Iwasawa cocycle
	\begin{equation*}
		\sigma_\mu:=\int_{G\times \P}\sigma(g,\eta)\dd\mu(g)\dd\nu(\eta).
	\end{equation*}
\end{defi}
\begin{lem}\label{lem:poslya}
	Let $\mu$ be a Zariski dense Borel probability measure with finite exponential moment on $G$. Then the Lyapunov vector $\sigma_\mu$ is in $\frak a^{++}$, the interior of the Weyl chamber. Equivalently, for any simple root $\alpha$, we have $\alpha(\sigma_\mu)>0$.
\end{lem}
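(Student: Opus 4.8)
The plan is to reduce positivity of $\sigma_\mu$ against each simple root to the simplicity of the top Lyapunov exponent in the corresponding fundamental representation, and then to invoke Furstenberg's theorem. Fix $\alpha\in\Pi$ and let $(\rho_\alpha,V_\alpha)$ be the irreducible representation with highest weight $\chi_\alpha$, equipped with a good norm. Since $\chi_\alpha(H_\beta)=n_\alpha\delta_{\alpha\beta}$, Lemma \ref{lem:tits} gives $\Theta_{\rho_\alpha}=\{\alpha\}$ and Lemma \ref{lem:super proximal} gives that $\rho_\alpha$ is super proximal; in particular $\chi_\alpha-\alpha$ is the unique weight of $V_\alpha$ just below $\chi_\alpha$, the space $V_\alpha^{\chi_\alpha-\alpha}$ is a line, and $\wedge^2V_\alpha$ is proximal with highest weight $2\chi_\alpha-\alpha$ and highest weight line $l_{\alpha,\eta}=V_{2\chi_\alpha-\alpha,\eta}$ (Lemma \ref{lem:line orbit}). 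It suffices to prove $\alpha(\sigma_\mu)>0$ for every such $\alpha$.

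First I would express $\alpha(\sigma_\mu)$ as a Lyapunov gap. Let $\lambda_{1,\alpha}\ge\lambda_{2,\alpha}$ be the first two Lyapunov exponents of the $\mu$-random walk acting through $\rho_\alpha$; they exist because $\mu$ has a finite (exponential, hence first) moment. By the Furstenberg formula, the uniqueness part of Lemma \ref{lem:stauni} (the image of $\nu$ in $\bp V_\alpha$ is the stationary measure $\nu_{V_\alpha}$, carried by the lines $V_{\chi_\alpha,\eta}$), and Lemma \ref{lem:flapro}, one gets
\[
\lambda_{1,\alpha}=\int_{G\times\P}\sigma_{V_\alpha}(g,V_{\chi_\alpha,\eta})\,\dd\mu(g)\,\dd\nu(\eta)=\int_{G\times\P}\chi_\alpha(\sigma(g,\eta))\,\dd\mu(g)\,\dd\nu(\eta)=\chi_\alpha(\sigma_\mu).
\]
Running the same argument with the proximal representation $\wedge^2V_\alpha$, whose highest weight is $2\chi_\alpha-\alpha$, whose relevant stationary measure is the pushforward of $\nu$ under $\eta\mapsto l_{\alpha,\eta}$, and which satisfies $\|\wedge^2\rho_\alpha(g)\|=\gamma_{1,2}(\rho_\alpha(g))\|\rho_\alpha(g)\|^2=e^{(2\chi_\alpha-\alpha)\kappa(g)}$ (the computation just before Lemma \ref{lem:gBmgmul} together with Lemma \ref{lem:flapro}), and using the elementary identity $\lambda_{1,\alpha}+\lambda_{2,\alpha}=\lim_n\frac1n\log\|\wedge^2\rho_\alpha(g_n\cdots g_1)\|$ for products of singular values, I obtain $\lambda_{1,\alpha}+\lambda_{2,\alpha}=(2\chi_\alpha-\alpha)(\sigma_\mu)$. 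Subtracting the two identities yields
\[
\alpha(\sigma_\mu)=\lambda_{1,\alpha}-\lambda_{2,\alpha}\ge 0,
\]
the inequality being automatic. (Alternatively, $e^{-\alpha\kappa(g)}=\gamma_{1,2}(\rho_\alpha(g))$ and the law of large numbers for the Cartan projection, see \cite{benoistquint}, give $\frac1n\alpha(\kappa(g_n\cdots g_1))\to\alpha(\sigma_\mu)\ge 0$ directly.)

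It remains to promote this to a strict inequality, i.e. to prove $\lambda_{1,\alpha}>\lambda_{2,\alpha}$. Here I would invoke Furstenberg's simplicity theorem (see \cite{benoistquint}, \cite{furstenberg1973boundary}): a subsemigroup of $\mathrm{GL}(W)$ with finite first moment acting strongly irreducibly and proximally on $W$ has a simple top Lyapunov exponent. In our situation $\Gamma_\mu$ is Zariski dense in the connected group $G$ and $V_\alpha$ is an irreducible $G$-module, so $\Gamma_\mu$ acts strongly irreducibly on $V_\alpha$ (any invariant finite union of proper subspaces would be $G$-invariant, hence trivial by connectedness and irreducibility), and $V_\alpha$ is proximal because $G$ is $\R$-split. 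Hence $\lambda_{1,\alpha}>\lambda_{2,\alpha}$, so $\alpha(\sigma_\mu)>0$. As $\alpha\in\Pi$ is arbitrary, $\sigma_\mu\in\frak a^{++}$.

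I expect the main obstacle to be exactly the strict inequality of the last paragraph: the identity $\alpha(\sigma_\mu)=\lambda_{1,\alpha}-\lambda_{2,\alpha}$ is formal, but simplicity of the top Lyapunov exponent is the genuine input. Its proof rests on the contraction properties of the random walk on $\bp V_\alpha$ — properness and non-atomicity of $\nu_{V_\alpha}$ (from strong irreducibility and non-compactness) together with the on-average contraction of the $G$-action on the projective space — which already underlie the rest of the setup here, so beyond a careful citation nothing further is needed; alternatively one could reproduce Furstenberg's original argument, applied to $\wedge^2V_\alpha$, showing that $\frac1n\log\gamma_{1,2}(\rho_\alpha(g_n\cdots g_1))$ has a strictly negative limit.
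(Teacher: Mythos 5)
Your argument is correct and is essentially the argument behind the references the paper itself cites (Guivarc'h--Raugi, Goldsheid--Margulis, \cite[Corollary 10.15]{benoistquint}): the paper gives no proof, and your reduction of $\alpha(\sigma_\mu)$ to the Lyapunov gap $\lambda_{1,\alpha}-\lambda_{2,\alpha}$ in the fundamental representation $V_\alpha$, followed by the simplicity theorem for strongly irreducible proximal actions, is exactly how that corollary is obtained. Since the genuine input (simplicity of the top exponent) is invoked by citation in both cases, your proposal matches the paper's treatment.
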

The maximal positivity of Lyapunov vector in Lemma \ref{lem:poslya} is due to Guivarc'h-Raugi \cite{guivarc1985frontiere} and Goldsheid-Margulis \cite{gol1989lyapunov}. See \cite[Corollary 10.15]{benoistquint} for more details.
Lemma \ref{lem:poslya} will be used to show that the action of $G$ on $\P$ is contracting in Section \ref{sec:sumfou}, where the contraction speed is given by $\beta=\sup_{\alpha\in\Pi}\{e^{-\alpha\sigma_\mu} \}$.

In the following proposition, we give the large deviation principle for the Cartan projection. We keep the assumption that \textbf{$\mu$ is a Zariski dense Borel probability measure on $G$ with a finite exponential moment}.
\begin{prop}\label{prop:lardev1}
	For every $\epsilon>0$ there exist $C,c>0$ such that for all $n\in\bb N$ we have
	\begin{align}
	&\label{equ:lardev1}\mu^{*n}\{g\in G|\ \|\kappa(g)-n\sigma_{\mu}\|\geq n\epsilon \}\leq Ce^{-c\epsilon n},
	\end{align}
\end{prop}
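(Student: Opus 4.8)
The statement is the standard large deviation estimate for the Cartan projection of a random walk with finite exponential moment, and I would prove it by the classical exponential moment / Chebyshev argument applied coordinate by coordinate, using the subadditivity of the Cartan projection and the existence of the Lyapunov vector $\sigma_\mu$. The key structural input is already available: by Lemma~\ref{lem:flapro}, for each fundamental representation $(\rho_\alpha,V_\alpha)$ with highest weight $\chi_\alpha$ one has $\chi_\alpha\kappa(g)=\log\|\rho_\alpha(g)\|$, and by \eqref{equ:norrep} the norm $\|\kappa(g)\|$ is comparable to $\sup_{\alpha\in\Pi}|\chi_\alpha\kappa(g)|$ (plus a central contribution controlled by the additive cocycle $c(g)$, which is handled identically since $c$ is an honest additive cocycle). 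So it suffices to get, for each linear functional $\chi$ among a finite generating family of $\frak a^*$, a two-sided large deviation bound for $\frac1n\chi\kappa(X_1\cdots X_n)$ around $\chi\sigma_\mu$.

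\textbf{Key steps.} First I would reduce to the one-dimensional statement: it is enough to show that for each $\alpha\in\Pi$ (and each coordinate of the central part) there exist $C,c>0$ with
\[
\mu^{*n}\bigl\{g:\ |\chi_\alpha\kappa(g)-n\,\chi_\alpha\sigma_\mu|\geq n\epsilon\bigr\}\leq Ce^{-c\epsilon n},
\]
since the event $\|\kappa(g)-n\sigma_\mu\|\geq n\epsilon$ is contained, up to the constant $C_1$ from \eqref{equ:norrep}, in the union of finitely many such events (with $\epsilon$ rescaled by a fixed constant). For the upper tail $\chi_\alpha\kappa(g)\geq n(\chi_\alpha\sigma_\mu+\epsilon)$ one uses submultiplicativity of the operator norm: $\log\|\rho_\alpha(g_1\cdots g_n)\|\leq\sum_i\log\|\rho_\alpha(g_i)\|$, hence by independence and a Markov/Chebyshev bound with parameter $t>0$,
\[
\mu^{*n}\{\chi_\alpha\kappa\geq n(\chi_\alpha\sigma_\mu+\epsilon)\}\leq e^{-tn(\chi_\alpha\sigma_\mu+\epsilon)}\Bigl(\int_G\|\rho_\alpha(g)\|^{t}\dd\mu(g)\Bigr)^{n},
\]
and the finite exponential moment hypothesis guarantees $\Lambda_\alpha(t):=\log\int\|\rho_\alpha(g)\|^t\dd\mu$ is finite and differentiable near $0$ with $\Lambda_\alpha'(0)=\int\log\|\rho_\alpha(g)\|\dd\mu =: \beta_\alpha^+$; choosing $t$ small and optimizing gives exponential decay provided $\chi_\alpha\sigma_\mu+\epsilon>\beta_\alpha^+$. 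One must check $\beta_\alpha^+\leq\chi_\alpha\sigma_\mu$, but this is immediate from Fekete's lemma together with Furstenberg's identification $\lim\frac1n\log\|\rho_\alpha(X_1\cdots X_n)\|=\chi_\alpha\sigma_\mu$ (the Lyapunov vector is defined precisely so that the cocycle averages out, and the Cartan projection has the same asymptotics as the Iwasawa cocycle in the leading order). The lower tail is symmetric: apply the upper-tail argument to $g^{-1}$ using $\kappa(g^{-1})=\iota\kappa(g)$ and the dual representation $V_\alpha^*$, whose highest weight is $\iota\chi_\alpha$; the exponential moment of $\mu$ transfers to one-sided control of $\|\rho_\alpha(g)^{-1}\|$ because $\log\|\rho_\alpha(g)^{-1}\|=\iota\chi_\alpha\,\kappa(g)$ is again bounded by $\|\kappa(g)\|$.

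\textbf{The main obstacle.} The genuinely non-trivial point is the lower bound $\frac1n\chi_\alpha\kappa(g)\geq\chi_\alpha\sigma_\mu-\epsilon$ with exponential probability, i.e.\ ruling out that the random product is anomalously contracting in the $V_\alpha$-direction. Submultiplicativity only gives the upper tail for free; for the lower tail one cannot use $\|\rho_\alpha(g_1\cdots g_n)\|\geq\prod\|\rho_\alpha(g_i)\|$ since that is false. The standard fix, which I would follow, is to track instead the cocycle along the stationary measure: fix a unit vector $v$ of highest weight and write $\log\|\rho_\alpha(g)v\|/\|v\| = \chi_\alpha\sigma(g,\eta_o)$ for $\eta=\eta_o$, use the additive cocycle relation to express $\chi_\alpha\sigma(X_1\cdots X_n,\eta_o)$ as a sum of stationary increments, apply Azuma–Hoeffding or an exponential-moment Bernstein bound to this sum (the increments have exponential moments by hypothesis, after centering by the Furstenberg boundary measure), and finally use Lemma~\ref{lem:iwacar} — which bounds $\|\sigma(g,\eta)-\kappa(g)\|$ by $C_1|\log\delta(\eta,\zeta^m_g)|$ — together with the regularity of the stationary measure to control the discrepancy between the cocycle and the Cartan projection on an event of exponentially large probability. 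This is the "positivity of Lyapunov exponent plus regularity" package that is cited in the text (Guivarc'h–Raugi, Benoist–Quint), so in practice I would invoke it: the cleanest route is to cite \cite[Theorem~13.11 (iii) and Theorem~14.11]{benoistquint}, which give exactly \eqref{equ:lardev1} under the Zariski density and finite exponential moment hypotheses, and then only need to record the reduction above from the norm bound to the comparison \eqref{equ:norrep}.
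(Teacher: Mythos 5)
The paper's own ``proof'' of this proposition is nothing more than the pointer ``see \cite[Thm 13.17]{benoistquint}'', so your final fallback (citing Benoist--Quint) is exactly what the paper does; do check the reference, since the relevant statement is the large deviations estimate for the Iwasawa cocycle and the Cartan projection in \cite[Thm 13.17]{benoistquint}, not Thm 13.11(iii)/14.11.

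The self-contained sketch you give before that fallback, however, has genuine gaps. First, the inequality you ``check'' for the upper tail is backwards: for the subadditive sequence $a_n=\int\log\|\rho_\alpha(g)\|\dd\mucon{n}(g)$, Fekete's lemma gives $\chi_\alpha\sigma_\mu=\lim_n a_n/n=\inf_n a_n/n\leq a_1=\beta_\alpha^+$, typically with strict inequality, not $\beta_\alpha^+\leq\chi_\alpha\sigma_\mu$. Consequently your one-step Chernoff bound does not decay for small $\epsilon$ (the exponent $-t(\chi_\alpha\sigma_\mu+\epsilon-\beta_\alpha^+)$ is of the wrong sign); the standard repair is a blocking argument: fix $n_0$ with $a_{n_0}/n_0\leq\chi_\alpha\sigma_\mu+\epsilon/2$, group the product into i.i.d.\ blocks of length $n_0$, and apply the exponential Chebyshev bound to the sum of block norms. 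Second, the ``symmetric'' reduction of the lower tail to $g^{-1}$ does not work: $\log\|\rho_\alpha(g^{-1})\|=(\iota\chi_\alpha)\kappa(g)$ controls the smallest singular value of $\rho_\alpha(g)$, so an upper-tail bound for $g^{-1}$ gives a lower bound on $\chi_\alpha\kappa(g)$ only at the level of the bottom Lyapunov exponent, far below $\chi_\alpha\sigma_\mu-\epsilon$. Third, the Azuma--Hoeffding/Bernstein sketch for the lower tail is not a proof as stated: the cocycle increments $\chi_\alpha\sigma(X_i,\cdot)$ evaluated along the random trajectory are neither independent nor martingale differences, and making the ``centering by the boundary measure'' rigorous with exponential error terms is precisely the Le Page/Benoist--Quint transfer-operator machinery — i.e.\ the content of the theorem you end up citing. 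So the sketch does not constitute an alternative proof; the correct conclusion, and the one the paper takes, is simply to quote \cite[Thm 13.17]{benoistquint}.
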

See \cite[Thm 13.17]{benoistquint} for more details. 
\begin{prop}\label{prop:large deviation projective}
	If $(\rho,V)$ is an irreducible representation of $G$, then for every $\epsilon>0$ there exist $C,c$ such that for all $x$ in $\bp V$ and $y$ in $\bp V^*$ and $n\geq 1$ we have
	\begin{align}\label{equ:large deviation projective}
		\mu^{*n}\{g\in G|\ \delta(x,y^m_g)\leq e^{-n\epsilon} \}\leq Ce^{-c\epsilon n},\\
		\nonumber\mu^{*n}\{g\in G|\ \delta(x^M_g,y)\leq e^{-n\epsilon} \}\leq Ce^{-c\epsilon n}.
	\end{align}
\end{prop}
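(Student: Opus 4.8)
The plan is to reduce the two statements for $\bp V$ and $\bp V^*$ to the large deviation principle for the Cartan projection, Proposition \ref{prop:lardev1}, by a covering argument on the compact spaces $\bp V$ and $\bp V^*$. I focus on the first inequality; the second follows by applying the first to the dual representation $(\rho^*,V^*)$ and observing that $y^m_g$ for $\,^tg^{-1}$ plays the role of $x^M_g$ in the duality provided by \eqref{equ:dual}, together with $\kappa(g^{-1})=\iota\kappa(g)$ and the fact that $\iota$ permutes the simple roots, so the exponential moment and Lyapunov hypotheses are preserved. So it suffices to bound $\mu^{*n}\{g\in G:\ \delta(x,y^m_g)\leq e^{-n\epsilon}\}$ uniformly in $x\in\bp V$.

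The key point is a transfer from the cocycle $\sigma_V(g,\cdot)$ to the Cartan projection via the ergodic-type estimate that for almost every $g$ (in the $\mu^{*n}$ sense) and a fixed $x$, the quantity $\sigma_V(g,x)$ is close to $\log\|\rho(g)\|=\chi\kappa(g)$, the gap being controlled by $\log\delta(x,y^m_g)$ through Lemma \ref{lem:cocycle}: indeed \eqref{equ:coccar} gives $\delta(x,y^m_g)\le \|\rho(g)v\|/(\|\rho(g)\|\,\|v\|)\le 1$. First I would use the fact that, by Lemma \ref{lem:stauni} and the positivity of the Lyapunov exponent (a consequence of Lemma \ref{lem:poslya} applied to the highest weight $\chi$, which is dominant and nonzero on $\frak a^{++}$), the random walk on $\bp V$ has a unique stationary measure $\nu_V$ which gives zero mass to every proper projective subspace; in particular to each hyperplane $(y)^\perp$. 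A standard argument (see \cite[Ch.~14]{benoistquint}) then upgrades this to a uniform Hölder-regularity / large-deviation bound: there are $C,c>0$ with $\mu^{*n}\{g:\ \sigma_V(g,x)\le \chi\kappa(g)-n\epsilon'\}\le Ce^{-c\epsilon' n}$ for all $x$, where the left side is exactly $\mu^{*n}\{g:\ \|\rho(g)v\|/(\|\rho(g)\|\|v\|)\le e^{-n\epsilon'}\}$. Combining with the lower bound $\delta(x,y^m_g)\le \|\rho(g)v\|/(\|\rho(g)\|\|v\|)$ from \eqref{equ:coccar} would give the desired estimate after adjusting $\epsilon'$ to $\epsilon$ (absorbing dimension-dependent constants).

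I expect the main obstacle to be the uniformity in $x$ of the large-deviation bound for $\sigma_V(g,x)-\chi\kappa(g)$, i.e.\ the statement that the random walk does not concentrate near any hyperplane at an exponential rate, uniformly over the choice of hyperplane. The clean way to get this is to invoke the regularity of the stationary measure together with a submultiplicativity/Markov argument: write $g=g_n\cdots g_1$, note $\sigma_V(g,x)=\sum_{k}\sigma_V(g_{k+1},g_k\cdots g_1 x)$, and use that after $O(\log(1/\epsilon))$ steps the point $g_k\cdots g_1 x$ has, with overwhelming probability, moved into the region where $g_{k+1}$ contracts and expands it correctly (Lemma \ref{lem:gBmg}, controlled by $\gamma_{1,2}$, itself exponentially small by Proposition \ref{prop:lardev1} since $\alpha(\sigma_\mu)>0$). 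Alternatively one can cite \cite[Thm 14.1, Prop 14.3]{benoistquint} directly, which already package exactly this statement; given the style of the paper I would simply state the proposition and refer to \cite{benoistquint} for the proof, exactly as was done for Proposition \ref{prop:lardev1}.
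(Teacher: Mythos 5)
Your fallback -- simply stating the proposition and citing \cite[Prop 14.3]{benoistquint}, with the remark that splitness makes every irreducible representation proximal -- is exactly what the paper does; its ``proof'' is nothing more than that reference plus the proximality remark. So as a final answer you land on the same approach.

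However, the self-contained reduction you sketch does not work as written, and the flaw is the direction of the inequality \eqref{equ:coccar}. That estimate gives only $\delta(x,y^m_g)\leq \|\rho(g)v\|/(\|\rho(g)\|\,\|v\|)$, so the event $\{\|\rho(g)v\|/(\|\rho(g)\|\|v\|)\leq e^{-n\epsilon'}\}$ is \emph{contained in} $\{\delta(x,y^m_g)\leq e^{-n\epsilon'}\}$, not the other way around; bounding the probability that the norm ratio is small therefore says nothing about the probability that $\delta(x,y^m_g)$ is small, which is what the proposition asks for ($\delta$ can be tiny while the ratio stays of order one whenever $\gamma_{1,2}(\rho(g))$ is not small). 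To make your reduction correct you need the complementary inequality $\|\rho(g)v\|/(\|\rho(g)\|\|v\|)\leq \delta(x,y^m_g)+\gamma_{1,2}(\rho(g))$ (the other half of \cite[Lemma 14.2]{benoistquint}, which the paper does not quote), together with the fact that $\gamma_{1,2}(\rho(g))=e^{-\theta\kappa(g)}$ for a positive combination $\theta$ of simple roots and is thus exponentially small outside an exponentially small set by Proposition \ref{prop:lardev1} and Lemma \ref{lem:poslya}; this is precisely where proximality (hence splitness) enters, and it is the actual content behind \cite[Prop 14.3]{benoistquint}. A smaller point: in your duality reduction for the second inequality, $x^M_g$ corresponds to the density point $y^m$ of $\,{}^t\rho(g)$ (equivalently $\rho(\theta(g^{-1}))$ by \eqref{equ:good norm}) acting on $V^*$, not of $\,{}^t\rho(g)^{-1}=\rho^*(g)$, and one must check that the relevant pushforward of $\mu$ is still Zariski dense with exponential moment; this is routine but should be said correctly.
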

See \cite[Prop 14.3]{benoistquint} for more details. We need $\rho$ to be proximal in Proposition \ref{prop:large deviation projective}, but the representation is automatically proximal due to the splitness of $G$.
\begin{prop}\label{prop:large deviatio flag}
	For every $\epsilon>0$ there exist $C,c$ such that for all $\eta,\,\zeta$ in $\PP$ and $n\geq 1$ we have
	\begin{align}
	\label{equ:xgmx}&\mu^{*n}\{g\in G|\ \delta(\eta^M_g,\zeta)\leq e^{-n\epsilon} \}\leq Ce^{-c\epsilon n},\\
	\label{equ:xgmy}&\mu^{*n}\{g\in G|\ \delta(\eta,\zeta^m_g)\leq e^{-n\epsilon} \}\leq Ce^{-c\epsilon n},
	\end{align}
\end{prop}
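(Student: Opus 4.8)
The plan is to deduce the statement from Proposition \ref{prop:large deviation projective} by means of the embedding $\PP\hookrightarrow\prod_{\alpha\in\Pi}\bp V_\alpha$ provided by Lemma \ref{lem:tits}. Recall that $\delta(\eta,\zeta)=\min_{\alpha\in\Pi}\delta(V_{\alpha,\eta},V^*_{\alpha,\zeta})$ by definition, and that by Lemma \ref{lem:ym rhog} the image of $\eta^M_g$ in $\bp V_\alpha$ is precisely the density point $x^M_{\rho_\alpha(g)}$, while the image of $\zeta^m_g$ in $\bp V^*_\alpha$ is $y^m_{\rho_\alpha(g)}$. So both quantities $\delta(\eta^M_g,\zeta)$ and $\delta(\eta,\zeta^m_g)$ are minima over the finite set $\Pi$ of quantities that are exactly of the shape treated in Proposition \ref{prop:large deviation projective}.

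First I would observe that if $\delta(\eta^M_g,\zeta)\leq e^{-n\epsilon}$ then there is at least one $\alpha\in\Pi$ with $\delta(x^M_{\rho_\alpha(g)},V^*_{\alpha,\zeta})\leq e^{-n\epsilon}$, and hence by a union bound over $\Pi$,
\[\mu^{*n}\{g\in G|\ \delta(\eta^M_g,\zeta)\leq e^{-n\epsilon} \}\leq \sum_{\alpha\in\Pi}\mu^{*n}\{g\in G|\ \delta(x^M_{\rho_\alpha(g)},V^*_{\alpha,\zeta})\leq e^{-n\epsilon} \}.\]
Since each $(\rho_\alpha,V_\alpha)$ is an irreducible (hence proximal) representation of $G$ and $V^*_{\alpha,\zeta}$ is a fixed point of $\bp V^*_\alpha$, the second inequality of \eqref{equ:large deviation projective} applied to $V_\alpha$ bounds each summand by $C_\alpha e^{-c_\alpha\epsilon n}$, with constants independent of $\zeta$. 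Setting $C=\sum_{\alpha\in\Pi}C_\alpha$ and $c=\min_{\alpha\in\Pi}c_\alpha$ yields \eqref{equ:xgmx}. The inequality \eqref{equ:xgmy} is proved in exactly the same way, writing $\delta(\eta,\zeta^m_g)=\min_{\alpha\in\Pi}\delta(V_{\alpha,\eta},y^m_{\rho_\alpha(g)})$ and invoking the first inequality of \eqref{equ:large deviation projective} with the fixed base point $x=V_{\alpha,\eta}\in\bp V_\alpha$.

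I do not expect a genuine obstacle here: the two points worth a word of care are the identification of the images of $\eta^M_g$ and $\zeta^m_g$ in the factor projective spaces with the density points of $\rho_\alpha(g)$ (Lemma \ref{lem:ym rhog}), and the fact that the constants furnished by Proposition \ref{prop:large deviation projective} are uniform over all base points in $\bp V_\alpha$ and $\bp V^*_\alpha$, so that the final bounds are uniform over $\eta,\zeta\in\PP$. The possible non-uniqueness of $x^M_{\rho_\alpha(g)}$ or $y^m_{\rho_\alpha(g)}$ when $\rho_\alpha(g)\notin A^{++}_{V_\alpha}$ is harmless, since the estimate holds for any choice coming from a Cartan decomposition.
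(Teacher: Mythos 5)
Your argument is correct and is exactly what the paper intends: the paper gives no proof beyond remarking that the proposition is the "multidimensional version" of Proposition \ref{prop:large deviation projective}, and the intended reduction is precisely yours, namely using $\delta(\eta,\zeta)=\min_{\alpha\in\Pi}\delta(V_{\alpha,\eta},V^*_{\alpha,\zeta})$ together with Lemma \ref{lem:ym rhog} and a union bound over the finitely many simple roots. Nothing further is needed.
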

Proposition \ref{prop:large deviatio flag} is a multidimensional version of Proposition \ref{prop:large deviation projective}.
\begin{prop}[H\"older regularity]\label{prop:holder regulariyt}
	If $(\rho,V)$ is an irreducible representation of $G$, then there exist constants $C>0,\,c>0$ such that for every $y$ in $\bp V^*$ and $r>0$ we have
	\begin{equation}\label{equ:regularity stataionary measure pro}
	\nu_V(\{x\in\bp V|\ \delta(x,y)\leq r \})\leq Cr^{c}.
	\end{equation}
\end{prop}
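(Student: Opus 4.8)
This is Guivarc'h's H\"older regularity of the Furstenberg measure; since $G$ is split, the representation $\rho$ is automatically proximal, so the density points $x^M_{\rho(g)}\in\bb PV$, $y^m_{\rho(g)}\in\bb PV^*$ and the large deviation estimates of this subsection all apply. The plan is to avoid working with $\nu_V$ directly, realising it as the law of the boundary map and transferring the regularity from the deterministic density points $x^M_{\rho(g)}$, for which Proposition \ref{prop:large deviation projective} already provides a bound uniform in $y$. First I would recall (Furstenberg; see Lemma \ref{lem:stauni} and \cite{benoistquint}) that for $\mu^{\otimes\bb N}$-almost every sequence $(X_1,X_2,\dots)$ the density points $x^M_{\rho(g_n)}$, where $g_n:=X_1\cdots X_n$, converge in $\bb PV$ to a point $Z$ whose law is the unique $\mu$-stationary measure $\nu_V$. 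Hence for $y\in\bb PV^*$ and $r>0$ we have $\nu_V(\{x\in\bb PV\mid \delta(x,y)\le r\})=\bb P(\delta(Z,y)\le r)$, and it suffices to bound the right-hand side by $Cr^c$ with $C,c>0$ independent of $y$.

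The core of the argument is a quantitative form of this convergence, uniform in $n$: there are $c_1,c_2>0$ such that $\bb P(d(Z,x^M_{\rho(g_n)})>e^{-c_1n})\le Ce^{-c_2n}$ for all $n\ge1$. I would prove it by telescoping along dyadic times,
\[d(Z,x^M_{\rho(g_n)})\le\sum_{k\ge0}d\big(x^M_{\rho(g_{2^{k+1}n})},x^M_{\rho(g_{2^kn})}\big),\]
and estimating each increment. With $m=2^kn$, write $g_{2m}=g_mh$ where $h$ has law $\mu^{*m}$ and is independent of $g_m$. The stability of density points under multiplication, in the spirit of Lemma \ref{lem:gBmg} (cf.\ the analogous estimate in \cite{benoistquint}), gives
\[d\big(x^M_{\rho(g_mh)},x^M_{\rho(g_m)}\big)\le C\,\gamma_{1,2}(\rho(g_m))\,\delta\big(x^M_{\rho(h)},y^m_{\rho(g_m)}\big)^{-2}\]
whenever $\delta(x^M_{\rho(h)},y^m_{\rho(g_m)})^2\ge\gamma_{1,2}(\rho(g_m))$, with no dependence on $\|\rho(h)^{\pm1}\|$. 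Now Proposition \ref{prop:lardev1} together with the maximal positivity of the Lyapunov vector (Lemma \ref{lem:poslya}) gives $\gamma_{1,2}(\rho(g_m))\le\gap(g_m)\le e^{-c_0m}$ off an event of probability $\le Ce^{-cm}$, while Proposition \ref{prop:large deviation projective}, applied to $h\sim\mu^{*m}$ with the (conditionally) fixed point $y^m_{\rho(g_m)}$, gives $\delta(x^M_{\rho(h)},y^m_{\rho(g_m)})\ge e^{-\epsilon m}$ off an event of probability $\le Ce^{-c\epsilon m}$. Choosing $\epsilon<c_0/2$ makes the hypothesis above hold and yields $d(x^M_{\rho(g_{2m})},x^M_{\rho(g_m)})\le Ce^{-(c_0-2\epsilon)m}$ off probability $\le Ce^{-c'm}$; summing over $k$ (so $m=2^kn$) both the increments and the exceptional probabilities are dominated by their $k=0$ terms, which proves the claim.

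Finally, given $y$ and $r>0$ small, choose $n\asymp\log(1/r)$ with $e^{-c_1n}\le r$. On the good event, $\delta(Z,y)\le r$ forces $\delta(x^M_{\rho(g_n)},y)\le r+e^{-c_1n}\le2r$, so
\[\bb P(\delta(Z,y)\le r)\le Ce^{-c_2n}+\mu^{*n}\big(\{g\mid\delta(x^M_{\rho(g)},y)\le2r\}\big).\]
Since $n\asymp\log(1/r)$ we have $2r\le e^{-\epsilon n}$ for a fixed $\epsilon>0$, so Proposition \ref{prop:large deviation projective} bounds the second term by $Ce^{-c\epsilon n}$; both terms are $\ll r^{c}$ for a uniform $c>0$, giving \eqref{equ:regularity stataionary measure pro}. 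I expect the only real obstacle to be in the second paragraph: making the convergence rate genuinely uniform in $n$, which forces both the dyadic telescoping (so that exceptional probabilities, which do not decay in the length of $h$ at a fixed scale, become summable) and the sharp norm-free form of the density-point stability estimate. The rest is bookkeeping on top of the large deviation principles already recorded above.
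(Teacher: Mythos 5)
Your argument is correct in substance, but it is worth pointing out that the paper itself does not prove this proposition: it is quoted from Guivarc'h \cite{guivarc1990produits} and \cite[Thm 14.1]{benoistquint}, and the standard proof behind that citation is shorter and different from yours. There one uses stationarity directly: writing $\nu_V=\mu^{*n}*\nu_V$, one has $\nu_V\{x:\delta(x,y)\le e^{-\epsilon n}\}=\int\mu^{*n}\{g:\delta(gx,y)\le e^{-\epsilon n}\}\,\dd\nu_V(x)$, and the inner measure is bounded by combining a one-step lower bound for $\delta(gx,y)$ in terms of $\delta(x^M_{\rho(g)},y)$, $\delta(x,y^m_{\rho(g)})$ and $\gamma_{1,2}(\rho(g))$ with the large deviation estimates (Propositions \ref{prop:lardev1}, \ref{prop:large deviation projective}, Lemma \ref{lem:poslya}), uniformly in $x$ and $y$; interpolating in $r$ gives \eqref{equ:regularity stataionary measure pro} with no boundary map at all. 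You instead realise $\nu_V$ as the law of $Z=\lim x^M_{\rho(g_n)}$ and prove a quantitative almost-sure rate by dyadic telescoping; this works, and the rate for $d(Z,x^M_{\rho(g_n)})$ is a genuinely stronger intermediate statement of independent interest, but it costs you two inputs not stated in the paper: the identification of the law of $Z$ with $\nu_V$ (standard, in \cite{benoistquint}, whereas Lemma \ref{lem:stauni} only gives uniqueness of the stationary measure), and the stability estimate $d\bigl(x^M_{\rho(gh)},x^M_{\rho(g)}\bigr)\ll\gamma_{1,2}(\rho(g))\,\delta\bigl(x^M_{\rho(h)},y^m_{\rho(g)}\bigr)^{-1}$. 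The latter is not literally Lemma \ref{lem:gBmg}, but it follows from the same toolkit: apply the inequality $d(gx,x^M_g)\,\delta(x,y^m_g)\le\gamma_{1,2}(g)$ (quoted in the proof of Lemma \ref{lem:gBmg}) at $x=\R\,hv$ with $v$ the top right-singular vector of $gh$, and bound $\delta(\R\,hv,y^m_g)\ge\delta(x^M_h,y^m_g)-\gamma_{1,2}(g)$ using $\|g(hv)\|=\|gh\|\ge\|g\|\|h\|\,\delta(x^M_h,y^m_g)$ together with the elementary complement of \eqref{equ:coccar}, namely $\|gu\|/(\|g\|\|u\|)\le\delta(\R u,y^m_g)+\gamma_{1,2}(g)$; the precise exponent in your hypothesis ($\delta^2$ versus $\delta^3$ against $\gamma_{1,2}$) is immaterial since all quantities are exponential in $m$. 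Granting these two points, your conditioning on $g_m$ so that Proposition \ref{prop:large deviation projective} applies to the independent block $h$ with $y=y^m_{\rho(g_m)}$, the summation over dyadic scales, and the final choice $n\asymp\log(1/r)$ (using that $\delta(\cdot,y)$ is Lipschitz for $d$ up to a factor $\sqrt2$) are all sound, so your proof is a valid, if longer, alternative to the cited one.
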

The proximality of the representation is also needed in Proposition \ref{prop:holder regulariyt}.
This result is due to Guivarc'h \cite{guivarc1990produits}. See \cite[Thm 14.1]{benoistquint} for more details.
As a corollary of Proposition \ref{prop:holder regulariyt}, we have the following.
\begin{cor}\label{cor:regularity}
	If $(\rho,V)$ is an irreducible representation of $G$ with highest weight $\chi$, then there exist constants $C>0,\,c>0$ such that for every $y$ in $\bp V^*$ and $r>0$ we have
	\begin{equation}\label{equ:regularity stataionary measure}
	\nu(\{\eta\in\P|\ \delta(V_{\chi,\eta},y)\leq r \})\leq Cr^{c}.
	\end{equation}
\end{cor}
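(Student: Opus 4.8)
The plan is to deduce this statement directly from Proposition \ref{prop:holder regulariyt} together with Lemma \ref{lem:stauni}, by pushing the stationary measure $\nu$ on $\P$ forward to $\bp V$ along the map $\eta \mapsto V_{\chi,\eta}$ from \eqref{equ:ppV}. First I would recall that, since $(\rho,V)$ is irreducible with highest weight $\chi$, the map $\pi_V \colon \P \to \bp V$, $\eta \mapsto V_{\chi,\eta}$, is well defined and $G$-equivariant (the parabolic $P$ stabilizes the line $V^\chi$). Consequently the pushforward $(\pi_V)_*\nu$ is a $\mu$-stationary probability measure on $\bp V$: for any Borel set $E$ we have $(\pi_V)_*\nu(E) = \nu(\pi_V^{-1}E)$ and $G$-equivariance of $\pi_V$ turns the stationarity equation $\nu = \mu * \nu$ on $\P$ into $(\pi_V)_*\nu = \mu * (\pi_V)_*\nu$ on $\bp V$. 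By the uniqueness clause in Lemma \ref{lem:stauni}, this forces $(\pi_V)_*\nu = \nu_V$, the unique $\mu$-stationary measure on $\bp V$.

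With this identification, the left-hand side of \eqref{equ:regularity stataionary measure} is exactly
\[
\nu(\{\eta\in\P \mid \delta(V_{\chi,\eta},y)\leq r\}) = \nu(\pi_V^{-1}\{x\in\bp V \mid \delta(x,y)\leq r\}) = \nu_V(\{x\in\bp V \mid \delta(x,y)\leq r\}),
\]
and Proposition \ref{prop:holder regulariyt} applied to $(\rho,V)$ — which is proximal because $G$ is split — gives constants $C>0$, $c>0$, depending only on $\rho$ (hence on $G$ and $\mu$), such that this quantity is at most $Cr^c$ for all $y\in\bp V^*$ and all $r>0$. This is precisely the claimed bound.

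The only point requiring a little care — and the one I would flag as the main (minor) obstacle — is verifying that $(\pi_V)_*\nu$ is genuinely stationary, i.e. that the equivariance identity $\pi_V(g\eta) = \rho(g)\pi_V(\eta)$ holds and commutes correctly with the convolution integral $\int_G g_*(\cdot)\,\dd\mu(g)$. This is immediate from the definition $V_{\chi,g\eta_o} = \rho(g)V^\chi$ and the $G$-invariance of the measure under pushforward, so it is routine; no measurability issue arises since $\pi_V$ is continuous and $\P$, $\bp V$ are compact. Everything else is a direct substitution, so the corollary follows with no further work.
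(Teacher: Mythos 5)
Your proposal is correct and follows the same route as the paper: identify the pushforward of $\nu$ under $\eta\mapsto V_{\chi,\eta}$ with $\nu_V$ via Lemma \ref{lem:stauni}, then apply Proposition \ref{prop:holder regulariyt}. The equivariance/stationarity point you flag is precisely what Lemma \ref{lem:stauni} already records, so no extra argument is needed.
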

\begin{proof}
By Lemma \ref{lem:stauni}, we have
\[ \nu(\{\eta\in\P|\ \delta(V_{\chi,\eta},y)\leq r \})=\nu_V(\{x\in\bp V|\ \delta(x,y)\leq r \}).\]
Hence Corollary \ref{cor:regularity} follows from Proposition \ref{prop:holder regulariyt}.
\end{proof}
For later convenience, we introduce the following definition. Let $C_1'>0$ be a constant such that $|\alpha(X)|\leq C_1\|X\|$ for all simple root $\alpha$ and $X\in\frak b$. Let $\constant=3C_1+C_1'$, where $C_1$ is defined in \eqref{equ:norrep}.
\begin{defi}[Good element]\label{defi:good element} 
	For $n\in\bb N, \epsilon>0$ and $\eta, \zeta\in \P$, we say that an element $h$ is $(n,\epsilon,\eta,\zeta)$ good if 
	\begin{equation}\label{equ:good element}
	\|\kappa(h)-n\sigma_\mu\|\leq \epsilon n/\constant\text{ and }\delta(\eta, \zeta^m_h),\delta(\eta^M_h,\zeta)>2e^{-\epsilon n/\constant}.
	\end{equation}
\end{defi} 
\begin{lem}\label{lem:cocbou expoential}	
	We have that $h$ is $(n,\epsilon,\eta,\zeta)$ good outside an exponentially small set, that is to say there exist $C>0,c>0$ such that 
	\[\mucon{n}\{h \text{ is not }(n,\epsilon,\eta,\zeta)\text{ good.} \}\leq Ce^{-c\epsilon n} .\]
\end{lem}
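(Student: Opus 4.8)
The statement asserts that a certain "good element" condition holds outside an exponentially small set. The plan is to take the three defining conditions in \eqref{equ:good element} and bound the complement of each one using the large deviation principles already recorded in Section~\ref{sec:lardev}, then combine via a union bound.

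First I would treat the Cartan projection condition $\|\kappa(h)-n\sigma_\mu\|\leq \epsilon n/\constant$. Applying Proposition~\ref{prop:lardev1} with $\epsilon$ replaced by $\epsilon/\constant$ gives constants $C_0,c_0>0$ such that
\[
\mucon{n}\{h\in G:\ \|\kappa(h)-n\sigma_\mu\|> \epsilon n/\constant\}\leq C_0 e^{-c_0\epsilon n/\constant}.
\]
Next I would treat the two separation conditions $\delta(\eta,\zeta^m_h)>2e^{-\epsilon n/\constant}$ and $\delta(\eta^M_h,\zeta)>2e^{-\epsilon n/\constant}$. Here I apply Proposition~\ref{prop:large deviatio flag}: by \eqref{equ:xgmy} with $\epsilon$ replaced by $\epsilon/\constant$ (and absorbing the factor $2$ into the constants, since $2e^{-\epsilon n/\constant}\leq e^{-\epsilon n/(2\constant)}$ for $n$ large, with the finitely many small $n$ handled by enlarging $C$), we get
\[
\mucon{n}\{h:\ \delta(\eta,\zeta^m_h)\leq 2e^{-\epsilon n/\constant}\}\leq C_1 e^{-c_1\epsilon n},
\]
and symmetrically from \eqref{equ:xgmx} for $\delta(\eta^M_h,\zeta)$, uniformly in $\eta,\zeta\in\P$ since those propositions are stated uniformly over the base points. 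Then a union bound over the three events, taking $C=C_0+2\max(C_1,\dots)$ and $c>0$ the minimum of the three exponential rates, yields the claimed estimate
\[
\mucon{n}\{h\text{ is not }(n,\epsilon,\eta,\zeta)\text{ good}\}\leq Ce^{-c\epsilon n}.
\]

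There is essentially no obstacle here: this lemma is a routine repackaging of the large deviation estimates. The only minor points requiring care are (i) tracking the constant $\constant$ so that the rescaled $\epsilon/\constant$ still produces a genuinely exponential bound, and (ii) checking uniformity in the base points $\eta,\zeta$, which is already built into the statements of Propositions~\ref{prop:large deviation projective} and~\ref{prop:large deviatio flag}. One should also note that the factor $2$ in front of $e^{-\epsilon n/\constant}$ costs only a constant factor in $n$ in the exponent, harmless after adjusting $c$. Hence the proof is a three-line union bound once the relevant propositions are invoked with the correct rescaling of $\epsilon$.
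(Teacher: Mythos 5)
Your proposal is correct and follows exactly the paper's argument: the paper's proof consists of citing the large deviation principle \eqref{equ:lardev1} together with \eqref{equ:xgmx} and \eqref{equ:xgmy}, and your write-up simply makes the union bound and the rescaling of $\epsilon$ by $\constant$ explicit. Nothing further is needed.
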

\begin{proof}
	This is due to the large deviation principle \eqref{equ:lardev1}, \eqref{equ:xgmx} and \eqref{equ:xgmy}.
\end{proof}
\begin{lem}\label{lem:cocbou}
	Let $\delta=e^{-\epsilon n}$ and $\expec=\max_{\alpha\in\Pi}e^{-\alpha\sigma_\mu n}$.
	Suppose that $\epsilon$ is small enough such that $\beta<\delta^3$. If $h$ is $(n,\epsilon,\eta,\zeta^m_g)$ good, then
	\[\gap(h)\leq\expec\delta^{-1}\leq\delta^2\text{ and }\|\sigma(gh,\eta)-\kappa(g)-n\sigma_\mu\|\leq \epsilon n. \]
\end{lem}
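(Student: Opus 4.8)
The plan is to deduce both assertions from three ingredients already in hand: the cocycle identity $\sigma(gh,\eta)=\sigma(g,h\eta)+\sigma(h,\eta)$, the comparison Lemma~\ref{lem:iwacar} bounding $\|\sigma(g,\eta)-\kappa(g)\|$ by $C_1|\log\delta(\eta,\zeta^m_g)|$, and the contraction Lemma~\ref{lem:gBmgmul}, which sends $B^m_h(\delta)$ into $b^M_h(\gap(h)\delta^{-1})$. The whole point of having defined $\constant=3C_1+C_1'$ is that it absorbs every loss incurred along the way, so the real content is only to check that each error stays proportional to $\epsilon n/\constant$.

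First I would settle the gap bound. Since each simple root $\alpha$ vanishes on $\frak c$, the estimate $|\alpha(X)|\le C_1'\|X\|$ holds for all $X\in\frak a$; applied to $X=\kappa(h)-n\sigma_\mu$ together with goodness it gives $|\alpha(\kappa(h)-n\sigma_\mu)|\le C_1'\epsilon n/\constant\le\epsilon n$. Writing $\alpha\kappa(h)=\alpha(n\sigma_\mu)+\alpha(\kappa(h)-n\sigma_\mu)$ one gets $e^{-\alpha\kappa(h)}\le e^{-\alpha(n\sigma_\mu)}e^{\epsilon n}=e^{-\alpha(n\sigma_\mu)}\delta^{-1}$, and taking the supremum over $\alpha\in\Pi$ yields $\gap(h)\le\beta\delta^{-1}$. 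The hypothesis $\beta<\delta^3$ turns this into $\gap(h)\le\beta\delta^{-1}\le\delta^2$, which is the first claim and also makes Lemma~\ref{lem:gBmgmul} applicable to $h$ with this value of $\delta$.

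For the cocycle estimate I would split
\[
\sigma(gh,\eta)-\kappa(g)-n\sigma_\mu=\bigl(\sigma(g,h\eta)-\kappa(g)\bigr)+\bigl(\sigma(h,\eta)-\kappa(h)\bigr)+\bigl(\kappa(h)-n\sigma_\mu\bigr),
\]
and bound each of the three pieces by $\epsilon n/\constant$, up to a factor $C_1$. The third is $\le\epsilon n/\constant$ by goodness. For the second, goodness gives $\delta(\eta,\zeta^m_h)>2e^{-\epsilon n/\constant}>e^{-\epsilon n/\constant}$, hence $|\log\delta(\eta,\zeta^m_h)|<\epsilon n/\constant$, and Lemma~\ref{lem:iwacar} bounds it by $C_1\epsilon n/\constant$. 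The first piece is the only one requiring a lower bound on $\delta(h\eta,\zeta^m_g)$: since $\constant>1$ we have $\delta=e^{-\epsilon n}\le e^{-\epsilon n/\constant}$, so $\delta(\eta,\zeta^m_h)>2e^{-\epsilon n/\constant}\ge\delta$, i.e. $\eta\in B^m_h(\delta)$, and Lemma~\ref{lem:gBmgmul} gives $d(h\eta,\eta^M_h)\le\gap(h)\delta^{-1}\le\beta\delta^{-2}<\delta\le e^{-\epsilon n/\constant}$. Because $\delta(\cdot,\zeta)$ is $1$-Lipschitz in its first argument in each coordinate projection while $d(\cdot,\cdot)$ is the maximum of those projections, one has $\delta(h\eta,\zeta^m_g)\ge\delta(\eta^M_h,\zeta^m_g)-d(h\eta,\eta^M_h)$; together with $\delta(\eta^M_h,\zeta^m_g)>2e^{-\epsilon n/\constant}$ (goodness, with second argument $\zeta^m_g$) this yields $\delta(h\eta,\zeta^m_g)>e^{-\epsilon n/\constant}$, so Lemma~\ref{lem:iwacar} bounds the first piece by $C_1\epsilon n/\constant$ as well. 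Summing, $\|\sigma(gh,\eta)-\kappa(g)-n\sigma_\mu\|<(2C_1+1)\epsilon n/\constant\le\epsilon n$, since $\constant=3C_1+C_1'=2C_1+(C_1+C_1')\ge 2C_1+1$ (the constant $C_1$ of \eqref{equ:norrep} is necessarily $\ge 1$).

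I do not expect a genuine obstacle: everything is a direct application of the quoted lemmas. The one thing to be careful about is the bookkeeping that keeps every loss of size at most $\epsilon n/\constant$, which is precisely why $\constant$ was chosen to dominate both $2C_1+1$ and $C_1'$, and why the two $\delta$-conditions defining a good element carry the factor $2$ in front of $e^{-\epsilon n/\constant}$.
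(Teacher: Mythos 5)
Your proof is correct and follows essentially the same route as the paper: first the gap bound $\gap(h)\le\expec\delta^{-1}\le\delta^2$ from $|\alpha(\kappa(h)-n\sigma_\mu)|\le C_1'\|\kappa(h)-n\sigma_\mu\|$ and the goodness hypothesis, then the cocycle decomposition $\sigma(gh,\eta)-\kappa(g)-n\sigma_\mu=(\sigma(g,h\eta)-\kappa(g))+(\sigma(h,\eta)-\kappa(h))+(\kappa(h)-n\sigma_\mu)$ estimated twice by Lemma~\ref{lem:iwacar} together with Lemma~\ref{lem:gBmgmul}. The only difference is that you make explicit the inclusion $b^M_h(\gap(h)\delta^{-1})\subset B^m_g(e^{-\epsilon n/\constant})$ via the $1$-Lipschitz property of $\delta(\cdot,\zeta^m_g)$ and track the constant $\constant=3C_1+C_1'$ precisely, bookkeeping that the paper's proof leaves implicit.
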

\begin{proof}
By hypothesis and $\alpha(n\sigma_\mu-\kappa(h))\leq C_1'\|n\sigma_\mu-\kappa(h)\|\leq \epsilon n$, we have
\[\gap(h)=\max_{\alpha\in\Pi}e^{-\alpha\kappa(h)}=\max_{\alpha\in\Pi} e^{-\alpha n\sigma_\mu}e^{\alpha(n\sigma_\mu-\kappa(h))}\leq \expec\delta^{-1}. \]
By Lemma \ref{lem:gBmgmul}, we have $h\eta\in b^M_h(\gap(h)/\delta)\subset b^M_h(\delta)\subset B^m_{g}(\delta)$. Hence by Lemma \ref{lem:iwacar}
	\begin{align*}
	\|\sigma(gh,\eta)-\kappa(g)-n\sigma_\mu\|&=\|\sigma(g,h\eta)-\kappa(g)+\sigma(h,\eta)-n\sigma_\mu \|\\
	&\leq C_1|\log\delta(h\eta,\zeta^m_{g})|
	+C_1|\log\delta(\eta,\zeta^m_{h})|+\|\kappa(h)-n\sigma_\mu
	\|\leq \epsilon n.
	\end{align*}
	The proof is complete.
	%
\end{proof}
For later usage in Section \ref{sec:noncon}, we will define another notion of goodness.
\begin{defi}\label{defi:good element'}
	For $n\in\bb N, \epsilon>0$ and $\zeta\in \P$, we say that an element $h$ is $(n,\epsilon,\zeta)$ good if 
	\begin{equation}\label{equ:good element'}
	\|\kappa(h)-n\sigma_\mu\|\leq \epsilon n/\constant\text{ and }\delta(\eta^M_h,\zeta)>2e^{-\epsilon n/\constant}.
	\end{equation}
\end{defi} 
	\begin{lem}\label{lem:cocbou'} Suppose the semisimple part of $\bf G$ is simply connected.
		Let $\delta=e^{-\epsilon n}$ and $\expec=\max_{\alpha\in\Pi}e^{-\alpha\sigma_\mu n}$.
		There exists a flag $\eta_\alpha$ in $\P$ which is different from $\eta_o$ only in its image in $\bp V_\alpha$ and
		\begin{equation}\label{equ:norlar1con}
		V_{\alpha,\eta_\alpha}=V^{\chi_\alpha-\alpha}.
		\end{equation}
	If $h$ is $(n,\epsilon,\zeta^m_g)$ good, then for $\eta=l_h^{-1}\eta_\alpha$, we have
		\begin{equation}\label{equ:cocbou'}
		e^{\omega_{\alpha'}(\sigma(gh,\eta)-\kappa(g)-n\sigma_\mu)} \in[\delta,\delta^{-1}] \text{ for } \alpha'\neq\alpha\text{ and }
		e^{\omega_\alpha(\sigma(gh,\eta)-\kappa(g)-n\sigma_\mu)}\leq \beta\delta^{-1}.
		\end{equation}
	\end{lem}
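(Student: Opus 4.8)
The plan is to build $\eta_\alpha$ by hand from the $\alpha$-circle through $\eta_o$, then to compute $\sigma(gh,\eta)=\sigma(g,h\eta)+\sigma(h,\eta)$ and read off each component $\omega_{\alpha'}$ using the representations $\rho_\alpha$. For the first claim, I would argue as follows. Since $\scr D\bf G$ is simply connected, $\chi_\alpha(H_\alpha)=\tilde\omega_\alpha(H_\alpha)=1$, so $V_\alpha$ is super proximal (Lemma \ref{lem:super proximal}) and $V^{\chi_\alpha-\alpha}$ is a line; by Lemma \ref{lem:alpha circle} the image in $\bp V_\alpha$ of the $\alpha$-circle through $\eta_o$ is the projective line spanned by $V^{\chi_\alpha}$ and $V^{\chi_\alpha-\alpha}$, on which $\rho_\alpha(O_\alpha)$ acts as $SO(2)$, so there is $t_0$ with $\rho_\alpha(\exp(t_0K_\alpha))V^{\chi_\alpha}=V^{\chi_\alpha-\alpha}$; put $\eta_\alpha=\exp(t_0K_\alpha)\eta_o$. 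For $\alpha'\ne\alpha$ we have $\chi_{\alpha'}(H_\alpha)=0$, so by Lemma \ref{lem:alpha circle} again the image of that $\alpha$-circle in $\bp V_{\alpha'}$ is the single point $V^{\chi_{\alpha'}}$, whence $V_{\alpha',\eta_\alpha}=V_{\alpha',\eta_o}$. Thus $\eta_\alpha$ differs from $\eta_o$ only in $\bp V_\alpha$, where $V_{\alpha,\eta_\alpha}=V^{\chi_\alpha-\alpha}$, which is \eqref{equ:norlar1con}; uniqueness of $\eta_\alpha$ is the embedding of Lemma \ref{lem:tits}.

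Next I would write $h=k_ha_h\ell_h$ in Cartan form, so that $\eta=\ell_h^{-1}\eta_\alpha$ and $h\eta=k_ha_h\eta_\alpha$. Each $V_{\alpha',\eta_\alpha}$ is an $A$-eigenline, hence fixed by $a_h$, so $V_{\alpha',h\eta}=\rho_{\alpha'}(k_h)V^{\chi_{\alpha'}}=V_{\alpha',\eta^M_h}$ for $\alpha'\ne\alpha$, while $V_{\alpha,h\eta}=\rho_\alpha(k_h)V^{\chi_\alpha-\alpha}$. To obtain $\sigma(h,\eta)$ exactly, I apply \eqref{equ:representation cocycle} in each $\rho_{\alpha'}$ to a highest-weight vector of $V_{\alpha',\eta}=\rho_{\alpha'}(\ell_h^{-1})V_{\alpha',\eta_\alpha}$, using $\rho_{\alpha'}(h)\rho_{\alpha'}(\ell_h^{-1})=\rho_{\alpha'}(k_ha_h)$ and the good-norm property of $\ell_h\in K$: this gives $\chi_{\alpha'}\sigma(h,\eta)=\chi_{\alpha'}\kappa(h)$ for $\alpha'\ne\alpha$ and $\chi_\alpha\sigma(h,\eta)=(\chi_\alpha-\alpha)\kappa(h)$. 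Since $\sigma(h,\eta)-\kappa(h)$ has vanishing central part it lies in $\frak b$, and since $\{\tilde\omega_{\alpha'}\}$ is the basis of $\frak b^*$ dual to $\{H_{\alpha'}\}$, this forces
\begin{equation*}
\sigma(h,\eta)=\kappa(h)-\alpha(\kappa(h))\,H_\alpha .
\end{equation*}

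Then I would apply $\omega_{\alpha'}$ — which, like $\alpha$ and $\tilde\omega_{\alpha'}$, vanishes on $\frak c$ and restricts to $\tilde\omega_{\alpha'}$ on $\frak b$ — to $\sigma(gh,\eta)-\kappa(g)-n\sigma_\mu=\sigma(g,h\eta)+\sigma(h,\eta)-\kappa(g)-n\sigma_\mu$. For any flag $\zeta$ one has $\sigma(g,\zeta)-\kappa(g)\in\frak b$, and combining \eqref{equ:representation cocycle} and \eqref{equ:representation cartan} gives $\omega_{\alpha'}(\sigma(g,\zeta)-\kappa(g))=\log\big(\|\rho_{\alpha'}(g)v\|/(\|\rho_{\alpha'}(g)\|\|v\|)\big)$ for nonzero $v\in V_{\alpha',\zeta}$. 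For $\alpha'\ne\alpha$, using $V_{\alpha',h\eta}=V_{\alpha',\eta^M_h}$ and $\omega_{\alpha'}(H_\alpha)=0$, the quantity equals $\log\big(\|\rho_{\alpha'}(g)v\|/(\|\rho_{\alpha'}(g)\|\|v\|)\big)+\omega_{\alpha'}(\kappa(h)-n\sigma_\mu)$ with $v\in V_{\alpha',\eta^M_h}$; by Lemma \ref{lem:cocycle} and Lemma \ref{lem:ym rhog} the first term lies in $(\log\delta(\eta^M_h,\zeta^m_g),\,0]\subset(-\epsilon n/\constant,0]$ because $h$ is $(n,\epsilon,\zeta^m_g)$ good, while $|\omega_{\alpha'}(\kappa(h)-n\sigma_\mu)|\le C_1\|\kappa(h)-n\sigma_\mu\|\le C_1\epsilon n/\constant$ by \eqref{equ:norrep}; as $C_1\ge1$ and $\constant=3C_1+C_1'\ge1+C_1$ the sum lies in $(-\epsilon n,\epsilon n)$, giving $e^{\omega_{\alpha'}(\cdots)}\in[\delta,\delta^{-1}]$. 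For $\alpha'=\alpha$, the same computation with $\omega_\alpha(H_\alpha)=1$ and $v\in V_{\alpha,h\eta}$ gives
\begin{equation*}
\omega_\alpha\big(\sigma(gh,\eta)-\kappa(g)-n\sigma_\mu\big)=\log\frac{\|\rho_\alpha(g)v\|}{\|\rho_\alpha(g)\|\|v\|}+\omega_\alpha(\kappa(h)-n\sigma_\mu)-\alpha(\kappa(h)-n\sigma_\mu)-n\,\alpha(\sigma_\mu),
\end{equation*}
whose first term is $\le0$ by Lemma \ref{lem:cocycle}, whose two middle terms are bounded by $(C_1+C_1')\epsilon n/\constant\le\epsilon n$ by \eqref{equ:norrep} and the defining property of $C_1'$, and with $-n\,\alpha(\sigma_\mu)\le-n\min_{\alpha''\in\Pi}\alpha''(\sigma_\mu)=\log\beta$; hence $e^{\omega_\alpha(\cdots)}\le\beta\delta^{-1}$, which is \eqref{equ:cocbou'}.

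The main obstacle — and the reason one cannot simply quote Lemma \ref{lem:cocbou} — is that $\eta=\ell_h^{-1}\eta_\alpha$ is deliberately positioned so that $\delta(\eta,\zeta^m_h)=0$ precisely in the $\alpha$-direction (its image in $\bp V_\alpha$ is the eigenline $V^{\chi_\alpha-\alpha}$, orthogonal to the hyperplane of $\zeta_o$), so there is no large-deviation control of $\sigma(h,\cdot)$ in that direction; the exact formula for $\sigma(h,\eta)$ together with asking only for the one-sided bound $e^{\omega_\alpha(\cdots)}\le\beta\delta^{-1}$, obtained from the trivial inequality $\|\rho_\alpha(g)v\|\le\|\rho_\alpha(g)\|\|v\|$, is exactly what makes the argument go through. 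The genuinely new ingredients beyond the preceding sections are the explicit shape of the $\alpha$-circle in the simply connected case (Lemma \ref{lem:alpha circle}) and the super proximality of $V_\alpha$.
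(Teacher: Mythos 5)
Your proof is correct and takes essentially the same route as the paper: construct $\eta_\alpha$ on the $\alpha$-circle through $\eta_o$ via Lemma \ref{lem:alpha circle}, use that its image differs from $\eta_o$ only in $\bp V_\alpha$ so that $\chi_\alpha\sigma(h,\eta)=(\chi_\alpha-\alpha)\kappa(h)$ produces the extra factor $e^{-\alpha\kappa(h)}$, and bound the $g$-part by $\chi_\alpha(\sigma(g,h\eta)-\kappa(g))\le 0$ from \eqref{equ:representation cocycle} and \eqref{equ:representation cartan}. The only cosmetic difference is that for $\alpha'\neq\alpha$ you re-derive the two-sided bound directly from Lemmas \ref{lem:cocycle} and \ref{lem:ym rhog} together with $\delta(\eta^M_h,\zeta^m_g)$-goodness, whereas the paper shortcuts this by observing $\delta(\eta_o,\zeta_o)=1$, so $h$ is $(n,\epsilon,\eta_o,\zeta^m_g)$ good, and quoting Lemma \ref{lem:cocbou}.
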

	\begin{proof}
		The existence of $\eta_\alpha$ is guaranteed by Lemma \ref{lem:alpha circle}. In the $\alpha$ circle of $\eta_o$, there exists a point $\eta_\alpha$ whose image in $\bp V_\alpha$ is exactly $V^{\chi_\alpha-\alpha}$. This is the $\eta_\alpha$ that we are looking for.
		
		Without loss of generality, we can suppose that $l_h=e$. The image of $\eta_\alpha$ in $\bp V_{\alpha'}$ is the same as $\eta_o$ if $\alpha'\neq\alpha$. Hence by \eqref{equ:representation cocycle}, we have $\omega_{\alpha'}\sigma(gh,\eta_\alpha)=\omega_{\alpha'}\sigma(gh,\eta_o)$ for $\alpha'\neq\alpha$. By \eqref{equ:delta etao}, that is $\delta(\eta_o,\zeta_o)=1$, the element $h$ is $(n,\epsilon,\eta_o,\zeta^m_g)$ good. By Lemma \ref{lem:cocbou}, we obtain the first part of \eqref{equ:cocbou'}.
		
		The image of $\eta_\alpha$ in $\bp V_\alpha$ is $V^{\chi_\alpha-\alpha}$, whose weight is $\chi_\alpha- \alpha$. Hence by \eqref{equ:representation cocycle}, for $v\in V^{\chi_\alpha-\alpha}$
		\begin{equation}\label{equ:etad chid}
			\chi_\alpha\sigma(h,\eta_\alpha)=\log\frac{\|h v\|}{\|v\|}=\log\frac{\|\exp(\kappa(h))v \|}{\|v\|}=(\chi_\alpha- \alpha)\kappa(h).
		\end{equation}
		By \eqref{equ:representation cocycle} and \eqref{equ:representation cartan}, we have $\chi_\alpha(\sigma(g,h\eta)-\kappa(g))\leq 0$.
		Together with \eqref{equ:etad chid},
		\begin{align*}
			&{\chi_\alpha(\sigma(gh,\eta)-\kappa(g)-n\sigma_\mu)}= 	{\chi_\alpha(\sigma(g,h\eta)-\kappa(g))}+	{\chi_\alpha(\sigma(h,\eta)-n\sigma_\mu)}\\
			\leq &{(\chi_\alpha- \alpha)\kappa(h)-n\chi_\alpha\sigma_\mu}={- n\alpha\sigma_\mu}+{(\chi_{\alpha}-\alpha)(\kappa(h)-n\sigma_\mu)}.
		\end{align*}
		By \eqref{equ:good element'} and $\chi_{\alpha}-\omega_\alpha\in \frak c^*$, the proof is complete.
	\end{proof}
\begin{exa}
	In the case $\slrn$,
			\begin{equation*}
			\eta_{\alpha_d}=\{\R e_1\subset \cdots\subset  \R e_1\oplus\cdots\oplus \R e_{d-1}\subset\R e_1\oplus\cdots\oplus \R e_{d-1}\oplus \R e_{d+1}\subset \cdots \},
			\end{equation*}
			and its image in $\wedge^d(\R^{\rank+1})$ is $\R(v_1\wedge \cdots \wedge v_{d-1}\wedge v_{d+1})$.
\end{exa}

	Let $V$ be a representation of $G$. Let $\bb G_2(V):=\{2\text{-planes  in }V \}$ be the Grassmannian variety of $V$. Let $q_\lambda:\wedge^2V\rightarrow \wedge^2V$
\nomentry{$q_\lambda$}{}
 be the $G$-equivariant projection to the sum of all the irreducible subrepresentations of $\wedge^2V$ with highest weight equal to $\lambda$.
\begin{lem}\label{lem:large rep}
	Let $V$ be an irreducible representation of $G$ with highest weight $\chi$. For a simple root $\alpha$, let $q_{2\chi-\alpha}$ be the $G$-equivariant projection from $\wedge^2V$ to $\wedge^2V$. There exists $c>0$ such that for all $v, v'$ in $V$,
	\begin{equation*}
	\sum_{\alpha\in\Pi}\|q_{2\chi-\alpha}(v\wedge v')\|\geq c \|v\wedge v'\|.
	\end{equation*}
\end{lem}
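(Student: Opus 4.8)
The plan is to decompose $\wedge^2 V$ into irreducible $G$-subrepresentations and to identify, among the highest weights occurring, exactly those of the form $2\chi-\alpha$ with $\alpha\in\Pi$. First I would recall that $\wedge^2 V$ is spanned by the vectors $v\wedge v'$ with $v,v'\in V$, and that the norm is $G$-invariant in the sense that $\rho(K)$ preserves it, $\rho(A)$ is symmetric, and more importantly the orthogonal decomposition into weight spaces $(\wedge^2V)^\lambda$ is preserved by $\rho(A)$. The top weight of $\wedge^2 V$ is $2\chi$, but since $V^\chi$ is one-dimensional (splitness/proximality, see the discussion after Lemma~\ref{lem:weight structure}), the weight space $(\wedge^2V)^{2\chi}=\wedge^2 V^\chi=0$; the highest weights that actually appear are $\chi+\chi'$ where $\chi'$ runs over the next layer of weights of $V$, i.e.\ $\chi'=\chi-\alpha$ for $\alpha\in\Theta_\rho\subseteq\Pi$. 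Thus every irreducible constituent of $\wedge^2V$ whose highest weight is maximal has highest weight of the form $2\chi-\alpha$, $\alpha\in\Pi$.

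Next I would run the standard $\mathfrak{sl}_2$-argument with $\mathfrak s_\alpha=\langle H_\alpha,X_\alpha,Y_\alpha\rangle$ (as in the proof of Lemma~\ref{lem:alpha circle} and Lemma~\ref{lem:super proximal}) to pin down the subspace $W:=\bigoplus_{\alpha\in\Pi}(\wedge^2V)^{2\chi-\alpha}$: a vector $v_\chi\wedge Y_\alpha v_\chi$ (with $v_\chi$ spanning $V^\chi$) is a highest weight vector of weight $2\chi-\alpha$ inside $\wedge^2V$, and by Lemma~\ref{lem:weight structure}/\cite[Chapter 7, Proposition 2]{serre2012complex} the only way to land in weight $2\chi-\alpha$ is via $Y_\alpha$ applied once, so $W$ is precisely the sum of the isotypic components we are projecting onto and $q_{2\chi-\alpha}$ is just the orthogonal projection onto the $G$-submodule generated by $(\wedge^2V)^{2\chi-\alpha}$. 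The inequality I need is then equivalent to: the orthogonal projection $\pi_W:\wedge^2V\to W$ satisfies $\|\pi_W(v\wedge v')\|\geq c\|v\wedge v'\|$ uniformly over all pure wedges $v\wedge v'$.

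For this last point I would argue by compactness/homogeneity. The set $\mathcal G:=\{\,\R(v\wedge v')\in\bp(\wedge^2V): v,v'\in V \text{ linearly independent}\,\}$ is the image of the Grassmannian $\bb G_2(V)$ under the Plücker embedding, hence a compact $G$-invariant subset of $\bp(\wedge^2V)$. The function $x=\R w\mapsto \|\pi_W w\|/\|w\|$ is continuous on $\bp(\wedge^2 V)$; it suffices to show it is strictly positive on $\mathcal G$, and then take $c$ to be its minimum. So I must rule out that a nonzero pure wedge $w=v\wedge v'$ lies in $W^\perp$. Here the key input is Zariski/$G$-irreducibility: $W^\perp$ is a $G$-submodule of $\wedge^2V$ (being the sum of all isotypic components with non-maximal highest weight), hence $\bp(W^\perp)$ is a proper projective $G$-subvariety of $\bp(\wedge^2 V)$; if it contained a single point of $\mathcal G$ it would, by $G$-invariance and the transitivity of $G$ on $\mathcal G$ (the Grassmannian $\bb G_2(V)$ is a single $G$-orbit — here I use that $V$ is irreducible, so in particular $V$ has no $G$-stable flag structure obstructing transitivity of $G$ on $2$-planes; more robustly, $\bb G_2(V)$ is an irreducible variety and $\mathcal G\cap\bp(W^\perp)$ is Zariski closed and $G$-invariant, so if nonempty it is all of $\mathcal G$), contain all of $\mathcal G$, forcing $\mathcal G\subseteq\bp(W^\perp)$; but the highest weight vector $v_\chi\wedge Y_\alpha v_\chi$ has a nonzero component in $W$ by construction, contradiction. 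Therefore $\|\pi_W w\|>0$ on the compact set $\mathcal G$, giving the uniform constant $c$.

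I expect the main obstacle to be the final compactness step, specifically justifying that no pure wedge lies in $W^\perp$: one has to be a little careful about whether $G$ acts transitively on $\bb G_2(V)$ (it need not, if $V$ is irreducible but $\bb G_2(V)$ breaks into several $G$-orbits). The clean way around this is to note that $W^\perp$ is a proper $G$-submodule, so $\bp(W^\perp)\cap\overline{\mathcal G}^{\,\mathrm{Zar}}$ is a proper Zariski-closed $G$-invariant subset of the irreducible variety $\overline{\mathcal G}^{\,\mathrm{Zar}}=\bb G_2(V)$ once we have exhibited one pure wedge (namely $v_\chi\wedge Y_\alpha v_\chi$ for any $\alpha\in\Theta_\rho$) not lying in $\bp(W^\perp)$; hence this intersection is not all of $\mathcal G$, and in fact an open dense subset of pure wedges avoids $W^\perp$ — but to get the uniform constant I only need the continuous function $\|\pi_W\cdot\|/\|\cdot\|$ to be nowhere zero on the (Euclidean) compact set $\mathcal G$, which follows since its zero locus inside $\mathcal G$ is a proper Zariski-closed $G$-invariant subset, hence (as $G$ preserves $\mathcal G$ and $\mathcal G$ has no proper closed $G$-invariant subset other than... — here instead I simply invoke that a $G$-invariant proper Zariski-closed subset of $\mathcal G$ must be empty because $\mathcal G$ generates $\wedge^2V$ as a vector space and $W\neq 0$). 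I would streamline the write-up by replacing the orbit-transitivity discussion with this generation argument: since the pure wedges span $\wedge^2 V$ and $\pi_W$ is a nonzero linear map, not all pure wedges can be killed, and the subset of pure wedges killed by $\pi_W$ is Zariski-closed and $G$-stable, hence its complement in the irreducible $\bb G_2(V)$ is dense, and a standard compactness argument on $\bb G_2(V)$ (which is Euclidean-compact) upgrades "nowhere zero" to "bounded below by $c>0$."
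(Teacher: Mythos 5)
Your overall strategy is the same as the paper's: reduce the inequality to the claim that no pure wedge lies in $\bigcap_{\alpha\in\Pi}\ker q_{2\chi-\alpha}$, and then conclude by continuity and compactness of $\bb G_2(V)$. The weight analysis identifying $v_\chi\wedge Y_\alpha v_\chi$ as a highest-weight vector of weight $2\chi-\alpha$ is also consistent with what the paper does. The problem is the step where you try to show that the zero locus is empty, and there the argument has a genuine gap. Your first attempt (``$\mathcal G\cap\bp(W^\perp)$ is Zariski closed and $G$-invariant, so if nonempty it is all of $\mathcal G$'') is false: $G$ does not in general act transitively on $\bb G_2(V)$, and $\bb G_2(V)$ typically does contain proper non-empty closed $G$-invariant subsets (for instance the closed orbit through a highest-weight $2$-plane), so ``closed, $G$-invariant, non-empty'' does not force ``everything.'' Your streamlined fallback — pure wedges span $\wedge^2V$ and $\pi_W\neq 0$, hence the killed pure wedges form a \emph{proper} closed $G$-invariant subset — only yields that the complement is dense. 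Density is not enough: to extract a uniform $c>0$ from compactness you need the continuous function $\R w\mapsto \sum_\alpha\|q_{2\chi-\alpha}(w)\|/\|w\|$ to be strictly positive at \emph{every} point of $\bb G_2(V)$, i.e.\ the zero locus must be empty, not merely of empty interior; otherwise the infimum can still be $0$.

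The missing idea is exactly what the paper supplies in Lemma \ref{lem:g2v W}: set $W'=\bigcap_{\alpha\in\Pi}\ker q_{2\chi-\alpha}$ and observe that $\bb G_2(V)\cap\bp W'$ is a complete $G$-invariant subvariety of $\bp(\wedge^2V)$; if it were non-empty, Borel's fixed point theorem (\cite[Thm.10.4]{borel1990linear}) applied to the Borel subgroup $B$ would produce a $B$-fixed point in it, and a weight argument shows that the only $B$-fixed pure wedges are the highest-weight lines $\R(v_\chi\wedge u_{\chi-\alpha})$, which are manifestly not in $W'$ — contradiction, hence the intersection is empty. Some structural input of this kind (equivalently: every non-empty closed $G$-invariant subvariety of a projective $G$-variety contains a closed orbit, hence a $B$-fixed point) is indispensable here; with it, your compactness conclusion is fine, but without it the proof does not close.
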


For the proof we need a lemma similar to \cite[Lemma 3.3]{benoistquint_finite_2012}. 
\begin{lem}\label{lem:g2v W} Under the same assumptions as in Lemma \ref{lem:large rep}, then $\bigcap_{\alpha \in \Pi}\ker(q_{2\chi-\alpha})$ does not contain any pure wedge.
\end{lem}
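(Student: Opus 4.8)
The plan is to argue by contradiction. Suppose $v\wedge v' \in \bigcap_{\alpha\in\Pi}\ker(q_{2\chi-\alpha})$ for some nonzero pure wedge, i.e. some $2$-plane $W=\mathrm{Span}(v,v')$ in $V$. The key idea is that the $G$-orbit (equivalently the $\frak g$-module) generated by a pure wedge $v\wedge v'$ inside $\wedge^2 V$ must, as soon as $v$ or $v'$ has a component on the highest weight line, contain a vector of weight $2\chi-\alpha$ for some simple root $\alpha$; and the condition $q_{2\chi-\alpha}(v\wedge v')=0$ for all $\alpha$ will then force $v\wedge v'=0$.

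First I would reduce to a convenient representative of the $2$-plane $W$. Since $\bigcap_\alpha \ker(q_{2\chi-\alpha})$ is a $G$-submodule of $\wedge^2 V$ and the set of pure wedges (the affine cone over the Grassmannian $\bb G_2(V)$ in its Plücker embedding) is $G$-stable, if it contains one nonzero pure wedge it contains a whole $G$-orbit of them. Using the contracting dynamics — concretely, acting by a suitable sequence $\exp(tX)$ with $X\in\frak a^{++}$, or directly invoking that $\bb G_2(V)$ is a projective variety on which $G$ acts with the line $V^\chi$ "attracting" — I can arrange that, after replacing $W$ by a limit $2$-plane in its orbit closure, $W$ contains the highest weight line $V^\chi$. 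So I may assume $v\in V^\chi\setminus\{0\}$ and $v'\in V$, $v'\notin V^\chi$, with $v\wedge v'\ne 0$.

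Next, decompose $v'$ into weight components $v'=\sum_{\omega}v'_\omega$ with respect to $\frak a$, where all weights $\omega$ occurring satisfy $\omega\le\chi$ and $\omega\ne\chi$ can be taken. The weight of $v\wedge v'_\omega$ is $\chi+\omega$, and these are the weights occurring in $v\wedge v'$; the maximal such is $\chi+\omega_0$ where $\omega_0$ is the top weight appearing in $v'$. Now $\chi+\omega_0$ is $\le 2\chi$, and $2\chi-(\chi+\omega_0)=\chi-\omega_0$ is a nonzero sum of positive roots. I would argue that $\chi+\omega_0$ is a weight of $\wedge^2 V$ which is maximal among weights $\le 2\chi$ only when $\chi-\omega_0$ is a single simple root: indeed by the $\frak{sl}_2$-theory applied along each simple root direction (as used in Lemma \ref{lem:weight structure} and Lemma \ref{lem:super proximal}), the weights of $V$ immediately below $\chi$ are exactly the $\chi-\alpha$ with $\alpha\in\Theta_\rho\subset\Pi$. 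Hence $\omega_0=\chi-\alpha$ for some $\alpha\in\Pi$ — unless $v'$ itself lies in $V^\chi$, which we excluded — and the component of $v\wedge v'$ of weight $2\chi-\alpha$ is nonzero, namely $v\wedge v'_{\chi-\alpha}\ne 0$ up to identifying $V^\chi\wedge V^{\chi-\alpha}$ inside $\wedge^2 V$. Since the weight space of $\wedge^2 V$ of weight $2\chi-\alpha$ lies in the isotypic component with highest weight $2\chi-\alpha$ (any higher highest weight would have to be $2\chi$, whose weight space is $\wedge^2 V^\chi=0$ as $\dim V^\chi=1$), the projection $q_{2\chi-\alpha}$ is nonzero on this component. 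Therefore $q_{2\chi-\alpha}(v\wedge v')\ne 0$, contradicting the assumption. This proves $\bigcap_\alpha\ker(q_{2\chi-\alpha})$ contains no pure wedge.

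The main obstacle I anticipate is the reduction step that lets one assume $W\supset V^\chi$: one has to make sure the orbit-closure argument produces a genuine pure wedge (a point of $\bb G_2(V)$, not a degenerate limit) still lying in the $G$-submodule $\bigcap_\alpha\ker(q_{2\chi-\alpha})$, and that the limiting $2$-plane really contains the attracting line $V^\chi$ rather than lying entirely in a complementary weight subspace. This is handled by choosing $X\in\frak a^{++}$ generic so that $\chi$ is the unique weight on which $X$ is maximal, writing $v\wedge v'$ in a weight basis of $\wedge^2 V$, and normalizing $e^{t X}(v\wedge v')$ by its largest weight component as $t\to+\infty$: the limit exists, is a nonzero pure wedge (pure wedges form a closed cone), lies in the submodule (which is $\exp(tX)$-stable and closed), and its top weight component has weight involving $\chi$, forcing the limiting plane to meet $V^\chi$. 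The rest is bookkeeping with weights and the one-dimensionality of $V^\chi$ coming from proximality (automatic here by splitness of $G$).
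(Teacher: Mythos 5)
Your strategy (degenerate a hypothetical pure wedge inside the $G$-stable closed set $\bigcap_{\alpha\in\Pi}\ker q_{2\chi-\alpha}$ until its weight-$(2\chi-\alpha)$ component is visibly not killed) is genuinely different from the paper's, which intersects $\bb G_2(V)$ with $\bp\bigl(\bigcap_\alpha\ker q_{2\chi-\alpha}\bigr)$, applies Borel's fixed point theorem to this complete $G$-invariant variety, and then shows that a $B$-fixed plane must be $V^\chi\oplus V^{\chi-\alpha}$. Your route could be made to work, but as written the key reduction is false. Applying only the torus flow $e^{tX}$, $X\in\frak a^{++}$, to the original plane $W$ does not force the limiting plane to contain $V^\chi$: if $W$ is spanned by two weight vectors neither of which lies in $V^\chi$ (for instance $W=V^{\chi-\alpha}\oplus V^{\chi-\beta}$), then $e^{tX}W=W$ for all $t$ and the limit is $W$ itself. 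The inference you offer --- ``the top weight component has weight involving $\chi$, hence the limit plane meets $V^\chi$'' --- is also invalid, since a weight of $\wedge^2V$ numerically equal to $\chi+\omega$ may arise as $(\chi-\alpha)+(\chi-\beta)$ and carries no information about the plane meeting $V^\chi$. To repair the dynamical argument you must first use the $G$-invariance of $\bigcap_\alpha\ker q_{2\chi-\alpha}$ to replace $W$ by a translate $gW$ lying in the basin of attraction, i.e.\ transverse to the span of all weight spaces other than $V^\chi$ and $V^{\chi-\alpha_0}$, where $\chi-\alpha_0$ is the second highest weight for your generic $X$ (it is of this form with $\alpha_0$ simple, and $\dim V^{\chi-\alpha_0}\leq1$); then $e^{tX}gW$ converges in $\bb G_2(V)$ to $V^\chi\oplus V^{\chi-\alpha_0}$, whose Pl\"ucker image is a highest weight vector of weight $2\chi-\alpha_0$, on which $q_{2\chi-\alpha_0}$ does not vanish. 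With this done your second step becomes unnecessary.

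The second half has an independent gap even granting $V^\chi\subset W$: the maximal weight $\omega_0$ occurring in $v'$ need not be of the form $\chi-\alpha$ with $\alpha$ simple (it need not be immediately below $\chi$, and ``the top weight'' is not even well defined for the partial order). If, say, $v'$ is a weight vector of weight $\chi-\alpha-\beta$, then $v\wedge v'$ has no component of weight $2\chi-\gamma$ for any simple root $\gamma$, and your criterion yields no contradiction, although the lemma must still apply to such a wedge; detecting it requires using the $G$-action (the $B$-invariance exploited in the paper's fixed-point argument, or the generic translate above), not merely the weight decomposition of the single wedge. The part of your argument that is correct and worth keeping is the observation that the weight-$(2\chi-\alpha)$ space of $\wedge^2V$ equals $V^\chi\wedge V^{\chi-\alpha}$ and lies in the isotypic component of highest weight $2\chi-\alpha$, so a nonzero component there survives $q_{2\chi-\alpha}$.
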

\begin{proof}	
	Let $W'$ be the intersection of all the kernels, that is $W'=\bigcap_{\alpha \in \Pi}\ker (q_{2\chi-\alpha}) $. The two sets $\bb G_2(V)$ and $\bp W'$ are closed subvarieties of $\bp (\wedge^2V)$ and $G$-invariant. Therefore their intersection is again a $G$-invariant closed subvariety which is complete. Let $B$ be the Borel subgroup of $G$, which is solvable. By \cite[Thm.10.4]{borel1990linear}, the action of a solvable algebraic connected group on a complete variety has fixed points.  We claim that the fixed points of $B$ on $\bb G_2(V)$ are the lines with the highest weight. Then the result follows by the fact that these lines do not belong to $W'$.
	
	Suppose that there exist $v,u$ in $V$ such that $v\wedge u$ is $B$-invariant. We can decompose $v,u$ as a sum $v=\sum_{\lambda}v_\lambda$ and $u=\sum_{\lambda}u_\lambda$. Since we can replace $v,u$ by $bv, bu$ for $b$ in $B$, which augments the weight, we can suppose that the component of highest weight $v_\chi$ is non-zero. Since the dimension of $V^\chi$ is $1$, we can suppose that $u_\chi=0$. Let $\rho\neq\chi$ be a highest weight such that $u_\rho$ is nonzero. The $B$-invariance of $\R (v\wedge u)$ also implies that the action of $X_\alpha$, for $\alpha$ simple roots, fixes the line. Hence $X_\alpha(v\wedge u)=X_\alpha v\wedge u+v\wedge X_\alpha u\in \R v\wedge u$. The weight $\chi+\rho+\alpha$ is higher than all the weights appearing in $v\wedge u$, hence $v_\chi\wedge X_\alpha u_\rho=0$ for all simple roots $\alpha$. This implies that $\rho=\chi-\alpha$ for some simple root $\alpha$. Therefore $v\wedge u$ contains $v_\chi\wedge u_{\chi-\alpha}$. Since $v\wedge u$ is also $A$-invariant, all the components in the weight decomposition have the same weight. Hence $v\wedge u=v_\chi\wedge u_{\chi-\alpha} $, which is a vector of highest weight in $\wedge^2 V$.
\end{proof}
\begin{proof}[Proof of Lemma \ref{lem:large rep}]
	By Lemma \ref{lem:g2v W}, we know that $\frac{\sum_{\alpha\in\Pi}\|q_{2\chi-\alpha}(v\wedge v')\|}{ \|v\wedge v'\|}: \bb G_2(V)\rightarrow \bb R_{\geq 0}$ is a positive continuous function. Since $\bb G_2(V)$ is a compact space, on which a positive continuous function has a lower bound, the result follows.
\end{proof}
We want to prove a large deviation principle for a special reducible representation. This lemma will be used in Lemma \ref{lem:g good} to control $y^m_{\wedge^2g}$ in Lemma \ref{lem:distance} and Lemma \ref{lem:chapoi}. 
\begin{lem}\label{lem:xx'w}
	Let $V$ be a super proximal representation of $G$ (Definition \ref{defi:super proximal}). For $\epsilon>0$ there exist $C,c>0$ such that the following holds. For $x=\bb Rv,x'=\bb Rv'\in\bb PV$ with $x\neq x'$, we have
	\[\mucon{n}\{g\in G|\delta(x\wedge x',y^m_{\wedge^2\rho(g)})<e^{-\epsilon n} \}\leq Ce^{-c\epsilon n} .\]
\end{lem}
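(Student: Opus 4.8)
The plan is to reduce the reducible representation $\wedge^2 V$ to its top irreducible piece and then quote the large deviation principle on projective spaces, Proposition \ref{prop:large deviation projective}. Since $V$ is super proximal, $\Theta_\rho=\{\alpha\}$ for a single simple root $\alpha$ and $\dim V^{\chi-\alpha}=1$, so the highest weight of $\wedge^2 V$ is $\lambda:=2\chi-\alpha$, with one-dimensional weight space $(\wedge^2V)^{\lambda}=\R(v_\chi\wedge v_{\chi-\alpha})$. In particular $\wedge^2 V$ is proximal, and it contains a unique irreducible subrepresentation $W$ of highest weight $\lambda$; write $\rho_W$ for the restriction of $\rho$ to $W$. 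Equip $\wedge^2 V$ with the norm induced from the good norm on $V$, which is again a good norm; by \eqref{equ:good norm} the orthogonal complement $W':=W^{\perp}$ is then $G$-invariant and the $G$-equivariant projection $q:=q_{\lambda}\colon\wedge^2 V\to W$ is the orthogonal projection.

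The first step is that the pure wedge has a large $W$-component. If $x\neq x'$, then $v\wedge v'\neq 0$; moreover, since $V$ is super proximal, $2\chi-\alpha'$ is not a weight of $\wedge^2 V$ for $\alpha'\neq\alpha$, so $q_{2\chi-\alpha'}=0$ and Lemma \ref{lem:large rep} collapses to $\|q(v\wedge v')\|\geq c\,\|v\wedge v'\|$. Hence $w:=q(v\wedge v')\neq 0$ and $\R w\in\bp W$.

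The main step is to identify $y^m_{\wedge^2\rho(g)}$ with $y^m_{\rho_W(g)}$. Write $y^m_{\wedge^2\rho(g)}=\R f$. By Lemma \ref{lem:ym rhog} applied to the proximal representation $\wedge^2 V$, $f=\,^t(\wedge^2\rho)(\ell_g)f_0$ with $\R f_0=((\wedge^2 V)^*)^{-\lambda}$. Since every weight of $W'$ is strictly below $\lambda$, this lowest weight line lies inside $W^*$, viewed as the annihilator of $W'$; that is, $f_0$ vanishes on $W'$, and since $\ell_g\in K$ preserves the orthogonal splitting, $f$ vanishes on $W'$ too. Setting $\bar f:=f|_W$, we get $\|f\|=\|\bar f\|$, $f(v\wedge v')=\bar f(w)$, and $\R\bar f=y^m_{\rho_W(g)}$ by Lemma \ref{lem:ym rhog} for the irreducible $W$. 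Therefore
\[
\delta\bigl(x\wedge x',\,y^m_{\wedge^2\rho(g)}\bigr)
=\frac{|\bar f(w)|}{\|\bar f\|\,\|v\wedge v'\|}
=\frac{\|w\|}{\|v\wedge v'\|}\,\delta\bigl(\R w,\,y^m_{\rho_W(g)}\bigr)
\;\geq\; c\,\delta\bigl(\R w,\,y^m_{\rho_W(g)}\bigr).
\]

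Finally, this inequality shows that $\{\,\delta(x\wedge x',y^m_{\wedge^2\rho(g)})<e^{-\epsilon n}\,\}\subseteq\{\,\delta(\R w,y^m_{\rho_W(g)})<c^{-1}e^{-\epsilon n}\,\}$, which is contained in $\{\,\delta(\R w,y^m_{\rho_W(g)})\leq e^{-\epsilon n/2}\,\}$ once $n$ is large. Applying Proposition \ref{prop:large deviation projective} to the irreducible proximal representation $W$ with $\epsilon/2$ in place of $\epsilon$ yields a bound $Ce^{-c\epsilon n}$, and for the finitely many small $n$ the estimate holds trivially after enlarging $C$. I expect the middle step to be the only real difficulty: showing that the density functional of $\wedge^2\rho(g)$ on the reducible space $\wedge^2 V$ is carried entirely by the top constituent $W$, so that the problem honestly becomes one about the irreducible representation $W$; the remainder is bookkeeping with good norms together with the cited large deviation estimates. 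One can also bypass $W$ altogether and apply the proximal form of Proposition \ref{prop:large deviation projective} directly to $\wedge^2 V$, which is proximal by hypothesis.
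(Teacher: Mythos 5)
Your main argument is correct and is essentially the paper's own proof: decompose $\wedge^2V=W\oplus W'$, observe via Lemma \ref{lem:ym rhog} that the functional spanning $y^m_{\wedge^2\rho(g)}$ vanishes on $W'$ so it is really $y^m_{\rho_W(g)}$, use Lemma \ref{lem:large rep} (which, by super proximality, reduces to the single term $q_{2\chi-\alpha}$) to show the pure wedge has $W$-component of size $\geq c\|v\wedge v'\|$, and conclude with Proposition \ref{prop:large deviation projective} applied to the irreducible $W$.

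One caveat: your closing remark that one could ``bypass $W$ altogether and apply the proximal form of Proposition \ref{prop:large deviation projective} directly to $\wedge^2V$'' does not work. That proposition is stated and proved for \emph{irreducible} representations, and for the reducible proximal space $\wedge^2V$ the conclusion is simply false for general points: for any $x\in\bp W'$ one has $\delta(x,y^m_{\wedge^2\rho(g)})=0$ for every $g$, since the density functional annihilates $W'$, so the bad set has full measure. The whole content of the lemma is precisely that a pure wedge $x\wedge x'$ cannot sit too close to $\bp W'$, which is what Lemma \ref{lem:large rep} provides; the reduction to $W$ is not optional.
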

Due to Definition \ref{defi:super proximal}, there is only one simple root $\alpha$ such that $q_{2\chi-\alpha}(\wedge^2V)$ is non-zero. Write $\wedge^2V=W\oplus W'$, where $W$ is the irreducible representation generated by the vector corresponding to the highest weight in $\wedge^2V$, and $W'$ is the $G-$invariant complementary subspace. Then $q_{2\chi-\alpha}(\wedge^2V)=W$, and we write $Pr_W=q_{2\chi-\alpha}$.
\begin{proof}[Proof of Lemma \ref{lem:xx'w}]
	By \eqref{equ:ym rhog}, we see that a non-zero vector in $y^m_{\wedge^2g}$ vanishes on $W'$ and $y^m_{\wedge^2g}$ can be seen as an element in $\bp W^*$. We only need to consider the projection of $v\wedge v'$ onto $W$ and use the large deviation principle \eqref{equ:large deviation projective}. By Lemma \ref{lem:large rep},
	\[\delta(x\wedge x',y^m_{\wedge^2 g})=\frac{|f(v\wedge v')|}{\|v\wedge v'\|}=\frac{|f(Pr_W(v\wedge v'))|}{\|Pr_W(v\wedge v')\|}\frac{\|Pr_W(v\wedge v')\|}{\|v\wedge v'\|}\geq c\delta(Pr_W(x\wedge x'),y^m_{\wedge^2g}), \]
	where $f$ is a unit vector in $y^m_{\wedge^2g}$.
	The proof is complete.
\end{proof}
\section{Non-concentration condition}
\label{sec:noncon}
We want to verify the main input for the sum-product estimate, the non-concentration condition, in this section. The semisimple part of $\bf G$ is supposed to be simply connected.

For the first time, the reader can neglect $g$ in the left of $h$ and think of the semisimple case $\slrn$. The main idea of the proof is already there. Adding $g$ is a technical step, which is needed in its application. (We only need an additional condition on $\eta^M_h$ to control $\kappa(gh)$.)
\subsection{Projective, Weak and Strong non-concentration}
Recall that $\rank$ is the semisimple rank of $G$ and $\chi_1,\cdots,\chi_\rank$ are fixed weights, where we change the subscript from $\alpha\in \Pi$ to $i\in \{1,\cdots,\rank \}$. The set $\{\omega_i\}_{1\leq i\leq\rank}$ are the extension of fundamental  weights $\tilde\omega_i$ to $\frak a$ which vanishes on $\frak c$ and the restriction of $\omega_i$ and $\chi_i$ to $\frak b$ coincide with $\tilde\omega_i$.
Recall that $\alpha_1,\cdots ,\alpha_\rank$ are the simple roots of $\frak a^*$. For two vectors $x=(x_1,\cdots,x_\rank)$ and $x'=(x_1',\cdots,x_\rank')$ in $\R^\rank$, we write $xx'$ for their product $(x_1x_1',\cdots,x_\rank x_\rank')$ and $\l x,x'\r=\sum_{1\leq j\leq \rank}x_1x_1'$ for their inner product.

In order to distinguish different objects, we will use capital letter $X$ to denote functions or random variables and use small letter $x$ to denote vectors or indeterminates.

Let $L$ be the $m\times m$ square matrix which changes the basis $(\omega_1,\cdots,\omega_\rank)$ of $\frak b^*$ to the basis $(-\alpha_1,\cdots,-\alpha_\rank)$, that is $L_{ij}=-\alpha_i(H_j)$. Then $L$ is an integer matrix. Hence, we can define $E_d$, a rational map from $(\R^*)^\rank$ to $(\R^*)^d$ with $1\leq d\leq m$, which is given by $y=E_d(x)$ for $x\in( \R^*)^\rank$ where
\[y_i=\Pi_{1\leq j\leq \rank} x_j^{L_{ij}}. \] 

Fix an element $g$ in $G$. Let 
\begin{align*}
	X_g(n,h,\eta)&=(e^{\omega_1(\sigma(gh,\eta)-\kappa(g)-n\sigma_\mu)},\dots,e^{\omega_\rank(\sigma(gh,\eta)-\kappa(g)-n\sigma_\mu)}),
	\\
	\sigmah_g(h,\eta)&=(e^{-\alpha_1(\sigma(gh,\eta)-\kappa(g)-n\sigma_\mu)},\dots,e^{-\alpha_\rank(\sigma(gh,\eta)-\kappa(g)-n\sigma_\mu)})
\end{align*}
for $\eta$ in $\PP$ and $h$ in $G$. By definition, $E_dX_g(n,h,\eta)$ is the vector which is composed of the first $d$ components of $\sigmah_g(h,\eta)$, that is 
\begin{equation}\label{equ:ed sig}
p_d\sigmah_g(h,\eta)=E_dX_g(n,h,\eta), 
\end{equation}
where $p_d:\R^\rank\rightarrow \R^d$ is the map which takes a vector $x$ of $\R^\rank$ to the vector of $\R^d$ composed of the first $d$ components of $x$.
In the following argument $g$ is fixed or $g$ equals identity. Hence we will abbreviate $X_g,\sigmah_\g, \sigmah_e$ to $X,\sigmah, \sigmah_0$. 

We define an affine determinant $A_d$ on $(\R^d)^{d+1}$. 
For $d+1$ vectors $y^1,\cdots,y^{d+1}$ in $\R^d$, let $A_d$ be the determinant of the $(d+1)\times (d+1)$ matrix
$\begin{pmatrix}
	y^1 & \cdots & y^{d+1}\\
	1 & \cdots & 1
\end{pmatrix}$, which is the volume of the $d+1$-dimensional parallelogram generated by vectors $(y^i,1)$ for $i=1,\dots,d+1$. Let $e_i$ be the vector in $\R^d$ with only $i$-th coordinate non-zero and equal to $1$. By identifying $e_1\wedge\cdots\wedge e_d$ with number $1$, we can also define $A_d$ by
\[ A_d(y^1,\cdots,y^{d+1})=\sum_{1\leq i\leq d+1}(-1)^{i+d+1}y^1\wedge\cdots\wedge\widehat{y^i}\wedge\cdots \wedge y^{d+1}. \]
For $d+1$ vectors $x^1,\cdots,x^{d+1}$ in $\R^\rank$, let $B_d$ be the rational function defined by
\[B_d(x^1,\cdots, x^{d+1})=A_d(E_dx^1,\cdots, E_dx^{d+1}). \]

We introduce the notation 
\[\bfh{d+1}=(h_1,\dots,h_{d+1}),\] 
which is an element in $G^{ (d+1)}$. Let
\[A_d^n(\bfh{d+1},\eta):=B_d(X(n,h_1,\eta), \dots, X(n,h_{d+1},\eta)).\]

\begin{defi}\label{defi:prononcon} We say that $\mu$ satisfies the projective non-concentration (PNC) on dimension $d$, if for every $\epsilon>0$ there exist $c,C>0$ such that for all $n$ in $\bb N$, $\eta$ in $\PP$ and $g$ in $G$
	\[\sup_{a\in\bb R,v\in\bb S^{d-1}}\mu^{*n}\{h\in G| |\l v,\sigmah(h,\eta)\r-a|\leq e^{-\epsilon n} \}\leq Ce^{-c\epsilon n},\]
	where $v$ is regarded as a vector in $\R^d\times \{0\}^{m-d}\subset \R^m$.
\end{defi}
More geometrically, this is equivalent to say that the probability of $\sigmah(h,\eta)$ being close to an affine hyperplane is exponentially small.
\begin{defi}\label{defi:cocnoncon} We say that $\mu$ satisfies the weak non-concentration (WNC) on dimension $d$, if for every $\epsilon>0$ there exist $c,C>0$ such that for all $n$ in $\bb N$, $\eta$ in $\PP$ and $g$ in $G$
	\[(\mu^{*n})^{\otimes (d+2)}\{(\bfh{d+1},\ell)\in G^{(d+2)}||A^n_d(\bfh{d+1},\ell\eta)|\leq e^{-\epsilon n} \}\leq Ce^{-c\epsilon n}.\]
\end{defi}
\begin{defi}\label{defi:strong} We say that $\mu$ satisfies the strong non-concentration (SNC) on dimension $d$, if for every $\epsilon>0$ there exist $c,C>0$ such that for all $n$ in $\bb N$, $\eta$ in $\PP$ and $g$ in $G$
	\[(\mu^{*n})^{\otimes(d+1)}\{\bfh{d+1}\in G^{ (d+1)}|| A^n_d(\bfh{d+1},\eta)|\leq e^{-\epsilon n} \}\leq Ce^{-c\epsilon n}.\]
\end{defi}
We will proceed by induction. When $d=0$, we make the convention that $A_0^d=1$ and it is trivial that SNC holds. Then
\begin{itemize}
	\item SNC on dimension $d$ $\Rightarrow$ WNC on dimension $d$ (By definition)
	\item PNC on dimension $d$ $\Leftrightarrow$ SNC on dimension $d$ (Lemma \ref{lem:PNC}) 
	\item WNC on dimension $d$ $\Rightarrow$  PNC on dimension $d$ (Lemma \ref{lem:WNC})
	\item SNC on dimension $d-1$ $\Rightarrow$ WNC on dimension $d$ (Lemma \ref{lem:SNC}).
\end{itemize} 
In the above implications, the constants $C,c$ will change. We can conclude: 
\begin{prop}\label{prop:PNC}
	Let $\mu$ be a Zariski dense Borel probability measure on $G$ with exponential moment. Then $\mu$ satisfies PNC on dimension $m$.
\end{prop}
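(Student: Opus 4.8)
The plan is to prove Proposition~\ref{prop:PNC} by induction on $d$ from $0$ to $m$. For a fixed $d$ the three conditions PNC$(d)$, WNC$(d)$, SNC$(d)$ are equivalent: SNC$(d)\Rightarrow$WNC$(d)$ is immediate (disintegrate $(\mu^{*n})^{\otimes(d+2)}$ over the last coordinate $\ell$ and apply SNC$(d)$, which is uniform over flags, with the flag $\ell\eta$), WNC$(d)\Rightarrow$PNC$(d)$ is Lemma~\ref{lem:WNC}, and PNC$(d)\Leftrightarrow$SNC$(d)$ is Lemma~\ref{lem:PNC}; Lemma~\ref{lem:SNC} bridges consecutive dimensions via SNC$(d-1)\Rightarrow$WNC$(d)$. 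Since $A^n_0\equiv 1$ makes SNC$(0)$ vacuous, running SNC$(d-1)\Rightarrow$WNC$(d)\Rightarrow$PNC$(d)\Rightarrow$SNC$(d)$ gives all of them up to $d=m$; in particular the last round yields PNC$(m)$. So the work is entirely in the three lemmas. Throughout, the fixed element $g$ is harmless: the ``good element'' estimates of Section~\ref{sec:lie groups} (Lemmas~\ref{lem:cocbou}, \ref{lem:cocbou'}, \ref{lem:cocbou expoential}) confine $\sigma(gh,\eta)-\kappa(g)-n\sigma_\mu$ to a ball of radius $O(\epsilon n)$ off an exponentially small $\mu^{*n}$-set, so all the weights $E_dX(n,h,\eta)$ and the subdeterminants below are of size $e^{\pm O(\epsilon n)}$.

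For Lemma~\ref{lem:PNC} (PNC $\Leftrightarrow$ SNC) I would argue by plain linear algebra. By \eqref{equ:ed sig}, $A^n_d(\bfh{d+1},\eta)$ is the affine determinant of the $d+1$ points $p_d\sigmah(h_i,\eta)=E_dX(n,h_i,\eta)$ of $\R^d$, i.e.\ (up to $d!$) the volume of the simplex they span. Off the exceptional set above, this volume is small precisely when one of the $d+1$ points lies in an $e^{-c\epsilon n}$-neighbourhood of the affine hyperplane spanned by the remaining $d$ — the lower-dimensional degeneracies of that hyperplane fall under SNC$(d-1)$, available in the induction, and fixing the offending hyperplane $\{\langle v,\cdot\rangle=a\}$ and invoking PNC$(d)$ for the free point bounds the probability. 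Conversely, if $p_d\sigmah(h,\eta)$ lies within $e^{-\epsilon n}$ of a fixed $\{\langle v,\cdot\rangle=a\}$, then $d+1$ independent such samples satisfy $|A^n_d|\le e^{-\epsilon n}e^{O(\epsilon n)}$, which is SNC after rescaling $\epsilon$.

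For Lemma~\ref{lem:WNC} (WNC $\Rightarrow$ PNC) I would disintegrate the event over $(h_1,\dots,h_d,\ell)$. Off an exponentially small set the first $d$ points $p_{d-1}\sigmah(h_i,\ell\eta)$ are in general position (SNC$(d-1)$ with the flag $\ell\eta$), so $A^n_d(\bfh{d+1},\ell\eta)$ is an affine functional of the last point $p_d\sigmah(h_{d+1},\ell\eta)$ whose linear part has norm $\asymp|A^n_{d-1}|\gtrsim e^{-c\epsilon n}$; hence $|A^n_d|\le e^{-\epsilon n}$ pins $p_d\sigmah(h_{d+1},\ell\eta)$ into an $e^{-c\epsilon n}$-neighbourhood of an affine hyperplane depending only on $(h_1,\dots,h_d,\ell)$. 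As these parameters range over their support the hyperplanes exhaust all of them, while $\ell\eta$ equidistributes exponentially fast towards the stationary measure $\nu$, whose Guivarc'h regularity (Corollary~\ref{cor:regularity}) gives only polynomial mass to neighbourhoods of proper subvarieties; this is what lets one discard the averaging variable $\ell$ and conclude PNC$(d)$ uniformly in $\eta$ and $g$.

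The main obstacle is Lemma~\ref{lem:SNC} (SNC$(d-1)\Rightarrow$WNC$(d)$), where the dimension increases and Zariski density enters. Expanding the $(d+1)\times(d+1)$ affine determinant along its row of $d$-th coordinates gives
\[A^n_d(\bfh{d+1},\eta)=\sum_{i=1}^{d+1}(-1)^{i}\,[\sigmah(h_i,\eta)]_d\;A^n_{d-1}(\bfh{d+1}\setminus\{h_i\},\eta),\]
so $A^n_d$ is affine in $p_d\sigmah(h_{d+1},\eta)$ with coefficients that are $(d-1)$-dimensional affine determinants, bounded below off an exponentially small set by SNC$(d-1)$. What remains — and what cannot be outsourced to PNC$(d)$, which is not yet available — is to show that $p_d\sigmah(h_{d+1},\ell\eta)$ does not concentrate near the resulting affine hyperplane, and this is exactly where the extra flag $\ell\eta$ is needed. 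One expresses $\sigma(gh_{d+1},\ell\eta)$ through the cocycle at the attracting flag $\eta^M_{h_{d+1}}$ and the Cartan fluctuation $\kappa(h_{d+1})-n\sigma_\mu$ (Lemmas~\ref{lem:iwacar}, \ref{lem:cocbou}, \ref{lem:cocbou'}), so that $p_d\sigmah(h_{d+1},\ell\eta)$ is, up to controllable errors, read off the $\mu$-stationary measure pushed through the representations $\rho_\alpha$ and $\wedge^2\rho_\alpha$ (Lemmas~\ref{lem:large rep}, \ref{lem:xx'w}); non-concentration near an arbitrary affine subspace then amounts to saying these image measures give only polynomial mass to neighbourhoods of proper algebraic subvarieties, which follows from Zariski density together with Guivarc'h's Hölder regularity (Corollary~\ref{cor:regularity}) — but the quantitative version, \emph{uniform over all affine subspaces simultaneously}, requires the linear-algebra-to-volume reduction of this section. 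I expect the real difficulty to be precisely this bookkeeping: arranging the interaction of the monomial map $E_d$, the fundamental representations, and the fluctuation $\kappa(h)-n\sigma_\mu$ so that a single volume lower bound dispatches every affine hyperplane at once.
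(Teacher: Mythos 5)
Your top-level scheme (SNC trivial at $d=0$, then SNC on dimension $d-1$ $\Rightarrow$ WNC on dimension $d$ $\Rightarrow$ PNC on dimension $d$ $\Leftrightarrow$ SNC on dimension $d$, up to $d=\rank$) is exactly the paper's proof of the proposition, and your treatment of PNC $\Leftrightarrow$ SNC via the affine-volume lemma (Corollary \ref{cor:3.6}) is correct. The first genuine gap is in your sketch of WNC $\Rightarrow$ PNC. WNC only controls the determinant at the randomly moved flag $\ell\eta$, averaged over $\ell\sim\mucon{n}$, whereas PNC demands an estimate for the fixed, arbitrary flag $\eta$, uniformly over all affine hyperplanes; invoking equidistribution of $\ell\eta$ and Guivarc'h regularity cannot bridge this quantifier mismatch, since Corollary \ref{cor:regularity} concerns the stationary measure and says nothing about the given $\eta$, and the offending hyperplane depends on the sample. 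The paper's mechanism is different: one proves PNC at time $2n$ (and $2n+1$) from WNC at time $n$ by splitting $h=\ell_1\ell$ and using the multiplicative cocycle property $\sigmahh^{2n}(h,\eta)=\sigmah(\ell_1,\ell\eta)\,\sigmah_0(\ell,\eta)$, so that after renormalizing by $R=\|v\,\sigmah_0(\ell,\eta)\|\geq\delta^{1/2}$ (off an exponentially small set of $\ell$, by Lemmas \ref{lem:cocbou expoential} and \ref{lem:cocbou}) the hyperplane condition for $h$ at $\eta$ becomes a hyperplane condition for $\ell_1$ at the flag $\ell\eta$; H\"older's inequality in $\ell$ then raises the inner supremum to the power $d+1$, and Corollary \ref{cor:3.6} turns it into exactly the WNC event. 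Your proposal contains neither the time-splitting nor the H\"older step, and the argument you do give (small determinant forces the last point near a hyperplane spanned by the others) runs in the direction used for PNC $\Rightarrow$ SNC, not for WNC $\Rightarrow$ PNC.

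The second gap is in SNC on dimension $d-1$ $\Rightarrow$ WNC on dimension $d$ (Lemma \ref{lem:SNC}). You correctly expand $A^n_d$ and see that SNC on dimension $d-1$ controls the cofactors, but the remaining non-concentration of the last point near the resulting hyperplane — which you explicitly defer as ``bookkeeping'' — is the heart of the proof and is not supplied. In the paper this is done by multiplying $A_d\rho\circ E_d$ by the common denominator $D_d$ and taking the product over all sign flips $\rho\in(\Z/2\Z)^{d(d+1)}$, producing a multi-homogeneous polynomial in the squares of the coordinates; for fixed $\bfh{d+1}$ this is realized as a linear functional $F_1$ on the irreducible subrepresentation $W$ of highest weight inside $\bigotimes_j(Sym^2V_j)^{\otimes q_j}$, evaluated at $\ell v$, the image of $\ell\eta$. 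One then needs the lower bound $\|F_1|_W\|\geq\delta^{C_0}$, obtained by evaluating at the special flag $\eta=\ell_{h_{d+1}}^{-1}\eta_{\alpha_d}$ of Lemma \ref{lem:cocbou'} and exploiting $\alpha_d(H_d)=2$, $\alpha_j(H_d)\leq 0$ for $j<d$, so that the leading cofactor term dominates (here the strengthened form of SNC from Remark \ref{rem:fini SNC} is used). Finally the non-concentration in the variable $\ell$ comes from the large-deviation estimate Lemma \ref{lem:lindev} applied to $W$, not from Guivarc'h regularity, which does not control $\mucon{n}$ in $\ell$. Without this linearization and norm lower bound the inductive step is unproved, so the proposal as written does not establish the proposition.
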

\subsection{Away from affine hyperplanes}
We need a lemma of linear algebra, which relates different non-concentrations. This lemma is already known from \cite[Lemma 7.5]{eskin2005growth}. For two subsets $A,B$ of a metric space $(X,d)$, the distance between $A$ and $B$ is defined as
\[d(A,B)=\inf_{x\in A, y\in B}d(x,y). \]
\begin{lem}\label{lem:affvol}
	Let $C>0,c>0$. Let $u_1, \cdots, u_{d+1}$ be vectors in $\R^d$ with length less than $C$. Consider the following conditions:
	\begin{itemize}
		\item[i.] There exists an affine hyperplane $l$ such that for $i=1,\dots,d+1$,
			\[d(u_i,l)\leq c. \]
		\item[ii.] We have
	\begin{equation*}
		\left\|\sum_{1\leq i\leq d+1}(-1)^iu_1\wedge \cdots \wedge \widehat{u_i}\wedge\cdots\wedge u_{d+1}\right\|\leq c,
	\end{equation*}
	where $\widehat{u_i}$ means this term is not in the wedge product.
	 \item[iii.] There exists $i$ in $\{1,\dots,d \}$ such that
	 \[ d(u_i,\sp_{\rm{aff}}(u_{d+1},u_1,\dots, u_{i-1}) )\leq c, \]
	 where $\sp_{\rm{aff}}$ is the affine subspace generated by the elements in the bracket.
\end{itemize} 
Then $i(c)\Rightarrow ii(2^{d+1}C^{d-1}c)$, $ii(c)\Rightarrow iii(c^{1/d})$ and $iii(c)\Rightarrow i(c)$.
\end{lem}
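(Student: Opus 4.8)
The plan is to prove the three implications $i(c)\Rightarrow ii(2^{d+1}C^{d-1}c)$, $ii(c)\Rightarrow iii(c^{1/d})$, and $iii(c)\Rightarrow i(c)$ separately, each by elementary linear algebra. The cleanest framing throughout is to pass to the homogenized vectors $\tilde u_i=(u_i,1)\in\R^{d+1}$, so that an affine hyperplane $l=\{x:\langle v,x\rangle=a\}$ in $\R^d$ corresponds to the linear hyperplane $\{(x,t):\langle v,x\rangle=at\}$ in $\R^{d+1}$, and the alternating sum in condition $ii$ is precisely $\pm\tilde u_1\wedge\cdots\wedge\tilde u_{d+1}\in\wedge^{d+1}\R^{d+1}$, i.e.\ the determinant $A_d(u_1,\dots,u_{d+1})$ from the definition. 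With this dictionary: $i$ says all $\tilde u_i$ lie near a common linear hyperplane, $ii$ says the $(d+1)$-volume they span is small, and $iii$ says some $\tilde u_i$ lies near the span of the others listed before it (cyclically).

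For $iii(c)\Rightarrow i(c)$: if $d(u_i,\mathrm{sp}_{\mathrm{aff}}(u_{d+1},u_1,\dots,u_{i-1}))\le c$, take $l$ to be any affine hyperplane containing that affine span (which has dimension $\le d-1$); then $u_i$ is within $c$ of $l$ and $u_{d+1},u_1,\dots,u_{i-1}$ lie exactly on $l$. It remains to put the other vertices $u_{i+1},\dots,u_d$ onto $l$ as well; since $l$ is only constrained to contain a $(\le d-1)$-dimensional affine set, we have freedom to enlarge it to pass through all of $u_{i+1},\dots,u_d$ too while still being an affine hyperplane (a generic affine hyperplane through $d$ affinely independent points, degenerate cases handled by perturbation or by noting an affine hyperplane through any $\le d$ points exists). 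So all $d+1$ points are within $c$ of $l$.

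For $i(c)\Rightarrow ii$: write the defining equation of $l$ as $\langle v,x\rangle=a$ with $|v|=1$; then $|\langle v,u_i\rangle-a|\le c$. Set $w_i=u_i-\pi_l(u_i)$ where $\pi_l$ is the nearest-point projection onto $l$, so $|w_i|\le c$ and $u_i=u_i'+w_i$ with $u_i'\in l$. The alternating sum $S=\sum_i(-1)^i u_1\wedge\cdots\widehat{u_i}\cdots\wedge u_{d+1}$ is multilinear and alternating in a way that makes it vanish when all arguments lie in the affine hyperplane $l$ (because then the homogenized vectors span at most a $d$-dimensional space, so their top wedge is $0$). Expanding each $u_i=u_i'+w_i$ and using multilinearity, every surviving term contains at least one $w_i$ factor; bounding $|u_j|\le C$, $|u_j'|\le C$ (projections onto $l$ of vectors of length $\le C$ stay of length $O(C)$, and one may assume $l$ passes near the origin or absorb into constants), $|w_i|\le c$, and counting $2^{d+1}$ terms with $d-1$ remaining bounded factors gives $\|S\|\le 2^{d+1}C^{d-1}c$.

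The main obstacle is the implication $ii(c)\Rightarrow iii(c^{1/d})$, which is where the loss of exponent $1/d$ comes from. The idea: $\|A_d(u_1,\dots,u_{d+1})\|=\|\tilde u_1\wedge\cdots\wedge\tilde u_{d+1}\|$ is the $(d+1)$-volume of the parallelepiped on the $\tilde u_i$, which equals $\prod_{i=1}^{d+1}\mathrm{dist}(\tilde u_i,\mathrm{span}(\tilde u_{d+1},\tilde u_1,\dots,\tilde u_{i-1}))$ in any fixed order; translating distances of homogenized vectors to affine distances of the $u_i$ and bounding the $\le d$ factors other than the smallest by $O(C)=O(1)$ (constants absorbed), one of the $d$ relevant affine distances must be $\le (c/C^{d})^{1/d}\asymp c^{1/d}$. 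Choosing the ordering so that $\tilde u_{d+1}$ is first reproduces exactly the affine spans $\mathrm{sp}_{\mathrm{aff}}(u_{d+1},u_1,\dots,u_{i-1})$ appearing in condition $iii$; the degenerate case where an intermediate span already has lower-than-expected dimension only helps (some distance is literally small). I will write this using the standard Gram-determinant / iterated-distance factorization of a wedge norm rather than grinding coordinates.
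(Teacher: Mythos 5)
Your identification of the alternating sum with the homogenized determinant $\pm\tilde u_1\wedge\cdots\wedge\tilde u_{d+1}$, where $\tilde u_i=(u_i,1)$, is correct, and your argument for $iii(c)\Rightarrow i(c)$ is fine (an affine hyperplane through the $d$ points $u_{d+1},u_1,\dots,u_{i-1},u_{i+1},\dots,u_d$ always exists and contains the relevant affine span). The gap is in the other two implications, where you treat $C$ as an absorbable constant; it is not: the constants in the statement are explicit in $C$, and in the application $C=e^{\epsilontwo n}$ grows with $n$. Concretely, in $ii(c)\Rightarrow iii(c^{1/d})$ the iterated-distance factorization in $\R^{d+1}$ produces the \emph{linear} distances $\mathrm{dist}(\tilde u_i,\mathrm{span}(\tilde u_{d+1},\tilde u_1,\dots,\tilde u_{i-1}))$, and these can be smaller than the affine distances $d(u_i,\sp_{\rm{aff}}(u_{d+1},u_1,\dots,u_{i-1}))$ by a factor of order $C$ (already for $d=1$: if $u_{d+1}$ has norm $C$ and $u_1$ is close to it, the distance from $(u_1,1)$ to $\R(u_{d+1},1)$ is about $|u_1-u_{d+1}|/C$). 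So from the product of homogenized distances being $\leq c$ you only get some affine distance $\leq \sqrt{1+C^2}\,c^{1/d}$, not $c^{1/d}$. Similarly, in $i\Rightarrow ii$ each surviving term of your expansion is a wedge of one vector $(w_i,0)$ of norm $\leq c$ with $d$ (not $d-1$) homogenized vectors of norm up to $\sqrt{(C+c)^2+1}$, which yields a bound of order $C^{d}c$ rather than $2^{d+1}C^{d-1}c$; the remark that one may ``absorb constants'' does not repair either count, because $C$ is a parameter of the lemma.

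Both defects disappear with the reduction the paper uses, which you should substitute for the homogenization: set $v_i=u_i-u_{d+1}$ for $i=1,\dots,d$, so $\|v_i\|\leq 2C$ and the quantity in $ii$ equals exactly $\|v_1\wedge\cdots\wedge v_d\|$. Then the factorization $\|v_1\wedge\cdots\wedge v_d\|=\prod_{i=1}^d d\bigl(v_i,\sp(v_1,\dots,v_{i-1})\bigr)$ holds with each factor equal, with no loss, to the affine distance $d(u_i,\sp_{\rm{aff}}(u_{d+1},u_1,\dots,u_{i-1}))$, which gives $ii\Rightarrow iii$ with the clean threshold $c^{1/d}$; and for $i\Rightarrow ii$ the expansion of $\det(v_1,\dots,v_d)$ along the normal direction of the (linear) hyperplane has one small factor of size $O(c)$ and only $d-1$ factors of size $\leq 2C$. (Your weaker bounds, with $C^{d}c$ and $Cc^{1/d}$, would in fact still suffice for the way the lemma is used in Corollary \ref{cor:3.6} and Lemma \ref{lem:PNC}, but they do not prove the statement with the constants as written.)
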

\begin{proof}
	We first transfer the affine problem to a linear problem. Let $v_i=u_i-u_{d+1}$ for $i=1,\dots,d$. Then $v_i$ are vectors with length less than $2C$. The above three conditions are equivalent to (with change of constants in $i$) 
		\begin{itemize}
			\item[i'.] There exists a linear subspace $l$ of codimension $1$ such that for $i=1,\dots,d$
			\[d(v_i,l)\leq c. \]
			\item[ii'.] We have
			\begin{equation*}
			\|v_1\wedge \cdots\wedge v_d\|\leq c.
			\end{equation*}
			\item[iii'.] There exists $i$ such that
			\[ d(v_i,\sp(v_1,\dots, v_{i-1}) )\leq c, \]
			where $\sp$ is the linear subspace generated by the elements in the bracket.
		\end{itemize} 
		
	 $iii'(c)\Rightarrow i'(c)$: Let  the hyperplane $l$ be $\sp(v_1,\cdots, \hat{v_i},\cdots, v_d)$. Then $i'(c)$ follows from $iii'(c)$.
	 
	 $i'(c)\Rightarrow ii'(2^dC^{d-1}c)$: Due to $i'$, 
	 the volume of the parallelogram generated by $\{v_i\}_{1\leq i\leq d}$ is less than $(2C)^{d-1}2c$, which is $ii'$.
	 
	 $ii'(c)\Rightarrow iii'(c^{1/d})$: Due to the same argument as in Lemma \ref{lem:volume area}, we have a formula for the volume,
	 \[\|v_1\wedge\cdots\wedge v_d\|=\Pi_{1\leq i\leq d}d(v_i,\sp(v_1,\dots,v_{i-1})),  \]
	 from which the result follows.
\end{proof}
As a corollary, we have the following, which is general and deals with random variables. 
\begin{cor}\label{cor:3.6}	
	 Let $X_1,\dots, X_{d+1}$ be i.i.d. random vectors in $\R^d$ bounded by $C>0$. Let $l$ be an affine hyperplane in $\R^d$. Then for any $c>0$, we have
	\begin{equation}\label{equ:projective from strong}
		\bb P\{d(X_1,l)<c \}^{d+1}\leq \bb P\{\|\sum(-1)^i X_1\wedge\cdots \wedge\hat{X}_i\wedge\cdots\wedge X_{d+1}\|<2^{d+1}C^{d-1}c \},
	\end{equation}
	and
	\begin{equation}\label{equ:strong from projective}
	\begin{split}
		&\bb P\{\|\sum(-1)^i X_1\wedge\cdots\wedge\hat{X}_i\wedge\cdots\wedge X_{d+1}\|<c \}\\
		&\leq\sum_{1\leq i\leq d}\bb P\{d(X_i,\sp_{\rm{aff}}(X_{d+1},X_1,\cdots,X_{i-1}))<c^{1/d} \}.
		\end{split}
	\end{equation}
\end{cor}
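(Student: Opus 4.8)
The plan is to read off both inequalities from Lemma \ref{lem:affvol}, applied pointwise to the random tuple $(X_1,\dots,X_{d+1})$, combined with independence for \eqref{equ:projective from strong} and a union bound for \eqref{equ:strong from projective}. Note that the hypothesis that the $X_i$ are bounded by $C$ is exactly the boundedness hypothesis required to apply Lemma \ref{lem:affvol}.

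First I would prove \eqref{equ:projective from strong}. Fix the affine hyperplane $l$; since $l$ is deterministic and $X_1,\dots,X_{d+1}$ are independent, the events $\{d(X_i,l)<c\}$ for $i=1,\dots,d+1$ are independent, and by identical distribution each has probability $\bb P\{d(X_1,l)<c\}$. For every sample point at which all of these events occur, condition $i(c)$ of Lemma \ref{lem:affvol} holds for the realized tuple, hence by the implication $i(c)\Rightarrow ii(2^{d+1}C^{d-1}c)$ the quantity $\|\sum(-1)^iX_1\wedge\cdots\wedge\hat{X}_i\wedge\cdots\wedge X_{d+1}\|$ is $<2^{d+1}C^{d-1}c$ at that point. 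Thus $\bigcap_{i=1}^{d+1}\{d(X_i,l)<c\}$ is contained in the event on the right-hand side of \eqref{equ:projective from strong}, and taking probabilities (using independence on the left-hand side) yields \eqref{equ:projective from strong}.

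For \eqref{equ:strong from projective} I would use the implication $ii(c)\Rightarrow iii(c^{1/d})$ of Lemma \ref{lem:affvol}, again pointwise: whenever $\|\sum(-1)^iX_1\wedge\cdots\wedge\hat{X}_i\wedge\cdots\wedge X_{d+1}\|<c$ at a sample point, there is some $i\in\{1,\dots,d\}$ with $d(X_i,\sp_{\rm{aff}}(X_{d+1},X_1,\dots,X_{i-1}))<c^{1/d}$ there. Hence the event on the left of \eqref{equ:strong from projective} is contained in $\bigcup_{i=1}^d\{d(X_i,\sp_{\rm{aff}}(X_{d+1},X_1,\dots,X_{i-1}))<c^{1/d}\}$, and the union bound gives the claimed inequality; no independence is needed here.

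This argument is essentially routine, so there is no real obstacle; the only points to watch are that the constant $2^{d+1}C^{d-1}$ and the exponent $1/d$ are carried over exactly as in Lemma \ref{lem:affvol}, and the harmless strict-versus-non-strict inequality issue (Lemma \ref{lem:affvol} is stated with $\le$, but the volume identity $\|v_1\wedge\cdots\wedge v_d\|=\prod_i d(v_i,\sp(v_1,\dots,v_{i-1}))$ and the monotone estimates in its proof are equally valid for strict inequalities, so the inclusions of events above are genuine). A tiny discrepancy between ``bounded by $C$'' and ``length less than $C$'' in the hypotheses can be absorbed by enlarging $C$ slightly and is immaterial.
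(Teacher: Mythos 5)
Your proof is correct and coincides with the paper's intended argument: the paper states this result as an immediate consequence of Lemma \ref{lem:affvol}, and your pointwise use of the implications $i(c)\Rightarrow ii(2^{d+1}C^{d-1}c)$ and $ii(c)\Rightarrow iii(c^{1/d})$, combined with independence of the events $\{d(X_i,l)<c\}$ for a deterministic hyperplane $l$ in \eqref{equ:projective from strong} and the union bound in \eqref{equ:strong from projective}, is exactly the omitted proof. Your handling of the strict-versus-non-strict inequalities and of the constant $2^{d+1}C^{d-1}$ is also fine.
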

\begin{lem}\label{lem:PNC}
	PNC on dimension $d$ is equivalent to SNC on dimension $d$. 
\end{lem}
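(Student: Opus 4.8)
Both non-concentration conditions are assertions about the i.i.d.\ family of $\R^d$-valued random vectors $U_j:=p_d\,\sigmah(h_j,\eta)=E_dX(n,h_j,\eta)$, where $h_1,h_2,\dots$ are i.i.d.\ with law $\mucon n$ and $g,\eta$ are fixed (recall \eqref{equ:ed sig}). Identifying a unit normal $v\in\bb S^{d-1}$ with a vector of $\R^d\times\{0\}^{m-d}$, the affine hyperplane $l_{v,a}=\{x\in\R^d:\langle v,x\rangle=a\}$ satisfies $d(U_1,l_{v,a})=|\langle v,U_1\rangle-a|=|\langle v,\sigmah(h_1,\eta)\rangle-a|$, so PNC on dimension $d$ is exactly the statement that $\sup_{l}\mucon n\{h:d(U_1,l)\le e^{-\epsilon n}\}$ decays exponentially, the supremum ranging over all affine hyperplanes of $\R^d$; and since $|A^n_d(\bfh{d+1},\eta)|=|A_d(U_1,\dots,U_{d+1})|=\big\|\sum_{i}(-1)^{i}\,U_1\wedge\cdots\wedge\widehat{U_i}\wedge\cdots\wedge U_{d+1}\big\|$, SNC on dimension $d$ is exactly the statement that $(\mucon n)^{\otimes(d+1)}\{|A_d(U_1,\dots,U_{d+1})|\le e^{-\epsilon n}\}$ decays exponentially. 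Corollary~\ref{cor:3.6}, itself a probabilistic packaging of Lemma~\ref{lem:affvol}, is the deterministic comparison between these two quantities, and the whole proof consists in feeding it the right independence inputs.

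For PNC $\Rightarrow$ SNC, fix $\epsilon>0$ and apply \eqref{equ:strong from projective} to $U_1,\dots,U_{d+1}$ with $c=e^{-\epsilon n}$; this inequality uses only the implication $ii(c)\Rightarrow iii(c^{1/d})$ of Lemma~\ref{lem:affvol}, so it needs no boundedness of the $U_j$. It gives
\[
(\mucon n)^{\otimes(d+1)}\{|A_d(U_1,\dots,U_{d+1})|\le e^{-\epsilon n}\}\ \le\ \sum_{i=1}^{d}(\mucon n)^{\otimes(d+1)}\big\{d\big(U_i,\sp_{\rm{aff}}(U_{d+1},U_1,\dots,U_{i-1})\big)\le e^{-\epsilon n/d}\big\}.
\]
For each $i$, condition on $(h_{d+1},h_1,\dots,h_{i-1})$: the affine span $\sp_{\rm{aff}}(U_{d+1},U_1,\dots,U_{i-1})$ is spanned by at most $d$ points of $\R^d$, hence is contained in some affine hyperplane $l$, so $d(U_i,\sp_{\rm{aff}}(\cdots))\ge d(U_i,l)$. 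Since $h_i$ is independent of the conditioning, PNC on dimension $d$ with parameter $\epsilon/d$ bounds the conditional probability of $\{d(U_i,l)\le e^{-\epsilon n/d}\}$ by $Ce^{-c\epsilon n/d}$, uniformly in the realisation of the conditioning; integrating and summing over $i$ yields SNC on dimension $d$.

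For SNC $\Rightarrow$ PNC one uses \eqref{equ:projective from strong}, for which the boundedness in Corollary~\ref{cor:3.6} does matter, so I truncate onto a good event first. Fix $\epsilon>0$ and an affine hyperplane $l\subset\R^d$, and let $\epsilon_1>0$ be small (proportional to $\epsilon$, to be fixed). By the large deviation estimates of Section~\ref{sec:lardev}, namely Lemma~\ref{lem:cocbou expoential} together with Lemma~\ref{lem:cocbou} (applicable once $\epsilon_1$ is small), outside a set of $\mucon n$-measure $\le Ce^{-c\epsilon_1 n}$ the element $h$ is $(n,\epsilon_1,\eta,\zeta^m_g)$ good and then $\|\sigma(gh,\eta)-\kappa(g)-n\sigma_\mu\|\le\epsilon_1 n$; since the coordinates of $U=p_d\,\sigmah(h,\eta)$ are the numbers $e^{-\alpha_i(\sigma(gh,\eta)-\kappa(g)-n\sigma_\mu)}$, this forces $\|U\|\le C_n:=\sqrt d\,e^{C_0\epsilon_1 n}$ on the good event, for a constant $C_0$ depending only on $\bf G$. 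Put $\widetilde U_j:=U_j$ if $h_j$ is good and $\widetilde U_j:=0$ otherwise; the $\widetilde U_j$ are i.i.d.\ and bounded by $C_n$, so \eqref{equ:projective from strong} gives
\[
\mucon n\{d(\widetilde U_1,l)\le e^{-\epsilon n}\}^{\,d+1}\ \le\ (\mucon n)^{\otimes(d+1)}\{|A_d(\widetilde U_1,\dots,\widetilde U_{d+1})|\le 2^{d+1}C_n^{\,d-1}e^{-\epsilon n}\}.
\]
On the event that every $h_j$ is good, $\widetilde U_j=U_j$, so the right side is at most $(\mucon n)^{\otimes(d+1)}\{|A^n_d(\bfh{d+1},\eta)|\le 2^{d+1}C_n^{\,d-1}e^{-\epsilon n}\}+(d+1)Ce^{-c\epsilon_1 n}$. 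Choosing $\epsilon_1$ small enough that $2^{d+1}C_n^{\,d-1}e^{-\epsilon n}\le e^{-\epsilon n/3}$ for all large $n$, SNC on dimension $d$ with parameter $\epsilon/3$ bounds the first term by $Ce^{-c\epsilon n/3}$. Combining this with $\mucon n\{d(U_1,l)\le e^{-\epsilon n}\}\le\mucon n\{d(\widetilde U_1,l)\le e^{-\epsilon n}\}+Ce^{-c\epsilon_1 n}$ and the displayed inequality gives
\[
\mucon n\{d(U_1,l)\le e^{-\epsilon n}\}\ \le\ Ce^{-c\epsilon_1 n}+\big(Ce^{-c\epsilon n/3}+(d+1)Ce^{-c\epsilon_1 n}\big)^{1/(d+1)}\ \le\ C'e^{-c'\epsilon n},
\]
with $C',c'$ independent of $l$; taking the supremum over $l$ is PNC on dimension $d$.

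The one genuinely delicate point is the unboundedness of $U_j=E_dX(n,h_j,\eta)$ entering the direction SNC $\Rightarrow$ PNC: one must truncate onto the good event so that \eqref{equ:projective from strong} applies, and then verify that the subexponential factor $2^{d+1}C_n^{\,d-1}=e^{O(\epsilon_1 n)}$ it introduces is absorbed by taking $\epsilon_1$ small relative to $\epsilon$. Everything else is routine bookkeeping with the large deviation estimates of Section~\ref{sec:lardev} and with independence; the converse PNC $\Rightarrow$ SNC is immediate from \eqref{equ:strong from projective}.
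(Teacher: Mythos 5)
Your proposal is correct and follows essentially the same route as the paper: identify the PNC and SNC quantities with the hyperplane-distance and affine-determinant expressions for $X_i=E_dX(n,h_i,\eta)$, truncate to the good event via Lemma \ref{lem:cocbou expoential} and Lemma \ref{lem:cocbou} so that the vectors are bounded by $e^{O(\epsilon_1 n)}$, and apply the two inequalities of Corollary \ref{cor:3.6}. The only difference is that you spell out the conditioning step behind \eqref{equ:strong from projective} and the absorption of the factor $2^{d+1}C^{d-1}$, which the paper leaves implicit.
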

\begin{proof}
	Let $X_i=E_dX(n,h_i,\eta)$ for $i=1,\cdots, d+1$, where $h_i$ has distribution $\mucon{n}$. Due to Lemma \ref{lem:cocbou}, with a loss of exponentially small measure, we can suppose that the $X_i$ are bounded by $C=e^{\epsilontwo n}$, where $\epsilontwo =\epsilon/(2d)$. 
	
	Due to \eqref{equ:ed sig}, we have $\l v,\sigmah(h_i,\eta) \r=\l p_dv,E_dX(n,h_i,\eta)\r=\l p_dv,X_i\r$. PNC asks exactly that the probability that $X_i$ is close to a hyperplane is small. SNC means that $$|A_d^n(\bf h_{d+1},\eta)|=|A_d(X_1,\dots,X_{d+1})|=\|\sum(-1)^i X_1\wedge\cdots \wedge\hat{X}_i\wedge\cdots\wedge X_{d+1}\|$$
	is small. By \eqref{equ:projective from strong}, PNC on dimension $d$ follows from SNC on dimension $d$.
	
	By \eqref{equ:strong from projective}, SNC on dimension $d$ follows from PNC on dimension $d$.
\end{proof}
\begin{rem}\label{rem:fini SNC}
	We explain that SNC implies the stronger form of SNC, which will be used later. Let $\rm{O}(d)$ be the orthogonal group in dimension $d$. The stronger form of SNC says that for any $(\rho_1,\cdots,\rho_{d+1})\in \rm O(d)^{\times(d+1)}$, we have
	\[(\mu^{*n})^{\otimes(d+1)}\{\bfh{d+1}\in G^{ (d+1)}|| A_d(\rho_1E_dX(n,h_1,\eta),\dots,\rho_{d+1}E_dX(n,h_{d+1},\eta)|\leq e^{-\epsilon n} \}\leq Ce^{-c\epsilon n}.\]
	By Lemma \ref{lem:PNC}, SNC implies PNC. We adopt the notation in the proof of Lemma \ref{lem:PNC}. By Lemma \ref{lem:affvol} and the fact that $\rm{O}(d)$ preserves the distance,
	\begin{align*}
		&\bb P\{\|\sum(-1)^i \rho_1X_1\wedge\cdots\widehat{\rho_iX_i}\cdots\wedge \rho_{d+1}X_{d+1}\|<c \}\\
		\leq &\sum_{1\leq i\leq d}\bb P\{d(\rho_iX_i,l_i)<c^{1/d}\}=\sum_{1\leq i\leq d}\bb P\{d(X_i,\rho_i^{-1}l_i)<c^{1/d}\},
	\end{align*}
	where $l_i=\sp_{\rm{aff}}(\rho_{d+1}X_{d+1},\rho_1X_1,\cdots,\rho_{i-1}X_{i-1})$. Therefore SNC implies the stronger form of SNC.
\end{rem}
WNC is weaker than SNC, because WNC takes an extra average over $\ell$ with respect to measure $\mucon{n}$.
\begin{lem}\label{lem:WNC}
	WNC on dimension $d$ implies PNC on dimension $d$.
\end{lem}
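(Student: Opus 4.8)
The plan is to reduce the one-matrix statement PNC to the $(d{+}2)$-matrix statement WNC by splitting the random walk. By \eqref{equ:ed sig} and the fact that the vector $v$ in Definition \ref{defi:prononcon} lies in the first $d$ coordinates, PNC on dimension $d$ is equivalent to: for every $\epsilon>0$ there are $c,C>0$ such that
\[ \mucon{n}\{h:\ |\l u,\,E_dX(n,h,\eta)\r-a|\le e^{-\epsilon n}\}\ \le\ Ce^{-c\epsilon n} \]
for all $n\in\bb N$, $\eta\in\PP$, $g\in G$, $a\in\R$ and unit vectors $u\in\R^d$. Fix such $\eta,g,a,u$, write $n=n_1+n_2$ with $n_1=\lfloor n/2\rfloor$ and $n_2=\lceil n/2\rceil$, and split $h=h_0\ell$ with $h_0\sim\mucon{n_1}$ and $\ell\sim\mucon{n_2}$ independent. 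The additivity $\sigma(gh_0\ell,\eta)=\sigma(gh_0,\ell\eta)+\sigma(\ell,\eta)$ turns, after applying the monomial map $E_d$ (whose coordinates are the exponentials $e^{-\alpha_i(\,\cdot\,)}$), into a diagonal rescaling:
\[ E_dX(n,h_0\ell,\eta)=D_\ell\, E_dX(n_1,h_0,\ell\eta),\qquad D_\ell=\diag\!\big(e^{-\alpha_1(\sigma(\ell,\eta)-n_2\sigma_\mu)},\dots,e^{-\alpha_d(\sigma(\ell,\eta)-n_2\sigma_\mu)}\big), \]
with $D_\ell$ depending only on $\ell$ and $\eta$; hence $\l u,\,E_dX(n,h_0\ell,\eta)\r=\l D_\ell u,\,E_dX(n_1,h_0,\ell\eta)\r$.

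First I would discard the bad $\ell$: by Lemma \ref{lem:cocbou expoential} applied to $\ell$ and by Lemma \ref{lem:iwacar}, outside a set of $\mucon{n_2}$-measure at most $Ce^{-c\epsilon_1 n}$ one has $\|\sigma(\ell,\eta)-n_2\sigma_\mu\|\le C\epsilon_1 n$, so $D_\ell$ and $D_\ell^{-1}$ have operator norm $\le e^{C\epsilon_1 n}$. Choosing $\epsilon_1$ a small enough fixed multiple of $\epsilon$, the event $\{|\l u,\,E_dX(n,h_0\ell,\eta)\r-a|\le e^{-\epsilon n}\}$ is then contained in $\{|\l u_\ell,\,E_dX(n_1,h_0,\ell\eta)\r-a_\ell|\le e^{-\epsilon n/2}\}$ for some unit vector $u_\ell$ and some $a_\ell\in\R$, whence, setting $q(\ell):=\sup_{u'\in\R^d,\,|u'|=1,\,a'\in\R}\mucon{n_1}\{h_0:\ |\l u',\,E_dX(n_1,h_0,\ell\eta)\r-a'|\le e^{-\epsilon n/2}\}$,
\[ \mucon{n}\{h:\ |\l u,\,E_dX(n,h,\eta)\r-a|\le e^{-\epsilon n}\}\ \le\ \int_G q(\ell)\,\dd\mucon{n_2}(\ell)\ +\ Ce^{-c\epsilon_1 n}. \]

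For each fixed $\ell$ I would bound $q(\ell)$ just as in the proof that SNC implies PNC (Lemma \ref{lem:PNC}): take $d{+}1$ i.i.d.\ copies $h_1,\dots,h_{d+1}\sim\mucon{n_1}$; by Lemma \ref{lem:cocbou} and Lemma \ref{lem:cocbou expoential} (whose bounds are uniform in the base point $\ell\eta$ and in $g$), up to an exponentially small loss one may assume all $E_dX(n_1,h_i,\ell\eta)$ have norm $\le e^{\epsilon_2 n}$, and then the linear-algebra inequality \eqref{equ:projective from strong} gives, for $\epsilon_2$ a small enough fixed multiple of $\epsilon$,
\[ q(\ell)\ \le\ A(\ell)^{1/(d+1)}+C'e^{-c'\epsilon_2 n},\qquad A(\ell):=(\mucon{n_1})^{\otimes(d+1)}\{\bfh{d+1}:\ |A_d^{n_1}(\bfh{d+1},\ell\eta)|\le e^{-\epsilon n/4}\}, \]
where we used $2^{d+1}e^{(d-1)\epsilon_2 n}e^{-\epsilon n/2}\le e^{-\epsilon n/4}$. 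By Jensen's inequality $\int_G q(\ell)\,\dd\mucon{n_2}(\ell)\le\big(\int_G A(\ell)\,\dd\mucon{n_2}(\ell)\big)^{1/(d+1)}+C'e^{-c'\epsilon_2 n}$; and since $n_2\ge n_1$, writing $\ell=\ell'\ell''$ with $\ell'\sim\mucon{n_1}$, $\ell''\sim\mucon{n_2-n_1}$ gives $\ell\eta=\ell'(\ell''\eta)$, so that, for every $\ell''$,
\[ (\mucon{n_1})^{\otimes(d+2)}\{(\bfh{d+1},\ell'):\ |A_d^{n_1}(\bfh{d+1},\ell'\ell''\eta)|\le e^{-\epsilon n/4}\}\ \le\ Ce^{-c\epsilon n/8} \]
by WNC on dimension $d$ at time $n_1$ applied to the point $\ell''\eta$ (WNC holds at every point of $\PP$, so the bound is uniform in $\ell''$); integrating over $\ell''$ yields $\int_G A(\ell)\,\dd\mucon{n_2}(\ell)\le Ce^{-c\epsilon n/8}$. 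Combining the three exponentially small contributions and taking the supremum over $a$ and $u$ gives PNC at $(\eta,g)$ at time $n$; since $\eta,g,n$ were arbitrary, this is PNC on dimension $d$.

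The one step that really requires care — the heart of the lemma — is the first paragraph: recognising that splitting the walk and passing to the monomial coordinates $E_d$ converts the additivity of the Iwasawa cocycle into a diagonal rescaling of the vectors $E_dX(\,\cdot\,)$, under which the affine determinant $A_d$ merely acquires the scalar factor $\prod_{j\le d}e^{-\alpha_j(\sigma(\ell,\eta)-n_2\sigma_\mu)}$, so that, off the exponentially small set of bad $\ell$, the window $e^{-\epsilon n}$ degrades only to $e^{-\epsilon n/2}$ and no affine-position information is lost. Everything after that is bookkeeping of exponents together with two tools already in place: the estimate \eqref{equ:projective from strong} from Lemma \ref{lem:affvol}, and the large-deviation goodness of Lemma \ref{lem:cocbou} and Lemma \ref{lem:cocbou expoential}.
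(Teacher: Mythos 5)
Your argument is correct and follows essentially the same route as the paper's proof: split the time-$n$ walk into two halves, use the cocycle property so that the second half acts by a coordinatewise (diagonal) rescaling, discard the exponentially small set of bad $\ell$ via Lemma \ref{lem:cocbou expoential}/Lemma \ref{lem:cocbou}, pass from the hyperplane event to the affine-determinant event with Corollary \ref{cor:3.6} and H\"older/Jensen, and invoke WNC at the half time, absorbing the leftover step(s) by the uniformity of WNC in the base point. The only differences are cosmetic bookkeeping (you split $n=\lfloor n/2\rfloor+\lceil n/2\rceil$ and apply Jensen after a pointwise root, while the paper treats times $2n$ and $2n+1$ and applies H\"older first), so nothing further is needed.
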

%
 %
\begin{proof}
	Let $\delta=e^{-\epsilon n}$. We first prove the result for $2n$. Recall that $h$ is a random variable which takes values in $G$ with the distribution $\mucon{2n}$.
	Let $h=\ell_1\ell$ such that $\ell_1$ and $\ell$ both have distribution $\mu^{*n}$. Then the cocycle property implies $\sigmahh^{2n}(h,\eta)=\sigmahh^{2n}(\ell_1\ell,\eta)=\sigmah(\ell_1,\ell\eta)\sigmah_0(\ell,\eta)$.  Fubini's theorem implies
	\begin{align*}
	E:=&\sup_{a,v}\mucon{2n}\{h|\l v,\sigmahh^{2n}(h,\eta)\r\in B(a, \delta) \}\\
	\leq  &\int_G\sup_{a,v}\mucon{n}\{\ell_1|\l v,\sigmah(\ell_1,\ell\eta)\sigmah_0(\ell,\eta)\r\in B(a, \delta)\}\dd\mucon{n}(\ell).
	\end{align*}
	The cocycle property is crucial here. Fix $\ell$ and fix $a,v$. 
	We can write 	$$\l v,\sigmah(\ell_1,\ell\eta)\sigmah_0(\ell,\eta)\r=R\l {v'},\sigmah(\ell_1,\ell\eta)\r,$$
	where $R=\|v\sigmah_0(\ell,\eta)\|\geq \min_{1\leq j\leq d}|\sigmah_0(\ell,\eta)_j|$. Here $v'$ is a vector of norm 1, defined by $v'=v\cdot \sigmah_0(\ell,\eta)/R$, depending on $v,l$ and $\eta$. By Lemma \ref{lem:cocbou expoential} and Lemma \ref{lem:cocbou}, for $\ell$ outside an exponentially small set independent of $a,v$, we have $R\geq \delta^{1/2}$. Therefore the condition becomes $\l v',Y^n(\ell_1,\ell \eta)\r\in R^{-1}B(a,\delta)\subset B(a/R,\delta^{1/2})$ and we have 
	\begin{align}\label{equ:Eleq}
	E\leq \int_G\sup_{a,v}\mucon{n}\{\ell_1|\l v,\sigmah(\ell_1,\ell\eta)\r\in B(a, \delta^{1/2}) \}\dd\mucon{n}(\ell)+O_\epsilon(\delta^{c}),
	\end{align}
	where $c>0$ comes from the large deviation principle (Lemma \ref{lem:cocbou expoential}). By H\"older's inequality,
	\begin{equation}\label{equ:int g d+1}
	\begin{split}
		&\int_G\sup_{a,v}\mucon{n}\{\ell_1|\l v,\sigmah(\ell_1,\ell\eta)\r\in B(a, \delta^{1/2}) \}\dd\mucon{n}(\ell)\\
		&\leq \left(\int(\sup_{a,v}\mucon{n}\{\ell_1|\l v,\sigmah(\ell_1,\ell\eta)\r\in B(a, \delta^{1/2}) \})^{d+1}\dd\mucon{n}(\ell)\right)^{1/(d+1)}.
	\end{split}
	\end{equation}
	
	By the same argument as in the proof of Lemma \ref{lem:PNC}
	\begin{align*}
		&\sup_{a,v}\mucon{n}\{\ell_1|\l v,\sigmah(\ell_1,\ell\eta)\r\in B(a, \delta^{1/2}) \}^{d+1}\\
		&\leq (\mucon{n})^{\otimes(d+1)}\{(\bfh{d+1})||A^n_d(\bfh{d+1},\ell\eta)|\leq 2\delta^{1/4} \}+O_\epsilon(\delta^c).
	\end{align*}
	Therefore, by \eqref{equ:Eleq} and \eqref{equ:int g d+1}, we have
	\begin{align*}
	E^{d+1}\leq  (\mucon{n})^{\otimes(d+1)}\otimes\mucon{n}\{(\bfh{d+1},\ell)||A^n_d(\bfh{d+1},\ell\eta)|\leq 2\delta^{1/4} \}+O_\epsilon(\delta^{c}).
	\end{align*}
	The proof for $2n$ is complete.
	
	It remains to prove the same result for $2n+1$. Let $h=\ell_1\ell$ such that $\ell$ has distribution $\mucon{(n+1)}$ and $\ell_1$ has distribution $\mucon{n}$. Following the same argument, we have
	\begin{align*}
	E^{d+1}\leq  (\mucon{n})^{\otimes(d+1)}\otimes\mucon{(n+1)}\{(\bfh{d+1},\ell)||A^n_d(\bfh{d+1},\ell\eta)|\leq 2\delta^{1/4} \}+O_\epsilon(\delta^{c}).
	\end{align*}
	Since $\ell$ only changes the position $\eta$, the uniformity of WNC implies that 
	\begin{align*}
		&(\mucon{n})^{\otimes(d+1)}\otimes\mucon{(n+1)}\{(\bfh{d+1},\ell)||A^n_d(\bfh{d+1},\ell\eta)|\leq 2\delta^{1/4} \}\\
		&=  \int_{\ell_3\in G}(\mucon{n})^{\otimes(d+1)}\otimes\mucon{n}\{(\bfh{d+1},\ell_2)||A^n_d(\bfh{d+1},\ell_2(\ell_3\eta))|\leq 2\delta^{1/4} \}\dd\mu(\ell_3)\ll_\epsilon \delta^c.
	\end{align*}
	The proof is complete.
\end{proof}
\subsection{H\"older regularity}
In this section, we will prove
\begin{lem}\label{lem:SNC}
	SNC on dimension $d-1$ implies WNC on dimension $d$.
\end{lem}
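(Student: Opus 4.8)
The title of the section signals that the key input will be the H\"older regularity of the stationary measure (Corollary \ref{cor:regularity}); the reason it can enter is that WNC, unlike SNC, carries an extra random element $\ell$ whose job is precisely to spread the base point $\ell\eta$ out like $\nu$. The plan is as follows. First I would use the monomial structure of $E_d$: since $E_{d-1}$ is built from the simple roots $\alpha_1,\dots,\alpha_{d-1}$, it is the projection of $E_d$ onto the first $d-1$ coordinates, so $E_dx=(E_{d-1}x,e^{-\alpha_d(\cdot)})$. Expanding the $(d{+}1)\times(d{+}1)$ determinant defining $A_d$ along the row of $d$-th coordinates gives
\begin{equation*}
A_d^n(\bfh{d+1},\zeta)=\sum_{i=1}^{d+1}(-1)^{d+i}\,e^{-\alpha_d(\sigma(gh_i,\zeta)-\kappa(g)-n\sigma_\mu)}\,A_{d-1}^n\big(\bfh{d+1}\setminus\{h_i\},\zeta\big),
\end{equation*}
which realises $A_d^n(\bfh{d+1},\zeta)$ as an affine-linear expression in $E_dX(n,h_{d+1},\zeta)\in\R^d$ whose coefficient vector has \emph{last} entry $\pm e^{-\alpha_d(\cdot)}A_{d-1}^n(\bfh{d},\zeta)$. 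By Lemma \ref{lem:cocbou} the scalars $e^{\pm\alpha_d(\cdot)}$ are of size $e^{O(\epsilon n)}$ off an exponentially small set, and by SNC on dimension $d-1$ (applied with base point $\zeta$, uniformly) one has $|A_{d-1}^n(\bfh{d},\zeta)|\ge e^{-\epsilon n/10}$ off an exponentially small set; so for $\bfh{d+1}$ outside an exponentially small set this affine form has coefficient vector of size $\asymp1$ up to $e^{\pm\epsilon n}$.

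Next, fix such a good $\bfh{d+1}$, set $F(\zeta)=A_d^n(\bfh{d+1},\zeta)$, and by Fubini reduce to bounding $\mu^{*n}\{\ell:|F(\ell\eta)|\le e^{-\epsilon n}\}$. Here I would invoke Lemma \ref{lem:gBmgmul} to see that each $\sigma(gh_i,\cdot)$, hence $F$, is $e^{O(n)}$-Lipschitz on a set of $\nu$-measure $\ge1-e^{-cn}$, and Proposition \ref{prop:large deviatio flag} together with the contraction of good elements to see that $\ell\eta$ lies within $e^{-cn}$ of $\eta^M_\ell$ for $\ell$ in a set of $\mu^{*n}$-measure $\ge1-Ce^{-cn}$, while $\ell_*\nu$ is itself supported within $e^{-cn}$ of $\eta^M_\ell$ (by H\"older regularity once more). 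Comparing these gives $\mu^{*n}\{\ell:|F(\ell\eta)|\le e^{-\epsilon n}\}\le \nu(\{\zeta:|F(\zeta)|\le e^{-\epsilon n/2}\})+Ce^{-cn}$, so everything comes down to the H\"older-regularity estimate $\nu(\{|F|\le r\})\le Cr^{c}$ for typical $\bfh{d+1}$.

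For that last estimate I would restrict $F$ to the $\alpha_d$-circles of $\P$ (Lemma \ref{lem:alpha circle}): along such a circle the images $V_{\alpha_j,\zeta}$ with $j\neq d$ are constant (Lemma \ref{lem:image Valpha}), so by the expansion above the restriction of $F$ is, up to the non-degenerate factor $A_{d-1}^n(\bfh{d},\zeta)$, an affine function of $e^{-\alpha_d(\sigma(gh_{d+1},\zeta)-\cdot)}$; hence its small-value set is a bounded neighbourhood of $\{\delta(V_{\chi_d,\zeta},y)=0\}$ for a hyperplane $y\in\bp V^*_{\alpha_d}$ depending on $\bfh{d+1}$. Disintegrating $\nu$ over the partial flag variety $\P_{\Pi-\{\alpha_d\}}$ and applying Corollary \ref{cor:regularity} in each fibre then yields $\nu(\{|F|\le r\})\le Cr^{c}$, which together with the previous two steps is exactly WNC on dimension $d$.

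The hard part will be this last step. The minors $A_{d-1}^n(\bfh{d+1}\setminus\{h_i\},\zeta)$ with $i\le d$ in the expansion of $F$ need not be constant along an $\alpha_d$-circle when $\alpha_i$ is a Dynkin neighbour of $\alpha_d$ (Lemma \ref{lem:image Valpha} gives constancy only when $\alpha_i+\alpha_d$ is not a root), so $F$ restricted to a circle is not literally affine. One must either order the simple roots so that this never occurs for the relevant $d$, or treat the transverse variation of those minors as a controlled perturbation — it is governed by a factor $e^{-\alpha\kappa(gh_{d+1})}$, hence of strictly smaller magnitude than the main term by the gap estimates underlying Lemma \ref{lem:chapoi} — and absorb it into $r$. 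A secondary point, automatic from the statements of Propositions \ref{prop:large deviatio flag} and \ref{prop:holder regulariyt}, is that all constants remain uniform in $\eta$ and $g$.
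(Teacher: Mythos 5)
Your frame (expand $A_d$ so that SNC on dimension $d-1$ controls a coefficient, then spend the extra average over $\ell$) points in the right direction, and your row expansion closely parallels the paper's \eqref{equ:Ad y}--\eqref{equ:Ad Ad-1}; but the proof has a genuine gap exactly where you place ``the hard part'', and the fixes you suggest do not work. Your reduction -- replacing the $\mu^{*n}$-average over $\ell$ by the stationary measure and then asking for $\nu\{\zeta:|A^n_d(\bfh{d+1},\zeta)|\le r\}\le Cr^c$, uniformly in $\bfh{d+1}$ and in $g\in G$ -- pushes the entire content of the lemma into that unproven regularity claim, and your sketch of it via $\alpha_d$-circles fails. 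Along an $\alpha_d$-circle only $V_{\alpha_d,\zeta}$ moves, but the coordinate $y_j=e^{-\alpha_j(\sigma(gh_i,\zeta)-\kappa(g)-n\sigma_\mu)}$ moves for every $j$ with $\alpha_j(H_d)\neq 0$, i.e.\ for every Dynkin neighbour of $\alpha_d$; for a connected diagram this is unavoidable (already for $\slrn$ with $d\ge 2$, $\alpha_{d-1}$ is a neighbour), so no ordering of the simple roots removes it. Moreover the resulting variation of the $(d-1)$-minors along the circle is multiplicative, through positive powers of $x^i_d$, hence of the same order as the main term -- it is not a perturbation of size $e^{-\alpha\kappa(gh_{d+1})}$ and cannot be ``absorbed into $r$''. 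Even if the restriction to a circle were affine, Corollary \ref{cor:regularity} bounds the $\nu$-mass of neighbourhoods of hyperplane sections globally; it gives no information on the conditional measures of $\nu$ on the fibres of $\P\to\P_{\Pi-\{\alpha_d\}}$, which need not be regular or even non-atomic, so there is nothing to apply fibrewise after your disintegration. Two further uniformity issues: the exceptional set in SNC on dimension $d-1$ depends on the base point, so your lower bound on the minor at the random point $\zeta=\ell\eta$ needs a Fubini in $(\bfh{d},\ell)$ (i.e.\ a WNC-type statement), and the Lipschitz constants of $A^n_d(\bfh{d+1},\cdot)$ are controlled only on good regions, not uniformly in $g$, so the comparison $\mu^{*n}\{|F(\ell\eta)|\le e^{-\epsilon n}\}\le\nu\{|F|\le e^{-\epsilon n/2}\}+Ce^{-cn}$ needs more care than you give it.

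The paper avoids all of this by never converting to $\nu$ and instead linearizing: it multiplies $A_d\circ E_d$ by the common denominator $D_d$ and by all sign conjugates to obtain the multi-homogeneous polynomial $F_d$ (Lemma \ref{lem:multi homogeneous}), realizes the result as a linear functional $F_1$ on the irreducible subrepresentation $W$ of highest weight inside $\bigotimes_j(Sym^2V_j)^{\otimes q_j}$ (Lemma \ref{lem:linear functional}), and then applies Lemma \ref{lem:lindev} to $\ell\mapsto F_1(\ell v)$ -- this is precisely where the extra variable of WNC is consumed, with uniformity in $g$ built in. SNC on dimension $d-1$ (in the strengthened form of Remark \ref{rem:fini SNC}) enters only to prove $\|F_1|_W\|\ge\delta^{C_0}$, by evaluating the expansion \eqref{equ:Ad y'} at the special flag $\eta=\ell_{h_{d+1}}^{-1}\eta_{\alpha_d}$ of Lemma \ref{lem:cocbou'}, where $X^{d+1}_d\le\beta\delta^{-1}$ and the root-system relations $-\alpha_d(H_d)=-2$, $-\alpha_j(H_d)\ge 0$ make the $d$-th term dominant after clearing denominators. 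Without a substitute for this linearization step, your plan leaves the essential difficulty unresolved.
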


Using other representations, we can get more information on the Iwasawa cocycle. This idea has already been used in \cite{aoun2013transience} for problem concerning transience of algebraic subvariety of split real Lie groups. It is also used in the work of Bourgain-Gamburd on the spectral gap of dense subgroups in $\rm{SU}(n)$, for establishing transience of subgroups. 

The key tool is the following estimate. See \cite[Proposition 14.3]{benoistquint} or \cite{guivarc1990produits} for example.
\begin{lem}\label{lem:lindev}
	Let $V$ be an irreducible representation of $G$. Let $\mu$ be a Zariski dense Borel probability measure on $G$ with exponential moment. For every $\epsilon>0$ there exist $c,C>0$ such that for all $n\in\bb N$, $v$ in $V$ and $f$ in $V^*$ we have
	\[\mucon{n}\{\ell\in G|\, |f(\ell v)|\leq \|f\|\|\ell v\|e^{-\epsilon n} \}\leq Ce^{-c\epsilon n}. \]
\end{lem}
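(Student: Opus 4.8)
The plan is to reduce the statement to two large deviation inputs already at our disposal: the large deviation principle for the Cartan projection (Proposition \ref{prop:lardev1}) and the one for the density points in projective space (Proposition \ref{prop:large deviation projective}). Write $x=\bb Rv\in\bp V$ and $y=\bb Rf\in\bp V^*$; then the quantity to be controlled is exactly
\[\frac{|f(\ell v)|}{\|f\|\,\|\ell v\|}=\delta(\ell x,y),\]
the distance of $\ell x$ to the projective hyperplane $y^\perp$. Since $z\mapsto\delta(z,y)$ is $1$-Lipschitz for the distance $d$ on $\bp V$, we have $\delta(\ell x,y)\ge\delta(x^M_{\rho(\ell)},y)-d(\ell x,x^M_{\rho(\ell)})$, so it will be enough to bound $d(\ell x,x^M_{\rho(\ell)})$ from above and $\delta(x^M_{\rho(\ell)},y)$ from below, each outside an $\mucon n$-set of measure $\le Ce^{-c\epsilon n}$. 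Note also that the event $\{|f(\ell v)|\le\|f\|\,\|\ell v\|e^{-\epsilon n}\}$ shrinks as $\epsilon$ grows, so its $\mucon n$-measure is non-increasing in $\epsilon$; hence it suffices to treat $\epsilon$ in a fixed small range, the larger values following by adjusting the constants $c,C$ (which are allowed to depend on $\epsilon$).

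For the upper bound on $d(\ell x,x^M_{\rho(\ell)})$ I would first note that, $G$ being split, $\rho$ is proximal with a one-dimensional highest weight space $V^\chi$, and a short computation with the Cartan decomposition and \eqref{equ:representation cartan} gives $\gamma_{1,2}(\rho(\ell))=\|\wedge^2\rho(\ell)\|/\|\rho(\ell)\|^2=e^{-(\chi-\omega)\kappa(\ell)}$ for some weight $\omega\ne\chi$ of $\rho$ (the one realizing the second singular value, so it depends on $\ell$). Since $\chi-\omega$ is a nonzero sum of positive roots and $\sigma_\mu\in\frak a^{++}$ (Lemma \ref{lem:poslya}), the constant $c_V:=\min\{(\chi-\omega')(\sigma_\mu):\omega'\ne\chi\text{ a weight of }\rho\}$ is strictly positive, and Proposition \ref{prop:lardev1} yields $\gamma_{1,2}(\rho(\ell))\le e^{-c_V n/2}$ outside an exponentially small set. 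On the event, in addition, that $x\in B^m_{V,\rho(\ell)}(e^{-c_V n/4})$ — which holds outside an exponentially small set by the first inequality of Proposition \ref{prop:large deviation projective} with parameter $c_V/4$ — Lemma \ref{lem:gBmg} applies with $\delta=e^{-c_V n/4}$ (its hypothesis $\gamma_{1,2}(\rho(\ell))\le\delta^2$ being satisfied) and gives $d(\ell x,x^M_{\rho(\ell)})\le\gamma_{1,2}(\rho(\ell))e^{c_V n/4}\le e^{-c_V n/4}$. The key feature is that this rate depends only on $G$ and $\mu$, not on $\epsilon$.

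For the lower bound on $\delta(x^M_{\rho(\ell)},y)$, this is literally the second inequality of Proposition \ref{prop:large deviation projective} (valid as $\bp V^*$ is again proximal), applied with parameter $\epsilon/4$: $\delta(x^M_{\rho(\ell)},y)\ge e^{-\epsilon n/4}$ outside an exponentially small set, uniformly in $y$. Intersecting the three exceptional sets, for $\ell$ outside a set of $\mucon n$-measure $\le Ce^{-c\epsilon n}$ one gets $\delta(\ell x,y)\ge e^{-\epsilon n/4}-e^{-c_V n/4}$; assuming $\epsilon<c_V$ this is $\ge\tfrac12 e^{-\epsilon n/4}>e^{-\epsilon n}$ once $n$ exceeds a threshold depending only on $\epsilon$, and the finitely many remaining $n$ are absorbed into $C$. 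I do not anticipate a genuine obstacle; the only subtle point is exactly the one stressed above — arranging the bound on $d(\ell x,x^M_{\rho(\ell)})$ so that its exponential rate is independent of $\epsilon$, which forces the small detour through the uniform positivity constant $c_V$ and the Cartan large deviation principle rather than trying to get everything from the projective-space large deviation principle alone. Uniformity over $x$ and $y$ is automatic since both large deviation inputs are already uniform in their base points.
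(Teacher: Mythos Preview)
Your argument is correct. The paper does not supply its own proof of this lemma: it simply cites \cite[Proposition 14.3]{benoistquint} and \cite{guivarc1990produits}. Your reduction to Proposition~\ref{prop:lardev1} and Proposition~\ref{prop:large deviation projective} is a valid internal derivation within the paper's framework; note only that Proposition~\ref{prop:large deviation projective} is itself attributed to the same reference \cite[Prop.~14.3]{benoistquint}, so you are essentially deducing one consequence of that external result from another rather than giving an independent proof. The one genuinely new ingredient you supply is the uniform contraction bound $d(\ell x,x^M_{\rho(\ell)})\le e^{-c_V n/4}$ via the Cartan large deviation principle, which is exactly the right way to decouple the rate from $\epsilon$; the rest is a clean triangle-inequality argument.
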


In this part, we write $V_j=V_{\alpha_j}$ for the fixed representation in Lemma \ref{lem:tits} and we write $V_{j,\eta}$ for the image of $\eta\in\P$ in $\bp V_j$ for $j=1,\dots, \rank$. Let $v^j$ be a nonzero vector in $V_{j,\eta}$. For $\ell$ in $G$, we abbreviate $\rho_j(\ell)v^j$ to $\ell v^j$. Since $v^j$ lives in $V_j$, we use the same symbol $\|\cdot\|$ for norms on different $V_j$, which makes no confusion. For a vector $x$ in $\bb R^m$, we denote by $x_i$ the $i$-th coordinate. We use upper script to denote different vectors. We want to replace $\omega_j$ by $\chi_j$, because $\chi_j\sigma(g,\eta)$ has a nice interpretation using representations \eqref{equ:representation cartan}. Let $\chi_j^c=\chi_j-\omega_j$, which vanishes on $\frak b$.

Before proving Lemma \ref{lem:SNC}, we introduce some linear algebra. We want to construct a linear form.	
Recall that $E_d$ is a rational map, $A_d$ is the affine determinant, $B_d$ is the composition of $A_d$ and $E_d$ and $$A_d^n(\bfh{d+1},\eta):=B_d(X(n,h_1,\eta), \dots, X(n,h_{d+1},\eta)),$$
where 
\begin{equation}\label{equ:xnh eta}
X(n,h,\eta)=(e^{\omega_j(\sigma(gh,\eta)-\kappa(g)-n\sigma_\mu)})_{1\leq j\leq \rank}=\left(\frac{e^{\chi_j^c(-c(h)+n\sigma_\mu)}}{e^{\chi_j(\kappa(g)+n\sigma_\mu)}}\frac{\|ghv^j\|}{\|v^j\|}\right)_{1\leq j\leq \rank},
\end{equation}
and the second equality is due to \eqref{equ:representation cocycle} and
\begin{align*}
	\chi_j^c(\sigma(gh,\eta)-\kappa(g)-n\sigma_\mu)=\chi_j^c(c(gh)-c(g)-n\sigma_\mu)=\chi_j^c(c(h)-n\sigma_\mu).
\end{align*}
Let 
\begin{equation}\label{equ:xnh etaj}
 X^i(n,\eta):=X(n,h_i,\eta).
 \end{equation}
In order to use Lemma \ref{lem:lindev}, we need to linearise some function related to $A_d^n(\bfh{n+1},\eta)$ with $\bfh{n+1}$ fixed. We will multiply $B_d$ by its denominator and all the Galois conjugates to get a polynomial on $\|X^i_j\|^2$, which can be realized as a linear functional. 

The function $B_d$ can be seen as a rational function of $$(\underline{x} ):=(x^1,\cdots, x^{d+1})=(x^i_j)_{1\leq i\leq d+1,1\leq j\leq\rank}\text{, where }x^i=X^i(n,\eta).$$ 
By definition, $B_d$ has a special form. Each term in $B_d$ can be expressed as a quotient of two monomials. Let $D_d$ be the lowest common denominator of $B_d$ such that $D_dB_d$ is a polynomial on $(\underline{x})$. In other words, suppose that 
\[B_d(x^1,\cdots,x^{d+1})=\sum_{\bf n\in \bb Z^{\rank(d+1)}}b_{\bf n}\prod_{1\leq j\leq\rank,1\leq i\leq d+1}(x^i_j)^{n_{ij}}, \]
where $\bf n$ is a multi index and $b_{\bf n}$ is the coefficient. Let $q_{ij}=\sup_{\bf n\in \bb Z^{\rank(d+1)}}\{-n_{ij},0\}$ for $1\leq j\leq\rank,1\leq i\leq d+1$. Then $D_d(\underline{x})=\Pi_{1\leq j\leq\rank,1\leq i\leq d+1}(x^i_j)^{q_{ij}}$.

\begin{defi}\label{defi:mul homogeneous}
	Let $F$ be a polynomial on $(x^1,\cdots , x^k)$ where $x^1,\cdots, x^k$ are vectors in $\R^n$. Then we call $F$ a multi-homogeneous polynomial of degree $\bf q=(q_1,\cdots, q_n)\in\bb N^n$ if for $\xi$ in $(\R^*)^n$ we have
	\[F(\xi x^1,\cdots, \xi x^k)=\xi^{\bf q}F(x^1,\cdots, x^k), \]
	where $\xi^{\bf q}=\Pi_{1\leq j\leq n}\xi_j^{q_j}$.
\end{defi}

Let $\fini$ be the finite group $(\bb Z/2\bb Z)^{d(d+1)}$ which acts on $\R^{d(d+1)}$ by changing the sign. Let $(\underline{y}):=(y^1,\cdots ,y^{d+1})=(y^i_j)_{1\leq i\leq d+1,1\leq j\leq d}\in (\R^d)^{d+1}$.  
For $\rho\in \fini$, we write $\rho(\underline{y})$ for the action on the coefficient $y^i_j$, which is of dimension $d(d+1)$. Due to the definition of $\fini$, the product $\Pi_{\rho\in \fini}A_d\rho(y^1,\dots, y^{d+1})$ is invariant under the action $\fini$, hence it is a polynomial on $(y^i_j)^2$.
Let
\begin{equation}\label{equ:Fd definition}
F_d(x^1,\dots, x^{d+1}):=\prod_{\rho\in \fini} D_d(\underline{x})A_d\rho(E_dx^1,\dots, E_dx^{d+1}),
\end{equation}
then
\begin{lem}\label{lem:multi homogeneous}
	$F_d$ is a multi-homogeneous polynomial on $((x^1)^2, \cdots, (x^{d+1})^2)$ with degree $\bf q=(q_1,\cdots, q_\rank)\in \bb N^\rank$ such that $\xi^{\bf q}=\left(\det(E_d(\xi))D_d(\xi,\cdots,\xi)\right)^{2^{d(d+1)}}$.
\end{lem}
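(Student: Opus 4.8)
The plan is to establish three facts in turn: (i) $F_d$ is a genuine polynomial in the $x^i_j$, not merely a Laurent polynomial; (ii) $F_d$ is invariant under flipping the sign of any single variable $x^i_j$, hence is a polynomial in the squares $(x^i_j)^2$; and (iii) under the simultaneous scaling $x^i\mapsto\xi x^i$ the polynomial $F_d$ is multiplied exactly by $\left(\det(E_d(\xi))D_d(\xi,\dots,\xi)\right)^{|\fini|}$. Since $|\fini|=2^{d(d+1)}$, points (ii) and (iii) give the two assertions of the lemma, and combining (i) with (iii) forces $\mathbf q\in\bb N^\rank$, because every monomial of a polynomial has nonnegative exponents.

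I would begin by recording a few elementary transformation rules. As each coordinate of $E_d$ is a Laurent monomial, coordinate-wise scaling is multiplicative: $E_d(\xi x)=E_d(\xi)E_d(x)$ (entrywise product) for $\xi,x\in(\bb R^*)^\rank$. Scaling all $d+1$ arguments of $A_d$ coordinate-wise by a fixed vector $z\in(\bb R^*)^d$ amounts to left-multiplying the $(d+1)\times(d+1)$ matrix with columns $(y^i,1)$ by $\diag(z_1,\dots,z_d,1)$, so it multiplies $A_d$ by $\prod_{l=1}^{d}z_l=\det(\diag z)$; with $z=E_d(\xi)$ this reads $A_d(E_d(\xi)\underline y)=\det(E_d(\xi))A_d(\underline y)$. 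Since each $\rho\in\fini$ acts diagonally by sign changes, it commutes with coordinate-wise scaling, so the same identity holds with $A_d$ replaced by $A_d\rho$. Finally $D_d$ is a single monomial, so $D_d(\xi\underline x)=D_d(\xi,\dots,\xi)D_d(\underline x)$.

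For (i), applying $\rho\in\fini$ to $(E_dx^1,\dots,E_dx^{d+1})$ only changes the signs of individual Laurent monomials, so $A_d\rho(E_d\underline x)$ has the same monomial support as $B_d=A_d(E_d\underline x)$; hence $D_d(\underline x)A_d\rho(E_d\underline x)$ is a polynomial, differing from the polynomial $D_dB_d$ only by signs, and $F_d$, a product of such polynomials, is a polynomial. It is not identically zero: $L$ is invertible, so $E_d$ is dominant, and $A_d\not\equiv 0$, whence $B_d\not\equiv 0$. For (ii), flipping the sign of one coordinate $x^i_j$ induces a sign change $\tau\in\fini$ on $(E_dx^1,\dots,E_dx^{d+1})$, which merely permutes the factors $A_d\rho(E_d\underline x)$ of $F_d$ among themselves, while $D_d(\underline x)^{|\fini|}$ has all exponents even; thus $F_d$ is unchanged, so it is a polynomial in the $(x^i_j)^2$, and since $(E_dx^i)_j^2=\prod_p((x^i_p)^2)^{L_{jp}}$ this displays $F_d$ as a polynomial on $((x^1)^2,\dots,(x^{d+1})^2)$ (alternatively one may quote the observation preceding the lemma that $\prod_{\rho\in\fini}A_d\rho$ is already a polynomial in the $(y^i_j)^2$).

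For (iii), I would simply multiply the transformation rules above over the $|\fini|$ factors of $F_d$: each factor $D_d(\underline x)A_d\rho(E_d\underline x)$ is multiplied by $\det(E_d(\xi))D_d(\xi,\dots,\xi)$ under $x^i\mapsto\xi x^i$, hence $F_d(\xi\underline x)=\left(\det(E_d(\xi))D_d(\xi,\dots,\xi)\right)^{|\fini|}F_d(\underline x)$. So $F_d$ is multi-homogeneous with $\xi^{\mathbf q}=\left(\det(E_d(\xi))D_d(\xi,\dots,\xi)\right)^{2^{d(d+1)}}$. The one point requiring care is that $\det(E_d(\xi))=\prod_j\xi_j^{\sum_l L_{lj}}$ may carry negative powers of the $\xi_j$; it is only thanks to (i) --- equivalently, because $D_d(\xi,\dots,\xi)^{|\fini|}$ absorbs these negative powers --- that $\mathbf q$ actually lies in $\bb N^\rank$. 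I expect the main, though modest, obstacle to be exactly this bookkeeping: pinning down that the scaling factor is genuinely a monomial with nonnegative integer exponents rather than a Laurent monomial.
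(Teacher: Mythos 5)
Your proposal is correct and follows essentially the same route as the paper: the heart of the argument is the identical computation combining the multilinearity of the determinant (scaling by $\det(E_d(\xi))$), the morphism properties $E_d(\xi x)=E_d(\xi)E_d(x)$ and $D_d(\xi x^1,\dots,\xi x^{d+1})=D_d(\xi,\dots,\xi)D_d(x^1,\dots,x^{d+1})$, and multiplying over the $2^{d(d+1)}$ factors indexed by $\fini$. Your points (i) and (ii) and the nonnegativity of $\bf q$ are handled in the paper only by the definition of $D_d$ and the remark preceding the lemma (invariance under $\fini$ gives a polynomial in the squares), so your extra bookkeeping is a harmless elaboration rather than a different method.
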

\begin{proof}
	We only need to verify that $F_d$ is a multi-homogeneous polynomial. The fact that the determinant is a multilinear function implies that for $\lambda$ and $y^i$ in $\R^d$
	\begin{equation}\label{equ:Ad multilinear}
	A_d(\lambda y^1,\cdots, \lambda y^{d+1})=\det(\lambda)A_d(y^1,\cdots,y^{d+1}),
	\end{equation}
	where $\det(\lambda)=\lambda_1\cdots \lambda_d$. The functions $E_d$ and $D_d$ are group morphisms due to definition. Hence we have 
	\begin{equation}\label{equ:Ed morphism}
	E_d(\xi x)=E_d(\xi)E_d(x)  \text{ and }D_d(\xi x^1,\cdots, \xi x^{d+1})=D_d(\xi,\cdots, \xi)D_d(x^1,\cdots, x^{d+1}).
	\end{equation}
	Therefore by \eqref{equ:Fd definition}, \eqref{equ:Ad multilinear} and \eqref{equ:Ed morphism}, for $\xi$ and $x^i$ in $\R^\rank$,
	\begin{align*}
	F_d(\xi x^1,\cdots, \xi x^{d+1})&=\prod_{\rho\in\fini}D_d(\xi x^1,\cdots,\xi x^{d+1})A_d\rho(E_d(\xi x^1),\cdots, E_d(\xi x^{d+1}))\\
	& =\prod_{\rho\in\fini} D_d(\xi x^1,\cdots,\xi x^{d+1})A_d\rho(E_d(\xi)E_d(x^1),\cdots, E_d(\xi)E_d(x^{d+1}))\\
	&=\prod_{\rho\in\fini} D_d(\underline{x})A_d\rho(E_d(x^1),\cdots, E_d(x^{d+1}))\det(E_d(\xi))D_d(\xi,\cdots,\xi)\\
	&=\xi^{\bf q}F_d(x^1,\cdots,x^{d+1}),
	\end{align*}
	where $\bf q$ is a vector in $\bb N^\rank$ such that $\xi^{\bf q}=\left(\det(E_d(\xi))D_d(\xi,\cdots,\xi)\right)^{|\Gamma|}$.
\end{proof}
For $\bfh{d+1}\in G^{ (d+1)}$ and $\eta$ in $\P$, we write
\[ F(\bfh{d+1},\eta)=F_d(X^1(n,\eta),\dots, X^{d+1}(n,\eta)). \] 
Fix $\bfh{d+1}$. By \eqref{equ:xnh eta} and \eqref{equ:xnh etaj}, $F$ is a function on $v^j$ for $1\leq j\leq \rank$. Recall that $v^j$ are vectors in $V_{j,\eta}$. Let 
\begin{equation*}
F_0(v^1,\cdots, v^\rank)=F(\bfh{d+1},\eta)\Pi_{1\leq j\leq\rank}\|v^j\|^{2q_j}.
\end{equation*}
Now, we want to explain how to realize $F_0$ as a linear functional. 
\begin{lem}\label{lem:linear functional}
	There exists a linear functional $F_1$ on the space $V_0=\bigotimes_{1\leq j\leq \rank}(Sym^2V_j)^{\otimes q_j}$ such that $$F_1(\otimes_j ((v^j)^2)^{\otimes q_j})=F_0(v^1,\cdots, v^\rank).$$
\end{lem}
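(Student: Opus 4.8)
The plan is to unwind the definition of $F_0$, clear the $\|v^j\|$-denominators using the homogeneity recorded in Lemma \ref{lem:multi homogeneous}, and then invoke the standard correspondence between quadratic forms on $V_j$ and linear functionals on $Sym^2V_j$. The content is mostly bookkeeping; the one thing requiring care is matching up powers of $\|v^j\|$.

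First I would substitute into $F_d$ the explicit expression \eqref{equ:xnh eta} for $X(n,h_i,\eta)$, whose $j$-th coordinate equals a positive scalar $c_{ij}$ \emph{not depending on $v^j$} times $\|\rho_j(gh_i)v^j\|/\|v^j\|$. By Lemma \ref{lem:multi homogeneous}, $F_d$ is a polynomial in the squared coordinates $(x^i_j)^2$ which is multi-homogeneous of degree $\mathbf q=(q_1,\dots,q_\rank)$; thus each of its monomials $\prod_{i,j}(x^i_j)^{2m_{ij}}$ satisfies $\sum_i 2m_{ij}=q_j$ for every $j$, and after the substitution it contributes a factor $\|v^j\|^{-q_j}$ for each $j$. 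Multiplying by the prefactor $\prod_j\|v^j\|^{2q_j}$ in the definition of $F_0$ then leaves
\[
F_0(v^1,\dots,v^\rank)=\sum_{\mathbf m}a_{\mathbf m}\Big(\prod_{i,j}c_{ij}^{2m_{ij}}\Big)\prod_{1\le j\le \rank}\Big(\|v^j\|^{q_j}\prod_i\|\rho_j(gh_i)v^j\|^{2m_{ij}}\Big),
\]
a finite linear combination, with constant coefficients, of products over $j$. For fixed $\mathbf m$ and $j$, the maps $v\mapsto\|v\|^2$ and $v\mapsto\|\rho_j(gh_i)v\|^2$ are quadratic forms on $V_j$, and since $q_j=\sum_i 2m_{ij}$ is even the factor $\|v^j\|^{q_j}\prod_i\|\rho_j(gh_i)v^j\|^{2m_{ij}}$ is a product of exactly $\tfrac{q_j}{2}+\sum_i m_{ij}=q_j$ quadratic forms in $v^j$.

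Finally I would use that a quadratic form $q$ on $V_j$ corresponds to a unique $\widehat q\in(Sym^2V_j)^*$ with $\widehat q(v\cdot v)=q(v)$, where $v\cdot v$ denotes the image of $v$ under the squaring map $V_j\to Sym^2V_j$; hence a product of $q_j$ quadratic forms on $V_j$ corresponds to the tensor product of the associated functionals, an element of $\big((Sym^2V_j)^{\otimes q_j}\big)^*$ that evaluates to that product on $(v\cdot v)^{\otimes q_j}$. Taking the tensor product of these functionals over $j$ for each multi-index $\mathbf m$ and summing with the coefficients $a_{\mathbf m}\prod_{i,j}c_{ij}^{2m_{ij}}$ produces a linear functional $F_1$ on $V_0=\bigotimes_j(Sym^2V_j)^{\otimes q_j}$ with $F_1\big(\bigotimes_j((v^j)^2)^{\otimes q_j}\big)=F_0(v^1,\dots,v^\rank)$, as required. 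The only genuinely delicate step is the middle one: one must check that the power of $\|v^j\|$ surviving the cancellation, together with the homogeneity degree of $F_d$ in the $v^j$-block, is exactly $q_j$, so that the surviving expression is an honest product of $q_j$ quadratic forms and hence lies in the image of $V_0^*$; this is precisely where it is used that $F_d$ is a polynomial in the squared coordinates (which is why the Galois-type product over $\fini$ and the factor $\Pi_j\|v^j\|^{2q_j}$, rather than $\Pi_j\|v^j\|^{q_j}$, appear in the setup).
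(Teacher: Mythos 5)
Your proof is correct and follows essentially the same route as the paper: substitute \eqref{equ:xnh eta}, use the multi-homogeneity from Lemma \ref{lem:multi homogeneous} to track the powers of $\|v^j\|$, realize each $\|\rho_j(gh_i)v^j\|^2=\langle \rho_j(gh_i)v^j,\rho_j(gh_i)v^j\rangle$ (and any leftover $\|v^j\|^2$) as a linear functional on $Sym^2V_j$, and tensor these functionals over $i,j$ to get $F_1$ on $V_0$. Your exponent bookkeeping ($\sum_i 2m_{ij}=q_j$, with the leftover $\|v^j\|^{q_j}$ absorbed as $q_j/2$ copies of $\|\cdot\|^2$, using that $q_j$ is even because $\#\fini$ is even) is if anything slightly more careful than the paper's, which asserts exact cancellation of the norms.
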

\begin{proof}
Since $F_d$ is a multi-homogeneous polynomial (Lemma \ref{lem:multi homogeneous}), it is sufficient to prove that every monomial in $F_d$ has the same property. By Definition \ref{defi:mul homogeneous}, a monomial of $F_d$ is of the form
\[\Pi_{1\leq j\leq \rank}\Pi_{1\leq i\leq d+1}(x^i_j)^{2n_{ij}}, \]
with $n_{ij}\in\bb N$ and $\sum_{1\leq i\leq d+1} n_{ij}=q_j$. The term $\Pi \|v^j\|^{2q_j}$ is used to compensate $\|v^j\|$ in the denominator of $X^i_j$ in \eqref{equ:xnh eta}. Now, by multiplying $\|v^j\|$, we can view $X^i_j$ as $\|gh_iv^j\|$ with some coefficient.
By \eqref{equ:xnh eta} and $\|ghv^j\|^2=\l ghv^j,ghv^j \r$, the function $(X^i_j)^2$ is a linear functional on $Sym^2V_j$. Hence $\Pi_{1\leq i\leq d+1}(X^i_j)^{2n_{ij}}$ is a linear functional on $(Sym^2V_j)^{\otimes q_j}$. This is because if we have two linear functionals $f_1$ and $f_2$ on $W_1$ and $W_2$, then $f_1f_2$ is the linear functional on $W_1\otimes W_2$ given by $f_1f_2(w_1\otimes w_2)=f_1(w_1)f_2(w_2)$. Then by the same reason, the monomial $\Pi_{i,j}(X^i_j)^{2n_{ij}}$ is a linear functional on $V_0$. In order to express the linearity of $F_0$, we rewrite
\[F_1(\otimes_j ((v^j)^2)^{\otimes q_j}):=F_0(v^1,\cdots, v^\rank), \]
where $v^j$ is in $V_{j,\eta}$ and $F_1$ is understood as a linear functional on $V_0$.
\end{proof}
\begin{proof}[Proof of Lemma \ref{lem:SNC}]
	Recall $\expec=\max_{\alpha\in\Pi}e^{-\alpha\sigma_\mu n}$. Let $\delta=e^{-\epsilontwo  n}$, where the constant $\epsilontwo $ will be determined later depending on $\epsilon$. We suppose that $n$ is large enough such that $\delta\leq 1/2$, because for small $n$, WNC can be obtained by enlarging the constant $C$.

	\textbf{Step 1:}
	We take into account of measures. We want to reduce the condition of WNC on $A^n_d$ to $F$, which is essentially a linear functional.
	
	For this purpose, we will bound the measure of small $A^n_d$ by the measure of small $F$. 	
		In order to control $F/A^n_d(\bfh{d+1},\eta)$, we take $\bfh {d+1}$ which is $\eta$ good, that means for every $i$ in $\{1,\cdots,d+1 \}$, the group element $h_i$ is $(n,\epsilontwo ,\eta,\zeta^m_g)$ good (Definition \ref{defi:good element}). By Lemma \ref{lem:cocbou} and \eqref{equ:xnh eta}, for $1\leq i\leq d+1,1\leq j\leq \rank$
		\[|X^i_j|\leq \delta^{-1}. \]
	Since $F/A^n_d$ is a polynomial on $X^i_j$, for $\bfh {d+1}$ which is $\eta$ good, we have
	\begin{equation}\label{equ:fand}
	F/A^n_d=D_d\Pi_{\rho\in\fini,\rho\neq e}D_dA^n_d\rho\leq \delta^{-C_0}.
	\end{equation} 
	Hence by \eqref{equ:fand} and Lemma \ref{lem:cocbou expoential}, we have
	\begin{equation}\label{equ:mgood}
	\begin{split}
		M &:=(\mucon{n})^{\otimes(d+1)}\otimes\mucon{n}\{(\bfh {d+1},\ell)||A^n_d(\bfh{d+1},\ell\eta)|\leq e^{-\epsilon n} \}\\
		&\leq (\mucon{n})^{\otimes(d+1)}\otimes\mucon{n}\{\bfh {d+1} \text{ is }\ell\eta\text{ good, }\ell\in G ||A^n_d(\bfh{d+1},\ell\eta)|\leq e^{-\epsilon n} \}+O_{\epsilontwo }(\delta^c)\\
		&\leq  (\mucon{n})^{\otimes(d+1)}\otimes\mucon{n}\{\bfh {d+1} \text{ is }\ell\eta\text{ good, }\ell\in G ||F(\bfh{d+1},\ell\eta)|\leq e^{-\epsilon n}\delta^{-C_0} \}+O_{\epsilontwo }(\delta^c)\\
		&\leq  (\mucon{n})^{\otimes(d+1)}\otimes\mucon{n}\{(\bfh {d+1},\ell) ||F(\bfh{d+1},\ell\eta)|\leq e^{-\epsilon n}\delta^{-C_0} \}+O_{\epsilontwo }(\delta^c).
	\end{split}
	\end{equation}
	
	\textbf{Step 2:}
	Lemma \ref{lem:linear functional} tells us that
		\begin{equation*}
		F(\bfh{d+1},\eta)=F_1(\otimes_j ((v^j)^2)^{\otimes q_j})/\Pi \|v_j\|^{2q_j},
		\end{equation*}
		where $F_1$ is a linear functional on $V_0=\bigotimes_{j}(Sym^2V_j)^{\otimes q_j}$. To be more precise, $F_1$ will be restricted to a linear form on $W$, the unique irreducible representation of $V_0$ with maximal weight. (This is specific for real split Lie groups)
	
	\textbf{It remains to show that for most $\bfh {d+1}$ in $G^{(d+1)}$, the norm of $F_1$ is relatively large.} It is sufficient to find one $\eta$ such that $|F(\bfh{d+1},\eta)|$ is large. We will prove that $|D_dA_d\rho|$ is large for each $\rho$ in $\Gamma$, which implies that $|F(\bfh{d+1},\eta)|$ is large.
	
	Using the $(d+1)$-th column expansion of the matrix $\begin{pmatrix}
	y^1 & \cdots & y^{d+1}\\
	1 & \cdots & 1
	\end{pmatrix}$, we have
	\begin{equation}\label{equ:Ad y}
	\begin{split}
	 A_d(y^1,\cdots, y^{d+1})&=-A_{d-1}(r_dy^1,\cdots, r_dy^d)y^{d+1}_d+\text{ other terms}\\
	 &=\sum_{1\leq j\leq d}(-1)^{j+d+1}A_{d-1}(r_jy^1,\cdots, r_jy^d)y^{d+1}_j+\det(y^1,\cdots, y^d),
	 \end{split}
	 \end{equation}
	 where $r_j:\R^d\rightarrow \R^{d-1}$ is the map forgetting the $j$-th coordinate. Replacing $y^i$ by $E_dx^i$, due to $r_dE_dx^i=E_{d-1}x^i$, we obtain
	\begin{equation}\label{equ:Ad Ad-1}
	 A_d(E_dx^1,\cdots, E_dx^{d+1})=-A_{d-1}(E_{d-1}x^1,\cdots, E_{d-1}x^d)(E_dx^{d+1})_d+\text{ other terms}. 
	 \end{equation}
	 Using SNC on dimension $d-1$, we are able to give a lower bound of $A_{d-1}(E_{d-1}X^1,\cdots, E_{d-1}X^d)$ with a loss of exponentially small probability of $\bfh{d+1}$. But the problem is in other similar terms. Due to $y^{d+1}_j=\Pi_{1\leq i\leq \rank}(x^{d+1}_i)^{-\alpha_j(H_i)}$ and the structure of the root system, the degree of $x^{d+1}_d$ in $y^{d+1}_j=(E_dx^{d+1})_j$ is 
	\begin{equation}\label{equ:roots}
		-\alpha_d(H_d)=-2 \text{ and }-\alpha_j(H_d)\geq 0 \text{ for }j<d.
	\end{equation}
	Hence, we will find an $\eta$ with $X^{d+1}_d\leq \beta$, which makes the first term in \eqref{equ:Ad y} greater than $\delta^{C_0}\beta^{-2}$, and the other terms are less than $\delta^{-C_0}$.
	
	Now, here is the precise proof.	Take $h_{d+1}$ good, that means $h_{d+1}$ is $(n,\epsilontwo ,\zeta^m_g)$ good (Definition \ref{defi:good element'}). We take 
	\begin{equation}\label{equ:eta d}
	\eta=\ell_{h_{d+1}}^{-1}\eta_{\alpha_d}
	\end{equation} 
	as in Lemma \ref{lem:cocbou'}.
	By Lemma \ref{lem:cocbou'}
	\begin{equation}\label{equ:norlar1}
		X^{d+1}_j \in[\delta,\delta^{-1}] \text{ for } j\neq d\text{ and }
		X^{d+1}_d\leq \beta\delta^{-1}.
	\end{equation}
	Let $\fini_{d-1}=(\bb Z/2\bb Z)^{(d-1)d}$, seen as a subgroup of $\fini$, which acts on $\R^{(d-1)d}$. Then we demand that $\bfh{d}$ satisfies
	\begin{equation}\label{equ:norlar2}
		|A_{d-1}^n\rho(\bfh{d},\eta)|\geq \delta\text{ for all }\rho\in \fini_{d-1}\text{ and }\bfh{d} \text{ is }\eta\text{ good}.
	\end{equation}
	Recall that $\bfh{d}$ is $\eta$ good means that $h_i$ is $(n,\epsilontwo ,\eta,\zeta^m_g)$ good for $1\leq i\leq d$ and by Lemma \ref{lem:cocbou} and \eqref{equ:xnh eta}, this implies
	\begin{equation}\label{equ:norlar3}
		X^i_j(\eta)\in [\delta,\delta^{-1}], \text{ for }1\leq i\leq d,1\leq j\leq \rank.
	\end{equation}
	Recall that $W$ is the unique irreducible subrepresentation of $V_0$ with the highest weight.
	\begin{lem}\label{lem:norlar}
	If $h_{d+1}$ is good ($(n,\epsilontwo ,\zeta^m_g)$ good), $\eta$ is taken as in \eqref{equ:eta d} and the assumption \eqref{equ:norlar2} is satisfied for $\bf{h}_d$, then the operator norm satisfies \[\|F_1|_W\|\geq \delta^{C_0}.\]
	\end{lem}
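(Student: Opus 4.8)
The plan is to exhibit a unit vector of $W$ on which $F_1$ is not too small, which reduces the statement to a lower bound for $|F(\bfh{d+1},\eta)|$ with $\eta$ the particular flag $\ell_{h_{d+1}}^{-1}\eta_{\alpha_d}$ of \eqref{equ:eta d}. Concretely I would take $v^j$ to be a unit vector of the line $V_{j,\eta}$ and set $w=\otimes_j((v^j)^2)^{\otimes q_j}$, so that $F_1(w)=F_0(v^1,\dots,v^\rank)=F(\bfh{d+1},\eta)$. Writing $\eta=g_0\eta_o$, the line $V_{j,\eta}$ is the image of $V^{\chi_j}$ under $\rho_j(g_0)$, hence $w$ is proportional to the image under the action of $g_0$ on $V_0$ of the highest weight vector $\otimes_j((v^0_j)^2)^{\otimes q_j}$ of $V_0$ (here $v^0_j\in V^{\chi_j}$); since that highest weight vector lies in $W$ and $W$ is $G$-invariant, $w\in W$. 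Moreover $\|w\|=\prod_j\|(v^j)^2\|^{q_j}\asymp 1$, because the norm on each $Sym^2V_j$ is $K_{V_j}$-invariant and therefore takes a single value on squares of unit vectors. Thus $\|F_1|_W\|\ge |F_1(w)|/\|w\|\gg|F(\bfh{d+1},\eta)|$, and it remains to prove $|F(\bfh{d+1},\eta)|\ge\delta^{C_0}$.

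For this I would unwind $F(\bfh{d+1},\eta)=\prod_{\rho\in\fini}D_d(\underline X)\,A_d\rho(E_dX^1,\dots,E_dX^{d+1})$ with $X^i=X^i(n,\eta)$, and bound each of the finitely many factors from below; note that $D_d A_d\rho(E_d\,\cdot\,)$ is a genuine polynomial, so it is its value at $\underline X$ that must be controlled. The combinatorial input is that $(E_dx^i)_j=\prod_k(x^i_k)^{-\alpha_j(H_k)}$ together with $-\alpha_j(H_j)=-2$ and $-\alpha_j(H_k)\ge 0$ for $k\ne j$ (cf. \eqref{equ:roots}) forces $D_d(\underline x)=\prod_{i=1}^{d+1}\prod_{j=1}^d(x^i_j)^2$. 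I would then expand $A_d\rho$ along its last column, as in \eqref{equ:Ad y} and \eqref{equ:Ad Ad-1}: after multiplying by $D_d$, the term built from the entry $(E_dx^{d+1})_d=(x^{d+1}_d)^{-2}\prod_{k\ne d}(x^{d+1}_k)^{-\alpha_d(H_k)}$ has its $(x^{d+1}_d)^{-2}$ cancelled by the $(x^{d+1}_d)^2$ of $D_d$, so this distinguished term carries no power of $x^{d+1}_d$ and equals, up to a sign and a factor lying in $[\delta^{C_0},\delta^{-C_0}]$, the quantity $A^n_{d-1}\rho'(\bfh{d},\eta)$, where $\rho'\in\fini_{d-1}$ is the sign pattern induced by $\rho$ on the relevant coordinates; by \eqref{equ:roots} every remaining term of $D_dA_d\rho$ carries $x^{d+1}_d$ to a power $\ge 2$.

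It then remains to insert the quantitative bounds. By \eqref{equ:norlar3} one has $X^i_j\in[\delta,\delta^{-1}]$ for all $1\le i\le d$ and all $j$, while by \eqref{equ:norlar1} $X^{d+1}_j\in[\delta,\delta^{-1}]$ for $j\ne d$ and $X^{d+1}_d\le\beta\delta^{-1}$; together with the hypothesis \eqref{equ:norlar2} — which is exactly the stronger form of SNC on dimension $d-1$ from Remark \ref{rem:fini SNC}, giving $|A^n_{d-1}\rho'(\bfh d,\eta)|\ge\delta$ for every $\rho'\in\fini_{d-1}$ — the distinguished term has absolute value in $[\delta^{C_0},\delta^{-C_0}]$. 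The remaining terms, carrying $(x^{d+1}_d)^{\ge 2}$, are bounded by $(X^{d+1}_d)^2\delta^{-C_0}\le\beta^2\delta^{-C_0}$; since $\epsilon$ (and hence $\epsilontwo$) is taken small, $\beta$ is smaller than any fixed power of $\delta$, so this error is negligible against the distinguished term. Hence $|D_d(\underline X)A_d\rho(E_dX^1,\dots,E_dX^{d+1})|\ge\delta^{C_0}$ for each $\rho$, and taking the product over $\fini$ gives $|F(\bfh{d+1},\eta)|\ge\delta^{C_0}$, which finishes the proof.

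The main obstacle is the middle paragraph: correctly reading off $D_d=\prod(x^i_j)^2$ from the root system, matching the principal last-column cofactor with $A^n_{d-1}\rho'$ so that the inductive hypothesis \eqref{equ:norlar2} applies with the right sign pattern, and verifying that the special flag $\ell_{h_{d+1}}^{-1}\eta_{\alpha_d}$ makes precisely the entries $X^{d+1}_d$ small (via Lemma \ref{lem:cocbou'}) while keeping all other $X^i_j$ of size comparable to $[\delta,\delta^{-1}]$; once the polynomial structure is pinned down, the estimates are routine.
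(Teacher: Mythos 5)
Your proposal is correct and follows essentially the same route as the paper: reduce the operator-norm bound to a lower bound on $|F(\bfh{d+1},\eta)|$ at the special flag, expand $D_dA_d\rho$ along the last column as in \eqref{equ:Ad y'}, use the Cartan-matrix degrees \eqref{equ:roots} to isolate the distinguished term (free of $x^{d+1}_d$) bounded below via \eqref{equ:norlar2} and \eqref{equ:norlar1}--\eqref{equ:norlar3}, and absorb the remaining terms of size $\beta^2\delta^{-C_0}$. The only differences are that you make explicit what the paper leaves implicit (the vector $w\in W$ realizing the reduction, the formula for $D_d$, and the induced sign pattern $\rho'\in\fini_{d-1}$ for general $\rho$), which is consistent with the paper's argument.
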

	 \begin{proof}[Proof ot Lemma \ref{lem:norlar}]
	 	As we have already explained, it is sufficient to prove that, for $\rho$ in $\fini$, we have
	 		\[|D_dA^n_d\rho(\bfh{d},\eta)|\geq \delta^{C_0}. \]
	 	The proof is similar for an arbitrary $\rho$ in $\fini$, we will only prove the case $\rho=e$.	 
	 	
	 	By \eqref{equ:Ad y} and \eqref{equ:Ad Ad-1}
	 	\begin{equation}\label{equ:Ad y'}
		 	\begin{split}	 	
	 		 &D_d(\underline{x})A_d(E_dx^1,\cdots, E_dx^{d+1})=-A_{d-1}(E_{d-1}x^1,\cdots, E_{d-1}x^d)D_d(
	 		 \underline{x})(E_dx^{d+1})_d\\
	 		 &+\sum_{1\leq j<d}(-1)^{j+d+1}A_{d-1}(r_jE_dx^1,\cdots, r_jE_dx^d)D_d(\underline{x})(E_dx^{d+1})_j+D_d(\underline{x})\det(E_dx^1,\cdots, E_dx^d)
	 		\end{split}
	 	\end{equation}
	 	where $r_j:\R^d\rightarrow \R^{d-1}$ is the map forgetting the $j$-th coordinate. Since $x^{d+1}_d$ only appears in $E_dx^{d+1}$,	by \eqref{equ:roots}, we know that the degree of $x^{d+1}_d$ in $D_d$ equals $\alpha_d(H_d)=2$, which implies that $$D_d(X^1,\cdots,X^{d+1})\leq \delta^{-C_0}\beta^2.$$ 
	 	Hence by \eqref{equ:norlar1}-\eqref{equ:norlar3} and the property \eqref{equ:roots} that the degree of $X^{d+1}_d$ in $(E_dX^{d+1})_d$ is $-2$, the degree in $(E_dX^{d+1})_j$ is non negative for $j<d$, we have
	 	\begin{equation}\label{equ:DdEd}
	 	\begin{split}
	 	&D_d(E_dX^{d+1})_d\geq \delta^{C_0},\ |A_{d-1}(E_{d-1}X^1,\cdots, E_{d-1}X^d)|\geq \delta^{C_0},\\
	 	&D_d(E_dX^{d+1})_j\leq \delta^{-C_0}\beta^2,\ |A_{d-1}(r_jE_dX^1,\cdots, r_jE_dX^d)|\leq \delta^{-C_0}\text{ for }1\leq j<d\\
	 	& \text{ and }D_d\det(E_dX^1,\cdots, E_dX^d)\leq \delta^{-C_0}\beta^2.
	 	\end{split}
	 	\end{equation}
	 	By \eqref{equ:Ad y'} and \eqref{equ:DdEd}, we have
	 	\begin{equation*}
	 	|D_dA^n_d|\geq \delta^{C_0} -\delta^{-C_0}\beta^2\geq \delta^{C_0}.
	 	\end{equation*}
	 	The proof is complete by a good choice of $\epsilon$.
	 \end{proof}
	\textbf{Step 3.} We return to the proof of Lemma \ref{lem:SNC}. We write $\ell v$ for the vector $\otimes_j (\ell(v^j)^2)^{\otimes q_j}$ in $V_0$. Then $\R \ell v$ is exactly the image of $\ell\eta$ in $\bp W$. Using the Fubini theorem and \eqref{equ:mgood}, we have
	\begin{equation*}
	\begin{split}
	M&\leq \int \dd\mucon{n}(h_{d+1})\int\dd(\mucon{n})^{\otimes(d-1)}(\bfh d)\mucon{n}\left\{\ell\Big|\frac{|F_1(\ell v)|}{\|F_1|_W\|\|\ell v\|}\leq e^{-\epsilon n}\delta^{-C_0}\|F_1|_W\|^{-1}\right \}\\
	&+O_{\epsilontwo }(\delta^c).
	\end{split}
	\end{equation*}
	Using SNC on dimension $d-1$, for all $\rho\in \fini_{d-1}$, we have $(\mucon{n})^{\otimes(d-1)}\{(\bfh d)|| A_{d-1}^n\rho(\bfh{d},\eta)|\leq \delta \}=O_{\epsilontwo }(\delta^c)$. (This is a stronger form of SNC on dimension $d-1$. Due to $\fini_{d-1}\in \rm{O}(d-1)^{\times d}$, it follows from Remark \ref{rem:fini SNC} that SNC implies this stronger form.) 
	By Lemma \ref{lem:cocbou expoential}, the set where $h_{d+1}$ is not $(n,\epsilontwo ,\zeta^m_g)$ good and $\bfh{d}$ is not $\eta$ good have exponentially small measure. Hence
	\begin{equation}\label{equ:mg1}
		\begin{split}
		M&\leq \int_{good} \dd\mucon{n}(h_{d+1})\int_{\bfh{d} \text{ satisfying }\eqref{equ:norlar2}}\dd(\mucon{n})^{\otimes(d-1)}(\bfh d)\\&\mucon{n}\left\{\ell\Big|\frac{|F_1(\ell v)|}{\|F_1|_W\|\|\ell v\|}\leq e^{-\epsilon n}\delta^{-C_0}\|F_1|_W\|^{-1}\right \}
		+O_{\epsilontwo }(\delta^c).
		\end{split}
	\end{equation}
	Due to Lemma \ref{lem:norlar}, when $\epsilontwo $ is small enough with respect to $\epsilon$, we have ($\delta=e^{-\epsilontwo  n}$ and $\|F_1|_W\|\geq \delta^{C_0}$)
	\[e^{-\epsilon n}\delta^{-C_0}\|F_1|_W\|^{-1}\leq e^{-\epsilon n}\delta^{-C_0}\leq e^{-\epsilon n/2}. \]
	Using Lemma \ref{lem:lindev} with $V=W$, due to $\ell v$ in $W$ we conclude that, under the condition of Lemma \ref{lem:norlar},
	\begin{equation}\label{equ:mlf1}
		\mucon{n}\left\{\ell\Big|\frac{|F_1(\ell v)|}{\|F_1|_W\|\|\ell v\|}\leq e^{-\epsilon n}\delta^{-C_0}\|F_1|_W\|^{-1} \right\}\leq_\epsilon e^{-c\epsilon n}.
	\end{equation}
	By \eqref{equ:mg1} and \eqref{equ:mlf1}, the proof is complete.
\end{proof}
\subsection{Combinatoric tool}
\label{sec:com}
\begin{prop}\label{prop:sum-product}
	Fix $\kappa_1>0$. Let $C_0>0$. Then there exist $\epsilonthr $, $k\in\bb N$ depending only on $\kappa_1$ such that the following holds for $\fren$ large enough depending on $C_0$.  Let $\lambda_1,\dots \lambda_k$ be Borel measures on $([-\fren^{\epsilonfour },-\fren^{-\epsilonfour }]\cup[\fren^{-\epsilonfour },\fren^{\epsilonfour }])^\rank\subset \R^\rank$ where $\epsilonfour =\min\{\epsilonthr ,\epsilonthr \kappa_1\}/10k$, with total mass less than $1$. Assume that for all $\rho\in[\fren^{-2},\fren^{-\epsilonthr }]$ and $j=1,\dots,k$
	\begin{equation}\label{equ:non-con}
	\quad\sup_{a\in\bb R,v\in\bb S^{\rank-1}}(\pi_v)_*\lambda_j(B_\R(a,\rho))=\sup_{a,v}\lambda_j\{x| \,\l v, x\r\in B_\R(a,\rho) \}\leq C_0\rho^{\kappa_1}.
	\end{equation}	
	
	Then for all $\varsigma\in\R^\rank, \|\varsigma\|\in[\fren^{3/4},\fren^{5/4} ]$ we have
	\begin{equation*}
	\left|\int\exp(i\l\varsigma, x_1\cdots x_k\r)\dd\lambda_1(x_1)\cdots\dd\lambda_k(x_k) \right|\leq \fren^{-\epsilonthr }.
	\end{equation*}
\end{prop}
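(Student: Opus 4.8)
The plan is to establish Proposition \ref{prop:sum-product} as a multidimensional analogue of Bourgain's discretized sum--product decay estimate. The essential point is that the Hadamard product $x_1\cdots x_k$ entering the phase is precisely the product in the commutative ring $R=\R^{\rank}$, which is a product of $\rank$ copies of the field $\R$; so the statement is a sum--product phenomenon in $R$, and the hypothesis \eqref{equ:non-con} is a non-concentration condition near every affine hyperplane of $R$. Put $\delta=\fren^{-1}$; since $\|\varsigma\|\asymp\fren$, this is the resolution dual to the phase $\langle\varsigma,\cdot\rangle$. First I would reduce to a discretized setting: we may assume each $\lambda_j$ has total mass $\geq\fren^{-\epsilonthr}$ (otherwise the conclusion is trivial), and then, by a standard pigeonholing that splits $\lambda_j$ into $O(\log\fren)$ dyadic pieces according to its density at scale $\delta$, and at a small polynomial loss in $\fren$, one arrives at the case where each $\lambda_j$ is comparable at scale $\delta$ to the normalized counting measure on a $\delta$-separated set $A_j\subset\bigl([-\fren^{\epsilonfour},\fren^{\epsilonfour}]\setminus(-\fren^{-\epsilonfour},\fren^{-\epsilonfour})\bigr)^{\rank}$ inheriting the non-concentration estimate \eqref{equ:non-con} at every scale $\rho\in[\delta,\fren^{-\epsilonthr}]$.

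\textbf{The combinatorial core.} I would then run a sum--product increment in $R=\R^{\rank}$, following the scheme of Bourgain \cite{bourgain2010discretized} but adapted to the ring $R$. At each stage one either improves the bound on the partial exponential sum (equivalently, flattens the partial multiplicative convolution $\lambda_1\ast\cdots\ast\lambda_j$ at scale $\delta$ along every line) by a fixed power of $\fren$, or one extracts structure: by the Bourgain--Katz--Tao dichotomy in a product of fields, structure means concentration near a coset of a proper subring of $R$. But every proper subring of $R$ for the Hadamard product is contained in a proper linear subspace of $\R^{\rank}$, so its cosets are proper affine subspaces, and concentration of any of the relevant measures near such a subspace at a scale $\rho\in[\fren^{-2},\fren^{-\epsilonthr}]$ is exactly what \eqref{equ:non-con} forbids (taking $v$ normal to the subspace, and using that the scales arising stay in this window). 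Hence the structured alternative never occurs, the increment keeps improving, and after $k=k(\kappa_1,\rank)$ factors — $k$ chosen large enough to absorb both the sum--product iteration and the at most $\rank$ dimension-increasing steps — we reach $\bigl|\int\exp(i\langle\varsigma,x_1\cdots x_k\rangle)\,\dd\lambda_1(x_1)\cdots\dd\lambda_k(x_k)\bigr|\leq\fren^{-\epsilonthr}$, with $\epsilonthr$ and $k$ depending only on $\kappa_1$ (and the fixed $\rank$), and $\fren$ required large only in terms of $C_0$.

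\textbf{Main obstacle.} The principal difficulty is the passage to $\rank\geq2$: in a product of $\rank$ fields one must correctly rule out concentration near \emph{every} proper subring-coset, i.e.\ near every proper affine subspace of all intermediate dimensions, and one must verify that all scales consumed by the increment lie in the window $[\fren^{-2},\fren^{-\epsilonthr}]$ on which \eqref{equ:non-con} is assumed, while keeping $k$ and $\epsilonthr$ independent of $C_0$ and $\fren$. Tracking this bookkeeping of scales, fixing the correct number of factors $k(\kappa_1,\rank)$, and making the final ``structure $\Rightarrow$ decay'' step quantitatively uniform is where the real work lies; the initial reduction to the discretized setting and the identification of the obstructions with affine subspaces (via the ring structure of $\R^{\rank}$) are comparatively routine once the framework is in place.
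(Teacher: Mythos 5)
The first thing to note is that the paper does not prove Proposition \ref{prop:sum-product} in this manuscript at all: it is quoted from \cite{li-sumproduct_2018}, which in turn rests on the discretized sum--product estimate of He--de Saxc\'e \cite{he_sum-product_2018} (and on Bourgain \cite{bourgain2010discretized} when $\rank=1$). Your outline follows the same general philosophy as those cited works --- a Bourgain-type discretized increment in the product ring $\R^{\rank}$, with the non-concentration hypothesis \eqref{equ:non-con} ruling out the structured alternative --- so as a roadmap to the relevant literature it is pointed in the right direction.

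As a proof, however, it has a genuine gap: the entire analytic core is compressed into the sentence invoking ``the Bourgain--Katz--Tao dichotomy in a product of fields,'' treated as an off-the-shelf tool. No such dichotomy is available in the $\delta$-discretized setting over $\R^{\rank}$: Bourgain--Katz--Tao is a finite-field statement, and its discretized analogue --- that failure of flattening of multiplicative convolutions at scale $\fren^{-1}$, under non-concentration known only in the window $[\fren^{-2},\fren^{-\epsilonthr}]$, forces the measure to charge a neighbourhood of a proper affine subspace --- is precisely the theorem proved in \cite{he_sum-product_2018} and converted into the multilinear oscillatory bound in \cite{li-sumproduct_2018}; it is the content of the proposition, not a lemma one may assume before it. Two concrete soft spots illustrate this. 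First, your algebraic reduction ``every proper subring of $\R^{\rank}$ is contained in a proper linear subspace'' is false as stated ($\Z\times\R\subset\R^2$ is a proper subring contained in no proper subspace); what is true is that \emph{closed connected} subrings are subspaces, and upgrading ``approximate subring at a single scale'' to ``neighbourhood of an exact subalgebra'' is exactly where the real work lies. Second, you never explain how flattening of the multiplicative convolutions yields the stated \emph{pointwise} bound on $\bigl|\int e^{i\langle\varsigma,x_1\cdots x_k\rangle}\dd\lambda_1\cdots\dd\lambda_k\bigr|$ for every individual frequency with $\|\varsigma\|\in[\fren^{3/4},\fren^{5/4}]$, nor how the scales consumed by the increment remain inside the window on which \eqref{equ:non-con} is assumed; you flag these as bookkeeping, but they are the quantitative substance of the result. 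So the proposal is a plausible sketch of the strategy behind the cited references, but it does not stand on its own as a proof of the proposition.
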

This is proved in \cite{li-sumproduct_2018}, based on a discretized sum-product estimate by He-de Saxc\'e \cite{he_sum-product_2018}. When $m=1$, this is due to Bourgain in \cite{bourgain2010discretized}. 
\subsection{Application to measures induced by the random walk}
\label{sec:application}
From Proposition \ref{prop:PNC}, we fix $\epsilontwo <\frac{1}{10}\min_{\alpha\in\Pi}\{\alpha\sigma_\mu\}$ and we can find $c_1$ such that PNC holds. Let $(\epsilontwo /2,c')$ be the constants in Lemma \ref{lem:cocbou expoential}. Take 
$$\kappa_0=\frac{1}{10}\min\{c_1,c' \}.$$ 
Using Proposition \ref{prop:sum-product} with $\kappa_1=\kappa_0$, we get two constants $\epsilonthr , \epsilonfour $.

For $g,h$ in $G$ and $\eta$ in $\PP$, recall that $\sigmah(h,\eta)=(e^{-\alpha(\sigma(gh,\eta)-\kappa(g)-n\sigma_\mu)})_{\alpha\in\Pi}\in \R^m$.
Let $\lambda_{{g},\eta}$ be a pushforward measure on $\R^\rank$ of $\mucon{n}$ restricted on a subset $G_{n,{g},\eta}$ of $G$, which is defined by $$\lambda_{{g},\eta}(E)=\mucon{n}\{h\in G_{n,{g},\eta}|\sigmah(h,\eta)\in E \}$$ 
for any Borel subset $E$ of $\R^\rank$, where 
\begin{equation}\label{equ:h0gx}
G_{n,{g},\eta}=\{h\in G| h\text{ is }(n,\epsilon,\eta,\zeta^m_g) \text{ good} \}
\end{equation}
and where the constant $\epss >0$ will be determined later.

PNC is only at one scale, we need to verify all the scales needed in the sum-product estimate. The idea is to separate the random variable and try to use PNC in some other scale, where we need the cocycle property to change scale.
\begin{prop}[Change scale]\label{prop:appmea}
	With $\epss $ small enough depending on $\epsilonfour \epsilontwo $, there exists $C_0$ independent of $n$ such that the measure $\lambda_{g,\eta}$ satisfies the conditions in Proposition \ref{prop:sum-product} with constant $\fren=e^{\epsilontwo  n}$ for all $n\in\bb N$.
\end{prop}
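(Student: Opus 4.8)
The plan is to verify the two hypotheses of Proposition~\ref{prop:sum-product} for $\lambda_{g,\eta}$ with $\fren=e^{\epsilontwo n}$: the support and mass condition, and the affine non-concentration estimate at every scale $\rho\in[\fren^{-2},\fren^{-\epsilonthr}]$. By construction $\lambda_{g,\eta}$ is the image of $\mucon n$ under $h\mapsto\sigmah(h,\eta)$ restricted to $G_{n,g,\eta}$, so its total mass is at most $\mucon n(G)=1$. If $h\in G_{n,g,\eta}$, i.e.\ $h$ is $(n,\epss,\eta,\zeta^m_g)$ good, Lemma~\ref{lem:cocbou} gives $\|\sigma(gh,\eta)-\kappa(g)-n\sigma_\mu\|\le\epss n$, so each coordinate $\sigmah(h,\eta)_\alpha=e^{-\alpha(\sigma(gh,\eta)-\kappa(g)-n\sigma_\mu)}$ lies in $[e^{-C_1'\epss n},e^{C_1'\epss n}]$, where $|\alpha(X)|\le C_1'\|X\|$. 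Since $\fren^{\pm\epsilonfour}=e^{\pm\epsilonfour\epsilontwo n}$, taking $\epss\le\epsilonfour\epsilontwo/C_1'$ — the precise meaning of ``$\epss$ small enough depending on $\epsilonfour\epsilontwo$'' — puts the support of $\lambda_{g,\eta}$ inside $[\fren^{-\epsilonfour},\fren^{\epsilonfour}]^\rank$, hence inside the annulus of Proposition~\ref{prop:sum-product}.

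For the non-concentration estimate, fix $\rho\in[\fren^{-2},\fren^{-\epsilonthr}]$, write $\rho=e^{-sn}$ with $s\in[\epsilonthr\epsilontwo,2\epsilontwo]$, and fix $a\in\R$, $v\in\bb S^{\rank-1}$. Since $\lambda_{g,\eta}\le(\sigmah(\cdot,\eta))_*\mucon n$ as measures on $\R^\rank$, it is enough to bound $\mucon n\{h:|\l v,\sigmah(h,\eta)\r-a|\le\rho\}$ by $C_0\rho^{\kappa_0}$. When $s\ge\epsilontwo$ (so $\rho\le\fren^{-1}$) this event is contained in $\{h:|\l v,\sigmah(h,\eta)\r-a|\le e^{-\epsilontwo n}\}$, whose $\mucon n$-measure is $\le C_1e^{-c_1\epsilontwo n}$ by PNC (Proposition~\ref{prop:PNC}, Definition~\ref{defi:prononcon}) applied at time $n$ with the fixed parameter $\epsilontwo$; since $s\le2\epsilontwo$ and $\kappa_0\le c_1/10$ we have $c_1\epsilontwo n\ge5s\kappa_0 n$, so this is $\le C_0\rho^{\kappa_0}$ once $C_0$ is large.

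The remaining scales, $\epsilonthr\epsilontwo\le s<\epsilontwo$, demand non-concentration coarser than $e^{-\epsilontwo n}$, which PNC at time $n$ does not give directly; here one passes to a shorter walk using the cocycle property. Write $h=h_1h_2$ with $h_1\sim\mucon{n_1}$, $h_2\sim\mucon{n_2}$, $n_1+n_2=n$; then $\sigma(gh,\eta)=\sigma(gh_1,h_2\eta)+\sigma(h_2,\eta)$ gives, coordinatewise,
\[\sigmah(h,\eta)=\sigmahh^{n_1}_g(h_1,h_2\eta)\cdot\sigmahh^{n_2}_e(h_2,\eta),\]
with $\sigmahh^{n_2}_e(h_2,\eta)=(e^{-\alpha(\sigma(h_2,\eta)-n_2\sigma_\mu)})_{\alpha\in\Pi}$ and $\sigmahh^{n_1}_g$ the time-$n_1$ analogue of $\sigmah$. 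Condition on $h_2$. Outside an exponentially small set of $h_2$ — by Lemmas~\ref{lem:cocbou expoential} and~\ref{lem:cocbou} applied with $g=e$ and a fixed auxiliary parameter $\epsilontwo''\ll\epsilonthr\epsilontwo$ — one has $\|\sigma(h_2,\eta)-n_2\sigma_\mu\|\le\epsilontwo''n_2$, so $z:=\sigmahh^{n_2}_e(h_2,\eta)$ has all coordinates in $[e^{-C_1'\epsilontwo''n_2},e^{C_1'\epsilontwo''n_2}]$ and $\|vz\|\ge e^{-C_1'\epsilontwo''n_2}$. Setting $v'=vz/\|vz\|\in\bb S^{\rank-1}$ and $Y_1=\sigmahh^{n_1}_g(h_1,h_2\eta)$, the event becomes $\{|\l v',Y_1\r-a/\|vz\||\le\rho/\|vz\|\}$; choosing $n_1\approx sn/\epsilontwo$ — more precisely so that $\rho/\|vz\|\le e^{-\epsilontwo n_1}$, which is possible because $s<\epsilontwo$ and $\epsilontwo''$ is negligible — this event lies in a scale-$e^{-\epsilontwo n_1}$ event, and PNC at time $n_1$ with parameter $\epsilontwo$, uniform over the base point $h_2\eta$ and over $g$, bounds its $\mucon{n_1}$-measure by $C_1e^{-c_1\epsilontwo n_1}$. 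Integrating over $h_2$ yields
\[\mucon n\{h:|\l v,\sigmah(h,\eta)\r-a|\le\rho\}\le C_1e^{-c_1\epsilontwo n_1}+O_{\epsilontwo''}\bigl(e^{-c''n_2}\bigr)\]
for some $c''>0$, and choosing $n_1,n_2$ to be definite positive fractions of $n$ with $\epsilontwo n_1\gtrsim sn$ makes both terms $\le\tfrac12C_0\rho^{\kappa_0}$; here one uses the margin in $\kappa_0=\tfrac1{10}\min\{c_1,c'\}$ and the slack coming from $\epsilontwo$ being tiny (the latter also keeps $n_2$ bounded below by a positive fraction of $n$ even when $s$ is near $\epsilontwo$).

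The step I expect to be the main obstacle is this ``change of scale'': producing, uniformly in $\rho\in[\fren^{-2},\fren^{-\epsilonthr}]$, one splitting $n=n_1+n_2$ and one auxiliary parameter $\epsilontwo''$ so that simultaneously (i) PNC at time $n_1$ and the \emph{fixed} parameter $\epsilontwo$ covers the scale $\rho$ after absorbing the multiplicative error from $z$, (ii) the discarded $h_2$-set is exponentially small of an admissible exponent, and (iii) the final exponent still exceeds $\kappa_0$. Invoking PNC at the single parameter $\epsilontwo$ is exactly what keeps $c_1,C_1$ fixed throughout, and the smallness of $\epsilontwo$ is what makes (i)--(iii) compatible; by comparison, the support/mass condition and the fine-scale case $s\ge\epsilontwo$ are routine.
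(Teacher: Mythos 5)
Your overall strategy (splitting $h=h_1h_2$ by the cocycle property and applying PNC at the shorter time $n_1$ matched to the scale $\rho$) is the paper's strategy, and your treatment of the support/mass condition and of the fine scales $\rho\leq \fren^{-1}$ is correct. The gap is in how you handle the multiplicative factor $R=\|vz\|$ coming from $h_2$: you discard a bad set of $h_2$ by a large deviation estimate at time $n_2$ with an auxiliary parameter $\epsilontwo''$, and this cannot be made to work uniformly over $\rho\in[\fren^{-2},\fren^{-\epsilonthr}]$ with the already-fixed $\kappa_0$. Concretely: with your choice $n_1\approx sn/\epsilontwo$ (forced by requiring $\rho/\|vz\|\leq e^{-\epsilontwo n_1}$ with $\epsilontwo''$ tiny), one has $n_2=n-n_1\approx n(1-s/\epsilontwo)\to 0$ as $s\to\epsilontwo^{-}$, so your parenthetical claim that $n_2$ stays a positive fraction of $n$ is false, and the discarded-set error $O(e^{-c''\epsilontwo'' n_2})$ is nowhere near $\rho^{\kappa_0}=e^{-s\kappa_0 n}$. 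If instead you shrink $n_1$ (say $n_1\approx sn/2\epsilontwo$, which does keep $n_2\geq n/2$), you then need $c(\epsilontwo'')\,\epsilontwo''\gtrsim \epsilontwo\kappa_0$ for the discarded set, while the smallest scales $s\approx\epsilonthr\epsilontwo$ force $\epsilontwo''\ll\epsilonthr\epsilontwo$ so that the correction $e^{C_1'\epsilontwo'' n_2}$ does not swamp $\rho$. These two requirements on $\epsilontwo''$ pull in opposite directions: the effective LDP exponent $c(\epsilon)\epsilon$ degrades as $\epsilon\to 0$, and $\kappa_0=\tfrac1{10}\min\{c_1,c'\}$ was fixed (from PNC at $\epsilontwo$ and the LDP at $\epsilontwo/2$) before $\epsilonthr,\epsilonfour$ and before this proposition, so you have no room to arrange $c(\epsilontwo'')\epsilontwo''\geq 2\epsilontwo\kappa_0$.

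The paper's proof avoids this entirely: no set of $h_2$ is discarded. For each fixed $h_2$ it uses the deterministic dichotomy $R\geq\rho^{1/2}$ or $R\leq\rho^{1/2}$, with $n_1=[\,|\log\rho|/(2\epsilontwo)\,]$. In the first case $\rho/R\leq\rho^{1/2}\leq e^{-\epsilontwo n_1}$ and PNC at time $n_1$ applies directly. In the second case it exploits the fact that the event already carries the support constraint $\sigmah(h_1h_2,\eta)\in[\fren^{-\epsilonfour},\fren^{\epsilonfour}]^\rank$ (because $\lambda_{g,\eta}$ is the restriction to $(n,\epss,\eta,\zeta^m_g)$-good $h$): if some coordinate of $\sigmahh^{n_2}_0(h_2,\eta)$ is $\leq\rho^{1/2}$, the cocycle identity forces the corresponding coordinate of $\sigmahh^{n_1}(h_1,h_2\eta)$ to exceed $\fren^{-\epsilonfour}\rho^{-1/2}\geq e^{\epsilontwo n_1/2}$, an event whose $\mucon{n_1}$-probability is $\ll e^{-c'\epsilontwo n_1}$ by the large deviation principle in $h_1$. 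Thus all exponential gains are extracted from $h_1$ at time $n_1$, with exponents proportional to $\epsilontwo n_1\asymp|\log\rho|$, which is exactly why $\kappa_0\leq\tfrac1{10}\min\{c_1,c'\}$ suffices uniformly in $\rho$. To repair your argument you should replace the discard-of-$h_2$ step by this use of the support constraint of $\lambda_{g,\eta}$ in the small-$R$ branch.
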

\begin{proof}
	We abbreviate $\lambda_{g,\eta}$ to $\lambda$. 	By taking $\epss $ small depending on $\epsilonfour \epsilontwo $, Lemma \ref{lem:cocbou} implies that the support of $\lambda$ is contained in the cube $[\fren^{-\epsilonfour },\fren^{\epsilonfour }]^\rank$.
	
	Then we verify \eqref{equ:non-con}. Let $\rho\in[\fren^{-2},\fren^{-\epsilonthr } ]$. 
		Let $n_1=[\frac{|\log\rho|}{2\epsilontwo }] $ and $n_2=n-n_1$. Then $n_1$ lies in $[\epsilonthr n/2,n]$. We separate $h=h_1h_2$ such that $h_1,h_2$ have distributions $\mucon{n_1}, \mucon{n_2}$, respectively. We have
		\begin{equation}\label{equ:cocycle sigmah}
		\sigmah(h,\eta)=\sigmahh^{n_1}(h_1,h_2\eta)\sigmahh^{n_2}_0(h_2,\eta).
		\end{equation}
		We will use this cocycle property and the support of $Y$ to change the scale. 
		
	For \eqref{equ:non-con}, due to the fact that the support of $\lambda$ is contained in $[\fren^{-\epsilonfour },\fren^{\epsilonfour }]^\rank$, for $w\in \bb S^{\rank-1}$
	\begin{equation}\label{equ:piwn}
	(\pi_w)_*\lambda(B(a,\rho))\leq\sup_{h_2,v}\mucon{n_1}\{h_1|\l v,\sigmahh^{n_1}(h_1,h_2\eta)\r\in R^{-1}B(a,\rho),\sigmah(h_1h_2,\eta)\in[\fren^{-\epsilonfour },\fren^{\epsilonfour }]^\rank  \},
	\end{equation}
	where $R=\|w\sigmahh^{n_2}_0(h_2,\eta) \|$ depends on $h_2$.
	\begin{itemize}
		\item If $R\geq \rho^{1/2}$, then $\rho R^{-1}\leq \rho^{1/2}\leq e^{-\epsilontwo n_1}$. It follows by PNC at scale $n_1$ that
		\begin{equation}\label{equ:piwn1}
		\mucon{n_1}\{h_1|\l v,\sigmahh^{n_1}(h_1,h_2\eta)\r\in B(a/R,e^{-\epsilontwo n_1}) \}\ll_{\epsilontwo }e^{-c_1\epsilontwo n_1}\leq\rho^{\kappa_0}.
		\end{equation}
		
		\item If $R\leq \rho^{1/2}$. There exists one coordinate $\alpha$ such that $|\sigmahh^{n_2}_0(h_2,\eta)_\alpha|\leq \rho^{1/2}$, which implies that $\sigmahh^{n_1}(h_1,h_2\eta)_\alpha=\sigmah(h,\eta)_\alpha/\sigmahh^{n_2}_0(h_2,\eta)_\alpha\geq \fren^{-\epsilonfour }\rho^{-1/2}$. Due to $n_1\geq \epsilonthr  n/2$ and $\epsilonthr \geq 10\epsilonfour $, we have $$ \fren^{-\epsilonfour }\rho^{-1/2}\geq e^{\epsilontwo(n_1-\epsilonfour n)}\geq e^{\epsilontwo(n_1/2+\epsilonthr n/4-\epsilonfour n)} \geq e^{\epsilontwo  n_1/2}.$$
		For such $h_2$, we have 
		\begin{equation}\label{equ:piwn2}
		\begin{split}
		&\mucon{n_1}\{h_1|\sigmahh^{n_1}(h_1h_2,\eta)\in[\fren^{-\epsilonfour },\fren^{\epsilonfour }]^\rank  \}
		\leq\sum_{\alpha\in\Pi} \mucon{n_1}\{h_1|\sigmahh^{n_1}(h_1,h_2\eta)_\alpha\geq e^{\epsilontwo n_1/2} \}.
		\end{split}
		\end{equation}
		It follows from Lemma \ref{lem:cocbou} that
		\begin{equation}\label{equ:piwn3}
			\begin{split}
			&\mucon{n_1}\{h_1|\sigmahh^{n_1}(h_1,h_2\eta)_\alpha\geq e^{\epsilontwo n_1/2} \}\\&\leq \mucon{n_1}\{h_1|\|\sigma(gh_1,h_2\eta)-\kappa(g)-n_1\sigma_\mu \|\geq \epsilontwo n_1/2\}
			\ll_{\epsilontwo }e^{-c'\epsilontwo n_1}\leq \rho^{\kappa_0}.
			\end{split}
		\end{equation}
	\end{itemize}
	By \eqref{equ:piwn}-\eqref{equ:piwn3}, for $\rho\in[\fren^{-2},\fren^{-\epsilonthr } ]$ we have
	\[ 	(\pi_w)_*\lambda(B(a,\rho))\ll_{\epsilontwo } \rho^{\kappa_0}. \]
	The proof is complete.
\end{proof}
\section{Proof of the main theorems}
\label{sec:proof}
In this section, we will use the results of Section \ref{sec:lie groups} and Section \ref{sec:noncon} to give the proofs of the main theorems. In Section \ref{sec:sumfou}, we will prove Theorem \ref{thm:foudec}, the simply connected case. For non simply connected case, please see Theorem \ref{thm:foudecsemi} in Appendix \ref{sec:semisimple}. Then in Section \ref{sec:fougap}-\ref{sec:expdec}, we will work on semisimple case and we prove all the other theorems in the introduction from Theorem \ref{thm:foudecsemi}.

We will add many assumptions on the elements of $G$ and $\P$. The assumptions seem complicated. In fact, the exact form is not really important. They are assumed to make the result work outside a set of exponentially small measure. These assumptions usually ask that the elements are away from certain closed subvarieties of $G$ or that they are not too far from the expected average value.
\subsection{$(C,r)$ good function}
\label{sec:C r good}
For a $C^1$ function $\varphi$ on the flag variety $\PP$, we first lift it to $\P_0=G/A_eN$. Let $\partial_\alpha \varphi=\partial_{Y_{\alpha}}\varphi$ be the directional derivative on $\P_0$. By Lemma \ref{lem:kmk} the action of the group $M$ only changes the sign of the directional derivative $\partial_\alpha\varphi$, hence $|\partial_\alpha\varphi|$ is actually a function on $\P$. Although $\partial_\alpha\varphi$ is not well-defined on $\P$, we can fix a local trivialization of the line bundle with fibre $\R Y_\alpha$ and define the directional derivative. This local definition will be used in G3. 

Recall that for $\eta,\eta'$ in $\P$ and a simple root $\alpha$, we have defined $d_\alpha(\eta,\eta')=d(V_{\alpha,\eta},V_{\alpha,\eta'})$ and $d(\eta,\eta')=\sup_{\alpha\in\Pi}d_\alpha(\eta,\eta')$. 
\begin{defi}\label{defi:C r good}
	Let $r$ be a continuous function on $\P$ and $C>1$. 
	Let $\open$ be the open set in $\P$, which is the $1/C$-neighbourhood of the support of $r$. Let $\varphi$ be a $C^2$ function on $\PP$. For a simple root $\alpha$, let $v_\alpha=\sup_{\eta\in \supp r}|\partial_\alpha\varphi(\eta)|$.
	We say that $\varphi$ is $(C, r)$ good if:
	\begin{itemize}
		\item[(G1)] For $\eta,\eta'$ in $\open$ such that $d(\eta,\eta')\leq 1/C$,
		\begin{equation}\label{equ:G1}
		|\varphi(\eta)-\varphi(\eta')|\leq C \sum_{\alpha\in\Pi}d_\alpha(\eta,\eta')v_\alpha,
		\end{equation}
		\item[(G2)] For every simple root $\alpha$ and for every $\eta$ in the support of $r$, we have 
		\begin{equation}\label{equ:G2}
		|\partial_\alpha \varphi(\eta)|\geq\frac{1}{C}v_\alpha,
		\end{equation}
		\item[(G3)] For $\eta, \eta'$ in $\open$ with $d(\eta,\eta')\leq 1/C$,
		\begin{equation}\label{equ:G3}
		|\partial_\alpha \varphi(\eta)-\partial_\alpha\varphi(\eta')|\leq Cd(\eta,\eta')v_\alpha.
		\end{equation}
		\item[(G4)] 
		\begin{equation}\label{equ:G4}
		\sup_{\alpha\in\Pi}v_\alpha\in[1/C,C].
		\end{equation}
	\end{itemize}
\end{defi}
\begin{rem}\label{rem:distance}
	The distance $d_\alpha$ depends on the representation $V_\alpha$. But for two different representation $(\rho,V),(\rho',V')$ such that $\Theta(\rho)=\Theta(\rho')=\{\alpha \}$, by Lemma \ref{lem:kvv}, when $C$ is large enough, two distances $d_V,d_{V'}$ are equivalent.

	In the above definition, the G3 assumption \eqref{equ:G3} is equivalent to the inequality on $\P_0$, that is
	\begin{equation}\label{equ:G3'}
		|\partial_\alpha \varphi(\k)-\partial_\alpha\varphi(\k')|\leq Cd_0(\k,\k')v_\alpha,
	\end{equation}
	for  $\k,\k'$ in $\pi^{-1}(J)$ with $d(\k,\k')\leq 1/C$.
\end{rem}
The G1 assumption is trivial for $\sltwo$, so it is new in higher dimension. 
The G2 and the G3 assumptions are natural generalizations of the case $\rank=1$, $\sltwo$. The G4 assumption is used to normalize the function. 

The role of $\open$ is to simplify the verification of $(C,r)$ goodness. With this definition, we only need to verify assumptions on a neighbourhood of the support of $r$.
\subsection{From sum-product estimates to Fourier decay}
\label{sec:sumfou}
In this subsection we will prove Theorem \ref{thm:foudec}, an estimate of Fourier decay, by using the results established in Section \ref{sec:lie groups} and Section \ref{sec:noncon}. 

Recall that in Section \ref{sec:application} we have fixed $(\epsilontwo ,c_1)$ for Proposition \ref{prop:PNC}, the constant $(\epsilontwo /2,c')$ in Lemma \ref{lem:cocbou expoential}, which means that $(\epsilon,c)$ in Lemma \ref{lem:cocbou expoential} equals $(\epsilontwo /2,c')$, and 
$$\kappa_0=\frac{1}{10}\min\{c_1,c' \}.$$
Take $k, \epsilonthr , \epsilonfour $ from Proposition \ref{prop:sum-product} with this $\kappa_0$. 
Let $\epss$ be a positive number to be determined later (the only constant which is not fixed yet).
The constant $\epsilonzer $ in the hypothesis of Theorem \ref{thm:foudec} is defined as 
\begin{equation}\label{equ:eps0}
\epsilonzer =\frac{\epss}{\max_{\alpha\in\Pi}\{(2k+1)\alpha\sigma_\mu+\epsilontwo \}+\epss}
\end{equation} 
which will be fixed once $\epss$ is fixed. 

Here, we define and give relations of different constants. Let $v$ be the vector in $\R^\rank$ whose components are $
 v_\alpha=\sup_{\eta\in\supp r}|\partial_\alpha \varphi(\eta)|,\text{ for }\alpha\in\Pi$.
Recall that $\xi>1$. Then by the G4 assumption \eqref{equ:G4}, we have
\begin{equation}\label{equ:valp0}
	\sup_{\alpha\in\Pi}v_\alpha\in[\xi^{-\epsilonzer },\xi^{\epsilonzer }].
\end{equation}
Let $n$ be the minimal integer such that
\begin{equation}\label{equ:valp}
e^{\epsilontwo n}\geq \xi\max_{\alpha\in\Pi}\{v_\alpha e^{-(2k+1)\alpha\sigma_\mu n}\}.
\end{equation}
The existence is guaranteed by the positivity of Lyapunov constant, that is $\alpha\sigma_\mu>0$ for $\alpha\in\Pi$ (Lemma \ref{lem:poslya}).
Let the regularity scale $\delta$ be given by
\[\delta=e^{-\epss n}<1/2,\]
where we take $\xi$ large enough depending on $\epsilon$ so that $n$ is large enough. Let the contraction scale $\expec$ be given by
\begin{equation*}
\expec_\alpha=e^{-\alpha\sigma_\mu n}, \expec=\max_{\alpha\in\Pi}\{\expec_\alpha \}. 
\end{equation*}
The point is that the contraction speed $\beta$ decides the magnitude of a term and $\delta$ is only an error term, much larger than $\beta$. We can take $\epsilon$ small so that 
\begin{equation}\label{equ:epsbeta}
\beta\leq \delta^{10}.
\end{equation} 
 
Let the frequency $\fren$ be defined by $\fren=e^{\epsilontwo n} $. By \eqref{equ:valp}, we have
\begin{equation}\label{equ:fren}
	\fren\geq \xi\max_{\alpha\in\Pi}\{v_\alpha\beta_\alpha^{2k+1}\}\geq C_ {\epsilontwo }\fren,
\end{equation}
where $C_{\epsilontwo }=e^{-\epsilontwo }\min_{\alpha\in\Pi}\{e^{-(2k+1)\alpha\sigma_\mu} \}$.
By \eqref{equ:valp0}, there exists $\alpha_o$ in $\Pi$ such that $v_{\alpha_o}\geq\xi^{-\epsilonzer }$. Then \eqref{equ:fren} and \eqref{equ:eps0} imply that
\[\xi\leq\fren v_{\alpha_o}^{-1}\expec_{\alpha_o}^{-2k-1}\leq \xi^{\epsilonzer }\fren\expec_{\alpha_0}^{-(2k+1)} \leq \xi^{\epsilonzer }e^{n\epss\frac{1-\epsilonzer }{\epsilonzer }}. \]
Hence the regularity scale satisfies
\begin{equation}\label{equ:regsca}
\xi^{\epsilonzer }\leq e^{\epsilon n}=\delta^{-1}.
\end{equation}
\textbf{Notation}: We introduce some notation which will be used throughout Section \ref{sec:sumfou}.
\begin{itemize}
	\item Let $\bf g=(g_0,\dots, g_k)$ be an element in $G^{ (k+1)}$.
	\item Let $\bf h=(h_1,\dots, h_k)$ be an element in $G^{ k}$.
	\item We write $\bfgh=g_0h_1\cdots h_kg_k\in G$  for the product of $\bf g,\bf h$.
	\item We write $T\bfgh=g_0h_1\cdots g_{k-1}h_k\in G$.
	\item For $l\in\bb N$, let $\mu_{l,n}$ be the product measure on $G^{ l}$ given by $\mu_{l,n}=(\mucon{n})^{\otimes l}$.
	\item Recall that for $g,h$ in $G$ and $\eta$ in $\PP$, we define $\sigmah_g(h,\eta)_\alpha=\exp(-\alpha(\sigma(gh,\eta)-\kappa(g)-n\sigma_\mu))$ and $\sigmah_g(h,\eta)=(\sigmah_g(h,\eta)_\alpha)_{\alpha\in\Pi}\in\R^\rank$.
	\item For $\k$ in $\P_0$, let $\tisig_g(h,\k)_\alpha=\alpha^\up(\sg(\ell_g^{-1},h\k))\sigmah_g(h,\eta)_\alpha$, where $\alpha^\up$ is the corresponding algebraic character of the simple root $\alpha$ and we make a choice of $\ell_g$ and $\eta=\pi(\k)$.
	\item For $g$ in $G$, $\k$ in $\P_0$ and $\eta=\pi(\k)$, let $\tilde\lambda_{g,\k}$ be the pushforward measure on $\R^\rank$ of $\mucon{n}$ restricted to a subset $G_{n,g,\eta}$ under the map $\tisig_{g}(\cdot, \k)$. In other words, for a Borel set $E$,
	$$\tilde\lambda_{g,\k}(E)=\mucon{n}\{h\in G_{n,g,\eta}|\tisig_{g}(h,\k)\in E \}.$$ 
	Recall that the set $G_{n,g,\eta}$ is defined by $G_{n,{g},\eta}=\{h\in G| h \text{ is }(n,\epsilon,\eta,\zeta^m_g) \text{ good}\}$.
	
	\item After fixing $\bf{g}$, we will also fix a choice of $k_{g_j}$, $\ell_{g_j}$ for $g_j$ and let $\k_{g_j}=k_{g_j}\k_o$, $m_j(h)=\sg(\ell_{g_{j-1}}^{-1},hk_{g_j})$ and  $\lambda_j=\tilde\lambda_{g_{j-1},\k_{g_j}}$, for $j=1,\dots, k$.
\end{itemize}
\begin{lem}\label{lem:tilde noncon}
The measure $\tilde\lambda_{g,\k}$ satisfies the same property \eqref{equ:non-con} as $\lambda_{g,\eta}$ with $C_0$ replaced by $2^mC_0$, where $\eta=\pi(\k)$.
\end{lem}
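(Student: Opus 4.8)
The plan is to reduce the claim for $\tilde\lambda_{g,\k}$ to the corresponding non-concentration statement for $\lambda_{g,\eta}$, which we may use as an already-established property (it is exactly the content of Proposition \ref{prop:appmea}, which gives that $\lambda_{g,\eta}$ satisfies \eqref{equ:non-con} with some $C_0$). The only difference between the two measures is that $\tilde\lambda_{g,\k}$ is the pushforward of $\mucon{n}|_{G_{n,g,\eta}}$ under $\tisig_g(\cdot,\k)$ rather than under $\sigmah(\cdot,\eta)$, and by definition $\tisig_g(h,\k)_\alpha=\alpha^\up(\sg(\ell_g^{-1},h\k))\,\sigmah_g(h,\eta)_\alpha$. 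Since $\alpha^\up(m)\in\{\pm1\}$ for every $m\in M$ and every simple root $\alpha$, the map $\tisig_g(\cdot,\k)$ is obtained from $\sigmah(\cdot,\eta)$ by composing with a measurable map into the finite group $(\bb Z/2\bb Z)^\rank$ acting on $\R^\rank$ by sign changes on the coordinates. So I would first record this: there is a measurable partition $G_{n,g,\eta}=\bigsqcup_{\epsilon\in\{\pm1\}^\rank} G^{(\epsilon)}$ according to the value of $(\alpha^\up(\sg(\ell_g^{-1},h\k)))_{\alpha\in\Pi}$, and on each piece $\tisig_g(h,\k)=\epsilon\cdot\sigmah_g(h,\eta)$ (coordinatewise product with the sign vector $\epsilon$).

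Next I would carry out the estimate. Fix $a\in\R$ and $v\in\bb S^{\rank-1}$ and $\rho$ in the relevant range. Then
\begin{align*}
\tilde\lambda_{g,\k}\{x\mid \l v,x\r\in B_\R(a,\rho)\}
&=\sum_{\epsilon\in\{\pm1\}^\rank}\mucon{n}\{h\in G^{(\epsilon)}\mid \l v,\epsilon\cdot\sigmah_g(h,\eta)\r\in B_\R(a,\rho)\}\\
&\leq\sum_{\epsilon\in\{\pm1\}^\rank}\mucon{n}\{h\in G_{n,g,\eta}\mid \l \epsilon\cdot v,\sigmah_g(h,\eta)\r\in B_\R(a,\rho)\}\\
&=\sum_{\epsilon\in\{\pm1\}^\rank}\lambda_{g,\eta}\{x\mid\l\epsilon\cdot v,x\r\in B_\R(a,\rho)\},
\end{align*}
where I used $\l v,\epsilon\cdot y\r=\l\epsilon\cdot v,y\r$ and that $\epsilon\cdot v$ is again a unit vector in $\R^\rank$. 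Each term in the last sum is at most $C_0\rho^{\kappa_0}$ by the property of $\lambda_{g,\eta}$ (taking the supremum over unit vectors absorbs the choice $\epsilon\cdot v$), and there are $2^\rank=2^m$ values of $\epsilon$, giving the bound $2^mC_0\rho^{\kappa_0}$. Taking the supremum over $a$ and $v$ finishes the proof.

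This lemma is essentially bookkeeping, so I do not anticipate a genuine obstacle; the one point to be slightly careful about is that the partition into $G^{(\epsilon)}$ is well-defined and measurable, which follows because $\sg(\cdot,\cdot)$ is measurable (indeed locally constant, by the results of Section \ref{sec:sign group}) and because the choices of $k_g,\ell_g$ have been fixed. One should also note that the support condition and goodness condition defining $\lambda_{g,\eta}$ are identical to those for $\tilde\lambda_{g,\k}$ (both use $G_{n,g,\eta}$), so no extra truncation argument is needed. The net effect on later constants is harmless, since $2^m$ depends only on $\bf G$.
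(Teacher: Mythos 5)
Your argument is correct and is essentially the paper's own proof: the paper likewise observes that the two measures differ only by coordinate sign changes and bounds $(\pi_v)_*\tilde\lambda_{g,\k}(B_\R(a,\rho))$ by $\sum_{f\in(\bb Z/2\bb Z)^\rank}(\pi_{fv})_*\lambda_{g,\eta}(B_\R(a,\rho))$, yielding the factor $2^\rank=2^m$. Your explicit partition of $G_{n,g,\eta}$ by the sign vector and the identity $\l v,\epsilon\cdot y\r=\l\epsilon\cdot v,y\r$ just spell out the same inequality in more detail.
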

\begin{proof}
	Since the difference is only in the sign, we have
	\[(\pi_v)_*\tilde\lambda_{g,\k}(B_\R(a,\rho))\leq \sum_{f\in(\bb Z/2\bb Z)^\rank}(\pi_{fv})_*\lambda_{g,\eta}(B_\R(a,\rho)), \]
	where we identify $(\bb Z/2\bb Z)^\rank$ with $\{-1,1\}^\rank\subset \R^\rank$. The result follows from this inequality.	
\end{proof}
\textbf{First step:} 
For $\eta,\eta'$ in $\P$, let
\begin{equation}\label{equ:feta}
f(\eta,\eta')=	\int_G e^{i\xi(\varphi(g \eta)-\varphi(g \eta'))}r(g \eta)r(g \eta')\dd\mucon{(2k+1)n}(g).
\end{equation}
\begin{lem}
	We have
	\begin{equation}\label{equ:fxx'}
	\begin{split}
	\left|\int_\P e^{i\xi\varphi( \eta)}r( \eta)\dd\nu( \eta)\right|^2\leq\int_{\P^2} f( \eta, \eta')\dd\nu( \eta)\dd\nu( \eta').
	\end{split}
	\end{equation}
\end{lem}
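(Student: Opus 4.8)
The plan is to unwind the stationarity of $\nu$ iteratively. Since $\nu = \mu^{*(2k+1)n} * \nu$, for any continuous function $\Phi$ on $\P$ we have $\int_\P \Phi\, \dd\nu = \int_G \int_\P \Phi(g\eta)\,\dd\nu(\eta)\,\dd\mu^{*(2k+1)n}(g)$. Applying this with $\Phi(\eta) = e^{i\xi\varphi(\eta)}r(\eta)$ gives
\[
\int_\P e^{i\xi\varphi(\eta)}r(\eta)\,\dd\nu(\eta) = \int_\P\Bigl(\int_G e^{i\xi\varphi(g\eta)}r(g\eta)\,\dd\mu^{*(2k+1)n}(g)\Bigr)\dd\nu(\eta).
\]
First I would apply the Cauchy–Schwarz inequality to the outer integral against the probability measure $\nu$, bounding the square of the left-hand side by
\[
\int_\P\left|\int_G e^{i\xi\varphi(g\eta)}r(g\eta)\,\dd\mu^{*(2k+1)n}(g)\right|^2\dd\nu(\eta).
\]
Then I would expand the modulus squared as a double integral: writing the inner integral as $I(\eta)$, we have $|I(\eta)|^2 = I(\eta)\overline{I(\eta)}$, which is a double integral over $G\times G$ against $\mu^{*(2k+1)n}\otimes\mu^{*(2k+1)n}$ of $e^{i\xi(\varphi(g\eta)-\varphi(g'\eta))}r(g\eta)r(g'\eta)$ (here $r,\varphi$ are real-valued, so no conjugation issues arise beyond the sign in the exponent).

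The main remaining point is that the definition \eqref{equ:feta} of $f(\eta,\eta')$ uses a \emph{single} copy of $\mu^{*(2k+1)n}$ integrated against $g$, comparing $g\eta$ with $g\eta'$, rather than two independent copies of $g$ and $g'$ both acting on the same $\eta$. To reconcile these, I would note that after taking $I(\eta)\overline{I(\eta)}$ and then integrating $\eta$ against $\nu$, one should not pair the two group variables with the same $\eta$; instead the cleaner route is to write $|I(\eta)|^2$ first and then use stationarity of $\nu$ \emph{one more time} is not needed — rather, observe directly that $|\int_\P I(\eta)\dd\nu(\eta)|^2 \le \int_\P |I(\eta)|^2\dd\nu(\eta)$ by Cauchy–Schwarz, and then re-apply stationarity inside: actually the correct bookkeeping is to keep $|I(\eta)|^2$ and recognize that what we want on the right of \eqref{equ:fxx'} is $\int_{\P^2} f(\eta,\eta')\,\dd\nu(\eta)\dd\nu(\eta')$, so I would instead proceed as follows. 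Apply Cauchy–Schwarz to pass from $|\int \Phi\,\dd\nu|^2$ to a product of two integrals, introducing a second integration variable $\eta'$ with respect to $\nu$; this yields $\int_{\P^2}\bigl(\int_G e^{i\xi\varphi(g\eta)}r(g\eta)\dd\mu^{*(2k+1)n}(g)\bigr)\overline{\bigl(\int_G e^{i\xi\varphi(g\eta')}r(g\eta')\dd\mu^{*(2k+1)n}(g)\bigr)}\,\dd\nu(\eta)\dd\nu(\eta')$; then bound the product of the two $G$-integrals by a single $G$-integral using Cauchy–Schwarz in $L^2(\mu^{*(2k+1)n})$ (with $\|r\|_\infty\le 1$), which produces exactly $\int_G e^{i\xi(\varphi(g\eta)-\varphi(g\eta'))}r(g\eta)r(g\eta')\,\dd\mu^{*(2k+1)n}(g) = f(\eta,\eta')$ up to taking real parts / absolute values.

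I expect the only delicate step is the order in which Cauchy–Schwarz is applied and making sure the two group variables get identified correctly so that the diagonal $G$-integral $f(\eta,\eta')$ emerges; this is purely formal and involves no analysis, just a careful application of $|\langle u,v\rangle|\le\|u\|\,\|v\|$ twice (once in $L^2(\nu)$ and once in $L^2(\mu^{*(2k+1)n})$) together with $\|r\|_\infty\le 1$. Once that is set up, taking absolute values and using that $f(\eta,\eta')$ is what remains gives \eqref{equ:fxx'} immediately. No large deviation input or geometry of $\P$ is needed here; this lemma is a soft preliminary that sets up the reduction to estimating $f(\eta,\eta')$, which is where the real work (sum–product, non-concentration, changing flags) will enter in the subsequent steps.
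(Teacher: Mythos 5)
You have the right ingredients (stationarity of $\nu$, Fubini, Cauchy--Schwarz), but the ordering you finally commit to does not work. Writing $u_\eta(g)=e^{i\xi\varphi(g\eta)}r(g\eta)$ and $I(\eta)=\int_G u_\eta(g)\,\dd\mucon{(2k+1)n}(g)$, your last step claims that the product $I(\eta)\overline{I(\eta')}$ is bounded by $f(\eta,\eta')=\int_G u_\eta(g)\overline{u_{\eta'}(g)}\,\dd\mucon{(2k+1)n}(g)$ ``by Cauchy--Schwarz in $L^2(\mucon{(2k+1)n})$''. No such pointwise comparison holds: $f(\eta,\eta')$ is the inner product $\langle u_\eta,u_{\eta'}\rangle_{L^2(\mucon{(2k+1)n})}$, while $I(\eta)\overline{I(\eta')}$ is the product of means $\langle u_\eta,1\rangle\langle 1,u_{\eta'}\rangle$; if $u_\eta$ and $u_{\eta'}$ are orthogonal in $L^2(\mucon{(2k+1)n})$ but both have nonzero mean, the former is $0$ while the latter is not, and in any case $f(\eta,\eta')$ is complex-valued, so it cannot dominate anything pointwise. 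Your first route (Cauchy--Schwarz in $L^2(\nu)$ first) fares no better: it produces $\int_\P|I(\eta)|^2\dd\nu(\eta)$, a double integral over $G\times G$ with a single $\eta$, which is not the right-hand side of \eqref{equ:fxx'} and is not obviously comparable to it --- a mismatch you yourself noticed but did not resolve correctly.

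The correct (and the paper's) order is to apply Cauchy--Schwarz in the group variable, keeping the $\eta$-integral inside. By stationarity and Fubini, the quantity to be squared equals $\int_G\bigl(\int_\P u_\eta(g)\,\dd\nu(\eta)\bigr)\dd\mucon{(2k+1)n}(g)$; Cauchy--Schwarz (Jensen) with respect to the probability measure $\mucon{(2k+1)n}$ gives
\begin{equation*}
\left|\int_G\Bigl(\int_\P u_\eta(g)\,\dd\nu(\eta)\Bigr)\dd\mucon{(2k+1)n}(g)\right|^2\leq \int_G\left|\int_\P u_\eta(g)\,\dd\nu(\eta)\right|^2\dd\mucon{(2k+1)n}(g),
\end{equation*}
and expanding the square inside as a double $\nu$-integral in $(\eta,\eta')$ and exchanging the order of integration yields exactly $\int_{\P^2}f(\eta,\eta')\,\dd\nu(\eta)\dd\nu(\eta')$ (using that $\varphi$ and $r$ are real). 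In other words, the comparison between $\int_{\P^2}I(\eta)\overline{I(\eta')}\,\dd\nu\,\dd\nu$ and $\int_{\P^2}f(\eta,\eta')\,\dd\nu\,\dd\nu$ is valid only after the integration in $(\eta,\eta')$, where it is precisely Cauchy--Schwarz in $g$; note also that $\|r\|_\infty\leq 1$ is not needed at this step.
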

\begin{proof}
	By the definition of $\mu$-stationary measure and the Cauchy-Schwarz inequality, 
	\begin{equation*}
	\begin{split}
	&\left|\int_\P e^{i\xi\varphi( \eta)}r( \eta)\dd\nu( \eta)\right|^2
	\\&=\left|\int_{\P\times G} e^{i\xi\varphi(g \eta)}r(g \eta)\dd\mucon{(2k+1)n}(g)\dd\nu( \eta)\right|^2\leq \int_G\left|\int_\P e^{i\xi\varphi(g \eta)}r(g \eta)\dd\nu( \eta)\right|^2\dd\mucon{(2k+1)n}(g)\\
	&=\int_{\P^2}\int_G e^{i\xi(\varphi(g \eta)-\varphi(g \eta'))}r(g \eta)r(g \eta')\dd\mucon{(2k+1)n}(g)\dd\nu( \eta)\dd\nu( \eta').
	\end{split}
	\end{equation*}
	The proof is complete.
\end{proof}
Recall that for $\eta,\, \eta'$ in $\PP$, we write $V_{\alpha,\eta},\,V_{\alpha,\eta'}$ for their images in $\bb PV_\alpha$ and $d_\alpha(\eta,\eta')=d(V_{\alpha,\eta},V_{\alpha,\eta'})$.
\begin{defi}[Good Position]
	Let $ \eta, \eta'$ be in $\PP$. We say that they are in good position if
	\begin{equation*}
	\forall \alpha\in\Pi,\ d_\alpha(\eta,\eta')\geq \delta.
	\end{equation*} 
\end{defi}
We fix $ \eta, \eta'$ in good position, which means that $ \eta, \eta'$ are far in all $\bb PV_\alpha$. We rewrite the formula.
\begin{lem} 
	We have
	\begin{equation}\label{equ:dxx'}
	\begin{split}
	\left|\int_\P e^{i\xi\varphi( \eta)}r( \eta)\dd\nu( \eta)\right|^2\leq \int_{\eta,\eta' \text{ good}}f( \eta, \eta')\dd\nu( \eta)\dd\nu( \eta')+O(\delta^c).
	\end{split}
	\end{equation}
\end{lem}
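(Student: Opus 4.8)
The plan is to obtain \eqref{equ:dxx'} from \eqref{equ:fxx'} by simply discarding the contribution of the pairs $(\eta,\eta')$ that are \emph{not} in good position, and to control the discarded part by the Hölder regularity of the stationary measures. First I would observe that, since $\|r\|_\infty\le 1$ and $\mucon{(2k+1)n}$ is a probability measure, the function $f$ defined in \eqref{equ:feta} satisfies $|f(\eta,\eta')|\le 1$ for all $\eta,\eta'\in\PP$. Splitting $\P^2$ into the good set $\{(\eta,\eta')\in\P^2:\forall\alpha\in\Pi,\ d_\alpha(\eta,\eta')\ge\delta\}$ and its complement $\cal B$, inequality \eqref{equ:fxx'} then gives
\begin{equation*}
\left|\int_\P e^{i\xi\varphi(\eta)}r(\eta)\dd\nu(\eta)\right|^2\le\int_{\eta,\eta'\text{ good}}f(\eta,\eta')\dd\nu(\eta)\dd\nu(\eta')+(\nu\otimes\nu)(\cal B),
\end{equation*}
so the whole statement reduces to proving $(\nu\otimes\nu)(\cal B)=O(\delta^c)$.

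Since $\Pi$ is finite, by the union bound it suffices to bound $(\nu\otimes\nu)\{(\eta,\eta'):d_\alpha(\eta,\eta')<\delta\}$ for each fixed $\alpha\in\Pi$. Recall that $d_\alpha(\eta,\eta')=d(V_{\alpha,\eta},V_{\alpha,\eta'})$ and that the image of $\nu$ under $\eta\mapsto V_{\alpha,\eta}$ is the unique $\mu$-stationary measure $\nu_{V_\alpha}$ on $\bp V_\alpha$ (Lemma \ref{lem:stauni}, using that $V_\alpha$ is irreducible). By Fubini this quantity equals $\int_{\bp V_\alpha}\nu_{V_\alpha}\big(\{x:d(x,x')<\delta\}\big)\dd\nu_{V_\alpha}(x')$, so it is enough to bound $\nu_{V_\alpha}(\{x:d(x,x')<\delta\})$ uniformly in $x'\in\bp V_\alpha$, i.e.\ to show that the $\mu$-stationary measure on a projective space gives exponentially small mass to small balls.

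The last point I would handle by comparing balls with hyperplane neighbourhoods inside the single space $\bp V_\alpha$: fixing $x'=\R v'$ with $\|v'\|=1$, for $\delta\le 1/2$ any $x=\R v$ with $d(x,x')<\delta$ can be written (after normalising) as $v=v'+w$ with $w\perp v'$ and $\|w\|=d(x,x')/\sqrt{1-d(x,x')^2}<2\delta$; for any unit covector $f\in V_\alpha^*$ vanishing on $v'$ one then has $\delta(x,\R f)=|f(w)|/\|v\|\le\|w\|<2\delta$, so $\{x:d(x,x')<\delta\}$ lies in the $2\delta$-neighbourhood of a projective hyperplane. Proposition \ref{prop:holder regulariyt} then gives $\nu_{V_\alpha}(\{x:d(x,x')<\delta\})\le C(2\delta)^c$, uniformly in $x'$; summing over $\alpha\in\Pi$ yields $(\nu\otimes\nu)(\cal B)=O(\delta^c)$, which completes the proof. (One could equally well invoke Corollary \ref{cor:regularity} directly after the ball-to-hyperplane comparison.) This argument is essentially formal: the only step that is not bookkeeping is the elementary linear-algebra comparison of balls and hyperplane neighbourhoods in $\bp V_\alpha$, and even that is routine, so I do not expect any real obstacle here.
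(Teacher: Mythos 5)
Your proof is correct and follows essentially the same route as the paper: bound the bad set $\{(\eta,\eta'):d_\alpha(\eta,\eta')<\delta \text{ for some }\alpha\}$ by Fubini, the H\"older regularity of the stationary measure, and a union bound over $\Pi$, using $\|r\|_\infty\le 1$ to discard the integrand there. The only cosmetic difference is that you make the ball-to-hyperplane-neighbourhood comparison in $\bp V_\alpha$ explicit before invoking Proposition \ref{prop:holder regulariyt}, whereas the paper applies Corollary \ref{cor:regularity} directly; the content is the same.
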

\begin{proof}
	By the regularity of stationary measure \eqref{equ:regularity stataionary measure}, we have
	\begin{equation}\label{equ:nualpha}
	\nu\{\eta'\in\P | d_\alpha(\eta,\eta')\leq\delta\}=\nu\{\eta'\in\P|d(V_{\alpha,\eta},V_{\alpha,\eta'})\leq\delta\}\leq C\delta^c.
	\end{equation}
	Therefore by \eqref{equ:nualpha} and Fubini's theorem,
	\begin{equation*}
	\nu\otimes\nu\{(\eta,\eta')\in \P^2|\,d_\alpha(\eta',\eta)<\delta \} =\int_{\eta\in\P}\nu\{\eta'\in\P | d_\alpha(\eta,\eta')\leq\delta\}\dd\nu(\eta)\ll \delta^c. 
	\end{equation*}
	Summing over simple roots $\alpha$, we obtain the result by $\|r\|_\infty\leq 1$.
\end{proof}
\textbf{Second step:}
The purpose of this part is to prove that the Schottky type property is almost preserved by iteration, on the complement of an exponentially small set.

We fix $g_j$ for $j=0,\dots,k-1$ which satisfies
\begin{equation}\label{equ:g2l}
\|\kappa( g_j)-n\sigma_\mu\|\leq \epsilon n/C_A.
\end{equation}
Recall that $C_A$ is a constant in Definition \ref{defi:good element}. We also demand that
\begin{equation}\label{equ:g2j+1}
\begin{split}
h_{j+1} \text{ is } (n,\epsilon,\eta^M_{g_{j+1}},\zeta^m_{g_j}) \text{ good.}
\end{split}
\end{equation}

Recall that the Cartan subspace $\frak a$ is equipped with the norm induced by the Killing form, and with this norm $\frak a$ is isomorphic to the euclidean space $\R^{r}$.
\begin{lem}\label{lem:mainapprox}
	Suppose that $\bf{g}, \bf{h}$ satisfy the above conditions \eqref{equ:g2l} and \eqref{equ:g2j+1}. Then the action of $\tbfgh$ on $b^M_{V_\alpha,g_k}(\delta)$ is $\beta_\alpha^{2k}\delta^{-C_0}$ Lipschitz and
	\begin{equation}
	 \label{equ:exp leq}
	e^{-\alpha\sigma(g_0h_1,x^M_{g_1})}\cdots e^{-\alpha\sigma(g_{k-1}h_k,x^M_{g_k})}\leq \beta_\alpha^{2k}\delta^{-C_0},
\end{equation} 
	for every $\alpha$ in $\Pi$. For $t\in b^M_{g_k}(\delta)$, let $t_j=g_jh_{j+1}\cdots h_kt$ for $j=0,\dots ,k$, where we let $t_k=t$. Then
	\begin{align}\label{equ:xlb}
	&t_j\in b^M_{g_j}(\beta\delta^{-2})\subset b^M_{g_j}(\delta),
	\\ \label{equ:sigma-sigma}
	&\|\sigma(g_jh_{j+1},t_{j+1})-\sigma(g_jh_{j+1},\eta^M_{g_{j+1}})\|\leq\expec\delta^{-C_0}.
	\end{align}
\end{lem}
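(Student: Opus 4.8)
The plan is to prove all four assertions simultaneously by a downward induction on the index $j$, moving the ``density point'' $x^M_{g_{j+1}}$ through the composition one factor at a time and using the good-element hypothesis \eqref{equ:g2j+1} at each stage to stay inside the ball $b^M_{g_j}(\delta)$. First I would record the basic estimate: since $h_{j+1}$ is $(n,\epsilon,\eta^M_{g_{j+1}},\zeta^m_{g_j})$ good and $\|\kappa(g_j)-n\sigma_\mu\|\leq \epsilon n/C_A$, Lemma \ref{lem:cocbou} applied with the pair $(g_j,h_{j+1})$ gives $\gap(h_{j+1})\leq \expec\delta^{-1}\leq\delta^2$ and $\|\sigma(g_jh_{j+1},\eta^M_{g_{j+1}})-\kappa(g_j)-n\sigma_\mu\|\leq\epsilon n$, so in particular $e^{-\alpha\sigma(g_jh_{j+1},x^M_{g_{j+1}})}= e^{-\alpha\kappa(g_j)-\alpha\sigma(h_{j+1},\eta^M_{g_{j+1}})}$ is comparable to $\expec_\alpha^{2}\delta^{-C_0}$ up to the usual $\delta^{\pm C_0}$ factors (here one uses $\|\sigma(g_j,\cdot)-\kappa(g_j)\|\leq C_1|\log\delta(\cdot,\zeta^m_{g_j})|$ from Lemma \ref{lem:iwacar} together with $\delta(\eta^M_{g_{j+1}},\zeta^m_{g_j})>2e^{-\epsilon n/C_A}$). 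Multiplying these $k$ estimates over $j=0,\dots,k-1$ yields \eqref{equ:exp leq}; the exponent $2k$ rather than $k$ comes precisely from the fact that each $\gap(h_{j+1})\asymp\expec^2\delta^{-C_0}$ contributes a square, which is the quantitative heart of the statement.

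Next I would set up the induction proper. For $t\in b^M_{g_k}(\delta)$ put $t_j=g_jh_{j+1}\cdots h_k t$. The base case $j=k$ is the hypothesis $t_k=t\in b^M_{g_k}(\delta)$. Assume $t_{j+1}\in b^M_{g_{j+1}}(\delta)$. Since $\gap(h_{j+1})\leq\delta^2$, Lemma \ref{lem:gBmgmul} applied to $h_{j+1}$ shows $h_{j+1}t_{j+1}\in b^M_{h_{j+1}}(\gap(h_{j+1})\delta^{-1})\subset b^M_{h_{j+1}}(\delta)$; then because $\eta^M_{h_{j+1}}$ is close to $\eta^M_{g_j}$-side data — more precisely, using $\delta(\eta^M_{g_{j+1}},\zeta^m_{g_j})$ is bounded below and $b^M_{h_{j+1}}(\delta)$ is a small neighbourhood of $\eta^M_{h_{j+1}}$ which lies in $B^m_{g_j}(2\delta)$ by the good-element assumption — we get $h_{j+1}t_{j+1}\in B^m_{g_j}(2\delta)$. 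Applying Lemma \ref{lem:gBmgmul} once more to $g_j$ (whose gap is $\leq\delta^2$ by \eqref{equ:g2l}) gives $t_j=g_j(h_{j+1}t_{j+1})\in b^M_{g_j}(\gap(g_j)(2\delta)^{-1})\subset b^M_{g_j}(\beta\delta^{-2})\subset b^M_{g_j}(\delta)$, which is \eqref{equ:xlb}. For \eqref{equ:sigma-sigma}: both $t_{j+1}$ and $\eta^M_{g_{j+1}}$ lie in $B^m_{g_jh_{j+1}}(\delta)$ (one checks the relevant $\delta$-separation from $\zeta^m_{g_jh_{j+1}}$ using that $g_jh_{j+1}$ is itself a ``good'' product, or directly estimates $\sigma(g_j,\cdot)$ and $\sigma(h_{j+1},\cdot)$ separately via Lemma \ref{lem:iwacar} and the cocycle identity), and then the $O(\delta^{-1})$-Lipschitz property of $\sigma(g_jh_{j+1},\cdot)$ on that set from Lemma \ref{lem:gBmgmul}, combined with $d(t_{j+1},\eta^M_{g_{j+1}})\leq 2\delta$, bounds the difference by $O(\delta)\leq\expec\delta^{-C_0}$; alternatively, split $\sigma(g_jh_{j+1},\cdot)=\sigma(g_j,h_{j+1}\cdot)+\sigma(h_{j+1},\cdot)$ and use that the second term is $2\delta^{-1}$-Lipschitz on $B^m_{h_{j+1}}(\delta)$ while $h_{j+1}$ contracts distances by $\gap(h_{j+1})\delta^{-2}\leq\expec\delta^{-C_0}$, so the first term's contribution is $\leq\expec\delta^{-C_0}$ as well.

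Finally, the Lipschitz claim for $\tbfgh=g_0h_1\cdots g_{k-1}h_k$ on $b^M_{V_\alpha,g_k}(\delta)$ follows by composing the $2k$ Lipschitz bounds: each factor $g_i$ and each $h_i$, when restricted to the appropriate ball/complement occurring along the orbit (which the induction above has just certified), is Lipschitz with constant $\gap(\cdot)\delta^{-2}$ by Lemma \ref{lem:gBmgmul} (in the representation $V_\alpha$, where $\gamma_{1,2}(\rho_\alpha(g))=e^{-\alpha\kappa(g)}$), and the product of these $2k$ constants is $\asymp(\expec_\alpha)^{2k}\delta^{-C_0}$ after absorbing the finitely many $\delta^{-2}$'s and the $\epsilon$-deviations of $\kappa$ into $\delta^{-C_0}$. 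The main obstacle I anticipate is the bookkeeping in \eqref{equ:sigma-sigma} and in the Lipschitz chain: one must be careful that at every one of the $2k$ steps the point being acted on genuinely lies in the set where the relevant lemma applies (i.e.\ in $B^m$ of the next factor with margin $\delta$, not just $\beta\delta^{-2}$), which is why the hypotheses \eqref{equ:g2l}, \eqref{equ:g2j+1} are phrased with the constant $C_A$ and the factor $2$ in $2e^{-\epsilon n/C_A}$ — these slacks are exactly what is consumed in passing from $b^M(\beta\delta^{-2})$ back into $B^m(\delta)$ at the next stage, and verifying the arithmetic $\beta\delta^{-2}<\delta$ (guaranteed by \eqref{equ:epsbeta}, $\beta\leq\delta^{10}$) at each step is the routine-but-essential part.
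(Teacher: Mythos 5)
Most of your proposal follows the paper's own route: \eqref{equ:exp leq} by applying Lemma \ref{lem:cocbou} to each pair $(g_j,h_{j+1})$ and multiplying, \eqref{equ:xlb} by downward induction with two applications of Lemma \ref{lem:gBmgmul} per step (the good-element separations providing the passage from $b^M_{g_{j+1}}(\delta)$ into $B^m_{h_{j+1}}(\delta)$ and from $b^M_{h_{j+1}}(\beta\delta^{-2})$ into $B^m_{g_j}(\delta)$), and the Lipschitz constant by composing the $2k$ contraction estimates in $V_\alpha$. (One side remark is off: $\gap(h_{j+1})\asymp\beta\delta^{\pm C_0}$, not $\beta^2\delta^{-C_0}$; the square per factor comes from $\kappa(g_j)$ and $\kappa(h_{j+1})$ each being close to $n\sigma_\mu$, exactly as your displayed computation shows.)

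The genuine gap is in \eqref{equ:sigma-sigma}. You bound $\|\sigma(g_jh_{j+1},t_{j+1})-\sigma(g_jh_{j+1},\eta^M_{g_{j+1}})\|$ by an $O(\delta^{-1})$-Lipschitz constant times $d(t_{j+1},\eta^M_{g_{j+1}})\leq 2\delta$ and claim this gives $O(\delta)\leq\beta\delta^{-C_0}$. This fails twice: the product is $O(1)$, not $O(\delta)$; and, more seriously, the comparison goes the wrong way, since by \eqref{equ:epsbeta} one has $\beta\leq\delta^{10}$, so $\beta\delta^{-C_0}$ is much \emph{smaller} than $\delta$ — and the factor $\beta$ in \eqref{equ:sigma-sigma} is precisely what makes the error negligible in \eqref{equ:varphis0}, so it cannot be weakened. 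Your alternative route has the same defect: the contraction by $h_{j+1}$ does make the $\sigma(g_j,h_{j+1}\cdot)$ contribution $\leq\beta\delta^{-C_0}$, but the remaining term $\|\sigma(h_{j+1},t_{j+1})-\sigma(h_{j+1},\eta^M_{g_{j+1}})\|$ is only bounded by $2\delta^{-1}\cdot 2\delta=O(1)$ with the distance you use. The missing ingredient — and what the paper does — is to feed \eqref{equ:xlb} back in: at index $j+1$ one has $d(t_{j+1},\eta^M_{g_{j+1}})\leq\beta\delta^{-2}$, not merely $\leq\delta$ (your own induction already produces this refined radius for $j+1\leq k-1$; for $j+1=k$ the points arising in the application are in fact $\beta\delta^{-C_0}$-close to $\eta^M_{g_k}$ by Lemma \ref{lem:getajgetal}). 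With this refined distance both terms of the cocycle splitting are at most $O(\delta^{-1})\cdot\beta\delta^{-2}=\beta\delta^{-C_0}$, which is the required estimate.
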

\begin{rem}
	In \eqref{equ:xlb}, we need $\beta<\delta^3$, which is true due to \eqref{equ:epsbeta}.
	The contraction constant $\expec$ here is a little different from the gap $\gap(g_j)$, but $\gap(g_j)/\expec$ is in the interval $[\delta^{C_0},\delta^{-C_0}]$ by Lemma \ref{lem:cocbou}. Hence they are of the same size and we will not distinguish them.
	
		The intuition here is that by controlling $\kappa(g),\eta^M_g,\zeta^m_g$, all the other positions or lengths will also be controlled, which is similar to what happened in hyperbolic dynamics.
\end{rem}
\begin{proof}[Proof of Lemma \ref{lem:mainapprox}]
	By Lemma \ref{lem:cocbou}, we have \eqref{equ:exp leq} from \eqref{equ:g2j+1} for all $\alpha$ in $\Pi$ at the same time.
	
	We use induction to prove the inclusion. For $j=k$, it is due to the hypothesis of Lemma \ref{lem:mainapprox}.
	Suppose that the property holds for $j+1$. By definition, $t_j=g_jh_{j+1}t_{j+1}$. We abbreviate $g_j,h_{j+1}, t_{j+1},\eta^M_{g_{j+1}}$ to $g, h,\eta,\eta'$. The condition becomes
	$$d(\eta,\eta')\leq \delta, \|\kappa(g)-n\sigma_\mu\|\leq \epsilon n/\constant \text{ and }h\text{ is }(n,\epsilon,\eta',\zeta^m_g) \text{ good.}$$
	By Lemma \ref{lem:cocbou}, we have $\gap(h)\leq \beta\delta^{-1}$. By Lemma \ref{lem:gBmgmul}, due to $\eta\in B(\eta',\delta)\subset B^m_{h}(\delta)$, we have $h\eta\in b^M_{h}(\expec/\delta^2)\subset B^m_{g}(\delta)$. Therefore $gh\eta\in b^M_{g}(\expec/\delta^2)$, which is the inclusion condition.
	
	The Lipschitz property for simple root can be obtained by using Lemma \ref{lem:gBmg} $2k$ times, first the inclusion, then the Lipschitz property. 
	
	Then we prove \eqref{equ:sigma-sigma} and we keep the notation $g,h,\eta,\eta'$. We have
	\begin{align*}
	\|\sigma(gh,\eta)-\sigma(gh,\eta')\|\ll \|\sigma(g,h\eta)-\sigma(g,h\eta') \|+\|\sigma(h,\eta)-\sigma(h,\eta') \|.
	\end{align*}
	By the same argument, due to Lemma \ref{lem:gBmgmul} and $\eta,\eta'\in B(\eta',\beta/\delta^2)\subset B^m_{h}(\delta)$, we have $h\eta,h\eta'\in b^M_h(\beta/\delta^2)\subset B^m_g(\delta)$. Therefore by the  Lipschitz property of Lemma \ref{lem:gBmgmul}
	$$	\|\sigma(gh,\eta)-\sigma(gh,\eta')\|\ll (d(\eta,\eta')+d(h\eta,h\eta'))\delta^{-1}\leq (2\expec/\delta^2)\times\delta^{-1}= 2\expec/\delta^3.$$
	The proof is complete.
\end{proof}
\begin{lem}
	Suppose that $\bf{g}, \bf{h}$ satisfy the conditions \eqref{equ:g2l} and \eqref{equ:g2j+1}. Let $s$ be in $\{\k\in \P_0|d_0(\k,\k_{g_k})\leq\delta  \}$ (the distance $d_0$ is defined in Appendix \ref{sec:equi distance}). Let $s_j=g_jh_{j+1}\cdots h_k s$ for $j=0,\dots, k$, where we let $s_k=s$. We have
	\begin{equation}\label{equ:s0g0m0}
		\sg(s_0,k_{g_0})=\Pi_{1\leq j\leq k}\sg(\ell_{g_{j-1}}^{-1},h_jk_{g_j})=\Pi_{1\leq j\leq k}m_j(h_j).
	\end{equation}
\end{lem}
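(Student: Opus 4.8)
The plan is to peel the factors $g_{j-1}h_j$ off the word $s_0=g_0h_1\cdots h_ks$ one at a time using Lemma~\ref{lem:ghkk'}, and then to telescope; the point is that the sign error term produced at step $j$ is absorbed into step $j+1$ rather than accumulating. Two structural facts make this work. First, $\sg$ descends to a function on $\P_0\times\P_0$, so we may pass freely between a point of $\P_0$ and any of its $G$-representatives; in particular, choosing $k\in K$ with $s_j=k\k_o$ in $\P_0$, we have $g_{j-1}h_jk\cdot\k_o=g_{j-1}h_js_j=s_{j-1}$, hence $\sg(g_{j-1}h_jk,k_{g_{j-1}})=\sg(s_{j-1},\k_{g_{j-1}})$ and $\sg(k,k_{g_j})=\sg(s_j,\k_{g_j})$. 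Second, $\sg$ is symmetric: since $M\simeq(\Z/2\Z)^r$ is $2$-torsion, and the $M$-part of $\theta(g'^{-1})g$ in the Bruhat decomposition is the inverse of that of $\theta(g^{-1})g'$, the two are equal, so $\sg(g',g)=\sg(g,g')$. Consequently $\sg(s_{j-1},\k_{g_{j-1}})=\sg(k_{g_{j-1}},g_{j-1}h_jk)$.

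I would then verify the four hypotheses of Lemma~\ref{lem:ghkk'} for $g=g_{j-1}$, $h=h_j$, $k'=k_{g_j}$ and $k$ a lift of $s_j$, at every level $1\le j\le k$. From $d_0(s,\k_{g_k})\le\delta$ we get $\pi(s)\in b^M_{g_k}(\delta)$, whence Lemma~\ref{lem:mainapprox} (applied with $t=\pi(s)$; this uses \eqref{equ:g2l} and \eqref{equ:g2j+1}) gives $\pi(s_j)\in b^M_{g_j}(\beta\delta^{-2})$, so $d(\pi(s_j),\eta^M_{g_j})\le\beta\delta^{-2}<\delta$ by \eqref{equ:epsbeta}, where $\eta^M_{g_j}=k_{g_j}\eta_o=\pi(\k_{g_j})$. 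The goodness of $h_j$ from \eqref{equ:g2j+1} yields $\delta(\eta^M_{g_j},\zeta^m_{h_j})>2e^{-\epsilon n/\constant}$ and $\delta(\eta^M_{h_j},\zeta^m_{g_{j-1}})>2e^{-\epsilon n/\constant}$; since the threshold $e^{-\epsilon n/\constant}$ dominates $\delta$ and $\beta\delta^{-2}$ is negligible against $\delta$, the triangle inequality for $\delta(\cdot,\cdot)$ gives $\pi(s_j),\eta^M_{g_j}\in B^m_{h_j}(\delta)$ and $\eta^M_{h_j}\in B^m_{g_{j-1}}(3\delta)$. Finally Lemma~\ref{lem:cocbou} together with $\beta\le\delta^{10}$ gives $\gap(h_j)\le\beta\delta^{-1}\le\delta^2$.

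With these in hand, Lemma~\ref{lem:ghkk'} gives $\sg(k_{g_{j-1}},g_{j-1}h_jk)=\sg(\ell_{g_{j-1}}^{-1},h_jk_{g_j})\,\sg(k,k_{g_j})$, which by the two structural facts above and the definition $m_j(h)=\sg(\ell_{g_{j-1}}^{-1},hk_{g_j})$ becomes
\[
\sg(s_{j-1},\k_{g_{j-1}})=m_j(h_j)\,\sg(s_j,\k_{g_j}),\qquad 1\le j\le k.
\]
Multiplying over $j$ and using that $M$ is abelian gives $\sg(s_0,\k_{g_0})=\big(\prod_{1\le j\le k}m_j(h_j)\big)\,\sg(s_k,\k_{g_k})$. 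Since $s_k=s$ and $d_0(s,\k_{g_k})\le\delta<1$, Lemma~\ref{lem:p0 p} forces $\sg(s,\k_{g_k})=e$; this is the one place where the $\P_0$-distance hypothesis, rather than merely its projection to $\P$, is genuinely used. Unwinding the definition of $m_j$ then gives both equalities in \eqref{equ:s0g0m0}.

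The substance is not conceptual but lies in the bookkeeping of the second paragraph: one must confirm that Lemma~\ref{lem:ghkk'} really applies at \emph{every} level, which amounts to propagating the Schottky-type contraction of Lemma~\ref{lem:mainapprox} along the whole word $g_0h_1\cdots h_kg_k$ and checking that the regularity scale $\delta$, the contraction scale $\beta$ and the goodness thresholds $e^{-\epsilon n/\constant}$ are mutually compatible (so that $e^{-\epsilon n/\constant}$ dominates $\delta$ while $\beta\delta^{-2}$ stays negligible against $\delta$). No idea beyond Lemmas~\ref{lem:ghkk'}, \ref{lem:mainapprox} and \ref{lem:cocbou} is needed.
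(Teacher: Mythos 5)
Your proposal is correct and takes essentially the same route as the paper: the paper's own proof applies Lemma \ref{lem:ghkk'} once (with its hypotheses supplied by \eqref{equ:xlb} and \eqref{equ:g2j+1}) to peel off $g_0h_1$ and then iterates, exactly as you do. The extra bookkeeping you supply — the symmetry of $\sg$, the level-by-level verification of the hypotheses via Lemma \ref{lem:mainapprox} and Lemma \ref{lem:cocbou}, and the terminal identity $\sg(s,\k_{g_k})=e$ from Lemma \ref{lem:p0 p} — only makes explicit what the paper leaves implicit.
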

\begin{proof}
	We let $\eta=\pi(s)$, then $\eta$ is in $b^M_{g_k}(\delta)$.
	By \eqref{equ:xlb} with $j=1$ and \eqref{equ:g2j+1} with $j=0$, Lemma \ref{lem:ghkk'} implies
	\begin{equation*}
		\sg(s_0,k_{g_0})=\sg(k_{g_0},g_0h_1s_1)=\sg(\ell_{g_0}^{-1},h_1k_{g_1})\sg(s_1,k_{g_1}).
	\end{equation*}
	Iterating this formula, we obtain the result.
\end{proof}
\textbf{Third step:} Here we mimic the proof of \cite{bourgain2017fourier}, where they heavily use the properties of Schottky groups and symbolic dynamics. But in our case, the group is much more complicated from the point of view of dynamics. We use the large deviation principle to get a similar formula.

By a very careful control of $g_{l}$, with a loss of an exponentially small measure, we are able to rewrite the formula in a form to use the sum-product estimate. The key point is that by controlling the Cartan projection and the position of $\eta^M_{g_l}$ and $\zeta^m_{g_l}$ of each $g_{l}$, we are able to get a good control of their product $\bfgh$.

We should notice that the element $g_j$ will be fixed, and we will integrate first with respect to $h_j$. This gives the independence of the cocycle $\sigma(g_{j-1}h_j,\eta^M_{g_j})$, that is for different $j$ they are independent, which is an important point to apply sum-product estimates. 

We return to \eqref{equ:dxx'}. We call $\bf g$ ``good" with respect to $\eta,\eta'$ if
\begin{equation}\label{equ:g good}
\begin{split}
&\bf g \text{ satisfies \eqref{equ:g2l}, }g_k \text{ satisfies the conditions in Lemma \ref{lem:chapoi}},\ \eta^M_{g_0}\in\supp r \\
 &\text{ and }\delta(\eta,\zeta^m_{g_k}),\delta(\eta',\zeta^m_{g_k}), \delta(V_{\alpha,\eta}\wedge V_{\alpha,\eta'},y^m_{\wedge^2\rho_\alpha g_k})\geq 4\delta.
\end{split}
\end{equation}
\begin{lem}\label{lem:gketagketa}
	If $\eta$ and $\eta'$ are in good position and $\bf g$ is ``good", then $g_k\eta,g_k\eta'$ are in $b^M_{g_k}(\delta)$, and for $\alpha\in\Pi$ the $d_\alpha$ distance between $g_k\eta$ and $g_k\eta'$ is almost $\beta_\alpha$, that is
	\begin{equation*}
		d_\alpha(g_k\eta,g_k\eta')\in\expec_\alpha[\delta^{C_0},\delta^{-C_0}].
	\end{equation*}
\end{lem}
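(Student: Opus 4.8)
## Proof plan for Lemma \ref{lem:gketagketa}

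The plan is to exploit the large deviation control packaged into the definition of ``good'' together with the distance-contraction estimates of Section \ref{sec:linear action} and the lower bound for contracted distances from Lemma \ref{lem:distance}. First I would establish the inclusion $g_k\eta, g_k\eta'\in b^M_{g_k}(\delta)$. Since $\bf g$ is ``good'', we have $\delta(\eta,\zeta^m_{g_k}),\delta(\eta',\zeta^m_{g_k})\geq 4\delta$, so $\eta,\eta'\in B^m_{g_k}(4\delta)\subset B^m_{g_k}(\delta)$; and $g_k$ satisfies the conditions in Lemma \ref{lem:chapoi}, which in particular (combined with \eqref{equ:g2l} and Lemma \ref{lem:cocbou}) forces $\gap(g_k)\leq\beta\delta^{-1}\leq\delta^2$. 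Then Lemma \ref{lem:gBmgmul} applies and gives $g_k B^m_{g_k}(\delta)\subset b^M_{g_k}(\beta\delta^{-1})\subset b^M_{g_k}(\delta)$, which yields the first assertion (in fact with room to spare, landing inside $b^M_{g_k}(\beta\delta^{-1})$).

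For the two-sided estimate on $d_\alpha(g_k\eta,g_k\eta')$, fix $\alpha\in\Pi$ and work inside $\bp V_\alpha$ with $g=\rho_\alpha(g_k)$, $x=V_{\alpha,\eta}$, $x'=V_{\alpha,\eta'}$. For the upper bound I would use the Lipschitz part of Lemma \ref{lem:gBmg}: since $x,x'\in B^m_{V_\alpha,g_k}(\delta)$ (the projective-space version of the flag condition above) and $\gamma_{1,2}(\rho_\alpha(g_k))=e^{-\alpha\kappa(g_k)}\leq \beta\delta^{-1}\leq\delta^2$, the map $g$ is $e^{-\alpha\kappa(g_k)}\delta^{-2}$-Lipschitz on $B^m_{V_\alpha,g_k}(\delta)$, so
\[
d_\alpha(g_k\eta,g_k\eta')\leq e^{-\alpha\kappa(g_k)}\delta^{-2}\,d_\alpha(\eta,\eta')\leq e^{-\alpha\kappa(g_k)}\delta^{-2}.
\]
By \eqref{equ:g2l} and $|\alpha(n\sigma_\mu-\kappa(g_k))|\leq C_1'\|n\sigma_\mu-\kappa(g_k)\|\leq \epsilon n$, we have $e^{-\alpha\kappa(g_k)}=\beta_\alpha e^{\alpha(n\sigma_\mu-\kappa(g_k))}\in\beta_\alpha[\delta,\delta^{-1}]$, hence $d_\alpha(g_k\eta,g_k\eta')\leq \beta_\alpha\delta^{-C_0}$.

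For the lower bound I would invoke Lemma \ref{lem:distance} with $g=\rho_\alpha(g_k)$: it gives
\[
d_\alpha(g_k\eta,g_k\eta')\geq \gamma_{1,2}(\rho_\alpha(g_k))\,\delta(V_{\alpha,\eta}\wedge V_{\alpha,\eta'},y^m_{\wedge^2\rho_\alpha(g_k)})\,d_\alpha(\eta,\eta').
\]
Here $\gamma_{1,2}(\rho_\alpha(g_k))=e^{-\alpha\kappa(g_k)}\in\beta_\alpha[\delta,\delta^{-1}]$ as above; the last factor $d_\alpha(\eta,\eta')\geq\delta$ because $\eta,\eta'$ are in good position; and the middle factor is $\geq 4\delta$ precisely by the last clause of \eqref{equ:g good}. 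Multiplying gives $d_\alpha(g_k\eta,g_k\eta')\geq \beta_\alpha\delta^{C_0}$, completing the two-sided bound $d_\alpha(g_k\eta,g_k\eta')\in\beta_\alpha[\delta^{C_0},\delta^{-C_0}]$.

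The only subtlety — and the place to be careful rather than a genuine obstacle — is matching the flag-variety conditions in the definition of ``good'' with the projective-space hypotheses of Lemmas \ref{lem:gBmg} and \ref{lem:distance}: one must check that $\delta(\eta,\zeta^m_{g_k})\geq 4\delta$ on $\P$ implies $\delta(V_{\alpha,\eta},y^m_{\rho_\alpha(g_k)})\geq 4\delta$ in $\bp V_\alpha$ (immediate from $\delta(\eta,\zeta)=\min_\alpha\delta(V_{\alpha,\eta},V^*_{\alpha,\zeta})$ and Lemma \ref{lem:ym rhog}), and that the $\wedge^2$-nondegeneracy clause in \eqref{equ:g good} is stated exactly so as to feed Lemma \ref{lem:distance}. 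Since all constants absorbed are of the form $\delta^{\pm C_0}$ with $C_0$ depending only on $\bf G$ and $\mu$, tracking them through the three displayed inequalities finishes the proof.
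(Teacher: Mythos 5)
Your proposal is correct and follows essentially the same route as the paper, whose proof is exactly this in compressed form: the inclusion from Lemma \ref{lem:gBmgmul}, the lower bound from \eqref{equ:gxgx'l} (Lemma \ref{lem:distance}) using the $\wedge^2$-clause of \eqref{equ:g good} and the good position of $\eta,\eta'$, and the upper bound from the Lipschitz property in Lemma \ref{lem:gBmg}, with $e^{-\alpha\kappa(g_k)}\in\expec_\alpha[\delta,\delta^{-1}]$ coming from the Cartan control in \eqref{equ:g2l}. You have simply made explicit the bookkeeping (passing from the flag-variety conditions to the $\bp V_\alpha$ hypotheses via Lemma \ref{lem:ym rhog}) that the paper leaves implicit.
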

\begin{proof}
	The inclusion is due to Lemma \ref{lem:gBmgmul}. Since $g$ is good \eqref{equ:g good}, by \eqref{equ:gxgx'l} we have the lower bound and by the Lipschitz property in Lemma \ref{lem:gBmg} we have the upper bound.
\end{proof}
For $ \eta, \eta'$ in $\P$, we can rewrite the formula of $f(\eta,\eta')$ as
\begin{equation}\label{equ:feta'}
	f(\eta,\eta')=\int e^{i\xi(\varphi(\bfgh \eta)-\varphi(\bfgh \eta'))}r(\bfgh \eta)r(\bfgh \eta')\dd\mu_{k,n}(\bf h)\dd\mu_{k+1,n}(\bf g). 
\end{equation}
We say that $\bf h$ is $\bf g$-regular if $\bf h$ satisfies \eqref{equ:g2j+1}. Let 
\begin{equation*}
f_{\bf g}(\eta,\eta')=\int_{\bf g-regular} e^{i\xi(\varphi(\bfgh \eta)-\varphi(\bfgh \eta'))}\dd\mu_{k,n}(\bf h).
\end{equation*}
\begin{lem}\label{lem:g good}
	There exists $c$ depending on $\epsilon$, such that
	For $\eta,\eta'$ in $\PP$ 
	\begin{equation}\label{equ:fxx'd}
	\begin{split}
	|f(\eta,\eta')|
	\leq \int_{\bf g ``good"}|f_{\bf g}(\eta,\eta')|\dd\mu_{k+1,n}(\bf g)+O_\epsilon(\delta^c),
	\end{split}
	\end{equation}
	if $\epss $ is small enough with respect to $\gamma$, that is $\epss\leq \min_{\alpha\in\Pi}\{\alpha\sigma_\mu\gamma/(2+2\gamma) \}$.
\end{lem}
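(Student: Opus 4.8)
The plan is to bound the difference between the full integral $f(\eta,\eta')$ defined in \eqref{equ:feta'} and the restricted integral over the ``good'' set by the measure of the complement of the good set, using the large deviation principles collected in Section \ref{sec:lardev}. Since the integrand $e^{i\xi(\varphi(\bfgh\eta)-\varphi(\bfgh\eta'))}r(\bfgh\eta)r(\bfgh\eta')$ is bounded in absolute value by $\|r\|_\infty^2\le 1$, it suffices to show that
\[
\mu_{k+1,n}\{\bf g\text{ is not ``good'' w.r.t. }\eta,\eta'\}\le O_\epsilon(\delta^c)
\]
for some $c>0$ depending on $\epsilon$, and similarly to absorb the $\bf g$-regularity restriction on $\bf h$ into an error of the same magnitude. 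Indeed, once the good set has measure $1-O_\epsilon(\delta^c)$, writing $f(\eta,\eta')=\int_{\bf g\text{ ``good''}}(\cdots)+\int_{\bf g\text{ not ``good''}}(\cdots)$ and on the good part replacing $f(\eta,\eta')$'s inner integral over all $\bf h$ by the integral over $\bf g$-regular $\bf h$ (the difference again controlled by the measure of non-regular $\bf h$, which is $O_\epsilon(\delta^c)$ by Lemma \ref{lem:cocbou expoential}), gives \eqref{equ:fxx'd} directly.

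The core of the argument is a term-by-term estimate of $\mu_{k+1,n}\{\bf g\text{ not ``good''}\}$ against the sum of the probabilities that each defining condition in \eqref{equ:g good} fails. First, the condition \eqref{equ:g2l}, $\|\kappa(g_j)-n\sigma_\mu\|\le \epsilon n/C_A$ for each $j$, fails with probability $\le Ce^{-c\epsilon n}$ by the large deviation principle for the Cartan projection, Proposition \ref{prop:lardev1}. Since $\delta=e^{-\epss n}$ with $\epss$ small, $e^{-c\epsilon n}\le \delta^{c'}$ for a suitable $c'>0$; here I would use the hypothesis $\epss\le\min_{\alpha\in\Pi}\{\alpha\sigma_\mu\gamma/(2+2\gamma)\}$, which in particular forces $\epss$ to be small relative to $\epsilon$, so every $e^{-c\epsilon n}$ error is of the form $O_\epsilon(\delta^c)$. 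Next, the conditions $\delta(\eta,\zeta^m_{g_k})\ge 4\delta$, $\delta(\eta',\zeta^m_{g_k})\ge 4\delta$ fail with probability $\le Ce^{-c\epsilon n}$ by \eqref{equ:xgmy} of Proposition \ref{prop:large deviatio flag} (applied to the fixed flags $\eta,\eta'$, with $4e^{-\epsilon n}\le 4\delta$ absorbed by adjusting constants). The condition $\delta(V_{\alpha,\eta}\wedge V_{\alpha,\eta'},y^m_{\wedge^2\rho_\alpha g_k})\ge 4\delta$ fails with exponentially small probability by Lemma \ref{lem:xx'w} applied to the super proximal representation $V_\alpha$ — this is exactly why $V_\alpha$ being super proximal (Definition \ref{defi:super proximal}, Lemma \ref{lem:super proximal}) was needed, and note $V_{\alpha,\eta}\ne V_{\alpha,\eta'}$ holds because $\eta,\eta'$ are in good position, so $d_\alpha(\eta,\eta')\ge\delta>0$. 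The condition that $g_k$ satisfies the hypotheses of Lemma \ref{lem:chapoi}, namely the $\delta$-separation of $V_{\alpha,\eta'}$ (resp. $V_{\alpha,\eta}$) from $y^m_{\rho_\alpha(g_k)}$ and of $l_{\alpha,\eta}$ (resp. $l_{\alpha,\eta'}$) from $y^m_{\wedge^2\rho_\alpha(g_k)}$, fails with exponentially small probability by Proposition \ref{prop:large deviation projective} applied to $V_\alpha$, together with Lemma \ref{lem:xx'w} again applied to control $y^m_{\wedge^2\rho_\alpha(g_k)}$ relative to the fixed line $l_{\alpha,\eta}\in\bb P V_{2\chi_\alpha-\alpha}$. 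Finally the condition $\eta^M_{g_0}\in\supp r$ is the delicate one: it cannot be guaranteed with overwhelming probability a priori, but the set $\{g_0\mid \eta^M_{g_0}\in\supp r\}$ is simply a Borel set, and removing the complement only \emph{decreases} $|f(\eta,\eta')|$ after one passes to absolute value inside the integral, so this condition should be treated as a \emph{restriction} imposed for free rather than a constraint whose failure costs measure — one writes $|f(\eta,\eta')|\le\int|\cdots|$ and then restricts the domain of $\bf g$. (Alternatively, the $r(\bfgh\eta)$ factor already vanishes unless $\bfgh\eta\in\supp r$, and $\eta^M_{g_0}$ is close to $\bfgh\eta$ by Lemma \ref{lem:mainapprox} when $\bf g,\bf h$ satisfy the regularity conditions, so outside $\eta^M_{g_0}\in\supp r$ enlarged slightly the integrand is already zero.)

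The main obstacle I anticipate is the bookkeeping of how $\delta=e^{-\epss n}$ relates to the exponential rates $e^{-c\epsilon n}$ coming from the large deviation principles, and to the contraction scale $\beta=\max_\alpha e^{-\alpha\sigma_\mu n}$: one must check that choosing $\epss$ small (as in the stated hypothesis $\epss\le\min_{\alpha\in\Pi}\{\alpha\sigma_\mu\gamma/(2+2\gamma)\}$) makes all error terms of the form $O_\epsilon(\delta^c)$ simultaneously, while the relevant LDP constants $c$ are those fixed in Section \ref{sec:application} (coming from $c_1$, $c'$ and the constant in Lemma \ref{lem:cocbou expoential}). A secondary subtlety is handling the $\bf g$-regularity of $\bf h$: for fixed ``good'' $\bf g$, the event \eqref{equ:g2j+1} for $h_{j+1}$ is $(n,\epsilon,\eta^M_{g_{j+1}},\zeta^m_{g_j})$-goodness, which is exactly the event controlled by Lemma \ref{lem:cocbou expoential} with the fixed pair $(\eta^M_{g_{j+1}},\zeta^m_{g_j})$; summing over the $k$ values of $j$ and integrating over $\bf g$ keeps the total error $O_\epsilon(\delta^c)$ by Fubini. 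Assembling these estimates, the triangle inequality in the form $|f(\eta,\eta')|\le\int_{\bf g\text{ ``good''}}|f_{\bf g}(\eta,\eta')|\,d\mu_{k+1,n}(\bf g)+\mu_{k+1,n}(\text{bad }\bf g)+\sup_{\bf g}\mu_{k,n}(\text{non-regular }\bf h)$ yields \eqref{equ:fxx'd}.
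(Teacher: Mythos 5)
Your large-deviation bookkeeping (Propositions \ref{prop:lardev1}, \ref{prop:large deviatio flag}, Lemma \ref{lem:xx'w} for the ``semi-good'' conditions, and Lemma \ref{lem:cocbou expoential} for the $\bf g$-regularity of $\bf h$) matches the paper and is fine. The genuine gap is in how you deal with the weights $r(\bfgh\eta)r(\bfgh\eta')$ and with the condition $\eta^M_{g_0}\in\supp r$. Note that $f_{\bf g}$ in the statement carries \emph{no} $r$-factors: it is $\int_{\bf g-regular}e^{i\xi(\varphi(\bfgh\eta)-\varphi(\bfgh\eta'))}\dd\mu_{k,n}(\bf h)$. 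So bounding the integrand by $\|r\|_\infty^2\le 1$ and restricting the $\bf g$-domain does not produce the right-hand side of \eqref{equ:fxx'd}: you would at best reach $\int_{\bf g}|\tilde f_{\bf g}|$ with the $\bf h$-dependent weights still inside, and dropping the $\bf g$ with $\eta^M_{g_0}\notin\supp r$ ``for free'' is not legitimate when you need an \emph{upper} bound --- you must show that this discarded contribution is $O_\epsilon(\delta^c)$, which your first argument does not do, and your parenthetical alternative only gestures at. The missing step is the paper's replacement of $r(\bfgh\eta)r(\bfgh\eta')$ by $r(\eta^M_{g_0})^2$ uniformly in $\bf h$: by Lemma \ref{lem:gketagketa} and \eqref{equ:xlb} (with $j=0$) the points $\bfgh\eta,\bfgh\eta'$ lie within $\expec\delta^{-2}$ of $\eta^M_{g_0}$, so Hölder continuity $c_\gamma(r)\le\xi^{\epsilonzer}\le\delta^{-1}$ gives $|r(\eta^M_{g_0})^2-r(\bfgh\eta)r(\bfgh\eta')|\le 2\expec^{\gamma}\delta^{-1-2\gamma}\le 2\delta$. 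Once this is done, $|\tilde f_{\bf g}|\le r(\eta^M_{g_0})^2|f_{\bf g}|+O(\delta^c)$; the factor $r(\eta^M_{g_0})^2\le 1$ is pulled out, and it simultaneously kills the semi-good $\bf g$ with $\eta^M_{g_0}\notin\supp r$, which is exactly how the support condition in \eqref{equ:g good} enters.

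This also corrects your account of where the hypothesis $\epss\le\min_{\alpha\in\Pi}\{\alpha\sigma_\mu\gamma/(2+2\gamma)\}$ is used. It is not needed to compare the LDP rates $e^{-c\epsilon n}$ with $\delta=e^{-\epss n}$ (any exponentially small quantity is automatically some power of $\delta$, with exponent depending on $\epsilon$). It is needed precisely in the replacement step above, to ensure $\expec^{\gamma}\delta^{-2-2\gamma}\le 1$, i.e. $\gamma\min_\alpha\alpha\sigma_\mu\ge(2+2\gamma)\epss$; since $\gamma$ is the Hölder exponent of $r$, this is the only place where $\gamma$ interacts with the scales $\beta$ and $\delta$. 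Without this contraction-plus-Hölder argument the lemma as stated (with $f_{\bf g}$ weight-free and the domain restricted by $\eta^M_{g_0}\in\supp r$) cannot be reached.
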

\begin{proof}
	Let
	\begin{equation*}
	\tilde f_{\bf g}(\eta,\eta')=\int_{\bf g-regular} e^{i\xi(\varphi(\bfgh \eta)-\varphi(\bfgh \eta'))}r(\bfgh \eta)r(\bfgh \eta')\dd\mu_{k,n}(\bf h).
	\end{equation*}
	We call $\bf g$ ``semi-good" if $\bf g$ satisfies \eqref{equ:g good} except the assumption of $\eta^M_{g_0}\in\supp r$ in \eqref{equ:g good}. By large deviation principle (Proposition \ref{prop:lardev1}, Proposition \ref{prop:large deviatio flag}, Lemma \ref{lem:xx'w}), we conclude that 
	\begin{equation}\label{equ:not semigood}
	\mu_{k+1,n}\{g \text{ not ``semi-good" } \}\leq O_\epsilon(\delta^c).
	\end{equation}
	Then by \eqref{equ:feta'}, Lemma \ref{lem:cocbou expoential} and \eqref{equ:not semigood}, 
	\begin{equation}\label{equ:fxx'g}
	\begin{split}
	|f(\eta,\eta')|\leq \int_{\bf g}|\tilde f_{\bf g}(\eta,\eta')|\dd\mu_{k+1,n}(\bf g)+O_\epsilon(\delta^c)
	\leq \int_{\bf g ``semi-good"}|\tilde f_{\bf g}(\eta,\eta')|\dd\mu_{k+1,n}(\bf g)+O_\epsilon(\delta^c).
	\end{split}
	\end{equation}
	
	By Lemma \ref{lem:gketagketa}, \eqref{equ:xlb} with $j=0$ and $c_\gamma(r)\leq \xi^{\epsilonzer }\leq \delta^{-1}$,
	\begin{align*}
	|r(\eta^M_{g_0})^2-r(\bfgh \eta)r(\bfgh \eta')|\leq 2\|r\|_\infty c_\gamma(r)(\expec\delta^{-2})^\gamma\leq 2\expec^{\gamma}\delta^{-1-2\gamma}\leq 2\delta,
	\end{align*}
	if $\epss $ is small enough with respect to $\gamma$. Hence
	\begin{equation}\label{equ:tilf}
	\begin{split}
	|\tilde f_{\bf g}(\eta,\eta')|&\leq\left|\int_{\bf g-regular} e^{i\xi(\varphi(\bfgh \eta)-\varphi(\bfgh \eta'))}r(\eta^M_{g_0})^2\dd\mu_{k,n}(\bf h)\right|+O(\delta^c)\\
	&\leq r(\eta^M_{g_0})^2|\fgeven(\eta,\eta')|+O(\delta^c).
	\end{split} 
	\end{equation}
	If $r(\eta^M_{g_0})\neq 0$, then that $\bf{ g}$ is ``semi-good" implies $\bf g$ is ``good".
	Combined with \eqref{equ:fxx'g} and \eqref{equ:tilf}, by $\|r\|_\infty\leq 1$, we have
	\begin{align*}
	|f(\eta,\eta')|&\leq \int_{\bf g ``semi-good"} \left(r(\eta^M_{g_0})^2|\fgeven(\eta,\eta')|+O(\delta^c)\right)\dd\mu_{k+1,n}(\bf g)+O_\epsilon(\delta^c)\\
	& \leq \int_{\bf g ``good"}|f_{\bf g}(\eta,\eta')|\dd\mu_{k+1,n}(\bf g)+O_\epsilon(\delta^c).
	\end{align*}
	The proof is complete.
\end{proof}
	Recall that $\beta$ is the magnitude which is really small, $\delta$ is only an error term and $\fren$ is the frequency for applying the sum-product estimate, which lies between $\delta^{-1}$ and $\beta^{-1}$.
\begin{prop}\label{prop:mainapprox} Let $I_\fren=[\fren^{3/4},\fren^{5/4}]$. The following formula is true for $\eta,\eta'$ in good position and $\bf g$ ``good",
	\begin{equation}\label{equ:fgxx'}
	\begin{split}
	|\fgeven(\eta,\eta')|\leq\sup_{\|\varsigma\|\in I_\fren } \left|\int e^{i\l\varsigma, x_1\cdots x_k\r}\dd\lambda_1(x_1)\cdots \lambda_k(x_k) \right|+O(\expec\delta^{-C_0}\fren),
	\end{split}
	\end{equation}
	when $\epss $ is small enough with respect to $\epsilontwo $.
\end{prop}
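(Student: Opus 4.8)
The plan is to expand $\varphi(\bfgh\eta)-\varphi(\bfgh\eta')$ along the $k$ factors $g_{j-1}h_j$ by using the changing-flags lemma (Lemma \ref{lem:chapoi}) together with the Newton--Leibniz formula \eqref{equ:newlei}, so that each factor contributes a term roughly of the shape $\partial_\alpha\varphi(\eta^M_{g_0})\,e^{-\alpha\sigma(g_{j-1}h_j,\eta^M_{g_j})}d_\alpha(g_k\eta,g_k\eta')$, plus an error controlled by \eqref{equ:sigma-sigma} and \eqref{equ:geta1 geta2}. First I would lift $\varphi$ to $\P_0$ and apply Lemma \ref{lem:chapoi} to $g=g_k$, $\eta,\eta'$ in good position: this produces two chains $(\eta=\eta_0,\dots,\eta_{\nupione})$ and $(\eta'=\eta'_0,\dots,\eta'_{\nupitwo})$ along $\alpha$-circles such that $g_k\eta_{\nupione}$ and $g_k\eta'_{\nupitwo}$ are $\beta\delta^{-2}$-close in every $\bp V_\alpha$ (by \eqref{equ:geta1 geta2}), and each elementary move in the $\alpha$-circle has $d_\alpha$-length $\approx d_\alpha(g_k\eta,g_k\eta')$. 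Telescoping, $\varphi(\bfgh\eta)-\varphi(\bfgh\eta')$ decomposes into a sum over the simple roots $\alpha\in\Pi$ (one elementary move per root) plus a negligible remainder coming from the endpoints $g_k\eta_{\nupione},g_k\eta'_{\nupitwo}$ transported by $T\bfgh$; the latter is $O(\beta^{2k}\delta^{-C_0})$-small by the Lipschitz bound \eqref{equ:exp leq} in Lemma \ref{lem:mainapprox}, and after multiplication by $\xi$ it is $O(\delta^{C_0})$ because of the choice \eqref{equ:valp} of $n$.

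Next I would apply the Newton--Leibniz formula \eqref{equ:newlei} to each elementary move: for the $\alpha$-move between consecutive points $\k_j,\k_{j+1}$ on the $\alpha$-circle, with $g=T\bfgh$,
\[
\varphi(T\bfgh\,\k_j)-\varphi(T\bfgh\,\k_{j+1})=\pm\int_0^{u_\alpha}\partial_\alpha\varphi_{T\bfgh\,\gamma(s)}\,e^{-\alpha\sigma(T\bfgh,\gamma(s))}\dd s,
\]
where $u_\alpha=d_A(\k_j,\k_{j+1})\asymp d_\alpha(g_k\eta,g_k\eta')$ by Lemma \ref{lem:chapoi}. Using the cocycle identity to split $\sigma(T\bfgh,\cdot)$ over the $k$ factors $g_{j-1}h_j$, the chain rule for the contraction along the $\alpha$-circle, and the approximations \eqref{equ:sigma-sigma} (replacing $\sigma(g_{j-1}h_j,t_j)$ by $\sigma(g_{j-1}h_j,\eta^M_{g_j})$ at a cost of $O(\beta\delta^{-C_0})$ each) together with the G3 regularity \eqref{equ:G3} of $\partial_\alpha\varphi$ (replacing $\partial_\alpha\varphi$ at the moving point by its value at $\eta^M_{g_0}$), I get
\[
\xi\big(\varphi(\bfgh\eta)-\varphi(\bfgh\eta')\big)=\sum_{\alpha\in\Pi}\pm\,\xi\,\partial_\alpha\varphi(\eta^M_{g_0})\,d_\alpha(g_k\eta,g_k\eta')\prod_{j=1}^{k}e^{-\alpha\sigma(g_{j-1}h_j,\eta^M_{g_j})}+O(\expec\delta^{-C_0}\fren),
\]
where the error estimate uses $\xi\expec^{2k}\asymp\fren$ from \eqref{equ:valp}--\eqref{equ:fren}. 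Here the key point is that each exponential factor $e^{-\alpha\sigma(g_{j-1}h_j,\eta^M_{g_j})}$, once $g_{j-1},g_j$ are frozen, depends only on $h_j$ and is, up to the deterministic factors $e^{-\alpha(\kappa(g_{j-1})+n\sigma_\mu)}$, exactly the $\alpha$-coordinate of $\sigmah_{g_{j-1}}(h_j,\eta^M_{g_j})$, hence of the random variable whose law (restricted to the good set, and after inserting the sign $\alpha^\up(\sg(\cdot))$) is $\lambda_j=\tilde\lambda_{g_{j-1},\k_{g_j}}$. The sign factors are exactly what Lemma \ref{lem:ghkk'}/\eqref{equ:s0g0m0} arranges, so the product over $j$ becomes $\prod_j (x_j)_\alpha$ for $x_j$ drawn according to $\lambda_j$.

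Finally I would absorb $\xi\,\partial_\alpha\varphi(\eta^M_{g_0})\,d_\alpha(g_k\eta,g_k\eta')$ and the deterministic exponentials into a single frequency vector $\varsigma=(\varsigma_\alpha)_{\alpha\in\Pi}\in\R^\rank$; by Lemma \ref{lem:gketagketa} we have $d_\alpha(g_k\eta,g_k\eta')\in\expec_\alpha[\delta^{C_0},\delta^{-C_0}]$, by G2 \eqref{equ:G2} we have $|\partial_\alpha\varphi(\eta^M_{g_0})|\geq v_\alpha/C$ on $\supp r$, and by \eqref{equ:valp}--\eqref{equ:fren} the normalisation gives $\|\varsigma\|\in[\fren^{3/4},\fren^{5/4}]=I_\fren$ (the exponent window being generous enough to swallow the $\delta^{\pm C_0}$ once $\epss$ is small relative to $\epsilontwo$). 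Integrating $e^{i\xi(\varphi(\bfgh\eta)-\varphi(\bfgh\eta'))}$ over $\bf h$ against $\mu_{k,n}=\bigotimes_j\mucon{n}$, restricting to the $\bf g$-regular/good sets, and rewriting the integral over the good part of $G^k$ as an integral over $\R^\rank\times\cdots\times\R^\rank$ against $\lambda_1\otimes\cdots\otimes\lambda_k$ (the complement costing $O_\epsilon(\delta^c)$ by the large deviation Lemma \ref{lem:cocbou expoential}, which is $\ll\expec\delta^{-C_0}\fren$), yields
\[
|\fgeven(\eta,\eta')|\leq\sup_{\|\varsigma\|\in I_\fren}\left|\int e^{i\l\varsigma,x_1\cdots x_k\r}\dd\lambda_1(x_1)\cdots\dd\lambda_k(x_k)\right|+O(\expec\delta^{-C_0}\fren).
\]
I expect the main obstacle to be bookkeeping the errors in the telescoping/Newton--Leibniz step so that the accumulated error over $k$ factors stays at the level $O(\expec\delta^{-C_0}\fren)$ rather than blowing up: this is where one must use that $\beta\leq\delta^{10}$ (so $\beta\delta^{-C_0}\leq\delta^{C_0}$) and the precise relation $\xi\expec^{2k}\asymp\fren$ between the oscillation frequency, the contraction scale, and the sum-product frequency; and where the changing-flags geometry (Lemma \ref{lem:chapoi}, Lemma \ref{lem:getajgetal}) must be invoked to guarantee that every intermediate point of the chains, transported by the partial products, remains in the regions $B^m$ and $b^M$ where Lemma \ref{lem:gBmgmul} and \eqref{equ:sigma-sigma} apply. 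The sign consistency (Lemma \ref{lem:ghkk'}, \eqref{equ:s0g0m0}) is the other place that needs care, since the function $\partial_\alpha\varphi$ is only defined on the cover $\P_0$ up to the $M$-action.
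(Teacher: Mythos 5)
Your proposal is correct and follows the paper's own proof essentially step for step: the chain decomposition from Lemma \ref{lem:chapoi}, the Newton--Leibniz formula along $\alpha$-circles applied to $\tbfgh$, freezing $\partial_\alpha\varphi$ at $\k_{g_0}$ and the cocycles at $\eta^M_{g_j}$ via the G2/G3 assumptions and \eqref{equ:sigma-sigma}, the sign identity \eqref{equ:s0g0m0}, and a deterministic frequency $\varsigma$ with $\|\varsigma\|\in I_\fren$ tested against $\lambda_1,\dots,\lambda_k$. The only point to correct is your final parenthetical: no complement term $O_\epsilon(\delta^c)$ arises (and it could not be absorbed into $O(\expec\delta^{-C_0}\fren)$ anyway, since $\delta^c$ is in fact the larger quantity), because $\fgeven$ and the measures $\lambda_j$ are by definition already restricted to the $\bf g$-regular sets, so the identification of the restricted integral with $\int e^{i\l\varsigma,x_1\cdots x_k\r}\dd\lambda_1(x_1)\cdots\dd\lambda_k(x_k)$ is exact.
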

\begin{rem}
	This is the most complicated step, where the difficulty comes from higher rank. We need to use the technique of changing flags to find the direction of slowest contraction speed, where we can use Newton-Leibniz's formula. Since the action of the sign group $M$ is non trivial on the slowest directions, we also carefully treat the sign.
\end{rem}
\begin{proof}
	The element $\eta,\eta'$ and $\bf g$ are already fixed. Since $g_k$ satisfies the conditions in Lemma \ref{lem:chapoi}, we obtain two chains
	$(\eta=\eta_0,\eta_1,\dots ,\eta_\nupione)$ and $(\eta'=\eta_0',\eta_1',\dots,\eta_\nupitwo')$ as in Lemma \ref{lem:chapoi}. Then we write
	\begin{equation}\label{equ:ghetaeta'}
		\begin{split}
		&\varphi(\bfgh\eta)-\varphi(\bfgh\eta')=\sum_{0\leq j\leq \nupione-1}(\varphi(\bfgh\eta_{j})-\varphi(\bfgh\eta_{j+1}))\\
		&-\sum_{0\leq j\leq \nupitwo-1}(\varphi(\bfgh\eta'_{j})-\varphi(\bfgh\eta'_{j+1}))
		+\left( \varphi(\bfgh \eta_\nupione)-\varphi(\bfgh\eta_\nupitwo')\right) ,
		\end{split}
	\end{equation}
	The terms for different $j$ and for $\eta,\eta'$ are similar. We fix $j$ and we simplify $\alpha(\eta_j,\eta_{j+1})$ to $\alpha$. 
	
	\textbf{We compute the term $\varphi(\bfgh\eta_j)-\varphi(\bfgh\eta_{j+1})$.} In order to treat the sign, we will work on $\P_0=G/A_eN$. Recall that $\pi:\P_0\rightarrow \P$ is the projection and we use $\k=k\k_o$ to denote the element $kA_eN$ in $\P_0$.
	
	By Lemma \ref{lem:getajgetal} and \eqref{equ:g good}, we know that $g_k\eta_j,g_k\eta_{j+1}$ are in $b^M_{g_k}(\delta)$, then they satisfy the condition of Lemma \ref{lem:mainapprox}. Let $\k_0,\k_1$ be preimages of $g_k\eta_{j}$ and $g_k\eta_{j+1}$ in $\P_0$ such that $\sg(\k_0,\k_1)=e$. Notice that $\k_0,\k_1$ are in the same $\alpha$-circle.
	By Lemma \ref{lem:chapoi} \eqref{equ:djgj} and Lemma \ref{lem:gketagketa}
	\begin{equation*}
	d(g_k\eta_{j},g_k\eta_{j+1})=d_\alpha(g_k\eta,g_k\eta')+O(\beta e^{-\alpha\kappa(g_k)}\delta^{-C_0})\in \expec_\alpha[\delta^{C_0},\delta^{-C_0}].
	\end{equation*} 
	Due to $\sg(\k_0,\k_1)=e$, the arc-length distance also satisfies
	\begin{equation}\label{equ:tilxx'}
	d_A(\k_0,\k_1)=\arcsin d(g_k\eta_{j},g_k\eta_{j+1})\in  \expec_\alpha[\delta^{C_0},\delta^{-C_0}].
	\end{equation}
	
	Now, we lift $\varphi$ to $\P_0$, becoming a right $M$-invariant function. By abuse of notation, we also use $\varphi$ to denote the lifted function. 
	Let $\gamma$ be an arc connecting $\k_0,\k_1$ with unit speed in the $\alpha$-circle with length less than $\pi/2$. 
	Without loss of generality, we suppose that $\gamma$ is in the positive direction (If not, we add a minus sign on the right hand side of \eqref{equ:varphigh}. The sign only depends on $z_0,z_1$, which is independent of $\bf h$).
	By Newton-Leibniz's formula \eqref{equ:newlei}, we have
	\begin{equation}\label{equ:varphigh}
	\varphi(\tbfgh \k_0)-\varphi(\tbfgh \k_1)=\int_0^u \partial_\alpha\varphi(\tbfgh\gamma(t))e^{-\alpha\sigma(\tbfgh,\gamma(t))}\dd t,
	\end{equation}
	where $u=d_A(\k_0,\k_1)$. Fix a time $t$ in $[0,u]$, let $s_j=g_jh_{j+1}\cdots h_k\gamma(t)$. Then $\pi(\gamma(t))$ is in $b^M_{g_k}(\delta)$, because $g_k\eta_j$ and $g_k\eta_{j+1}$ are in $b^M_{g_k}(\delta)$ and by \eqref{equ:tilxx'}. By \eqref{equ:xlb}, the element $\pi(s_0)$, the image of $s_0=\tbfgh\gamma(t)$ in $\P$, is in $b^M_{g_0}(\beta\delta^{-C_0})$.
	
	Recall that we have made a choice of the Cartan decomposition of every $g_j$ for $0\leq j\leq k$. In particular, $k_{g_0}$ is given in the decomposition of $g_0=k_{g_0}a_{g_0}\ell_{g_0}\in KA^+K$. Let $m_0=\sg(s_0,k_{g_0})$ and $\underline{s_0}=s_0m_0$, then $\sg(\underline{s_0},k_{g_0})=e$. By Lemma \ref{lem:kmk}, 
	\begin{equation}\label{equ:phim0}
		\partial_\alpha\varphi_{s_0}=\partial_\alpha\varphi_{\underline{s_0}m_0}=\alpha^\up(m_0)\partial_\alpha\varphi_{\underline{s_0}}.
	\end{equation}
	By Lemma \ref{lem:p0 p} and $\pi (s_0),\pi (\k_{g_0})=\eta^M_{g_0}$ in $b^M_{g_0}(\beta\delta^{-C_0})$, we have 
	\begin{equation}\label{equ:d0 s0 g0}
	d_0(\underline{s_0},\k_{g_0})\leq d(\pi (s_0)),\pi (\k_{g_0}))<\beta\delta^{-C_0}.
	\end{equation} 
	Due to $\bf g$ good \eqref{equ:g good}, we have $\eta^M_{g_0}\in\supp r$. By the G2 assumption \eqref{equ:G2}, we have $|\partial_\alpha \varphi(\k_{g_0})|\geq \delta v_\alpha$. By \eqref{equ:d0 s0 g0}, the point $\pi (s_0)$ is in $\open$, the $\delta$ neighbourhood of $\supp r$. By the G3 assumption \eqref{equ:G3},
	$|\partial_\alpha\varphi(\underline{s_0})-\partial_\alpha\varphi(\k_{g_0})|\leq \delta^{-1} v_\alpha d_0(\underline{s_0},\k_{g_0}) $, which implies $$\partial_\alpha\varphi(\underline{s_0})/\partial_\alpha\varphi(\k_{g_0})\in[1-\expec\delta^{-C_0},1+\expec\delta^{-C_0}]. $$
		By Lemma \ref{lem:mainapprox} \eqref{equ:sigma-sigma}, we have
		\begin{equation}\label{equ:varphis0}
		(1-\expec\delta^{-C_0})e^{-O(\expec/\delta)} \leq \frac{\partial_\alpha\varphi(\underline{s_0})e^{-\alpha\sigma(g_0h_1,s_1)}\cdots e^{-\alpha\sigma(g_{k-1}h_k,s_k)}}{\partial_\alpha\varphi(\k_{g_0})e^{-\alpha\sigma(g_0h_1,x^M_{g_1})}\cdots e^{-\alpha\sigma(g_{k-1}h_k,x^M_{g_k})}}\leq(1+\expec\delta^{-C_0})e^{O(\expec/\delta)}. 
		\end{equation}
	By \eqref{equ:exp leq}, we have
	\begin{equation*}
		B_\alpha:=e^{-\alpha\sigma(g_0h_1,x^M_{g_1})}\cdots e^{-\alpha\sigma(g_{k-1}h_k,x^M_{g_k})}\leq \beta_\alpha^{2k}\delta^{-C_0}.
	\end{equation*}
	Together with \eqref{equ:tilxx'}-\eqref{equ:varphis0}
	\begin{equation}\label{equ:varphigh'}
	|\varphi(\bfgh \eta_{j})-\varphi(\bfgh \eta_{j+1})-d_A(\k_0,\k_1)\alpha^\up(m_0)\partial_\alpha\varphi(\k_{g_0})B_\alpha|\leq \beta\expec_\alpha^{2k+1}\delta^{-C_0} v_\alpha. 
	\end{equation}
	
	\textbf{We deal with the error term which comes from the process of changing flags}. 
	By Lemma \ref{lem:chapoi} \eqref{equ:geta1 geta2} and \eqref{equ:g2l}, we have $$d_\alpha(g_k\eta_\nupione,g_k\eta_\nupitwo')\leq \max_{\alpha'\in\Pi}\{e^{-\alpha'\kappa(g_k)} \}e^{-\alpha\kappa(g_k)} \delta^{-2}\leq \beta_\alpha\beta\delta^{-C_0}. $$
	 Then the Lipschitz property in Lemma \ref{lem:mainapprox}  implies that
	\[d_\alpha(\bfgh\eta_\nupione,\bfgh\eta_\nupitwo')\leq \beta_\alpha^{2k}\delta^{-C_0}d_\alpha(g_k\eta_\nupione,g_k\eta_\nupitwo')\leq \beta_\alpha^{2k+1}\beta\delta^{-C_0}, \]
	Due to \eqref{equ:xlb} in Lemma \ref{lem:mainapprox} and Lemma \ref{lem:getajgetal}, the two points $\bfgh\eta_\nupione,\bfgh\eta_\nupitwo'$ are in $\open$, the $\delta$ neighbourhood of $\supp r$. Due to the G1 assumption \eqref{equ:G1},
	\begin{equation*}
		|\varphi(\bfgh \eta_\nupione)-\varphi(\bfgh\eta_\nupitwo')|\leq\delta^{-1} \sum_{\alpha}v_\alpha d_\alpha(\bfgh\eta_\nupione,\bfgh\eta_\nupitwo').
	\end{equation*}
	 Therefore
	\begin{equation}\label{equ:error}
		|\varphi(\bfgh \eta_\nupione)-\varphi(\bfgh\eta_\nupitwo')|\leq\delta^{-C_0} \beta\sum_\alpha v_\alpha \beta_\alpha^{2k+1}.
	\end{equation}
	
	\textbf{We collect information for different simple roots.} Recall that for a fixed $g$ in $G$ and for $h\in G$, $\k\in \P_0$, we have defined $\tisig_{g}(h,\k)_\alpha=e^{-\alpha(\sigma(gh,\k)-\kappa(g)-n\sigma_\mu)}\alpha^\up(\sg(\ell_g,hk))$. Let 
	$$\varsigma_\alpha:=\frac{\xi d_A(\k_0,\k_1)\alpha^\up(m_0)\partial_\alpha\varphi(\k_{g_0})B_\alpha}{\Pi_{l=1}^{k}\tisig_{{g_{l-1}}}(h_l,\k_{g_l})_\alpha} .$$ 
	Let $\varsigma=(\varsigma_\alpha)_{\alpha\in\Pi}\in\R^\rank$. Hence by \eqref{equ:ghetaeta'}, \eqref{equ:varphigh'}, \eqref{equ:error} and \eqref{equ:fren}
	\begin{equation}\label{equ:xivar}
	|\xi(\varphi(\bfgh x)-\varphi(\bfgh x'))-\l\varsigma,\Pi_{l=1}^{k}\tisig_{{g_{l-1}}}(h_l,\k_{g_l})\r|\leq \expec\delta^{-C_0}\sum_\alpha\beta_\alpha^{2k+1}v_\alpha \xi\ll \beta\delta^{-C_0}\fren.
	\end{equation}
	
	We want to verify that $\|\varsigma\|\in I_\fren$. By \eqref{equ:s0g0m0}, we have
	$$\varsigma_\alpha=\xi d_A(\k_0,\k_1)\partial_\alpha \varphi(\k_{g_0})\expec_\alpha^{k}e^{-\alpha\kappa(g_0)-\cdots-\alpha\kappa(g_{k-1}) }. $$
	By \eqref{equ:g2l}, \eqref{equ:tilxx'}, \eqref{equ:g good} and \eqref{equ:G2} we have $|\varsigma_\alpha|\in  \xi v_\alpha\beta_\alpha^{2k+1} [\delta^{C_0},\delta^{-C_0}]$. 
	Therefore by \eqref{equ:fren},
	\begin{equation*}
	\|\varsigma\|\in\sup_\alpha \xi v_\alpha \beta_\alpha^{2k+1}[\delta^{C_0},\delta^{-C_0}]\in\fren [\delta^{C_0},\delta^{-C_0}]\subset[\fren^{3/4},\fren^{5/4}]=I_\fren.
	\end{equation*}
	
	By definition, the distribution of  $\tisig_{{g_{l-1}}}(h_l,\k_{g_l})$, where $h_l$ satisfies \eqref{equ:g2j+1} with distribution $\mucon{n}$, is the measure $\lambda_l$. 
	Finally, due to $|e^{ix}-e^{iy}|\leq |x-y|$ for $x,y\in\bb R$, inequality \eqref{equ:xivar} implies \eqref{equ:fgxx'}. 
\end{proof}
\textbf{Fourth step:}
We are able to apply sum-product estimates.
\begin{proof}[Proof of Theorem \ref{thm:foudec}] 
	For $l=1,2,\dots k$, Proposition \ref{prop:appmea} and Lemma \ref{lem:tilde noncon} tell us that with $\epss $ small enough depending on $\epsilonfour \epsilontwo$, there exists $C_0$ such that the measures $\lambda_l$ satisfy the assumptions in Proposition \ref{prop:sum-product} with $\fren$.

Proposition \ref{prop:sum-product} implies that for $\fren$ large enough,
	\[\left|\int\exp(i\l\varsigma, x_1\cdots x_k\r)\dd\lambda_1(x_1)\dots\dd\lambda_k(x_k)\right|\leq \fren^{-\epsilonthr }. \]
Then by \eqref{equ:dxx'}, \eqref{equ:fxx'd} and \eqref{equ:fgxx'}, we have
	\[	\left|\int e^{i\xi\varphi( \eta)}r( \eta)\dd\nu( \eta)\right|^2\leq O_\epsilon(\delta^c)+O(\expec\delta^{-C_0}\fren)+\fren^{-\epsilonthr }. \]
	Due to $\expec\delta^{-C_0}\fren=\max_{\alpha\in\Pi} e^{(-\alpha\sigma_\mu+O(1)\epss +\epsilontwo )n}$, take $\epss $ small enough. The proof is complete. 
\end{proof}
%
%
\subsection{From Fourier decay to spectral gap}
\label{sec:fougap}
From now on, we will only consider algebraic $\R$-split semisimple groups without simply connected condition.
In this section, we will prove Theorem \ref{thm:spegaprep} and Theorem \ref{thm:spegap} by using Theorem \ref{thm:foudecsemi}.
\subsubsection*{Derivative of the cocycle}
This part is devoted to the derivative of the cocycle. The results of this part imply that for most $g,h$ in $G$, the difference of the Iwasawa cocycle $\sigma(g,\cdot)-\sigma(h,\cdot)$ satisfies the $(C,r)$ good condition in Definition \ref{defi:C r good} (See Lemma \ref{lem:xgh}). Since the $\alpha$-bundle is trivial on $\P_0$, we will work on $\P_0$. We need to lift the Iwasawa cocycle $\sigma$ to $\P_0$ and we use the same notation $\sigma$.

Let $(\rho,V)$ be an irreducible representation of $G$ with highest weight $\chi$. Let $\alpha$ be a simple root. Let $e_1$ be a unit vector of highest weight in $V$ and let $e_2=Y_\alpha e_1$. The cocycle $\sigma_{V}$ is defined on $\bb PV$. After lifting to $\P_0$, we have $\sigma_V(g,z)=\frac{\|\rho(g)v\|}{\|v\|}$ for $g$ in $G$ and $z=kz_o$ in $\P_0$ with $v=\rho(k)e_1$. We will abbreviate $\rho g$ to $g$ in the proof, because $(\rho,V)$ is the only representation to be studied in this part. 
\begin{lem}\label{lem:derivative cocycle}
	For $\k=k\k_o$ in $\P_0$,
	we have
	\[\partial_\alpha \sigma_V(g,\k)=\frac{\l \rho gv,\rho gu\r}{\|\rho gv\|^2}, \]
	where $v=ke_1$ and $u=ke_2$. 
\end{lem}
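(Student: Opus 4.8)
The plan is a direct differentiation of the cocycle along the $\alpha$-direction, using the description of $T\P_0$ from Section \ref{sec:tangent}. First I would fix the curve computing $\partial_\alpha$: since $Y_\alpha\in\frak g^{-\alpha}\subset\frak n^-$ is the chosen representative of the $\alpha$-direction in $\frak g/\frak p$, the directional derivative $\partial_\alpha=\partial_{Y_\alpha}$ at $\k=k\k_o$ is obtained by differentiating along $t\mapsto k\exp(tY_\alpha)\k_o$ (one may equally use $t\mapsto k\exp(tK_\alpha)\k_o$, the curve \eqref{equ:alpha circle}, as $K_\alpha\equiv Y_\alpha$ modulo $\frak p$). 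Under the maps $\P_0\to\P\to\bp V$, the point $k\exp(tY_\alpha)\k_o$ goes to the line $\rho(k)\rho(\exp(tY_\alpha))V^\chi$, so a smooth choice of representative vector is $w(t):=\rho(k)e^{t\,\dd\rho(Y_\alpha)}e_1$, with $w(0)=\rho(k)e_1=v$ and $w'(0)=\rho(k)\,\dd\rho(Y_\alpha)e_1=\rho(k)e_2=u$.

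Unwinding the lifted definition of the cocycle evaluated on this representative, one has
\[\sigma_V(g,k\exp(tY_\alpha)\k_o)=\log\|\rho(g)w(t)\|-\log\|w(t)\|.\]
Differentiating at $t=0$ via $\frac{\dd}{\dd t}\log\|\xi(t)\|=\l\xi(t),\xi'(t)\r/\|\xi(t)\|^2$, the first term contributes $\l\rho(g)v,\rho(g)u\r/\|\rho(g)v\|^2$, which is exactly the asserted formula, and the second term contributes $\l v,u\r/\|v\|^2$. It then remains to check that this second contribution vanishes. Since the norm is good, $\rho(k)$ is orthogonal, so $\l v,u\r=\l\rho(k)e_1,\rho(k)e_2\r=\l e_1,e_2\r$; and since the norm is good, $\rho(A)$ is symmetric, so the weight spaces $V^\chi$ and $V^{\chi-\alpha}$ are orthogonal (the weights differ), giving $\l e_1,e_2\r=0$. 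Hence $\partial_\alpha\sigma_V(g,\k)=\l\rho(g)v,\rho(g)u\r/\|\rho(g)v\|^2$, as claimed. (When $\chi(H_\alpha)=0$ the $\frak{sl}_2$-theory forces $e_2=\dd\rho(Y_\alpha)e_1=0$ and the identity reads $0=0$, consistent with Lemma \ref{lem:alpha circle}.)

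There is no genuine obstacle here: the statement is a short computation. The only two points requiring a little care are the identification of the curve realizing $\partial_\alpha$ at a general point $\k=k\k_o$, which is supplied by the left-$K$-invariant trivialization of $T\P_0$ in Section \ref{sec:tangent}, and the orthogonality relation $\l V^\chi,V^{\chi-\alpha}\r=0$, which is immediate from the definition of a good norm (Definition \ref{defi:good norm}) together with the self-adjointness of $\dd\rho(\frak a)$.
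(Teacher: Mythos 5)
Your computation is correct and is essentially the paper's own proof: differentiate $\log\frac{\|\rho(g)w(t)\|}{\|w(t)\|}$ along the curve $t\mapsto k\exp(tY_\alpha)\k_o$ and kill the second term using orthogonality of distinct weight spaces for a good norm. The only cosmetic difference is that the paper first reduces to $\k=\k_o$ by left $K$-invariance, whereas you carry the general $k$ through and use that $\rho(k)$ is orthogonal; the two are equivalent.
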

\begin{proof}
	Without loss of generality, we suppose that $\k=\k_o$.
	By definition, we have
	\begin{align*}
	\partial_{Y_\alpha}\sigma_V(g,z_o)&=\partial_t\sigma_V(g,\exp(tY_\alpha)\k_o)|_{t=0}=\partial_t\left(\log\frac{\|g\exp(tY_\alpha)e_1\|}{\|\exp(tY_\alpha)e_1\|} \right)\Big|_{t=0}\\
	&=\frac{\l ge_1,gY_\alpha e_1 \r}{\|ge_1\|^2}-\frac{\l e_1,Y_\alpha e_1 \r}{\|e_1\|^2}.
	\end{align*}
	Since the norm is good, eigenvectors of different weights are orthogonal, hence we have $\l e_1,Y_\alpha e_1\r=0$. The result follows.
\end{proof}
From this lemma, we know that the derivative of the cocycle $\sigma_V$ in the direction $Y_\alpha$ is nonzero only if $\chi-\alpha$ is a weight of $V$. 
We fix the distance $d_0$ on $\P_0$, which is defined in Appendix \ref{sec:equi distance}.
\begin{lem}\label{lem:sigma lip iwasawa}
	Let $\delta<1/2$. Let $\widetilde{B^m_{V,g}(\delta)}$ be the preimage of $B^m_{V,g}(\delta)\subset\bp V$ in $\P_0$. For $\k=k\k_o\in \widetilde{B^m_{V,g}(\delta)}$,
	\begin{equation*}
	|\partial_\alpha \sigma_V(g,\k)|\leq \delta^{-C_0}.  
	\end{equation*}
	We also have
	\begin{equation*}
	Lip_{\P_0}(\partial_\alpha \sigma_V(g,\cdot)|_{\widetilde{B^m_{V,g}(\delta)}})\leq \delta^{-C_0}.
	\end{equation*}
\end{lem}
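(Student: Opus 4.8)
The statement to prove is Lemma \ref{lem:sigma lip iwasawa}: for $\k = k\k_o$ in the preimage of $B^m_{V,g}(\delta)$, both $|\partial_\alpha\sigma_V(g,\k)| \leq \delta^{-C_0}$ and the local Lipschitz constant of $\partial_\alpha\sigma_V(g,\cdot)$ restricted to $\widetilde{B^m_{V,g}(\delta)}$ is at most $\delta^{-C_0}$. The natural approach is to start from the closed-form expression provided by Lemma \ref{lem:derivative cocycle}, namely
\[
\partial_\alpha\sigma_V(g,\k) = \frac{\langle \rho(g)v, \rho(g)u\rangle}{\|\rho(g)v\|^2},
\]
where $v = \rho(k)e_1$ and $u = \rho(k)e_2 = \rho(k)Y_\alpha e_1$, and then estimate numerator and denominator separately using the distance-to-hyperplane control in Lemma \ref{lem:cocycle} (i.e.\ $\delta(x,y^m_g) \leq \|gv\|/(\|g\|\|v\|) \leq 1$), which is exactly what the hypothesis $\k \in \widetilde{B^m_{V,g}(\delta)}$ feeds into.

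\textbf{First bound.} For the pointwise estimate: by Cauchy--Schwarz the numerator satisfies $|\langle \rho(g)v,\rho(g)u\rangle| \leq \|\rho(g)v\|\,\|\rho(g)u\| \leq \|\rho(g)v\|\,\|\rho(g)\|\,\|u\|$, and $\|u\| = \|Y_\alpha e_1\|$ is a fixed constant depending only on the representation. Hence
\[
|\partial_\alpha\sigma_V(g,\k)| \leq \frac{\|\rho(g)\|\,\|u\|}{\|\rho(g)v\|} = \frac{\|u\|}{\|\rho(g)v\|/\|\rho(g)\|} \leq \frac{\|u\|}{\delta(V_{\chi,\eta},y^m_{\rho(g)})} \cdot \frac{1}{\|v\|},
\]
where $\eta = \pi(\k)$ and $\|v\|=1$ since $e_1$ is a unit vector and $\rho(k)$ is an isometry. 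Because $\k \in \widetilde{B^m_{V,g}(\delta)}$ means $\delta(V_{\chi,\eta}, y^m_{\rho(g)}) \geq \delta$, this gives $|\partial_\alpha\sigma_V(g,\k)| \leq \|u\|\,\delta^{-1} \leq \delta^{-C_0}$, as desired.

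\textbf{Second bound (the main work).} For the Lipschitz estimate I would take two points $\k = k\k_o$ and $\k' = k'\k_o$ in $\widetilde{B^m_{V,g}(\delta)}$ with $d_0(\k,\k')$ small (small enough that the estimate is only interesting at short range — for larger distances the pointwise bound already gives Lipschitz constant $\delta^{-C_0}$), set $v=\rho(k)e_1$, $u=\rho(k)e_2$, $v'=\rho(k')e_1$, $u'=\rho(k')e_2$, and write
\[
\partial_\alpha\sigma_V(g,\k) - \partial_\alpha\sigma_V(g,\k') = \frac{\langle \rho(g)v,\rho(g)u\rangle}{\|\rho(g)v\|^2} - \frac{\langle \rho(g)v',\rho(g)u'\rangle}{\|\rho(g)v'\|^2}.
\]
The plan is to bound this by putting everything over a common denominator $\|\rho(g)v\|^2\|\rho(g)v'\|^2$ and estimating the resulting numerator term by term. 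Each numerator piece is a difference of inner products which one controls by $\|\rho(g)\|^2$ times quantities like $\|v-v'\|$, $\|u-u'\|$; and $\|v-v'\|, \|u-u'\| \lesssim d_0(\k,\k')$ since the maps $k\k_o \mapsto \rho(k)e_i$ are Lipschitz on the compact manifold $\P_0$ (differentiable maps between compact Riemannian manifolds are Lipschitz, cf.\ the argument behind Lemma \ref{lem:profla}/Lemma \ref{lem:equivalence distance P}). The denominator is bounded below by $\delta^4\|\rho(g)\|^4$ using $\|\rho(g)v\|/\|\rho(g)\| \geq \delta(V_{\chi,\eta},y^m_{\rho(g)}) \geq \delta$ at both $\k$ and $\k'$. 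The factors of $\|\rho(g)\|$ cancel between numerator and denominator, leaving a bound of the shape $\delta^{-C_0} d_0(\k,\k')$. The one subtlety I expect to be the real obstacle is bookkeeping the cancellation cleanly: naively each term in the expanded numerator looks like it carries $\|\rho(g)\|^3$ while the denominator only has $\|\rho(g)\|^4$, so one must be careful to extract the extra $\|\rho(g)\|$ (e.g.\ by also invoking $\|\rho(g)u\| \leq \|\rho(g)\|\|u\|$ rather than crude bounds) to see that the ratio is genuinely dimensionless in $\|\rho(g)\|$; once that is organized, everything reduces to the Lipschitz continuity of the orbit maps and the lower bound on $\delta(\cdot,y^m_{\rho(g)})$, both already available, and the constant $C_0$ absorbs the representation-dependent constants $\|e_1\|,\|e_2\|$ and the Lipschitz constants.
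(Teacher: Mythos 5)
Your proposal is correct and follows essentially the same route as the paper's proof: the same closed-form expression from Lemma \ref{lem:derivative cocycle}, the lower bound on the denominator via \eqref{equ:coccar} and the hypothesis $\delta(V_{\chi,\eta},y^m_{\rho(g)})\geq\delta$, and for the Lipschitz bound the same common-denominator expansion with the numerator controlled by $\|g\|^4(\|v-v'\|+\|u-u'\|)$ and the final comparison to $d_0(\k,\k')$ via the Lipschitz property of the orbit maps and the equivalence of distances (Lemma \ref{lem:equivalent distance}). The power-counting subtlety you flag is resolved exactly as in the paper, by bounding $\|gv-gv'\|\leq\|g\|\|v-v'\|$ to extract the fourth factor of $\|g\|$.
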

\begin{proof}	
	By Lemma \ref{lem:derivative cocycle}, the hypothesis that $\R ke_1\in B^m_{V,g}(\delta)$ and \eqref{equ:coccar},
	\[|\partial_\alpha\sigma_V(g,\k)|=\left|\frac{\l gke_1,gke_2\r}{\|gke_1\|^2}\right|\leq \frac{\|Y_\alpha\|\|g\|^2\|e_1\|^2 }{\|g\|^2\delta^2\|e_1\|^2 }. \]
	Since the operator norm of $Y_\alpha$ is bounded, we have
	\[ |\partial_\alpha\sigma_V(g,\k)|\leq \delta^{-C_0}.\]
	
	The estimate of the Lipschitz norm is more complicated. Let $v=ke_1,v'=k'e_1,u=ke_2,u'=k'e_2$. We have
	\[|\partial_\alpha\sigma_V(g,\k)-\partial_\alpha\sigma_V(g,\k')|=\frac{	|\l gv,gu\r\|gv'\|^2-\l gv',gu'\r\|gv\|^2|}{\|gv\|^2\|gv'\|^2}. \]
	By the same argument, due to $\R v=\R ke_1\in B^m_{V,g}(\delta)$, we use \eqref{equ:coccar} to give a lower bound of the denominator, that is $$\|gv\|^2\|gv'\|^2\geq \delta^4\|g\|^4\|v\|^2\|v'\|^2=\delta^4\|g\|^4\|e_1\|^4.$$ 
	Use the difference to give an upper bound of the numerator, that is
	\begin{align*}
	&|\l gv,gu\r\|gv'\|^2-\l gv',gu'\r\|gv\|^2|\\
	&\ll \|g\|^3\|e_1\|^3(\|gv-gv'\|+\|gu-gu'\|) \ll \|g\|^4\|v\|^3(\|v-v'\|+\|u-u'\|) .
	\end{align*}
	Therefore we have
	\[|\partial_\alpha\sigma_V(g,\k)-\partial_\alpha\sigma_V(g,\k')|\ll \delta^{-C_0}(\|ke_1-k'e_1\|+\|ke_2-k'e_2\|). \]
	Then by Lemma \ref{lem:equivalent distance}, the proof is complete.
\end{proof}

Let $\wedge^2Sym^2V$ be the exterior square of the symmetric square of $V$. It is a linear space generated by vectors of the form $v_1v_2\wedge v_3v_4$ where $v_i$ is in $V$, for $i=1,2,3,4$. For $g,h$ in $GL(V)$, let $F_{g,h}$ be the linear functional on $\wedge^2Sym^2V$, whose action on the vector $v_1v_2\wedge w_1w_2$ is defined by
\[F_{g,h}(v_1v_2\wedge w_1w_2)=\l hv_1,hv_2 \r\l gw_1,gw_2\r-\l gv_1,gv_2\r\l hw_1,hw_2\r. \]
This formula is well defined because $v_1,v_2$ and $w_1,w_2$ are symmetric, respectively. We also have $F_{g,h}(v_1v_2\wedge w_1w_2)=-F_{g,h}(w_1w_2\wedge v_1v_2)$. 
Since the vectors of form $v_1v_2\wedge w_1w_2$ generate the space $\wedge^2Sym^2V$, the linear form $F_{g,h}$ is uniquely defined.

Suppose that $V$ is a super proximal representation of $G$ with highest weight $\chi $ (Definition \ref{defi:super proximal}). Let $\alpha$ be the unique simple root such that $\chi-\alpha$ is a weight of $V$. The space $\wedge^2Sym^2V$ may be reducible. The two highest weights of $Sym^2V$ are $2\chi,\ 2\chi-\alpha$, whose eigenspaces have dimension 1. Hence, the highest weight of $\wedge^2Sym^2V$ is $4\chi-\alpha$, and the eigenspace has dimension 1. Let $W$ be the irreducible subrepresentation of $\wedge^2Sym^2V$ with the highest weight $\chi_1:=4\chi-\alpha$. Recall that $V_{\chi_1,\eta}$ is the image of $\eta\in\P$ in $\bp W$. In the following lemma, we abbreviate $\rho(g),\rho(h)$ to $g,h$.
\begin{lem}\label{lem:-derivative}
	Let $\delta<1/2$.  Let $V$ be a super proximal representation of $G$ and let $\alpha$ be the unique simple root such that $\chi-\alpha$ is a weight of $V$. If $g,h$ in $G$ and $\k=k\k_o\in\P_0, \eta=\pi(\k)$ satisfy
	\begin{itemize}
		\item[(1)] $\ell_{h}^{-1}V^{\chi},\ell_{ h}^{-1} V^{\chi-\alpha}\in B^m_{V,g}(\delta),\gamma_{1,2}(  g)\leq \delta^3$,
		\item[(2)] $\delta(V_{\chi_1,\eta},F_{  g,  h}|_W)>\delta$ and $V_{\chi ,\eta}\in B^m_{V,g}(\delta)\cap B^m_{V,h}(\delta)$,
	\end{itemize}
	then
	\begin{equation*}
	|\partial_\alpha (\sigma_V(g,\k)-\sigma_V(h,\k))|\geq \delta^{C_0}.
	\end{equation*}
\end{lem}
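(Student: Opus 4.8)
\textbf{Proof plan for Lemma \ref{lem:-derivative}.}

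The plan is to compute $\partial_\alpha(\sigma_V(g,\k)-\sigma_V(h,\k))$ using Lemma \ref{lem:derivative cocycle} and rewrite it so that the hypothesis $\delta(V_{\chi_1,\eta},F_{g,h}|_W)>\delta$ appears as a quantitative lower bound. First I would apply Lemma \ref{lem:derivative cocycle} twice: with $v=ke_1$, $u=ke_2=kY_\alpha e_1$ one gets
\[
\partial_\alpha(\sigma_V(g,\k)-\sigma_V(h,\k))=\frac{\l gv,gu\r}{\|gv\|^2}-\frac{\l hv,hu\r}{\|hv\|^2}
=\frac{\l gv,gu\r\|hv\|^2-\l hv,hu\r\|gv\|^2}{\|gv\|^2\|hv\|^2}.
\]
The numerator is exactly $-F_{g,h}(v u\wedge v v)$ up to sign, since $F_{g,h}(v_1v_2\wedge w_1w_2)=\l hv_1,hv_2\r\l gw_1,gw_2\r-\l gv_1,gv_2\r\l hw_1,hw_2\r$ and here $v_1v_2=vu$, $w_1w_2=vv$ (note $\|gv\|^2=\l gv,gv\r$). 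So I would write the numerator as $\pm F_{g,h}(vu\wedge v^2)$, and the key point is to identify the pure wedge $vu\wedge v^2\in\wedge^2Sym^2V$ with a vector proportional to the image of $\eta$ in $\bp W$: since $v=ke_1$ spans $V_{\chi,\eta}$ and $u=ke_2$ is a vector of weight $\chi-\alpha$ after the action of $k$, the vector $v^2$ has weight $2\chi$, $vu$ has weight $2\chi-\alpha$, and $vu\wedge v^2$ generates the highest-weight line $4\chi-\alpha=\chi_1$ in $\wedge^2 Sym^2 V$, i.e. lies in $W$ and represents $V_{\chi_1,\eta}$. (This needs the $\bf g$-independent fact, from Lemma \ref{lem:alpha circle}/super-proximality, that the line $\R(vu\wedge v^2)$ is exactly $V_{\chi_1,\eta}$; I would cite Definition \ref{defi:super proximal} and Lemma \ref{lem:weight structure} here.)

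Next I would turn this into the estimate. By definition of $\delta(\cdot,\cdot)$ in \eqref{equ:distance x x'} and the paragraph after it, $\delta(V_{\chi_1,\eta},F_{g,h}|_W)=\frac{|F_{g,h}(vu\wedge v^2)|}{\|F_{g,h}|_W\|\,\|vu\wedge v^2\|}$, so hypothesis (2) gives
\[
|F_{g,h}(vu\wedge v^2)|\ge \delta\,\|F_{g,h}|_W\|\,\|vu\wedge v^2\|.
\]
It remains to bound the three factors on the right from below, and the denominator $\|gv\|^2\|hv\|^2$ from above, all by powers of $\delta$ (i.e. by $\delta^{\pm C_0}$ in the paper's convention). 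The upper bound $\|gv\|\le\|g\|\|v\|$, $\|hv\|\le\|h\|\|v\|$ is trivial; and I would normalize so $\|e_1\|=1$. For $\|F_{g,h}|_W\|$: expanding $F_{g,h}$ on a highest-weight vector of $W$ and using Lemma \ref{lem:flapro} / \eqref{equ:representation cartan} to write the relevant inner products in terms of $e^{2\chi\kappa(g)}$, $e^{(2\chi-\alpha)\kappa(g)}$, etc., one sees $\|F_{g,h}|_W\|$ is of the same size as $\max\{\|g\|^4\gamma_{1,2}(g),\|h\|^4\gamma_{1,2}(h)\}$ up to constants — but I actually only need a lower bound, which follows once I know $F_{g,h}|_W$ does not vanish identically, and hypothesis (2) ($\delta(V_{\chi_1,\eta},F_{g,h}|_W)>\delta$) already forces $F_{g,h}|_W\neq 0$, so $\|F_{g,h}|_W\|>0$; to get a quantitative $\delta^{C_0}$ lower bound I would instead keep $\|F_{g,h}|_W\|$ inside and reorganize — dividing through, the expression to bound below is $\delta\,\|F_{g,h}|_W\|\,\|vu\wedge v^2\|/(\|gv\|^2\|hv\|^2)$, and I would estimate $\|vu\wedge v^2\|$ from below (it is comparable to $\|v\|^3\|u_\perp\|$ where $u_\perp$ is the component of $u$ orthogonal to $v$, which is $\gg 1$ since $u=ke_2$, $v=ke_1$ with $e_1\perp e_2$) and, crucially, estimate $\|F_{g,h}|_W\|$ from below in terms of $\|g\|,\|h\|$ using that $V^\chi\in B^m_{V,g}(\delta)\cap B^m_{V,h}(\delta)$ and $\ell_h^{-1}V^{\chi},\ell_h^{-1}V^{\chi-\alpha}\in B^m_{V,g}(\delta)$ plus $\gamma_{1,2}(g)\le\delta^3$ — these are exactly the hypotheses (1)–(2), and they are designed so that the "bad" loci where the ratio degenerates are avoided with a $\delta$-margin. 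The final bookkeeping combines everything into $|\partial_\alpha(\sigma_V(g,\k)-\sigma_V(h,\k))|\ge \delta^{C_0}$.

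The main obstacle I expect is the lower bound on $\|F_{g,h}|_W\|$ and the matching of it against the denominator $\|gv\|^2\|hv\|^2$: one has to be careful that $F_{g,h}|_W$ is not much smaller than its full operator norm on all of $\wedge^2Sym^2V$, and that the contributions of $g$ and $h$ to $F_{g,h}$ do not catastrophically cancel. This is controlled by hypothesis (1): $\ell_h^{-1}V^\chi$ and $\ell_h^{-1}V^{\chi-\alpha}$ both lying in $B^m_{V,g}(\delta)$ together with $\gamma_{1,2}(g)\le\delta^3$ means that, after the Cartan decomposition $h=k_h a_h\ell_h$, the action of $g$ on the relevant two-dimensional block stays comparable in size to $\|g\|^2 e^{-\alpha\kappa(g)}=\|g\|^2\gamma_{1,2}(g)$ rather than collapsing, so that $|F_{g,h}(vu\wedge v^2)|$, being the "Wronskian-type" determinant of the $g$- and $h$-actions, cannot be small unless $\eta$ itself is near the degenerate locus $F_{g,h}|_W=0$ — which (2) excludes. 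I would organize this as: (i) reduce by $K$-equivariance (good norm) to the case $k=e$, so $v=e_1$, $u=e_2$; (ii) expand everything in the eigenbasis of $\rho(A)$, so only the $e_1,e_2$ components matter for $vu\wedge v^2$ but the full matrices of $g,h$ enter $F_{g,h}$; (iii) use \eqref{equ:coccar} and Lemma \ref{lem:cocycle} repeatedly for the elementary size estimates; (iv) collect. Each individual estimate is routine in the style of Section \ref{sec:linear action}, so I will not grind through them here.
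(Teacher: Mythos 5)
Your first half is exactly the paper's route: the derivative formula from Lemma \ref{lem:derivative cocycle}, the identification of the numerator with $\pm F_{g,h}(v^2\wedge vu)$, the observation that $\R(v^2\wedge vu)=kV^{\chi_1}$ spans $V_{\chi_1,\eta}$ inside $W$, hypothesis (2) read as $|F_{g,h}(v^2\wedge vu)|\geq\delta\,\|F_{g,h}|_W\|\,\|v^2\wedge vu\|$, and the trivial upper bound $\|gv\|^2\|hv\|^2\leq\|g\|^2\|h\|^2$. All of that is correct and is how the paper argues.

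The genuine gap is the one estimate you flag as "crucial" and then do not prove: $\|F_{g,h}|_W\|\geq\delta^{C_0}\|g\|^2\|h\|^2$ under hypothesis (1) (this is precisely the paper's Lemma \ref{lem:Fgh}, the heart of the lemma). The heuristics you offer in its place do not work and are partly wrong. The claim that $\|F_{g,h}|_W\|$ is of size $\max\{\|g\|^4\gamma_{1,2}(g),\|h\|^4\gamma_{1,2}(h)\}$, and later that the relevant two-dimensional block of $g$ is "comparable to $\|g\|^2\gamma_{1,2}(g)$", would at best yield $|\partial_\alpha(\sigma_V(g,\k)-\sigma_V(h,\k))|\gtrsim\delta^{O(1)}\gamma_{1,2}(g)$, which is far weaker than $\delta^{C_0}$: in the application $\gamma_{1,2}(g)\leq\delta^3$ is only an upper bound, and $\gamma_{1,2}(g)\asymp e^{-\alpha\kappa(g)}$ is exponentially smaller than any fixed power of $\delta$, so such a bound is useless. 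The appeal "$F_{g,h}$ cannot be small unless $\eta$ is near the degenerate locus, which (2) excludes" is circular: (2) is a statement about $F_{g,h}(v^2\wedge vu)$ relative to $\|F_{g,h}|_W\|$ and gives no control of $\|F_{g,h}|_W\|$ against $\|g\|^2\|h\|^2$; nor does it by itself rule out cancellation between the $g$- and $h$-terms of $F_{g,h}$. The actual mechanism, which is the one idea missing from your plan, is to produce a test vector in $W$ at which the $h$-term vanishes identically: using the Cartan decomposition and the good norm, reduce to $h=\diag(a_1,\dots,a_n)$ diagonal; by super proximality (Definition \ref{defi:super proximal}) $V^\chi$ and $V^{\chi-\alpha}$ are one-dimensional eigenlines and eigenvectors of distinct weights are orthogonal, so $\l he_1,he_2\r=0$ and $F_{g,h}(e_1^2\wedge e_1e_2)=a_1^2\l ge_1,ge_2\r$ with $a_1=\|h\|$; then hypothesis (1) -- which is phrased in the $\ell_h$-frame exactly so that in these coordinates $\delta(\R e_1,y^m_g),\delta(\R e_2,y^m_g)>\delta$ -- together with \eqref{equ:coccar} gives $\l ge_1,ge_2\r\geq\|ge_1\|\|ge_2\|-\|ge_1\wedge ge_2\|\geq\|g\|^2(\delta^2-\gamma_{1,2}(g))$, and $\gamma_{1,2}(g)\leq\delta^3$ makes this $\geq\delta^{C_0}\|g\|^2$. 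Without this vanishing trick your "Wronskian-type" argument has no way to exclude cancellation, so as written the proposal does not close.
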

\begin{rem}	
	This is similar to the non local integrability property as defined in \cite{dolgopyat1998decay} \cite{naud2005expanding} and \cite{stoyanov2011spectra}. Although the above two conditions are complicated, we will see later that outside a set of exponentially small measure, all pairs $g,h$ satisfy these conditions.
	
	The key idea here is to use other representations to linearise polynomial functions on $V$. As long as the function is linear, we will have a good control of it. Another point is that the image of $\P$ stays in the same irreducible subrepresentation.
\end{rem}
\begin{proof}[Proof of Lemma \ref{lem:-derivative}]
	By Lemma \ref{lem:derivative cocycle}, let 
	\begin{equation}\label{equ:L kvu}
	L:=\partial_\alpha (\sigma_V(g,\k)-\sigma_V(h,\k))=\frac{F_{g,h}(v^2\wedge vu)}{\|gv\|^2\|hv\|^2},
	\end{equation}
	where $v=ke_1$ and $u=kY_\alpha e_1$ as in Lemma \ref{lem:derivative cocycle}.
	\begin{lem}\label{lem:Fgh}
		If $g,h$ satisfy assumption $(1)$, then the operator norm satisfies
		\[\|F_{g,h}|_W\|\geq \delta^{C_0}\|g\|^2\|h\|^2. \]
	\end{lem}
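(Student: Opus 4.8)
\textbf{Proof proposal for Lemma \ref{lem:Fgh}.}

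The plan is to reduce the lower bound on $\|F_{g,h}|_W\|$ to evaluating the functional $F_{g,h}$ on a single well-chosen test vector in $W$, namely the pure wedge $v_1^2\wedge v_1v_2$ where $v_1$ is a unit highest-weight vector of $V$ and $v_2=Y_\alpha v_1$ spans $V^{\chi-\alpha}$. The point is that $v_1^2$ spans the highest-weight line of $\mathrm{Sym}^2V$ (weight $2\chi$) and $v_1v_2$ spans the next-highest weight line (weight $2\chi-\alpha$, which has dimension one since $V$ is super proximal, by Definition \ref{defi:super proximal}), so $v_1^2\wedge v_1v_2$ is a highest-weight vector of $\wedge^2\mathrm{Sym}^2V$ and therefore lies in the irreducible subrepresentation $W$ with highest weight $\chi_1=4\chi-\alpha$. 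Its norm is bounded above and below by constants depending only on the chosen good norm, so it suffices to show $|F_{g,h}(v_1^2\wedge v_1v_2)|\geq \delta^{C_0}\|g\|^2\|h\|^2$.

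First I would write out, using the definition of $F_{g,h}$,
\[
F_{g,h}(v_1^2\wedge v_1v_2)=\l hv_1,hv_1\r\l gv_1,gv_2\r-\l gv_1,gv_1\r\l hv_1,hv_2\r=\|hv_1\|^2\l gv_1,gv_2\r-\|gv_1\|^2\l hv_1,hv_2\r.
\]
Dividing by $\|gv_1\|^2\|hv_1\|^2$, this equals $\|gv_1\|^2\|hv_1\|^2\big(\partial_\alpha\sigma_V(g,\k_o)-\partial_\alpha\sigma_V(h,\k_o)\big)$ by Lemma \ref{lem:derivative cocycle} evaluated at the base point $\k_o$ (where $v=v_1$, $u=v_2$). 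Now assumption (1) says $\ell_h^{-1}V^\chi,\ell_h^{-1}V^{\chi-\alpha}\in B^m_{V,g}(\delta)$, which via \eqref{equ:ym rhog} and Lemma \ref{lem:cocycle} forces $\|gv_1\|,\|gv_2\|\gg\delta\|g\|$ and similarly $\|hv_1\|\gg\delta\|h\|$ using $V^\chi\in B^m_{V,h}(\delta)$; so the prefactor $\|gv_1\|^2\|hv_1\|^2$ is at least $\delta^{C_0}\|g\|^2\|h\|^2$. It remains to bound $|\partial_\alpha\sigma_V(g,\k_o)-\partial_\alpha\sigma_V(h,\k_o)|$ from below — but wait, that is exactly the quantity $L$ of Lemma \ref{lem:-derivative} evaluated at $\k=\k_o$, which is what the whole chain is trying to bound, so I must not argue circularly here.

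The correct route, and the step I expect to be the main obstacle, is to bound $F_{g,h}(v_1^2\wedge v_1v_2)$ directly rather than through the derivatives. Expand $gv_1$, $gv_2$ in the orthonormal eigenbasis $(e_i)$ of $\rho(A)$ (so $e_1=v_1$, $e_2=v_2$ up to scalar): writing $g=k_g a_g\ell_g$ with $a_g=\diag(a_1\geq a_2\geq\cdots)$, one has $\l gv_1,gv_2\r=\sum_i a_i^2 \l \ell_gv_1,e_i\r\l \ell_gv_2,e_i\r$, and the dominant contribution is $a_1^2\l\ell_gv_1,e_1\r\l\ell_gv_2,e_1\r$; assumption (1) controls $\l\ell_gv_1,e_1\r$ and $\l\ell_gv_2,e_1\r$ away from zero (these are, up to sign, $\delta(\ell_h^{-1}V^\chi,y^m_{\rho(g)})$-type quantities), while $\gamma_{1,2}(g)\leq\delta^3$ controls the subdominant terms $a_i^2/a_1^2\leq a_2^2/a_1^2=\gamma_{1,2}(g)$ for $i\geq2$. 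So $\l gv_1,gv_2\r/\|g\|^2$ and $\|hv_1\|^2/\|h\|^2$, $\l hv_1,hv_2\r/\|h\|^2$ all have sizes of the form $\delta^{\pm C_0}$, and the real content is to show the two products in the difference cannot nearly cancel. This non-cancellation is precisely what assumption (2) — that $\delta(V_{\chi_1,\eta_o},F_{g,h}|_W)>\delta$, i.e. the image of $\eta_o$ in $\bp W$ is far from the hyperplane killed by $F_{g,h}$ — is designed to prevent; I would invoke it by noting $V_{\chi_1,\eta_o}=\R(v_1^2\wedge v_1v_2)$ so that $\delta(V_{\chi_1,\eta_o},F_{g,h}|_W)=|F_{g,h}(v_1^2\wedge v_1v_2)|/(\|F_{g,h}|_W\|\,\|v_1^2\wedge v_1v_2\|)$ — hmm, this makes the desired inequality nearly tautological given (2), provided one already has an a priori upper bound $\|F_{g,h}|_W\|\ll\|g\|^2\|h\|^2$ coming from $|F_{g,h}(w_1w_2\wedge w_3w_4)|\leq\|g\|^2\|h\|^2\|w_1w_2\wedge w_3w_4\|$ on pure wedges. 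So the proof structure is: (i) the trivial upper bound $\|F_{g,h}|_W\|\ll\|g\|^2\|h\|^2$; (ii) identify $V_{\chi_1,\eta_o}$ with $\R(v_1^2\wedge v_1v_2)\in\bp W$; (iii) rearrange the definition of $\delta(\cdot,\cdot)$ in $\bp W$ together with assumption (2) to get $\|F_{g,h}|_W\|\geq |F_{g,h}(v_1^2\wedge v_1v_2)|/(\delta^{-1}\|v_1^2\wedge v_1v_2\|)$; this is the wrong direction, so instead one uses (2) as $|F_{g,h}(v_1^2\wedge v_1 v_2)| \geq \delta \|F_{g,h}|_W\| \|v_1^2 \wedge v_1 v_2\|$ read backwards — the genuinely needed estimate is the lower bound on $|F_{g,h}(v_1^2\wedge v_1v_2)|$ itself, obtained from the eigenbasis expansion and assumption (1) as sketched, after which $\|F_{g,h}|_W\|\geq|F_{g,h}(v_1^2\wedge v_1v_2)|/\|v_1^2\wedge v_1v_2\|\geq\delta^{C_0}\|g\|^2\|h\|^2$ follows. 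The main obstacle is thus the careful bookkeeping in the eigenbasis expansion showing that, under (1) and $\gamma_{1,2}(g)\leq\delta^3$, the quantity $F_{g,h}(v_1^2\wedge v_1v_2)$ is dominated by a single non-vanishing monomial of size $\delta^{C_0}\|g\|^2\|h\|^2$ rather than suffering cancellation.
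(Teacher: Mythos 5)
There is a genuine gap, and it sits exactly at the point you flag as the ``main obstacle'': for your choice of test vector it cannot be closed. The paper's proof of Lemma \ref{lem:Fgh} avoids the cancellation problem altogether by adapting the test vector to $h$. Writing $h=k_ha_h\ell_h$ and using that the good norm is $K$-invariant, one may either assume $h=a_h$ is diagonal or, equivalently, evaluate $F_{g,h}$ on the translated highest-weight vector $\ell_h^{-1}\bigl(e_1^2\wedge e_1e_2\bigr)$, which still lies in $W$ (as $W$ is $G$-invariant) and has the same norm. For this vector the second term of $F_{g,h}$ vanishes \emph{identically}: since $h\ell_h^{-1}=k_ha_h$ and, by super proximality, $a_h$ acts on $e_1,e_2$ by the scalars $a_1,a_2$, one gets $\langle h\ell_h^{-1}e_1,h\ell_h^{-1}e_2\rangle=\langle a_1e_1,a_2e_2\rangle=0$. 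What remains is the single term $\|h\|^2\langle g\ell_h^{-1}e_1,g\ell_h^{-1}e_2\rangle$, and assumption (1) is tailored precisely to it: $\ell_h^{-1}V^{\chi},\ell_h^{-1}V^{\chi-\alpha}\in B^m_{V,g}(\delta)$ combined with \eqref{equ:coccar} gives $\|g\ell_h^{-1}e_i\|\geq\delta\|g\|$, while $\|g\ell_h^{-1}e_1\wedge g\ell_h^{-1}e_2\|\leq\gamma_{1,2}(g)\|g\|^2\leq\delta^3\|g\|^2$, so $|\langle g\ell_h^{-1}e_1,g\ell_h^{-1}e_2\rangle|\geq(\delta^2-\delta^3)\|g\|^2$ and the operator-norm bound follows by evaluating at this one unit vector.

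By contrast, with your untranslated vector $v_1^2\wedge v_1v_2$ at the base point two things go wrong. First, assumption (1) does not control the quantities you need: $\langle\ell_gv_1,e_1\rangle$ and $\langle\ell_gv_2,e_1\rangle$ are of the type $\delta(V^{\chi},y^m_g)$ and $\delta(V^{\chi-\alpha},y^m_g)$, whereas (1) bounds only $\delta(\ell_h^{-1}V^{\chi},y^m_g)$ and $\delta(\ell_h^{-1}V^{\chi-\alpha},y^m_g)$ from below (likewise, your bound $\|hv_1\|\gg\delta\|h\|$ uses $V_{\chi,\eta}\in B^m_{V,h}(\delta)$, which belongs to assumption (2) and is not a hypothesis of Lemma \ref{lem:Fgh}). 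Second, and more seriously, even granting size estimates on both products, nothing in (1) prevents $\|hv_1\|^2\langle gv_1,gv_2\rangle$ and $\|gv_1\|^2\langle hv_1,hv_2\rangle$ from nearly cancelling: (1) constrains $h$ only through the position of $\ell_h$ relative to $y^m_g$, an open condition, and within that set $\langle hv_1,hv_2\rangle=\langle a_h\ell_hv_1,a_h\ell_hv_2\rangle$ can be tuned to match the first term, while $\|F_{g,h}|_W\|$ stays large because the norm is attained at other vectors (in particular at the $\ell_h^{-1}$-translated one). So the ``careful bookkeeping'' you defer to cannot deliver the lower bound for your test vector; the missing idea is precisely the $h$-adapted choice that kills the second term. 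You were right, however, to discard assumption (2) (it is not available here, and it only bounds $|F_{g,h}|$ at a point relative to $\|F_{g,h}|_W\|$, the wrong direction), and your reduction of the operator norm to a single evaluation in $W$, together with the trivial upper bound $\|F_{g,h}|_W\|\ll\|g\|^2\|h\|^2$, is sound.
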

	\begin{proof}
		Using the Cartan decomposition and good norm, we can suppose that $h$ is diagonal and $h=\diag(a_1,a_2,\cdots, a_n )$ with $a_1\geq a_2\geq \dots\geq a_n$. By Definition \ref{defi:super proximal}, we know that $he_1=a_1e_1$ and $he_2=a_2e_2$. Assumption $(1)$ becomes
		\begin{equation}\label{equ:assum}
		\delta(\R e_1,y^m_g),\delta(\R e_2,y^m_g)>\delta,\gamma_{1,2}(g)\leq \delta^3.
		\end{equation}
		In \eqref{equ:L kvu}, let $\k=\k_o$, then $v=e_1, u=e_2$, which make
		\[ \l hv,hu\r=\l a_1e_1,a_2e_2\r=0.\] 
		Therefore, due to 
		\[\ \l v_1,v_2\r\geq \|v_1\|\|v_2\|-\|v_1\wedge v_2\|, \]
		for $v_1,v_2$ in $V$, we have
		\begin{align*}
		F_{g,h}(e_1^2\wedge e_1e_2)=a_1^2\l ge_1,ge_2\r&\geq a_1^2(\|ge_1\|\|ge_2\|-\|ge_1\wedge ge_2\|).
		\end{align*}
		Then \eqref{equ:coccar} and \eqref{equ:assum} imply 
		\[ F_{g,h}(e_1^2\wedge e_1e_2)\geq\|h\|^2\|g\|^2(\delta^2-\gamma_{1,2}(g)).\]
		The proof is complete.
	\end{proof}
	By Definition \ref{defi:super proximal}, the representation $\wedge^2 Sym^2V$ is a proximal representation.  Due to $\R (v^2\wedge vu)=\R k(e_1^2\wedge e_1e_2)=k V^{\chi_1}$, the line $\R(v^2\wedge vu)$ is contained in the $K$-orbit of the subspace of highest weight $V^{\chi_1}$. Since $V^{\chi_1}$ is in $W$, we see that $v^2\wedge vu$ is also in $W$.  By \eqref{equ:L kvu},
	\[L=\frac{F_{g,h}(v^2\wedge vu)}{\|F_{g,h}|_W\|}\frac{\|g\|^2\|h\|^2}{\|gv\|^2\|hv\|^2}\frac{\|F_{g,h}|_W\|}{\|g\|^2\|h\|^2}. \]
	When $\eta$ satisfies assumption $(2)$, Lemma \ref{lem:-derivative} follows by applying \eqref{equ:coccar} to $\|gv\|^2,\|hv\|^2$ and by Lemma \ref{lem:Fgh}.
\end{proof}
\subsubsection*{Proof of the spectral gap}
Here we will prove the theorem of uniform spectral gap. The first part is classic, where we use some ideas of Dolgopyat \cite{dolgopyat1998decay} to transform the problem to an effective estimate in Proposition \ref{prop:L1pbf}, see also \cite{naud2005expanding} and \cite{stoyanov2011spectra}. The key observation is that this effective estimate (Proposition \ref{prop:L1pbf}) can be obtained by the Fourier decay, regarding the difference of cocycle as a function on $\PP$.
The intuition here is from Lemma \ref{lem:-derivative}. When $g,h$ are in general position and $\eta$ not too close to $\zeta^m_g,\zeta^m_h$, the difference $\varphi(\eta)=\sigma(g,\eta)-\sigma(h,\eta)$ will be $(C,r)$ good (Definition \ref{defi:C r good}). But in order to accomplish this, we need some sophisticated cutoff, which makes the proof complicated.

Recall the family of representations $\{V_\alpha\}_{\alpha\in\Pi}$ defined in Lemma \ref{lem:tits}. This family is super proximal by Lemma \ref{lem:super proximal} and with highest weight $\chi_{\alpha}$ equal to a multiple of fundamental weight $\tilde{\omega}_\alpha$.
We are in semisimple case and we know that $\frak b^*=\frak a^*$. 
We can write $\vartheta$ in $\frak a^*$ as a linear combination of weights, $\{\chi_\alpha|\alpha\in \Pi\}$, that is
\[\vartheta=\sum_{\alpha\in\Pi}\vartheta_\alpha\chi_\alpha. \]
Set $|\vartheta|=\max_{\alpha\in\Pi}|\vartheta_\alpha|$. 

We want to treat the spectral gap on the flag variety $\P$ and the projective space $\bp V$ at the same time, where $V$ is an irreducible representation of $G$. Let $X$ be $\P$ or $\bp V$. Let $\sigma:G\times X\rightarrow E$ be the cocycle, which is 
\begin{itemize}
	\item given by the Iwasawa cocycle $\sigma$ and $E=\frak a$ when $X=\P$, 
	\item given by $\sigma_V$ (defined in \eqref{equ:sigma V}) and $E=\R$ when $X=\bp V$.
\end{itemize}
 Let $E_{\bb C}=E\otimes_\R\bb C$ and $E^*_{\bb C}$ be the dual space of $E_{\bb C}$. For $z\in E^*_{\bb C}$, write $z=\realpart+i\vartheta$, where $\realpart,\,\vartheta$ are elements in $E^*$.  Recall that the transfer operator $P_z$ is defined as follows: for $|\realpart|$ small enough and for $f$ in $C^0(X)$, $x$ in $X$
\[P_zf(x)=\int_{G}e^{z\sigma(g,x)}f(gx)\dd\mu(g), \]
where $z=\realpart+i\vartheta$.
Recall that for $f$ in $C^\gamma(X)$ we defined $c_\gamma(f)=\sup_{x\neq x'}\frac{|f(x)-f(x')|}{d(x,x')^\gamma}$ and $|f|_\gamma=|f|_\infty+c_\gamma(f)$. 

We state our main result of this section
\begin{thm}\label{prop:spegap}
	Let $\mu$ be a Zariski dense Borel probability measure on $G$ with a finite exponential moment. For $\gamma>0$ small enough, there exist $\rho<1, C>1$ such that for all $\vartheta$ and $\realpart$ in $E^*$ with $|\vartheta|$ large enough, $|\realpart|$ small enough and $f$ in $C^{\gamma}(X)$, $n$ in $\bb N$ we have
	\begin{equation*}
	|P^{n}_{\realpart+i\vartheta}f|_{\gamma}\leq C|\vartheta|^{2\gamma}\rho^n|f|_{\gamma}.
	\end{equation*}
\end{thm}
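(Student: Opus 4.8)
The plan is to run the Dolgopyat-type argument, deriving the contraction from the Fourier decay (Theorem \ref{thm:foudec}/\ref{thm:foudecsemi}). First I would set up the standard decomposition of the iterated transfer operator: writing $P_{\realpart+i\vartheta}^n f(x)=\int_G e^{(\realpart+i\vartheta)\sigma(g,x)}f(gx)\,\dd\mucon n(g)$, the Lasota--Yorke type inequality for $P_z$ on $C^\gamma(X)$ (which follows from the good-norm cocycle estimates and the contracting action, Lemmas \ref{lem:gBmgmul}, \ref{lem:iwacar}) gives $c_\gamma(P_z^n f)\le C\bigl(\rho_0^n c_\gamma(f)+|\vartheta|\,\|f\|_\infty\bigr)$ for some $\rho_0<1$; here the factor $|\vartheta|$ (eventually $|\vartheta|^{2\gamma}$ after balancing scales $n\asymp\log|\vartheta|$) absorbs the oscillation of $e^{i\vartheta\sigma}$. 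So the whole problem reduces to showing an $L^\infty$ (or $L^2(\nu)$) contraction $\|P_{\realpart+i\vartheta}^{Nn_0}f\|\le\rho_1^{N}\|f\|$ for a suitable fixed block length $n_0\asymp\log|\vartheta|$, which is the content of the announced Proposition \ref{prop:L1pbf}.

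Next I would carry out the Dolgopyat cancellation. Fix $x$ and split $G$ (under $\mucon{n_0}$) into the ``good'' elements $g$ in the sense of the large deviation principles of Section \ref{sec:lardev}, losing only an exponentially small mass. On the good part, the phases $\vartheta\sigma(g,x)$, as $g$ ranges and $x$ varies over a small ball $B$ in $X$, are governed — via the Newton--Leibniz formula \eqref{equ:newlei} along $\alpha$-circles and the changing-flag Lemma \ref{lem:chapoi} — by the function $\varphi(\cdot)=\vartheta(\sigma(g,\cdot)-\sigma(h,\cdot))$ for pairs $g,h$. The crucial input is Lemma \ref{lem:-derivative} (non-local-integrability): outside an exponentially small set of pairs $(g,h)$, this $\varphi$ is $(C,r)$ good in the sense of Definition \ref{defi:C r good}, with $C$ polynomial in $|\vartheta|$. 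One then builds a Dolgopyat operator: choose a maximal $\epss$-separated net of balls, and on each ball pair up two branches $g,h$ whose relative phase is $(C,r)$ good; replacing $|e^{i\varphi}|$ on that pair by a slightly shrunk weight $(1-\epsilon_0)$ gives, by the mean-value/triangle inequality, a genuine gain $\|P^{n_0}_{\realpart+i\vartheta}f\|\le(1-\epsilon_0')\|f\|$ provided $f$ is already suitably ``flat'' (belongs to the Dolgopyat cone $\{|f|\le M,\ c_\gamma(f)\le M|\vartheta|^{\sigma}\}$). The key estimate that makes the pairing effective — rather than just producing a trivial bound — is precisely the Fourier decay of the stationary measure: integrating $e^{i\xi\varphi}r$ against $\nu$, with $\xi\asymp|\vartheta|$ and $\varphi$ being $(\xi^{\epsilonzer},r)$ good, Theorem \ref{thm:foudecsemi} yields $\le\xi^{-\epsilonone}$, which converts into the required $L^2(\nu)$ decay after unfolding the stationarity relation $\nu=\mucon{n_0}*\nu$.

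Finally I would iterate: applying the single-step contraction to the images $P^{n_0\,j}_{\realpart+i\vartheta}f$ (which stay in the cone by the Lasota--Yorke inequality, once $|\vartheta|$ is large so that $n_0\asymp\log|\vartheta|$ dominates the initial $c_\gamma(f)$), one gets $\|P^{n_0 N}_{\realpart+i\vartheta}f\|_\infty\le\rho_1^N\|f\|_\gamma$, and combining with the Lasota--Yorke bound on the Hölder seminorm produces $|P^n_{\realpart+i\vartheta}f|_\gamma\le C|\vartheta|^{2\gamma}\rho^n|f|_\gamma$ for all $n$, with $\rho<1$ and all constants uniform in $|\realpart|$ small (the $\realpart$-dependence is harmless since $e^{\realpart\sigma}$ is bounded and Lipschitz uniformly for $|\realpart|\le\realpart_0$, by the exponential moment). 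The main obstacle, and the place where essentially all the work of Sections \ref{sec:lie groups}--\ref{sec:noncon} is consumed, is verifying the $(C,r)$-goodness of $\varphi=\vartheta(\sigma(g,\cdot)-\sigma(h,\cdot))$ on a full-measure (up to exponentially small) set of pairs with the correct polynomial dependence of $C$ on $|\vartheta|$ — this is where Lemma \ref{lem:-derivative}, the regularity of $\nu$ (Corollary \ref{cor:regularity}), and the higher-rank changing-flag machinery all enter, and it is considerably more delicate than in the rank-one ($\sltwo$) situation because one must simultaneously control all the directional derivatives $\partial_\alpha\varphi$ and track the sign group $M$ on $\P_0$.
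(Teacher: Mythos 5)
Your overall strategy is the right one and identifies the same key ingredients as the paper — the reduction via the a priori/Lasota--Yorke estimates (Proposition \ref{prop:spectral real}) to an $L^2(\nu)$ bound at time $n\asymp\Cone\log|\vartheta|$ (Proposition \ref{prop:L1pbf}), the verification via Lemma \ref{lem:-derivative} that $\varphi=|\vartheta|^{-1}\vartheta(\sigma(g,\cdot)-\sigma(h,\cdot))$ is $(|\vartheta|^{\epsilonzer},r)$ good for all pairs $(g,h)$ outside an exponentially small set, and the use of Theorem \ref{thm:foudecsemi} to produce the cancellation — but the middle layer you insert (a genuine Dolgopyat operator: $\epsilon$-separated nets of balls, pairing of branches, shrunken weights $(1-\epsilon_0)$, invariant cones) is not what the paper does and is in fact superfluous once Fourier decay is available. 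The paper simply expands $\int|P^n_{\realpart+i\vartheta}f|^2\dd\nu$ by Fubini into a double integral $\int A_{g,h}\,\dd\mucon{n}(g)\dd\mucon{n}(h)$, discards the bad pairs by the large deviation principles and Cauchy--Schwarz, and for each good pair applies Theorem \ref{thm:foudecsemi} directly to the oscillatory integral $\int e^{i|\vartheta|\varphi}r\,\dd\nu$, where the amplitude $r$ is $f(g\cdot)\bar f(h\cdot)e^{\realpart(\sigma(g,\cdot)+\sigma(h,\cdot))}$ multiplied by smooth cutoffs $\mollifier_\alpha$ enforcing Lemma \ref{lem:-derivative}(2) on $\supp r$; the cost of the cutoffs is controlled by the H\"older regularity of $\nu$ \eqref{equ:regularity stataionary measure}, and the delicate bookkeeping ($c_\gamma(r)\leq|\vartheta|^{\epsilonzer}$, $|r|_\infty\leq|\vartheta|^{\epsilonone/2}$, the choice $\Cone\geq\max_\alpha 1/\alpha\sigma_\mu+1$ so that the contraction rate beats $|\vartheta|$) is exactly Lemma \ref{lem:xgh}. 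Two cautions about your version: a pairing/Dolgopyat-operator gain of the kind you sketch normally needs a doubling or Federer-type lower regularity of $\nu$ on the balls of the net, which is not established (and not needed) here, so if you keep that scaffolding you re-introduce a gap that the direct expansion avoids; and you should record the normalization of $\varphi$ by $|\vartheta|^{-1}$ (so that the frequency is $\xi=|\vartheta|$ and the G4 condition can be met) as well as the reduction of the case $X=\bp V$ to $X=\P$ via $\sigma_V(g,V_{\chi,\eta})=\chi\sigma(g,\eta)$, which your sketch leaves implicit.
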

\begin{rem}
	Here we should be careful that the distances on $\bp V$ and $\P$ are defined in \eqref{equ:distance x x'} and \eqref{equ:distance eta eta'}. They are not the Riemannian distances defined in the introduction. But on a compact Riemannian manifold, different Riemannian distances are equivalent. In particular, every Riemannian distance on $\P$ is equivalent to the $K$-invariant Riemannian distance on $\P$. By Lemma \ref{lem:equivalence distance P}, we know it is equivalent to the distances defined in \eqref{equ:distance eta eta'}. The case of the projective space $\bp V$ is similar. Hence, the norms $|\cdot|_\gamma$ induced by different distances are equivalent.
\end{rem}
\begin{rem}\label{rem:norm}
	We explain here that the result holds for any norm.
	
	If we have another norm $\|\cdot \|_1$ on $V$. Let $\sigma_1$ be the new cocycle defined with respect to the norm $\|\cdot\|_1$. Let $\psi(x)=\log\frac{\|v\|_1}{\|v\|}$ for $x=\R v$ in $\bp V$. Then \[\sigma_1(g,x)=\sigma_V(g,x)+\psi(gx)-\psi(x),\] 
	which means the difference of two cocycles is a coboundary. This function $\psi$ is Lipschitz, due to equivalence of norms on finite-dimensional vector spaces. Let $T_zf(x)=e^{z\psi(x)}f(x)$. By Lipschitz property of $\psi$, we have
	\[|T_zf|_\gamma\leq Ce^{C|\realpart|}|z|^{\gamma}|f|_\gamma, \]
	where $C$ depends on $|\psi|_{Lip}$. 
	We know that
	\[P_{z\sigma_1}=T_z^{-1}P_{z\sigma_V}T_z, \]
	hence the same spectral gap property also holds for the norm $\|\cdot\|_1$ with different constants.
\end{rem}
Theorem \ref{thm:spegaprep} and Theorem \ref{thm:spegap} follow directly from Theorem \ref{prop:spegap}. 

We start with standard \textit{a priori} estimates. When $z=0$, we will write $P$ for $P_0$.
\begin{prop}\label{prop:spectral real}
	For every $\gamma>0$ small enough, there exist $C>0$ and $0<\rho<1$ such that for all $f$ in $C^\gamma(X)$, $|\realpart|$ small enough and $n\in\bb N$
	\begin{align}
		&\label{equ:expmom}
		|P^n_zf|_\infty\leq C^{|\realpart|n}|f|_\infty,\\
	&\label{equ:infpnf}|P^nf|_{\infty}\leq\left|\int_Xf\dd\nu\right|+C\rho^n|f|_{\gamma},\\
	&\label{equ:gampnf}
	c_\gamma(P^n_zf)\leq C(C^{|\realpart|n}|\vartheta|^{\gamma}|f|_{\infty}+\rho^nc_\gamma(f)).
	\end{align}
\end{prop}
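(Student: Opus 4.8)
I would prove the three inequalities in turn, the first two being essentially immediate and the third being a Doeblin--Fortet (``Lasota--Yorke'') estimate. Throughout I use that $P_z^nf(x)=\int_Ge^{z\sigma(g,x)}f(gx)\dd\mu^{*n}(g)$ by the cocycle property, that $|e^{z\sigma(g,x)}|=e^{\varpi\sigma(g,x)}$ with $\varpi\sigma(g,x)\in\R$, and that $\|\sigma(g,x)\|\leq C\|\kappa(g)\|$ together with subadditivity $\|\kappa(gh)\|\leq\|\kappa(g)\|+\|\kappa(h)\|$.

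\textbf{The estimates \eqref{equ:expmom} and \eqref{equ:infpnf}.} For \eqref{equ:expmom}: bound $|P_z^nf(x)|\leq|f|_\infty\int_Ge^{\varpi\sigma(g,x)}\dd\mu^{*n}(g)\leq|f|_\infty(\int_Ge^{C|\varpi|\|\kappa(g)\|}\dd\mu(g))^n$, and by the finite exponential moment (Definition \ref{defi:exponential moment}) the factor is $1+O(|\varpi|)\leq e^{C_2|\varpi|}$ for $|\varpi|$ small, so one gets $C^{|\varpi|n}$ with $C=e^{C_2}$. For \eqref{equ:infpnf} one proves the spectral gap of the Markov operator $P=P_0$ on $C^\gamma(X)$ modulo constants. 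First establish a one-step contraction $c_\gamma(P^{n_0}f)\leq\rho_0c_\gamma(f)$ for suitable $n_0$, $\rho_0<1$ and $\gamma$ small: from $|P^{n_0}f(x)-P^{n_0}f(x')|\leq c_\gamma(f)\int_Gd(gx,gx')^\gamma\dd\mu^{*n_0}(g)$, split $G$ into the ``good'' set where $\kappa(g)\approx n_0\sigma_\mu$ and $x,x'$ lie in $B^m_{V,g}(e^{-\epsilon n_0})$ (resp. $B^m_g(\cdot)$ on $\P$) --- of $\mu^{*n_0}$-measure $\geq1-Ce^{-cn_0}$ by Propositions \ref{prop:lardev1}, \ref{prop:large deviation projective}, \ref{prop:large deviatio flag} --- and its complement. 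On the good set Lemma \ref{lem:gBmg} (resp. \ref{lem:gBmgmul}) gives $d(gx,gx')\leq\gamma_{1,2}(g)e^{2\epsilon n_0/C_A}d(x,x')$, which is $\leq\rho_1^{n_0}d(x,x')$ for $\epsilon$ small since $\gamma_{1,2}(\rho g)=e^{-\alpha\kappa(g)}$ with $\alpha\sigma_\mu>0$ (Lemma \ref{lem:poslya}); on the complement use the crude global bound $d(gx,gx')\leq\|g\|^2\|g^{-1}\|^2d(x,x')$ and Cauchy--Schwarz with the exponential moment, the exponentially small measure beating the moment factor when $\gamma$ is small. Iterating gives $c_\gamma(P^nf)\leq C\rho^nc_\gamma(f)$; applying this to $f-\int_Xf\dd\nu$ (whose $P^n$-image has zero $\nu$-mean by stationarity, since $P1=1$) and writing $P^ng(x)=\int_X(P^ng(x)-P^ng(y))\dd\nu(y)$ yields $|P^nf-\int_Xf\dd\nu|_\infty\leq C\rho^nc_\gamma(f)\leq C\rho^n|f|_\gamma$.

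\textbf{The Doeblin--Fortet estimate \eqref{equ:gampnf}.} Decompose, for $x\neq x'$,
\[
P_z^nf(x)-P_z^nf(x')=\int_Ge^{z\sigma(g,x)}(f(gx)-f(gx'))\dd\mu^{*n}(g)+\int_G(e^{z\sigma(g,x)}-e^{z\sigma(g,x')})f(gx')\dd\mu^{*n}(g),
\]
and split the second integrand as $(e^{\varpi\sigma(g,x)}-e^{\varpi\sigma(g,x')})e^{i\vartheta\sigma(g,x)}+e^{\varpi\sigma(g,x')}(e^{i\vartheta\sigma(g,x)}-e^{i\vartheta\sigma(g,x')})$. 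The pointwise inputs are: the contraction bound $d(gx,gx')\leq\min(1,\gamma_{1,2}(g)d(x,x')/(\delta(x,y^m_g)\delta(x',y^m_g)))$ (which comes from the computation in the proof of Lemma \ref{lem:distance}, and its $d_\alpha$-version on $\P$); the global Lipschitz-type bound $|\sigma_V(g,x)-\sigma_V(g,x')|\leq Cd(x,x')/(\delta(x,y^m_g)\delta(x',y^m_g))$ (the global refinement of Lemma \ref{lem:gBmg}); the elementary $|e^{i\vartheta t}-e^{i\vartheta s}|\leq2^{1-\gamma}|\vartheta|^\gamma|t-s|^\gamma$; and the uniform Hölder regularity $\int_G\delta(x,y^m_g)^{-s}\dd\mu^{*n}(g)\leq C_s$ for $0<s<c$ together with $\int_G\gamma_{1,2}(g)^{s}\dd\mu^{*n}(g)\leq Ce^{-c'n}$, both deduced from the large deviation estimates of Section \ref{sec:lardev} applied at every scale. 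Feeding these in and using Hölder's inequality with the exponential moment: the first integral is $\leq c_\gamma(f)d(x,x')^\gamma(\int e^{3\varpi\sigma})^{1/3}(\int\gamma_{1,2}^{3\gamma})^{1/3}(\int(\delta\delta')^{-3\gamma})^{1/3}\leq C\rho^nc_\gamma(f)d(x,x')^\gamma$ for $|\varpi|$ small; the oscillatory part of the second is $\leq C|\vartheta|^\gamma|f|_\infty d(x,x')^\gamma(\int e^{2\varpi\sigma})^{1/2}(\int(\delta\delta')^{-2\gamma})^{1/2}\leq CC^{|\varpi|n}|\vartheta|^\gamma|f|_\infty d(x,x')^\gamma$; and the ``coboundary'' part of the second is similarly $\leq CC^{|\varpi|n}|f|_\infty d(x,x')^\gamma\leq CC^{|\varpi|n}|\vartheta|^\gamma|f|_\infty d(x,x')^\gamma$ since $|\vartheta|\geq1$. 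Summing gives \eqref{equ:gampnf}. On $\P$ one first reduces every occurrence of the $\frak a$-valued cocycle to its coordinates $\chi_\alpha\sigma(g,\eta)=\sigma_{V_\alpha}(g,V_{\alpha,\eta})$ via \eqref{equ:representation cocycle}, applying the projective-space estimates on each $V_\alpha$ and using $d(\eta,\eta')=\max_\alpha d_\alpha(\eta,\eta')$, $\delta(\eta,\zeta^m_g)=\min_\alpha\delta(V_{\alpha,\eta},V^*_{\alpha,\zeta^m_g})$.

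\textbf{Main obstacle.} The genuinely technical point is obtaining the cocycle-difference and contraction estimates with the \emph{full} power $d(x,x')^\gamma$ and with constants \emph{uniform in $n$}; this is what forces the use of the global Lipschitz-type bound $|\sigma_V(g,x)-\sigma_V(g,x')|\lesssim d(x,x')/(\delta(x,y^m_g)\delta(x',y^m_g))$ rather than the ``on the good set only'' bound of Lemma \ref{lem:gBmg}, together with the uniform-in-$n$ Hölder regularity $\int_G\delta(x,y^m_g)^{-s}\dd\mu^{*n}(g)\leq C_s$. Once these two facts (both standard consequences of the large deviation principles of Section \ref{sec:lardev}) are in hand, the rest is Hölder's inequality and the exponential moment. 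The clean shape of the factor $C^{|\varpi|n}$ is ensured by the bootstrap $\int_Ge^{k\varpi\sigma(g,x)}\dd\mu(g)\leq e^{C_k|\varpi|}$ for $|\varpi|$ small.
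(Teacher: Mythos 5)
Your treatment of \eqref{equ:expmom} and \eqref{equ:infpnf} is fine and follows the classical route the paper itself points to (Bougerol--Lacroix, Benoist--Quint): exponential moment plus subadditivity of $\kappa$ for the first, and the good/bad splitting via the large deviation principles to get $\sup_{x\neq x'}\int (d(gx,gx')/d(x,x'))^{\gamma}\dd\mu^{*n}(g)\leq C\rho^{n}$ for the second. The gap is in \eqref{equ:gampnf}: your argument rests on the ``uniform H\"older regularity'' $\sup_{n}\sup_{x}\int_{G}\delta(x,y^m_g)^{-s}\dd\mu^{*n}(g)\leq C_s$, and this is false in the stated generality. Take $\mu$ finitely supported (Zariski dense with exponential moment is allowed), let $h$ be an atom of $\mu^{*n}$ with $\kappa(h)\in\frak a^{++}$, and choose $x$ in the hyperplane $(y^m_h)^{\perp}$; then $\delta(x,y^m_h)=0$ and the integral is infinite for every $s>0$ and every $n$. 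Nor can such a bound be ``deduced from the large deviation estimates of Section \ref{sec:lardev} applied at every scale'': Proposition \ref{prop:large deviation projective} controls only the single scale $e^{-\epsilon n}$, with constants depending on $\epsilon$, so for fixed $n$ it says nothing about arbitrarily small values of $\delta(x,y^m_g)$. (The scale-matching trick that turns the LDP into an all-scale bound works for the stationary measure, as in Corollary \ref{cor:regularity}, precisely because there one may let $n$ vary with the scale; here $n$ is fixed.) Consequently both H\"older-factor estimates $\bigl(\int(\delta\delta')^{-3\gamma}\bigr)^{1/3}$ and $\bigl(\int(\delta\delta')^{-2\gamma}\bigr)^{1/2}$ in your Doeblin--Fortet step are unjustified, and the ``main obstacle'' paragraph doubles down on exactly this false ingredient.

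The standard repair (Le Page; Bougerol--Lacroix, Ch.~V) avoids $\delta^{-s}$ moments altogether. For the oscillatory term, do not bound $|\sigma(g,x)-\sigma(g,x')|$ for $g\sim\mu^{*n}$ in one stroke: write $g=g_n\cdots g_1$ and telescope the cocycle, $\sigma(g,x)-\sigma(g,x')=\sum_k\bigl(\sigma(g_k,y_{k-1})-\sigma(g_k,y'_{k-1})\bigr)$ with $y_k=g_k\cdots g_1x$, use the crude one-letter bound $|\sigma(g_k,y)-\sigma(g_k,y')|\leq C\|g_k\|\|g_k^{-1}\|\,d(y,y')$ (which needs no $\delta$), and then integrate using $(\sum a_k)^{\gamma}\leq\sum a_k^{\gamma}$, independence, and the mean contraction $\int (d(hx,hx')/d(x,x'))^{\gamma}\dd\mu^{*k}(h)\leq C\rho^{k}$ you already established; the resulting series is summable uniformly in $n$, and H\"older with the exponential moment converts the weight $e^{\realpart\sigma(g,x)}$ into the factor $C^{|\realpart|n}$. (Equivalently: prove the inequality at a fixed time $n_0$ where the crude bound is harmless, and iterate in blocks using \eqref{equ:expmom}.) This is also what produces the precise coefficient $C^{|\realpart|n}|\vartheta|^{\gamma}$ of $|f|_{\infty}$ --- bounded in $n$ when $\realpart=0$ --- which is needed later in the Dolgopyat-type reduction; a global bound by $\|g\|\|g^{-1}\|$ applied directly at time $n$ would instead give an $e^{c\gamma n}$ loss, which is not of the required form. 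The $c_\gamma(f)$ term can be salvaged exactly as in your proof of \eqref{equ:infpnf}, by the good/bad splitting with the crude Lipschitz bound on the bad set, rather than via $\delta^{-s}$ moments.
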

Inequality \eqref{equ:expmom} is a consequence of exponential moment and the H\"older inequality.
For \eqref{equ:infpnf}, please see \cite[V, Thm.2.5]{bougerol1985products} and \cite[Prop 11.10, Lem.13.5]{benoistquint} for more details. This inequality \eqref{equ:infpnf} is a consequence of the fact that the action of $G$ on $X$ is contracting.
The third inequality \eqref{equ:gampnf} is called the Lasota-Yorke inequality, whose proof is classic.

We reduce Theorem \ref{prop:spegap} to Proposition \ref{prop:L1pbf}. The reduction is standard, using Proposition \ref{prop:spectral real}. Please see \cite{dolgopyat1998decay} for more details. For $f$ in $C^\gamma(X)$, we define another norm $|f|_{\gamma,\vartheta}=|f|_{\infty}+c_{\gamma}(f)/|\vartheta|^\gamma$ for $\vartheta\neq 0$. 
\begin{prop}\label{prop:L1pbf}
	For every $\gamma>0$ small enough, for $|\vartheta|$ large enough and $|\realpart|$ small enough, there exist $\epsilontwo ,\ \Cone >0$ such that
	for $f$ in $C^\gamma(X)$ and $|f|_{\gamma,\vartheta}\leq 1$, we have 
	\begin{equation}\label{equ:L1pbf}
	\int \left|P^{[\Cone \ln |\vartheta|]}_{\realpart+i\vartheta}f\right|^2\dd\nu\leq e^{-\epsilontwo \ln|\vartheta|}.
	\end{equation}
\end{prop}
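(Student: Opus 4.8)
\textbf{Plan of proof of Proposition \ref{prop:L1pbf}.}
The plan is to combine the a priori estimates of Proposition \ref{prop:spectral real} with the Fourier decay of Theorem \ref{thm:foudecsemi}, following the strategy of Dolgopyat. Write $N=[\Cone\ln|\vartheta|]$ and $z=\realpart+i\vartheta$. Since $P_z^Nf(x)=\int_G e^{z\sigma(g,x)}f(gx)\,\dd\mucon{N}(g)$, expanding the square and using Fubini gives
\begin{equation*}
\int_X|P_z^Nf|^2\dd\nu=\int_{G\times G}\left(\int_X e^{i\vartheta(\sigma(g,x)-\sigma(h,x))}R_{g,h}(x)\,\dd\nu(x)\right)\dd\mucon{N}(g)\,\dd\mucon{N}(h),
\end{equation*}
where $R_{g,h}(x)=e^{\realpart(\sigma(g,x)+\sigma(h,x))}f(gx)\overline{f(hx)}$. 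So the first task is to estimate, for a typical pair $(g,h)$, the oscillatory integral in parentheses, regarding $\vartheta(\sigma(g,\cdot)-\sigma(h,\cdot))$ as a phase on $\P$.

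Second, I would isolate a ``good'' set of pairs $(g,h)\in G\times G$ of $(\mucon{N})^{\otimes 2}$-measure $1-O(e^{-c\epsilon N})$, using the large deviation principles (Proposition \ref{prop:lardev1}, Proposition \ref{prop:large deviatio flag}), the regularity estimate (Corollary \ref{cor:regularity}), and the non-local-integrability bound of Lemma \ref{lem:-derivative} together with Lemma \ref{lem:xgh}. For such a pair the function $\varphi_{g,h}:=\vartheta(\sigma(g,\cdot)-\sigma(h,\cdot))/\xi$, with $\xi\asymp|\vartheta|$ a normalizing frequency, is $(\xi^{\epsilonzer},r)$ good in the sense of Definition \ref{defi:C r good} for the weight $r$ obtained by multiplying $R_{g,h}$ by a smooth cutoff $\chi_{g,h}$ supported at distance $\gtrsim\delta=e^{-\epsilon N}$ from $\zeta^m_g$ and $\zeta^m_h$ and rescaling so that $\|r\|_\infty\le 1$: the upper bounds (G1, G3, G4) come from Lemma \ref{lem:sigma lip iwasawa} and the cocycle Lipschitz estimates, and the crucial lower bound (G2) is exactly Lemma \ref{lem:-derivative}. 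One must also check $c_\gamma(r)\le\xi^{\epsilonzer}$: on the support of the cutoff the action of $g$ (a product of $N$ factors) is contracting, so $c_\gamma(R_{g,h})$ is controlled by $c_\gamma(f)\le|\vartheta|^\gamma$ times harmless factors, and the cutoff contributes $c_\gamma(\chi_{g,h})\le\delta^{-C_0}=|\vartheta|^{O(\epsilon\Cone)}$; choosing $\gamma$, $\epsilon\Cone$ and $|\realpart|$ small enough this stays below $\xi^{\epsilonzer}$. For $(g,h)$ in the complementary bad set, bound the inner integral trivially by $\|R_{g,h}\|_\infty\le e^{C|\realpart|N}=|\vartheta|^{C\Cone|\realpart|}$, so the bad-set contribution is at most $e^{-c\epsilon N}|\vartheta|^{C\Cone|\realpart|}\le|\vartheta|^{-c'}$ for $|\realpart|$ small.

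Third, on the good set Theorem \ref{thm:foudecsemi} yields
\begin{equation*}
\left|\int_X e^{i\vartheta(\sigma(g,x)-\sigma(h,x))}R_{g,h}(x)\,\dd\nu(x)\right|\le \|R_{g,h}\|_\infty\,\xi^{-\epsilonone}+O\big(\nu(\text{cutoff region})\big)\le|\vartheta|^{-c''},
\end{equation*}
where the error from removing the cutoff near $\zeta^m_g,\zeta^m_h$ is controlled by Corollary \ref{cor:regularity} and $\|R_{g,h}\|_\infty\le|\vartheta|^{C\Cone|\realpart|}$ is absorbed by taking $|\realpart|$ small. Summing the good- and bad-set contributions gives $\int_X|P_z^Nf|^2\dd\nu\le|\vartheta|^{-c'}+|\vartheta|^{-c''}\le e^{-\epsilontwo\ln|\vartheta|}$ with $\epsilontwo=\tfrac12\min\{c',c''\}$, provided $\Cone$ is chosen large enough that $N=[\Cone\ln|\vartheta|]$ is simultaneously large enough for the large deviation estimates to be effective and large enough to make $\xi\asymp|\vartheta|$ an admissible frequency in Theorem \ref{thm:foudecsemi}.

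\textbf{Main obstacle.} The delicate point is the second step: showing that $\vartheta(\sigma(g,\cdot)-\sigma(h,\cdot))$ is $(C,r)$ good for all but an exponentially small set of pairs. The lower bound (G2) forces one to verify that, on the support of the cutoff, the hypotheses of Lemma \ref{lem:-derivative} hold, i.e.\ that the exceptional events $\delta(V_{\chi_1,\eta},F_{g,h}|_W)\le\delta$, $V_{\chi,\eta}\notin B^m_{V_\alpha,g}(\delta)\cap B^m_{V_\alpha,h}(\delta)$ and their analogues over all simple roots $\alpha$ have $(\mucon{N})^{\otimes 2}$-measure $O(e^{-c\epsilon N})$; this requires reconciling the scale $\delta=e^{-\epsilon N}$ of that lemma with the scale $\xi^{\epsilonzer}\asymp|\vartheta|^{\epsilonzer}$ imposed by the Fourier decay, which amounts to the constraint $\epsilon\Cone$ small while $\Cone$ is large. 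Balancing these exponents against one another and against $\gamma$ and $|\realpart|$, and carrying out the Hölder-norm estimate for $r$, is the technical core of the argument.
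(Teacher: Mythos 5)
Your proposal follows the same route as the paper: expand $\int|P^{N}_{\realpart+i\vartheta}f|^2\dd\nu$ by Fubini into an average of oscillatory integrals $A_{g,h}$ over pairs $(g,h)$ distributed by $\mucon{N}\otimes\mucon{N}$ with $N=[\Cone\ln|\vartheta|]$, discard an exponentially small bad set of pairs, and on the good set apply Theorem \ref{thm:foudecsemi} to the phase $|\vartheta|^{-1}\vartheta(\sigma(g,\cdot)-\sigma(h,\cdot))$ with a mollified amplitude, the G2 lower bound coming from Lemma \ref{lem:-derivative}. However, two of your steps fail as written.

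First, on the bad set you bound the inner integral by $\|R_{g,h}\|_\infty\leq e^{C|\realpart|N}$. This is not available: $\mu$ only has a finite exponential moment, so $\|\kappa(g)\|$ is unbounded on the support of $\mucon{N}$, and the bad set consists precisely of atypical pairs (e.g. those with $\|\kappa(g)-N\sigma_\mu\|>\epss N$), for which $\sup_\eta e^{\realpart\sigma(g,\eta)}$ can be far larger than $e^{C|\realpart|N}$. The paper instead applies the Cauchy--Schwarz inequality in $(g,h)$, bounding the bad-set contribution by $\bigl(\mucon{N}\otimes\mucon{N}(G_{N,\epss}^c)\bigr)^{1/2}\bigl(\int|A_{g,h}|^2\dd\mucon{N}(g)\dd\mucon{N}(h)\bigr)^{1/2}$, and controls the second factor by the a priori estimate $\int|A_{g,h}|^2\leq|P^N_{2\realpart}\B|_\infty^2\leq C^{4N|\realpart|}$ from \eqref{equ:expmom}; this uses the exponential moment on the average over $(g,h)$ rather than a pointwise supremum, which is the correct way to handle the unbounded tail.

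Second, your verification of $c_\gamma(r)\leq\xi^{\epsilonzer}$ does not close. You bound $c_\gamma(r)$ by $c_\gamma(f)\leq|\vartheta|^\gamma$ times ``harmless factors'' and propose to finish by taking $\gamma$ (and $\epss\Cone$, $|\realpart|$) small. But $\epsilonzer$ in Theorem \ref{thm:foudecsemi} depends on $\gamma$, and in its construction (see \eqref{equ:eps0} together with the constraint $\epss\leq\min_{\alpha\in\Pi}\alpha\sigma_\mu\gamma/(2+2\gamma)$ from Lemma \ref{lem:g good}) one has $\epsilonzer<\gamma$; so $|\vartheta|^{\gamma}$ alone already exceeds $|\vartheta|^{\epsilonzer}$, and shrinking $\gamma$ shrinks $\epsilonzer$ as well. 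The factor $|\vartheta|^{\gamma}$ must actually be cancelled by the contraction: on $X_{g,h}$ one has $c_\gamma(f(g\cdot))\leq c_\gamma(f)\,(\mathrm{Lip}\,g|_{X_{g,h}})^\gamma\leq(|\vartheta|\beta\delta^{-2})^\gamma$, and this is $\leq\delta^{-C_0}$ only because $\Cone$ is required to satisfy $\Cone\geq\max_{\alpha\in\Pi}\{1/\alpha\sigma_\mu\}+1$ (see \eqref{equ:C1definition}), which forces $\beta=e^{-N\min_\alpha\alpha\sigma_\mu}\leq|\vartheta|^{-1}$. This explicit lower bound on $\Cone$, dictated by the Lyapunov vector and not merely by making the large deviation estimates effective, is the missing ingredient; once it is imposed, $c_\gamma(r)\leq\delta^{-C_0}=|\vartheta|^{O(\epss\Cone)}\leq|\vartheta|^{\epsilonzer}$ after shrinking $\epss$, and the remainder of your argument proceeds as in the paper's Lemma \ref{lem:xgh}.
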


Now we will distinguish two cases. \textbf{We claim that the case $\bp V$ is a corollary of the case $\P$ up to a constant.} Recall that the stationary measure on $\bp V$ is written as $\nu_V$. Let $f$ be a function in $C^\gamma(\bp V)$ and $|f|_{\gamma,\vartheta}\leq 1$.
The estimate only depends on the value of $f$ on the support of the stationary measure $\nu_V$. By Lemma \ref{lem:stauni}, the stationary measure on $\bp V$ is the pushforward measure of the stationary measure $\nu$ on $\P$. Hence we can define the function $\tilde{f}$ on $\P$ by 
\[\tilde{f}(\eta)=f(V_{\chi,\eta}), \]
where $\chi$ is the highest weight of $V$. Then by $\sigma_V(g,V_{\chi,\eta})=\chi\sigma(g,\eta)$ (see \eqref{equ:representation cocycle}),
\begin{equation*}
\int \left|P^{[\Cone \ln|\vartheta|]}_{\realpart+i\vartheta}f \right|^2\dd\nu_V=\int \left|P^{[\Cone \ln|\vartheta|]}_{(\realpart+i\vartheta)\chi}\tilde f \right|^2\dd\nu.
\end{equation*}
We will verify that $\tilde{f}$ satisfies $|\tilde{f}|_{\gamma,\vartheta}\ll 1$. By \eqref{equ:profla}, for two distinct points $\eta,\eta'$ in $\P$ we have
\begin{align*}
	\frac{|\tilde{f}(\eta)-\tilde{f}(\eta')|}{d(\eta,\eta')^\gamma}=\frac{|\tilde{f}(\eta)-\tilde{f}(\eta')|}{d(V_{\chi,\eta},V_{\chi,\eta'})^\gamma} \frac{d(V_{\chi,\eta},V_{\chi,\eta'})^\gamma}{d(\eta,\eta')^\gamma}\ll\frac{|{f}(V_{\chi,\eta})-{f}(V_{\chi,\eta'})|}{d(V_{\chi,\eta},V_{\chi,\eta'})^\gamma} =|f|_\gamma.
\end{align*}
Hence with different constants, we can deduce the case $\bp V$ from the case $\P$.

We only need to prove Proposition \ref{prop:L1pbf} for the case $\P$.
\begin{proof}[From Fourier decay to Proposition \ref{prop:L1pbf}]
	We need to reduce \eqref{equ:L1pbf} to Fourier decay (Theorem \ref{thm:foudecsemi}). Let
	\begin{equation}\label{equ:C1definition}
	n=[\Cone \log|\vartheta|] \text{ and }\delta=e^{-\epss  n}
	\end{equation}
	 (with $\Cone \geq\max_{\alpha\in\Pi} \{1/\alpha\sigma_\mu\}+1$ and $\epss >0$ to be determined later), and let $G_{n,\epss,\alpha }$ be the subset of $G\times G$ defined as the set of couples which satisfy Lemma \ref{lem:-derivative} (1) with $V=V_\alpha$. Let
	 $$G_{n,\epss}=\{g\in G|\|\kappa(g)-n\sigma_\mu\|\leq n\epsilon \}^2\cap (\cap_{\alpha\in\Pi}G_{n,\epss,\alpha})\subset G\times G.$$
	Let 
	\[ A_{g,h}:=\int_Xe^{z\sigma(g,\eta)+\bar z\sigma(h,\eta)}f(g\eta)\bar f(h\eta)\dd\nu(\eta). \]
	Then
	\begin{equation}\label{equ:pnzf}
	\begin{split}
	&\int |P^n_zf|^2\dd\nu=\int e^{z\sigma(g,\eta)+\bar z\sigma(h,\eta)}f(g\eta)\bar f(h\eta)\dd\nu(\eta)\dd\mu^{*n}(g)\dd\mu^{*n}(h)\\
	&=\int_{G_{n,\epss }}A_{g,h}\dd\mu^{*n}(g)\dd\mu^{*n}(h)+\int_{G_{n,\epss }^c}A_{g,h}\dd\mu^{*n}(g)\dd\mu^{*n}(h).
	\end{split}
	\end{equation}
	
	\textbf{We first compute the term with $(g,h)$ outside of $G_{n,\epss}$,} where the behaviour is singular. By the Cauchy-Schwarz inequality,
	\begin{equation}\label{equ:gncA}
		\left|\int_{G_{n,\epss }^c}A_{g,h}\dd\mu^{*n}(g)\dd\mu^{*n}(h)\right|^2\leq \mucon{n}\otimes\mucon{n}(G_{n,\epsilon}^c)\int |A_{g,h}|^2\dd\mu^{*n}(g)\dd\mu^{*n}(h).
	\end{equation}
	By large deviation principle (Proposition \ref{prop:lardev1}, Proposition \ref{prop:large deviation projective}), the set $G_{n,\epsilon}^c$ has exponentially small $\mucon{n}\otimes\mucon{n}$ measure, that is
	\begin{equation}\label{equ:mu Gnc}
		\mucon{n}\otimes\mucon{n}(G_{n,\epsilon}^c)\ll_\epsilon\delta^c.
	\end{equation}
	By $\|f\|_\infty\leq 1$ and \eqref{equ:expmom}, we have
	\begin{equation}\label{equ:A gh2}
	 \int |A_{g,h}|^2\dd\mu^{*n}(g)\dd\mu^{*n}(h)\leq |P_{2\realpart}^n\B|_\infty^2\leq C^{4n|\realpart|}.
	 \end{equation}
	When $|\realpart|$ is small enough depending on $\epsilon$, by \eqref{equ:gncA}, \eqref{equ:mu Gnc} and \eqref{equ:A gh2}
	\begin{equation}\label{equ:gncA2}
		\int_{G_{n,\epss }^c}A_{g,h}\dd\mu^{*n}(g)\dd\mu^{*n}(h)\ll_\epsilon \delta^{c/2}\leq |\vartheta|^{-c\epsilon/(2\Cone )}.
	\end{equation}

\textbf{We compute the major term, that is $(g,h)$ in $G_{n,\epss}$.} 
We want to use Theorem \ref{thm:foudecsemi} to control this part with $\varphi=|\vartheta|^{-1}\vartheta(\sigma(g,\eta)-\sigma(h,\eta))$ and a suitable $r$. In order to apply Theorem \ref{thm:foudecsemi}, we need that $\varphi$ is $(C,r)$ good, which will be accomplished by multiplying by a smooth cutoff. The most important is the G2 assumption \eqref{equ:G2}, which will be verified with the help of Lemma \ref{lem:-derivative}. Hence we want that $r$ vanishes when $\eta$ does not satisfy Lemma \ref{lem:-derivative} (2).

Let $X_{g,h,\alpha}$ be the subset of $\P$, defined as the set of elements which satisfy Lemma \ref{lem:-derivative} (2) with $V=V_\alpha$. Let $X_{g,h}=\bigcap_{\alpha\in\Pi}X_{g,h,\alpha}$.
	Let $\mollifier$ be a smooth function on $\bb R$ such that $\mollifier|_{[0,\infty)}=1$, $\mollifier$ takes values in $[0,1]$, $\supp\mollifier\subset[-1,\infty)$ and $|\mollifier'|\leq 2$. Set $\mollifier_\delta(x)=\mollifier(x/\delta)$ for $x\in \R$.
	Let $\sigma_\alpha:=\sigma_{V_\alpha}=\chi_\alpha\sigma$,
	\begin{equation}\label{equ:varphi eta}
	\varphi(\eta)=|\vartheta|^{-1}\vartheta(\sigma(g,\eta)-\sigma(h,\eta))=|\vartheta|^{-1}\sum_{\alpha\in\Pi}\vartheta_\alpha(\sigma_\alpha(g,\eta)-\sigma_\alpha(h,\eta))
	\end{equation}
	and 
	\begin{equation}\label{equ:r eta}
	r(\eta)=f(g\eta)\bar f(h\eta)e^{\realpart(\sigma(g,\eta)+\sigma(h,\eta))}\prod_{\alpha\in\Pi} \mollifier_{\alpha}(\eta),
	\end{equation}
	where
	\begin{align*}
		 \mollifier_{\alpha}(\eta)&=\mollifier_\delta(4\delta_\alpha(\eta,\zeta^m_g)-4\delta)\mollifier_\delta(4\delta_\alpha(\eta,\zeta^m_h)-4\delta)\mollifier_\delta(4\delta(V_{4\chi_\alpha-\alpha,\eta},F_{\rho_\alpha g,\rho_\alpha h})-4\delta),
	\end{align*}
	where $\delta_\alpha$ is defined to be $$\delta_\alpha(\eta,\zeta^m_g)=\delta(V_{\alpha,\eta},y^m_{\rho_\alpha(g)}).\nomentry{$\delta_\alpha(\eta,\zeta)$}{}$$ 
	The choice of $ \mollifier_{\alpha}$ is sophisticated. We only need to keep in mind that they come from Lemma \ref{lem:-derivative}. Then $e^{i|\vartheta|\varphi}r(\eta)$ equals $e^{z\sigma(g,\eta)+\bar z\sigma(h,\eta)}f(g\eta)\bar f(h\eta)$ on $X_{g,h}$.
	\begin{lem}\label{lem:xgh}
		Let $\epsilonzer ,\epsilonone $ be given by Theorem \ref{thm:foudecsemi}. Let $(g,h)$ be in $G_{n,\epsilon}$. With 
		$\epsilon$ small enough depending on $\epsilonzer $ and $|\realpart|$ small enough depending on $\epss $ and $\epsilonone $, for $\varphi,r$ defined in \eqref{equ:varphi eta} and \eqref{equ:r eta} we have that $\varphi$ is $(|\vartheta|^{\epsilonzer },r)$ good and  $c_\gamma(r) \leq |\vartheta|^{\epsilonzer }$, $|r|_\infty\leq |\vartheta|^{\epsilonone /2}$.
	\end{lem}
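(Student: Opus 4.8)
The plan is to verify the four conditions (G1)--(G4) of Definition~\ref{defi:C r good} for the pair $(\varphi, r)$ with $C = |\vartheta|^{\epsilonzer}$, using the large deviation constraints encoded in $(g,h)\in G_{n,\epsilon}$ to control all relevant norms by powers of $\delta = e^{-\epss n}$, and then invoking the bound $\xi^{\epsilonzer} = |\vartheta|^{\epsilonzer} \leq \delta^{-1}$ (the analogue of \eqref{equ:regsca}, which holds once $\epsilon$ is small compared to $\epsilonzer$ since $\delta^{-1} = e^{\epsilon n}$ and $|\vartheta| \asymp e^{n/\Cone}$). The key point throughout is that $C_0$-powers of $\delta$ are all dominated by $|\vartheta|^{\epsilonzer}$, so statements like ``$|\partial_\alpha\varphi(\eta)| \geq \delta^{C_0} v_\alpha$'' become exactly the (G2) bound $|\partial_\alpha\varphi(\eta)| \geq C^{-1}v_\alpha$. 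First I would record that $v_\alpha := \sup_{\eta\in\supp r}|\partial_\alpha\varphi(\eta)| = |\vartheta|^{-1}|\vartheta_\alpha| \sup_{\eta\in\supp r}|\partial_\alpha(\sigma_\alpha(g,\eta) - \sigma_\alpha(h,\eta))|$, and that on $\supp r$ the cutoff functions $\mollifier_\alpha$ force $\eta$ to satisfy the hypotheses of Lemma~\ref{lem:-derivative}(2) (while $(g,h) \in G_{n,\epsilon}$ gives Lemma~\ref{lem:-derivative}(1) via $G_{n,\epss,\alpha}$), so Lemma~\ref{lem:-derivative} and Lemma~\ref{lem:sigma lip iwasawa} bracket $v_\alpha$ between $\delta^{C_0}$ and $\delta^{-C_0}$ up to the factor $|\vartheta|^{-1}|\vartheta_\alpha|$.

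For (G4), since $|\vartheta| = \max_\alpha |\vartheta_\alpha|$, there is some $\alpha_0$ with $|\vartheta_{\alpha_0}| = |\vartheta|$, and for that root $v_{\alpha_0} = \sup_{\supp r}|\partial_{\alpha_0}(\sigma_{\alpha_0}(g,\cdot) - \sigma_{\alpha_0}(h,\cdot))| \in [\delta^{C_0}, \delta^{-C_0}]$ by the two lemmas above; combined with $\sup_\alpha v_\alpha \leq \delta^{-C_0}$ and $\delta^{-C_0} \leq |\vartheta|^{\epsilonzer}$, this gives $\sup_\alpha v_\alpha \in [|\vartheta|^{-\epsilonzer}, |\vartheta|^{\epsilonzer}]$. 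For (G2), on $\supp r$ we have $|\partial_\alpha\varphi(\eta)| = |\vartheta|^{-1}|\vartheta_\alpha| \cdot |\partial_\alpha(\sigma_\alpha(g,\eta) - \sigma_\alpha(h,\eta))| \geq |\vartheta|^{-1}|\vartheta_\alpha| \delta^{C_0}$ by Lemma~\ref{lem:-derivative}, while $v_\alpha \leq |\vartheta|^{-1}|\vartheta_\alpha|\delta^{-C_0}$ by Lemma~\ref{lem:sigma lip iwasawa}; the ratio is $\geq \delta^{2C_0} \geq |\vartheta|^{-\epsilonzer}$. For (G1) and (G3), I would use the Newton--Leibniz formula \eqref{equ:newlei} along $\alpha$-circles together with the Lipschitz estimate of Lemma~\ref{lem:sigma lip iwasawa} for $\partial_\alpha\sigma_{V_\alpha}(g,\cdot)$ and $\partial_\alpha\sigma_{V_\alpha}(h,\cdot)$ on the relevant $B^m$-neighbourhoods (which contain $J$, the $1/C$-neighbourhood of $\supp r$, once $(g,h)\in G_{n,\epsilon}$); (G1) follows by decomposing $\eta \to \eta'$ into a chain of moves along $\alpha$-circles as in Lemma~\ref{lem:chapoi} and summing $d_\alpha$-displacements weighted by $\sup|\partial_\alpha\varphi| \asymp v_\alpha$, and (G3) follows directly from the $\delta^{-C_0}$-Lipschitz bound on $\partial_\alpha\varphi$, again absorbing $\delta^{-C_0}$ into $C = |\vartheta|^{\epsilonzer}$.

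Finally, for the two side conditions: $c_\gamma(r) \leq |\vartheta|^{\epsilonzer}$ follows from the product rule applied to $r = f(g\cdot)\bar f(h\cdot)e^{\realpart(\sigma(g,\cdot)+\sigma(h,\cdot))}\prod_\alpha \mollifier_\alpha(\cdot)$: the factor $f(g\cdot)$ has $\gamma$-Hölder seminorm $\leq c_\gamma(f)\cdot \mathrm{Lip}_\gamma(g\cdot|_{\supp r})^\gamma \leq |\vartheta|^\gamma \cdot \delta^{-C_0\gamma}$ on the contracted region by Lemma~\ref{lem:gBmgmul} (using $|f|_{\gamma,\vartheta}\leq 1$, so $c_\gamma(f)\leq|\vartheta|^\gamma$), the exponential factor contributes $O(|\realpart|\delta^{-C_0})$ which is harmless for $|\realpart|$ small, and each $\mollifier_\alpha$ is $\delta^{-1}$-Lipschitz hence $\delta^{-1}$-Hölder-$\gamma$ up to constants; multiplying out and using that all factors are bounded, the total is $\leq \delta^{-C_0} \leq |\vartheta|^{\epsilonzer}$ provided $\gamma$ (equivalently $\epss$ relative to $\epsilonzer$) is small enough. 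The bound $|r|_\infty \leq |\vartheta|^{\epsilonone/2}$ follows from $\|f\|_\infty \leq 1$ and $|e^{\realpart(\sigma(g,\eta)+\sigma(h,\eta))}| \leq e^{C|\realpart|n} = |\vartheta|^{C|\realpart|\Cone}$, which is $\leq |\vartheta|^{\epsilonone/2}$ once $|\realpart|$ is small compared to $\epsilonone$. The main obstacle I expect is bookkeeping: making sure that $\supp r \subset X_{g,h}$ and that the $1/C$-neighbourhood $J$ of $\supp r$ stays inside the $B^m$-regions where Lemmas~\ref{lem:sigma lip iwasawa} and \ref{lem:-derivative} apply — this requires choosing the cutoff thresholds ($4\delta$, $-4\delta$ in $\mollifier_\alpha$) with enough slack relative to $1/C = |\vartheta|^{-\epsilonzer} \leq \delta$, and checking that all the ``$\delta^{-C_0}$ vs.\ $|\vartheta|^{\epsilonzer}$'' comparisons are consistent under a single choice of $\epsilon, \epss, |\realpart|$ small in terms of $\epsilonzer, \epsilonone, \gamma$.
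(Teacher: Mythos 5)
Your overall skeleton (decompose $\varphi=\sum_\alpha\varphi_\alpha$, get G2 and G4 from Lemma \ref{lem:-derivative} using that the cutoffs $\mollifier_\alpha$ force Lemma \ref{lem:-derivative}(2) on $\supp r$ while $G_{n,\epsilon}$ gives (1), get G3 from Lemma \ref{lem:sigma lip iwasawa}, and bound $|r|_\infty$ by the exponential moment of the Cartan projection) is the paper's argument, and the cancellation of the factor $|\vartheta|^{-1}|\vartheta_\alpha|$ in the G2/G4 ratios is handled correctly. But your route to G1 is a genuine misstep. You propose to connect $\eta$ to $\eta'$ by a chain of $\alpha$-circle moves ``as in Lemma \ref{lem:chapoi}'' and sum Newton--Leibniz contributions. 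Lemma \ref{lem:chapoi} does not provide such a decomposition: two general flags cannot be joined by one move per simple root, and in that lemma the two chains built from $\eta$ and $\eta'$ have endpoints that are close only \emph{after} applying a strongly contracting element (the bound \eqref{equ:geta1 geta2} is on $d_\alpha(g\eta_{\nupione},g\eta'_{\nupitwo})$, not on $d(\eta_{\nupione},\eta'_{\nupitwo})$), and its hypotheses involve $y^m_{\wedge^2\rho_\alpha(g)}$ which you have not arranged here. No contracting element is applied inside $\varphi(\eta)-\varphi(\eta')$, so the residual term of the chain cannot be controlled. The correct (and much shorter) argument is the one the paper uses: since G1 is linear and $|\vartheta|^{-1}|\vartheta_\alpha|\leq1$, it suffices to treat $\varphi_\alpha$, which by Lemma \ref{lem:lift varphi} is the lift of $\sigma_{V_\alpha}(\rho_\alpha g,\cdot)-\sigma_{V_\alpha}(\rho_\alpha h,\cdot)$ on $\bp V_\alpha$; on the region where $\delta_\alpha(\cdot,\zeta^m_g),\delta_\alpha(\cdot,\zeta^m_h)\geq\delta/2$ this function is $\delta^{-C_0}$-Lipschitz by Lemma \ref{lem:gBmg}, so $|\varphi_\alpha(\eta)-\varphi_\alpha(\eta')|\leq\delta^{-C_0}d_\alpha(\eta,\eta')\leq\delta^{-2C_0}v_\alpha d_\alpha(\eta,\eta')$, and one sums over $\alpha$. (Relatedly, your citation of \eqref{equ:regsca} is the wrong way round: here the comparison you actually need and use is $\delta^{-C_0}\leq|\vartheta|^{\epsilonzer}$, which holds since $n=[\Cone\log|\vartheta|]$ and $\epsilon$ is taken small depending on $\epsilonzer$.)

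The second gap is in $c_\gamma(r)$, and it is not just bookkeeping. You bound $c_\gamma(f(g\cdot))\leq c_\gamma(f)\cdot\mathrm{Lip}(g|_{\supp r})^\gamma\leq|\vartheta|^\gamma\delta^{-C_0\gamma}$ and then claim the total is $\leq\delta^{-C_0}\leq|\vartheta|^{\epsilonzer}$ ``provided $\gamma$ is small enough''. This does not close: $\gamma$ is fixed \emph{before} $\epsilonzer$ is produced by Theorem \ref{thm:foudecsemi}, and in the construction $\epsilonzer$ is far smaller than $\gamma$ (it is proportional to $\epss$, which is itself $\lesssim\gamma\min_\alpha\alpha\sigma_\mu$), so $|\vartheta|^\gamma\leq|\vartheta|^{\epsilonzer}$ is unavailable; nor can $|\vartheta|^\gamma\approx e^{\gamma n/\Cone}$ be absorbed into $\delta^{-C_0}=e^{C_0\epsilon n}$ once $\epsilon$ is small. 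The point you discard is exactly the one that saves the estimate: by Lemma \ref{lem:gBmgmul} the Lipschitz constant of $g$ on $X_{g,h}$ is $\beta\delta^{-2}$ with $\beta=\max_\alpha e^{-\alpha\sigma_\mu n}$, and the choice $\Cone\geq\max_\alpha 1/(\alpha\sigma_\mu)+1$ in \eqref{equ:C1definition} gives $\beta\leq|\vartheta|^{-1}$, so $c_\gamma(f(g\cdot)|_{X_{g,h}})\leq\bigl(|\vartheta|\beta\delta^{-2}\bigr)^\gamma\leq\delta^{-2\gamma}$, i.e.\ the contraction cancels the factor $c_\gamma(f)\leq|\vartheta|^\gamma$; only then does $\delta^{-C_0}\leq|\vartheta|^{\epsilonzer}$ finish the argument (together with Lemma \ref{lem:frho} to restrict to $\supp\mollifier_\alpha$, and Lemma \ref{lem:profla} for the Lipschitz constant of the $F_{\rho_\alpha g,\rho_\alpha h}$-cutoff). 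With these two repairs the remaining parts of your proposal (G2, G3, G4, $|r|_\infty$, and the exponential factor $e^{\realpart(\sigma(g,\cdot)+\sigma(h,\cdot))}$ for $|\realpart|$ small) go through as in the paper.
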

	By Lemma \ref{lem:xgh}, we can fix a value of $\epsilon$ and functions $\varphi$ and $r|\vartheta|^{-\epsilonone /2}$ satisfying the condition in Theorem \ref{thm:foudecsemi}. (Theorem \ref{thm:foudecsemi} still holds when $r$ is a complex function) Hence Theorem \ref{thm:foudecsemi} implies
	\begin{equation}\label{equ:etheta}
	\left|\int e^{i |\vartheta| \varphi(\eta)}r(\eta)\dd\nu(\eta)\right|\leq |\vartheta|^{-\epsilonone}\|r\|_\infty\leq |\vartheta|^{-\epsilonone /2} .
	\end{equation}
	Due to the definition of $G_{n,\epsilon}$, the difference between $A_{g,h}$ and $\int e^{i |\vartheta| \varphi(\eta)}r(\eta)\dd\nu(\eta)$ is bounded by 
	\begin{equation}\label{equ:Xgh}
	\nu(X_{g,h}^c)e^{|\realpart|(\|\kappa(g)\|+\|\kappa(h)\|)}\leq e^{2n|\realpart|(\|\sigma_\mu\|+\epsilon)}\sum_{\alpha\in\Pi}\nu(X_{g,h,\alpha}^c) \leq |\vartheta|^{c\epss/2\Cone}\sum_{\alpha\in\Pi}\nu(X_{g,h,\alpha}^c),
	\end{equation}
	if $|\realpart|$ is small enough. Using the regularity of stationary measure \eqref{equ:regularity stataionary measure} with $V=W_\alpha$, the irreducible subrepresentation of $\wedge^2Sym^2V_\alpha$ with the highest weight, we have
	\begin{equation}\label{equ:Xgh1}
	\nu\{\eta\in\P| \delta(V_{4\chi_\alpha-\alpha,\eta},F_{\rho_\alpha g,\rho_\alpha h})<\delta\}\ll_\epsilon e^{-c\epsilon n}.
	\end{equation}
	Using the regularity of stationary measure \eqref{equ:regularity stataionary measure} with $V=V_\alpha$, we obtain
	\begin{equation}\label{equ:Xgh2}
		\nu\{\eta\in\P| V_{\alpha,\eta}\in B^m_h(\delta)\cup B^m_g(\delta)\}\ll_\epsilon e^{-c\epsilon n}.
	\end{equation}
	Hence by \eqref{equ:Xgh}-\eqref{equ:Xgh2}, we have
	 \begin{equation}\label{equ:Xghc}
	 \nu(X_{g,h}^c)\ll_\epsilon e^{- c \epss n}=|\vartheta|^{-c\epss/\Cone }.
	 \end{equation}
	For $(g,h)$ in $G_{n,\epss}$, by \eqref{equ:etheta}, \eqref{equ:Xgh} and \eqref{equ:Xghc}
	\[A_{g,h}\ll |\vartheta|^{-\epsilonone /2}+|\vartheta|^{-c\epss/2\Cone }.\]
	Combined with \eqref{equ:pnzf} and \eqref{equ:gncA2}, the proof is complete by setting $\epsilontwo=\min\{\frac{\epsilonone}{2},\frac{c\epsilon}{4\Cone} \}$.
\end{proof}
It remains to prove Lemma \ref{lem:xgh}.
\begin{proof}[Proof of Lemma \ref{lem:xgh}]
	\textbf{We first verify that $\varphi$ is $(|\vartheta|^{\epsilonzer },r)$ good.}
	Since $\epsilon$ will be taken small enough, we can suppose $|\vartheta|^{-\epsilonzer }\leq \delta/4$. Let $\open$ be the $|\vartheta|^{-\epsilonzer }$ neighbourhood of $\supp r$. Then for $\eta\in\open$, we have $\delta_\alpha(\eta,\zeta^m_g)\geq \delta/2$ for $\alpha$ in $\Pi$.
	
	The function $\varphi$ is a sum of functions, each one is the lift of a function on $\bp V_\alpha$ for some simple root $\alpha$. We write $\varphi=\sum_{\alpha\in\Pi}\varphi_\alpha$ where $\varphi_\alpha(\eta)=|\vartheta|^{-1}\vartheta_\alpha(\sigma_\alpha(g,\eta)-\sigma_\alpha(h,\eta))$. By Lemma \ref{lem:lift varphi}, that is $\partial_{\alpha'}\varphi_\alpha=0$ for $\alpha'\neq \alpha$, in order to verify the $(|\vartheta|^{\epsilonzer },r)$ good condition, it is enough to verify G1-G3 assumptions \eqref{equ:G1}-\eqref{equ:G3} for $\varphi_\alpha$ and the G4 assumption \eqref{equ:G4} for $\varphi$. Since G1-G3 are linear, we can forget the coefficients $|\vartheta|^{-1}\vartheta_\alpha$ in $\varphi_\alpha$.
	
	Now, we verify G1-G3 assumptions. We fix a simple root $\alpha$ and consider $\varphi=\varphi_\alpha=\sigma_\alpha(g,\cdot)-\sigma_\alpha(h,\cdot)$. 
	 Recall that $v_\alpha=\sup_{\eta\in\supp r}|\partial_\alpha\varphi(\eta)|$. Since $\open$ satisfies the hypothesis of Lemma \ref{lem:sigma lip iwasawa} with $V=V_\alpha$, we have
	\begin{equation}\label{equ:lip phi alpha}
		v_\alpha,Lip_{\P_0}(\partial_\alpha\varphi|_{\pi^{-1}\open})<\delta^{-C_0}.
	\end{equation}
	Since $(g,h)\in G_{n,\epsilon}$ satisfies Lemma \ref{lem:-derivative}(1) and the support of $r$ satisfies Lemma \ref{lem:-derivative}(2), for $\eta$ in the support of $r$, by Lemma \ref{lem:-derivative},
	\begin{equation*}
		|\partial_\alpha\varphi(\eta)|>\delta^{C_0}\geq \delta^{C_0}v_\alpha
	\end{equation*}
	which is the G2 assumption \eqref{equ:G2}. This also implies
	\begin{equation}\label{equ:phi geq}
	v_\alpha>\delta^{C_0},
	\end{equation}
	the G4 assumption \eqref{equ:G4}. By \eqref{equ:lip phi alpha}, we have the G3 assumption \eqref{equ:G3}. Let $$\varphi_1(x)=\sigma_{V_\alpha}(\rho_{\alpha}(g),x)-\sigma_{V_\alpha}(\rho_{\alpha}(h),x)$$ 
	be a function on $\bp V_\alpha$, then $\varphi_1(V_{\alpha,\eta})=\varphi(\eta)$. Since $\open$ satisfies the hypothesis of Lemma \ref{lem:gBmg}, this Lemma implies	
	\[\frac{|\varphi(\eta)-\varphi(\eta')|}{d_\alpha(\eta,\eta')}= \frac{|\varphi_1(V_{\alpha,\eta})-\varphi_1(V_{\alpha,\eta'})|}{d(V_{\alpha,\eta},V_{\alpha,\eta'})}\leq |Lip_{\bp V_\alpha}\varphi_1|<\delta^{-C_0}\leq \delta^{-C_0}v_\alpha, \]
	which is the G1 assumption \eqref{equ:G1}. 
	
	For general $\varphi$, it remains to verify the G4 assumption \eqref{equ:G4}. There exists a simple root $\alpha$ such that $|\vartheta_\alpha|=|\vartheta|$. Since $\varphi_\alpha$ satisfies the G4 assumption and due to Lemma \ref{lem:lift varphi} we have $|\partial_\alpha\varphi|=|\partial_\alpha\varphi_\alpha|$, the function $\varphi$ also satisfies the G4 assumption.
	
	\textbf{Finally, we verify the terms $c_\gamma(r)$ and $|r|_\infty$.} 
		\begin{lem}\label{lem:frho}
		For $0<\gamma\leq 1$, let $f, \mollifier$ be two $\gamma$-H\"older functions on a compact metric space $X$. Then
		\begin{equation*}
		c_\gamma(\mollifier f)\leq c_\gamma(\mollifier)\|f|_{\supp \mollifier}\|_\infty+|\mollifier|_\infty c_\gamma(f|_{\supp \mollifier}).
		\end{equation*}
	\end{lem}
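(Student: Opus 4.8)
The plan is to prove Lemma \ref{lem:frho} directly from the definitions, treating the two types of pairs of points separately. Recall that $c_\gamma(u)=\sup_{x\neq x'}\frac{|u(x)-u(x')|}{d(x,x')^\gamma}$. Since $\mollifier f$ is supported on $\supp \mollifier$, if at least one of $x,x'$ lies outside $\supp\mollifier$ then $(\mollifier f)(x)-(\mollifier f)(x')$ only involves values of $\mollifier f$ at points of $\supp\mollifier$ (the point outside contributes $0$, which equals the value of $\mollifier$ there as well since $\mollifier$ vanishes on the boundary and outside), so it suffices to bound the Hölder quotient when both $x,x'$ lie in $\supp\mollifier$; I would make this reduction explicit first. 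Actually, to be careful: if $x\in\supp\mollifier$ and $x'\notin\supp\mollifier$, I would note $(\mollifier f)(x')=0$ and pick a point on the "path" from $x$ to $x'$ — but since $X$ is just a metric space, the cleanest argument is simply that $|\mollifier(x')|=0 \le |\mollifier(x)-\mollifier(x')|$ handled by the same telescoping below, using that $|f|$ restricted to $\supp\mollifier$ still controls things; I will instead phrase the whole estimate so it never needs a point strictly outside.

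The key step is the standard product telescoping: for $x,x'$ in $\supp\mollifier$,
\begin{equation*}
(\mollifier f)(x)-(\mollifier f)(x')=\mollifier(x)\bigl(f(x)-f(x')\bigr)+\bigl(\mollifier(x)-\mollifier(x')\bigr)f(x').
\end{equation*}
Taking absolute values and dividing by $d(x,x')^\gamma$ gives
\begin{equation*}
\frac{|(\mollifier f)(x)-(\mollifier f)(x')|}{d(x,x')^\gamma}\le |\mollifier|_\infty\,\frac{|f(x)-f(x')|}{d(x,x')^\gamma}+\|f|_{\supp\mollifier}\|_\infty\,\frac{|\mollifier(x)-\mollifier(x')|}{d(x,x')^\gamma},
\end{equation*}
and since $x,x'\in\supp\mollifier$ the two quotients on the right are bounded by $c_\gamma(f|_{\supp\mollifier})$ and $c_\gamma(\mollifier)$ respectively. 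For the remaining case where one of the two points, say $x'$, is not in $\supp\mollifier$: then $(\mollifier f)(x')=0$, so the quotient is $\frac{|\mollifier(x)||f(x)|}{d(x,x')^\gamma}=\frac{|\mollifier(x)-\mollifier(x')||f(x)|}{d(x,x')^\gamma}\le c_\gamma(\mollifier)\|f|_{\supp\mollifier}\|_\infty$, which is dominated by the claimed bound. Taking the supremum over all pairs yields the inequality.

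There is no serious obstacle here; the only mild subtlety is the bookkeeping for pairs of points where one lies outside $\supp\mollifier$, which I would dispatch with the one-line observation above (using that $\mollifier$ vanishes there so that $|\mollifier(x)|=|\mollifier(x)-\mollifier(x')|$). I expect the proof to be three or four lines once the case split is set up, and I would present it in that order: state the decomposition, handle both points in the support, then handle the boundary/exterior case, then take the supremum.

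\begin{proof}
	We may assume $\mollifier f\not\equiv 0$. Let $x\neq x'$ be points of $X$. If both $x,x'$ lie in $\supp\mollifier$, then
	\begin{equation*}
	(\mollifier f)(x)-(\mollifier f)(x')=\mollifier(x)\bigl(f(x)-f(x')\bigr)+\bigl(\mollifier(x)-\mollifier(x')\bigr)f(x'),
	\end{equation*}
	hence
	\begin{equation*}
	\frac{|(\mollifier f)(x)-(\mollifier f)(x')|}{d(x,x')^\gamma}\leq |\mollifier|_\infty\frac{|f(x)-f(x')|}{d(x,x')^\gamma}+\|f|_{\supp\mollifier}\|_\infty\frac{|\mollifier(x)-\mollifier(x')|}{d(x,x')^\gamma}\leq |\mollifier|_\infty c_\gamma(f|_{\supp\mollifier})+c_\gamma(\mollifier)\|f|_{\supp\mollifier}\|_\infty.
	\end{equation*}
	If one of the points, say $x'$, is not in $\supp\mollifier$, then $\mollifier(x')=0$ and $(\mollifier f)(x')=0$, so
	\begin{equation*}
	\frac{|(\mollifier f)(x)-(\mollifier f)(x')|}{d(x,x')^\gamma}=\frac{|\mollifier(x)-\mollifier(x')|\,|f(x)|}{d(x,x')^\gamma}\leq c_\gamma(\mollifier)\|f|_{\supp\mollifier}\|_\infty.
	\end{equation*}
	In both cases the quotient is bounded by $c_\gamma(\mollifier)\|f|_{\supp\mollifier}\|_\infty+|\mollifier|_\infty c_\gamma(f|_{\supp\mollifier})$. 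Taking the supremum over all $x\neq x'$ gives the claim.
\end{proof}
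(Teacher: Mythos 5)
Your proof is correct and is exactly the standard telescoping argument the paper has in mind when it omits the proof as ``elementary'': split $(\mollifier f)(x)-(\mollifier f)(x')$ as $\mollifier(x)(f(x)-f(x'))+(\mollifier(x)-\mollifier(x'))f(x')$ on $\supp\mollifier$ and note the quotient vanishes (or reduces to the $c_\gamma(\mollifier)$ term) when a point leaves the support. The only cosmetic remark is that when \emph{both} points lie outside $\supp\mollifier$ the quotient is simply $0$, so your intermediate bound there is trivially valid without invoking $\|f|_{\supp\mollifier}\|_\infty$.
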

	\noindent The proof of Lemma \ref{lem:frho} is elementary.
	Recall that 
		$$r(\eta)=f(g\eta)\bar f(h\eta)e^{\realpart(\sigma(g,\eta)+\sigma(h,\eta))}\prod_{\alpha\in\Pi} \mollifier_{\alpha}(\eta).$$
	For the infinity norm, due to $(g,h)\in G_{n,\epss}$, we have
	\begin{equation*}
		|r|\leq e^{|\realpart|(\|\kappa(g)\|+\|\kappa(h) \|)}\leq e^{|\realpart|(2\|\sigma_\mu\|+2\epsilon)n}\leq |\vartheta|^{|\realpart|\Cone (2\|\sigma_\mu\|+2\epsilon)}.
	\end{equation*}
	Take $|\realpart|$ small enough, then $|r|_\infty\leq |\vartheta|^{\epsilonone /2}$.
	
	For the term $c_\gamma(r)$, we only need to verify that each term in the formula of $r$ has a bounded $c_\gamma$ value. Due to Lemma \ref{lem:frho}, we only need to verify the $c_\gamma$ value on $X_{g,h}$.
	\begin{itemize}
		\item Since the action of $g$ on $X_{g,h}$ is contracting, by Lemma \ref{lem:gBmgmul}, we have
		\[c_\gamma(f(g\cdot)|_{X_{g,h}})\leq c_\gamma(f)(Lip\  g|_{X_{g,h}})^\gamma\leq (|\vartheta|\beta\delta^{-2})^\gamma. \]
		Due to \eqref{equ:C1definition}, we have $\log \beta=-n\min_{\alpha\in\Pi}\alpha\sigma_\mu<-n/\Cone \leq -\log|\vartheta|$. Therefore $c_\gamma(f(g\cdot)|_{X_{g,h}})\leq \delta^{-C_0}$.
		\item Due to 
		\[|e^a-e^b|\leq\max\{e^a,e^b \}|a-b|^\gamma \]
		for all $a,b$ in $\R$ and $0\leq \gamma\leq 1$, by Lemma \ref{lem:gBmgmul},
		\[c_\gamma(e^{\realpart\sigma(g,\cdot)}|_{X_{g,h}})\leq e^{|\realpart|\|\kappa(g)\|}(Lip\realpart\sigma(g,\cdot)|_{X_{g,h}})^\gamma\leq e^{|\realpart|(\|\sigma_\mu\|+\epsilon)n}|\realpart|^\gamma\delta^{-\gamma}. \]
		Hence when $|\realpart|$ is small enough depending on $\sigma_\mu$, we obtain $c_\gamma(e^{\realpart\sigma(g,\cdot)}|_{X_{g,h}})\leq \delta^{-C_0}$. 
		\item In $c_\gamma( \mollifier_{\alpha})$, the only term we need to be careful about is $\mollifier_\delta(4\delta(V_{4\chi_\alpha-\alpha,\eta},F_{\rho_\alpha g,\rho_\alpha h})-4\delta)$. By Lemma \ref{lem:profla}, we have $d(V_{4\chi_\alpha-\alpha,\eta},V_{4\chi_\alpha-\alpha,\eta'})\ll d(\eta,\eta')$. Hence the $c_\gamma$ value of this term is also bounded by $\delta^{-C_0}$.
	\end{itemize}
	The proof is complete.
\end{proof}

\subsection{Exponential error term}
\label{sec:expdec}
In this section, we will prove Theorem \ref{thm:renewal} that the speed of convergence in the renewal theorem is exponential using our result on the spectral gap (Theorem \ref{prop:spegap}).
Recall that $X=\bp V$, where $(\rho, V)$ is an irreducible representation of $G$ with highest weight $\chi$. We have defined a renewal operator $R$ as follows: For a positive bounded Borel function $f$ on $\bb R$, a point $x$ in $X$ and a real number $t$, we set
\begin{align*}
Rf(x,t)=\sum_{n=0}^{+\infty}\int_Gf(\sigma_V(\rho(g),x)-t)\dd\mu^{*n}(g)
\end{align*}
and 
\begin{equation*}
R_Pf(x,t)=\sum_{n=0}^{+\infty}\int_Gf(\log\|\rho(g)\|-t)\dd\mu^{*n}(g).
\end{equation*}

Recall $P_z$ is the transfer operator defined by $P_zf(x)=\int_G e^{z\sigma_V(\rho(g),x)}f(gx)\dd\mu(g)$. For $\eta>0$, let $\ceta=\{z\in\bb C|\,|\Re z|<\eta \}$.
Using the analytical Fredholm theorem, we summarize the property of $P_z$.
\begin{prop}\label{prop:invrsetran}
	Under the same assumptions as in Theorem \ref{thm:renewal}, for any $\gamma>0$ small enough, there exists $\eta>0$ such that on $\ceta$ the transfer operator $P_z$ is a bounded operator on $C^\gamma(X)$ and depends analytically on $z$. Moreover there exists an analytic operator $U(z)$ on $\ceta$ such that the following holds on $\ceta$
	\begin{equation*}
	(I-P_z)^{-1}=\frac{1}{\sigma_{V,\mu}z}N_0+U(z),
	\end{equation*}
	where $N_0$ is the operator defined by $N_0f=\int_X f\dd\nu_V$ and $\sigma_{V,\mu}=\chi\sigma_{\mu}>0$ is the Lyapunov constant. There exists $C>0$ such that for $z\in\ceta$
	\begin{equation}\label{equ:uz}
	\|U(z)\|_{C^\gamma\rightarrow C^\gamma}\leq C(1+|\Im z|)^{2\gamma}.
	\end{equation}
\end{prop}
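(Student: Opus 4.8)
The strategy is the classical Fredholm-theoretic analysis of the resolvent $(I-P_z)^{-1}$ near the line $\Re z=0$, made quantitative by the uniform spectral gap of Theorem \ref{prop:spegap}. First I would record that, for $|\Re z|=|\realpart|$ small, $P_z$ preserves $C^\gamma(X)$ and is analytic in $z$: the exponential moment hypothesis gives $|P_z f|_\infty\le C^{|\realpart|}|f|_\infty$ (inequality \eqref{equ:expmom}), and the Lasota--Yorke inequality \eqref{equ:gampnf} together with $c_\gamma$-control shows $P_z\colon C^\gamma\to C^\gamma$ is bounded with norm growing at most polynomially in $|\Im z|$; analyticity in $z$ follows by differentiating under the integral sign, using the finite exponential moment to justify convergence. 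Next, by the Lasota--Yorke inequalities \eqref{equ:infpnf}--\eqref{equ:gampnf}, $P_z$ is quasi-compact on $C^\gamma(X)$ for each fixed $z\in\ceta$: some iterate $P_z^{n}$ is a compact perturbation of an operator of small spectral radius, so $I-P_z$ is Fredholm of index $0$, and the analytic Fredholm theorem gives that $(I-P_z)^{-1}$ is meromorphic on $\ceta$ with poles of finite rank.

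\textbf{Locating and resolving the pole at $z=0$.} The second step is to show the only pole in $\ceta$ (after possibly shrinking $\eta$) is a simple pole at $z=0$, and to compute its residue. At $z=0$, $P_0=P$ is the Markov operator; by \eqref{equ:infpnf} its spectrum on $C^\gamma(X)$ consists of the simple eigenvalue $1$ (eigenfunction the constants, eigenprojection $N_0 f=\int f\,\dd\nu_V$) and the rest contained in a disc of radius $\rho<1$. A standard perturbation argument (Kato) gives, for $|z|$ small, a simple eigenvalue $\lambda(z)$ of $P_z$ depending analytically on $z$ with $\lambda(0)=1$, and the corresponding spectral projection $N(z)$ analytic with $N(0)=N_0$; writing the Laurent expansion of $(I-P_z)^{-1}$ around $z=0$ one gets the singular part $\frac{1}{1-\lambda(z)}N(z)$. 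The derivative $\lambda'(0)$ is computed by the usual first-order perturbation formula $\lambda'(0)=\int_X\left(\frac{\partial}{\partial z}\big|_{z=0}\int_G e^{z\sigma_V(\rho(g),x)}\,\dd\mu(g)\right)\dd\nu_V(x)=\int_{G\times X}\sigma_V(\rho(g),x)\,\dd\mu(g)\dd\nu_V(x)$, which by \eqref{equ:representation cocycle} equals $\chi\sigma_\mu=\sigma_{V,\mu}>0$ (positivity from Lemma \ref{lem:poslya}). Hence $1-\lambda(z)=\sigma_{V,\mu}z+O(z^2)$ with nonzero leading coefficient, so $z=0$ is a simple pole with residue $\frac{1}{\sigma_{V,\mu}}N_0$, and we may write $\frac{N(z)}{1-\lambda(z)}=\frac{1}{\sigma_{V,\mu}z}N_0+(\text{analytic near }0)$. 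Absorbing the analytic remainder and the part of the resolvent from the rest of the spectrum into a single operator $U(z)$ gives the claimed identity on a neighbourhood of $0$.

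\textbf{Extending to all of $\ceta$ and the polynomial bound.} The third step is to propagate this to the whole strip $\ceta$ and to obtain the growth estimate \eqref{equ:uz}. For $|\Im z|$ bounded, compactness of $\{z\in\ceta:|\Im z|\le T_0\}$ plus the meromorphy already established shows $(I-P_z)^{-1}-\frac{1}{\sigma_{V,\mu}z}N_0$ is bounded on this region once we know there are no other poles; the absence of other poles comes from the non-arithmeticity / contracting-action input (as recalled in the introduction, $P_z$ has spectral radius $<1$ off $z=0$ on the imaginary axis), which persists for $|\realpart|$ small by upper semicontinuity of the spectral radius. For $|\Im z|=|\vartheta|$ large, this is exactly where Theorem \ref{prop:spegap} enters: it gives $|P_{\realpart+i\vartheta}^n f|_\gamma\le C|\vartheta|^{2\gamma}\rho^n|f|_\gamma$ uniformly, so the Neumann-type series $(I-P_z)^{-1}=\sum_{n\ge0}P_z^n$ — more precisely, writing $(I-P_z)^{-1}=\big(I+P_z+\cdots+P_z^{n_0-1}\big)(I-P_z^{n_0})^{-1}$ with $n_0\asymp\log|\vartheta|$ chosen so that $C|\vartheta|^{2\gamma}\rho^{n_0}\le 1/2$ — converges in operator norm on $C^\gamma$ with $\|(I-P_z)^{-1}\|_{C^\gamma\to C^\gamma}\ll |\vartheta|^{2\gamma}\cdot\log|\vartheta|\cdot\|P_z\|^{n_0}$. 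Since $\|P_z\|_{C^\gamma\to C^\gamma}\ll|\vartheta|^{C}$ this a priori gives a bound $|\vartheta|^{C\log|\vartheta|}$, which is too weak; the remedy is to iterate the Lasota--Yorke inequality \eqref{equ:gampnf} itself rather than crude operator norms, so that after one block of length $n_0$ the $c_\gamma$-part is already contracted and one reads off $\|U(z)\|\ll(1+|\Im z|)^{2\gamma}$ directly. On the region $|\Im z|$ large the $\frac{1}{\sigma_{V,\mu}z}N_0$ term is itself $O(|\Im z|^{-1})$, hence harmlessly absorbed into $U(z)$. Patching the three regimes (near $0$, bounded imaginary part, large imaginary part) by a partition-of-unity-free gluing — they agree where they overlap because the resolvent is unique — yields the global analytic $U(z)$ with the stated bound. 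The main obstacle is the last point: getting the \emph{sharp} exponent $2\gamma$ rather than a logarithmically-growing exponent in the large-$|\Im z|$ regime, which forces one to track the Hölder seminorm through the spectral-gap iteration rather than treating $P_z$ as a black box, and to choose the block length $n_0\asymp C_2\log|\vartheta|$ exactly as in Proposition \ref{prop:L1pbf}.
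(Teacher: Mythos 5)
Your proposal is correct and follows essentially the same route as the paper, whose proof simply cites \cite{li2017fourier} and \cite{boyer2016renewalrd} for the standard part (quasi-compactness of $P_z$ on $C^\gamma(X)$ via the Lasota--Yorke estimates, analytic Fredholm theory, and the simple pole at $z=0$ coming from the perturbation expansion of the leading eigenvalue with $\lambda'(0)=\sigma_{V,\mu}$) and adds Theorem \ref{prop:spegap} as the only new ingredient, giving analyticity of $U(z)$ on all of $\ceta$ together with the bound \eqref{equ:uz}. Only note that your final concern is moot: Theorem \ref{prop:spegap} bounds $\|P^n_{\realpart+i\vartheta}\|_{C^\gamma\to C^\gamma}\le C|\vartheta|^{2\gamma}\rho^n$ uniformly in $n$ (the factor $|\vartheta|^{2\gamma}$ appears once, not per iterate), so for $|\vartheta|$ large the Neumann series $\sum_{n\ge 0}P_z^n$ converges directly in operator norm with sum $O\bigl((1+|\Im z|)^{2\gamma}\bigr)$, and neither the block decomposition with $n_0\asymp\log|\vartheta|$ nor a re-iteration of the Lasota--Yorke inequality is needed.
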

This is a generalization of \cite[Prop. 4.1]{li2017fourier} and \cite[Theorem 4.1]{boyer2016renewalrd}, and the proof is exactly the same. The main difference is that the spectral radius of $P_z$ is bounded below $1$ on $\ceta$ (except at $0$), due to Theorem \ref{prop:spegap}. From this we have the analytic continuation of $U(z)$ to $\ceta$ and the bound on the operator norm of $U(z)$. 

Now, we give the precise statement and the proof of Theorem \ref{thm:renewal}.
\begin{prop}\label{prop:reniwa}
	Under the same assumptions as in Theorem \ref{thm:renewal}, there exists $\epsilon>0$ such that for $ f\in C_c^{2}(\bb R)$, we have
	\begin{equation*}
	Rf(x,t)=\frac{1}{\sigma_{V,\mu}}\int_{-t}^{\infty}f(u)\dd u+e^{-\epsilon|t|}O(e^{\epsilon|\supp f|}(| f''|_{L^1}+| f|_{L^1})),
	\end{equation*}
	where $|\supp f|$ is the supremum of the absolute value of  $x$ in $\supp f$.
\end{prop}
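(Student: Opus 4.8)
The plan is to pass from the renewal sum to a contour integral via the Laplace/Fourier transform and then exploit Proposition~\ref{prop:invrsetran}. First I would fix $x\in X=\bp V$ and view, for a test function $f\in C_c^2(\bb R)$, the quantity $Rf(x,t)=\sum_{n\geq 0}\int_G f(\sigma_V(\rho(g),x)-t)\,\dd\mu^{*n}(g)=\sum_{n\geq 0}(P^n\tilde f_t)(x)$, where $\tilde f_t(\cdot)=f(\cdot-t)$ is regarded via the cocycle as acting on $X$; more precisely one writes $Rf(x,t)=\big((I-P)^{-1}\big)$ applied formally to the family $f(\cdot-t)$, an identity that must be justified by the large deviation principle (Proposition~\ref{prop:lardev1}) which gives geometric decay of the tail and hence absolute convergence of the series once $t$ is in the relevant range. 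The standard device is to write $f(s-t)=\frac{1}{2\pi}\int_{\bb R} \hat f(\xi)e^{i\xi(s-t)}\,\dd\xi$ (or the two-sided Laplace version on a vertical line $\Re z=a$ with $|a|<\eta$), so that $Rf(x,t)=\frac{1}{2\pi i}\int_{\Re z=a} e^{-zt}\,\hat f(z)\,\big((I-P_z)^{-1}\B\big)(x)\,\dd z$, where $\hat f(z)=\int e^{zu}f(u)\,\dd u$ and $\B$ is the constant function $1$. This is exactly the setup of \cite{li2017fourier} and \cite{boyer2016renewalrd}, and I would follow it verbatim.

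Next I would insert the resolvent decomposition from Proposition~\ref{prop:invrsetran}: $(I-P_z)^{-1}=\frac{1}{\sigma_{V,\mu}z}N_0+U(z)$. The pole term contributes, by a residue computation, the main term: moving the contour and picking up the residue at $z=0$ of $e^{-zt}\hat f(z)\frac{1}{\sigma_{V,\mu}z}N_0\B$ gives $\frac{1}{\sigma_{V,\mu}}\hat f(0)\int_X \B\,\dd\nu_V=\frac{1}{\sigma_{V,\mu}}\int_{\bb R}f$ when $t$ is suitably signed; more carefully, writing $\hat f(z)=\int_{\bb R}e^{zu}f(u)\,\dd u$ and integrating the factor $e^{-zt}/z$ against it recovers $\frac{1}{\sigma_{V,\mu}}\int_{-t}^\infty f(u)\,\dd u$ as claimed. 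The analytic part $U(z)$ is where the exponential error term comes from. Here I would shift the contour of integration from $\Re z=0$ to $\Re z=\pm\epsilon$ for a small $\epsilon<\eta$, choosing the sign so that $e^{-zt}$ decays: take $\Re z=\epsilon$ when $t>0$ and $\Re z=-\epsilon$ when $t<0$. On this shifted line $|e^{-zt}|=e^{-\epsilon|t|}$, and the remaining integral is $\int_{\Re z=\pm\epsilon}e^{-zt}\hat f(z)\,(U(z)\B)(x)\,\dd z$. Using the bound \eqref{equ:uz}, namely $\|U(z)\|_{C^\gamma\to C^\gamma}\leq C(1+|\Im z|)^{2\gamma}$, together with the decay of $\hat f$ — since $f\in C_c^2$, integration by parts twice gives $|\hat f(a+i\xi)|\ll e^{a|\supp f|}\,(1+\xi^2)^{-1}(|f''|_{L^1}+|f|_{L^1})$ — the $\Im z$-integral converges absolutely (because $2\gamma<2$) and is bounded by $C e^{-\epsilon|t|}e^{\epsilon|\supp f|}(|f''|_{L^1}+|f|_{L^1})$. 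This yields the stated estimate for $Rf(x,t)$; the bound is uniform in $x$ because $\|U(z)\B\|_{C^\gamma}$ controls the sup norm.

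For the norm version $R_Pf(t)$, I would use the identity $\log\|\rho(g)\|=\chi\kappa(g)=\max_{x\in\bp V}\sigma_V(\rho(g),x)$ and the large deviation principle (Proposition~\ref{prop:large deviation projective}) which says that for a typical $g$ the point $x^M_g$ realizing this maximum is, with exponentially small exceptional probability, such that $\sigma_V(\rho(g),x)$ is within an exponentially small error of $\log\|\rho(g)\|$ for $x=x^M_g$; more to the point, one shows $R_Pf(t)-Rf(x,t)$ is $O(e^{-\epsilon|t|})$ by comparing $\log\|\rho(g)\|$ with $\sigma_V(\rho(g),x)$ and using \eqref{equ:coccar} together with the Hölder regularity of $f$ and Proposition~\ref{prop:holder regulariyt}. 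This comparison argument is standard and appears in \cite{li2017fourier}; I would cite it and carry out the same estimate.

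The main obstacle I anticipate is not any single step but the careful bookkeeping of the contour shift and the justification that the series $\sum_n P^n\tilde f_t$ genuinely equals the contour integral of $(I-P_z)^{-1}$ in the regime where $t$ can have either sign — one must split into $t\geq 0$ and $t<0$, check that the resolvent identity is being applied on a strip where $P_z$ is genuinely invertible (which is where Theorem~\ref{prop:spegap} is essential, via Proposition~\ref{prop:invrsetran}), and track the dependence of all constants on $\supp f$ and the Sobolev norm of $f$. The genuinely new input over the abelian case is entirely packaged in Proposition~\ref{prop:invrsetran}, so modulo that the proof is, as the paper says, "rather straightforward"; the work is in assembling the pieces cleanly and verifying that the exceptional-set estimates from the large deviation principles are compatible with the $e^{-\epsilon|t|}$ target.
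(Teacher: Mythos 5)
Your proposal is correct and follows essentially the same route as the paper: the identity from \cite{li2017fourier} and \cite{boyer2016renewalrd} isolates the main term, Proposition \ref{prop:invrsetran} supplies the analytic continuation of $U(z)$ and the bound \eqref{equ:uz}, and the exponential error comes from analyticity in a strip combined with the $C_c^2$ bound \eqref{equ:hat f} on $\hat f$. The only cosmetic difference is that you carry out the contour shift to $\Re z=\pm\epsilon$ by hand, whereas the paper packages exactly this step as Lemma \ref{lem:reed} (Reed--Simon) applied to $T(\xi)=\hat f(\xi)U(i\xi)\B(x)$; note only that absolute convergence of the shifted integral requires $2\gamma<1$ rather than your stated $2\gamma<2$, which is harmless since $\gamma$ is taken small.
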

\begin{proof}
	By the same computation as in \cite[Lemma 4.5]{li2017fourier} and \cite[Prop. 4.14]{boyer2016renewalrd}, we have
	\begin{align*}
	Rf(x,t)=\frac{1}{\sigma_{V,\mu}}\int_{-t}^{\infty}f(u)\dd u+\lim_{s\rightarrow 0^+}\frac{1}{2\pi}\int e^{-it\xi}\hat{f}(\xi)U(s+i\xi)\B(x)\dd\xi,
	\end{align*}
	where $\B(x)$ is the constant function with value $1$ on $X$ and $\hat{f}$ is the Fourier transform of $f$ given by $\hat{f}(\xi)=\int e^{i\xi u}f(u)\dd u$. Hence, we only need to control the error term.
	
	By Proposition \ref{prop:invrsetran}, we know that $U(z)$ is analytical on $\ceta$ and uniformly bounded by $(1+|\Im z|)^{2\gamma}$. Since $f$ is a compactly supported smooth function, the Fourier transform $\hat{f}$ is an analytic function on $\bb C$.  As $|\hat{ f}(\pm i\epsilon+\xi)|\leq e^{\epsilon|\supp f|}\frac{1}{|\xi|^2}| f''|_{L^1}$, and $|\hat{ f}(\pm i\epsilon+\xi)|\leq e^{\epsilon|\supp f|}| f|_{L^1}$ for $\epsilon>0$ and $\xi$ in $\R$, we have 
	\begin{equation}\label{equ:hat f}
	|\hat{ f}(\pm i\epsilon+\xi)|\leq e^{\epsilon|\supp f|}\frac{1}{1+|\xi|^2}(| f''|_{L^1}+| f|_{L^1}).
	\end{equation}
	By \eqref{equ:uz}, \eqref{equ:hat f} and the dominant convergence theorem, we have
	\begin{equation}\label{equ:s+}
	\lim_{s\rightarrow 0^+}\frac{1}{2\pi}\int e^{-it\xi}\hat{f}(\xi)U(s+i\xi)\B(x)\dd\xi=\frac{1}{2\pi}\int e^{-it\xi}\hat{f}(\xi)U(i\xi)\B(x)\dd\xi. 	
	\end{equation}
	
	\begin{lem}\label{lem:reed}\cite[Thm.\uppercase\expandafter{\romannumeral9}14]{reed1975methods}
		If $T$ is in $\cal S'(\bb R)$, the space of tempered distributions, the distribution $T$ has analytic continuation to $|\Im \xi|<a$ and $\sup_{|b|<a}\int|T(ib+y)|\dd y< \infty$, then $\check{T}$, the inverse Fourier transform of $T$, is a continuous function. For all $b<a$, let $C_b=\max\int|T(\pm ib+y)|\dd y$. We have
		\begin{equation*}
		|\check T(t)|\leq C_be^{-b|t|}.
		\end{equation*}
	\end{lem}
	
	Using Lemma \ref{lem:reed} with $T(\xi)=\hat{f}(\xi)U(i\xi)\B(x)$, we have
	\begin{align}\label{equ:check t}
	\left|\int\hat{ f}(\xi)U(i\xi)\bm 1(x)e^{-it\xi}\dd\xi\right|&=| \check{T}(t)|\leq e^{-\epsilon|t|}\max|T(\pm i\epsilon+\xi)|_{L^1(\xi)}.
	\end{align}
	for $\epsilon< \eta$.
	By \eqref{equ:uz} and \eqref{equ:hat f}, we have
	\begin{equation}\label{equ:max t}
	\begin{split}
	\max|T(\pm i\epsilon+\xi)|_{L^1(\xi)}&\leq e^{\epsilon|\supp f|}\int \frac{1}{1+|\xi|^2}(| f''|_{L^1}+| f|_{L^1})|U(\mp \epsilon+i\xi)\bm 1(x)|\dd\xi\\
	&\ll_\gamma e^{\epsilon|\supp f|}(| f''|_{L^1}+| f|_{L^1}).
	\end{split}
	\end{equation}
	Combining \eqref{equ:s+}, \eqref{equ:check t} and \eqref{equ:max t}, we obtain the result.
\end{proof}
\begin{prop}
	Under the same assumptions as in Theorem \ref{thm:renewal}, there exists $\epsilon>0$ such that for $ f\in C_c^{2}(\bb R)$, we have
	\begin{equation*}
	R_Pf(x,t)=\frac{1}{\sigma_{V,\mu}}\int_{-t}^{\infty}f(u)\dd u+e^{-\epsilon|t|}O(e^{\epsilon|\supp f|}(| f''|_{L^1}+| f|_{L^1})).
	\end{equation*}
\end{prop}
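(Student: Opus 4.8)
The plan is to deduce the renewal estimate for the norm cocycle $R_Pf(x,t)$ from the already-established estimate for $Rf(x,t)$ in Proposition~\ref{prop:reniwa}. The point is that the two renewal sums differ by a controlled amount: by Lemma~\ref{lem:cocycle} (i.e. \eqref{equ:coccar}), for any $g\in G$ and $x=\R v\in\bp V$ one has
\[
0\leq \log\|\rho(g)\|-\sigma_V(\rho(g),x)=-\log\frac{\|\rho(g)v\|}{\|\rho(g)\|\|v\|}=-\log\delta(x,y^m_{\rho(g)}),
\]
so $\log\|\rho(g)\|-\sigma_V(\rho(g),x)\geq 0$ is a nonnegative quantity that is typically small. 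First I would fix $x=\R v$ and write $R_Pf(t)=Rf(x,t)+\big(R_Pf(t)-Rf(x,t)\big)$, and estimate the difference term. Writing $u(g,x)=\log\|\rho(g)\|-\sigma_V(\rho(g),x)\geq 0$, the difference is
\[
\sum_{n=0}^\infty\int_G\big(f(\log\|\rho(g)\|-t)-f(\sigma_V(\rho(g),x)-t)\big)\dd\mu^{*n}(g),
\]
and the integrand is bounded by $\min\{2\|f\|_\infty,\ \|f'\|_\infty u(g,x)\}$ and supported where either argument lies in $\supp f-t$, i.e. where $\sigma_V(\rho(g),x)\in[t-R-u(g,x),t+R]$ with $R=|\supp f|$.

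The main work is then to bound, for each $n$, the $\mu^{*n}$-measure of the relevant set and to sum over $n$. Using the large deviation principle for the Cartan projection (Proposition~\ref{prop:lardev1}) together with Proposition~\ref{prop:large deviation projective}, the quantity $u(g,x)=-\log\delta(x,y^m_{\rho(g)})$ is, outside a set of $\mu^{*n}$-measure $\leq Ce^{-cn}$ (for a suitable fixed small exponent), bounded by, say, $\epsilon_0 n$ for any prescribed $\epsilon_0>0$; more precisely $\mu^{*n}\{u(g,x)\geq s\}\leq Ce^{-cs}$ uniformly in $n$ and $x$ by Proposition~\ref{prop:large deviation projective}. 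On the other hand, $\sigma_V(\rho(g),x)=\chi\sigma(g,x)$ concentrates near $n\sigma_{V,\mu}$: by the large deviation principle, $\mu^{*n}\{|\sigma_V(\rho(g),x)-n\sigma_{V,\mu}|\geq \epsilon_0 n\}\leq Ce^{-c\epsilon_0 n}$. Hence the terms with $n$ outside the window $n\in[(1-\epsilon_1)t/\sigma_{V,\mu},(1+\epsilon_1)t/\sigma_{V,\mu}]$ contribute at most $\sum_{n}Ce^{-cn}\cdot(\text{something})$ plus, for the small $n$, a geometrically decaying tail, altogether $O(e^{-\epsilon|t|})$; and for $n$ inside this window of length $O(\epsilon_1 t)$, we use the bound $\min\{2\|f\|_\infty,\|f'\|_\infty u\}$ on the integrand together with the fact that on the event $\{u(g,x)\leq s\}$ the constraint $\sigma_V(\rho(g),x)\in[t-R-s,t+R]$ forces, via the renewal estimate for $Rf$ applied to a suitable smooth bump majorizing the indicator of this interval, a total contribution of size $O(1)$ per unit interval. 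Splitting $u$ dyadically, $\sum_{j\geq 0}2^{-j}\cdot(\text{measure where }u\asymp 2^{-j})$ plus the tail $u\geq c_0\log(1/\text{stuff})$, and using that the renewal function $R_P\B$ of a nonnegative compactly supported bump is bounded (which itself follows from Proposition~\ref{prop:reniwa} applied to such a bump, since $\int_{-t}^\infty$ of a bump is $O(1)$ and the error is $O(e^{-\epsilon|t|})$), one gets that the difference term is $O(e^{-\epsilon|t|})$ with the same dependence on $f$.

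Concretely the steps are: (i) record the identity $\log\|\rho(g)\|-\sigma_V(\rho(g),x)=-\log\delta(x,y^m_{\rho(g)})\geq 0$; (ii) express $R_Pf(t)-Rf(x,t)$ as a difference of renewal sums and bound the integrand; (iii) use Propositions~\ref{prop:lardev1} and~\ref{prop:large deviation projective} to restrict to $n$ in a window of size $O(\epsilon_1 t)$ around $t/\sigma_{V,\mu}$, absorbing the rest into an $O(e^{-\epsilon|t|})$ error; (iv) on the good window, majorize using a smooth bump $\phi\geq \B_{[\,\cdot\,]}$ and apply Proposition~\ref{prop:reniwa} (or rather the boundedness of $R_P$ of a bump, which follows from it) to see the window contributes $O(e^{-\epsilon_1 t/\sigma_{V,\mu}})$-smallness once we gain a factor from either $u$ being small (making $\|f'\|_\infty u$ small) or from the probability $u$ is large being exponentially small; (v) optimize the splitting parameter (choosing the dyadic cutoff for $u$ at scale $\asymp \epsilon' t$) so that both the "$u$ large" exponential loss and the "$u$ small" factor $u$ balance, producing a net $e^{-\epsilon|t|}$; (vi) conclude, tracking the dependence on $|\supp f|$, $\|f''\|_{L^1}$ and $\|f\|_{L^1}$, and noting the estimate is uniform in $x\in X$ since all the large-deviation bounds used are uniform in $x$. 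The main obstacle is step (iv)–(v): carefully coupling the exponential smallness of $\mu^{*n}\{u\geq s\}$ with the linear-in-$u$ modulus of continuity of $f$ and with the renewal estimate, so that the "width" $R+u$ of the target interval does not destroy the exponential gain — this requires choosing the truncation scale for $u$ proportional to $|t|$ and checking that the renewal estimate for $Rf$, applied to bumps of width up to $O(|t|)$, still yields the claimed bound, which it does because the error term $e^{-\epsilon|t|}$ in Proposition~\ref{prop:reniwa} only picks up a factor $e^{\epsilon|\supp f|}$ and we keep $\epsilon|\supp f|\ll |t|$.
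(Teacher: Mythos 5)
Your reduction to Proposition~\ref{prop:reniwa} founders at step (ii): the pointwise comparison of the two integrands cannot produce an exponentially small difference, because $u(g,x)=\log\|\rho(g)\|-\sigma_V(\rho(g),x)$ is \emph{not} typically small. (Note also that Lemma~\ref{lem:cocycle} gives only the inequality $\delta(x,y^m_{\rho(g)})\leq \frac{\|\rho(g)v\|}{\|\rho(g)\|\|v\|}$, not the identity you wrote, though the direction you need is the right one.) Under $\mucon{n}$ the quantity $u(g,x)$ converges in law, as $n\to\infty$, to a nondegenerate positive random variable of order one (roughly $-\log\delta(x,\cdot)$ evaluated at the limiting density point of ${}^tg$); it does not tend to $0$. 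Consequently, for $n$ in the window around $t/\sigma_{V,\mu}$, the event $\{u\asymp 1\}\cap\{\sigma_V(\rho(g),x)\in[t-R-u,t+R]\}$ has $\mucon{n}$-measure comparable to that of the second event alone, and summing over the window its total renewal mass is of order $|\supp f|$, not $e^{-\epsilon|t|}$. Your dichotomy in (iv)--(v) does not repair this: below your cutoff $u\leq \epsilon' t$ the factor $\min\{2\|f\|_\infty,\|f'\|_\infty u\}$ gives no gain at all, while the probability bound $\mucon{n}\{u\geq s\}\lesssim e^{-cs}$ only becomes exponentially small in $t$ when $s\gtrsim t$; moreover that two-scale bound is not what Proposition~\ref{prop:large deviation projective} states (it gives the scale $s\asymp\epsilon n$ only, and in general one must add an $e^{-cn}$ term, e.g. for finitely supported $\mu$). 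The exponential smallness of $R_Pf(t)-\frac{1}{\sigma_{V,\mu}}\int f$ is a cancellation at the level of the renewal \emph{sum} (an essentially constant, $g$-dependent shift of the argument does not change the renewal limit), and it cannot be seen term by term.

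The route the paper takes (following \cite[Lemma 4.11, Prop. 4.28]{li2017fourier}) keeps your step (iii) but replaces (ii),(iv),(v) by a conditioning argument: after restricting to $n$ in the window by the large deviation principle, write $g=g'h$ with $h$ of law $\mucon{\ell}$, $\ell\asymp\epsilon|t|$, and use the approximation of \cite[Lemma 17.8]{benoistquint} (together with \eqref{equ:representation cartan} and the goodness of the norm) to replace $\log\|\rho(g'h)\|$ by $\sigma_V(\rho(g'),x^M_{\rho(h)})+\log\|\rho(h)\|$ outside a set of measure $O(e^{-c\ell})$. The norm renewal sum then becomes an average over $h$ of cocycle renewal sums $Rf\big(x^M_{\rho(h)},\,t-\log\|\rho(h)\|\big)$, to which Proposition~\ref{prop:reniwa} applies \emph{uniformly in the base point}; since $\log\|\rho(h)\|=O(\epsilon|t|)$ off an exponentially small set, the error $e^{-\epsilon(t-\log\|\rho(h)\|)}$ stays exponentially small in $|t|$. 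If you want to salvage your outline, this conditioning (making the shift depend only on a short prefix of the word and then invoking the cocycle renewal theorem at the shifted time and moved base point) is the missing idea; the pointwise bound by $\|f'\|_\infty u$ should be used only for the $O(e^{-c\ell})$-size discrepancy produced by the approximation, not for $u$ itself.
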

\begin{proof}
	The ideal of the proof is the same as \cite[Lemma 4.11 or Proposition 4.28]{li2017fourier}, where we only need to replace the error term by the error term in the above Proposition \ref{prop:reniwa}. 
	
	We summarize the main idea here. By the large deviation principle, the main contribution of the renewal sum is given by $n$ in a small interval containing $t/\sigma_{V,\mu}$. Since the norm is good, we have the interpretation of the norm by the Cartan projection \eqref{equ:representation cartan}. Then we use \cite[Lemma 17.8]{benoistquint} to replace the norm by the cocycle $\sigma_V$ for each $n$ in the small interval. The proof is complete by applying Proposition \ref{prop:reniwa}.
\end{proof}

\section{Appendix}
\subsection{Non simply connected case}
\label{sec:semisimple}
We explain here how to get Theorem \ref{thm:foudecsemi} for connected algebraic semisimple Lie groups defined and split over $\R$ from Theorem \ref{thm:foudec} for connected $\R$-split reductive $\R$-groups whose semisimple part is simply connected, which is proved in Section \ref{sec:proof}. 

See \cite{margulis91discrete} and \cite[\S 22]{borel1990linear} for more facts about algebraic groups and the central isogeny.
\begin{lem}\label{lem:surjective}
	Let $\bf G'$ be a connected algebraic semisimple Lie groups defined over $\R$. Then there exist a connected reductive $\R$-group $\bf G$ with simply connected derived group $\scr D\bf G$ and an algebraic group morphism $\psi:\bf G\rightarrow \bf G'$ which is surjective between real points. Moreover, the restriction of $\psi$ to $\scr D\bf G$ gives a central isogeny from $\scr D\bf G$ to $\bf G'$ and the connected centre of $\bf G$ is $\R$-split.
\end{lem}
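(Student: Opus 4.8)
\textbf{Proof plan for Lemma \ref{lem:surjective}.}

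The plan is to build $\bf G$ from the simply connected cover of $\bf G'$ together with a carefully chosen split central torus, in such a way that the quotient map onto $\bf G'$ is surjective \emph{on real points}, not merely on $\overline{\R}$-points. Let $\pi:\widetilde{\bf G'}\to\bf G'$ be the simply connected central cover, with finite central kernel $\bf F=\ker\pi$, an $\R$-group of multiplicative type. The obstruction is exactly that $\pi$ need not be surjective on $\R$-points: $\bf G'(\R)/\pi(\widetilde{\bf G'}(\R))$ injects into $H^1(\R,\bf F)$, which can be non-trivial (the $\mathrm{PGL}_n$ case). The key idea is to enlarge $\widetilde{\bf G'}$ by an $\R$-split torus $\bf S$ that maps onto $\bf G'$ while still allowing us to quotient by a copy of $\bf F$ embedded anti-diagonally, so that the resulting group $\bf G$ has the required central isogeny onto $\bf G'$ from its derived group.

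First I would choose an embedding $\bf F\hookrightarrow\bf S$ of $\bf F$ into an $\R$-split torus $\bf S$ such that $\bf S(\R)\to(\bf S/\bf F)(\R)$ is surjective; since $\bf S$ is split this can be arranged (a split torus has trivial $H^1(\R,-)$, so $H^1(\R,\bf F)$ is killed after pushing into $\bf S$). Concretely, writing $\bf F$ as a product of $\boldsymbol\mu_{d_i}$'s over $\R$ — which is legitimate because $\bf F$, being central in the split group $\widetilde{\bf G'}$, is a split group of multiplicative type — one can embed each $\boldsymbol\mu_{d_i}$ into a copy of $\mathbb{G}_m$ by $x\mapsto x$. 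Set $\bf S=\mathbb{G}_m^{\,\#\{i\}}$, so $\bf F\subset\bf S$ diagonally. Now form
\[
\bf G:=\bigl(\widetilde{\bf G'}\times\bf S\bigr)\big/\,\bf F,
\]
where $\bf F$ is embedded \emph{anti-diagonally}, $f\mapsto(f,f^{-1})$. Then $\bf G$ is a connected reductive $\R$-group. Its derived group is the image of $\widetilde{\bf G'}\times\{1\}$; since $\widetilde{\bf G'}$ is semisimple simply connected and the anti-diagonal $\bf F$ meets $\widetilde{\bf G'}\times\{1\}$ trivially, $\scr D\bf G\cong\widetilde{\bf G'}$ is simply connected. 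The connected centre of $\bf G$ is the image of $\{1\}\times\bf S$, which is $\bf S/\bf F$, an $\R$-split torus (a quotient of a split torus by a split subgroup of multiplicative type is split). Define $\psi:\bf G\to\bf G'$ on representatives by $\psi([(g,s)])=\pi(g)\cdot\varphi(s)$, where $\varphi:\bf S\to\bf G'$ is the composite of any section-free recipe — but more cleanly: note $\bf S/\bf F$ is central in $\bf G$ while $\bf G'$ has finite centre, so we cannot send the central torus into $\bf G'$ non-trivially. Instead I would define $\psi$ to be $\pi$ on the $\scr D\bf G$ factor and \emph{trivial} on the central torus: $\psi([(g,s)]):=\pi(g)$, which is well defined because the anti-diagonal $\bf F$ maps to $\pi(f)=1$.

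It remains to check surjectivity of $\psi$ on real points, which is the crux. Given $g'\in\bf G'(\R)$, lift it to $\tilde g\in\widetilde{\bf G'}(\overline\R)$ with $\sigma(\tilde g)\tilde g^{-1}=f_\sigma\in\bf F$ a cocycle (Galois $\sigma$ being complex conjugation). By construction of $\bf S\supset\bf F$ with $H^1(\R,\bf S)$-triviality, choose $s\in\bf S(\overline\R)$ with $\sigma(s)s^{-1}=f_\sigma^{-1}$. Then $(\tilde g,s)$ has Galois cocycle $(f_\sigma,f_\sigma^{-1})\in\bf F$ anti-diagonally, hence descends to a genuine point $[(\tilde g,s)]\in\bf G(\R)$, and $\psi([(\tilde g,s)])=\pi(\tilde g)=g'$. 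Thus $\psi$ is surjective between real points. The restriction $\psi|_{\scr D\bf G}$ is $\pi$ up to the identification $\scr D\bf G\cong\widetilde{\bf G'}$, which is a central isogeny by definition of the simply connected cover. The main obstacle, and the only genuinely delicate point, is verifying the two $H^1$-triviality facts — that $\bf F$ can be realised as a split subgroup of a split torus $\bf S$ for which $\bf S(\R)\twoheadrightarrow(\bf S/\bf F)(\R)$ and, Galois-cohomologically, that every class in $H^1(\R,\bf F)$ dies in $H^1(\R,\bf S)$ — which follows from the structure theory of groups of multiplicative type over $\R$ together with Hilbert 90 for $\mathbb G_m$; I would cite \cite[\S 22]{borel1990linear} and \cite{margulis91discrete} for the isogeny and multiplicative-type facts and carry out the cocycle bookkeeping explicitly.
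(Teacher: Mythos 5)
Your construction is correct, but it is genuinely different from the one in the paper. The paper takes a maximal $\R$-split torus $\bf A'$ of $\bf G'$, lets it act on the simply connected cover $\bf G_1$ by (descended) conjugation, sets $\bf G=\bf A'\ltimes \bf G_1$ with $\psi(s,g)=sf(g)$, and gets surjectivity on real points from Matsumoto's theorem $G'=A'(G')^o$ together with Steinberg's connectedness of $G_1(\R)$ --- no Galois cohomology at all. You instead build the standard $z$-extension $\bf G=(\widetilde{\bf G'}\times\bf S)/\bf F$ with $\bf F$ anti-diagonal in a split torus $\bf S\supset\bf F$, and prove surjectivity by killing the obstruction cocycle in $H^1(\R,\bf S)=0$ (Hilbert 90). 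Your route buys two things: $\ker\psi$ is central in $\bf G$ by construction (the image of $\bf F\times\bf S$), a fact the paper must verify separately when identifying the flag varieties of $\bf G$ and $\bf G'$ in the proof of Theorem \ref{thm:foudecsemi}; and the argument extends verbatim to non-split groups if one replaces the split torus $\bf S$ by an induced torus (still $H^1$-trivial), though then the connected centre is only quasi-trivial rather than $\R$-split. Two small points to tidy up: your identification of $\bf F$ with a product of $\mu_{d_i}$'s uses that $\bf F$ is a \emph{split} finite group of multiplicative type, which holds because $\bf G'$ is split --- that hypothesis is not in the statement of the lemma, though it is the only case the paper uses; and the connected centre of your $\bf G$ is the (injective) image of $\{1\}\times\bf S$, hence isomorphic to $\bf S$ itself rather than to $\bf S/\bf F$ --- the anti-diagonal $\bf F$ meets $\{1\}\times\bf S$ trivially --- but either way it is a split torus, so the conclusion stands.
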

\begin{proof}
	Let $\bf A'$ be a maximal $\R$-split torus of $\bf G'$. Let $\bf G_1$ be a cover of $\bf G'$ which is simply connected and let $f$ be the isogeny map from $\bf G_1$ to $\bf G'$. Let $\bf A_1$ be the preimage of $\bf A'$ in $\bf G_1$, which is a maximal $\R$-split torus of $\bf G_1$ \cite[Theorem 22.6 (ii)]{borel1990linear}. Let $\bf N=\ker f\cap \bf A_1$, then $\bf A'$ is isomorphic to $\bf A_1/\bf N$ as torus. Consider the action of $\bf A_1$ on $\bf G_1$ via conjugation, that is for $s\in \bf A_1$ and $g\in \bf G_1$ we define $Int_s(g)=s^{-1}gs$. Since the kernel of $f$ is in the centre of $\bf G_1$, the conjugation action of $\bf N$ on $\bf G_1$ is trivial. By \cite[Corollary 6.10]{borel1990linear}, the quotient group $\bf A'\simeq \bf A_1/\bf N$ acts $\R$-morphically on $\bf G_1$.
	\begin{center}
		\begin{tikzcd}
			\bf A_1\times \bf G_1 \arrow[d, "f\times Id"']  \arrow[r, "Int"] & \bf G_1 \arrow[dd,"f"] \\
			\bf A'\times \bf G_1 \arrow[ru,dotted] \arrow[d, "Id\times f"'] &  \\
			\bf A'\times \bf G' \arrow[r,"Int"]& \bf G'
		\end{tikzcd}
	\end{center}	
	
	Hence, we can define the semidirect product $\bf G=\bf A'\ltimes \bf G_1$, given by the action of $\bf A'$ on $\bf G_1$. The derived group $[\bf G,\bf G]$ equals $\bf G_1$, which is simply connected. The group $\bf G$ is defined over $\R$, because $\bf A',\bf G_1$ and $\psi$ are also. The restriction of the action of $\bf A'$ on $\bf A_1$ is trivial and $\bf A'\times \bf A_1$ is a maximal $\R$-split torus of $\bf G$. 
	Hence the group $\bf G$ is a connected reductive $\R$-group. 
	
	We only need to find the surjective morphism $\psi$. Let $\bf A'\ltimes \bf G'$ be the semidirect product given by the conjugation action of $\bf A'$ on $\bf G'$. As $\bf A'$ is a subgroup of $\bf G'$, this semidirect product is a product. Hence $(s,g)\mapsto sg$ from $\bf A'\ltimes\bf G'$ to $\bf G'$ is a group morphism.
	We have a group morphism
	\begin{align*}
	\bf G=\bf A'\ltimes \bf G_1\rightarrow &\bf A'\ltimes \bf G'\rightarrow \bf G'\\
	\psi:	(s,g)\mapsto& (s,f(g))\mapsto sf(g).	 
	\end{align*}
	It is well-known that the real part of a semisimple simply connected group $G_1$ is connected in the analytic topology. (See for example \cite{steinberg1968}) Let $(G')^o$ be the analytic connected component of the identity element in $G'$. Then the image of real points of $\bf G$ under $\psi$ is $A'(G')^o$, which is equal to $ G'$ by a theorem of Matsumoto \cite{matsumoto64gpreel} (\cite[Théorème 14.4]{boreltits65reductif}).
\end{proof}
\begin{exa}
	When $\bf G'=\bf{PGL}_2$, the above construction gives $\bf G=\bf{GL}_2=\bf{GL}_1\ltimes \bf{SL}_2$ and the map $\psi$ is the quotient map from $\bf{GL}_2$ to $\bf{PGL}_2$.
\end{exa}

Let $G'=\bf G'(\R)$ be the group of real points of a connected algebraic semisimple Lie groups defined and split over $\R$. Recall that $\mu$ is a Zariski dense Borel probability measure on $G'$ with a finite exponential moment. If $\bf G'$ is simply connected, then Theorem \ref{thm:foudec} holds for $\bf G'$.
If not, let $G=\bf G(\R)$ be as in Lemma \ref{lem:surjective}. 
Recall that $\psi$ is a group morphism from $G$ to $G'$, 
\begin{lem}\label{lem:psimu}
There exists a Zariski dense Borel probability measure $\tilde{\mu}$ on $G$ with a finite exponential moment such that 
\begin{equation}\label{equ:psimu}
\psi_*\tilde{\mu}=\mu.
\end{equation}
\end{lem}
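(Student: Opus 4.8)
The plan is to lift $\mu$ through the surjection $\psi:G\to G'$ by a measurable section argument. First I would invoke the fact that $\psi:G\to G'$ is a continuous surjective homomorphism of locally compact second countable groups, so by a standard selection theorem (e.g.\ the existence of a Borel section for a continuous open surjection of Polish groups) there is a Borel map $s:G'\to G$ with $\psi\circ s=\mathrm{id}_{G'}$. One should check that $\psi$ is actually open: since $\psi$ is an algebraic morphism whose image contains $(G')^o$ and is all of $G'$ by Lemma \ref{lem:surjective}, and the kernel of $\psi$ on real points is the group of real points of $\ker\psi\subset\bf G$ which is finite extension of the split central torus $A'$, the map $\psi$ is a composition of a covering-type map and a torus quotient, hence open. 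Then I set $\tilde\mu=s_*\mu$, i.e.\ $\tilde\mu(E)=\mu(s^{-1}(E))$ for Borel $E\subset G$. By construction $\psi_*\tilde\mu=\psi_*s_*\mu=(\psi\circ s)_*\mu=\mu$, which is \eqref{equ:psimu}.

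Next I would verify the two properties of $\tilde\mu$. For Zariski density: $\Gamma_{\tilde\mu}$, the subgroup generated by $\supp\tilde\mu$, satisfies $\psi(\Gamma_{\tilde\mu})\supset\Gamma_\mu$ up to the issue that $s$ is only a section, so more precisely $\psi(\overline{\Gamma_{\tilde\mu}}^{Zar})$ is a Zariski closed subgroup of $\bf G'$ containing $\Gamma_\mu$, hence equals $\bf G'$ since $\mu$ is Zariski dense. Therefore $\overline{\Gamma_{\tilde\mu}}^{Zar}$ surjects onto $\bf G'=\scr D\bf G'$, so it contains a conjugate-stable complement to the kernel; but the kernel of $\psi$ on $\bf G$ is central (it is $\ker(f)\subset$ centre of $\bf G_1$ times nothing on the $A'$ factor — more carefully, $\ker\psi$ lies in the centre of $\bf G$). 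Here one must argue that $\overline{\Gamma_{\tilde\mu}}^{Zar}$ is in fact all of $\bf G$: a Zariski closed subgroup $\bf H\subset\bf G$ with $\psi(\bf H)=\bf G'$ has $\bf H\supset\scr D\bf G=\bf G_1$ (since $\scr D\bf G$ is generated by the unipotent one-parameter subgroups, each of which maps isomorphically under the central isogeny $\psi|_{\scr D\bf G}$, and $\bf H$ must contain preimages of these), and also $\bf H$ must surject onto the torus part $\bf A'$; however one cannot conclude $\bf H=\bf G$ in general without more care. The fix: replace $\bf G$ by the Zariski closure of $\Gamma_{\tilde\mu}$ if necessary, or, better, choose the section $s$ to take values in a fixed finitely generated subgroup and note that the central torus direction can be absorbed. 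I expect this to be the main obstacle — ensuring $\tilde\mu$ is genuinely Zariski dense in $\bf G$ and not merely in a proper subgroup mapping onto $\bf G'$. One clean way around it is: pick any Zariski dense probability measure $\nu_0$ on $G$ with finite exponential moment and support meeting every coset of $\scr D G$ that matters (e.g.\ a compactly supported smooth density near the identity times a density on the central torus), and set $\tilde\mu=\tfrac12 s_*\mu+\tfrac12\nu_0'$ where $\nu_0'$ is supported in $\ker\psi$; then $\psi_*\tilde\mu=\tfrac12\mu+\tfrac12(\text{measure on }\{e_{G'}\}\text{-}\ldots)$ — this breaks \eqref{equ:psimu}, so instead one should convolve: use $\tilde\mu=s_*\mu$ but first replace $\mu$ by $\mu$ itself and note $\ker\psi$ is finite modulo the split central torus $C$ of $\bf G$, and the support of $s_*\mu$ together with all of $C(\R)$ — no. The honest resolution is to take $s$ Borel and observe that since $\ker(\psi|_{\scr D\bf G})$ is finite central, $\Gamma_{\tilde\mu}$ Zariski closure surjects onto $\bf G'$ with finite central kernel intersection, hence contains $\scr D\bf G$; and the $C(\R)$-component is handled because we may and do choose the section so that $\psi^{-1}$ of a Zariski dense set in the torus quotient is Zariski dense in $C$ (a single generic point suffices as $C$ is a split torus and $\psi|_C$ is an isogeny onto its image).

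Finally, for the finite exponential moment: fix a faithful representation of $\bf G$; then $\|g\|$ for $g\in G$ is comparable to $e^{c\|\kappa_G(g)\|}$, and one checks $\|\kappa_G(s(g'))\|\le C(1+\|\kappa_{G'}(g')\|)$ because $\psi$ restricted to a maximal split torus of $\bf G$ is an isogeny onto a maximal split torus of $\bf G'$ times the central torus, and the section can be chosen with polynomially (indeed linearly) bounded Cartan projection on the relevant directions — the central torus direction of $s(g')$ can be taken trivial or bounded. Hence $\int_G e^{t_0\|\kappa_G(g)\|}\,\dd\tilde\mu(g)=\int_{G'}e^{t_0\|\kappa_G(s(g'))\|}\,\dd\mu(g')\le\int_{G'}e^{t_0 C(1+\|\kappa_{G'}(g')\|)}\,\dd\mu(g')<\infty$ for $t_0$ small, using the exponential moment of $\mu$. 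This completes the construction; I would then note that Theorem \ref{thm:foudec} applies to $\tilde\mu$ on $\bf G$ (whose semisimple part is simply connected), and transporting the stationary measure and the flag variety via $\psi$ — which identifies the flag varieties of $G$ and $G'$ equivariantly and intertwines the cocycles up to a coboundary — yields Theorem \ref{thm:foudecsemi} for $\mu$ on $G'$.
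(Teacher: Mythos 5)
Your overall strategy (lift $\mu$ through $\psi$ by a measurable section, then check Zariski density and exponential moment) is workable in spirit, but the step you yourself flag as ``the main obstacle'' is exactly where the argument breaks, and your proposed fix rests on a false premise. In the construction of Lemma \ref{lem:surjective} the connected centre $C$ of $G$ satisfies $\dd\psi|_{\frak c}=0$, i.e.\ $C\subset\ker\psi$: the map $\psi|_C$ is \emph{trivial}, not ``an isogeny onto its image'' inside $G'$, and there is no ``torus quotient'' of $G'$ seen by $C$. Consequently the condition $\psi_*\tilde\mu=\mu$ gives no control whatsoever on the $C$-components of a lift, and a section whose central component is ``trivial or bounded'' (which is also what you need for your moment bound) has its support inside a proper algebraic subgroup of $\bf G$ missing the connected centre, so $s_*\mu$ is \emph{not} Zariski dense. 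You correctly discarded the idea of mixing with a measure supported in $\ker\psi$ (that destroys $\psi_*\tilde\mu=\mu$), but you never replace it with a valid mechanism. The correct and simple device, used in the paper, is translation rather than mixture: first lift $\mu$ through the finite covering $F\ltimes G_1\to G'$ (with $F$ a finite subgroup of $A'$) to get $\tilde\mu_1$ supported in $F\ltimes G_1$ with $\psi_*\tilde\mu_1=\mu$, and then set $\tilde\mu=\tfrac12(\tilde\mu_1+c_*\tilde\mu_1)$ where $c\in C$ generates a Zariski dense subgroup of $C$; since $c\in\ker\psi$ one still has $\psi_*(c_*\tilde\mu_1)=\mu$, and the translate supplies the missing central direction, so that $\frak c\subset\frak h$ for the Lie algebra $\frak h$ of the Zariski closure of $\Gamma_{\tilde\mu}$.

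Two further points. First, your argument that the Zariski closure $\bf H$ contains $\scr D\bf G$ is only sketched (``$\bf H$ must contain preimages'' of unipotents); as stated it does not follow from surjectivity of $\psi|_{\bf H}$ alone, and you concede you cannot conclude. It can be repaired either by using that $\ker\psi$ is central, so $\scr D\bf G=\scr D(\bf H\cdot\ker\psi)=\scr D\bf H\subset\bf H$ (or via Jordan decomposition), or as in the paper by transporting the irreducibility of the adjoint action of $\Gamma_\mu$ through the isomorphism $\dd\psi|_{\scr D\frak g}$ to conclude $\scr D\frak g\subset\frak h$; but some such argument must be given. Second, your exponential moment bound $\|\kappa_G(s(g'))\|\le C(1+\|\kappa_{G'}(g')\|)$ is asserted, not proved; it becomes a genuine argument precisely when the section is taken inside the finite cover $F\ltimes G_1$, where the subadditivity of the Cartan projection and the finiteness of $F$ give $\|\kappa(mg)\|\le\|\kappa(m)\|+\|\kappa(\psi(m))\|+\|\kappa(\psi(mg))\|$ and hence the moment transfers from $\mu$ — and the central translate by the fixed element $c$ does not affect it. As written, your proposal has a genuine gap at the Zariski density step and cannot be completed along the route you indicate.
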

The proof of Lemma \ref{lem:psimu} will be given at the end this section.
We will explain why the result also holds for $G'$ and $\mu$. We state the non simply connected version of Theorem \ref{thm:foudec} here.

\begin{thm}[Fourier decay]\label{thm:foudecsemi}
	Let $\bf G'$ be a connected algebraic semisimple Lie group defined and split over $\R$ and let $G'=\bf G'(\R)$ be its group of real points.
	Let $\mu$ be Zariski dense Borel probability measure on $G'$ with finite exponential moment. Let $\nu$ be the $\mu$-stationary measure on the flag variety $\P$.
	
	For every $\gamma>0$, there exist $\epsilonzer > 0,\epsilonone >0$ depending on $\mu$ such that the following holds. For any pair of real functions $\varphi\in C^2(\PP)$, $r\in C^\gamma(\PP)$ and $\xi>0$ such that $\varphi$ is $(\xi^{\epsilonzer },r)$ good, $\|r\|_\infty\leq 1$ and $c_\gamma(r) \leq \xi^{\epsilonzer }$,
	then 
	\begin{equation*}
	\left|\int e^{i\xi \varphi(\eta)}r(\eta)\dd\nu(\eta)\right|\leq \xi^{-\epsilonone } \quad\text{ for all } \xi \text{ large enough}.
	\end{equation*}
\end{thm}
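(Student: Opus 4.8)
The plan is to deduce Theorem \ref{thm:foudecsemi} from Theorem \ref{thm:foudec} by pushing the problem up along the covering $\psi:\bf G\to\bf G'$ constructed in Lemma \ref{lem:surjective} and the lift $\tilde\mu$ of $\mu$ furnished by Lemma \ref{lem:psimu}. The key point is that $\psi$ restricted to $\scr D\bf G$ is a central isogeny, so $\psi$ induces an isomorphism of flag varieties $\P_{\bf G}\xrightarrow{\sim}\P_{\bf G'}$ (a minimal parabolic of $\bf G$ is $\bf A'\ltimes\bf P_1$ where $\bf P_1$ is minimal parabolic in $\bf G_1=\scr D\bf G$, and its image is a minimal parabolic of $\bf G'$; the central torus and the finite kernel of the isogeny act trivially on the flag variety). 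Moreover the Iwasawa cocycle is compatible: $\sigma_{\bf G}(g,\eta)$ maps to $\sigma_{\bf G'}(\psi(g),\psi(\eta))$ under the natural map of Cartan subspaces $\frak a_{\bf G}\to\frak a_{\bf G'}$, and on the semisimple parts this map is an isomorphism. First I would set up this dictionary carefully: identify $\P=\P_{\bf G}=\P_{\bf G'}$ via $\psi$, check that $\psi_*\tilde\mu=\mu$ implies the $\tilde\mu$-stationary measure $\tilde\nu$ on $\P_{\bf G}$ equals $\nu$ under this identification (uniqueness of the stationary measure, Lemma \ref{lem:stauni}, applied to the Zariski dense $\tilde\mu$ given by Lemma \ref{lem:psimu}), and verify that the distance on $\P$ coming from the representations $V_\alpha$ of $\bf G$ is bi-Lipschitz equivalent to the one coming from $\bf G'$ — here the simple roots of $\bf G$ and $\bf G'$ are in canonical bijection, and the fundamental representations differ only by the irrelevant central character, so the metrics match up to constants (cf. Remark \ref{rem:distance} and Lemma \ref{lem:kvv}).

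Next, given the data $(\varphi,r)$ on $\P_{\bf G'}$ with $\varphi$ being $(\xi^{\epsilon_0'},r)$ good in the sense of Definition \ref{defi:C r good}, I would transport $\varphi$ and $r$ to functions $\tilde\varphi,\tilde r$ on $\P_{\bf G}$ via the identification $\P_{\bf G}\simeq\P_{\bf G'}$. Because the simple roots correspond and the $\alpha$-circles correspond (the $\alpha$-circle structure only depends on $\scr D\bf G$ up to isogeny, Lemma \ref{lem:alpha circle}), the directional derivatives $\partial_\alpha\tilde\varphi$ are, up to the fixed bi-Lipschitz constant between the two metrics, equal to $\partial_\alpha\varphi$; hence conditions (G1)--(G4) are preserved after replacing the constant $C=\xi^{\epsilon_0'}$ by $C'=A\,\xi^{\epsilon_0'}$ for a fixed constant $A$ depending only on the groups. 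Shrinking $\epsilon_0$ slightly (so that $A\,\xi^{\epsilon_0'}\le\xi^{\epsilon_0}$ for $\xi$ large, with $\epsilon_0$ the exponent from Theorem \ref{thm:foudec} applied to $\bf G$ and $\tilde\mu$), we get that $\tilde\varphi$ is $(\xi^{\epsilon_0},\tilde r)$ good, $\|\tilde r\|_\infty\le 1$, $c_\gamma(\tilde r)\le A\xi^{\epsilon_0'}\le\xi^{\epsilon_0}$. Then Theorem \ref{thm:foudec} applied to $\bf G$, $\tilde\mu$, $\tilde\nu$ gives
\begin{equation*}
\left|\int_{\P_{\bf G}} e^{i\xi\tilde\varphi(\eta)}\tilde r(\eta)\dd\tilde\nu(\eta)\right|\le\xi^{-\tilde\epsilon_1}
\end{equation*}
for $\xi$ large, and unwinding the identification (using $\tilde\nu=\nu$, $\tilde\varphi=\varphi\circ\psi$, $\tilde r=r\circ\psi$, and that $\psi_*\tilde\nu=\nu$) this is exactly the desired bound on $\P_{\bf G'}$ with $\epsilon_1=\tilde\epsilon_1$.

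The main obstacle I expect is Lemma \ref{lem:psimu}: producing a Zariski dense lift $\tilde\mu$ of $\mu$ with finite exponential moment. The fibre of $\psi$ over a point is a coset of $\ker\psi$, which is an extension of a connected $\R$-split torus (the central $\bf A'$) by the finite central kernel of the isogeny $\scr D\bf G\to\bf G'$; one needs to choose a measurable section and equip it with enough spreading in the torus direction to keep Zariski density while controlling the exponential moment. Since $\kappa_{\bf G'}\circ\psi$ and $\kappa_{\bf G}$ differ by the norm of the central component, and the central torus is split, a lift concentrated near a bounded section (e.g. pushing forward $\mu$ along a Borel section $\sigma:G'\to G$ with $\|\kappa(\sigma(g))-(\text{lift of }\kappa(g))\|$ bounded, then convolving with a compactly supported measure on $\ker\psi$ that is Zariski dense in $\ker\psi$) should work; the exponential moment is then inherited from $\mu$ because the extra factor is bounded. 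The other routine but slightly delicate check is that the correspondence of $\alpha$-circles and the equivalence of metrics genuinely go through for the reductive group $\bf G$ with non-trivial $\R$-split centre — but this is covered by the material of Section \ref{sec:lie groups} applied to $\bf G$, which is a connected $\R$-split reductive group with simply connected derived group, precisely the setting of Theorem \ref{thm:foudec}. Everything else is bookkeeping about how weights, cocycles, and stationary measures transfer along a central isogeny.
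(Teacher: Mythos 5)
Your proposal is correct and follows essentially the same route as the paper: lift $\mu$ to the reductive cover $\bf G$ with simply connected derived group via Lemma \ref{lem:surjective} and Lemma \ref{lem:psimu}, observe that the kernel of $\psi$ is central so the flag varieties (and hence, by uniqueness, the stationary measures) are identified and the metrics $d_\alpha$ are equivalent as in Remark \ref{rem:distance}, then apply Theorem \ref{thm:foudec} after absorbing the comparison constants into $\xi^{\epsilonzer}$. Your extra care with adjusting the exponents and your alternative sketch for constructing the lift $\tilde\mu$ are harmless refinements of what the paper already does.
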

\begin{proof}
	We only need to prove that the flag varieties of $G$ and $G'$ are isomorphic.
	By Lemma \ref{lem:surjective}, since $\bf G'$ is $\R$-split, $\bf G$ is also $\R$-split. By Remark \ref{rem:distance}, the distance $d_\alpha$ on $\P$ for $\bf G$ and $\bf G'$ are equivalent. By Lemma \ref{lem:psimu},  we obtain that $\nu$ is also the $\tilde{\mu}$-stationary measure and we can apply Theorem \ref{thm:foudec} to $G,\tilde{\mu}$.
	
	Now we will prove that the flag varieties of $G$ and $G'$ are isomorphic.
	By \cite[Prop.20.5]{borel1990linear}, we know that $(\bf G_2/\bf P_2)(\R)=\bf G_2(\R)/\bf P_2(\R)$ for any connected reductive $\R$-group $\bf G_2$ and its parabolic $\R$-subgroup $\bf P_2$. Hence it is sufficient to prove for a minimal parabolic $\R$-subgroup $\bf P$ of $\bf G$ and $\bf P'$ its image in $\bf G'$ that
	\begin{equation}\label{equ:gp}
		\bf G/\bf P\simeq \bf G'/\bf P'.
	\end{equation}
	As if \eqref{equ:gp} holds, then $\bf P'$ is also a parabolic subgroup by definition and it is minimal because $\bf P$ is.
	Due to \cite[Thm.11.16]{borel1990linear}, the normalizer of a parabolic subgroup is itself. Then the centre of $\bf G$ is contained in the parabolic group $\bf P$. It suffices to prove that $\ker \psi$ is in the centre, then 
	\[\bf G/\bf P\simeq (\bf G/\ker\psi)/(\bf P/\ker\psi)\simeq \bf G'/\bf P'.\]
	
	By \cite[Prop.14.2]{borel1990linear}, we know that $\bf G=\bf C\cdot\scr D\bf G$, where $\bf C$ is the connected centre and $\bf C\cap\scr D\bf G$ is finite. Since $\bf G'$ is semisimple, the connected centre $\bf C$ is in $\ker\psi$. As the restriction of $\psi$ on $\scr D\bf G$ to $\bf G'$ is a central isogeny, hence $\ker\psi\cap \scr D\bf G$ is in the centre of $\scr D\bf G$, which is also in the centre of $\bf G$. Therefore the kernel of $\psi$ is in the centre of $\bf G$. The proof is complete.
\end{proof}
It remains to prove Lemma \ref{lem:psimu}.
\begin{proof}[Proof of Lemma \ref{lem:psimu}]
	We will first construct a measure $\tilde{\mu}_1$ which has a finite exponential moment. In the construction of Lemma \ref{lem:surjective}, there exists a finite subgroup $F$ of $A'$ such that $\psi$ from $F\ltimes G_1$ to $G'$ is already surjective. Let $F_1$ be the kernel of this covering, which is finite. Then there exists a Borel probability measure $\tilde{\mu}_1$ on the subgroup $F\ltimes G_1$ of $G$ which is $F_1$-left invariant and the pushforward measure is $\mu$.

	The moment condition is also satisfied. Because $\psi$ induce an isomorphism between $\frak a_1$ to $\frak a'$ (Recall the notation in Section \ref{sec:lie groups}) and this isomorphism identifies the Cartan projections $\kappa(g)$ and $\kappa(\psi(g))$. 
	Let $mg$ be an element in $F\ltimes G_1$ with $m\in F$ and $g\in G_1$, then
	by the sub additivity of the Cartan projection (\cite[Corollary 8.20]{benoistquint}),
	\[\|\kappa(mg)\|\leq \|\kappa(m)\|+\|\kappa(g)\|= \|\kappa(m)\|+\|\kappa(\psi(g))\|\leq \|\kappa(m)\|+\|\kappa(\psi(m))\|+\|\kappa(\psi(mg)) \|. \]
	Hence
	\[\int_G e^{\epsilon\|\kappa(g)\|}\dd\tilde{\mu}_1(g)\ll\int_{G}e^{\epsilon\|\kappa(\psi(g))\|}\dd\tilde\mu_1(g)= \int_{G'}e^{\epsilon\|\kappa(g')\|}\dd\mu(g'). \]
	
	In order to get a Zariski dense measure $\tilde{\mu}$, we replace the above measure $\tilde{\mu}_1$ by $\tilde{\mu}=\frac{1}{2}(\tilde{\mu}_1+c_*\tilde{\mu}_1)$, where $c$ is an element in the connected centre $C$ such that the group $C_1=\l c\r$ generated by $c$ is Zariski dense in $C$. Due to $\dd\psi|_{\frak c}=0$, the connected centre $C$ is in the kernel of $\psi$. Hence $\psi_*(c_*\tilde{\mu}_1)=\psi_*\tilde{\mu}_1$.
	This measure $\tilde{\mu}$ satisfies \eqref{equ:psimu} and has a finite exponential moment. We will prove that it is also Zariski dense.
	
	Let $\bf H$ be the Zariski closure of $\Gamma_{\tilde{\mu}}$, the group generated by the support of $\tilde{\mu}$. Let $\frak h$ be the Lie algebra of $H$. Since the group $\bf G$ is a connected $\R$-group, it is sufficient to prove that $\frak h=\frak g$. 
	Recall that $\frak g=\frak c\oplus\scr D\frak g$. Due to $c$ in $H$, the Zariski closure of $C_1$ is also in $H$. Hence $\frak h\supset \frak c$. For the semisimple part, consider the adjoint action of $\Gamma_{\tilde{\mu}}$ on $\scr D\frak g$. Because the group $\Gamma_\mu$ is Zariski dense in $\bf G'$, the adjoint action of $\Gamma_\mu$ on $\frak g'$ is irreducible. The map $\dd\psi|_{\scr D\frak g}:\scr D\frak g\rightarrow \frak g'$ is an isomorphism of Lie algebras. By 
	\[\dd\psi(\ad_gX)=\ad_{\psi(g)}\dd\psi X\text{ for }X\in\scr D\frak g, \]
	we obtain that the action of $\Gamma_{\tilde{\mu}}$ on $\scr D\frak g$ is irreducible. Since $\frak h\cap \scr D\frak g$ is nonzero 
	and $\Gamma_{\tilde{\mu}}$-invariant, we know that $\frak h\cap\scr D\frak g=\scr D\frak g$. Therefore $\frak h=\frak g$. The proof is complete.
\end{proof}

\subsection{Equivalence of distances}
\label{sec:equi distance}
We consider connected $\R$-split reductive $\R$-groups.
\begin{defi}
	Let $(X,d)$ be a metric space. Let $d'$ be another metric on $X$. We say that $d,d'$ are equivalent if there exist $c,C>0$ such that for all $x_1,x_2$ in $X$
	\[ cd(x_1,x_2)\leq d'(x_1,x_2)\leq Cd(x_1,x_2). \]
\end{defi}
Recall that $\P_0$ is the homogeneous space $G/A_eN$, on which the compact group $K$ acts simply transitively. Recall that $\{V_\alpha \}_{\alpha\in\Pi}$ is the family of representations fixed in Lemma \ref{lem:tits}. We will define three distances on $\P_0$. Due to the fact that $\P_0$ is homeomorphic to $K$, a distance on $\P_0$ is also a distance on $K$ and we will continue our argument on $K$.
Let $k,k'$ be two points in $K$. If they are not in the same connected component, we define their distance as 1. From now on, we always suppose that $k,k'$ are in the connected component $K^o$.
\begin{itemize}
		\item $d_0(k,k')=\sup_{\alpha\in\Pi}\|kv_\alpha-k'v_\alpha \|/\sqrt{2}$, where $v_\alpha$ is a unit vector in $V_\alpha$ with highest weight. This is also the distance induced by the embedding of $\P_0$ into $\Pi_{\alpha\in\Pi}\bb SV_\alpha$. \nomentry{$d_0(k,k')$}{}
	\item $d_1(k,k')$ is the distance induced by the bi-invariant Riemannian metric on $K$.
\end{itemize}
We can easily verify that they are distances.
\begin{lem}\label{lem:equivalent distance}
	The two distances $d_0$ and $d_1$ on $\P_0$ are equivalent.
\end{lem}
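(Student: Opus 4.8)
The statement to prove is Lemma~\ref{lem:equivalent distance}: the two distances $d_0$ and $d_1$ on $\P_0 \cong K$ (equivalently on $K^o$, since points in different components are at distance $1$ in both metrics) are equivalent. The plan is to exploit that $K^o$ is a compact connected Lie group and that both $d_0$ and $d_1$ are built from the smooth $K$-action, so comparison is essentially a statement about two continuous Riemannian-type metrics on a compact manifold, together with left-invariance.

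First I would observe that both distances are left $K^o$-invariant. For $d_1$ this is by construction (it comes from a bi-invariant metric on $K$). For $d_0$, left-invariance follows because $K$ preserves the chosen norms on each $V_\alpha$: for $g \in K^o$, $\|gk v_\alpha - g k' v_\alpha\| = \|k v_\alpha - k' v_\alpha\|$, hence $d_0(gk, gk') = d_0(k,k')$. By left-invariance, it suffices to compare the two distances near the identity element $e \in K^o$, i.e. to show there exist $c, C > 0$ and a neighbourhood $U$ of $e$ such that $c\, d_1(e,k) \le d_0(e,k) \le C\, d_1(e,k)$ for all $k \in U$; the global comparison on all of $K^o$ then follows by a standard chaining/compactness argument (cover $K^o$ by finitely many such neighbourhoods, or use that any two points can be joined by a $d_1$-geodesic of bounded length subdivided into short pieces, and that $d_0 \le \operatorname{diam}_{d_0}(K^o) < \infty$ is automatically controlled, while the lower bound for $d_0$ comes from the fact that $d_0$ separates points on the compact space $K^o$ and is continuous).

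The local comparison near $e$ I would handle by a first-order (differential) computation. Consider the map $\Phi : K^o \to \prod_{\alpha \in \Pi} \mathbb{S} V_\alpha$, $k \mapsto (k v_\alpha)_\alpha$, whose image carries $d_0$ as (a rescaling of) the chord distance. This map is a smooth embedding: it is injective because the family $\{V_\alpha\}$ gives an embedding of $\P_0$ into $\prod_\alpha \bp V_\alpha$ (Lemma~\ref{lem:tits}, refined to the sphere level since we use unit highest-weight vectors and track the sign), and its differential at $e$ is injective — indeed $d\Phi_e(Y) = (\dd\rho_\alpha(Y) v_\alpha)_\alpha$, and if this vanishes for some $Y \in \frak k$ then $Y$ acts trivially on every highest-weight line, forcing $Y = 0$ by the same separation property. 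Hence $\Phi$ is an immersion from a compact manifold, so it is bi-Lipschitz onto its image with respect to $d_1$ and the extrinsic (chord) distance; since the chord distance on the sphere is comparable to the intrinsic one and $d_0$ is exactly the sup over $\alpha$ of (a constant times) these chord distances, we get $d_0 \asymp d_1$ locally, hence globally. I would also need the elementary facts that for a $C^1$ map between compact Riemannian manifolds the image of a curve has length at most (Lipschitz constant) times the source length — giving the upper bound $d_0 \le C d_1$ — and that an injective immersion of a compact manifold has a continuous, everywhere-positive "local co-Lipschitz" constant, giving the lower bound.

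The main obstacle is not any single hard estimate but rather being careful about the passage from $\bp V_\alpha$ to $\mathbb{S} V_\alpha$: $d_0$ as written uses the unit highest-weight vector $v_\alpha$, so $\Phi$ really does land in a product of spheres, and one must check $\Phi$ is well-defined and injective on $K^o$ (the sign ambiguity of $v_\alpha$ is killed once we restrict to the connected component and fix $v_\alpha$). Once this is set up, the equivalence is a routine compactness-plus-immersion argument; I expect the write-up to be short, the only genuinely substantive input being Lemma~\ref{lem:tits} (the embedding property) and the injectivity of $d\Phi_e$, both of which follow from the representation theory already developed.
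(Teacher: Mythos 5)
Your proposal is correct and follows essentially the same route as the paper: reduce by left $K$-invariance to a comparison near the identity, treat the complement of a neighbourhood of $e$ by compactness and positivity of the ratio $d_0/d_1$, and obtain the local lower bound from the injectivity on $\frak k$ of the differential of $k\mapsto(kv_\alpha)_{\alpha\in\Pi}$, which is exactly what the paper's Lemma \ref{lem:kvv} provides (quantitatively, via the $\frak{sl}_2$/weight computation $\|\exp(tZ)v_\alpha-v_\alpha\|\asymp t\|Z_\alpha\|$). Your softer justification of that injectivity is also fine, provided you make explicit that a $Y\in\frak k$ annihilating every $v_\alpha$ lies in the stabilizer algebra $\frak p$ of $\eta_o$ and that $\frak k\cap\frak p=\{0\}$.
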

Before proving Lemma \ref{lem:equivalent distance}, we need a lemma.
\begin{lem}\label{lem:kvv}
	Let $(\rho,V)$ be an irreducible representation with highest weight $\chi$, which satisfies $\chi(H_\alpha)>0$ for only one simple root $\alpha$. Then there exists $t_0>0$ such that the following holds. Let $Z$ be a unit vector in $\frak k$, given by $Z=\sum_{\alpha\in R^+}c_\alpha K_\alpha$. 
	Let 
		\[Z_\alpha=\sum_{\beta\geq\alpha,\beta\in R^+}c_\beta K_\beta.\]
	Then for $0<t<t_0$, $k=\exp(tZ)$ and a unit vector $v$ with highest weight, we have
	\[\|kv-v\|\asymp t\|Z_\alpha\|. \]
\end{lem}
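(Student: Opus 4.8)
The plan is to reduce the estimate to its first-order term in $t$ and to pin down exactly which root vectors $K_\beta$ act non-trivially on the highest weight line. First I would record how $\dd\rho(K_\beta)$ acts on $v\in V^\chi$: since $v$ is a highest weight vector, $\dd\rho(X_\beta)v=0$ for every $\beta\in R^+$ (the weight $\chi+\beta$ is too large), so $\dd\rho(K_\beta)v=-\dd\rho(Y_\beta)v$. Viewing $v$ as a highest weight vector for the $\frak{sl}_2$-triple $\langle H_\beta,X_\beta,Y_\beta\rangle$, the representation theory of $\frak{sl}_2$ gives $\dd\rho(Y_\beta)v\neq 0$ if and only if $\chi(H_\beta)>0$. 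Now $H_\beta=\beta^\vee$ is a non-negative combination of the simple coroots whose support in that basis equals the support of $\beta$ in the simple-root basis, so, using $\chi(H_{\alpha'})=0$ for simple $\alpha'\neq\alpha$ and $\chi(H_\alpha)>0$, one gets $\chi(H_\beta)>0\iff\alpha\in\mathrm{supp}(\beta)\iff\beta-\alpha$ is a non-negative sum of simple roots $\iff\beta\geq\alpha$. Consequently
\[\dd\rho(Z)v=-\sum_{\beta\in R^+}c_\beta\,\dd\rho(Y_\beta)v=-\sum_{\beta\geq\alpha}c_\beta\,\dd\rho(Y_\beta)v=\dd\rho(Z_\alpha)v.\]

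Next I would compare $\|\dd\rho(Z_\alpha)v\|$ with $\|Z_\alpha\|$. The vectors $\dd\rho(Y_\beta)v$, for distinct $\beta\geq\alpha$, lie in the distinct weight spaces $V^{\chi-\beta}$ and are therefore linearly independent, so the linear map $\mathrm{Span}\{K_\beta:\beta\geq\alpha\}\to V$, $\sum c_\beta K_\beta\mapsto -\sum c_\beta\dd\rho(Y_\beta)v$, is injective; as its source is finite-dimensional it is bounded above and below, and since the $K_\beta$ are pairwise orthogonal for the Killing form this gives $\|\dd\rho(Z_\alpha)v\|\asymp\|Z_\alpha\|$, with constants depending only on $\rho$, the good norm and the root system.

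Finally I would estimate $\|kv-v\|$ from $\rho(\exp(tZ))v-v=\int_0^t\rho(\exp(sZ))\,\dd\rho(Z)v\,\dd s$. Because $Z\in\frak k$ and the norm is good, each $\rho(\exp(sZ))$ is orthogonal, which at once gives $\|kv-v\|\leq t\|\dd\rho(Z)v\|=t\|\dd\rho(Z_\alpha)v\|\ll t\|Z_\alpha\|$. For the lower bound, write $kv-v=t\,\dd\rho(Z)v+\int_0^t(\rho(\exp(sZ))-I)\,\dd\rho(Z)v\,\dd s$ and bound the remainder by $\int_0^t s\|\dd\rho(Z)\|\,\|\dd\rho(Z)v\|\,\dd s\leq\tfrac{t^2}{2}\|\dd\rho(Z)\|\,\|\dd\rho(Z)v\|$; choosing $t_0$ so small that $t_0\|\dd\rho(Z)\|\leq 1$ for every unit $Z\in\frak k$ (possible since $\rho$ is fixed and $\frak k$ is finite-dimensional) yields $\|kv-v\|\geq\tfrac12 t\|\dd\rho(Z)v\|=\tfrac12 t\|\dd\rho(Z_\alpha)v\|\gg t\|Z_\alpha\|$ for $t<t_0$. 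Combining the two bounds gives $\|kv-v\|\asymp t\|Z_\alpha\|$; when $Z_\alpha=0$ both sides vanish, since then $\dd\rho(Z)v=0$ and hence $kv=v$.

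The hard part, and really the only non-mechanical step, will be the identification $\{\beta\in R^+:\dd\rho(Y_\beta)v\neq 0\}=\{\beta\geq\alpha\}$, i.e. matching the $\frak{sl}_2$-condition $\chi(H_\beta)>0$ with the combinatorial order on roots; once that is in place the remainder is a Taylor expansion together with a finite-dimensional compactness argument.
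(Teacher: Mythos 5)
Your proof is correct and follows essentially the same route as the paper: you identify $\{\beta\in R^+:\dd\rho(Y_\beta)v\neq 0\}=\{\beta\geq\alpha\}$ via the $\frak{sl}_2$-triples and the support of $\beta^\vee$ (the paper phrases this through a Weyl-invariant inner product, $\chi(H_\beta)=2(\chi,\beta)/(\beta,\beta)$), then use that the images lie in distinct weight spaces to get $\|\dd\rho(Z)v\|\asymp\|Z_\alpha\|$, and conclude by a first-order expansion in $t$. Your explicit Taylor-remainder estimate with a uniform choice of $t_0$ (and the remark on the case $Z_\alpha=0$) just spells out what the paper compresses into ``by Lipschitz's property,'' so there is nothing to add.
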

\begin{proof}	
For a positive root $\beta$, let
	\[A_{\beta}:=\dd\rho(K_\beta)v=\dd\rho(Y_\beta)v. \]
	Consider the representation of $\frak{s}_\beta=\{Y_\beta,X_\beta,H_\beta \}\simeq \frak{sl}_2$. Due to the classification of the representations of $\frak{sl}_2$, the vector $A_{\beta}$ is non-zero if and only if $\chi(H_\beta)>0$.

	Fix an inner product $(\cdot,\cdot)$ on $\frak a^*$ which is invariant under the Weyl group, then we can identify $H_\beta$ with $2\frac{\beta}{(\beta,\beta)}$, that is (see \cite[V. 5]{serre2012complex} for example)
	\[\chi(H_\beta)=(\chi,2\frac{\beta}{(\beta,\beta)}). \]
	By hypothesis, $(\chi,\alpha)>0$ for only one simple root $\alpha$, this implies that $\chi(H_\beta)=2(\chi,\beta)/(\beta,\beta)>0$ if and only if $\beta\geq \alpha$ and $\beta$ is a positive root. 
	 Therefore only the vectors $\{A_\beta \}_{\beta\geq\alpha,\beta\in R^+}$ are non-zero. They are also orthogonal since they are of different weights. When $t$ is small enough, by Lipschitz's property we conclude
	\[\|kv-v\|=\|\exp(tZ)v-v\|\asymp t\|\dd\rho(Z)v \|=t\|\sum_{\beta\geq\alpha,\beta\in R^+}c_\beta A_\beta\|\asymp t\|Z_\alpha\|.\]
	The proof is complete.
\end{proof}
\begin{proof}[Proof of Lemma \ref{lem:equivalent distance}]	
	Due to the embedding $\P_0$ to $\Pi_{\alpha\in\Pi}\bb SV_\alpha$ and since the Riemannian metric on $\Pi_{\alpha\in\Pi}\bb SV_\alpha$ induces a metric on $\P_0$ which is equivalent to $d_0$, we know that for any $x_1,x_2$ in $\P_0$, 
	\[d_0(x_1,x_2)\ll d_1(x_1,x_2). \]
	
	We observe that the two distances are left $K$-invariant. It is sufficient to prove the equivalence for $k'$ equal to the identity $e$.
	
	Fix $\epsilon$ small depending on $K$. Let $B_\epsilon$ be the neighbourhood of $e$ given by  $\{k\in K|d_1(k,e)<\epsilon \}$. Then $B_\epsilon^c$ is a compact subset of $K$. Consider the function $f(k)=\frac{d_0(k,e)}{d_1(k,e)}$ for $k\in B_\epsilon^c$. Then $f$ is a positive continuous function on $B_\epsilon^c$. The compactness of $B_\epsilon^c$ implies that it has positive minimum on $B_\epsilon^c$. Hence there exists $c>0$ such that for $k$ outside of $B_\epsilon$ 
	\[d_0(k,e)\geq cd_1(k,e). \]
	
	Finally, we only need to consider a small neighbourhood of the identity. We take $\epsilon$ small such that the exponential map at $e$ is bi-Lipschitz. Suppose that $k=\exp(tZ)$ with $Z$ a unit vector in $\frak k$ and $t>0$. 
	We can decompose $Z$ as in Lemma \ref{lem:kvv}.
	There exists $\alpha\in \Pi$ such that $\|Z_\alpha\|\gg 1$. By Lemma \ref{lem:kvv},
	we have
	\[\|kv_\alpha-v_\alpha\|\asymp t\|Z_\alpha \|\gg t.  \]
	Then we have $d_0(k,e)\gg d_1(k,e)$. The proof is complete.
\end{proof}

\begin{lem}\label{lem:equivalence distance P}
	The $K$-invariant Riemannian distance on $\P$ is equivalent to the distance defined in \eqref{equ:distance eta eta'}.
\end{lem}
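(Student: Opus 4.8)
The plan is to reduce the statement on the flag variety $\P$ to the previously established equivalence of distances on the covering space $\P_0$, namely Lemma \ref{lem:equivalent distance} (equivalence of $d_0$ and $d_1$ on $\P_0$), together with the comparison of projective distances in Lemma \ref{lem:profla}. Recall that $\P$ is the quotient of $\P_0=G/A_eN$ by the right action of the finite group $M$, and that the distance $d$ on $\P$ from \eqref{equ:distance eta eta'} is $d(\eta,\eta')=\max_{\alpha\in\Pi}d(V_{\alpha,\eta},V_{\alpha,\eta'})$, i.e. the distance induced by the embedding $\P\hookrightarrow\prod_{\alpha\in\Pi}\bp V_\alpha$. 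So the task splits into two inequalities.

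First I would show that $d(\eta,\eta')\ll d_{\mathrm{Riem}}(\eta,\eta')$, where $d_{\mathrm{Riem}}$ is the fixed $K$-invariant Riemannian distance. This is the ``easy'' direction: the maps $\eta\mapsto V_{\alpha,\eta}$ from \eqref{equ:ppV} are smooth maps between compact manifolds, hence Lipschitz when the target $\bp V_\alpha$ carries a smooth (e.g.\ Fubini–Study-type, or the one induced by \eqref{equ:distance x x'}) metric; taking the max over the finitely many $\alpha\in\Pi$ preserves the Lipschitz bound. Concretely one can just push this through the quotient map $\pi:\P_0\to\P$: both $d_{\mathrm{Riem}}$ (pulled back) and the $V_\alpha$-component maps factor through $K^o\to\P_0$, and on the compact group $K$ a smooth map is Lipschitz. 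Alternatively invoke Lemma \ref{lem:profla} directly, which already gives $d(V_{\chi,\eta},V_{\chi,\eta'})\leq Cd(\eta,\eta')$ for each irreducible $V$; applying it with $V=V_\alpha$ and the Riemannian distance in place of the max-distance $d$ handles one direction once one knows $d\ll d_{\mathrm{Riem}}$ — so really the content is the reverse bound.

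The reverse inequality $d_{\mathrm{Riem}}(\eta,\eta')\ll d(\eta,\eta')$ is where Lemma \ref{lem:equivalent distance} does the work. The point is that $d_0$ on $\P_0$ was defined as $\sup_{\alpha\in\Pi}\|kv_\alpha-k'v_\alpha\|/\sqrt2$, which is exactly (up to the standard comparison between the chordal distance $\|kv_\alpha-k'v_\alpha\|$ on the unit sphere $\bb S V_\alpha$ and the projective distance $d$ of \eqref{equ:distance x x'} on $\bp V_\alpha$, valid for points at distance $<1$, and globally since both are bounded) the pullback to $\P_0$ of the max-distance on $\prod_\alpha\bp V_\alpha$. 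Descending to $\P$: for $\eta,\eta'\in\P$ one chooses lifts $\k,\k'\in\P_0$ minimizing $d_0$; then $d(\eta,\eta')\asymp d_0(\k,\k')$, while the $K$-invariant Riemannian distance $d_{\mathrm{Riem}}(\eta,\eta')$ is the infimum over $m\in M$ of $d_1(\k,\k'm)$ (since $\P=\P_0/M$ and $d_1$ is $M$-right-invariant, being bi-invariant on $K$). By Lemma \ref{lem:equivalent distance}, $d_1\asymp d_0$ on $\P_0$, and the finite infimum over $M$ on both sides gives $d_{\mathrm{Riem}}(\eta,\eta')\asymp\min_{m\in M}d_0(\k,\k'm)\asymp d(\eta,\eta')$.

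The main obstacle — really the only nontrivial bookkeeping — is handling the finite group $M$ correctly when passing from $\P_0$ to $\P$: one must check that taking $\min_{m\in M}$ commutes with the distance comparisons (straightforward since $M$ is finite and acts by isometries for $d_1$, and permutes the $d_0$-balls compatibly with the embedding into $\prod_\alpha\bp V_\alpha$, because the $M$-action on $\P_0$ covers the trivial action on the projective image up to the sign ambiguity already inherent in $\bp V_\alpha$), and that the chordal-vs-projective and sphere-vs-projective comparisons are uniform. Once the $M$-descent is set up, the statement follows by chaining Lemma \ref{lem:equivalent distance} with the (standard) local bi-Lipschitz equivalence between chordal and geodesic metrics on the compact manifolds involved. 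A final remark: since all distances in sight are bounded above (compactness) and comparison only needs to be checked for nearby points, there is no issue at ``large scale''.
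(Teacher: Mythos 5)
Your overall strategy (descend to $\P_0$, invoke Lemma \ref{lem:equivalent distance}, compare chordal and wedge distances, use compactness at large scale) is the same as the paper's, but there is a genuine gap at the pivot of your descent: the claim that for lifts $\k=k\k_o$, $\k'=k'\k_o$ of $\eta,\eta'$ one has $d(\eta,\eta')\asymp\min_{m\in M}d_0(\k,\k'm)$. Only one inequality is easy: $d(\eta,\eta')\leq \sqrt{2}\,d_0(\k,\k'm)$ for every $m$, since $\|v_1\wedge v_2\|\leq\|v_1-v_2\|$ for unit vectors (this is the first inequality of Lemma \ref{lem:p0 p}). The reverse bound $\min_{m\in M}d_0(\k,\k'm)\ll d(\eta,\eta')$ is exactly the hard half of the lemma: given Lemma \ref{lem:equivalent distance}, $\min_m d_0(\k,\k'm)\asymp\min_m d_1(\k,\k'm)=d_1(\eta,\eta')$, so this step is equivalent to $d_1(\eta,\eta')\ll d(\eta,\eta')$, which is what you are trying to prove. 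Concretely, smallness of $d(\eta,\eta')$ only tells you that for each $\alpha$ there is a sign $\epsilon_\alpha\in\{\pm1\}$ with $\|kv_\alpha-\epsilon_\alpha k'v_\alpha\|$ small; you must then produce a \emph{single} $m\in M$, lying moreover in the connected component that keeps $\k'm$ at finite $d_0$-distance from $\k$, with $\chi_\alpha^\up(m)=\epsilon_\alpha$ for all $\alpha\in\Pi$ simultaneously. Your justification (``the $M$-action covers the trivial action up to the sign ambiguity'') only says that $M$ preserves the fibres of $\P_0\to\prod_{\alpha\in\Pi}\bp V_\alpha$; it does not yield this simultaneous realizability. (Relatedly, $d_0$ is \emph{not} the pullback of the max projective distance ``globally since both are bounded'': antipodal vectors have chordal distance $2$ and projective distance $0$.) When the semisimple part is simply connected one could extract the sign realizability from Lemma \ref{lem:isomorphism M Z1}, but the lemma here is stated and used for general reductive and non simply connected semisimple groups, where this is not addressed.

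The paper sidesteps the sign problem entirely, and your proof is repaired the same way: by $K$-invariance of both distances and compactness (injectivity of the embedding from Lemma \ref{lem:tits} gives $d>0$ off the diagonal), it suffices to compare the two distances for pairs that are close in the Riemannian metric $d_1$ — not in $d$. For such a pair one chooses lifts realizing the quotient distance, $d_1(\k,\k')=d_1(\eta,\eta')$ small; then $d_0(\k,\k')<1$ by Lemma \ref{lem:equivalent distance}, all angles between $kv_\alpha$ and $k'v_\alpha$ are acute, and $\|kv_\alpha\wedge k'v_\alpha\|\asymp\|kv_\alpha-k'v_\alpha\|$ gives $d(\eta,\eta')\asymp d_0(\k,\k')\asymp d_1(\eta,\eta')$. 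Your closing remark that the comparison ``only needs to be checked for nearby points'' is the right instinct, but nearness must be measured in $d_1$, the metric in which you can choose a good lift; taking lifts minimizing $d_0$ and asserting $d\asymp d_0$ globally is precisely where the argument breaks.
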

\begin{proof}
	By $\P=\P_0/M$ and since the group $M$ is a subgroup of $K$ which preserves the distance, let $d_1$ also be the quotient Riemannian distance on $\P$.
	By the same argument of the proof as in Lemma \ref{lem:equivalent distance}, it is sufficient to prove a small neighbourhood of $\eta_0$.
	For any two points $\eta$, $\eta'$ in this small neighbourhood, we can find $\k$, $\k'$ in $\P_0$ such that $\pi(\k)=\eta$, $\pi(\k')=\eta'$ and $d_1(\k,\k')=d_1(\eta,\eta')$. Due to $d_1(\k,\k')$ small, we see that $d_0(\k,\k')$ is less than $1$. Hence by  
	\[\|v_1-v_2\|\asymp \|v_1\wedge v_2\| \]
	for two unit vectors $v_1,v_2$ with $\|v_1-v_2\|<\sqrt{2}$,
	we have
	\[d(\eta,\eta')\asymp d_0(\k,\k').\] 
	By Lemma \ref{lem:equivalent distance}, we have $d_0(\k,\k')\asymp d_1(\k,\k')=d_1(\eta,\eta')$. The proof is complete.
\end{proof}

Recall the definition of the sign function $\sg$ in Section \ref{sec:sign group}. 
\begin{lem}\label{lem:p0 p}
	Suppose in addition that semisimple part of the group $\bf G$ is simply connected. Let $\k=k\k_o,\k'=k'\k_o$ be two points in $\P_0$, then
	\[ \sqrt{2}d_0(\k,\k')\geq d(\pi(\k),\pi(\k')). \]
	We have
		\[\sg(\k,\k')=e\Longleftrightarrow d_0(\k,\k')< 1. \]
	If $\sg(\k,\k')=e$, then
	\[ d(\pi(\k),\pi(\k'))\geq d_0(\k,\k'). \]
\end{lem}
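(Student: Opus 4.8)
\textbf{Proof plan for Lemma \ref{lem:p0 p}.}

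The plan is to exploit the two embeddings of $\P_0$ and $\P$ into products of spheres and projective spaces coming from the family $\{V_\alpha\}_{\alpha\in\Pi}$, together with the algebraic description of the sign function $\sg$ via the pairings $\langle v_\alpha,\rho_\alpha(\cdot)v_\alpha\rangle$ established in Lemma \ref{lem:isomorphism M Z}. Recall $d_0(\k,\k')=\sup_{\alpha}\|kv_\alpha-k'v_\alpha\|/\sqrt2$ while $d(\eta,\eta')=\sup_{\alpha}d(V_{\alpha,\eta},V_{\alpha,\eta'})$, and that for two unit vectors $v_1,v_2$ one has the elementary comparison $\|v_1\wedge v_2\|\le\|v_1-v_2\|$, with a reverse inequality $\|v_1-v_2\|\asymp\|v_1\wedge v_2\|$ valid when $\|v_1-v_2\|<\sqrt2$ (this last point is exactly the estimate already invoked in the proof of Lemma \ref{lem:equivalence distance P}). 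The first inequality $\sqrt2\,d_0(\k,\k')\ge d(\pi(\k),\pi(\k'))$ is then immediate: for each $\alpha$, writing $v_1=kv_\alpha$, $v_2=k'v_\alpha$ (unit vectors representing $V_{\alpha,\eta}$ and $V_{\alpha,\eta'}$), we get $d(V_{\alpha,\eta},V_{\alpha,\eta'})=\|v_1\wedge v_2\|\le\|v_1-v_2\|=\sqrt2\,d_0(\k,\k')_\alpha\le\sqrt2\,d_0(\k,\k')$, and taking the sup over $\alpha$ gives the claim.

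For the equivalence $\sg(\k,\k')=e\Longleftrightarrow d_0(\k,\k')<1$, I would argue on each coordinate $\alpha$ separately. If $k,k'$ lie in different connected components of $K$, then $\k,\k'$ lie in different components of $\P_0$, so by convention $d_0(\k,\k')=1$ and also $\sg(\k,\k')\ne e$ (the $M_1$-part is nontrivial), so both sides fail; hence we may assume $k,k'\in K^o$ and reduce, using left-invariance and Lemma \ref{lem:gg'm} (i.e. $\sg(\k,\k')=\sg(\k_o,\,k^{-1}k'\k_o)$), to the case $\k=\k_o$. Now $\sg(\k_o,\k')=e$ means $\theta(k'{}^{-1})\in U$ and its $M$-part is trivial, which by Lemma \ref{lem:isomorphism M Z} (applied with $g=\theta(k'{}^{-1})$, noting $\langle v_\alpha,\rho_\alpha(\theta(k'{}^{-1}))v_\alpha\rangle=\langle k'v_\alpha,v_\alpha\rangle$ by \eqref{equ:good norm}) is equivalent to $\langle k'v_\alpha,v_\alpha\rangle>0$ for every simple root $\alpha$ — equivalently the angle between the unit vectors $k'v_\alpha$ and $v_\alpha$ is strictly acute, i.e. $\|k'v_\alpha-v_\alpha\|<\sqrt2$, i.e. $d_0(\k_o,\k')<1$ in that coordinate. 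By Lemma \ref{lem:isomorphism M Z1} the $M_0$-part of $\sg$ is determined by the collection of signs $(\chi_\alpha^\up(\cdot))_\alpha$, and we have just seen each such sign is $+1$ exactly when the corresponding coordinate of $d_0$ is $<1$; since the $M_1$-part is trivial on $K^o$, this yields the full equivalence after taking the sup over $\alpha$. The main point to get right here is the precise translation of "$\langle k'v_\alpha,v_\alpha\rangle>0$" into "$\|k'v_\alpha-v_\alpha\|<\sqrt2$" for unit vectors, which is just $\|v_1-v_2\|^2=2-2\langle v_1,v_2\rangle$, together with the care that when $\theta(k'^{-1})\notin U$ the relevant pairing vanishes (Lemma \ref{lem:U}), so $d_0(\k_o,\k')=1$ in some coordinate and $\sg\ne e$ — both sides still fail, consistently.

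Finally, assume $\sg(\k,\k')=e$; by the above each $\|kv_\alpha-k'v_\alpha\|<\sqrt2$, so we are in the regime where $\|v_1-v_2\|\asymp\|v_1\wedge v_2\|$ — but we need the sharp constant, not just $\asymp$. Here I would use that for unit vectors with acute angle $\vartheta\in[0,\pi/2)$ one has $\|v_1-v_2\|=2\sin(\vartheta/2)$ while $\|v_1\wedge v_2\|=\sin\vartheta=2\sin(\vartheta/2)\cos(\vartheta/2)$, hence $\|v_1\wedge v_2\|\le\|v_1-v_2\|$ is sharp but in the other direction we only get $\|v_1-v_2\|=\|v_1\wedge v_2\|/\cos(\vartheta/2)\ge\|v_1\wedge v_2\|$... wait — that gives $\|v_1-v_2\|\ge\|v_1\wedge v_2\|$, which is the wrong direction for the claimed $d(\pi\k,\pi\k')\ge d_0(\k,\k')$. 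So the correct route is: taking the sup over $\alpha$, there is $\alpha_0$ achieving $d_0(\k,\k')=\|k v_{\alpha_0}-k'v_{\alpha_0}\|/\sqrt2=\sin(\vartheta_0/2)$ with $\vartheta_0<\pi/2$, and we must compare this to $d(\pi\k,\pi\k')=\sup_\alpha\sin\vartheta_\alpha\ge\sin\vartheta_{\alpha_0}=2\sin(\vartheta_0/2)\cos(\vartheta_0/2)\ge\sqrt2\sin(\vartheta_0/2)$ provided $\cos(\vartheta_0/2)\ge1/\sqrt2$, i.e. $\vartheta_0\le\pi/2$ — which holds. Thus $d(\pi\k,\pi\k')\ge\sqrt2\sin(\vartheta_0/2)\ge\sin(\vartheta_0/2)=d_0(\k,\k')$, with room to spare. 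The anticipated obstacle is precisely bookkeeping these trigonometric comparisons with the $1/\sqrt2$ normalization in $d_0$ and making sure the sup is taken over the same coordinate consistently; everything else is a routine unwinding of definitions plus Lemmas \ref{lem:isomorphism M Z}, \ref{lem:isomorphism M Z1}, \ref{lem:gg'm} and \ref{lem:U}.
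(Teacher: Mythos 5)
Your argument is correct and is essentially the paper's own proof: you compare, coordinate by coordinate, $\|kv_\alpha-k'v_\alpha\|=2\sin(\vartheta_\alpha/2)$ with $\|kv_\alpha\wedge k'v_\alpha\|=\sin\vartheta_\alpha=2\sin(\vartheta_\alpha/2)\cos(\vartheta_\alpha/2)$, and you characterize $\sg(\k,\k')=e$ by positivity of all pairings $\l kv_\alpha,k'v_\alpha\r$ via Lemma \ref{lem:isomorphism M Z} (your extra bookkeeping with Lemmas \ref{lem:isomorphism M Z1}, \ref{lem:gg'm} and \ref{lem:U} just makes explicit what the paper leaves implicit). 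The only flaw is a harmless normalization slip at the end: $d_0(\k,\k')=\|kv_{\alpha_0}-k'v_{\alpha_0}\|/\sqrt2=\sqrt2\sin(\vartheta_0/2)$, not $\sin(\vartheta_0/2)$, so your bound $d(\pi(\k),\pi(\k'))\geq\sqrt2\sin(\vartheta_0/2)$ is exactly $d\geq d_0$ with no room to spare (it becomes sharp as $\vartheta_0\to\pi/2$).
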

\begin{proof}
	Suppose that the angle between $kv_\alpha$ and $k'v_\alpha$ is $\vartheta\in [0,\pi)$, then $\|kv_\alpha-k'v_\alpha\|=2\sin\frac{\vartheta}{2}$ and $d(V_{\alpha,k\eta_o},V_{\alpha,k'\eta_o})=\|kv_\alpha\wedge k'v_\alpha\|=\sin\vartheta=2\sin\frac{\vartheta}{2}\cos\frac{\vartheta}{2}\leq 2\sin\frac{\vartheta}{2}$, which implies the first inequality.
	
	The assumption $d_0(\k,\k')< 1$ is equivalent to that for every simple root $\alpha$,  the angle $\vartheta$ is less than $\pi/2$, which is equivalent to $\sg(\k,\k')=e$ due to Lemma \ref{lem:isomorphism M Z}.
	
	If $m(\k,\k')=e$, then for every simple root $\alpha$, the angle $\vartheta$ is less than $\pi/2$. Hence $\sin\vartheta=2\sin\frac{\vartheta}{2}\cos\frac{\vartheta}{2}\geq \sqrt{2}\sin\frac{\vartheta}{2}$, which implies the result.
\end{proof}

\noindent Jialun LI\\
Universit\"at Z\"urich\\
jialun.li@math.uzh.ch
\newpage
\printnomenclature
\addcontentsline{toc}{section}{List of symbols}
\clearpage

\end{document}